\DeclareSymbolFont{cyrletters}{OT2}{wncyr}{m}{n}
\DeclareMathSymbol{\Sha}{\mathalpha}{cyrletters}{"58}
\newtheorem{theoA}{Theorem}
\newenvironment{customconj}[1]
  {\innercustomconj}
  {\endinnercustomconj}
\newenvironment{customthm}[1]
  {\innercustomthm}
  {\endinnercustomthm}
\newtheorem*{coro*}{Corollary}
\newtheorem*{conj*}{Conjecture}
\newtheorem*{lemm*}{Lemma}
\providecommand{\twomat}[4]{\left(\begin{array}{cc}#1&#2\\#3&#4\end{array}\right)}
\providecommand{\smalltwomat}[4]{\left(\begin{smallmatrix}#1&#2\\#3&#4\end{smallmatrix}\right)}
\theoremstyle{definition}
\theoremstyle{remark}
\newtheorem{remark*}{Remark}
\numberwithin{equation}{subsection}
\numberwithin{table}{subsection}
\newcommand{\ot}{\otimes}
\newcommand{\ts}{\times}
\newcommand{\cd}{\cdot}
\newcommand{\beq}{\begin{equation}\begin{aligned}}
\newcommand{\eeq}{\end{aligned}\end{equation}}
\newcommand{\beqq}{\begin{equation*}\begin{aligned}}
\newcommand{\eeqq}{\end{aligned}\end{equation*}}
\newcommand{\lb}[1]{\label{#1}}
\newcommand{\nek}{Nekov{\'a}{\v{r}}}
\newcommand{\rhobar}{\overline{\rho}}
\newcommand{\one}{\mathbf{1}}
\newcommand{\Q}{\mathbf{Q}}
\newcommand{\GL}{\mathrm{GL}}
\newcommand{\G}{\mathrm{G}}
\renewcommand{\H}{\mathrm{H}}
\newcommand{\X}{\mathscr{X}}
\newcommand{\cE}{\mathscr{E}}
\newcommand{\cH}{\mathscr{H}}
\newcommand{\cV}{\mathscr{V}}
\newcommand{\calP}{\mathscr{P}}
\newcommand{\cP}{\mathscr{P}}
\newcommand{\calL}{\mathscr{L}}
\newcommand{\cW}{\mathscr{W}}
\newcommand{\cF}{\mathscr{F}}
\newcommand{\R}{\mathbf{R}}
\newcommand{\Z}{\mathbf{Z}}
\newcommand{\uw}{\underline{w}}
\newcommand{\ur}{\underline{r}}
\newcommand{\ul}{\underline{l}}
\newcommand{\frakm}{\mathfrak{m}}
\newcommand{\frakp}{\mathfrak{p}}
\newcommand{\ad}{\mathrm{ad}}
\newcommand{\exc}{\mathrm{exc}}
\newcommand{\into}{\hookrightarrow}
\newcommand{\Y}{\mathscr{Y}}
\newcommand{\la}{\langle}
\newcommand{\ra}{\rangle}
\newcommand{\Nm}{\mathrm{N}}
\newcommand{\al}{\alpha}
\newcommand{\tht}{\theta}
\newcommand{\lm}{\lambda}
\newcommand{\Lm}{\Lambda}
\newcommand{\sg}{\sigma}
\newcommand{\Sg}{\Sigma}
\newcommand{\tord}{\gamma_{H'}^{\ord} }
\newcommand{\tordp}{\gamma_{H'}^{\ord} }
\newcommand{\calA}{\mathscr{A}}
\newcommand{\cQ}{\mathscr{Q}}
\newcommand{\cK}{\mathscr{K}}
\newcommand{\calN}{\mathscr{N}}
\newcommand{\cI}{\mathscr{I}}
\newcommand{{\calG}}{\mathscr{G}}
\newcommand{\cR}{\mathscr{R}}
\newcommand{\cM}{\mathscr{M}}
\newcommand{\cD}{\mathscr{D}}
\newcommand{\cU}{\mathscr{U}}
\newcommand{\bC}{\mathbf{C}}
\newcommand{\N}{\mathbf{N}}
\newcommand{\OO}{\mathscr{O}}
\newcommand{\A}{\mathbf{A}}
\newcommand{\bks}{\backslash}
\newcommand{\baar}{\overline}
\newcommand{\eps}{\varepsilon}
\newcommand{\vpi}{\varpi}
\newcommand{\Up}{\mathrm{U}}
\newcommand{\wtil}{\widetilde}
\newcommand{\Res}{\mathrm{Res}}
\newcommand{\B}{\mathbf{B}}
\newcommand{\ord}{\mathrm{ord}} 
\newcommand{\cl}{\mathrm{cl}} 
\newcommand{\vol}{\mathrm{vol}}
\newcommand{\Tr}{\mathrm{Tr}}
\newcommand{\Gal}{\mathrm{Gal}}
\newcommand{\Ker}{\mathrm{Ker}\,}
\newcommand{\Hom}{\mathrm{Hom}\,}
\newcommand{\End}{\mathrm{End}\,}
\newcommand{\llb}{\llbracket}
\newcommand{\rrb}{\rrbracket}
\newcommand{\Spec}{\mathrm{Spec}\,}
\newcommand{\id}{\mathrm{id}}
\newsavebox\tempbox
\let\svwidetilde\widetilde
\renewcommand\widetilde[1]{\sbox\tempbox{$#1$}\svwidetilde{\usebox{\tempbox}}}
   \def\XXint#1#2#3{{\setbox0=\hbox{$#1{#2#3}{\int}$}
        \vcenter{\hbox{$#2#3$}}\kern-.5\wd0}}
\title{The universal $p$-adic Gross--Zagier formula}
\author{Daniel Disegni} 
\address{Department of Mathematics, Ben-Gurion University of the Negev, Be'er Sheva 84105, Israel}
\email{disegni@bgu.ac.il}
\begin{document}

\begin{abstract} Let $\G $ be the group $ (\GL_{2}\ts {\rm GU}(1))/\GL_{1}$ over a totally real field $F$, and let $\X$ be a Hida family for~$ \G$.
Revisiting a construction of Howard and Fouquet, we construct an explicit section $\cP$ of a sheaf of Selmer groups  over $\X$. We show, answering a question of Howard, that $\cP$ is a universal Heegner class, in the sense that it interpolates geometrically defined Heegner classes at all the relevant classical points of $\X$. We also propose a `Bertolini--Darmon' conjecture for the leading term of $\cP$ at classical points.

We then prove that the  $p$-adic height of $\cP$ is given by the cyclotomic derivative of a $p$-adic $L$-function. This formula over $\X$ (which is an identity of functionals on  some universal ordinary  automorphic representations)     specialises at classical points to all the  Gross--Zagier formulas for $\G$ that may be expected from representation-theoretic considerations.

 Combined with a result of Fouquet, the formula implies the $p$-adic analogue of the Beilinson--Bloch--Kato conjecture in analytic rank one, for the selfdual motives attached to Hilbert modular forms and their twists by CM Hecke characters. It also implies one half of the  first example of a non-abelian Iwasawa main conjecture for derivatives, in $2[F:\Q]$ variables. Other applications include two different generic non-vanishing results for Heegner classes and  $p$-adic heights.
\end{abstract} 
\thanks{This work was supported by ISF grant 1963/20 and BSF grant 2018250. During the preparation of a first draft of this paper, the author was supported by a public grant  of the Fondation Math\'ematique Jacques Hadamard.}

\maketitle
\tableofcontents

\section{Introduction and statements of the main results}

A beautiful construction of Heegner and Birch, based on the modularity of elliptic curves and the theory of complex multiplication,  attaches to an elliptic curve $A/\Q$ and an imaginary quadratic field $E$ a point $P\in A(E)$. The work of Gross--Zagier \cite{GZ} related the height of $P$
   to   the derivative of the  $L$-function $L'(A_{E}, 1)$, with striking applications to the Birch and Swinnerton-Dyer conjecture. An analogous result in $p$-adic coefficients was proved  by Perrin-Riou \cite{pr} soon thereafter, if $A$ has good ordinary reduction  at the prime~$p$.   
   
   \medskip

The decade following those works saw a pair of 
   similar results, by \nek\ \cite{nek-heeg} and Zhang \cite{zhang-hcycles},  relating   Heegner cycles on Kuga--Sato varieties to ($p$-adic) $L$-functions of higher-weight modular forms.  
We may single out two  major innovations in the approach to Heegner points and Gross--Zagier formulas since then,\footnote{Two other recent  ideas  that our work does not touch upon are nevertheless too important to be ignored: the conjecture of Darmon and Guitart--Madseu--{\c Seng\"un}  that there should exist Heegner points attached to any quadratic extension of number fields (see \cite{darmon-hhp}, \cite{gms}), and the 
formulas for the $p$-adic \emph{logarithms} of Heegner points of  \cite{bdp, LZZ}.} both answering the question of what `other' Heegner points there are and how they fit together.

 The first one starts from the observation by Mazur \cite{mazur-icm} and Perrin-Riou \cite{PR2} that Heegner points should vary $p$-adically in anticyclotomic families, in the same way that the $L$-function of the elliptic curve $A_{E}$ does; this observation inspired  Howard \cite{howard} 
to prove a generalisation to such families of Perrin-Riou's formula. Howard later significantly expanded the scope of Mazur and Perrin-Riou's  idea by proving that the Kummer classes of Heegner points  also vary in Hida families of modular forms \cite{howbig}; the question of the relation of the resulting `big' classes to Heegner cycles was left open.

The second innovation was the observation  by Gross \cite{gross-msri} that Heegner points can be viewed as elements of spaces of $\H'$-invariant linear functionals on an  automorphic representation of $(\G\ts\H)'$ (these reductive groups will be defined below),\footnote{N.B.: the notation $\G$ used in the informal abstract differs from the notation of the paper.}
 so that the tools of representation theory may be brought in to conceive and prove more  general formulas: a programme whose main achievement, in complex coefficients,  is the work of Yuan--Zhang--Zhang \cite{yzz} on Heegner points on Shimura curves. 

\medskip

In this  work, we combine those two approaches. We construct  Heegner classes for the Galois representation over a Hida family for $(\G\ts\H)'$, show that they specialise to  (cohomological) Heegner cycles at all classical points, and prove a formula for their $p$-adic heights that is universal in the sense that it specialises to all the $p$-adic formulas suggested by the framework of Gross. (The analogous complex Gross--Zagier formulas  are not currently known\footnote{See however the very recent \cite{Qiu}. (Note added during revision.)}  for motives of higher weight.)
 We obtain various applications to the arithmetic of motives attached to Hilbert modular forms.

\medskip

In the rest of this first section we state our main theorems, and complete the discussion of their history.

We begin in \S~\ref{sec: BK} by  presenting  the results concerning the $p$-adic  {Be\u\i linson}--Bloch--Kato conjecture   
(Theorem \ref{BK thm}); they are applications of  the  general $p$-adic Gross--Zagier formula for a fixed representation, stated as 
 Theorem \ref{GZ thm} in \S~\ref{sec: GZ}.

In \S~\ref{sec: uH} we outline the construction and properties of the universal  family of Heegner classes (Theorem \ref{theo heeg univ}), referring to the ``Bertolini--Darmon'' conjecture of \S~\ref{sec bd conj} for a further study of its classical specialisations.  In \S~\ref{sec: ugz}  we state the universal  formula  of the title (Theorem \ref{ugz thm}); a complementary `Waldspurger' analogue will be proved in \S~\ref{sec: wald} (Theorem \ref{wald univ}).
 
  Finally, in \S~\ref{sec: appl} we discuss some further applications:  the first non-abelian example of an Iwasawa main conjecture for  derivatives of $p$-adic $L$-functions (Theorem \ref{iw}); and two results on the generic non-vanishing of $p$-adic heights and Heegner cycles: one for CM motives (Theorem \ref{nv CM}), the other for Hida families containing a rank-$0$ elliptic curve with split multiplicative reduction (Theorem \ref{nv exc}).
   A further application, to a criterion for certain Bloch--Kato Selmer groups to be of rank \emph{zero}, will appear separately.

\subsection{The $p$-adic {Be\u\i linson}--Bloch--Kato conjecture in analytic rank~$1$} \label{sec: BK}
 The primary  motivation for our work comes from  the generalisations of  the Birch and Swinnerton-Dyer (BSD) conjecture and its $p$-adic analogue, as proposed by {Be\u\i linson}, Bloch--Kato, and Perrin-Riou \cite{bei, bk, PRbook}. Recall that if $A/\Q$ is an elliptic curve, (BSD) is equivalent to the following statements. Denote by   $r_{\rm an}$ and $L^{*}(A, 1)$ the order of vanishing and leading term  of $L(A, s)$ at $s=1$. Then 
  $L^{*}(A,1) >0$ and for every prime  $p$,
  \begin{enumerate}[label=(\alph*)]
  \item
 the Selmer group ${\rm Sel}(V_{p}A):=(\varprojlim_{n} {\rm Sel}_{p^{n}}(A)) \otimes_{\Z_{p}} \Q_{p}$ has dimension equal to $r_{\rm an}$;
\item  the divisible part  of $\Sha(A)[p^{\infty}]$ vanishes;
\item the $p$-adic valuations of $L^{*}(A, 1)/\Omega_{A} R_{A}$ and $|\Sha(A)[p^{\infty}]|\prod_{v\nmid \infty}c_{v}(A)$ are equal.
  \end{enumerate}

\subsubsection{Selmer groups according to Bloch--Kato and \nek} If $E$ is a number field and $V$ is a geometric $p$-adic  representation of its Galois group $G_{E}$, Bloch  and Kato \cite{bk} have proposed an analogue 
$$H^{1}_{f}(E,V)$$ of the Selmer group of $A$; it is an $L$-vector-subspace (where $L$ is the field of scalars for $V$) of the first Galois cohomology group of $V$, consisting of those classes satisfying certain local conditions. According to the resulting variant of the  conjecture of  {Be\u\i linson} \cite{bei}, the dimension  
$\dim_{L} H^{1}_{f}(E, V)$ should equal   the    order of vanishing of the $L$-function $L(V^{*}(1), s)$ at $s=0$.\footnote{Provided $V$ contains no copies of the trivial representation. Of course in general the   meromorphic continuation of $L(V,s)$ is itself conjectural. Note that when $V$ is self-dual, or $E$ is a CM field and $V$ is   conjugate-self-dual, we have $L(V,s)=L(V^{*}(1), s)$.}  

Another definition of Selmer groups   was proposed by Greenberg when $V$ satisfies an ordinariness condition at the places above  a prime $p$; specialised to the cases of interest to us,  it recovers the Bloch--Kato Selmer groups. \nek\ observed that a variation of Greenberg's definition works well in $p$-adic families, and developed this observation into the theory of Selmer complexes \cite{nek-selmer},  that provides the foundation for the  present work (\S~\ref{big sec: sel}). For nice $p$-adic families of $G_{E}$-representations, the theory allows to define groups 
$$\wtil{H}^{i}_{f}(E, V).$$
 for all $i$.

\subsubsection{The $p$-adic  {Be\u\i linson}--Bloch--Kato conjecture for Hilbert modular forms}  
Our main arithmetic results concern the $p$-adic analogue
of the {Be\u\i linson}--Bloch--Kato conjecture   for the Galois representations attached to Hilbert modular forms and their twists by Hecke characters of CM fields.

 Fix throughout the rest of this paper a rational prime  $p$. Let $F$ be a totally real field,  let $E$ be  a CM quadratic extension of $F$, and  let
$$\G_{0}:= \Res_{F/\Q}\GL_{2}, \quad \H:= \Res_{E/\Q}\G_{m}.$$
  Let $L$ be a finite extension of $\Q_{p}$ splitting $F$.
A pair of cohomological weights for $\G_{0}$ and $\H$ is a pair of tuples $\uw:=(w; (w_{\sigma})_{\sigma\colon F\into L})$, $\ul=(l;(l_{\sigma})_{\sigma\colon F\into L} )$, each consisting of   $[F:\Q]+1$ integers  of the same parity, such that $w_{\sigma}\geq 2$ for all $\sigma\colon F\into L$.  In this paper we will only consider cohomological weights and therefore omit the adjective `cohomological'. By a ``Hilbert modular form over $L$ of weight $\uw$'' (respectively a  ``Hecke character of $E$ over $L$ of weight $\ul$'') we mean a cuspidal automorphic  representation of $\G_{0}(\A)$ (respectively $\H(\A)$) over $L$ of weight $\uw$ (respectively weight $\ul$) as defined in Definition \ref{aut-def} below.

If $\pi_{0}$ is a Hilbert modular form and $\chi$  a Hecke character over $L$, we denote by $\Pi_{0}=\pi_{0}\otimes \chi$ the associated representation of $\G_{0}\times \H$. We denote by  $V_{\pi_{0}}$ and $V_{\chi}$ 
the corresponding $2$- (respectively $1$-) dimensional representations of $G_{F}$ (respectively $G_{E}$), normalised so that $L(V_{\pi_{0}}, s)= L(s+1/2, {\pi_{0}})$,  and we let 
$$V:= V_{\Pi_{0}}:=V_{\pi_{0}|G_{E}}\otimes V_{\chi}.$$
Let  $\omega_{\pi_{0}}$ be the central character of $\pi_{0}$ and let $\omega_{\chi}:=\chi|_{F_{\A^{\infty}}^{\times}}$. If $\omega_{\pi_{0}}\omega_{\chi}=1$, then $V$  is  conjugate-self-dual  and pure of weight $-1$,  and  the  epsilon factor $\eps(V)\in  \{\pm 1\}$.

Let $\Gamma_{F}:= F_{\A^{\infty}}^{\times}/F^{\times}\hat{\OO}^{p, \ts}_{F}$ (identified with the Galois group of the maximal abelian  extension of $F$ unramified outside $p$ by class field theory), and  let 
$$\cE_{{\rm Z}/L}:=\Spec(\Z_{p}\llb \Gamma_{F}\rrb_{L}).$$ 
(We will also simply write $\cE_{\rm Z}$ for $\cE_{\rm Z/\Q_{p}}$.)
 Suppose that $ \pi_{0}$ is \emph{ordinary}  in the sense of Definition \ref{ord-def}; equivalently, for all $v\vert p$ the associated $G_{F_{v}}$-representation $V_{\pi_{0}, v}$ reduces nontrivially as
$$0\to V_{\pi_{0}, v}^{+}\to V_{\pi_{0, v}}\to V_{\pi_{0}, v}^{-}\to 0,$$
and $G_{F_{v}}$ acts on $V_{\pi_{0},v}^{+}$ by the product of the cyclotomic character $\chi_{\rm cyc}$ and a character $\alpha_{v}^{\circ}$ valued in $p$-adic units.
We may attach to $V$ a meromorphic $p$-adic $L$-function
$$  \calL_{p}(V_{(\pi_{0}, \chi)}, s) \qquad \in \cK(\cE_{{\rm Z}/L})$$
where the variable  $s\in \cE_{{\rm Z}/L}$ may be thought of as a $p$-adic character of $\Gamma_{F}$; we use the synonym  $\chi_{F, s}$ when we want to emphasise such nature of $s$, and we denote by ``$s=0$'' the trivial character $\chi_{F, 0}=\one$.\footnote{Other authors consider $p$-adic $L$-functions of a variable $s'\in\Z_{p}$. In our language this corresponds to restricting $\calL_{p}(V, s)$ along the embedding $\Z_{p}=\Spec \Z_{p}\llb \Z_{p}\rrb_{\Q_{p}}(\Q_{p})\to \cE_{{\rm Z}/L}(\Q_{p})$,  $s'\mapsto \chi_{{\rm cyc}, F}^{s'}$ where $\chi_{{\rm cyc}, F}=\eqref{chi cyc}$ is the cyclotomic character of $F$.}
More precisely, working in terms of the multivariable  function $\calL_{p}(\cV^{\sharp})$ of Theorem \ref{conj Lp} below,   we may define  $\calL_{p}(V_{(\pi, \chi)})$ as the restriction
 \beq \lb{def Lp}
 \calL_{p}(V_{(\pi, \chi)}, s):= \calL_{p}(\cV^{\sharp})(z_{s})
 \eeq
  where  $z_{s}$ corresponds to the family of representations $V_{\pi|G_{E}}\ot\chi \chi_{F, s|G_{E}}$.
   
  If $\eps(V)=-1$, then $\calL_{p}(V_{(\pi_{0}, \chi)}, 0)=0 $ and we denote  by $\calL_{p}'(V_{(\pi_{0}, \chi)}, 0)={\rm d}\calL_{p}((V_{(\pi_{0}, \chi)})(0) \in T_{0}\cE_{{\rm Z}/L}= \Gamma_{F}\hat{\ot}L$  its first derivative.

\begin{theoA}\label{BK thm} Let $\pi_{0}$ be a Hilbert modular form over $L$ of weight $\uw$, and let $\chi$ be a Hecke character of  $E$ over $L$ of weight $\ul$.
Let $V:=V_{\pi_{0}|G_{E}}\otimes V_{\chi}$ .   Suppose that:
\begin{itemize}
\item[$(\textup{wt})$]  $  |  l_{\sg}|<w_{\sg}$ {for all $\sg\colon F\into L$};
\item[$(\textup{sd})$] 
$\omega_{\pi_{0}}\omega_{\chi}=1$ (which implies $w+l=0$);
 \item[$(\eps)$]  $\eps(V)=-1$;
\item[$(\textup{ord})$] $\pi_{0}$ is ordinary;
\item[$(\textup{n-exc})$] $V$ is \emph{not exceptional}: for no place $w\vert v\vert p$  of $E$ is $V_{w}^{-}:=V^{-}_{\pi_{0}, v|G_{E_{w}}}\ot \chi_{w}$ the trivial representation. \end{itemize}
\begin{enumerate}
\item\lb{dimgeq1}
We have
$$\calL_{p}'(V_{(\pi, \chi)}, 0)\neq 0 \   \Longrightarrow  \ \dim_{L} \wtil{H}^{1}_{f}(E, V)\geq 1,$$
and we can exhibit an explicit  nonzero element of $\wtil{H}^{1}_{f}(E, V)  = {H}^{1}_{f}(E, V)$, whose $p$-adic height (cf. Proposition \ref{prop ht})  is also non-zero.
\item \lb{2fou}
Let $T\subset V$ be a stable lattice. If $\calL_{p}'(V_{(\pi, \chi)}, 0)\neq 0$ and moreover the conditions of \cite[Theorem B.(i)]{fouquet} 
 are satisfied, then:
  \begin{enumerate}
\item \lb{2foua}
 we have
\beqq
 \dim_{L} \wtil{H}^{1}_{f}(E, V)=1;
\eeqq
\item 
let $R_{T} \in \OO_{L}\hat{\ot}_{\Z_{p}}\Gamma_{F} $ be the regulator of the height pairing \eqref{ht V}  on $\wtil{H}^{1}_{f}(E, T)\ts\wtil{H}^{1}_{f}(E, T^{*}(1))$. Then 
\beqq
\calL_{p}'(V_{(\pi, \chi)}, 0) \succeq_{\Z_{p}} R_{T}\cdot | \wtil{H}^{2}_{f}(E, T)_{\rm tors}|
\eeqq
in $L\hat{\ot}\Gamma_{F} $.
\end{enumerate}
\end{enumerate}
\end{theoA}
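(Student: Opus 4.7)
I would deduce Theorem \ref{BK thm} formally from two other results of the paper: the $p$-adic Gross--Zagier formula (Theorem \ref{GZ thm}) for part (1), and the combination of that formula with Fouquet's Kolyvagin-type bound for part (2). The explicit class in part (1) is the Heegner class $P = P(\pi_0,\chi) \in \wtil{H}^1_f(E,V)$, obtained as the classical specialisation at $(\pi_0,\chi)$ of the universal section $\cP$ of Theorem \ref{theo heeg univ}.

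For part (1), specialising Theorem \ref{GZ thm} at $(\pi_0,\chi)$ yields an identity of the shape
$$\la P, P\ra \ =\ c\cdot \calL_{p}'(V_{(\pi_0,\chi)}, 0) \qquad \text{in } L\hat\ot \Gamma_F,$$
where $\la\cdot,\cdot\ra$ is \nek's $p$-adic height pairing on $\wtil{H}^1_f(E,V)$ and $c\in L^\times$ is an explicit constant composed of local zeta integrals and archimedean comparison factors. The running hypotheses are designed so that $c\neq 0$: (wt) makes the archimedean test integrals nonvanishing, (sd) and $(\eps)$ put us in the sign-of-functional-equation regime where the leading term is a first derivative, and (n-exc) prevents a trivial-zero phenomenon from creating an extra vanishing in the comparison constant. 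Consequently $\calL_{p}'(V_{(\pi_0,\chi)},0)\neq 0$ forces $\la P,P\ra\neq 0$, and in particular $P\neq 0$ in $\wtil{H}^1_f(E,V)$. The identification $\wtil{H}^1_f(E,V)=H^1_f(E,V)$ in the ordinary, non-exceptional setting is a standard comparison between Greenberg-style Selmer complexes and Bloch--Kato Selmer groups (cf.\ \S\ref{big sec: sel}), the condition (n-exc) being precisely what makes the relevant local $H^{0}$'s at places above $p$ vanish.

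For part (2), the additional hypotheses permit the invocation of the Euler-system / Kolyvagin argument of \cite[Theorem B(i)]{fouquet} for the big Heegner class in the Hida family, specialised at $(\pi_0,\chi)$. Fouquet's theorem yields the upper bound $\dim_L \wtil{H}^1_f(E,V)\leq 1$, matching the lower bound of (1) and proving (a); it also delivers an estimate on the $\OO_L$-index of $P$ in $\wtil{H}^1_f(E,T)/{\rm tors}$ controlled by $|\wtil{H}^2_f(E,T)_{\rm tors}|$. Feeding this index estimate into the identity of (1), which expresses $\calL_{p}'$ as the self-pairing of $P$ up to a unit, converts the square of the index into the regulator $R_T$ of $\wtil{H}^1_f(E,T)\times \wtil{H}^1_f(E,T^*(1))$ (using the duality between the two Selmer groups under the self-dual hypothesis (sd)) and produces the claimed divisibility $\calL_{p}'(V_{(\pi,\chi)},0)\succeq_{\Z_p} R_T\cdot |\wtil{H}^2_f(E,T)_{\rm tors}|$.

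The main obstacle in this reduction is organisational rather than conceptual: one must match the classical specialisation of the universal Heegner section $\cP$ with the big Heegner class used by Fouquet, verify that the Euler-system hypotheses of \cite[Thm.~B(i)]{fouquet} are inherited from the running assumptions on $\Pi_0=\pi_0\ot\chi$, and trace the explicit constant $c$ in the specialised Gross--Zagier formula to confirm its non-vanishing under exactly the listed conditions (wt), (sd), $(\eps)$, (ord), (n-exc). The substantive difficulty---the proof of Theorem \ref{ugz thm}, of which Theorem \ref{GZ thm} is a specialisation---lies outside this deduction and constitutes the analytic and geometric core of the paper.
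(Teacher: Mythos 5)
Your proposal matches the paper's deduction. Part~(1) is exactly the specialisation of Theorem~\ref{GZ thm} combined with the non-vanishing of the local terms $Q$ under $(\eps_v)$ and the identification $\wtil{H}^1_f(E,V)=H^1_f(E,V)$ under (n-exc); Part~(2) is the invocation of Fouquet's Euler-system bound fed through the height formula from Part~(1). The only ingredient you elide is the explicit Jacquet--Langlands step: one must first note (via the observation following \eqref{eps v}) that under $(\eps)$, (sd), (wt), (ord), (n-exc) there exists a unique incoherent totally definite $\B$ satisfying all the local conditions $(\eps_v)$, and transfer $\pi_0$ to $\pi$ on $\G(\A)$ before applying Theorem~\ref{GZ thm}; your description of the Heegner class as a specialisation of $\cP$ implicitly presupposes this, and for Part~(1) the direct Heegner cycle class $P_\Pi(f_1)$ (without universal interpolation) already suffices.
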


In the last formula we have used the following suggestive notation.
\paragraph{Notation} For a domain $A$ with fraction field $K$  and two $A$-submodules $m_{1}, m_{2}$ of a $K$-vector space $M$   we write $m_{1}\succeq_{A} m_{2}$ if  $m_{1}\subseteq m_{2}$; the notation is extended to the case where some $m_{i}$ is an element of $M$, in which case we interpret it as $Am_{i}$.

 Part 1    will be an immediate consequence of Theorem \ref{GZ thm},  the Jacquet--Langlands correspondence, and the observation following \eqref{eps v} below.
For a list of previous results in the direction of part 1 we refer to the discussion following Theorem~\ref{GZ thm}. Let us  note, for now, that an analogue of this result in complex coefficients is not known.

Part 2 follows from invoking the results of Fouquet in \cite{fouquet}, that generalise the bounds on Tate--Shafarevich groups of elliptic curves obtained by Kolyvagin using the methods of Euler systems.

\begin{rema} Condition $(\textup{n-exc})$ guarantees that $\calL_{p}(V_{(\pi, \chi)}, s)$ has no exceptional zeros at $s=0$, and it is equivalent to the identity $\wtil{H}^{1}_{f}(E, V)= {H}^{1}_{f}(E, V)$.  We will also  equivalently say that $\Pi$ is not exceptional.  For a characterisation of this condition, see Lemma \ref{6444}.
\end{rema} \begin{rema} 
 In the simplest case  where $F=\Q$, $\pi_{0}$ is a modular form with rational Fourier coefficients of weight $w_{\sg}=2$, and   $\chi=\one$,
 the representation $V_{\pi_{0}}=V_{\frakp}A$ is  the rational $p$-adic  Tate module of an elliptic curve $A/\Q$. In this case ${H}^{1}_{f}(E, V)={\rm Sel}(V_{p}A_{E})$,
and letting $T=T_{p}A_{E}$, the group  $\wtil{H}^{2}_{f}(E, T)_{\rm tors}$ equals (\cite[(1.36)]{bu-fl}) the quotient of $\Sha(A_{E})[p^{\infty}]$ by its divisible submodule $\Sha(A_{E})_{p\textup{-div}}$. 

The group $\Sha(A_{E})_{p\textup{-div}}$, conjecturally $0$,  measures the failure of ${\rm Sel}(V_{p}A_{E})$ to be generated by the classes of points in $A(E)$. We do not address in this paper the analogous conjecture from \cite{bk} that ${H}^{1}_{f}(E, V)$ should be generated by the classes of algebraic cycles.
Nevertheless our construction of a generator is sufficiently geometric to provide a good  starting point to establish this conjecture, cf. Remark \ref{rem-P in H1f}.
\end{rema}

\subsubsection{A variant for selfdual Hilbert modular forms} Suppose that $\pi_{0}$ is an ordinary  Hilbert modular form,  $\omega_{\pi_{0}}=1$ (so that $w=0$), and $\eps(V_{\pi_{0}})=-1$. Assume that either $[F:\Q]$ is odd or there is a place $v\nmid p\infty$ of $F$ such that $\pi_{0,v}$ is not a principal series. Suppose that for no $v\vert p$ is $\pi_{0,v}$ the Steinberg representation. Let $L_{p}(V_{\pi_{0}}, s)$ be the $p$-adic $L$-function of $V_{\pi_{0}}$ constructed in \cite{dimitrov}.
 If   $L'_{p}(V_{\pi_{0}},0)\neq 0$, then  the conclusions \eqref{dimgeq1} and  \eqref{2foua} of the previous theorem hold with $(E, V)$ replaced by $(F, V_{\pi_{0}})$. (This is proved by a standard argument based on the choice of a suitable auxiliary $E$ to reduce to the previous theorem.) A similar remark (at least for part  \eqref{dimgeq1}) applies when $\pi_{0}$ has CM by $E$, cf. the proof of Theorem \ref{nv CM} in \S~\ref{sec: nvCM}

\subsubsection{Addendum to the historical overview: higher-rank cases}
The general  overview sketched in our opening page ignored a third important theme:
 Gross's framework has been generalised in \cite{ggp} to study  special cycles attached to other pairs of groups $(\G, \H)$. Several works have explored the  consequences towards the Be\u\i linson--Bloch--Kato conjecture of the possible non-vanishing of those cycles, most notably \cite{five}. On the other hand, non-vanishing \emph{criteria} in terms of  $L$-functions have been obtained in a considerably more limited set of cases, mostly related to  triple-product $L$-functions  \cite{YZZ3, xue, dr, bsv, bcf}.\footnote{In a related context,  see also  the very recent breakthrough of Li--Liu \cite{LL}. (Note added during revision.)}  The relation with cyclotomic $p$-adic $L$-functions has not been studied beyond Heegner cycles.

\subsection{The $p$-adic Gross--Zagier formula for arbitrary weight} \label{sec: GZ}
Theorem \ref{BK thm}, like analogous previous results \cite{pr, nek-heeg, dd-ant, shnidman, dd-pyzz, nonsplit}, is an application of an explicit formula for the $p$-adic heights of a certain Selmer class (here rather a collection of classes). When the weights are \emph{trivial}, that is $\uw=(0; (2,\ldots, 2))$ and $\ul=(0; (0, \ldots, 0))$, this is the class of a \emph{Heegner} $0$-cycle coming from   CM points on quaternionic   Shimura curves; this is the case studied in \cite{dd-pyzz, nonsplit}, and earlier  in complex coefficients by Yuan--Zhang--Zhang \cite{yzz}.
 In general, it is the class of a   $0$-cycle supported at CM points,  with coefficients in a local system corresponding to the weight of the representation. The specific choice of the (tower of) Shimura curves is dictated by the local root numbers of $V$, see the discussion preceding Definition \ref{loc dist}.

\subsubsection{Algebraic groups and Shimura varieties} 
Let $\B$ be a quaternion algebra over $F_{\A}$ (where $\A$ denotes the ad\`eles of $\Q$) with ramification set $\Sigma\sqcup \{v\vert \infty\} $ satisfying $|\Sigma| \equiv [F:\Q] -1\pmod{2}$.  Then $\G(\A):= \B^{\ts}$ is   \emph{not} the points of an algebraic group `$\G$' over $\Q$, but we will still find convenient to   use this  suggestive notation and   refer to  $\G$ as an \emph{incoherent} algebraic group over $\Q$ (see \S~\ref{ssec incoh} for a more formal treatment).
 Let $\H=\Res_{E/\Q}{\bf G}_{m}$ as above, and let  ${\rm Z}:= \Res_{F/\Q}{\bf G}_{m}$, that admits natural central embeddings in $\G$ and $\H$.

The list of (coherent or incoherent) groups of interest in this paper, often denoted collectively by $\G_{*}$, is
\beq \label{list}
\G, \qquad  \H, \qquad \G\times \H, \qquad (\G\times \H)':= (\G\times \H)/{\rm Z}, \qquad\H':=\H/{\rm Z},
\eeq
where ${\rm Z}$ is embedded diagonally in the product group.\footnote{In fact, the (incoherent) group that truly underlies our constructions is $(\G\ts_{\rm Z}\H)' = \{(g, h)\, | \, \nu_{\G}(g)=\nu_{\H}(h)\}$ (where $\nu_{?}\colon ?\to {\rm Z}$  arises from  the reduced norm map of $\B$ (for $?=\G$) or from the norm of $E/F$ (for $?=\H$). That is, the universal Heegner class and the other associated objects described below descend to the ordinary eigenvariety for $(\G\ts_{\rm Z}\H)'$ (a quotient of the one for $(\G\ts\H)'$). Nevertheless, for the sake of simplicity we will content ourselves with working with $(\G\ts\H)'$.} 
 We suppose that for every $v\in \Sg$, $E_{v}/F_{v}$ is nonsplit. Then  there is unique $\B^{\times}$-conjugacy class of $F_{\A}$-embeddings $E_{\A}\into \B$, of which we fix one. It induces an embedding ${\rm e}\colon \H\into \G$.

To the above groups and suitable Shimura data (\S~\ref{ssec shi}), we associate
corresponding towers of   compactified Shimura varieties $X_{*}$, respectively denoted 
\beq \lb{shlist}X_{/F}, \qquad Y_{/E}, \qquad X\times_{F} Y_{/E}, \qquad Z_{/E}, \qquad Y'_{/E}.
\eeq
  They are curves except for $Y$, $Y'$ that have dimension~$0$. The embedding ${\rm e}$ induces a diagonal embedding $\H'\into (\G\times \H)'$, hence a morphism of Shimura varieties 
$${\rm e}' \colon  Y' \to Z.$$

\subsubsection{$p$-adic automorphic representations}
It is  more natural to parametrise ``cohomological automorphic representations over a $p$-adic field $L$'' of a group $\G_{*}$ by irreducible  algebraic representations $W$  of $\G_{*}$.\footnote{See Definition \ref{aut-def}: the  $W/L$ of interest to us are in bijection with (finite) $G_{L}$-orbits of cohomological `numerical'  weights as defined above. From now on all numerical or representation-theoretic weights will be tacitly understood to be cohomological.} 

Let $G_{*, \infty}$ be $\G_{*}(\Q_{p})$ with the Zariski topology (and for later purposes let $G_{*, p}:=\G_{*}(\Q_{p})$ with the $p$-adic topology, $G_{*}:=G_{* , p} \ts G_{*,\infty}$).  We \emph{redefine} throughout this work
$$\G_{*}(\A):= \G_{*}(\A^{\infty})\ts G_{*, \infty}.$$
 Let $W$ be an (algebraic) representation of $G_{*, \infty}$ over $L$, and let $\cW $ be the corresponding  \'etale local system on the tower $X_{*}$. Then we define  a (cuspidal, cohomological)  \emph{automorphic representation of $\G_{*}(\A)$ over $L$ of weight $W$} to be a representation  $$\Pi=\Pi^{\infty}\ot W$$
of $\G_{*}(\A)$  occurring in $H^{\bullet}(X_{*, \baar{E}}, \cW^{\vee})\ot W$.\footnote{This approach is inspired by the work of Emerton \cite{emerton}.}
  (Here and in the rest of the paper, groups and Hecke algebras act on Shimura varieties and their homology on the \emph{right}, on cohomology and  on automorphic forms on the \emph{left}. Left and right algebraic representations $W$ are identified via $w.g:=g^{-1}.w$.)

\subsubsection{Automorphic and Galois representations}
Let $\Pi=\pi\otimes \chi$ be a cuspidal automorphic representation of $(\G\times \H)'(\A^{})$ over $L$ of weight $W=W_{\G}\ot W_{\H}$.
    Let $V=V_{\Pi}=V_{\pi|G_{E}}\otimes V_{\chi}$ be the associated $G_{E}$-representation.

For a smooth proper variety $Z'$ of dimension $d$ over a characteristic-zero field $F'$ and a $p$-adic local system $\cW'$, define $$H_{i}(Z', \cW') := H^{2d-i}_{\text{\'et}}(Z', \cW'(d))$$
 for all $0\leq i\leq 2d$.       For each level $K\subset (\G\ts\H)'(\A^{\infty}) $, let  $\baar{Z}_{K}:=Z_{K}\ts_{\Spec E}\Spec{\baar E}$.  
 We use the notation
$$\H_{i}(\baar{Z}_{K}, \cW):=H_{i}(\baar{Z}_{K}, \cW)\ot W^{\vee}$$
and similarly for the other Shimura varieties over $F$, $E$, $\baar{E}$ under consideration.
Thanks to work of Carayol we can construct an injection (an isomorphism unless $V$ is decomposable) of $(\G\ts\H)'(\A^{})$-representations
\beq
\label{cara Z}
\Pi \into \varinjlim_{K} \Hom_{L[G_{E}]}(\H_{1}(\baar{Z}_{K}, \cW), V_{\Pi}) .
\eeq

\subsubsection{Heegner cycles}   
 Suppose that $W$ satisfies (wt), then $W^{H'_{\infty}}\cong W_{H'_{\infty}}$ is $1$-dimensional, and ${\rm e'}$ induces   a canonical system of maps  
 $$\H_{0}(Y'_{V'}, L)\to \H_{0}(Z_{K}, \cW)$$
  for all $V'\subset \H'(\A^{\infty})\cap K$. The  image $\Delta^{\circ}_{W, f_{\infty}}\in  \H_{0}(Z_{K}, \cW)$  of the normalised fundamental class 
$$[Y'_{V'}]= |Y'(\baar{E})|^{-1}\cdot \sum_{y\in Y'(\baar{E})} [y] \quad \in \varprojlim_{V'} \H_{0}(Y'_{V}, L)$$
is well-defined and (after a modification if $W$ is trivial) belongs to the kernel  $\H_{0}(Z_{K}, \cW)_{0}$ of 
$  \H_{0}(Z_{K}, \cW)\to \H_{0}(\baar{Z}_{K}, \cW)$.
 The images of $\Delta^{\circ}_{W, f_{\infty}}$ under the Abel--Jacobi maps ${\rm AJ}\colon \H_{0}(Z_{K}, \cW)_{0} \to H^{1}(E, \H_{1}( \baar{Z}_{K}, \cW)) $ are compatible under pushforward along the tower $Z_{K}$ and invariant under the $\H'(\A^{})$-action,  hence they define an element 
 $$P_{W}:= \lim {\rm AJ}(\Delta_{W, -}^{\circ})\in \varprojlim_{K} H^{1}(E, \H_{1}(\baar{Z}_{K}, \cW)^{H'(\A^{})})$$
Via  \eqref{cara Z},  $P_{W}$ defines  an $\H'(\A)$-invariant functional
\beq \lb{cP Pi 0} 
P_{\Pi}\colon \Pi_{H'_{\infty}}
 \to H^{1}(G_{E}, V_{\Pi}),
  \eeq
whose  image should   lie in $H^{1}_{f}(E, V_{\Pi})\subset H^{1}(E, V_{\Pi})$ (see Remark \ref{rem-P in H1f} for a stronger conjecture). We show in Proposition \ref{in H1f} that this is the case if  $\B_{p}$ is split and $\Pi$ is \emph{ordinary} and not exceptional, which we define to mean that $\B_{p}$  is split and the Jacquet--Langlands transfer $\Pi_{0}$ of $\Pi$  to $\G_{0}\ts \H$ (which  is thus the `identity' at $p$) satisfies those properties. 

Our formula will give a criterion for the nonvanishing of  $P_{\Pi}$.

\subsubsection{Multiplicity one}\lb{sec m1} Representation theory provides a necessary condition.  The space 
$$(\Pi)^{*,\H'(\A)}=\Hom_{\H'(\A)} (\Pi, L)$$
 is known, by a theorem of Waldspurger,  Tunnell, and H. Saito \cite{tunnell, saito-h}, to be nonzero if and  only if 
the following condition is satisfied for all $v$:
\begin{itemize}
\item[($\eps_{v}$)]  Define $ \eps(\B_{v}):=+1 $ (respectively $-1$) if $\B_{v}$ is split (respectively nonsplit).  Let
$\eps(V_{v}):= \prod_{w\vert v}\eps(V_{w})$, $\chi_{v}(-1):=\prod_{w\vert v}\chi_{w}(-1)$; then 
\beq \label{eps v}
\eps_{v}^{\G}(V):= \eps(V_{v})\chi_{v}(-1)\eta_{v}(-1)\eps(\B_{v}) =+1.
\eeq
\end{itemize}
If this is the  case, $(\Pi)^{*,\H'(\A)}$ is $1$-dimensional and moreover the global root number $\eps(V)=-1$.  Conversely, if $V$ is as in Theorem \ref{BK thm} and in particular  satisfies $\eps(V)=-1$, there exists a unique incoherent totally definite quaternion algebra $\B$ verifying $(\eps_{v})$.
 
 The conditions ($\eps_{v}$) for a finite $v$ generalise the classical ``Heegner condition''. For $v\vert p$, if $\pi$ is ordinary the condition $(\eps_{v})$  is satisfied  unless   $v$ is nonsplit in $E$ and $\pi$ is  exceptional at $v$ (Lemma \ref{6444}). 
The condition ($\eps_{\infty}$)   is equivalent to (wt). 
 \begin{defi}\lb{loc dist} 
 We say that $\Pi$ is \emph{locally distinguished by $\H'$}, or simply \emph{locally distinguished}, if it satisfies conditions $(\eps_{v}) $ for all $v$.
 \end{defi}

\subsubsection{Local toric periods}\lb{ssec loc tp} Assume that $\Pi$ is locally distinguished, and let $\Pi^{\vee}$ denote the contragredient representation of $\Pi$.  Then we know an explicit a generator of 
\beq\label{pi*h}  
\Pi^{*,\H'(\A)}\ot (\Pi^{\vee})^{*,\H'(\A)}
\eeq
as a product of local pairings, which we now define. 
The  pair $P_{\Pi}\ot P_{\Pi^{\vee}}$ will be measured against this generator.

For $v$  a finite  place of $F$, let $\Pi_{v}$ be the local component of $\Pi$, a representation of $(\B_{v}^{\ts}\ts E^{\ts}_{v})/F_{v}^{\ts}\supset H'_{v}:= E_{v}^{\ts}/F_{v}^{\ts}$; let   $dt_{v}$ be a Haar measure on $H_{v}'$.
For $v=\infty$, let $\Pi_{\infty}=W$ and let $dt_{\infty}$ be a formal symbol  synonymous with a constant $\vol(H'_{\infty}, dt_{\infty})\in L$. In all cases, let $\Pi_{v}^{*, H'_{v}}:=\Hom_{H_{v}'}(\Pi_{v}, L)$ and let $(\ , \ )_{v}$ be an invariant pairing on $\Pi_{v}\ot \Pi_{v}^{\vee}$. 

Let $V_{v}$ (respectively $V_{\pi, v}$) be the  restriction to $G_{E_{v}}:=\prod_{w\vert v}G_{E,w}$ (respectively $G_{F_{v}}$) of the Galois representation 
 associated with $\Pi$ (respectively  $\pi$)
   if $v$ is finite, and the Hodge structure associated with $W$ (reps. $W_{\G}$) if $v=\infty$. Let us also introduce the convenient notation 
$$\text{``$V_{(\pi, \chi), v}:= (V_{\pi, v}\ot{\rm Ind}_{F_{v}}^{E_{v}}\chi_{v})\ominus \ad(V_{\pi})(1)$''}$$
(to be thought of as referring to a `virtual motive').

Let  $\eta\colon F^{\ts}_{\A}/F^{\ts}\to \{\pm 1\}$ be the character associated with $E/F$, and let
\beq\lb{calLv}
\calL(V_{(\pi, \chi), v},0):= {\zeta_{F,v}(2) L(V_{v}, 0) \over {L(1, \eta_{v})L(\ad(V_{\pi, v}) , 1)}}
\cdot  \begin{cases} 1 \quad &\text{if $v$ is finite}\\ \pi^{-[F:\Q]}  &\text{if
 $v=\infty$}\end{cases}
\quad \in L.
\eeq
Then 
\beqq
 \label{Qpair intro} 
 Q_{v,(, )_{v}, dt_{v}}(f_{1, v}, f_{2,v})&:=
\calL(V_{(\pi, \chi),v},0)^{-1}\
 \int_{H_{v}'} (\Pi_{v}(t)f_{1,v}, f_{2, v})_{v}\, dt_{v} 
\eeqq
is an explicit generator of $\Pi_{v}^{*, H'_{v}}\ot_{L} (\Pi_{v}^{\vee})^{*, H'_{v}}$. Here for $v\nmid \infty$ the integral is absolutely convergent (after making any choice of $L\into\bC$), and  for $v =\infty$ we understand $$\int_{H'_{\infty}}\Pi_{\infty}(t)dt_{\infty}:={\vol(H'_{\infty}, dt_{\infty})} \cdot {\rm p}_{H'_{\infty}} \colon W\to W_{H'_{\infty}} =W^{H'_{\infty}},$$ where ${\rm p}_{H'_{\infty}}$ is  the natural projection.

Given $f_{3,v}, f_{4,v}\in \Pi_{v}\ot\Pi_{v}^{\vee}$ such that $(f_{3,v}, f_{4,v})_{v}\neq 0$, the quantity
\beq\label{Qv intro} 
Q^{}_{v, dt_{v}}\left( {  f_{1, v} \ot  f_{2, v}\over f_{3,v}\ot f_{4, v}}\right) := {Q_{v,( , )_{v}, dt_{v}}(f_{1,v}, f_{2,v})\over ( f_{3,v}, f_{4,v})_{v}}
\eeq
 is independent of the choice of $(\ ,  \ )_{v}$; it equals $\vol(\OO_{E,v}^{\ts}/\OO_{F, v}^{\ts}, dt_{v})$ if all the data are unramified.

Fix a choice of   measures $dt_{v}$ such that for $dt=\prod_{v }dt_{v}$,
\beq \lb{dt norm}\textstyle\vol(\H'(\Q)\bks \H'(\A), dt):= \vol(\H'(\Q)\bks \H'(\A^{\infty}), \prod_{v\nmid \infty} dt_{v}) \cdot \vol(H'_{\infty}, dt_{\infty})=1.\eeq
Then we define  for $f_{1}\in \Pi_{H'_{}}$, $f_{2}\in \Pi^{\vee}_{H'_{}}$, $f_{3}\in \Pi$, $ f_{4}\in \Pi^{\vee}$
such that  $\prod_{v}(f_{3,v}, f_{4,v})_{v}\neq 0$:
\beqq 
Q \left( {  f_{1} \ot  f_{2}\over f_{3}\ot f_{4}}\right)
&:= 
\prod_{v}Q_{v, dt_{v}}\left( {  f_{1, v} \ot  f_{2, v}\over f_{3,v}\ot f_{4, v}}\right).
\eeqq

\subsubsection{Global pairings and $p$-adic heights} Let $V^{\iota}:=V_{\Pi^{\vee}}$. Fix a Galois-equivariant  pairing 
\beq\label{pair V}
V\ot V^{\iota }\to L(1).\eeq
 Poincar\'e duality provides a canonical Galois-  and Hecke- equivariant pairing 
 $ \H_{1}(\baar{Z}_{K},\cW) \ot  \H_{1}(\baar{Z}_{K}, {\cW}^{\vee})\to L(1)$.
  Via \eqref{cara Z} and \eqref{pair V}, it induces  dual  pairings $(\ , \ )_{\Pi}^{K}\colon \Pi^{K}  \ot \Pi^{\vee, K}
  \to L $ for all $K$. Letting $L_{K}$ be the Hodge bundle on $Z_{K}$, the following pairing (\eqref{pair pi} in the text) is well defined:
  $$(\ , \ )_{\Pi}:= \lim_{K} \ (\dim W\cdot \deg(L_{K}))^{-1}\cdot (\ ,\ )_{\Pi^{K}} \colon \Pi
  \ot \Pi^{\vee}
  \to L .$$

On the other hand, if $\pi$ is ordinary  the restriction $V_{w}$ of $V$ to $G_{E_{w}}$, $w\vert p$,  is reducible
\beq\label{ord w}
0\to V^{+}_{w}\to V_{w}\to V_{w}^{-}\to 0,
\eeq
and there is an analogous reduction  for $V^{\iota}$ such that $V^{+}_{w}$ and $V^{\iota, +}_{w}$ are exact orthogonal of each other under \eqref{pair V}.  These data
allow 
  to define a height  pairing
\beq \label{ht V}
 h_{V}\colon \wtil{H}^{1}_{f}(E, V)\ot \wtil{H}^{1}_{f}(E, V^{\iota})\to L\hat{\ot}\Gamma_{F} 
 \eeq
on \nek's Selmer groups as in Proposition \ref{prop ht}.
When  $W$ is trivial,  the representation  $V=V_{\frakp}A_{E}\ot\chi$ is a factor of the Tate module of an abelian variety, and (under   (n-exc))  the pairing $h_{V}$ coincides with all other $p$-adic height pairings on abelian varieties defined in the literature: see \cite{dd-pyzz} for a review.
\subsubsection{The formula}   We can now state the $p$-adic Gross--Zagier formula for $V$. 
\begin{theoA}\label{GZ thm} Let $\Pi=\pi\ot \chi$ be an ordinary, locally distinguished, non-exceptional automorphic  representation of $(\G\ts\H)'(\A^{})$ over $L$.
 Let $V=V_{\Pi}$.

  The image of $P_{\Pi}$ lies in $H^{1}_{f}(E, V)$, and 
for all $f_{1}\in \Pi_{H'_{\infty}}$, $f_{2}\in\Pi^{\vee}_{H'_{{\infty}}}$, $f_{3}\in \Pi$, $f_{4}\in\Pi^{\vee}$
such that $(f_{3}, f_{4})_{\Pi}\neq 0$, we have
$$   {  h_{V}(P_{\Pi}(f_{1}), P_{\Pi^{\vee}}(f_{2}))    \over   (f_{3}, f_{4})_{\Pi} }  
= 
e_{p\infty}(V_{(\pi, \chi)})^{-1}
 \cdot \calL_{p}'(V_{(\pi, \chi)}, 0)
\cdot
Q\left( {  f_{1} \ot  f_{2}\over f_{3}\ot f_{4}}\right),
$$
where $e_{p\infty}(V_{(\pi, \chi)}) \in L^{\ts}$ is  the $p$-interpolation factor  for  $\calL_{p}(V_{(\pi ,\chi)}, s)$ defined in \eqref{epinf L} below. 
\end{theoA}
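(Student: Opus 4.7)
The strategy is to deduce Theorem \ref{GZ thm} as the specialization of the universal formula (Theorem \ref{ugz thm}) at the classical point $z_{\Pi}\in \X$ corresponding to $\Pi$. The hypotheses on $\Pi$ (ordinary, locally distinguished, non-exceptional) guarantee both that such a classical point exists on a suitable Hida family for $(\G\ts\H)'$, and that the Nekov\'a\v{r} Selmer group $\wtil H^{1}_{f}(E,V)$ agrees with the Bloch--Kato group $H^{1}_{f}(E,V)$, which is needed to make sense of the classical statement. The non-exceptional condition also ensures that no extra ``$\calL$-invariant'' terms spoil the cyclotomic derivative on the right-hand side.

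The key specializations proceed in parallel on the two sides of the universal identity. On the geometric side, the universal class $\cP$ interpolates the geometric Heegner classes at classical points (Theorem \ref{theo heeg univ}); specialization at $z_{\Pi}$ therefore yields the functional $P_{\Pi}$ of \eqref{cP Pi 0} applied to $f_{1}$ (and dually $P_{\Pi^{\vee}}$ on $f_{2}$). The local toric periods $Q_{v}$ appearing in the universal formula are by construction continuous in the Hida variables, and their specialization recovers the explicit generator $Q(f_{1}\ot f_{2}/f_{3}\ot f_{4})$ of \eqref{pi*h}. On the analytic side, the restriction of the multivariable $\calL_{p}(\cV^{\sharp})$ along the cyclotomic subfamily passing through $z_{\Pi}$ is, by definition \eqref{def Lp}, the one-variable $\calL_{p}(V_{(\pi,\chi)},s)$; since $\eps(V)=-1$ forces $\calL_{p}(V_{(\pi,\chi)},0)=0$, the cyclotomic derivative at $z_{\Pi}$ reads off as $\calL_{p}'(V_{(\pi,\chi)},0)$, up to the interpolation factor $e_{p}(V_{(\pi,\chi)})^{-1}$ coming from the comparison of normalizations between the family and the classical $L$-function.

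The most delicate step is to show that the universal $p$-adic height, after differentiating in the cyclotomic direction at $z_{\Pi}$, specializes to the Nekov\'a\v{r} height pairing $h_{V}$ of \eqref{ht V} applied to $P_{\Pi}(f_{1})\ot P_{\Pi^{\vee}}(f_{2})$, normalized by $(f_{3},f_{4})_{\Pi}$. This requires that the Selmer complex over $\X$ interpolating $\wtil H^{1}_{f}$ is compatible with the local ordinary filtrations \eqref{ord w}, that the Poincar\'e pairing inducing $(\ ,\ )_{\Pi}$ matches the universal analogue (this is why the formula is quotiented by $(f_{3},f_{4})_{\Pi}$), and that the ``derivative of a functional'' encoded in the universal formula produces precisely the cyclotomic height. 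I expect this compatibility check to be the principal technical obstacle: it is the place where Carayol's injection \eqref{cara Z}, the normalization \eqref{dt norm}, and the precise comparison between the Hida-family Selmer complex and \nek's classical construction must all be reconciled.

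Finally, the claim that $P_{\Pi}$ takes values in $H^{1}_{f}(E,V)$ is provided by Proposition \ref{in H1f} under the ordinary and non-exceptional hypotheses, and the identification $\wtil H^{1}_{f}(E,V)=H^{1}_{f}(E,V)$ under (n-exc) closes the loop. Assembling these ingredients and equating the two sides of the specialized universal formula yields exactly the stated identity.
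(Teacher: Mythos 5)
Your overall strategy---deduce Theorem~\ref{GZ thm} by specialising the universal formula Theorem~\ref{ugz thm} at the classical point $z_{\Pi}$---is indeed half of the paper's argument, and the pieces you cite (Theorem~\ref{theo heeg univ} for the Heegner class, \eqref{def Lp} for the $L$-function, Proposition~\ref{prop ht} for the height, Proposition~\ref{in H1f} for $P_{\Pi}\in H^{1}_{f}$) are the right ones. However, there is a real gap in the passage from the specialised universal formula to the statement of Theorem~\ref{GZ thm} as written, and it is not the height-pairing compatibility you flag as ``the most delicate step'' (that one is handled by Proposition~\ref{prop ht} more or less directly).

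The gap is this: specialising the universal formula at $z_{\Pi}$ does \emph{not} give you an identity for arbitrary $f_{1}\in \Pi_{H'_{\infty}}$, $f_{2}\in\Pi^{\vee}_{H'_{\infty}}$, $f_{3}\in\Pi$, $f_{4}\in\Pi^{\vee}$, as Theorem~\ref{GZ thm} demands. It gives the \emph{ordinary} variant (Theorem~\ref{GZ thm'} in the paper): an identity of functionals on $\Pi^{\ord}\ot\Pi^{\vee,\ord}$, with $\cP$ specialising to $P_{\Pi}^{\ord}=P_{\Pi}\circ\gamma_{H'}^{\ord}$, $((\ ,\ ))$ specialising to $(\ ,\ )_{\Pi}^{\ord}$, and $\cQ$ specialising to $Q^{\ord}$, not $Q$. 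Your sentence that the local periods ``specialize to the explicit generator $Q$'' is where this is swept under the rug. To bridge Theorem~\ref{GZ thm'} to Theorem~\ref{GZ thm} one needs two further ingredients that your proposal omits: (i) an explicit local computation (Proposition~\ref{compare Qs}, resting on Propositions~\ref{toric period} and~\ref{compare toric inf}) expressing $Q$ evaluated on the ``special quadruple'' $(\gamma_{H'}^{\ord}f_{1},\gamma_{H'}^{\ord}f_{2}, w_{\rm a}^{\ord}f_{3},f_{4})$ in terms of $Q^{\ord}(f_{1},f_{2},f_{3},f_{4})$ up to precisely the factor $e_{p}(V_{(\pi,\chi)})\cd\dim W$ --- this is in fact where the interpolation factor $e_{p}$ in your statement comes from, not from ``normalization of the family versus the classical $L$-function'' as you suggest; and (ii) a multiplicity-one argument: since $\Hom_{\H'(\A)}(\Pi,L)\ot\Hom_{\H'(\A)}(\Pi^{\vee},L)$ is one-dimensional, the identity on special quadruples extends to all $f_{i}$ provided $Q$ is non-vanishing on a special quadruple, which in turn needs the (n-exc) hypothesis via Lemma~\ref{6444} and Proposition~\ref{exc vanishing}. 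Without these steps the argument proves a strictly weaker statement than Theorem~\ref{GZ thm}.

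One more remark, which is not strictly a gap in what you were asked but worth noting for context: in the paper the logical order is the \emph{reverse} of your presentation. Theorem~\ref{ugz thm} is not available for free; it is itself obtained by starting from the trivial-weight case of Theorem~\ref{GZ thm} (imported from \cite{dd-pyzz,non-split}), using Zariski density of trivial-weight classical points, and the same specialisation dictionary you invoke. So the paper proves $\mathrm{B}(\text{trivial }W)\Rightarrow\mathrm{D}\Rightarrow\mathrm{B}(\text{all }W)$, and the specialisation argument is the forward engine of the whole proof, not a downstream corollary.
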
 
 When $\G=\GL_{2/\Q}$, $V$ is crystalline at $p$, $p$ splits in $E$,   $\chi$ is unramified  and the $f_{i}$  are newforms, a version of this result was proved by Perrin-Riou \cite{pr} when $W$ is trivial, and by   \nek\ \cite{nek-heeg} and Shnidman \cite{shnidman} when $W$ has even weights. The general case with trivial $W$ was proved in \cite{dd-pyzz, nonsplit}. 
\begin{rema} Establishing Gross--Zagier formulas in this generality has proven useful for arithmetic applications, such as those in  \cite{tian, bur-d, BuTian} and Theorem \ref{nv CM} below.

Explicit versions of the formula can be obtained by evaluating the functional $Q$  at well-chosen $f_{i}$. This is a local problem, solved in  \cite{cst}.
\end{rema}

\begin{rema} For a variant of Theorem \ref{GZ thm} that is valid  in the exceptional case as well, see Theorem \ref{GZ thm'}. That variant is often trivially  $0=0$ in the exceptional case, but not always, and indeed Remark \ref{rmk GS} sketches a new proof of the Greenberg--Stevens theorem \cite{GS} based on it. For a further discussion going beyond any trivial or non-trivial vanishing, see Remark \ref{go to bd} and \S~\ref{sec bd conj}.
\end{rema}

\subsection{The universal Heegner classes}\label{sec: uH} We explain the interpolation of the Heegner cycles $P_{\Pi}$ as $\Pi$ varies over a Hida family for $(\G\ts \H)'$.

Suppose from now on that $\B_{p}$ is split and  fix an isomorphism $\G_{\Q_{p}}\cong \Res_{F_{p}/\Q_{p}}\GL_{2}$, giving a model of $\G$ (hence $(\G\ts\H)'$) over $\Z_{(p)}$. We let $N_{\G, 0}:=\smalltwomat 1{\OO_{F, p}}{}1\subset \G(\Q_{p})$ and $N_{0}$ be the image of $N_{\G, 0}$ in $(G\ts H)'_{p}$. Finally we denote by $\Up_{p}$ the usual operator in the Iwahori--Hecke algebra of $(G\ts H)'_{p}$, and by $\Up_{p\infty}$ its product with $ (\smalltwomat p{}{}1, 1)\in (G\ts H)'_{\infty}$.

For a localisation ${\rm M}$ of a finite $\Z_{p}$-module ${\rm M}^{\circ}$ on which the operator $\Up_{p\infty}$ acts (on the left or the right), we denote by ${\rm M}^{\ord}$ the image of ${\rm M}$ under  Hida's ordinary projector 
$$e^{\ord}=\lim \Up_{p\infty}^{n!}.$$

\subsubsection{Hida families for $(\G\ts\H)'$}  
Pick an arbitrary  $(\G\ts\H)'(\Z_{p})$-stable lattice $W^{\circ}\subset W$, yieding a sub-local system $\cW^{\circ}\subset \cW$.
  Then we define, for any $K=K^{p}K_{p}$ with $K_{p}\supset N_{0}$,
\beq\label{MWK}
M_{W,K}^{\circ}:= (H^{1}(\baar{Z}_{K}, \cW^{\circ})\ot (W^{\circ, \vee})_{N_{0}})^{\ord},  \qquad M_{W,K}:= M_{W, K}^{\circ}\ot_{\OO_{L}}L.
\eeq
    
Let $K^{p}\subset (\G\ts\H)'(\A^{p\infty})$ be an open compact subgroup. Consider the ordinary completed homology of $\baar{Z}_{K^{p}}$
$$M_{K^{p}}:=( \varprojlim_{K_{p}\supset N_{0}} M_{K^{p}K_{p}}^{\circ})\ot_{\Z_{p}}\Q_{p},$$
where $ M_{K}^{\circ}= $\eqref{MWK} with $W$ the trivial representation, and the limit is over $K$ such that $K_{p}\supset N_{0}$ (``level $\Gamma_{1}^{1}(p^{\infty})$''). By the work of Hida, $M_{K^{p}}$ is a finite flat module over a certain weight algebra $\Lm=\Lm_{K^{p}} \simeq \Q_{p}[\Delta]\ot_{\Z_{p}} \Z_{p}\llb T_{1}, \ldots , T_{2[F:\Q]+1+\delta_{F ,p}}\rrb$ where $\Delta $ is a   finite group and $\delta_{F, p}$ is the Leopoldt defect of $F$. 

Let ${\bf T}^{\rm sph, \ord}_{K^{p}, \Q_{p}}\subset \End_{\Lm}(M_{K^{p}})$ be the image of the algebra generated by the  spherical Hecke operators and the operators $\Up_{v}$, $v\vert p$. The `ordinary eigenvariety' 
$$\cE^{\ord}=\cE^{\ord}_{K^{p}}:=  \Spec {\bf T}^{\rm sph, \ord}_{K^{p}, \Q_{p}}$$ contains a dense subset  $\cE^{\ord, \cl}$ (more precisely a reduced $0$-dimensional ind-subscheme) of regular points,  in bijection with the   set of $G_{\Q_{p}}$-orbits of those  ordinary automorphic representations $\Pi$ of $(\G\ts\H)'$ over $\Q_{p}$ such that  $\Pi^{K^{p}}\neq 0$.

Let us  fix an irreducible component 
$$\X\subset\cE^{\ord}_{K^{p}}$$
that is a  \emph{Hida family} for $(\G\ts\H)'$. We let  $\X^{\cl}:=\X\cap\cE^{\ord, \cl}  $.

 \begin{defi}\lb{hida loc dist} 
A {Hida family} $\X$ for $(\G\ts \H)'$ 
is said to be \emph{locally distinguished} (by $\H'$) if it satisfies  the conditions  
\begin{itemize}
\item[$(\eps_{v})'$]  for every (equivalently,\footnote{By \cite[Corollary 5.3.3]{LLC}.} one) classical point $z\in \X$ (of weight satisfying (wt)), the Galois representation $\cV_{z}$ attached to the representation $\Pi_{z}$ satisfies ($\eps_{v}$)
\end{itemize}
for all $v\nmid p\infty$.
\end{defi}

\subsubsection{Sheaves on $\X$} 
 The Hida family $\X$ comes with a coherent sheaf   $\cM_{K^{p}}$ corresponding to $M_{K^{p}}$; moreover in fact for each $K^{p}{}'\subset K^{p}$ the module $M_{K^{p}{}'}$ gives rise to a coherent $\OO_{\X}$-module
$$\cM_{K^{p}{}'}$$
 with $\OO_{\X}$-linear Hecke- and Galois actions. Fix an arbitrary  $K^{p}{}'\subset K^{p}$, `sufficiently large' at the places in $\Sg$.\footnote{In the sense that for each $z\in \X^{\cl}$,  $v\in\Sg$, the finite-dimensional constituent  $\Pi_{z,v}$ of $\Pi_{z}$ is fixed by $K_{v}$.}  Let $S$ be a finite set of primes, not containing those above $p$, such that all data $\G, \H,K^{p}{}'$ are unramified outside $Sp$.
 Let $G_{E, Sp}$ be the Galois group of the maximal extension of $E$ unramified outside $Sp$.
 We prove in the text that the following statements are true up to  replacing $\X$ by an open subset  containing $\X^{\cl}$: 
\begin{itemize}
\item there exists a locally free sheaf $\cV$ of rank $2$ with a $G_{E, Sp}$-action, such that for all $z\in \X^{\cl}$, the representation $\cV_{|z}$ is associated with $\Pi_{z}$ via  the  Langlands correspondence;
\item for each $w\vert p$ there is an exact sequence  of  $\OO_{\X}[G_{E,w}]$-modules 
\beq\label{ord fam}
 0\to \cV_{w}^{+}\to \cV_{w}\to \cV^{-}_{w}\to 0,\eeq 
where the $\cV_{w}^{\pm} $ are line bundles  over $\X$, specialising to  \eqref{ord w} at all $z\in \X^{\cl}$;
\item assume from now on that $\X$ is locally distinguished. There is a locally free $\OO_{\X}$-module 
$$\Pi^{K^{p}{}', \ord}_{H'_{\Sg}}$$ 
interpolating the spaces of $(E_{\Sg}^{\ts}/F_{\Sg}^{\ts})$-coinvariants,
 $K^{p}{}'$-invariants of $\Pi_{z}^{\ord}$ for $z\in \X^{\cl}$; 
\item we have a map of Hecke modules over $\OO_{\X}$
\beq \label{cara hida}
 \Pi^{K^{p}{}', \ord}_{H'_{\Sg}}\to \Hom_{\OO_{\X}[G_{E, Sp}]}(\cM_{K^{p}{}'}^{H_{\Sg}'}, \cV)
\eeq
whose specialisations over $\X^{\cl}$ are deduced by \eqref{cara Z}.
\end{itemize}

\subsubsection{The universal Heegner class} 
We construct in the appendix (Proposition \ref{gHo})  an  operator $\gamma_{H'}^{\ord}$, that is the key to the interpolation of Heegner cycles.  It is a limit of    of Hecke operators at $p\infty$, 
 intertwining toric and ordinary  parts:
\beqq\lb{op gamma} {\H}_{1}(\baar{Z}_{K^{p}}, \cW)^{H_{}'} &\stackrel{\cdot\tordp}{\longrightarrow} \H_{1}(\baar{Z}_{K^{p}}, \cW)^{\ord} =M_{W, K^{p}}^{\ord}\\
\Pi^{{K^{p}}}_{H_{}'} &\stackrel{\tordp\cdot }{\longleftarrow} \Pi^{K^{p},\ord}.
\eeqq

Consider 
 the class
$$P^{\ord}_{W, K^{p}{}'} := P_{W,K^{p}{}'} \ \gamma_{H'}^{\ord} 
\in  H^{1}(G_{E, Sp},   M_{W, K^{p}{}'}).$$
It is   invariant under $\H'(\A^{p\infty})$, hence:
\begin{itemize}
\item as $K^{p}{}'$ varies, it defines an $\H'(\A^{p\infty})$-invariant functional
\beq \lb{cP Pi}
P_{\Pi}^{\ord}= P_{W}\circ \tord\colon \Pi^{\ord}\to H^{1}(E, V_{\Pi})
\eeq
and in fact, as we shall prove, valued in $H^{1}_{f}(E, V_{\Pi})$. 
\item
restricting  (without loss of generality as we will see in a moment) to the case where $W$ is trivial,  its localisation over 
$\X$ defines a global section $\cP_{K^{p}{}'}$ of
$  H^{1}(G_{E, Sp},\cM_{K^{p}{}'}^{H_{\Sg}'})$. 
\end{itemize}

Using \nek's theory of Selmer complexes  we show that
 the universal class $\cP_{K^{p}{}'}$ is a section of a sheaf of Selmer groups   $\wtil{H}^{1}_{f}({E},\cM_{K^{p}{}'}^{H_{\Sg}'})$, where the subscript $f$ signifies  a local condition at $p$ coming from \eqref{ord fam}, and for Selmer groups we use $E$ in place of $G_{E, Sp}$ for short. Then by \eqref{cara hida} the class $\cP_{K^{p}{}'}$   defines a map of $\OO_{\X}$-modules
$$\cP_{K^{p}{}'}\colon \Pi^{K^{p}{}', \ord}_{H'_{\Sg}} \to \wtil{H}^{1}_{f}({E}, \cV).$$

When $\G=\GL_{2/\Q}$, the value of $\cP_{K^{p}{}'}$ on a family of newforms is the class  originally defined by Howard in \cite{howbig}. (The statement  that  the fibre of $\cP_{K^{p}{}'}$  at \emph{all} classical points lands in the Selmer group is in new even in the context of \cite{howbig}.) There, Howard    asked whether his class  interpolates  Heegner cycles at all classical points of $\X$. 
The first part of the  following theorem summarises the results described above. The second part, whose proof is  simple and direct,  provides an affirmative answer to the  generalisation of Howard's question.\footnote{The question in \cite{howbig} was phrased in terms of the Abel--Jacobi classes of  Heegner  cycles  in a suitable Chow group, defined in that case in \cite{nek-heeg}; these  classes are identical to the  $P_{\Pi}(f)$ from  \eqref{cP Pi 0}: see \cite[\S~ I.2]{nek-heeg}.}

\begin{theoA}\lb{theo heeg univ} Let $\X$ be a locally distinguished Hida family for $(\G\ts\H)'$.
  There exist an open subset $\X'\subset  \X$ containing $\X^{\cl}$ 
  and a map
 $$\cP_{K^{p}{}'}\colon \Pi^{K^{p}{}', \ord}_{H'_{\Sg}} \to {H}^{1}_{f}(G_{E, Sp}, \cV)$$
of sheaves over $\X'$,  satisfying the following properties:
\begin{enumerate}
\item
$\cP_{K^{p}{}'}$ is invariant under the action of the away-from-$p\Sg$-Hecke algebra of $\H'$;
\item 
for all  $z\in \X^{\cl}$ corresponding to a representation $\Pi_{z}$  satisfying $(\textup{wt})$,  denote by $P^{\ord}_{\Pi_{z}, K^{p}{}'}$ the restriction of \eqref{cP Pi} to  $(\Pi_{z})_{H'_{\Sg}}^{K_{p}', \ord }$; then
$$\cP_{K^{p}{}'|z}  = P^{\ord}_{\Pi_{z},K^{p}{}'}$$
under the  natural  map ${H}^{1}(G_{E, Sp}, \cV)_{|z}\to {H}^{1}_{}(G_{E, Sp}, V_{\Pi_{z}})$.
\end{enumerate}
\end{theoA}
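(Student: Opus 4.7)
My plan is to construct $\cP_{K^{p}{}'}$ by composing the Heegner class $P_{W, K^{p}{}'}$ with the ordinary intertwining operator $\tord$ of Proposition \ref{gHo}, and then sheafifying the result over $\X$. Concretely, at a finite level $K = K^{p}{}' K_{p}$ with $K_{p} \supset N_{0}$ and $W$ trivial, the class $P_{W, K^{p}{}'} \cdot \tord$ lies in $H^{1}(G_{E, Sp}, M_{W, K^{p}{}'}^{\ord})$ and is $\H'(\A^{p\infty})$-invariant. Passing to the inverse limit over $K_{p} \supset N_{0}$ and then to the $H'_{\Sg}$-coinvariant quotient (harmless since $K^{p}{}'$ is sufficiently large at primes in $\Sg$ and $P$ is $\H'$-invariant there) produces a global section of $H^{1}(G_{E, Sp}, \cM_{K^{p}{}'}^{H'_{\Sg}})$ over $\X$. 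Composing with the map \eqref{cara hida}, valid on some open $\X' \subset \X$ containing $\X^{\cl}$, yields the desired sheaf map $\cP_{K^{p}{}'}$. Property $(1)$ is then automatic: the Heegner classes are invariant under the full prime-to-$p\Sg$ Hecke algebra of $\H'$, and $\tord$ is a limit of operators at $p\infty$, hence commutes with that Hecke algebra.

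For property $(3)$, I fix $z \in \X^{\cl}$ satisfying $(\textup{wt})$. By the compatibility of \eqref{cara hida} with \eqref{cara Z} at $z$ and of $\cM_{K^{p}{}'}$ with specialization, the fibre $\cP_{K^{p}{}' | z}$ is computed by applying \eqref{cara Z} to $P_{W_{z}, K^{p}{}'} \cdot \tord$. But this is exactly the definition of $P^{\ord}_{\Pi_{z}, K^{p}{}'}$ recorded in \eqref{cP Pi}, so $(3)$ holds tautologically.

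For property $(2)$, the target Selmer sheaf $\wtil{H}^{1}_{f}(E, \cV)$ is defined by \nek's complex using the filtration \eqref{ord fam}: outside $Sp$ no condition is imposed, while at $w \vert p$ one requires the class to lie in the image of $H^{1}(G_{E, w}, \cV^{+}_{w}) \to H^{1}(G_{E, w}, \cV_{w})$. Since $\cP_{K^{p}{}'}$ is unramified outside $Sp$ by construction, only the local condition at $p$ is genuinely at stake. At each $z \in \X^{\cl, \textup{n-exc}}$, Proposition \ref{in H1f} shows $P^{\ord}_{\Pi_{z}} \in H^{1}_{f}(E, V_{\Pi_{z}}) = \wtil{H}^{1}_{f}(E, V_{\Pi_{z}})$, the equality being precisely $(\textup{n-exc})$, so the local condition is verified pointwise on a Zariski-dense subset. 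The main obstacle will be the final step: upgrading this pointwise verification to a statement about the family. I expect it to follow from a combination of \nek's Selmer complex formalism, the flatness of $\cV$ and of the line bundles $\cV^{\pm}_{w}$ over $\X'$, and compatibility of the Selmer complex with specialization; the obstruction to the local Selmer condition is then a section of a coherent $\OO_{\X}$-sheaf vanishing on a dense subset, and after suitably shrinking $\X'$ to an open subset still containing $\X^{\cl, \textup{n-exc}}$ this forces it to vanish identically, yielding the required factorization through $\wtil{H}^{1}_{f}(E, \cV)$.
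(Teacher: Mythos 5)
Your construction matches the paper's (Definition of $\cP_W := P_W\gamma_{H'}^{\ord}$ in \S\ref{sec: 6.3}, followed by the identifications from Proposition \ref{cofinal}.2 and \eqref{cara hida}), and your treatment of property (1) is correct. However, there are two genuine gaps, one of which is the crux of the entire theorem.

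The central gap is in property (3): the interpolation is \emph{not} tautological, and asserting otherwise misses the main point of the statement. The class $\cP_{K^p{}'}$ is built from the Heegner classes at \emph{trivial} weight --- you say so yourself in the construction. At a classical point $z$ of weight $W_z\neq\Q_p$, the fibre of the sheaf $\cM_{K^p{}'}$ is identified with $\H_1(\baar Z_{K^p{}',r},\cW_z)^{\ord}$ via the map $j_{W_z}$ of Proposition \ref{free indep}, and what you need to show is that $j_{W_z,*}(\cP) = P_{W_z}\gamma_{H'}^{\ord}$. This is exactly Proposition \ref{indep of wt}, which is a substantive result: its proof requires a $p$-divisibility argument over the lattices $W^{\circ}$, working modulo $p^r$ with the explicit semilocal operators $\gamma_{r,p}$, $w_{r,p}$, and the constants $c(W)$, and ultimately rests on the unitarity computation of Lemma \ref{unitary} in the appendix. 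Your phrase ``by the compatibility of \eqref{cara hida} with \eqref{cara Z} at $z$'' elides the passage between trivial and nontrivial coefficient systems, which is precisely where the work lies. This weight-independence is also the content of the answer to Howard's question highlighted in the introduction.

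There is also a circularity in property (2): you cite Proposition \ref{in H1f} to place $P^{\ord}_{\Pi_z}$ in $H^1_f(E,V_{\Pi_z})$ for all non-exceptional $z$, but the proof of Proposition \ref{in H1f} in the non-exceptional higher-weight case invokes Theorem \ref{theo heeg univ} (``whose proof we have just completed''). The density argument should be run only over the trivial-weight locus $\X^{\cl,\Q_p}$, where the membership in $H^1_f$ is a direct consequence of the Abel--Jacobi construction (this is what the paper does in Lemma \ref{sel at p}); this is still Zariski dense by Lemma \ref{dense}, so the strategy survives, but the invocation of \ref{in H1f} as stated is backwards. Separately, ``unramified outside $Sp$'' does not by itself give the strict Greenberg condition at $w\in S$, $w\nmid p$ --- those require the localization in $H^1(E_w,\cV_w)$ to vanish, which the paper gets from Lemma \ref{sel away from p} (weight-monodromy forcing $H^1(E_w,\cV_{|z})=0$ at classical points, then Proposition \ref{torss}).
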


 An answer to Howard's question in its original context was earlier given by Castella (\cite{cast}, \cite{cast2})  by an indirect method, under the assumption that   $p$ splits in $E$. 

\begin{rema} It follows from the results of \cite{CV} that, under mild conditions, the class $\cP$ is non-torsion over $\X$, cf. the discussion after \cite[Theorem B]{fouquet}.
\end{rema}
\begin{rema} \lb{go to bd}
Theorem \ref{theo heeg univ} is far from being the last word on $\cP$: first,  the class $\cP$ may vanish at some classical points; second, we can consider its specialisation in \nek's Selmer group $\wtil{H}^{1}_{f}(E, V_{\Pi_{z}})$, which equals $H^{1}_{f}(E, V_{\Pi_{z}}) $ when $z$ is not exceptional but is larger otherwise. In \S~\ref{sec bd conj}, we address both problems by proposing a conjecture for the order of vanishing and leading term of $\cP$ at any classical point, generalising conjectures by Bertolini--Darmon. The same Conjecture  \ref{pf conj} will also give a prediction for the leading terms of universal toric periods on distinguished Hida families for \emph{coherent} quaternionic groups, discussed in \S~\ref{sec: wald}, and in that case we will describe some new evidence in higher rank coming from the `plectic' world  via \cite{FG}.
\end{rema}

\subsection{The universal formula}\lb{sec: ugz}   We first recall the $p$-adic $L$-function constructed in \cite{dd-pLf}, then  state our formula for the $p$-adic height of $\cP_{K^{p}{}'
}$.  

At times we refer to the main body of the paper for the precise definition of some of the objects.

\subsubsection{Dualities over Hida families}
The space $\cE^{\ord}$  is  endowed with an involution $\iota$ corresponding to $\Pi_{z}\mapsto \Pi_{z}^{\vee}$.
Fix a  locally distinguished Hida family $\X$; then the   constructions of \S~\ref{sec: uH}  can be performed over $\X$.
 Denoting by $(-)^{\iota}$ the  pullback   under $\iota$ of an object over $\X$,
 we have dualities 
\beq\label{pair V fam}
\cV\ot \cV^{\iota}\to\OO_{\X}(1)
\eeq
interpolating \eqref{pair V}. These data, together with their deformation to a Hida family $\X^{\sharp}$ for $\G\ts\H$,  allow to define a height pairing as in Proposition \ref{prop ht2},
\beq\label{ht fam}
h_{\cV/\cV^{\sharp}}\colon \wtil{H}^{1}_{f}({E}, \cV)\ot_{\OO_{\X}} \wtil{H}^{1}_{f}(E,\cV^{\iota})\to\mathscr{N}_{\X/\X^{\sharp}}^{*}\cong \OO_{\X}\hat{\ot}\Gamma_{F}.
\eeq
As usual after possibly restricting to an open subset containing $\X^{\cl}$, we  construct:
\begin{itemize}
\item
pairings 
$$((\ , \ )) \colon \Pi^{K^{p}{}', \ord}_{H'_{\Sg}} \ot_{\OO_{\X}} (\Pi^{K^{p}{}', \ord}_{H'_{\Sg}})^{\iota}\to \OO_{\X}$$
 interpolating the $p\infty$-modification $(\ ,\  )^{\ord}_{\Pi}:=\eqref{()'}$ of $(,)_{\Pi}$;
\item   $\OO_{\X}^{\ts}$-module maps 
$$\cQ \colon( \Pi^{K^{p}{}', \ord}_{H'_{\Sg}} \ot_{\OO_{\X}^{\ts}} \Pi^{K^{p}{}', \ord, \iota}_{H'_{\Sg}})
\ot_{\OO_{\X}^{\ts}} 
( \Pi^{K^{p}{}', \ord}_{H'_{\Sg}} \ot_{\OO_{\X}^{\ts}} \Pi^{K^{p}{}', \ord, \iota}_{H'_{\Sg}})^{\times, -1}\to \cK_{\X}$$
interpolating the $p\infty$-modification $Q^{\ord}= \eqref{Q orddd}$ of $Q$. Here, $\cK_{\X}$ is the sheaf of fractions of $\OO_{\X}$ and  the superscript `$\times, -1$' denotes the subgroup of those $f_{3}\ot f_{4} $ satisfying $((f_{3}, f_{4}))\neq 0$ and suggests  the `denominator' invariance of the pairing in the last two variables.
\end{itemize}
\subsubsection{The $p$-adic $L$-function} \lb{ssec Lp}
 Let $\cE_{0}^{\sharp, \ord}:= \cE_{\G_{0}\ts\H}^{\ord} $ be the ordinary eigenvariety for $\G_{0}\ts\H$ (see \cite{hida-adv, hida-LF}); for appropriate  choices of tame levels,  there is a map $\iota_{\rm JL}\colon \cE^{\ord}_{\G\ts\H} \to \cE_{\G_{0}\ts \H}^{\ord}$, which is a closed immersion onto a union of irreducible components.
Let $\X_{0}^{\sharp}:=\iota_{\rm JL}(\X^{\sharp})\subset \cE^{\sharp, \ord}$.  We recall the $p$-adic $L$-function on $\X^{\sharp}$ constructed in \cite{dd-pLf}.

Let $\X_{0}^{\sharp, \cl} = \iota_{\rm JL}(\X_{}^{\sharp, \cl}) \subset \X_{0}^{\sharp}$ be the ind-scheme of classical points.
If $(x,y)\in \X_{0}^{\sharp, \cl}(\bC)$ is a geometric point corresponding to a closed point  $(x_{0}, y_{0})\in \X_{0}^{\sharp,\cl}$ together with an embedding   $\iota\colon \Q_{p}(x_{0}, y_{0}) \into \bC$, we denote $\pi_{x} =\pi_{x_{0}}^{\iota}, \chi_{y}=\chi_{y_{0}}^{\iota}$, which are  complex automorphic representations of $\G_{0}(\A)$ and $\H(\A)$ respectively. We then  denote $V_{(x_{0},y_{0})}^{\sharp}:= V_{(\pi_{x_{0}}, \chi_{y_{0}})}$ and let  
$$\calL(V_{(x, y)}^{\sharp}, 0)= \prod_{v}\iota \calL(V_{(x, y),v}^{\sharp},0)
$$ be the product (defined by analytic continuation) of \emph{all} of the factors \eqref{calLv}.

Recall that if $W$ is a complex Weil--Deligne representation of the Weil group of a local field $F_{v}$ and $\psi_{v}\colon F_{v}\to \bC^{\ts}$ is a nontrivial character,  the inverse Deligne--Langlands $\gamma $-factor is\footnote{The normalisations of $L$- and $\eps$-factors are as in \cite{tate-nt}.}  
\beq \lb{gamma intro} \gamma(W, \psi_{v})^{-1} = {L(W)/ \eps(W, \psi_{v}) L(W^{*}(1))}, \eeq
and  $\psi_{E, w}=\psi_{v}\circ \Tr_{E_{w}/F_{v}}$.

 If $\pi=\pi_{x_{0}}$, $\chi=\chi_{y_{0}}$ are as just above (with weights $\uw=\uw_{x_{0}}$, $\ul=\ul_{y_{0}}$),  let  $\ad (V_{\pi,v})(1)^{++}:= \Hom(V_{\pi,v}^{-}, V_{\pi, v}^{+})$. Let $\psi=\prod_{v}\psi_{v}\colon F\bks \A_{F}\to \bC^{\ts}$ be the standard additive character such that $\psi_{\infty}(\cd) = e^{2\pi i \Tr_{F_{\infty}/\R}(\cd)}$; let $\psi_{E}=\prod_{w}\psi_{E, w}=\psi\circ \Tr_{\A_{E}/\A_{F}}$. For  a place $v\vert p $ of $F$, let $d_{v}$ be a generator of the different ideal of $F_{v}$, and  define 
\beq \lb{eVv}
e_{v}(V_{(\pi_{x}, \chi_{y})})= |d_{v}|^{-1/2}
 {\prod_{w\vert v}\gamma(\iota{\rm WD}(V^{+}_{\pi, v|G_{E, w}} \ot V_{\chi, w}), \psi_{E,w})^{-1}
 \over \gamma(\iota{\rm WD}(\ad(V_{\pi,v})(1))^{++}, \psi_{v})^{-1}}
 \cd \calL( V_{(\pi^{\iota}, \chi^{\iota}),v})^{-1},\eeq
 where  $\iota {\rm WD}$ is the functor from potentially semistable Galois representations to complex Weil--Deligne representations of \cite{fontaine}. 
Finally,   we define 
\beq\lb{epinf}
e_{\infty}(V_{(\pi^{\iota}, \chi^{\iota})}) &:= i^{(w+l)[F:\Q]},\\
 e_{p\infty}(V_{(\pi^{\iota}, \chi^{\iota})}) &:=  e_{\infty}(V_{(\pi^{\iota}, \chi^{\iota})}) \cd \prod_{v\vert p}e_{v}(V_{(\pi^{\iota}, \chi^{\iota})}).
\eeq
At least if $w+l=0$, these belong to $\iota \Q_{p}(x_{0}, y_{0})$, and we may define  
\beq\lb{epinf L}
e_{p\infty}(V_{(\pi, \chi)}) :=   \iota^{-1}e_{p\infty}(V_{(\pi^{\iota}, \chi^{\iota})}).\eeq

  The following is the main theorem of \cite{dd-pLf}.

\begin{theo} \lb{conj Lp} There exists a  meromorphic function 
 $$\calL_{p}(\cV^{\sharp})\in  \mathscr{K}(\X_{0}^{\sharp})$$
 whose polar locus $\mathscr{D}$  does not intersect $\X_{0}^{\cl}$, uniquely characterised by the following property.

For each $z=(x, y)\in \X_{0}^{\sharp, \cl}(\bC)- \mathscr{D}(\bC)$ corresponding to an automorphic  representation $\pi_{x}\ot \chi_{y}$ of $\G_{0}(\A)\ts \H(\A)$
 of weight $(\uw_{x},  \ul_{y})$ 
 satisfying the conditions 
$$
  \text{ $| l_{y, \sg} | < w_{x, \sg}$, \qquad  $|w_{x}+l_{y}| \leq w_{x,\sg}- | l_{y,\sg}| -2$ \qquad for all $\sg\colon F\into \bC$},$$  
 we have
 \beq
\label{interpol Lp intro} 
\calL_{p}(\cV)(x, y) =
e_{p\infty}(V_{(\pi_{x}, \chi_{y})}^{})
 \cdot  \calL(V_{(\pi_{x} ,\chi_{y})}, 0).
 \eeq
  \end{theo}

\subsubsection{Main theorem}
Under the condition of local distinction of $\X$,   the function  $\calL_{p}(\cV^{\sharp})$ vanishes identically on $\X_{0}$. Let   $\mathscr{N}^{*}_{\X_{0}/\X_{0}^{\sharp}} =\cI_{\X_{0}}/\cI_{\X_{0}}^{2}\ot_{\OO_{\X_{0}^{\sharp}}}\OO_{\X_{0}}  \cong \OO_{\X_{0}}\hat{\ot}\Gamma_{F}$  be the conormal sheaf and let 
$$\qquad \qquad \qquad {\rm d}^{\sharp}\calL_{p}(\cV):={\rm d}_{\X_{0}/\X_{0}^{\sharp}} \calL_{p}(\cV^{\sharp})  \qquad \in \cK(\X_{0})\hat{\ot}\Gamma_{F}= \cK(\X)\hat{\ot}\Gamma_{F}$$ be the  image of $\calL_{p}(\cV^{\sharp})$.

\begin{theoA}\label{ugz thm} Let $\X$ be a locally distinguished Hida family for $(\G\ts\H)'$. Abbreviate $\Pi^{(\iota)}:=  \Pi^{K^{p}{}', \ord, (\iota)}_{H'_{\Sg}}$, $\OO:=\OO_{\X}$, $\cK:=\cK_{\X}$. 	

Then there is an
open subset $\X'\subset \X$ containing $\X^{\cl}$ such that all of  the above constructions   can be made over $\X'$,  and
$$   {  h_{\cV/\cV^{\sharp}}(\cP(f_{1}), \cP^{\iota}(f_{2}))    \over   ((f_{3}, f_{4}))      }  
=
 {\rm d}^{\sharp} \calL_{p}(\cV^{\sharp})
\cdot
\cQ\left( {  f_{1} \ot  f_{2}\over f_{3}\ot f_{4}}\right),
$$
 an equality of $\cK\hat{\otimes}_{\Z_{p}}\Gamma_{F}$-valued $\OO$-linear  functionals on $(\Pi\ot_{\OO} \Pi^{\iota})\ot_{\OO^{\times}} (\Pi\ot_{\OO} \Pi^{\iota})^{\ts, -1}$.
\end{theoA}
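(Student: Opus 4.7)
The plan is to deduce the universal identity from the pointwise identity of Theorem~\ref{GZ thm} by a density argument, after verifying that both sides of the proposed equality define coherent/meromorphic objects over an open $\X'\subset\X$ containing the Zariski-dense set $\X^{\cl, \textup{n-exc}}$ and that they are both continuous with respect to specialisation in the natural sense.

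First I would fix the geometric setup: by the results summarised in \S~\ref{sec: uH} and the constructions recalled in \S~\ref{sec: ugz}, I have (after possibly shrinking $\X$) the universal class $\cP$ valued in the Selmer sheaf $\wtil{H}^{1}_{f}(E,\cV)$, the family height pairing $h_{\cV/\cV^{\sharp}}$ of \eqref{ht fam}, the family pairing $((\,,\,))$ and the family local period functional $\cQ$. Both sides of the claimed identity are thus \emph{a priori} well-defined $\cK_{\X}\hat\otimes\Gamma_{F}$-valued $\OO_{\X}$-linear functionals on $(\Pi\otimes\Pi^{\iota})\otimes(\Pi\otimes\Pi^{\iota})^{\times,-1}$ over an open $\X'\supset\X^{\cl,\textup{n-exc}}$. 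Since $\X'$ is reduced and $\X^{\cl,\textup{n-exc}}$ is Zariski-dense, it suffices to prove the equality after specialisation at every $z\in\X^{\cl,\textup{n-exc}}$ for which the denominator $((f_{3},f_{4}))$ does not vanish (a Zariski-open condition that still contains a dense subset), reducing to a pointwise identity in $L\hat\otimes\Gamma_{F}$.

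Next I would match the specialisations on each side with the ingredients of Theorem~\ref{GZ thm}. For the LHS, I need: (i)~$\cP_{|z}=P^{\ord}_{\Pi_{z}}$ on the $\H'_{\Sg}$-coinvariants $\ord$-part, which is exactly Theorem~\ref{theo heeg univ}(3); (ii)~compatibility of the Nekov\'a\v{r} height pairing in families with the height pairing $h_{V}$ of \eqref{ht V} on the fibre, which is a specialisation statement for the pairing \eqref{ht fam} built via a deformation to $\X^{\sharp}$ and the identification $\mathscr{N}^{*}_{\X/\X^{\sharp}}\simeq\OO_{\X}\hat\otimes\Gamma_{F}$. For the RHS, the interpolation formula \eqref{interpol Lp intro} of Theorem~\ref{conj Lp} shows $\calL_{p}(\cV^{\sharp})_{|z_{s}}=\calL_{p}(V_{(\pi,\chi)},s)$ along $z_{s}$ corresponding to cyclotomic twist (cf.~\eqref{def Lp}), so that the cyclotomic derivative ${\rm d}^{\sharp}\calL_{p}(\cV^{\sharp})_{|z}$ is identified with $\calL_{p}'(V_{(\pi,\chi)},0)$; and I need to identify the fibre of $\cQ$ at $z$ with $e_{p}(V_{(\pi,\chi)})^{-1}\cdot Q$, which is a local computation: the ordinary modifications $(\,,\,)^{\ord}$ and $Q^{\ord}$ built into $((,))$ and $\cQ$ (see~\eqref{()'}, \eqref{Q orddd}) differ from their non-ordinary counterparts precisely by the $p$-interpolation factor appearing in \eqref{eVv}--\eqref{epinf L}, and the formal $\infty$-part $\vol(H'_{\infty},dt_{\infty})$ matches $e_{\infty}$. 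Granting these identifications, the fibre of the universal formula over $z$ reads
\[
\frac{h_{V}(P^{\ord}_{\Pi_{z}}(f_{1}),P^{\ord}_{\Pi^{\vee}_{z}}(f_{2}))}{(f_{3},f_{4})^{\ord}_{\Pi}}
=\calL_{p}'(V_{(\pi,\chi)},0)\cdot Q^{\ord}\!\left(\tfrac{f_{1}\otimes f_{2}}{f_{3}\otimes f_{4}}\right),
\]
which is exactly Theorem~\ref{GZ thm} after folding the common factor $e_{p}(V_{(\pi,\chi)})^{-1}$ into either side consistently with the $\ord$-modification.

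The main obstacle I expect is item~(ii) above: showing that $h_{\cV/\cV^{\sharp}}$ specialises cleanly to $h_{V}$ at non-exceptional classical points. This requires a careful use of Nekov\'a\v{r}'s Selmer complex machinery --- specifically, that the local conditions at $p$ coming from the family filtration $\cV^{+}_{w}\subset\cV_{w}$ of \eqref{ord fam} specialise to those of \eqref{ord w}, that the Bockstein associated to the cyclotomic deformation over $\X^{\sharp}$ specialises to the cyclotomic Bockstein defining $h_{V}$, and that the non-exceptional hypothesis rules out the auxiliary $\wtil{H}^{0}$ contributions that would obstruct base-change. A secondary, more bookkeeping, obstacle is the precise matching of the $p$- and $\infty$-interpolation factors in $\cQ$ versus $Q$ and $((,))$ versus $(,)_{\Pi}$; I expect this to reduce to the local computations already implicit in the definition of $e_{p\infty}$ in \S~\ref{ssec Lp} and the construction of $\gamma^{\ord}_{H'}$ in the appendix. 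Once these two points are settled, the density argument closes the proof.
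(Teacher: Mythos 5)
Your proposal is essentially the same density argument the paper uses, and all the main ingredients you list (Theorem~\ref{theo heeg univ} for $\cP_{|z}$, Proposition~\ref{prop ht} for the height pairing, Theorem~\ref{X6} for $\cQ$, Theorem~\ref{conj Lp} for the $L$-function) are the correct ones. There are, however, two points you should be more careful about.

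First, there is a logical-order issue you should make explicit. You propose to deduce Theorem~\ref{ugz thm} from Theorem~\ref{GZ thm} by specialising at \emph{all} non-exceptional classical points. But in this paper Theorem~\ref{GZ thm} for general weight $W$ is not available independently of Theorem~\ref{ugz thm}: the only externally-known input is the trivial-weight case (from the author's prior work \cite{dd-pyzz,non-split}, via Lemma~\ref{compare Lp}). The paper's Proposition~\ref{final lemma} makes this precise: it establishes an \emph{equivalence} between (i)~Theorem~\ref{ugz thm}, (ii)~Theorem~\ref{GZ thm}/\ref{GZ thm'} at all points of $\X^{\cl,\textup{n-exc}}$, and (iii)~Theorem~\ref{GZ thm}/\ref{GZ thm'} at all but finitely many points of a fixed weight $W$. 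The trivial-weight case of (iii) then seeds the whole equivalence, and Theorem~\ref{GZ thm} for general $W$ is a \emph{consequence} of the universal formula, recovered by a multiplicity-one argument together with Proposition~\ref{compare Qs}. Your density argument is therefore correct as a mechanism, but it must be fed only the trivial-weight instances (already dense in $\X$ by Lemma~\ref{dense}); otherwise the argument becomes circular.

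Second, a bookkeeping factor of $\tfrac{1}{2}$ appears that you do not mention. The inclusion $\X\hookrightarrow\X^{\sharp}$ is not a direct factor (because $\G\ts\H\to(\G\ts\H)'$ is a ${\rm Z}$-torsor for the \'etale but not the Zariski topology), and the induced map on conormal sheaves $\calN^{*}_{\X/\X^{\sharp}}=\OO_{\X}\hat\otimes\Gamma_{F}\to\calN^{*}_{\X/\X\ts\cE_{\rm Z}}=\OO_{\X}\hat\otimes\Gamma_{F}$ is multiplication by $\tfrac{1}{2}$. Consequently both $h_{\cV/\cV^{\sharp}}$ and ${\rm d}^{\sharp}\calL_{p}(\cV^{\sharp})$ specialise at $z$ to $\tfrac12$ times the fibre quantities $h_{V}$ and $\calL'_{p}(V,0)$; the two factors cancel in the formula, but the verification has to note both. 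Your item~(ii), the compatibility of $h_{\cV/\cV^{\sharp}}$ with $h_{V}$ on fibres, is handled by Proposition~\ref{prop ht} (following Venerucci), and the non-exceptional hypothesis only enters via the identity $\wtil{H}^{1}_{f}=H^{1}_{f}$ and the construction of $\X^{(3,f)}$; it does not add a further obstruction beyond what you anticipate.
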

The formula of the theorem  in fact also holds at exceptional points $z\in \X^{\cl}$, see Theorem \ref{GZ thm'}.

\subsection{Applications}
\lb{sec: appl}
We turn to some  arithmetic  applications of the main theorems (in addition to Theorem \ref{BK thm}).

\subsubsection{On the Iwasawa Main Conjecture for derivatives}
We use the notation introduced after Theorem~\ref{BK thm}. 
\begin{theoA} \label{iw}
Let $\X$ be a locally distinguished Hida family for $(\G\ts\H)'$, satisfying the further conditions 
of \cite[Theorem B.(iii)]{fouquet}.  Let $\X'\subset \X$ be the open subset of Theorem \ref{ugz thm}; up to shrinking $\X'$ we may assume it is a regular scheme.  Let $\cR\subset \OO_{\X'}\hat{\ot} \Gamma_{F}$ be the regulator of the height pairing \eqref{ht fam}
over $\X'$.
 Then 
\beqq
{\rm d}^{\sharp}\calL_{p}(\cV) \succeq_{\OO_{\X'}} \cR\cdot {\rm char}_{\OO_{\X'}} (\wtil{H}^{2}_{f}(E, \mathscr{\cV})_{\OO_{\X'}{\textup{-tors}}}).
\eeqq
\end{theoA}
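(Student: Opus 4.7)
My plan is to combine the universal Gross--Zagier formula (Theorem \ref{ugz thm}) with the Euler system divisibility of \cite[Theorem B.(iii)]{fouquet}, in close analogy with the deduction of Theorem \ref{BK thm}(2) from Theorem \ref{GZ thm}.  After shrinking $\X'$ further (preserving density of $\X^{\cl,\textup{n-exc}}$) I would first arrange that the sheaves $\wtil{H}^{1}_{f}(E,\cV)$ and $\wtil{H}^{1}_{f}(E,\cV^{\iota})$ are free of rank one modulo torsion (the upper bound on rank coming from Fouquet's Euler system, the lower bound from the non-triviality of the universal class $\cP$ along $\X$, cf.\ the Remark following Theorem \ref{theo heeg univ}); that $\Pi^{K^{p}{}',\ord}_{H'_{\Sg}}$ and its $\iota$-twin are free of rank one; and finally that  test vectors $f_{1}, f_{2}, f_{3}, f_{4}$ may be chosen so that both $((f_{3},f_{4}))$ and $\cQ(f_{1}\otimes f_{2}/f_{3}\otimes f_{4})$ are units in $\OO_{\X'}^{\times}$.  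Fix generators $\cP_{1},\cP_{1}^{\iota}$ of the respective $\wtil{H}^{1}_{f}$ modulo torsion; by definition $\cR = h_{\cV/\cV^{\sharp}}(\cP_{1},\cP_{1}^{\iota})$.

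Writing $\cP(f_{1}) = \lambda_{1}\cP_{1}$ and $\cP^{\iota}(f_{2}) = \lambda_{2}\cP_{1}^{\iota}$ modulo torsion, for uniquely determined $\lambda_{1},\lambda_{2}\in \OO_{\X'}$, bilinearity of the height pairing gives $h_{\cV/\cV^{\sharp}}(\cP(f_{1}),\cP^{\iota}(f_{2})) = \lambda_{1}\lambda_{2}\cdot \cR$.  Substituting this into Theorem \ref{ugz thm} and using that the remaining factor on the right-hand side is in $\OO_{\X'}^{\times}$, I would obtain the equality of $\OO_{\X'}$-submodules of $\cK_{\X'}\hat{\otimes}\Gamma_{F}$
$$({\rm d}^{\sharp}\calL_{p}(\cV^{\sharp})) \ =\ (\lambda_{1}\lambda_{2}) \cdot \cR.$$
On the other hand, after identifying the universal Heegner class $\cP$ of Theorem \ref{theo heeg univ} with the big Heegner class appearing in \cite{fouquet} up to an $\OO_{\X'}^{\times}$-unit, the Kolyvagin--Howard--Fouquet method should yield the key divisibility
$$(\lambda_{1})\cdot(\lambda_{2}) \ \subseteq\ {\rm char}_{\OO_{\X'}}\bigl(\wtil{H}^{2}_{f}(E,\cV)_{\OO_{\X'}\textup{-tors}}\bigr).$$
Combining the two displayed lines concludes the proof.

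The hard part will be the explicit identification of $\cP$ with the class upon which Fouquet's Euler system machinery operates, up to an $\OO_{\X'}^{\times}$-unit across all of $\X'$; this rests on the geometric interpolation property recorded in Theorem \ref{theo heeg univ}(3), on the density of $\X^{\cl,\textup{n-exc}}$, and on torsion-freeness of the Selmer sheaves over the regular scheme $\X'$.  A secondary, more technical input is the verification of the availability of test vectors yielding unit local periods (which may require passing to an \'etale cover of $\X'$, and uses local multiplicity-one at the ramified primes).
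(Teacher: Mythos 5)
Your proposal tracks the argument the paper invokes: the paper's "proof" consists of the single sentence that the deduction from Theorem \ref{ugz thm} plus Fouquet's Theorem B.(iii) is "virtually identical" to the proof of \cite[Theorem D]{dd-pyzz}, and the strategy you sketch — shrink $\X'$ to free rank-$1$ Selmer sheaves, normalise the $Q$-term to a unit by choice of test vectors, express both sides in terms of the index scalars $\lambda_1$, $\lambda_2$, then feed in the Euler system divisibility — is exactly the shape of that argument. Two points worth tightening. First, Fouquet's divisibility is naturally stated for a single module in terms of the index of a single big Heegner class, roughly ${\rm char}(\wtil H^2_{f,\mathrm{tors}}) \mid (\lambda)^2$; to convert this into your displayed $(\lambda_1)(\lambda_2)\subseteq{\rm char}$ you need the symmetry between $\cV$ and $\cV^\iota$ and between $\cP$ and $\cP^\iota$ (the skew-hermitian duality of Proposition \ref{duality} and the definition of $\iota$) to conclude $(\lambda_1)=(\lambda_2)$; this is implicit in the way \cite{dd-pyzz} is organised, but your write-up should make it explicit. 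Second, when you write $\cP(f_1)=\lambda_1\cP_1$ "modulo torsion," note that passage to the torsion-free quotient is safe for the height computation precisely because the target $\OO_{\X'}\hat\otimes\Gamma_F$ is torsion-free over the domain $\OO_{\X'}$, so the pairing kills torsion — worth saying. With those two caveats your plan is sound; the identification of $\cP$ with Fouquet's class, which you flag as the hard part, is indeed the substantive remaining input, but it is also where the present paper's Theorem \ref{theo heeg univ} is designed to be applied.
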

The proof, based on Theorem \ref{ugz thm} and \cite[Theorem B.(iii)]{fouquet}, is virtually identical to that  of \cite[Theorem D]{dd-pyzz}, based on Theorem C.4 \emph{ibid.}  and \cite[Theorem B.(ii)]{fouquet}.

\subsubsection{Generic non-vanishing of $p$-adic heights for self-dual CM motives} 
It is conjectured that cyclotomic $p$-adic height pairings are non-vanishing (and even non-degenerate).  
Results in this direction have been quite rare.  The next theorem generalises a variant of the main theorem of \cite{bur-d}, to which we refer for a discussion of the background.

Consider the set of locally algebraic Hecke characters
$$\chi\colon E^{\ts}\bks E_{\A}^{\ts}\to \Q_{p}(\chi)^{\ts}.$$
satisfying the special  self-duality  condition
\beq\lb{sd hecke}
\chi_{|F_{\A}^{\ts}}=\eta \cdot \chi_{{\rm cyc},F}.
\eeq

This is precisely the set of classical points of the closed subspace 
$$\cE^{\ord, {\rm sd}}_{\H}\subset \cE^{\ord}_{\H}:= \bigcup_{V^{p}\subset \H(\A^{p})}\cE^{\ord}_{\H, V^{p}}$$
cut out by the condition \eqref{sd hecke} on continuous characters. 
The space $\cE^{\ord, {\rm sd}}_{\H}$ is a  torsor for $\cE^{\ord}_{\H'}$; in particular it is smooth of  dimension $[F:\Q]$. Let $\Y\subset \cE^{\ord, {\rm sd}}_{\H}$ be an irreducible component; then there is a sign $\epsilon \in \{\pm 1\}$ such that  for all $x\in \Y_{}^{\rm cl}$,  $\eps(1,\chi) =\epsilon$; we then say that $\Y^{\cl} $ has type $\epsilon$.

Denote by $h_E^-=h_E/h_F$  the relative class number of $E/F$ and  by $D_{F}$ the absolute discriminant of $F$.

\begin{theoA}  \lb{nv CM}
Let $\Y\subset \cE^{\ord, {\rm sd}}_{\H}$ be an irreducible component  of type $-1$. 
Suppose that all primes $v\vert p$ of $F$ split in $E$,  the extension $E/F$ is ramified,  and $p \nmid 2 D_{F} h_E^-$. 

Then, there  exists a non-empty open  subset $\Y'\subset \Y$ such that  for all $y\in \Y^{\cl}\cap \Y'$, the Selmer group $H^{1}_{f}({E}, \chi_{y})$ is nonzero and the $p$-adic height pairing 
$$h\colon H^{1}_{f}({E}, \chi_{y})\ot H^{1}_{f}({E}, \chi_{y}^{-1}(1))\to \Q_{p}(y)\hat{\ot}\Gamma_{F}$$
is non-vanishing.
\end{theoA}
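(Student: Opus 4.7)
The approach is to deduce the statement from the $p$-adic Gross--Zagier formula of Theorem \ref{GZ thm} applied to an auxiliary representation having $V_{\chi_y}$ as a direct summand, generalising the strategy of \cite{bur-d}. The key first move is to fix an ordinary Hecke character $\chi_0$ of $E$ such that $\pi_0:=\theta(\chi_0)$ is a cuspidal ordinary Hilbert modular form of $\G_0$, and, setting $\chi:=\chi_0^{-1}\chi_y$, such that $\Pi_0:=\pi_0\otimes\chi$ is a self-dual, ordinary, locally distinguished, non-exceptional representation of $\G_0\ts\H$. Restriction to $G_E$ then gives the canonical decomposition
$$
V_{(\pi_0,\chi)}\;=\;V_{\chi_y}\;\oplus\;V_{(\chi_0^c/\chi_0)\chi_y},
$$
in which the first summand has $\eps=-1$ (by assumption on $\Y$) while $\chi_0$ can be chosen so that the second has $\eps=+1$, forcing $\eps(V_{(\pi_0,\chi)})=-1$ and placing us in the scope of Theorem \ref{GZ thm}.

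Next, the $p$-adic $L$-function factorises along the decomposition, and since $\calL_p(V_{\chi_y},0)=0$, its first derivative at $s=0$ collapses to
$$
\calL_p'(V_{(\pi_0,\chi)},0)\;=\;\calL_p'(V_{\chi_y},0)\cdot\calL_p(V_{(\chi_0^c/\chi_0)\chi_y},0).
$$
As $y$ varies in $\Y^{\cl}$, the characters $(\chi_0^c/\chi_0)\chi_y$ sweep an anticyclotomic family to which, under the hypotheses of the theorem (all $v\vert p$ split in $E$, $E/F$ ramified, $p\nmid 2D_Fh_E^-$), Hsieh's mod-$p$ non-vanishing theorem for anticyclotomic Katz-type $p$-adic $L$-functions (refining Hida and Hida--Tilouine) applies, yielding a non-empty Zariski-open $\Y'\subset\Y$ on which the second factor is non-zero.

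For $y\in\Y^{\cl}\cap\Y'$, Theorem \ref{GZ thm}, applied with local test vectors making $Q\neq 0$ (whose existence at generic $y$ follows from the local toric computations of \cite{cst}), produces Heegner classes $P,P^{\vee}\in H^1_f(E,V_{(\pi_0,\chi)})$ with $h(P,P^{\vee})$ equal to a non-zero multiple of $\calL_p'(V_{(\pi_0,\chi)},0)$. Decomposing $P=(a,b)$ and $P^{\vee}=(a^{\vee},b^{\vee})$ orthogonally along the two summands, the $\eps=+1$ summand has non-vanishing central $\calL_p$-value, so by the divisibility side of the anticyclotomic main conjecture for CM characters (Hida--Tilouine, Hsieh), its Selmer group $H^1_f(E,V_{(\chi_0^c/\chi_0)\chi_y})$ vanishes after possibly shrinking $\Y'$; hence $b=b^{\vee}=0$. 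Orthogonality of the height pairing then yields
$$
h_{\chi_y}(a,a^{\vee})\;=\;h_{V_{(\pi_0,\chi)}}(P,P^{\vee})\;\neq\;0,
$$
so that $a\neq 0$ in $H^1_f(E,\chi_y)$ and the cyclotomic height pairing on the $\chi_y$-summand is non-vanishing, as required.

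\emph{Main obstacle.} The principal difficulty is the non-vanishing of the auxiliary anticyclotomic $\calL_p$-value in the factorisation: this is precisely the point at which the arithmetic hypotheses on $E/F$, $p$, $D_F$, and $h_E^-$ enter, via the Hsieh/Hida--Tilouine machinery. A subsidiary difficulty is the vanishing of the wrong-sign Selmer summand in the last step, which again rests on one-sided divisibilities in the CM main conjecture under the same hypotheses. The auxiliary character $\chi_0$ must additionally be chosen so that ordinariness, local distinction, non-exceptionality, and the desired local root numbers simultaneously hold for $\Pi_0$ --- a finite-level matching problem posing no essential obstruction in the range under consideration.
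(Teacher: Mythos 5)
There is a genuine gap: your open subset $\Y'$ controls the wrong factor in the factorisation, and as a result you never establish the non-vanishing of $\calL_p'(V_{(\pi_0,\chi)},0)$ that the Gross--Zagier formula needs as input. You correctly write
$\calL_p'(V_{(\pi_0,\chi)},0)=\calL_p'(V_{\chi_y},0)\cdot\calL_p(V_{(\chi_0^c/\chi_0)\chi_y},0)$,
but then use Hsieh's anticyclotomic non-vanishing only to force the \emph{second} (auxiliary) factor to be non-zero for generic $y$. Nothing in your argument constrains $\calL_p'(V_{\chi_y},0)$; if it vanishes, then $h(P,P^\vee)=0$ and the formula is $0=0$, carrying no information. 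The paper addresses exactly this via Burungale's theorem on the $\mu$-invariant of the cyclotomic derivative of the Katz $p$-adic $L$-function \cite{ashay}: this is what shows $d^\sharp L_\Sigma$ is not identically zero on $\Y$, and $\Y'$ is then defined as the complement of the zero locus of \emph{that} derivative (intersected over CM types $\Sigma$). Your ``main obstacle'' paragraph actually mislocates the difficulty — the hypotheses $p\nmid 2D_Fh_E^-$, $v\vert p$ split, $E/F$ ramified feed primarily into Burungale's derivative result, not only into the auxiliary Hsieh non-vanishing.

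Beyond this, the treatment of the auxiliary character is inverted relative to the paper: the paper chooses $\chi_0$ \emph{after} fixing $y\in\Y^{\cl}\cap\Y'$, via the argument of \cite[Lemma 2.5]{bur-d} applied to a component $\Y_2$ of sign $+1$ satisfying the hypotheses under which the anticyclotomic Main Conjecture \cite{hida-mc, hida-quad} is available; one then uses Hsieh's non-vanishing along $\Y_2$ and the Main Conjecture to simultaneously obtain $L_p(\chi',0)\neq 0$ \emph{and} $H^1_f(E,\chi')=0$ (which are equivalent granted the Main Conjecture). Your plan fixes $\chi_0$ in advance, but then the claimed vanishing of the $\eps=+1$ Selmer summand has to hold uniformly for all $y\in\Y'$, which is not what the one-sided divisibility directly delivers — you would need to vary $\chi_0$ with $y$, at which point the per-$y$ construction of the paper becomes necessary. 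Neither of these issues is a matter of presentation: as written the argument does not prove the non-vanishing claimed in the theorem.
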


\subsubsection{Non-vanishing of universal  Heegner classes along some classical Hida families} Part 3 of the following theorem is also a contribution to the non-vanishing conjecture for $p$-adic heights. Parts 1 and 2 provide, to the best of the author's knowledge, the first piece of theoretical evidence towards conjectures of  Greenberg \cite{greenberg} and Howard \cite{howbig}.

\begin{theoA} \lb{nv exc} Let $\X_{0}$ be a Hida family for ${\rm PGL}_{2/\Q}$, and let $\X_{0}^{\sharp}$ be the Hida family for $\GL_{2/\Q}$ containing $\X_{0}$. Denote by $\cV_{0}$, $\cV_{0}^{\sharp}$ the associated rank-$2$ representations of $G_{\Q}$. 

  Suppose that $\X_{0}$ contains a point corresponding to an elliptic curve $A$ with split multiplicative reduction at~$p$,  satisfying $L(A, 1)\neq 0$.
   Then:
\begin{enumerate}
\item a universal Heegner class  $\calP_{0}$ is nonvanishing along $\X_{0}$;
\item the Selmer group $\wtil{H}^{1}_{f}(\Q, \cV_{0})$ has generic rank~$1$, generated by $\calP_{0}$;
\item the $p$-adic height pairing $h_{\cV_{0}/\cV_{0}^{\sharp}} $ is non-vanishing.
\end{enumerate}
\end{theoA}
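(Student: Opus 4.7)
The strategy is to evaluate the universal Gross--Zagier formula at the exceptional classical point corresponding to $A$, converting the hypothesis $L(A,1)\neq 0$ into nonvanishing via an exceptional zero formula. First, I choose an auxiliary imaginary quadratic field $E/\Q$ in which $p$ splits and that satisfies the Heegner hypothesis for the tame conductor of $A$; by the nonvanishing theorem for quadratic twists (Waldspurger, Bump--Friedberg--Hoffstein), $E$ may additionally be chosen so that $L(A\otimes\eta_{E/\Q},1)\neq 0$ and hence $L(A_E,1)\neq 0$. Together with the trivial Hecke character $\chi=\mathbf{1}$, the data $(A,E,\chi)$ define a classical exceptional point $z_A\in \X^{\cl}$ of a locally distinguished Hida family $\X$ for $(\G\ts\H)'$ that lies above an open neighbourhood of $A$ in $\X_0$; Theorem~\ref{theo heeg univ} produces the universal class $\calP$ on $\X$, and $\calP_0$ is obtained as its projection to $\X_0$.

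The key computation is at $z_A$. Since $A$ has split multiplicative reduction at $p$, the condition $(\textup{n-exc})$ fails there, so Theorem~\ref{ugz thm} does not apply directly; instead I would use its exceptional variant Theorem~\ref{GZ thm'}, which expresses $h_{\cV/\cV^{\sharp}}(\calP(f_1),\calP^{\iota}(f_2))$ at $z_A$ in terms of the $\sharp$-derivative ${\rm d}^{\sharp}\calL_p(\cV^{\sharp})(z_A)$ multiplied by the explicit local toric periods $\cQ$. By the Greenberg--Stevens-type exceptional zero formula (as recovered via the paper's formalism in Remark~\ref{rmk GS}), this derivative equals, up to a nonzero constant and for a suitable choice of test vectors, the product of a Mazur--Tate--Teitelbaum $\mathscr{L}$-invariant (nonzero because the reduction is split multiplicative) and the algebraic central $L$-value $L(A_E,1)/\Omega_{A_E}$ (nonzero by our choice of $E$). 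Hence $h_{\cV_0/\cV_0^{\sharp}}$ and $\calP_0$ are both nonzero at $z_A$, and by openness of the nonvanishing locus, they are generically nonzero on $\X_0$. This yields parts (1) and (3).

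For part (2), I would combine the lower bound $\mathrm{rk}\,\wtil{H}^{1}_{f}(\Q,\cV_0)\geq 1$ (furnished generically by $\calP_0$) with the Hida-family generalisation of Kolyvagin's Euler-system rank bound due to Fouquet~\cite{fouquet} (which, under suitable hypotheses implied by our setup, gives the matching upper bound $\leq 1$ at a dense set of classical points), concluding that the generic rank is exactly $1$ and is realised by $\calP_0$. The main obstacle throughout is the exceptional zero phenomenon at $z_A$: the universal $p$-adic $L$-function vanishes identically on $\X_0$ (forcing passage to the $\sharp$-derivative in the direction normal to $\X_0$ inside $\X_0^{\sharp}$), and at $z_A$ the classical Heegner class on $A_E$ sits in $H^{1}_{f}\subsetneq \wtil{H}^{1}_{f}$ in a potentially degenerate way. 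The exceptional zero formula is the crucial bridge converting the complex-analytic hypothesis $L(A,1)\neq 0$ into the sought arithmetic nonvanishing of $\calP_0$ and of $h_{\cV_0/\cV_0^{\sharp}}$.
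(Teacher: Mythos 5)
Your approach contains a genuine error that would derail the argument: you choose an auxiliary imaginary quadratic field $E$ in which $p$ \emph{splits}, but the paper chooses $E$ in which $p$ is \emph{inert}, and this distinction is essential. With $p$ split, the factorization $\calL_p(V_{(\pi_0,\one)},s)\doteq L_p(A,s)\,L_p(A\otimes\eta,s)$ has \emph{both} factors afflicted by an exceptional (trivial) zero at $s=0$, since $\eta_p=\one$ and hence $A\otimes\eta$ still has split multiplicative reduction at $p$. The cyclotomic derivative $\calL_p'(V_{(\pi_0,\one)},0)$ then vanishes, and the exceptional Gross--Zagier formula (Theorem~\ref{GZ thm'}) becomes the vacuous identity $0=0$; your pipeline never gets off the ground. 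Moreover, by Lemma~\ref{6444}.2(a), the kernel of $\wtil{H}^1_f(\Q,V_pA_E)\to H^1_f(\Q,V_pA_E)$ is $2$-dimensional when $p\in S_p^{\rm exc,\,s}$, rather than $1$-dimensional, which makes the ``leading-term at an exceptional point'' analysis delicate. With $p$ inert, $\eta_p$ is unramified quadratic, $A\otimes\eta$ has nonsplit multiplicative reduction at $p$, only one factor has the trivial zero, the kernel is a line $\Q_p\cdot[q]$ spanned by the class of the Tate parameter, and the Bertolini--Darmon rigid-analytic formula~\cite{BDrigid} applies.

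Even aside from the split/inert issue, your route differs from the paper's in two further respects. First, you assert that the $\mathscr{L}$-invariant is ``nonzero because the reduction is split multiplicative''; this does not suffice --- one needs $\log_p(q)\neq 0$, which is the transcendence theorem of Barr\'e-Sirieix--Diaz--Gramain--Philibert, cited by the paper as \cite{st et}. Second, the paper does not argue via Greenberg--Stevens at all: it directly evaluates $\cP_{0,E}(x_0)\otimes\cP^{\iota}_{0,E}(x_0)$ as an explicit multiple of $[q]\otimes[q]$ using the rigid Bertolini--Darmon special-value formula, deducing part~(1) first, then (3) from the explicit identity $h([q],[q])=\ell(q)$ together with \cite{st et} (the paper instead records that \emph{Greenberg--Stevens can be re-derived} from Theorem~\ref{GZ thm'} plus the BD formula, see Remark~\ref{rmk GS}; your proposal runs this implication in the opposite, and weaker, direction). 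Finally, for part~(2) the paper invokes Nekov\'a\v{r}'s Euler-system bound \cite{nek-koly} rather than Fouquet's. Replacing ``$p$ splits'' by ``$p$ is inert'' would repair the most serious gap, but you would still need to address the $\mathscr{L}$-invariant input and the correct attribution of the key explicit formula.
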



\subsection{Outline  of  the proofs} The basic  strategy to prove the main results is very simple. When $W$ is trivial, Theorem \ref{GZ thm} was proved  in \cite{dd-pyzz, nonsplit} under some technical assumptions. As the set of points of trivial weight in $\X^{\cl}$ satisfying those assumptions is still dense in $\X$, this suffices to deduce Theorem \ref{ugz thm} once its terms are defined; by a multiplicity-one argument and an explicit local  computation, this in turn implies Theorem \ref{GZ thm} for all $W$.
Much of this work is therefore  an exercise in $p$-adic interpolation to construct the objects  of   \S\S~\ref{sec: uH}-\ref{sec: ugz}; 
 the table of contents, and the internal references given so far, should suffice to guide the reader through the paper. 
 
 {\begin{figure}[h] $$\includegraphics[width=.7\textwidth]{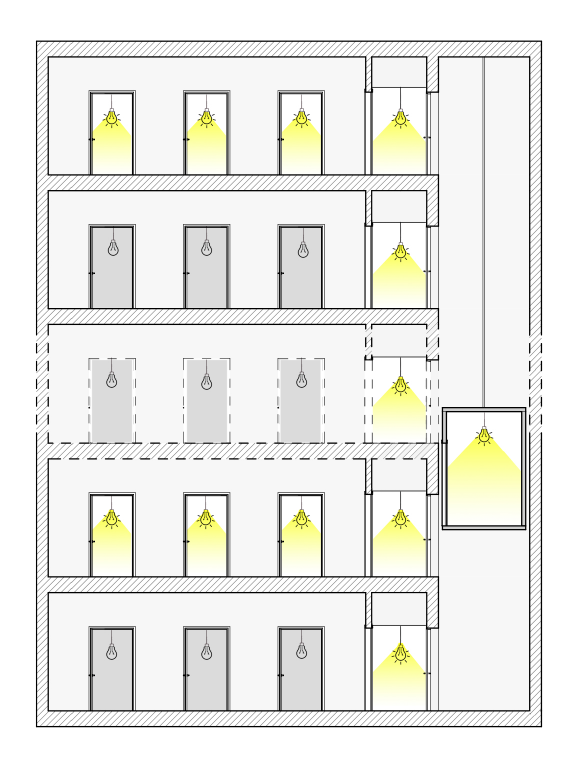} 
$$
\caption{An illustration of the proof of Theorems \ref{GZ thm} and \ref{ugz thm}. Each of the (infinitely many) floors corresponds to a representation $\Pi$ as in Theorem \ref{GZ thm},  each apartment to a quadruple ${\bf f}=(f_{1}, f_{2}, f_{3}, f_{4})$, and the building  to a Hida family. A light being on indicates that the corresponding Gross--Zagier formula is proven. On `most' floors corresponding to a $\Pi$ of trivial weight, all lights are on by \cite{dd-pyzz, nonsplit}. In this paper, we construct the lift corresponding to the formula of Theorem~\ref{ugz thm}, with doors (interpolation statements) onto special  apartments in each floor (the formulas of Theorem \ref{GZ thm'} in \S~\ref{B ord}, equivalent to Theorem \ref{GZ thm} for certain special quadruples ${\bf f}$). As soon as the lights in a dense set of floors in the building are on, the light in the lift is on; this allows to turn on the light in all the special apartments. Finally, the multiplicity-one principle allows to propagate the electricity among different apartments on the same floor.}
\lb{fig1}
\end{figure} }
 
 The proof of Theorem \ref{theo heeg univ} is completed in \S~\ref{645} and the proof of Theorems \ref{GZ thm}, \ref{ugz thm} is completed in  \S~\ref{sec: final}, where we also prove Theorem \ref{nv CM}. Theorem  \ref{nv exc} is proved  at the end of  \S~\ref{sec bd conj} using a known case of the conjecture made there.  Constructions and calculations of a local nature are gathered in  Appendix~\ref{app A}.  
 
 \medskip
 
 We highlight some of the key tools we use (many have already been mentioned):
 \begin{itemize}
 \item
 \nek's theory of Selmer complexes and $p$-adic heights (\cite{nek-selmer}, see also \cite[Appendix C]{venerucci-thesis}), applied to Hida theory;
\item the local Langlands correspondence in families as described in \cite{LLC}, that is necessary for the interpolation of the terms $Q_{v}$;
\item Emerton's point of view \cite{emerton} on $p$-adic cohomological automorphic representations as having a component at `infinity' that is an algebraic representation of the relevant group; in our context, this further allows to properly consider  `incoherent' reductive groups;
\item the multiplicity-one result for $\H'(\A)$-invariant functionals;
\item the definition and study of semi-local operators at $p\infty$, as the key to transitioning between ordinary and anti-ordinary or toric parts of a module; 
\item the explicit evaluation of certain local toric periods in terms of gamma factors.
 \end{itemize}
 
We view the framework introduced in the appendix as the main technical novelty contributed by the present work, and we hope that the underlying approach will prove useful in many other contexts.\footnote{Cf. the work \cite{loeffler} discussed in \S~\ref{contemp} below.}

 \subsubsection*{Further directions} We have not paid attention to the integral aspects; doing so may also remove the need to  restrict to open subsets of $\X$ at various points, e.g. by restricting to newforms or using the  local Langlands correspondence in integral families of Emerton, Helm, and Moss (see references in \cite{LLC}).  (However, this would require imposing some residual irreducibility assumptions for the representation $\cV_{v}$.)  This may lead to non-vanishing results for higher-weight Heegner cycles, automorphic toric periods,  and $L$-values: an example we have in mind is the anticyclotomic non-vanishing result of \cite{CH}, based on a construction not unlike that of Theorem \ref{wald univ}. 
 
  In a different direction, all of the constructions of this paper could be generalised, with work, to the context of eigenvarieties;   the  Gross--Zagier formulas should also extend to that context.
 
\subsection{Related contemporary work}\lb{contemp} After a first version of this paper was made publicly available, the following partly related works have appeared.
\begin{itemize}
\item In  \cite{JLZ}, the authors construct universal Heegner classes for Coleman families of elliptic modular forms (with classical restrictions); then they prove that these classes interpolate the images of Heegner cycles, by a method not dissimilar to that of the present work. Similar results are also  independently proved in \cite{ota} in the ordinary case, and (by a  different method) in   \cite{BL} in the case where $p$ splits in the field of complex multiplications.
\item In \cite{BPS},  the authors
use \cite{JLZ, BL} and a strategy similar to the one of the present paper to prove the $p$-adic Gross--Zagier formula for critical-slope refinements of elliptic modular forms, conditionally on  work in preparation  of Kobayashi on such formula for small-slope refinements. Their idea is to deduce, from the latter,  a $p$-adic Gross--Zagier formula in a Coleman family, within which the objects considered by Kobayashi form a dense subset; then specialise the formula to other  classical points.
\item In \cite{loeffler}, Loeffler gives a method to construct  $p$-adic  families of cohomology classes attached to   inclusions of reductive groups $\H_{1}\subset \H_{2}$ such that 
$\H_{2}/\H_{1}$ is  a spherical variety. His local-at-$p$ construction vastly generalises the one of Proposition \ref{gHo}. A difference is that in \cite{loeffler}, the weight variation is not addressed (accordingly, that construction does not use the `infinite' place).
\end{itemize}

 \FloatBarrier

\subsection{Acknowledgements} I would like to thank  Jo\"el Bellaiche, Ashay Burungale, Ga\"etan Chenevier,  Olivier Fouquet, Ming-Lun Hsieh, David Loeffler, Michele Fornea, Jonathan Pottharst,  Ye Tian,  Rodolfo Venerucci, and Sarah Zerbes for useful conversations or correspondence or mathoverflow answers. I am also grateful to the referee for coaxing me to write \S~\ref{sec bd conj}, and to Simone Dell'Ariccia for Figure~\ref{fig1}.
 Finally,  I would  like to thank Shouwu Zhang for  a  vague question  he asked me in 2010; this paper may be a partial answer.

\subsection{Notation}\label{sec: not1} Throughout the paper we use the  following notation unless otherwise noted.
\begin{itemize}
\item $\A$ is the ring of ad\`eles of $\Q$;
\item   the fields $F$ and $E$ are as in the introduction, $\eta=\eta_{E/F}\colon F_{\A}^{\ts}/F^{\ts}\to \{\pm 1\}$ is the associated quadratic character,  and we denote by  $\baar{E}$ a fixed  algebraic closure {of $E$};
\item we denote by $G_{E}$ the absolute Galois group of a field $E$; if $E$ is a number field and $S$ is a finite set of places, we denote by $G_{E, S}$ the Galois group of the maximal extension of $E$ unramified outside $S\infty$;
\item  for a place $w $ of  a number field $E$, we denote by $\vpi_{w}$ a fixed uniformiser at $w$, and by $q_{w} $ the cardinality of the residue field;
\item the class field theory isomorphism is normalised by sending uniformisers to geometric Frobenii; for $E$  a number field (respectively a local field), we will then identify   characters of $G_{E}$ with characters of $E_{\A}^{\times}/E^{\times}$ (respectively $E^{\times})$ without further comment;
\item let $\mu\subset \Q_{p}^{\times}$ be the subgroup of roots of unity, and let $\langle \cdot\rangle_{p}\colon \Q_{p}^{\times}\to 1+2p\Z_{p}\subset \Q_{p}^{\times} $ be the unique continuous character such that $x_{p}\langle x\rangle_{p}^{-1}$ has values in $\mu$. 
The $p$-adic  cyclotomic character of $\Q$ is 
 $$\chi_{{\rm cyc}, \Q}(x):=|x|_{\A^{\infty }}\la x_{p}\ra_{p} , $$ a character on $\A^{\infty,\times}/\Q^{\times}$.
   If $E$ is a number field, the $p$-adic cyclotomic character of $E$ is the character
    \beq\lb{chi cyc}\chi_{{\rm cyc}, E}=\chi_{{\rm cyc}, \Q } \circ N_{E/\Q}\colon E_{\A^{\infty}}^{\times}/E^{\times}\to \Q_{p}^{\times}.\eeq
\end{itemize}

\section{Automorphic and Galois representations }
In this section we define the basic set up regarding  ordinary automorphic representations for our groups, and the associated Galois representations.

\subsection{Groups}\label{sec: not} We introduce our notation on groups and related objects.

\subsubsection{Incoherent reductive groups} \lb{ssec incoh} 
Let $F$ be a global field. For the purposes of this discussion, a `coherent' reductive  group over $F$ is just a reductive  algebraic group in the usual sense. The following notion is probably appropriate only in the context of orthogonal or unitary groups, cf. \cite{gross-incoh}; we do not explicitly restrict to that case just for the sake of brevity.

 An \emph{$F$-incoherent reductive group $\G$ over $F$} is a collection of reductive  groups $\G_{v}/F_{v}$, for $v$ a place of $F$, such that for each place $w$ of $F$ there is a coherent reductive  group $\G(w)/F$ that is $w$-nearby to $\G$ in the following sense: for each place $v\neq w$, $\G(w)\ts_{F}F_{v}\cong \G_{v}$, and the groups $\G(w_{})\ts_{F}F_{w}$ and  $\G_{w} $ are non-isomorphic inner forms of each other.  

 Let $F/F_{0}$ be a finite extension of global fields. An \emph{$F$-incoherent reductive  group $\G$ over $F_{0}$} is a collection of reductive groups $\G_{v_{0}}/F_{0, v_{0}}$, indexed by the places $v_{0}$ of $F_{0}$, satisfying the following. For each $v_{0}$, we may write $\G_{v_{0}}=\Res_{F_{v_{0}}/F_{0,v_{0}}}\G_{F, v_{0}}:=\prod_{v\vert v_{0}}\Res_{F_{v}/F_{0, v_{0}}} G_{F, v}$ for a collection of reductive groups $G_{F, v}/F_{v}$ that forms an $F$-incoherent algebraic group $\G_{F}$ over $F$. 
In this  situation, we write $\G=\Res_{F/{F_{0}}} \G_{F}$. We write just `incoherent' when $F$ is unimportant or understood from context. We also write $\G(F_{v_{0}}):= \G_{v_{0}}(F_{v_{0}})$ for short.

  By definition,  for all but finitely many $v_{0}$, the group $\G_{v_{0}}$ is unramified. In particular, if $S$ is a finite set of places of $F_{0}$, it makes sense to consider the restricted tensor product $\G(\A^{S}):= \prod'_{v_{0}\notin S}\G(F_{v_{0}})$. 
  
   It will be convenient to consider a $p$-adic variant in the case where $F=\Q$ and $\G_{F, \infty}$ is anisotropic modulo its centre (so that all its admissible representations are finite-dimensional). In this case we \emph{redefine}, for any finite set of finite places $S$,
  $$\G(\A^{S}):= \G(\A^{S\infty})\ts G_{\infty},$$
  where  $ \G_{\infty}:=\G_{p}(\Q_{p}) $ with the Zariski topology.

The  main example of interest to us is the following: $F$ is our totally real number field, $F_{0}=\Q$, and $\G_{F_{v}}=\B_{v}^{\ts}$. The conditions are satisfied since, for each place $w$, there is a quaternion algebra $B(w)$ over $F$ such that $\B_{v}\cong B(w)_{v}$ if and only if $w\neq v$.   Other examples are obtained as follows: if $\G$ is an incoherent group and $\H$ is a coherent group, the product $\G\ts\H$ (whose precise definition is left to the reader) is an incoherent group.

\subsubsection{Hecke algebras}   Let  ${\G}$ be a coherent or incoherent reductive group over $\Q$, $A$ a ring.

 If  $S$ is a finite set of primes of $\Q$ different from $p$,
let
$${\mathscr H}_{{\G, A}}:=C^{\infty}_{\rm c}({\G}(\A^{p\infty}),A), \qquad {\mathscr H}^{S}_{{\G}}:=C^{\infty}_{\rm c}({\G}(\A^{Sp\infty}),A)$$
be the Hecke algebras. If $U\subset \G(\A^{\infty})$ is a compact open subgroup we let $\cH_{\G,U,A}$ and $\cH_{G, U,
A}^{S}$ be the respective subalgebras of functions that are bi-$U$-invariant. If $S$ is \emph{$U$-spherical} in the sense  that $U_{v}$ is maximal for all $v\notin S$, we say that $ \cH_{\G,U}^{S}$ is a \emph{spherical} Hecke algebra. 

If $M$ is an $A$-module with a smooth $A$-module action by ${\mathscr H}= \cH_{{\G}}$, $\cH_{{\G}}^{S}$, $\cH_{{\G},U}$, or  $\cH_{{\G},U}^{S}$, we let $\cH({M})\subset \End_{A}(M)$ by the image of $\cH$. We define the \emph{spherical Hecke algebra} acting on $M$ to be 
$$\cH_{\G}^{\rm sph}:=\varprojlim_{S,U} \cH_{{\G},U}^{S}(M)\subset  \End_{A}(M)$$
if the limit, taken over pairs $(S,U)$ such that $S$ is $U$-spherical, stabilises. It is equipped with an involution $\iota $ coming from the involution on $\G_{*}(\A)$.

 \subsubsection{Subgroups of $\G_{*}$} We restrict,  for the rest of this subsection,  to the groups in \eqref{list}, denoted collectively by $\G_{*}$. 
Assuming that $\B_{p}$ is split, we fix an  identification $G:=\G(\Q_{p})\cong \GL_{2}(F_{p})$ for the rest of the paper, by which we obtain  $\Z_{p}$-models $\G_{*/\Z_{p}}$ for all of the groups $\G_{*/\Q_{p}}$.

 Let $N_{\G}\subset \G(\Q_{p})\cong \GL_{2}(F_{p})$ be the subgroup of unipotent matrices. Let $N_{\H}=\{ 1\}\subset \H(\Q_{p})$, and for  $?=\emptyset$ (respectively $?= {}'$), let $N_{(\G\times \H)^{?}}:=N_{\G}\times \{1\}$ (respectively its image in $(\G\times\H)'(\Q_{p})$). Finally, let $N_{\G_{*},0}:= N_{\G_{*}}\cap \G_{*/\Z_{p}}(\Z_{p})$.

Let $T_{\G_{*}}\subset \G_{*}(\Q_{p})$ be the maximal torus consisting of diagonal matrices when $\G_{*}=\G$ and compatible with this choice when $\G_{*}$ is any other group. Let $T_{\G,0}:= T_{\G_{*}}\cap \G_{*}(\Z_{p})$ the integral subgroup.
Let    $T_{\G_{*}}^{+}\subset T_{\G_{*}}$
 be the normaliser of $N_{\G_{*}, 0}$ in $T_{\G_{*}}$, so that   $T_{\G}^{+}:=\prod_{v\vert p}T_{\G, v}^{+}$ with 
 $$T_{\G, v}^{+}:=\{ \smalltwomat {t_{1}}{}{}{t_{2}} \, :\,  v(t_{1})\geq v(t_{2})\} .$$

\subsubsection{Involutions} \lb{ssec invo} We denote by $\iota$ the involutions on 
 $\cH^{\rm sph}_{\G}$ induced by $g\mapsto g^{\rm T,-1}$, and on $\H$ induced by $h\mapsto h^{-1}$.   

We also denote by $\iota$ the involution of $T_{\G_{*}}$  deduced by the involutions 
\beq 
\label{involution}
t\mapsto t^{\iota}:= t\nu(t)^{-1},
\eeq
where  $\nu$ denotes the reduced norm if $\G_{*}=\G$, the norm $N_{E/F}$ if $\G=\H$. 
It preserves the sub-semigroups  $T_{\G_{*}}^{+}$.

\subsubsection{Congruence subgroups}  \lb{ssec cong}
Let $G=\GL_{2}(F_{p})$, $H=E_{p}^{\times}$, $H'=E_{p}^{\times}/F_{p}^{\times}$,  $(G\times H)':=(G\times H)/F_{p}^{\times}$ where $F_{p}^{\times }$ is identified with the centre of $G\times H$.

For $r\in \N$, define the compact subgroups $U(\vpi_{v}^{r})\subset U^{1}_{1}(\vpi_{v}^{r})\subset \GL_{2}(F_{v})$ by 
\begin{align*}
U^{1}_{1}(\vpi_{v}^{r}) &:= \{ \smalltwomat abcd \in \GL_{2}(\OO_{F,v})\, :\,  a-1\equiv d-1 \equiv c\equiv 0 \pmod{\vpi_{v}^{r}}\},\\
U(\vpi_{v}^{r}) &:= \{ \smalltwomat abcd \in \GL_{2}(\OO_{F,v})\, :\,  a-1\equiv d-1 \equiv b\equiv c\equiv 0 \pmod{\vpi_{v}^{r}}\}.
\end{align*}

For each place  $v\vert p$  of $F$, we  fix $\epsilon_{v}\in F_{v}^{\times}$ such that  $E_{v}=F_{v}(\sqrt{\epsilon_{v}})$;
for technical reasons it will be convenient to \emph{assume that $v(\epsilon_{v})\geq 1$.}

 For $\ur=(r_{v})\in \N^{\{v\vert p\}}$, we define  the   compact open subgroups   $V_{F,v, r_{v}}:=1+\vpi^{r_{v}}_{v}\OO_{F,v}\subset F_{v}^{\times}$ and
$$V_{p, \ur}=\prod_{v\vert p}V_{v, r_{v}} \subset H=\prod_{v\vert p}E_{v}^{\times},
 \qquad U_{p,\ur}=\prod_{v\vert p}U_{v, r_{v}} \subset G=\prod_{v\vert p} \GL_{2}(F_{v})$$
  as follows:
\beqq\lb{Vr}
V_{ v, r_{v}}&:=\begin{cases}
 V_{F,v, r_{v}}(1+\vpi^{r_{v}}\OO_{E,w}) &\text{if $v $  splits in $E$}\\
 V_{F,v, r_{v}}(1+\sqrt{\epsilon_{v}}\vpi^{r_{v}}\OO_{E,w}) &\text{if $v$ is nonsplit  in $E$}\\
 \end{cases},\\
  \qquad U_{ v, r_{v}}&:=
  U_{1}^{1}(\vpi_{v}^{r_{v}}).
\eeqq
We also define  $V_{p, \ur}':= V_{p, \ur}F_{p}^{\times}/F_{p}^{\times}\subset H'$, and  
$$K_{p, \ur}, K_{p}(p^{\ur})\subset (G\times H)'$$
 to be the images of $U_{p, \ur}\times V_{p, \ur}$, $U_{p}(p^{\ur})\times V_{p, \ur}$ respectively.

 We also denote  $$    T_{\G_{*},r}:=T_{\G_{*}}\cap U_{*,p}(p^{r }).$$

If $p\OO_{F,p}=\prod_{v}\vpi_{v}^{e_{v}}\OO_{F,v}$, we associate to an integer $r$ the tuple $\ur:= (e_{v}r)_{v\vert p}$. Denoting by $U_{*}$ any of the symbols $U, V, K$, we then let $U_{*,p,r}:=U_{*,p,\ur}$, $U_{*, p}(p^{r}):=U_{*,p}(p^{\ur})$.

\subsection{Algebraic representations}
We set up some notation for algebraic representations of a (coherent or incoherent) reductive group $\G$ over $\Q$,  then  discuss in some more detail the situation for the groups of interest to us.
Let $L$ be an extension of $\Q_{p}$, $W$ a finite-dimensional irreducible algebraic (left) representation of $\G$ over $L$.  
Throughout the paper, we tacitly identify left and right algebraic representations of $\G$ via $g.w=w. g^{-1}$.

\subsubsection{Highest-weight character} We suppose that $\G=\G_{*}$ is one of the groups of \S~\ref{sec: not}.
Let  $T_{\G_{*}}\subset G_{*}$ be the fixed  torus and let $N_{\G_{*}}\subset G_{*}$ be the fixed unipotent subgroup. If $W$ is an irreducible left  (respectively right) representation of $\G_{*}$, we denote by $\sigma_{W}$ the character by which $T_{\G_{*}}$ acts on the line  of highest-weight vectors $W^{N_{\G_{*}}}$ (respectively highest-weight covectors $W_{N_{\G_{*}}}$). 

The highest-weight character of $W$ is related to that of its  dual by 
\beq \label{highest weight}
\sigma_{W^{\vee}}(t)
=\sigma_{W}(t^{\iota}),
\eeq
where ${\iota}$ is the involution \eqref{involution}.

\subsubsection{Quaternionic  special case} Suppose that $\G(\A^{\infty})$ is 
 the group of units of a quaternion algebra  $\B^{\infty}$ over $\A^{\infty}$. 
Let $L$  be an extension of $\Q_{p}$ splitting $F$ and $B_{p}$.  A \emph{(cohomological)  weight} for $\G$ over $L$ is a list  $\underline{w}=(w; (w_{\sigma})_{\tau\colon F\into L})$ of   $[F:\Q]+1$ integers  of the same parity such that $w_{\sigma}\geq 2$ for all $\sigma\colon F\into L$. Denote by ${\rm Std}_{\sigma}\cong (L^{\oplus 2})^{*}$ (respectively, ${\rm Nm}_{\sigma}\cong L$) the standard (respectively, reduced norm) representation of ${\G}(\Q_{p})=B_{p}^{\times}$ factoring through $(B_{p}\otimes_{F_{p}, \sigma} L)^{\times}\cong \GL_{2}(F_{p}\otimes_{\sigma} L)$. We associate to the weight $\uw$ the algebraic representation 
\begin{align}\label{Ww}
W_{{\G},\underline{w}}:= \bigotimes_{\sigma\in \Hom(F, L)}{\rm Sym}^{w_{\sigma}-2}{\rm Std}_{\sigma}\otimes {\rm Nm}_{\sigma}^{(w-w_{\sigma}+2)/2}
\end{align}
of $\G_{/\Q_{p}}$, whose dual is $W_{\G, \underline{w}^{\vee}}$ with $\underline{w}^{\vee}=(-w; (w_{\sigma}))$. 

 Suppose for a moment  that $L/\Q_{p}$ is Galois, then  $\Gal(L/\Q_{p})$ acts on the set of all weights $\uw$ and,   letting $L(\uw)\subset L$ be the fixed field of the stabiliser of $\uw$, 
 the representation $W_{\G, \uw}$ descends to a representation 
 over $L(\uw)$.  It is then  convenient to use the following terminology: if $W$ is an algebraic representation of $\G$ over $L$ and $\uw$ is a cohomological weight over a finite extension $L'/L$, we say that $W$ is of weight $\uw$ (with respect to $L\into L'$) if $W\otimes_{L}L' \cong W_{\G,\uw}$.

Explicitly,  $W_{\G,\uw}$ may be described as  the space of tuples  $p=(p_{\sigma})_{\sigma\colon F\into L}$ such that $p_{\sigma}\in L[x_{\sigma}, y_{\sigma}]$ is a homogeneous polynomial of degree $w_{\sigma}-2$,
 with action  on each $\sg$-component by
\beq\lb{ghp}
g.p_{\sg}(x, y)= \det(\sg g)^{w-w_{\sg}+2\over 2}\cdot  p_{\sg}((x, y)\sg g).
\eeq
The representation $W_{\G, \uw}$ admits a natural   $\OO_{L}$-lattice, stable under the action of a maximal order in $\G(\Q_{p})$,
\beq \label{homog poly lattice}
W_{\G, \uw}^{*,\circ}\subset W_{\G, \uw}^{*}
\eeq
consisting of tuples of polynomials with coefficients in $\OO_{L}$.

If $W=W_{\G, \uw}$, we have   $\sigma_{W}:=\otimes_{v}\sigma_{W,v}\colon T_{v}\to L^{\times}$ with 
\beq\label{alg char}
\sigma_{W,v}\colon \smalltwomat {t_{1}} {}{}{t_{2}} \mapsto \prod_{\sigma\colon F_{v}\into L} \sigma(t_{1})^{{w+w_{\sigma}-2\over 2}}\sigma(t_{2})^{w-w_{\sigma}+2\over 2}.
\eeq
By abuse of notation we still denote by $\sigma_{W}=\otimes \sigma_{W,v}$ the algebraic character of $F_{p}^{\times}$ defined by 
\beqq
\sigma_{W, v}(x):=\sigma_{\uw, v}\left(\smalltwomat x{}{}{1}\right).
\eeqq

\subsubsection{Toric special case}\lb{hecke 1} Let $L$ be a finite extension of $\Q_{p}$ splitting $E$. A \emph{cohomological  weight } for $\H$ is a list  $\ul:= (l, (l_{\sigma})_{\sigma\colon F\into L})$ of $[F:\Q]+1$ integers of the same parity.  For each $\sigma\colon F\into L$, fix  an arbitrary extension $\sigma'$ of $\sigma$ to $E$ (this choice will only intervene in the numerical labelling of representations of $\H$).
We let
\begin{align}\label{Wl}
W_{{\H},\underline{l}}:= \bigotimes_{\sigma\in \Hom(F, L)}{\sigma'}^{l_{\sigma}}\otimes \sigma\circ{\rm Nm}_{E_{p}/F_{p}}^{{l-l_{\sigma}\over 2}},
\end{align}
as a $1$-dimensional vector space over $L$ with action by $\H(\Q_{p})=E_{p}^{\times}$.  After choosing an identification of this space  with $L$, it admits a lattice $W_{\H,\underline{l}}^{\circ}$, stable under the action of $\OO_{E, p}^{\times}$.  If$W$ is an algebraic representation of $\H$ over $L$ and $\ul$ is a cohomological weight over a finite extension $L'/L$, we say that $W$ is of weight $\ul$ (with respect to $L\into L'$) if $W\otimes_{L}L'\cong W_{\H,\ul}$.

\subsection{Shimura varieties and   local systems}
We  again write $\G_{*}$ to denote any of the groups \eqref{list}.

\subsubsection{Shimura varieties}\lb{ssec shi}
 For $\tau$ an infinite place of $F$,  let $\G_{\tau}=\Res_{F/\Q}\G_{F}(\tau)$ be the $\tau$-nearby group as  in \S~\ref{ssec incoh}.  Consider the  Shimura datum $(\G_{\tau}, \{h_{\G, \tau}\})$, where $h_{\G, \tau}\colon {\rm S}:=\Res_{\bC/\R}{\bf G}_{m}\to \G_{\R}$ the Hodge cocharacter of  \cite[\S0.1]{carayol}.   Let $h_{\H}\colon  {\rm S} \to \H_{\R}$ be the unique cocharacter such that ${\rm e}_{\R}\circ h_{\H}=h_{\G}$. By products and projections we deduce Hodge cocharacters $h_{\G_{*}, \tau}$, hence Shimura data $(\G_{*, \tau}, h_{\G_{*, \tau}})$, for any of the groups \eqref{list}; from $h_{\H, \tau}$ we also obtain an extension of $\tau $ to an embedding $\tau\colon E\into \bC$.  Then we obtain towers of Shimura varieties  $X_{*, \tau}/\tau E_{*}$, where  the reflex field $E_{*}:=E$ unless $\G_{*}=\G$, in which case  $E_{*}=F$.  
  These data descend to $E_{*}$: there are towers  
  $$X_{*}/E_{*}$$ such that $X_{*}\ts_{\Spec E_{*}} \Spec \tau E_{*}=X_{*,\tau}$, see \cite[\S~3.1]{yzz}. Throughout this paper, we will also use the notation $\baar{X}_{*}:=X\ts_{\Spec E_{*}}\Spec{\baar{E}_{*}}$.

 We will use also the specific names \eqref{shlist} for those varieties; an explicit description of some of them is as follows:
\beq\lb{z quot}
X_{U, \tau}(\bC)&\cong B(\tau)^{\ts}\bks \mathfrak{h}^{\pm}\ts \B^{\infty\ts}/ U \cup \{\text{cusps}\}, \qquad Y_{V}(E^{\rm ab}) \cong E^{\ts}\bks E_{\A^{\infty}}^{\ts}/V, 
\\ Z_{K} &\cong X_{U}\ts Y_{V}/\Delta_{U, V}, \qquad \Delta_{U, V}:= F_{\A^{\infty}}^{\ts}/F^{\ts}\cdot ((U\cap  F_{\A^{\infty}}^{\ts})\cap (V\cap  F_{\A^{\infty}}^{\ts})) 
\eeq
if $K\subset (\G\ts\H)'(\A^{\infty})$ is the image of $U\ts V$.

\subsubsection{Automorphic local systems}
 Let $W$ be an irreducible cohomological {right} algebraic representation of $\G_{*}$ over $L$, let $U_{*}\subset \G_{*}(\A^{\infty})$ be a sufficiently small (in the sense of Lemma \ref{free action} below)  compact open subgroup, let $W^{\circ}\subset W $ be a $U_{*,p}$-stable  $\OO_{L}$-lattice, and let $U_{*,p, n}\subset U_{*,p}$ be a subgroup acting trivially on $W^{\circ}/p^{n}W^{\circ}$.
 
\begin{lemm}\label{free action} If $U^p_*$ is sufficiently small (a condition independent of $n$), then:
\begin{enumerate}
\item The quotient $\baar{\G}_n:=U_{*,p}/U_{*,p, n}({\rm Z}_{\G_*}(\Q) \cap U_*)_p$ acts freely on $X_{U_*^pU_{*,p,n}}$, hence $X_{*,U_*^pU_{*,p,n}}\to X_{*, U_*}$ is an \'etale  cover with Galois group $\baar{\G}_{*,n}$. 
\item The group ${\rm Z}_{\G_*}(\Q) \cap U_*$ acts trivially on $W^{\circ}$.
\end{enumerate}
\end{lemm}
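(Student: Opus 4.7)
The plan is to reduce both statements to standard properties of arithmetic groups acting on Hermitian symmetric domains, via the adelic presentation \eqref{z quot}. First I would fix representatives $\{g_{i}\}$ for the finite double-coset space $\G_{*}(\Q) \bks \G_{*}(\A^{\infty})/U_{*}$ and set $\Gamma_{i} := g_{i} U_{*} g_{i}^{-1} \cap \G_{*}(\Q)$; then $X_{*,U_{*}}(\C)$ decomposes as the disjoint union of quotients $\Gamma_{i} \bks \mathfrak h^{\pm}$, interpreted as a finite set of points when $\G_{*} = \H$ or $\H'$. The cover $X_{*, U_{*}^{p} U_{*,p,n}} \to X_{*,U_{*}}$ admits the analogous description via the deeper congruence subgroups $\Gamma_{i}^{(n)} := g_{i} U_{*}^{p} U_{*,p,n} g_{i}^{-1} \cap \G_{*}(\Q)$, and $\baar\G_{*,n}$ acts as its group of deck transformations.

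For part (1), I would appeal to Borel's neatness theorem: by imposing a mild congruence condition at a single auxiliary prime $\ell \nmid p$ on $U_{*}^{p}$, one ensures that every $\Gamma_{i}$ is neat---a condition independent of $n$ and of $W$. For neat arithmetic subgroups acting on $\mathfrak h^{\pm}$, the stabiliser of any point is contained in the centre: indeed, it sits discretely inside the compact-mod-centre stabiliser in $\G_{*}(\R)$, hence is finite modulo centre, and neatness forbids non-central torsion. Consequently every point stabiliser in $\Gamma_{i}^{(n)}$ lies in $Z_{\G_{*}}(\Q) \cap g_{i} U_{*}^{p} U_{*,p,n} g_{i}^{-1}$, and the quotient $\baar\G_{*,n} = U_{*,p}/U_{*,p,n}(Z_{\G_{*}}(\Q) \cap U_{*})_{p}$ precisely removes these stabilisers, yielding a free action and an \'etale cover with the stated Galois group.

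For part (2), the action of $Z_{\G_{*}}(\Q_{p})$ on $W^{\circ}$ is given by an algebraic central character $\sigma_{W}$ of the shape \eqref{alg char}; the parity condition built into the definition of a cohomological weight implies that $\sigma_{W}$ factors through a fixed power of a norm character $\nu$ of $Z_{\G_{*}}$, depending only on the ambient group and not on $W$. The intersection $\Gamma_{Z} := Z_{\G_{*}}(\Q) \cap U_{*}$ is a finitely generated subgroup of units---namely $\OO_{F}^{\times}$, $\OO_{E}^{\times}/\OO_{F}^{\times}$, or an analogous subquotient depending on $\G_{*}$---whose image under $\nu$ is torsion. A further congruence condition on $U_{*}^{p}$ at an auxiliary prime, again independent of $n$ and of $W$, trivialises this torsion in $\nu(\Gamma_{Z})$ and hence forces $\sigma_{W}|_{\Gamma_{Z}} = 1$. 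The two congruence requirements can be imposed jointly, giving the single bound on $U_{*}^{p}$ in the statement.

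The main obstacle I anticipate is guaranteeing uniformity in $W$ for part (2): on the face of it the character $\sigma_{W}$ varies with $W$, and the naive congruence condition shrinking $\Gamma_{Z}$ into $\ker \sigma_{W}$ could a priori depend on $W$. The resolution is the observation above that $\sigma_{W}$ always factors through the same fixed norm character of $Z_{\G_{*}}$, so one auxiliary congruence killing the torsion part of $\nu(\Gamma_{Z})$ disposes of all weights at once, consistently with the claim that the sufficiency condition on $U_{*}^{p}$ does not depend on $n$ (nor on the choice of $W^{\circ}$).
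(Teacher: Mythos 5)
Your approach is essentially the same as the paper's. For part (1), the paper simply cites \cite[Lemme 1.4.1.1]{carayol}, whose underlying mechanism is precisely the neatness/torsion-freeness-modulo-centre argument you spell out, so there is no real divergence. For part (2), the paper reduces to $\G_* = \G$ or $\H$ (with centre $\Res_{E_*/\Q}\G_m$), shrinks $U_*^p$ so that $Z_{\G_*}(\Q)\cap U_*$ lands in the finite-index subgroup $\OO_F^{\times,1}=\{z\in\OO_F^\times : N_{F/\Q}(z)=1\}$, and observes that every cohomological $W$ has central character a power of $N_{F/\Q}$, hence trivial on $\OO_F^{\times,1}$. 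Your argument is the same in substance.

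One point you should correct, though it does not invalidate the conclusion: the assertion that ``$\sigma_W$ factors through a \emph{fixed} power of a norm character $\nu$ of $Z_{\G_*}$, depending only on the ambient group and not on $W$'' is false. Restricted to the centre, $\sigma_W$ is $N_{F/\Q}(\cdot)^{w}$ where $w$ is the parallel weight of $W$, which most certainly varies with $W$. What you actually need---and what you implicitly revert to in your final paragraph when you write ``factors through the same fixed norm character''---is only that $\sigma_W$ factors through the character $\nu = N_{F/\Q}$ itself, not through a fixed power of it. Once $\Gamma_Z = Z_{\G_*}(\Q)\cap U_*$ is shrunk so that $\nu(\Gamma_Z)=1$, every power of $\nu$ is killed on $\Gamma_Z$ simultaneously, which is what gives the required uniformity in $W$ (and in the lattice $W^\circ$). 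The paper's formulation avoids this slip by shrinking directly into $\ker\nu$. A minor second remark: for $\G_*=\H$ the relevant group is $Z_\H(\Q)\cap V$, a finite-index subgroup of $\OO_E^\times$ rather than of $\OO_E^\times/\OO_F^\times$ (the latter arises for $\H'$); your parenthetical list conflates these.
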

\begin{proof} The first assertion is \cite[Lemme 1.4.1.1]{carayol}  when $\G_*=\G$ (other cases are similar or easier).
For the second assertion, we may reduce to the case $\G_*=\G $ or $\G_*=\H$, with centre ${\rm Z}_{\G_*}={\rm Res}_{E_*/\Q}\G_m$. For any $U_*$, the group ${\rm Z}_{\G_*}(\Q) \cap U_*$ has has finite index in $\OO_{E_*}^\times$, therefore  for sufficiently small $U_*^p$ 
it  is contained in the finite-index subgroup $\OO_{F}^{\times, 1}:=\{z\in \OO_F^\times \, :\, N_{F/\Q}(z)=1\}\subset \OO_{E_*}^\times$. But since $W$ is of cohomological weight, the group $\OO_{F}^{\times, 1}$ acts trivially. 
\end{proof}

 Assume first that $X_{*}$ is compact.   
  Then, by the lemma,
 \begin{align}\label{Upn}
 (X_{*,U_{*}^{p}U_{*,p, n}}\times W^{\circ}/p^{n}W^{\circ})/\baar{\G}_{*,n}
 \end{align}
 defines a locally constant \'etale $\OO_{L}/p^{n}\OO_{L}$-module  $\cW^{n}$over $X_{*,U_{*}}$. We let 
 $$ \cW^{\circ}:=  (\varprojlim_{n} \cW^{ n}),$$
   an $\OO_{L}$-local system on $X_{*, U_{*}}$, and consider 
   $$\cW:=\cW^{\circ}\otimes_{\OO_{L}}L.$$ The $L$-local system $\cW$  is compatible with pullback in the tower $\{X_{*, U_{*}}\}$ and, up to isomorphism, independent of the choice of lattice $W^{\circ}$.  When $X_{*}$ is the compactification of a noncompact Shimura variety $X_{*}'$ (essentially only when $\G=\GL_{2/\Q}$), we perform the above construction on  $X'_{*}$ and then push the resulting sheaf forward to $X_{*}$.

\subsection{Ordinary automorphic representations}  Keep the assumption that $\G_{*}$ is one of the groups in \eqref{list}.
\subsubsection{$p$-adic automorphic representations}
Let $L$ be an extension of $\Q_{p}$, $W$ a finite-dimensional irreducible algebraic left representation of 
$G_{*,\infty}=\G_{*}(\Q_{p})$  over $L$. 
\begin{defi}\label{aut-def} A (regular algebraic cuspidal) \emph{automorphic representation of $\G_{*}(\A^{})$ over $L$ } of weight $W$
is an irreducible admissible locally algebraic representation $\pi$ of 
$$\G_{*}(\A^{}):=\G(\A^{\infty})\ts G_{*,\infty}$$
 that can be factored as $$\pi=\pi^{\infty}\ot W$$
such that $\G_{*}(\A^{\infty})$ acts smoothly on $\pi^{\infty}$, $\G_{*, \infty}$ acts algebraically, and $\pi$ occurs as a subrepresentation of $$\H^{\bullet}(\baar{X}_{*}, \cW^{\vee})=\varinjlim_{U\subset \G_{*}(\A^{\infty})}H^{\bullet}(\baar{X}_{*, U}, \cW^{\vee})\ot W,$$
where $X_{*}$ is the compactified Shimura variety attached to $\G_{*}$, and $\cW^{\vee}$ is the local system on $X$ attached to $W^{\vee}$.

In the quaternionic or toric case, we say that $\pi$ is of weight $\uw$ (a cohomological weight for $\G$ over some finite extension $L'/L$) if $W$ is of weight $\uw$ as defined after \eqref{Ww} (respectively \eqref{Wl}).\footnote{These notions depend of course on $L\into L'$; nevertheless they will only be used to impose conditions on the weights that are invariant under the Galois group of $L$.}
\end{defi}
We will use subscripts $p$, respectively $\infty$, respectively $p\infty$, to denote an element of $\G(\A)$ in the copy of $\G(\Q_{p})$ contained in $\G(\A^{\infty})$, respectively in the  `algebraic copy' $G_{\infty}$, respectively the diagonal copy in the product of the previous two. 

\begin{rema} The previous definition follows the work of Emerton  (\cite{emerton}). It slightly departs from it in  that in \cite{emerton}, one restricts to considering the action of the product of $\G(\A^{p\infty})$ and the diagonal copy of $\G(\Q_{p})$. While this is indeed the part that acts integrally, we do have use for the non-integral action of each individual copy (cf.  \S~\ref{sec: A1}). The corresponding local notions are introduced in Definition \ref{def admiss}.
\end{rema}

\subsubsection{Quaternionic  special case and ordinarity} \lb{242}
Suppose that $\G_{*} =\G$ and $\B_{p}$ is split,  or that  $\G_{*}=\G_{0}=\Res_{F/\Q}\GL_{2}$ for a totally real field $F$.  
 An automorphic representation  $\pi$ over  $L$ of weight $W_{\G_{*}, \uw}$ is also said to be  of weight $\uw$.

\begin{defi}\label{ord-def} 
We say that an automorphic representation $\pi$ of $\G_{*}(\A^{})$ over $L$ of classical weight $W=W_{\G_{*}, \uw}$ is \emph{ordinary} at $ v$ with unit character $\alpha_{v}^{\circ}$
if there exists  a  smooth character $\alpha_{v}$ of $T_{v}$ 
such that  $\pi_{v}$ is 
the unique irreducible subrepresentation of
${\rm Ind}(\alpha_{v}\cdot( |\ |_{v} , |\ |_{v}^{-1}))$ and
the locally algebraic character
\begin{align}\label{alphacirc}
\alpha_{v}^{\circ}:=\alpha_{v}
\sigma_{W, v}\colon T_{v}\to L^{\ts}
\end{align}
 takes values in $ \OO_{L}^{\times}$.\footnote{This notion agrees with the notion of $\pi$ being \emph{nearly ordinary} as defined in the work of Hida (e.g. \cite{hida-adv}).}

(It follows from  the parity conditions on the weights 
   that the indicated subrepresentation is always infinite-dimensional; moreover if $\pi_{v}$ is ordinary then the character 
$\alpha_{v}$ of $T_{v}$    is uniquely determined by $\pi_{v}$.) 
We  say that $\pi$ is \emph{ordinary} if it  is ordinary at all $v\vert p$. 
\end{defi}

 Let  $v\vert p $ be a prime of $F$ and $\vpi_{v}$ a uniformiser.
For $t\in T_{v}^{+}$ or $x\in F_{v}^{\times}$ with $v(x)\geq0$, define the double coset operators
\beq\label{aut-Up}
 \Up_{t}&:=[U_{1}^{1}(\vpi_{v}^{r}) \,  t \, U_{1}^{1}(\vpi_{v}^{r})_{v}], \\
 \Up_{x}&:= \Up_{\smalltwomat x{}{}1},\\
 \Up_{v} &:=\Up_{\vpi_{v}},
\eeq
which act on the $N_{0}$-fixed vectors of any locally algebraic representation of $\GL_{2}(F_{v})$ (see also \S~\ref{A1} for further details).
  Then $\pi$ is ordinary at $v$ with  unit character $\alpha_{v}^{\circ}$ if and only if,  for sufficiently large $r$,
   the space of  $U_{1}^{1}(\vpi_{v}^{r})$-fixed vectors in the  locally algebraic representation
$$\pi_{v}\otimes_{L} W$$
of $\GL_{2}(F_{v})$ contains a (necessarily unique) line of   eigenvectors for the diagonal action of the operators   $\Up_{x}$,  $x\in F_{v}^{\times}$,  with eigenvalue $\alpha^{\circ}(x)$. Specifically, if $w_{v}\in \pi_{v}$ is a $\Up_{x}$-eigenvector of eigenvalue $\alpha_{v}(\vpi_{v})$, then such line is 
  $$\pi_{v}^{\ord}:= L w_{v}\otimes W^{N_{v}} $$
where  $W^{N_{v}}$ is the line of  highest-weight vectors of $W$.

If $\pi$ is an automorphic representation that is ordinary at  all $v\vert  p$, extend $\alpha_{v}^{\circ}$ to a character of $T_{v}^{+}$ by the formula \eqref{alphacirc}, and let $\alpha^{\circ}=\prod_{v\vert p}\alpha_{v}^{\circ}\colon T^{+}=\prod T_{v}^{+}\to L^{\times}$; then we define
\beq\label{piord}
\pi^{\ord}:=\pi^{p\infty} \otimes\otimes_{v\vert p} \pi_{v}^{\ord},
\eeq
as a smooth representation of $\G(\A^{p\infty})$ and  a locally algebraic representation of $T^{+}$ on which $ T^{+}$ acts by $\Up_{t}\mapsto \alpha_{v}^{\circ}(t)$.

\subsubsection{Toric special case}  \lb{hecke 2}
Suppose now that $E$ is a CM field and that $\G_{*}=\H:={\rm Res}_{E/\Q}\G_{m}$. 
Then a $p$-adic automorphic representation of $\H$ of weight $W_{\H, \ul}$ is 
simply the space of scalar multiples of a locally algebraic character $\chi\colon E^{\times}\bks E_{\A^{\infty}}^{\times}\to L^{\times}$ whose restriction to a sufficiently small open subgroup of $E_{p}^{\times}$ coincides with  the character of $W_{\H, \ul}$.

\subsubsection{Convention} We use the convention  that all automorphic representations of $\H(\A^{\infty})$ are ordinary, and that a  representation $\pi\otimes \chi$ of $\G\times \H$ of cohomological weight is cuspidal and  ordinary  if $\pi$ and $\chi$ are.

\subsection{Galois representations} Let $\G$ be as in \S~\ref{242}. 
\subsubsection{Galois representations attached to automorphic representations of $\G(\A^{})$}
The following notation is used throughout the paper: if $V$ is a representation of $G_{F}$ and $v$ is a prime of $F$, we denote by $V_{v}$ the restriction of $V$ to a decomposition group at $v$. 

\begin{theo}
[Ohta, Carayol, Saito]\label{galois for hilbert}
 Let $L$ be a finite extension of $\Q_{p}$, let $W$ be an irreducible algebraic representation of $\G$ over $L$, and   let $\pi$ be an automorphic representation of $\G(\A^{\infty})$ of weight $W$ over $L$.   Let $S$ be a finite set of non-archimedean places of $F$ containing all the places at which $\pi$ is ramified and the places above $p$. 
 There exists a  $2$-dimensional 
$L$-vector  space $V_{\pi}$ and an absolutely irreducible Galois representation
$$\rho=\rho_{\pi}\colon G_{F,S}\to{\rm Aut}(V_{\pi})$$
uniquely determined by the property that for every finite place $v\notin  S$ of $F$, 
\begin{align}\label{trrho}
{\rm Tr}(\rho({\rm Fr}_{v}))= q_{v}^{-1}\lambda_{\pi_{v}}(T_{v})\end{align}
where ${\rm Fr}_{v}$ is a geometric Frobenius,   $T_{v}\in \mathscr{H}^{\rm sph}_{\GL_{2}(F_{v})}$ is the element corresponding to the double class $K_{0}(1)\smalltwomat {\vpi_{v}}{}{}1K_{0}(1)$, and $\lambda_{\pi_{v}}\colon \cH_{\GL_{2}(F_{v})}^{\rm sph}\to L$ is the character giving the action  on $\pi_{v}^{K_{0}(1)}$. 

For a prime $v$ of $F$, let $\rho_{v}$ be the restriction of $\rho$ to a decomposition group at $v$. 
\begin{enumerate}
\item The representation $\rho_{v}$ is unramified for almost all $v$ and potentially semistable for $v\vert p$. For every finite place $v$,  the Weil--Deligne representation $r_{v}$ attached to $\rho_{v}$ is associated with $\pi_{v}$ via the local Langlands correspondence normalised ``\`a la Hecke'' \cite[\S~ 3.2]{deligne}:
$$L(s, r_{{v}})=L(s+1/2, \pi_{v}).$$
\item For  every finite place $v$, $r_{v}$
 satisfies the weight-monodromy conjecture: its monodromy filtration is pure 
 of weight $w-1$. The monodromy filtration is trivial if and only if $\pi_{v}$ is not a special representation.
 \item For any archimedean place $v$, the representation $\rho_{v}$ is odd, that is if  $c_{v}\in G_{F_{v}}$ is the complex conjugation, $\det \rho_{v}(c_{v})=-1$.
 \item 
If $W=W_{\G, \uw}$ with  $\uw=(w; (w_{\sigma})_{\sigma\colon F\into \baar{L}})$, then for each $v\vert p$ and $\sigma\colon F_{v}\into  \baar{L}$, 
\begin{itemize}
\item 
 the   $\sigma$-Hodge--Tate weights \footnote{If $V$ is a Hodge--Tate representation of $G_{F_{v}}$ over $\baar{L}$ and $\sigma \colon F_{v}\into \baar{L}$, the $\sigma$-Hodge--Tate weights of $V$ are the degrees in which the graded module  
$$( \oplus_{n} \bC_{v}(n)\otimes_{\baar{F}_{v},\sigma} V)^{G_{F_{v}}}$$
is nonzero; here $\bC_{v}$ is a completion of $\baar{F}_{v}$ and, in the tensor product, $\sigma $ is extended to an isomorphism $\baar{F}_{v}\to \baar{L}$.
In particular our convention is that the Hodge--Tate weight of the cyclotomic character of $\Q_{p}$ is $-1$.}
of $\rho_{v}\otimes_{L }\baar{L}$ are
$$-1-{w+w_{\sigma}-2\over 2}, \qquad -{w-w_{\sigma}+2\over 2}.$$
\item\label{phig} if $\pi$ is ordinary   at $v$ in the sense of Definition \ref{ord-def},
 then there is a unique  exact sequence  in the category of  $G_{F,v}$-representations
\beq\lb{fil}0\to V^{+}_{\pi, v} \to V_{\pi, v} \to V_{\pi,v}^{-}  \to 0, \eeq
 such that  $V^{\pm}_{\pi, v}$ is $1$-dimensional.
 
 The Galois group $G_{F, v}$ acts on $V^{+}_{\pi,v}$ by the character
 $$\alpha_{\pi,v}^{\circ}\chi_{{\rm cyc},v}\colon F_{v}^{\ts}\to L^{\times},$$
  where $\alpha_{\pi,v}^{\circ}$ is \eqref{alphacirc}.
\end{itemize}
\end{enumerate}
\end{theo}
\begin{proof}
The construction and statements 1 and  2 for $v\vert p$ are the main results of  Carayol in \cite{carayol}. Statements  1  and 2 for $v\vert p $  were proved  by T.  Saito \cite[Theorems 2.2, 2.4]{saito}. 
For the last two statements, we refer to \cite[Proposition 6.7]{TX} and references therein;  note  that  
in comparison with  the notation of \cite{TX}, our $\rho$ equals their $\rho_{f}(1)$,
and  our $(w; \underline{w})$ is their  $(w-2, {\underline k})$.
\end{proof}

\subsubsection{Realisation in the homology of Shimura varieties} \lb{ssec real}
Let $\G_{*}$ be again one of the groups of \eqref{list}. 
We introduce a new piece of notation. Let 
$$G_{F, E}:=G_{F}\ts G_{E},$$
and similarly  for a finite set of places $S$, $G_{F, E, S}:=G_{F, S}\ts G_{E,S}$. If $\G_{*}=\G\ts \H$ or $(\G\ts \H)'$, we \emph{redefine}
$$G_{E_{*}}:= G_{F, E }.$$
(This is an abuse of notation, as we have not redefined $E_{*}$.)
This product of Galois groups  acts on the homology $\overline{X}_{*}$: this is clear by the K\"unneth formula in the case of $\G\ts \H$, and follows from that case and the Galois-invariance of the quotient map  for  $(\G\ts \H)'$. 

The following is  the main result of \cite{carayol-h} in the special case $\G_{*}=\G$; the general case may be deduced from the special case together with the case $\G_{*}=\H$ (that is  class field theory).

\begin{prop}[Carayol]   Let $U_{*}\subset \G_{*}(\A^{\infty})$ be a compact open subgroup, $W$ be an irreducible   right algebraic representation of $
\G_{*}$ over $L$,  $\cW$ the local system on $X_{*, U_{*}}$ associated with $W$. Let $L'$ be a sufficiently large finite Galois extension of $L$.

 Then there is an isomorphism of $\cH_{\G_{*}, U_{*}, L}[G_{E_{*},S}]$-modules
\beq\label{carayol iso 2}
\H_{d}(\baar{X}_{*, U_{*}}, \cW) \ot_{L}L' \cong \bigoplus_{\pi } \pi^{\vee, U_{*}}\otimes V_{\pi},
\eeq
equivariant for the action of $\Gal(L'/L)$, 
where $\pi$ runs through all   equivalence classes of automorphic representations of $\G_{*}(\A)$ of weight $W$ over $L'$. 
\end{prop}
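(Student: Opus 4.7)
The plan is to reduce to the two atomic cases $\G_{*}=\G$ (the main theorem of \cite{carayol-h}) and $\G_{*}=\H$ (class field theory), and then to assemble everything for the product $\G\ts\H$ via K\"unneth and for the quotients $\H'$, $(\G\ts\H)'$ by descent along finite central quotients.

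The case $\G_{*}=\H$ is essentially tautological. Since $Y_{V}$ is $0$-dimensional with $Y_{V}(\baar{E})\cong E^{\ts}\bks E_{\A^{\infty}}^{\ts}/V$ as a $G_{E}$-set via global reciprocity, and $\cW$ is the local system attached to the weight $\ul$, the decomposition
$$\H_{0}(\baar{Y}_{V},\cW)\ot_{L}L' \cong \bigoplus_{\chi}\chi^{\vee,V}\ot V_{\chi},$$
with $\chi$ running over locally algebraic Hecke characters of $E$ over $L'$ of weight $\ul$, amounts to the orthogonality of characters of the relevant finite abelian double coset. The $G_{E}$-equivariance is built into the definition of $V_{\chi}$, and $\Gal(L'/L)$-equivariance is the tautology $V_{\chi^{\tau}}=V_{\chi}^{\tau}$. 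The variant $\G_{*}=\H'$ is identical after imposing triviality of central characters along $F_{\A^{\infty}}^{\ts}/F^{\ts}$.

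For $\G_{*}=\G\ts\H$, the Shimura variety $X\ts_{F}Y$ has dimension $1$, and its base change to $\baar{E}$ is $(X_{U}\ot_{F}\baar{E})\ts_{\baar{E}}\baar{Y}_{V}$, with the local system splitting as $\cW=\cW_{\G}\boxtimes\cW_{\H}$. Since $\baar{Y}_{V}$ is a finite disjoint union of points, the K\"unneth formula degenerates to
$$\H_{1}(\baar{(X\ts_{F}Y)_{U\ts V}},\cW) \cong \H_{1}(X_{U}\ot_{F}\baar{E},\cW_{\G})\ot_{L}\H_{0}(\baar{Y}_{V},\cW_{\H}),$$
and substituting Carayol's decomposition (base-changed harmlessly from $F$ to $E$) into the first factor and the decomposition of the previous paragraph into the second yields the statement, with $G_{F,E,S}=G_{F,S}\ts G_{E,S}$ acting through the two tensor slots and with Hecke- and $\Gal(L'/L)$-equivariance inherited from each factor.

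Finally, for $\G_{*}=(\G\ts\H)'$ I would use the description \eqref{z quot}: the map $X_{U}\ts Y_{V}\to Z_{K}$ exhibits $Z_{K}$ as the quotient by the finite abelian group $\Delta_{U,V}\subset F_{\A^{\infty}}^{\ts}/F^{\ts}$ acting through the central torus $\mathrm{Z}$, so pushforward identifies $\H_{1}(\baar{Z}_{K},\cW)$ with the $\Delta_{U,V}$-coinvariants of $\H_{1}(\baar{(X\ts_{F}Y)_{U\ts V}},\cW)$. This singles out precisely those summands $\pi^{\vee}\ot\chi^{\vee}\ot V_{\pi}\ot V_{\chi}$ for which $\omega_{\pi}\omega_{\chi}$ is trivial on $\Delta_{U,V}$, i.e.\ those descending to representations of $(\G\ts\H)'(\A)$. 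The main technical point, and in my view the only real obstacle, is verifying that the Hecke, $G_{F,E,S}$, and $\Gal(L'/L)$ actions all descend cleanly through this central quotient; this reduces to the fact that $\cW$ is itself pulled back from $Z_{K}$ (so a suitable lattice is automatically $\mathrm{Z}$-stable) and that all three actions commute with the $\Delta_{U,V}$-action on $X\ts_{F}Y$.
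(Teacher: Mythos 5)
Your proposal is correct and follows essentially the same reduction the paper intends. The paper's proof is a one-line remark: cite \cite{carayol-h} for $\G_{*}=\G$, note that $\G_{*}=\H$ is class field theory, and state that the general case follows from these two. Your K\"unneth argument for $\G\ts\H$ and the descent along the finite central group $\Delta_{U,V}$ for $(\G\ts\H)'$ (using \eqref{z quot}, and in effect the same quotient as in \eqref{mgh} later in the paper) are precisely the implicit deductions; in characteristic zero the free-action hypothesis of Lemma \ref{free action} is not even needed since coinvariants by a finite group agree with invariants. The one notational point worth keeping in mind: the Galois action in the product and quotient cases is through $G_{E_{*}}:=G_{F,E}=G_{F}\ts G_{E}$ (the paper's redefinition in \S\ref{ssec real}), exactly as the two tensor slots of your K\"unneth decomposition supply it.
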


\section{Sheaves on Hida families}\label{section 3}
We construct the universal Hecke- and Galois- modules over Hida families for $(\G\ts\H)'$ and prove a local-global compatibility result. We claim no originality for the  results of \S\S~\ref{3.1}-\ref{3.2new}.
\subsection{Hida theory}\lb{3.1}
We let   $\G_{*}$ denote any of the groups $\G$, $\H$, $(\G\times \H)$, $(\G\times \H)'$, and let $r\in \N^{\{v\vert p\}}$. We will use the notation from \S~\ref{sec: not}.
 For $U_{*}^{p}\subset \G(\A^{p\infty})$ we let $X_{*, U_{*}^{p},r}:=X_{*, U_{*}^{p}U_{*,r}}$ be the corresponding Shimura variety.

  When $M$ is a $\Z_{p}$-module with   action by $T_{\G_{*}}^{+}$, arising as limit of ordinary parts of   $p$-adic coadmissible  $\G_{*}(\Q_{p})\ts G_{*, \infty}$-modules (see Definition \ref{def coadmiss} and \S~\ref{ssec ord}),
  we denote this action by 
  $$t\mapsto \Up_{t}$$
  and adopt the notation of \eqref{aut-Up}.

\subsubsection{Weight spaces} \lb{sec wtsp} Let $U_{*}^{p}\subset \G_{*}(\A^{p\infty})$ be a compact open subgroup, and define $Z_{\G_{*}, U^{p}_{*}}
\subset  {\rm Z}_{\G}(\Q)$ by 
\beqq
Z_{\G, U^{p}}&:= {\rm Z}_{\G}(\Q)\cap U^{p}T_{\G, 0}, 
&Z_{\H, V^{p}}&:=\H(\Q)\cap V^{p}T_{\H, 0}=\OO_{E}^{\times}\cap V^{ p }, & {\ } \\
Z_{(\G\times \H)', K^{p}}&:= \text{the image in $T_{(\G\times \H)'}$ of }  &Z_{\G\times H, U^{p}\times V^{p}} : =  Z_{\G, U^{p}}\times Z_{\H, V^{p}} & \text{if $K^{p}$ is the image of $U^{p}\times V^{p}$}.
\eeqq 
In all cases, let 
$\baar{T}_{\G_{*},U_{*}^{p},0}:=T_{\G,0}/\baar{Z_{\G_{*}, U_{*}}}$,
where $\baar{\Box}$  denotes the closure for the $p$-adic topology, and let  $ \baar{T}_{\G_{*}, U^{p}_{*}, r}\subset \baar{T}_{\G, U^{p}_{*},0}$ be the image of $T_{\G_{*},U^{p}_{*},r}$.
Let
$$\Lm^{\circ}_{\G_{*}, U^{p}_{*}}:=\Z_{p}\llb \baar{T}_{\G_{*},0}\rrb, 
$$
and for an irreducible algebraic representation $W$ of $\G_{*}$ consider the ideals 
\beq 
\label{Ir}
I_{\G_{*},U^{p}_{*}, W, r, L}:= ([t]-\sigma_{W}^{-1}(t))_{t\in \baar{T}_{\G_{*},  U^{p}_{*},r}})\subset \Lm^{\circ}_{\G_{*}  ,U^{p}_{*}}\otimes \OO_{L}
\eeq
For each fixed $W$ and varying $r$, the ideals $I_{\G_{*},U^{p}_{*}, W, r}$ form a fundamental system of neighbourhoods of zero in $\Lm^{\circ}_{\G_{*}, U^{p}_{*}}\otimes \OO_{L}$, so that  
\beq\label{alg density}
\Lm_{\G_{*}, U^{p}_{*}}:= \Lm^{\circ}_{\G_{*}, U^{p}_{*}}\otimes_{\OO_{L}} L=(\varprojlim_{r}\Lambda_{\G_{*}, U^{p}_{*}, W,r})\ot_{\OO_{L}} L\eeq
with
\beq\label{lambda r}
\Lambda^{\circ}_{\G_{*}, U^{p}_{*}, W, r}:= \Lambda^{\circ}_{\G_{*}, U^{p}_{*}}
/ I_{\G_{*}, U^{p}_{*},W, r}\cong
 \OO_{L}[\baar{T}_{\G_{*}, U^{p}_{*}, 0}/\baar{T}_{\G_{*}, U^{p}_{*}, r}],
\eeq
where the isomorphism is given by $[t]\mapsto \sigma_{W}^{-1}(t)[\baar{t}]$.
When $W=\Q_{p}$, we omit $W$ from the  notation. We also omit the subscript $U^{p}_{*}$  when it is unimportant or understood from context.

Writing $\baar{T}_{\G_{*},U^{p}_{*},0}\cong \Delta\times \Z_{p}^{{\rm d}(\G_{*})}$ for a finite torsion group $\Delta$, we have an isomorphism $\Lambda_{\G_{*}, U^{p}_{*}}\cong \Z_{p}[\Delta]\otimes  \Z_{p}\llb X_{1}, \ldots X_{{\rm d}(\G_{*})}\rrb$ for  an integer ${{\rm d}(\G_{*})}$  given by\footnote{We would have ${\rm d}((\G\ts_{\rm Z}\H)'=2[F:\Q]$ for the group of the footnote after \eqref{list}, which explains the way the number of variables of Theorem \ref{iw} is counted in the abstract.}
$${\rm d}(\G)= {\rm d}(\H)= [F:\Q]+1+\delta, \quad {\rm d}((\G\times \H)')= 2[F:\Q] + 1+\delta,$$
where $\delta=\delta_{F, p}$ is the Leopoldt defect of $F$ at $p$; see \cite[\S~2.2.3.3]{fouquet} for ${\rm d}(\G)$.

\begin{defi} \label{clwt} 
The \emph{weight space} is 
$$  \mathfrak{W}_{\G_{*}, U^{p}_{*}}:= \Spec \Lambda_{\G_{*}, U^{p}_{*}}\otimes \Q_{p}.$$
Let $W$ be an irreducible cohomological  algebraic representation of $\G_{*}$. The zero-dimensional subscheme of \emph{classical points of weight $W$}  and level $r$  is 
$$\mathfrak{W}_{\G_{*}, U^{p}_{*},r}^{\cl, W}:=  \Spec \Lambda_{\G_{*}, U^{p}_{*}, r, W}.$$
The ind-subschemes   of all classical points of weight $W$ and of of all classical points  are respectively
$$\mathfrak{W}_{\G_{*}, U^{p}_{*}}^{\cl, W}:=\bigcup_{r\geq 0}  \mathfrak{W}_{\G_{*}, U^{p}_{*},r}^{\cl, W}, \qquad\mathfrak{W}_{\G_{*}, U^{p}_{*}}^{\cl}:=\bigcup_{W} \mathfrak{W}_{\G_{*}, U^{p}_{*}}^{\cl, W},$$
where as usual the union runs through the algebraic representations of cohomological weight.
\end{defi}

\subsubsection{Ordinary completed homology} 
Let $W$ be an irreducible right   algebraic representation of $\G_{*}(\Q_{p})$ over $L$, and fix a $\G(\Z_{p})$-stable $\OO_{L}$-lattice $W^{\circ}\subset W$. 
Let $\cW$ be the  local system attached to $W$,  and for $U_{*}^{p}\subset \G_{*}(\A^{p\infty})$, $r\geq 0$ consider the \emph{ordinary parts}
\beqq\H^{\textrm{\'et}}_{d}(\baar{X}_{*,U_{*}^{p}U_{*,r}}, \cW^{\circ})^{\ord} &
 := (H^{\textrm{\'et}}_{d}(\baar{X}_{*,U_{*}^{p}U_{*,r}}, \cW^{\circ})\ot_{\OO_{L}[T^{+}_{\G_{*}}]} (W^{\circ,\vee})_{N_{0}})^{\ord}
 \\
\H^{\textrm{\'et}}_{d}(\baar{X}_{*,U_{*}^{p}U_{*,r}}, \cW)^{\ord}&=\H^{\textrm{\'et}}_{d}(\baar{X}_{*,U_{*}^{p}U_{*,r}}, \cW^{\circ})^{\ord}\ot_{\OO_{L}}L
\eeqq
with respect to the action of $T_{\G_{*}}^{+}$ by $\Up_{t}\ot t$, as defined in \S~\ref{ssec ord}.
The \emph{ordinary completed homology} of $X_{\G_{*}, U^{p}_{*}}$ is 
$$M^{\circ}_{\G_{*}, U_{*}^{p}, W}:=  
\varprojlim_{r } \H^{\textrm{\'et}}_{d}(\baar{X}_{*,U^{p}_{*}U_{*,p,r}}, \cW^{\circ})^{\ord},$$
an $\OO_{L}$-module. It depends on the choice of lattice $W^{\circ}\subset W$, whereas the $L$-vector space 
$$M_{\G_{*}, U_{*}^{p}, W}:=M_{\G_{*}, U_{*}^{p}, W}^{\circ}\otimes_{\OO_{L}}L$$
does not.
When $\cW={\Q_{p}}$ is the trivial local system, we omit it from the notation, thus
$$M_{\G_{*}, U_{*}^{p}}=
M_{\G_{*}, U_{*}^{p}, {\Q_{p}}}.$$

\subsubsection{Independence of weight and Control Theorem} 
For  a $\Z_{p}$-algebra $A$, let  $\cH_{\G_{*}, U^{p}_{*}, p, A}^{\ord}:= A[T_{\G_{*}}]\otimes_{\Z_{p}[T^{+}_{\G_{*},0}]}\Lambda^{\circ}_{\G_{*}, U^{p}_{*}}$. 
For $?= S,\emptyset, {\rm sph}$, consider the $\Lambda_{\G_{*}, U^{p}_{*},A}$-algebra
\begin{align} \label{hecke}
\cH_{\G_{*}, U^{p}_{*},A}^{?, \ord}:= \cH^{?}_{\G_{*}, U^{p}_{*}, A} \otimes_{A} \cH_{\G_{*}, U^{p}_{*}, p, A}^{\ord}.
\end{align}
For every irreducible algebraic representation $W$ over $L$ and $\OO_{L}$-algebra $A$, the space $M^{\circ}_{\G_{*}, U_{*}^{p}, W}\otimes A$ is  a module over $\cH^{\ord}_{\G_{*}, U^{p}_{*},A}$, where $[t]\in A[T_{\G_{*}}^{+}]$ acts by the double coset operator $\Up_{t}$. 

The base ring $A$ will be omitted from the notation when it can be understood from the context.

Let $U_{*, r}=U^{p}_{*}U_{*, r, p}$ be as in \S~\ref{sec: not} and let $X_{*, r}:=X_{*, U_{*,r}}$.

\begin{prop} \label{free indep} Let $W$ be an irreducible right algebraic representation of $\G_{*/\Q_{p}}$ over $L$, $\cW$ the corresponding local system. Then:
\begin{enumerate} 
\item If
 $\G_{*}=\G$, $\H$, then   $M_{\G_{*}, U_{*}^{p},W}^{\circ}$ is a projective  $\Lm^{\circ}_{\G_{*}, U^{p}_{*}}\otimes \OO_{L}$-module of finite type. For all  of the groups $\G_{*}$, the  $\Lm^{\circ}_{(\G\times \H)'}\otimes \OO_{L}$-module $M_{(\G\times \H)',K^{p}, W}^{\circ}$  is of finite type, and $M_{(\G\times \H)',K^{p},W}$  is a projective $\Lambda_{(\G\times \H)'}\otimes L$-module of finite type.
\item \label{indep}  We have natural $\cH_{\G_{*}, U^{p}_{*}}^{\ord}$-equivariant isomorphisms  
\beq\label{iotaW}
j_{W}\colon M_{\G_{*}, U_{*}^{p}}\otimes \OO_{L}\cong M_{\G_{*}, U_{*}^{p},W}.  \eeq
\item Consider
\beq\label{MGr} 
M_{\G_{*}, U^{p}_{*}, W,r}:= M_{\G_{*}, U^{p}_{*}}\otimes_{\Lambda_{\G_{*}, U^{p}_{*}}}\Lambda_{\G_{*}, U^{p}_{*}, W, r}.
\eeq
There is a natural $\cH_{\G_{*}, U^{p}_{*}}^{\ord}$-equivariant isomorphism 
$$M_{\G_{*}, U_{*}, W, r} \cong \H_{d}(\baar{X}_{*, r}, \cW)^{\ord}.$$
 \end{enumerate}
\end{prop}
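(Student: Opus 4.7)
The plan is to reduce everything to Hida's classical control and independence-of-weight theorems for the two ``building-block'' groups $\G$ and $\H$, and then propagate the results to $\G\times\H$ via K\"unneth and to $(\G\times\H)'$ via the quotient description in \eqref{z quot}. For $\G_*=\G$ the relevant statements are those of Hida--Dimitrov--Fouquet for Hilbert modular forms on an incoherent quaternionic group; for $\G_*=\H$ they amount to the well-known structure of spaces of $p$-adic Hecke characters, both of which I will cite. The starting point in each case is that for each finite level $r$, the operator $\Up_{p\infty}$ is compact on $H_d^{\text{\'et}}(\overline{X}_{*,r},\cW^\circ)$, so Hida's idempotent $e^\ord$ cuts out a bounded-rank direct summand whose rank is independent of $r$. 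A standard Mittag--Leffler plus Nakayama argument then shows that the inverse limit $M^\circ_{\G_*,U^p_*,W}$ is a finitely generated $\Lambda^\circ_{\G_*,U^p_*}\otimes\OO_L$-module; freeness at finite levels propagates to projectivity of the limit.

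For part (2), I would construct $j_W$ as an explicit ``twist by $\sigma_W$''. The module $(W^{\circ,\vee})_{N_0}$ is a free $\OO_L$-module of rank one on which $T_{\G_*,0}^+$ acts via $\sigma_W^{-1}$ (by the highest-weight identity \eqref{highest weight} and \eqref{alg char}); tensoring with it thus identifies the $T^+$-action on the ordinary part of trivial-coefficient homology with the $\Up_t\otimes t$ action used to define $M^\circ_{\G_*,U^p_*,W}$. The resulting map is manifestly $\cH^{\rm sph}$-equivariant, and one checks that after inverting $p$ it becomes an isomorphism because the integral cokernel is killed by a power of a regular element of $\Lambda$ (this is where the passage from $\OO_L$ to $L$ is essential, and is the source of the projectivity-vs-finite-generation asymmetry for $(\G\times\H)'$). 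For part (3), the covering $\overline{X}_{*,U^p_*U_{*,r',p}}\to \overline{X}_{*,U^p_*U_{*,r,p}}$ is Galois with group (a quotient of) $\baar T_{\G_*,r}/\baar T_{\G_*,r'}$ by Lemma~\ref{free action}, so passing to $p$-adic completed homology and then to coinvariants under $\baar T_{\G_*,0}/\baar T_{\G_*,r}$ recovers $H_d^{\text{\'et}}(\overline{X}_{*,r},\Q_p)^\ord$; combining this with the isomorphism $j_W$ of part~(2) gives exactly the identification $M_{\G_*,U_*,W,r}\cong \H_d(\overline{X}_{*,r},\cW)^\ord$.

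To treat $\G_*=(\G\times\H)'$, I would proceed in two steps. First, for $\G_*=\G\times\H$, the K\"unneth formula reduces projectivity/finiteness to the cases of $\G$ and $\H$ separately, using that $\Lambda^\circ_{\G\times\H}=\Lambda^\circ_\G\hat\otimes\Lambda^\circ_\H$. Second, for $(\G\times\H)'$ I use the presentation $Z_K\cong X_U\times Y_V/\Delta_{U,V}$ from \eqref{z quot}, which expresses $M^\circ_{(\G\times\H)',K^p,W}$ as the module of coinvariants of $M^\circ_{\G\times\H,U^p\times V^p, W}$ under the action of a profinite quotient of $F_{\A^\infty}^\times/F^\times$. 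Taking coinvariants preserves finite generation (giving the integral statement) but in general does not preserve projectivity integrally; however, after inverting $p$ the averaging operator $|\,\cdot\,|^{-1}\sum$ is available over the relevant finite group, and projectivity over $\Lambda_{(\G\times\H)'}\otimes L$ is recovered. The compatibility with the map $j_W$ and with the control isomorphism in the quotient case follows by functoriality, because both the ordinary projector and the twist by $\sigma_W$ descend to the quotient.

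The main obstacle, in my view, is the careful bookkeeping for $(\G\times\H)'$: one must verify that the formation of the central-torus coinvariants on the level of completed homology commutes with the ordinary idempotent and with the twist~$j_W$, and that the correct weight spaces $\Lambda^\circ_{(\G\times\H)'}$ (defined via the quotient tori $\baar T_{(\G\times\H)'}$) match the ones inherited from $\Lambda^\circ_{\G\times\H}$ modulo the central weights. Once this compatibility is in place, parts (1)--(3) for $(\G\times\H)'$ follow mechanically from those for $\G$ and $\H$. All other steps are standard invocations of Hida theory.
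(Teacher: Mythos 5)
Your overall architecture matches the paper's: cite Hida for $\G$, make the $\H$-case explicit via Iwasawa algebras of id\`ele class groups, use K\"unneth for $\G\times\H$, and descend to $(\G\times\H)'$ through the quotient description \eqref{z quot}. But there is a genuine gap in your construction of $j_W$ for part (2). You propose to produce $j_W$ by tensoring the trivial-coefficient ordinary homology with the highest-weight covector line $(W^{\circ,\vee})_{N_0}$, arguing this matches the $\Up_t\otimes t$-action. That step does not by itself identify $H_d(\baar X_{*,r},\cW^\circ)$ with $H_d(\baar X_{*,r},\Z_p)\otimes W^{\circ,N_0}$: the definition of $M^\circ_{\G_*,U^p_*,W}$ already has the $(W^{\circ,\vee})_{N_0}$-twist built in, and what one actually needs is a map from trivial coefficients into \emph{$\cW$-coefficient} homology. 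The paper supplies this via a cap-product map
$$j_{W,r}\colon H_1(\baar X_r,\Z/p^r)^{\ord}\otimes_{\Lm_r} W^{\circ,N_0}/p^r \to H_1(\baar X_r,\cW^\circ/p^r\cW^\circ),$$
using that $\cW^\circ/p^r$ is the trivial local system at level $r$ (so $H^0(\baar X_r,\cW^\circ/p^r)\cong W^{\circ,N_0}/p^r$), and then takes limits; the resulting $j_W$ is an isomorphism by Hida's independence-of-weight theorem, which the paper cites. Your alternative justification — that the integral cokernel is ``killed by a power of a regular element of $\Lambda$'' — is not substantiated, and it is in any case not the paper's route (no such regular-element argument appears; the isomorphism is Hida's and holds at the integral, not just rational, level). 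Without the cap-product construction you have not actually defined the map whose bijectivity you are asserting.

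A second, smaller inaccuracy: in your $(\G\times\H)'$ step you treat the passage from $M^\circ_{\G\times\H}$ to $M^\circ_{(\G\times\H)'}$ as coinvariants under one profinite group, invoking averaging after inverting $p$. The paper's argument is a two-step factorization: first quotient by the $p$-adic central group $F_p^\times$ (a base-change $\Lm^\circ_{\G\times\H}\to\Lm^\circ_{(\G\times\H)'}$, which preserves projectivity integrally), then quotient by the remaining finite group (where the averaging argument after inverting $p$ applies). Collapsing these loses the reason the first step preserves projectivity over $\OO_L$, which is what yields the ``finite type integrally, projective rationally'' asymmetry in the statement. Parts (1) for $\G,\H$ and part (3) are, as you say, direct invocations of Hida's flatness and control theorems (and trivial for $\H$); there your proposal and the paper coincide.
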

\begin{proof} We first treat part 1 when $W=\Q_{p}$. Then we will deal with part 2, which implies that part 1 holds for any $W$.

If $\G_{*}=\G$, the result is proved  in \cite[Thoerem 1.2, cf. also Remark 1.1]{hida luminy}.
 If $\G_{*}=\H$, $U^{p}_{*}=V^{p}$, then the module under consideration is  isomorphic to   $\Z_{p}
\llb E^{\times}\bks E_{\A^{\infty}}^{\times}/V^{p}\rrb$, which is finite free over $\Lm^{\circ}_{\H,V^{p}}=\Z_{p}\llb \baar{\OO_{E}^{\times }\cap V^{p}}\bks \OO_{E, p}^{\times}  \rrb$ as $ \baar{\OO_{E}^{\times }\cap V^{p}}\bks \OO_{E, p}^{\times}\subset E^{\times}\bks E_{\A^{\infty}}^{\times}/V^{p}$ is a subgroup of finite index.

If $\G_{*}=\G\times \H$, by the K\"{u}nneth formula we have $M^{\circ}_{\G\times \H, U^{p}\times V^{p}}= M^{\circ}_{\G, U^{p}}\hat{\otimes }    M^{\circ}_{\H, V^{p}}$, which by the previous results is a finite type projective module over $\Lm^{\circ}_{\G\times \H} = \Lm^{\circ}_{\G}\hat{\otimes} \Lm^{\circ}_{\H}$. Finally, if $\G_{*}=(\G\times \H)'$ and $K^{p}$ is the image of $U^{p}\times V^{p}$,  by the description of $Z_{K}$ in \eqref{z quot} we have 
\begin{align}\label{mgh}
M_{(\G\times\H)', K^{p}}^{\circ}= (M^{\circ}_{\G\times \H, U^{p}\times V^{p}})  /  (F_{\A^{\infty}}^{\ts}/ F^{\ts}\cd ((U^{p}\cap F_{\A^{p\infty}}^{\ts}) \cap (V^{p}\cap F_{\A^{p\infty}}^{\ts})))
\end{align}
As $M^{\circ}_{\G\times\H, U^{p}\times V^{p}}$ is a projective $\Lm^{\circ}_{\G\times \H, U^{p}\times V^{p}}$-module of finite type, the quotient
$M^{\circ}_{\G, \times\H, U^{p}\times V^{p}}/{F, p}^{\times} = M_{\G, \times\H, U^{p}N_{\G,{0}}\times V^{p}} \otimes_{\Lm^{\circ}_{\G\times \H}}\Lm^{\circ}_{(\G\times \H)'}$ is a projective  $\Lm^{\circ}_{(\G\times \H)', K^{p}}$-module of finite type, and $M^{\circ}_{(\G\times\H)',K^{p}}$ is its quotient by the free action of the finite group 
$F_{\A^{\infty}}^{\ts}/ F^{\ts}\cd  F_{p}^{\ts}((U^{p}\cap F_{\A^{p\infty}}^{\ts}) \cap (V^{p}\cap F_{\A^{p\infty}}^{\ts}))).$
After inverting $p$, the quotient map admits a section, hence  $M_{(\G\times \H)',K^{p}}$ is projective over $\Lambda_{(\G\times \H)'}$. 

We now turn to part 2. As above it suffices to prove the result  when $\G_{*}=\G, \H$.  Let $\G_{*}=\G$,  and suppose that $W=W_{\G, \uw}^{*}$.
Let $W^{\circ}\subset W$  be the lattice of \eqref{homog poly lattice}, $r\geq 1$. We have a $\Lm_{r}$-linear map
\beq\label{jWr}
j_{W, r}\colon H_{1}(\baar{X}_{r  }, \Z/p^{r}\Z)^{\ord}
\ot_{\Lm_{r}}   W^{\circ,N_{0}}/p^{r}
\to H_{1}(\baar{X}_{r}, \cW^{\circ}/p^{r}\cW^{\circ})
\eeq
induced by cap product\footnote{I am grateful to David Loeffler and Sarah Zerbes for explaining to me this point of view on the Control Theorem.} via the isomorphism of $\Lm_{r}$-modules $ H^{0}(\baar{X}_{r}, 
\cW^{\circ}/p^{r} \cW^{\circ})\cong W^{\circ,N_{0}}/p^{r}$

The maps \eqref{jWr}  are compatible with variation in $r$, and taking limits  we obtain the map \eqref{iotaW}, which Hida  (\cite[\S~ 8]{Hida88},   \cite[Theorem 2.4]{hida luminy}) proved to be an isomorphism; the asserted equivariance  properties are clear from the construction.

When $\G_{*}=\H$ the construction is similar but  easier, as each $W$ is $1$-dimensional and each of the analogous maps $j_{W,r}$ is an isomorphism.

Finally, we address part 3. As above we may reduce to the case $W=\Q_{p}$ and $\G_{*}=\H$ or $\G_{*}=\G$. The former is clear, and the latter is, in view of part 2, equivalent to the statement
$$M_{\G, U^{p}, \cW}\otimes_{\Lm_{\G, U^{p}}} \Lm_{\G, U^{p}, r}\cong \H_{d}(\baar{X}_{r},  \cW)^{\circ},$$
which is the control theorem of 
 \cite[Theorem 1.2 (3)]{hida luminy}.
\end{proof}

\subsubsection{Ordinary eigenvarieties}

The space  $M^{\circ}_{\G_{*}, U^{p}_{*}}$ has the structure of an $\cH_{\G_{*}, U^{p}_{*}}^{ \ord}$-module (in particular of $\Lambda^{\circ}_{\G_{*}, U^{p}_{*}}$-module), and for $?=\emptyset, {\rm sph}$ and $A$ a $\Z_{p}$-algebra,   we let
$${\bf T}_{\G_{*}, U^{p}_{*}, A}^{{\rm sph}, \ord}$$
   be the image of  $\cH_{\G_{*}, U^{p}_{*}, A}^{{\rm ?}, \ord}$ in $\End_{A}(M^{\circ}_{\G_{*}, U^{p}_{*}}\otimes A)$,  that  is independent of the particular spherical Hecke algebra chosen when $?={\rm sph}$.  When $A=\Z_{p}$ we omit it from the notation.

We may now define 
  $$\cE^{\ord}_{\G_{*}, U_{*}^{p}} :=\Spec {\bf T}_{\G_{*}, U^{p}_{*}, \Q_{p}}^{{\rm sph}, \ord}.$$
  When $\G_{*}=\H$, we will omit the superscript `${\ord}$'. 
 
Let
   $$\kappa_{\G_{*}}\colon \cE^{\ord}_{\G_{*}, U_{*}^{p}}  \to  \mathfrak{W}_{\G_{*}, U^{p}_{*}}  .$$ 
Referring to Definition \ref{clwt}, the zero-dimensional  (ind)-subscheme of classical points (respectively classical points of weight $W$, for an algebraic representation $W$ of $\G_{*}$, respectively classical points of weight $W$ and level $r$) is 
  $$\cE^{\ord, \cl, (W)}_{\G_{*}, U^{p}_{*}, (r)} := \mathfrak{W}_{\G_{*}, U^{p}_{*},(r)}^{\cl, (W)}     \ts_{ \mathfrak{W}_{\G_{*}, U^{p}_{*}}, \kappa} \cE^{\ord}_{\G_{*}, U_{*}^{p}}. $$
  
  We denote by 
  $$\cM_{\G_{*}, U^{p}_{*}}$$
  the sheaf on $ \cE^{\ord}_{\G_{*}, U_{*}^{p}}$ corresponding to $M_{\G_{*}, U^{p}_{*}}$. 
\begin{enonce*}{Notation} \textup{When $\G_{*}= (\G\times \H)'$, we omit the subscripts, thus e.g. for $K^{p}\subset  (\G\times \H)'(\A^{p\infty})$ 
we write
  $$\cE_{K^{p}}^{\ord}:= \cE_{(\G\times \H)', K^{p}}^{\ord}.$$}
\end{enonce*}

By \eqref{mgh},
  ${\bf T}_{K^{p}}^{{\rm sph}, \ord}$ is a quotient of ${\bf T}_{\G\times \H, U^{p}\times V^{p}}^{{\rm sph}, \ord}$ and correspondingly we have a closed immersion
\begin{align}\label{immers}
\cE_{K^{p}}^{\ord}\into \cE_{\G\times\H, U^{p}\times V^{p}}^{\ord}.
\end{align}

\begin{prop}\label{semi-local} 
The ring $ {\bf T}_{\G_{*}, U^{p}_{*} }^{{\rm sph}, \ord}$ is finite  flat
 over $\Lambda^{\circ}_{\G_{*}, U^{p}_{*}}$, hence semi-local. 
  The  maximal ideals of $ {\bf T}_{\G_{*}, U^{p}}^{{\rm sph}, \ord} $ are in bijection with $G_{{\bf F}_{p}}$-orbits of  characters $\baar{\lambda}\colon   
 {\bf T}_{\G_{*}, U^{p}_{*}}^{{\rm sph}, \ord}\to \baar{\bf F}_{p}$. 
  \end{prop}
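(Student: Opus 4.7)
My plan is to derive the proposition from Proposition~\ref{free indep} by standard commutative algebra. Write $\mathbf{T}:=\mathbf{T}^{{\rm sph},\ord}_{\G_{*},U^{p}_{*}}$, $\Lm^{\circ}:=\Lm^{\circ}_{\G_{*},U^{p}_{*}}$, and $M^{\circ}:=M^{\circ}_{\G_{*},U^{p}_{*}}$.

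First I would establish finiteness and semi-locality. By Proposition~\ref{free indep}(1), $M^{\circ}$ is a finitely generated module over the Noetherian ring $\Lm^{\circ}$, so $\End_{\Lm^{\circ}}(M^{\circ})$ is a finite $\Lm^{\circ}$-algebra, and hence so is its subring $\mathbf{T}$. The ring $\Lm^{\circ}\cong \Z_{p}[\Delta]\llb T_{1},\dots,T_{{\rm d}(\G_{*})}\rrb$ is semi-local: its Jacobson radical contains $(p,T_{1},\dots,T_{{\rm d}(\G_{*})})$, so its maximal ideals correspond bijectively to those of ${\bf F}_{p}[\Delta]$, equivalently to $G_{{\bf F}_{p}}$-orbits of characters $\Delta\to \baar{\bf F}_{p}^{\ts}$. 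A finite algebra over a semi-local ring is semi-local, which gives the semi-locality of $\mathbf{T}$.

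For flatness I would use the classical Hida-theoretic strategy, leveraging the projectivity of $M^{\circ}$ and the Control Theorem (Proposition~\ref{free indep}(2)--(3)). For $\G_{*}=\G,\H$ (and hence by K\"unneth for $\G\ts\H$), Proposition~\ref{free indep}(1) gives that $M^{\circ}$ is finitely generated projective over $\Lm^{\circ}$; for $(\G\ts\H)'$ the same holds after inverting $p$, and integral flatness is deduced component-by-component by analyzing the finite-group quotient \eqref{mgh}. Now $M^{\circ}$ is a faithful $\mathbf{T}$-module, so the generic rank of $\mathbf{T}$ on each component of $\Spec\Lm^{\circ}$ is bounded by the rank of $\End_{\Lm^{\circ}}(M^{\circ})$, while the Control Theorem identifies each classical specialisation $M^{\circ}\ot_{\Lm^{\circ}}\Lm^{\circ}/I_{\G_{*},U^{p}_{*},W,r}\cong \H_{d}(\baar{X}_{r},\cW)^{\ord}$ with a finite-rank cohomology group on which the specialised Hecke algebra acts faithfully. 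These specialisation ranks are locally constant along the Zariski-dense locus of classical weights in $\Spec\Lm^{\circ}$, so the fibre ranks of the finite module $\mathbf{T}$ over $\Spec\Lm^{\circ}$ are locally constant; by the standard equivalence over Noetherian rings (finite + locally constant fibre rank $\Rightarrow$ locally free) $\mathbf{T}$ is locally free over $\Lm^{\circ}$, whence flat.

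Finally, the parametrisation of maximal ideals is formal given finiteness. Since $\mathbf{T}$ is finite over $\Lm^{\circ}$ and the residue fields of $\Lm^{\circ}$ at maximal ideals are finite extensions of ${\bf F}_{p}$, every maximal ideal $\mathfrak{m}\subset \mathbf{T}$ has residue field $\mathbf{T}/\mathfrak{m}$ that is a finite extension of ${\bf F}_{p}$; composing the quotient $\mathbf{T}\twoheadrightarrow \mathbf{T}/\mathfrak{m}$ with any embedding into $\baar{\bf F}_{p}$ gives a character $\baar\lambda_{\mathfrak{m}}$, well-defined up to the $G_{{\bf F}_{p}}$-action on embeddings. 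Conversely, any character $\baar\lambda\colon \mathbf{T}\to \baar{\bf F}_{p}$ has image contained in a finite extension of ${\bf F}_{p}$ (since $\mathbf{T}$ is finite over $\Lm^{\circ}$), so its kernel is a maximal ideal, and two characters share the same kernel iff they are $G_{{\bf F}_{p}}$-conjugate. The main obstacle is the flatness claim for $\G_{*}=(\G\ts\H)'$ at the integral level, where the quotient \eqref{mgh} by a finite-group action can a priori introduce torsion; this is handled by shrinking $K^{p}$ so that the group acting is small enough for the descent to preserve projectivity component-by-component on $\Spec\Lm^{\circ}$.
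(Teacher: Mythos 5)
Your treatment of finiteness, semi-locality, and the parametrisation of maximal ideals is essentially sound and in the same spirit as the paper: both reduce the last point to the observation that the residue fields of $\mathbf{T}$ are finite extensions of $\mathbf{F}_p$ (the paper obtains this from topological finite generation over $\Z_p$, you from finiteness over $\Lm^{\circ}_{\G_*,U^p_*}$ — these are interchangeable here).

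The flatness argument, however, has a genuine gap. You assert that the fibre ranks of $\mathbf{T}$ are locally constant because the classical specialisation ranks of $M^{\circ}$ are locally constant, and then invoke ``finite $+$ locally constant fibre rank $\Rightarrow$ locally free.'' Two things go wrong. First, the criterion you invoke requires the fibre rank to be locally constant on all of $\Spec\Lm^{\circ}$, not merely on a Zariski-dense subset of classical points; semicontinuity alone does not propagate constancy from a dense locus to the whole space (a finite module can have a jump on a closed set disjoint from the classical points). Second, and more seriously, the step from ``$M^{\circ}\otimes_{\Lm^{\circ}}\kappa(x)$ has the expected rank'' to ``$\mathbf{T}\otimes_{\Lm^{\circ}}\kappa(x)$ has the expected rank'' is itself a control theorem for Hecke algebras: the natural map $\mathbf{T}\otimes\kappa(x)\to\End_{\kappa(x)}(M^{\circ}\otimes\kappa(x))$ need not be injective, so faithfulness of $\mathbf{T}$ on $M^{\circ}$ does not formally specialise. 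Proving this injectivity (equivalently, that the finite-level Hecke algebra is a genuine base change of $\mathbf{T}$) is precisely the nontrivial content of Hida's freeness theorem. The paper simply cites Hida's \emph{Control of nearly ordinary Hecke algebras} for the case $\G_*=\G$, treats $\H$ as elementary, and passes to products and quotients; Hida's own proof goes through a duality pairing identifying $\mathbf{T}$ with a $\Lm$-dual of the (cuspidal) completed cohomology, which is the piece of input your sketch is missing. If you wish to make the argument self-contained rather than cite Hida, you would need to either reproduce that duality or establish the control statement for the Hecke algebra directly, which your proposal does not do.
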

 \begin{proof}
The first statement is easy for the group $\H$ and it is  proved in \cite{hida luminy} for the group $\G$. Together they imply the statement for $\G\times \H$ and hence $(\G\times \H)'$.  
As  $ {\bf T}_{\G_{*}, U^{p}}^{{\rm sph}, \ord} $  is topologically finitely generated over $\Z_{p}$, the residue fields of its maximal ideals are finite extensions of ${\bf F}_{p}$; this implies the second statement.
\end{proof}

\begin{lemm}\label{dense}
Let $W$ be an irreducible algebraic representation of $\G_{*}$. The set $\cE^{\ord, \cl, W}_{\G*, U_{*}}$
 of classical points of weight $W$
 is Zariski-dense in $\cE^{\ord}_{G_{*}, U_{*}^{p}}$.
\end{lemm}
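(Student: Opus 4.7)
The plan is to deduce density in $\cE^{\ord}_{\G_*, U^p_*}$ from the analogous statement on the weight space $\mathfrak{W}_{\G_*, U^p_*}$, transferring via the finite flat structure morphism of Proposition \ref{semi-local}.

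\emph{Step 1: density in the weight space.} I would first show that $\mathfrak{W}^{\cl, W}_{\G_*, U^p_*}$ is Zariski-dense in $\mathfrak{W}_{\G_*, U^p_*}$, equivalently that $\bigcap_r I_{\G_*, U^p_*, W, r} = 0$ in $\Lambda^\circ_{\G_*, U^p_*} \otimes \OO_L$. The algebraic character $\sigma_W^{-1}\colon \baar{T}_{\G_*, U^p_*, 0} \to \OO_L^\times$ (cf.\ \eqref{alg char}) induces a continuous $\OO_L$-algebra automorphism $\tau$ of $\Lambda^\circ_{\G_*, U^p_*} \otimes \OO_L$ via $\tau([t]) = \sigma_W^{-1}(t)[t]$ on group-like elements; multiplicativity of $\sigma_W^{-1}$ ensures $\tau$ is well defined, with inverse $[t]\mapsto \sigma_W(t)[t]$. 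A direct calculation shows $\tau(I_{W,r}) = J_r$, where $J_r := ([t]-1 : t \in \baar{T}_{\G_*, U^p_*, r})$ is the augmentation-type ideal. By the very definition of the completed group ring, $\Lambda^\circ \otimes \OO_L = \OO_L\llb \baar{T}_{\G_*, U^p_*, 0}\rrb$ is the inverse limit of the quotients $(\Lambda^\circ \otimes \OO_L)/J_r \cong \OO_L[\baar{T}_{\G_*, 0}/\baar{T}_{\G_*, r}]$, so $\bigcap_r J_r = 0$, and applying $\tau^{-1}$ gives $\bigcap_r I_{W,r} = 0$ in $\Lambda^\circ \otimes \OO_L$. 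Each quotient $(\Lambda^\circ \otimes \OO_L)/I_{W,r}$ is $\OO_L$-free (being isomorphic via $\tau$ to a finite group algebra over $\OO_L$), so the vanishing persists after inverting $p$: any nonzero $f \in \Lambda_{\G_*, U^p_*} \otimes L$ has nonzero image in $L[\baar{T}_{0}/\baar{T}_{r}]$ for some $r$, and hence fails to vanish at some closed point of $\Spec L[\baar T_0/\baar T_r]$, i.e.\ at some classical weight-$W$, level-$r$ point.

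\emph{Step 2: transfer to the eigenvariety.} The structure morphism $\pi \colon \cE^{\ord}_{\G_*, U_*^p} \to \mathfrak{W}_{\G_*, U^p_*}$ is finite flat by Proposition \ref{semi-local}, and in particular open (since flat morphisms of finite presentation are open). Therefore, for any nonempty open $U \subset \cE^{\ord}_{\G_*, U_*^p}$, the image $\pi(U)$ is a nonempty open subset of $\mathfrak{W}_{\G_*, U^p_*}$. By Step 1, $\pi(U) \cap \mathfrak{W}^{\cl, W}_{\G_*, U^p_*}$ is nonempty, and so $U \cap \cE^{\ord, \cl, W}_{\G_*, U_*^p} = U \cap \pi^{-1}(\mathfrak{W}^{\cl, W}_{\G_*, U^p_*})$ is nonempty, yielding the claimed density.

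The only genuinely substantive point is the intersection-vanishing of Step 1; via the twist $\tau$ it reduces to the universal property of the completed group algebra. Step 2 is then formal, given the finite flat structure of the eigenvariety over the weight space established in Proposition \ref{semi-local}.
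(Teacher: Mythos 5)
Your proof is correct and follows essentially the same route as the paper: reduce density in the eigenvariety to density of the classical weights in $\mathfrak{W}_{\G_*, U^p_*}$, and then transfer via the finite flat morphism of Proposition~\ref{semi-local}. The only notable refinement is in Step~2, where you invoke openness of the flat finite-type map $\kappa$ rather than the paper's terse ``finite hence closed'' — openness (rather than closedness alone) is in fact the property that directly justifies that preimages of dense sets are dense, so your version makes the transfer step fully explicit.
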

\begin{proof}
By the previous proposition, the map $\kappa_{\G_{*}}$ is finite hence closed. Then the Zariski-density of
 $\cE^{\ord, \cl, W}_{\G*, U_{*}}=\kappa_{\G_{*}}^{-1} ( \mathfrak{W}_{\G_{*}, U^{p}_{*}}^{W})$ reduces to the Zariski-density of $\mathfrak{W}_{\G_{*}, U^{p}_{*}}^{W} \subset \mathfrak{W}_{\G_{*}, U^{p}_{*}}$, which follows from \eqref{alg density}; cf. aso \cite[Lemma 3.8]{SW99}.
 \end{proof}

\subsubsection{Abelian case}
The structure  of the eigenvariety for the abelian groups $\H:={\rm Res}_{E/\Q}\G_{m}$ and ${\rm Z}={\rm Res}_{F/\Q}\G_{m}$
 is very simple, and we make it explicit  for the group $\H$: we have
$$M_{\H, V^{p}}:= \hat H_{0}(\baar{Y}_{V^{p}})=  \Z_{p}\llb Y_{V^{p}}( \baar{E})\rrb\otimes\Q_{p},$$
the set $ Y_{V^{p}}(\baar{E})$ is a principal homogeneous space for $\Gamma_{E,V^{p}}:=\H(\Q)\bks \H(\A^{\infty})/V^{p}= E^{\times}\bks E_{\A^{\infty}}^{\times}/V^{p}$, and 
$$\cE_{\H, V^{p}}= \Spec \Z_{p}\llb{\Gamma_{E, V^{p}}}\rrb_{\Q_{p}}.$$
(We omit the superscript `$\ord$' which is meaningless here.)
The  classical points 
 $\cE_{\H, V^{p}}^{\cl}\subset \cE_{\H, V^{p}}(\baar\Q_{p})$ parametrise locally algebraic characters of $\Gamma_{E, V^{p}}$.
Finally, the sheaf $\cM_{\H, V^{p}}$ is a trivial line bundle, with actions  by $G_{E}$ given by the universal character 
\begin{align}\label{chiuniv}
\chi_{\rm univ}\colon G_{E}\to \Gamma_{E, V^{p}}\to\Z_{p}\llb\Gamma_{E}\rrb^{\times},
\end{align} and by  $\H(\A^{\infty})$  given by the inverse  $\chi_{\H, \rm univ}^{-1}$ of the  corresponding automorphic character. We may formally write 
\begin{align}\label{chiHuniv}
\cM_{\H, V^{p}}= \chi_{\H, \rm univ}^{-1}\otimes \chi_{\rm univ}
\end{align}
as a tensor product of two trivial sheaves, the first one endowed with the $\H(\A^{})$-action only, and the second one with the Galois action by $ \chi_{\rm univ}$ only.%

\subsubsection{Fibres of the sheaves $\cM$} 
Let 
$$(\lambda^{p},\lambda_{p})\colon {\bf T}_{\G, U^{p}}^{{\rm sph}, \ord}\to \OO(\cE_{\G, U^{p}})$$ be the tautological character, 
 and define
  \beq
\begin{aligned}\label{alphauniv}
\alpha^{\circ}\colon  F_{p}^{\times}&\to \OO(\cE_{\G, U^{p}})^{\times}\\
x&\mapsto \lambda_{p}(\Up_{x}).
\end{aligned}
\eeq
\begin{prop}\label{fibreM2} 
Let $x\in \cE^{\ord, \cl,W}_{\G, U^{p}}$  be a classical point  of weight $W$ and level $r$. 
 Then:
\begin{enumerate}
\item  Let $U:=U^{p}U_{p,r}$, and let $\cW$ be the local system on $X$ associated with $W$. 
 We have an isomorphism of $\cH^{\ord}_{\G, U^{p}, \Q_{p}(x)}$-modules
$$(\cM_{\G, U^{p}})_{x}\cong  \H_{1}(X_{U, \baar F}, \cW_{\G, \uw})^{\ord}\otimes_{\cH^{\rm sph}_{\G, U^{p}}, \lambda_{x}} \Q_{p}(x).$$
\item 
There exists  a  unique  automorphic representation ${\pi}_{x}$ of $\G(\A^{})$  over ${\Q}_{p}(x)$ of spherical character $\lambda_{x}^{p}$, weight $W$, and unit character $\alpha_{x}^{\circ}$. It satisfies the property
\beq\label{property}
\pi_{x}^{ {\ord,U^{p}}}
\cong
\Hom_{\Q_{p}(x)[G_{F, S}]}((\cM_{\G, U^{p}})_{x} ,
 \rho_{x})
\eeq
as left $\cH^{\ord}_{\G, U}\otimes \Q_{p}(x)$-modules.
\end{enumerate}\end{prop}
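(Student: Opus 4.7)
The plan for part 1 is to reduce computation of the fibre to a classical cohomology calculation via the Control Theorem. By definition, $(\cM_{\G,U^p})_x = M_{\G,U^p} \otimes_{{\bf T}^{\mathrm{sph},\ord}_{\G,U^p}, \lambda_x} \Q_p(x)$, and under the tautological map $\Lambda^\circ_{\G,U^p} \to {\bf T}^{\mathrm{sph},\ord}_{\G,U^p}$, the hypothesis that $x$ is classical of weight $W$ and level $r$ precisely means that the composed character factors through $\Lambda_{\G,U^p,W,r}$. Hence I can first tensor $M_{\G,U^p}$ down over $\Lambda_{\G,U^p}$ to $M_{\G,U^p,W,r}$, which by Proposition \ref{free indep}(3) is canonically identified with $\H_d(\baar{X}_r,\cW)^{\ord}$, and only then complete the localisation by $\lambda_x$ on the away-from-$p$ Hecke algebra. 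The remaining $p$-Hecke eigencharacter is automatically cut out on $\H_d(\baar{X}_r,\cW)^{\ord}$ by the ordinary refinement, so the resulting tensor product coincides with the one in the statement.

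For part 2, the strategy is to combine the output of part 1 with Carayol's decomposition \eqref{carayol iso 2}. After extending scalars to a sufficiently large finite Galois extension $L'$ of $\Q_p(x)$, that decomposition gives a Hecke- and Galois-equivariant isomorphism $\H_d(\baar{X}_{U},\cW)\otimes_L L' \cong \bigoplus_{\pi} \pi^{\vee,U}\otimes V_{\pi}$, where $\pi$ ranges over automorphic representations of weight $W$. Passing to the ordinary part selects those $\pi$ ordinary at all $v\vert p$; further tensoring with $\Q_p(x)$ along the spherical character $\lambda_x^p$ and imposing the $\Up$-eigencharacter $\alpha_x^\circ$ isolates, by strong multiplicity one for $\mathrm{GL}_{2/F}$ together with the uniqueness clause in Definition \ref{ord-def}, a \emph{unique} automorphic representation $\pi_x$. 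Existence and uniqueness of $\pi_x$ over $\Q_p(x)$ (rather than over $L'$) then follow from the equivariance of the Carayol decomposition under $\Gal(L'/\Q_p(x))$ and the fact that the Hecke/Galois data defining $\pi_x$ already live over $\Q_p(x)$.

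Finally, to obtain the identification \eqref{property}, I apply $\Hom_{\Q_p(x)[G_{F,S}]}(-,\rho_x)$ to the isomorphism from part 1. Using that $V_{\pi_x}=\rho_x$ is absolutely irreducible (Theorem \ref{galois for hilbert}) and that the other $V_\pi$ appearing in Carayol's decomposition are non-isomorphic to $\rho_x$ as $G_F$-representations (by \eqref{trrho} together with the characterisation of $x$ by the spherical character $\lambda_x^p$), the $\Hom$ picks out exactly the $\pi_x$-isotypic component, yielding $\pi_x^{\ord,U^p}$ as a module over $\cH^{\ord}_{\G,U}\otimes \Q_p(x)$.

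The main obstacle I anticipate is the bookkeeping of tensor products: making sure that tensoring with $\Q_p(x)$ over the full ${\bf T}^{\mathrm{sph},\ord}_{\G,U^p}$ truly coincides with tensoring over $\Lambda_{\G,U^p}\otimes \cH^{\mathrm{sph}}_{\G,U^p}$, given that the $\Up$-eigenvalues are already determined on the ordinary part by the unit character $\alpha_x^\circ$. A related subtlety is that, \emph{a priori}, the fibre of a coherent sheaf at a non-reduced scheme point need not coincide with the naive specialisation; but since $\cE^{\ord}_{\G,U^p}$ is locally finite over the reduced weight algebra $\Lambda_{\G,U^p}$ (Proposition \ref{semi-local}) and the classical weight is a regular point of $\mathfrak{W}$, the two notions agree and the argument goes through.
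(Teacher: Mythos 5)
Your proposal is correct and follows essentially the same route as the paper: part 1 via the Control Theorem (Proposition~\ref{free indep}.3), part 2 by comparing with Carayol's decomposition \eqref{carayol iso 2} and invoking strong multiplicity one. The one place where your argument is compressed relative to the paper's is the descent from $\baar{\Q}_p$ (or $L'$) to $\Q_p(x)$: you appeal to $\Gal(L'/\Q_p(x))$-equivariance of Carayol's decomposition plus the observation that the Hecke/Galois data live over $\Q_p(x)$, but this leaves implicit why the Galois representation $V_{\baar\pi}$ (and hence $\rho_x$, which appears on the right-hand side of \eqref{property}) actually admits a model over $\Q_p(x)$ rather than merely having $\Q_p(x)$-valued trace. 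The paper handles this more concretely: it first descends the Galois representation via Frobenius traces (producing $V_\pi$ over $\Q_p(x)$), and then \emph{defines} the automorphic model as $\pi:=\varinjlim_{U'}\Hom(H_1(\baar X_{U'},\cW),V_\pi)$, which makes the $\Q_p(x)$-rationality of both $\pi_x$ and $\rho_x$ automatic and renders \eqref{property} a tautology rather than something to be re-derived by a separate Hom computation.
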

\begin{proof}
Part 1 follows from Proposition \ref{free indep}.3. 

For part 2, 
fix an embedding $\Q_{p}(x)\into \baar{\Q}_{p}$. By strong multiplicity-one, a representation $\overline{\pi}$ over $\baar{\Q}_{p}$ with character $\lambda_{x}^{p}$ is unique if it exists. By comparing part 1 with \eqref{carayol iso 2}, we find that $\baar{\pi}$ exists and that for such $\baar{\pi}$  property \eqref{property}  holds after base-change to $\baar{\Q}_{p}$. Let $V_{\baar{\pi}}$ be the Galois representation associated with $\pi$ by Theorem \ref{galois for hilbert}, then by looking at Frobenius traces, we see that  $V_{\baar{\pi}}$  has a model $V_{\pi}$ over $\Q_{p}(x)$. It follows again from \eqref{carayol iso 2} that $\pi :=\lim_{U'}\Hom(H_{1}(\baar{X}_{U}, \cW), V_{\pi}) $ is a model of $\baar{\pi}$ that satisfies \eqref{property}.
\end{proof}

In the rest of the paper, we will use without further comment the notation $\pi_{x}^{}$ for the
 representation of $\G(\A^{})$ defined above, for  $x\in \cE_{\G, U^{p}}^{\ord,\cl }$.

\begin{coro}\label{fibreMad}
Let $z\in \cE_{K^{p}}^{\ord, \cl}$ be a classical point, and write $z=(x,y)$ via \eqref{immers} and $L:=\Q_{p}(z)$. Let $\omega_{x}$ be the central character of $\pi_{x}$, let $\chi_{\H,y}$ be the  character of $\H(\A^{\infty})$ obtained by specialising $\chi_{\H, \rm univ}$, and let $\chi_{y}$ be the corresponding locally algebraic character of $G_{E,S}$. Write $L:=\Q_{p}(z)$. Then $\omega_{z}:=\chi_{y|F_{\A^{\infty, \times}}}\omega_{x} = \one $, and 
$$\cM_{K^{p},{z}}\cong
(  (\pi_{x}^{ p,\vee, })^{U^{p}}\otimes_{L}\chi_{\H,y}^{-1,p})  \otimes_{L} (V_{x|_{G_{E,S}}}\otimes_{L[G_{E, S]}} \chi_{y})
$$
as $\cH^{K_{S}}_{S, L}\otimes_{L} L[G_{E,S}]$-modules. Here, $G_{E, S}$ acts trivially on the first two tensor factors, and the natural action of 
 $\cH_{\G\ts\H}^{U_{S}\ts V_{S}}$ 
on the first two factors is extended trivially to the whole tensor product, and  it factors to an action of $\cH^{K_{S}}_{S}$.
\end{coro}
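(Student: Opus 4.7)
The plan is to assemble the formula from the separate fibre descriptions at $x$ (Proposition \ref{fibreM2}) and at $y$ (equation \eqref{chiHuniv}), then pass through the central quotient defining $(\G\ts\H)'$. First I will invoke the K\"unneth decomposition $M^{\circ}_{\G\ts\H, U^{p}\ts V^{p}} \cong M^{\circ}_{\G, U^{p}} \hat{\otimes} M^{\circ}_{\H, V^{p}}$ used in the proof of Proposition \ref{free indep}, which yields $\cM_{\G\ts\H, U^{p}\ts V^{p}} \cong \cM_{\G, U^{p}}\boxtimes \cM_{\H, V^{p}}$ and hence $(\cM_{\G\ts\H, U^{p}\ts V^{p}})_{(x,y)} \cong (\cM_{\G, U^{p}})_{x}\ot_{L}(\cM_{\H, V^{p}})_{y}$ on specialisation. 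Combining Proposition \ref{fibreM2}.2, the Carayol decomposition \eqref{carayol iso 2}, and the absolute irreducibility of $\rho_{x}$ (Theorem \ref{galois for hilbert}), the first factor becomes $(\cM_{\G, U^{p}})_{x}\cong \pi_{x}^{\vee, \ord, U^{p}}\ot_{L} V_{x}$ as an $\cH^{\ord}_{\G, U^{p}, L}\ot L[G_{F, S}]$-module; the abelian formula \eqref{chiHuniv} identifies $(\cM_{\H, V^{p}})_{y}\cong \chi_{\H, y}^{-1}\ot_{L}\chi_{y}$ as an $\cH_{\H, V^{p}, L}\ot L[G_{E, S}]$-module.

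Next I will descend from $\G\ts\H$ to $(\G\ts\H)'$ using the coinvariants presentation \eqref{mgh} and the closed immersion \eqref{immers}: $M^{\circ}_{K^{p}}$ is the quotient of $M^{\circ}_{\G\ts\H, U^{p}\ts V^{p}}$ under the (after inverting $p$, free) action of a finite-index subgroup of $F_{\A^{\infty}}^{\ts}/F^{\ts}$, and this imposes at $z = (x,y)$ the trivial-central-character identity $\omega_{z} := \chi_{y|F_{\A^{\infty,\ts}}}\omega_{x} = \mathbf{1}$. Under this quotient the $p$-components of $\pi_{x}^{\vee, \ord}$ and $\chi_{\H, y}^{-1}$ become redundant with the Galois data: Theorem \ref{galois for hilbert}.\ref{phig} identifies the $G_{F, v}$-action on $V_{x, v}^{+}$ with $\alpha_{x, v}^{\circ}\chi_{{\rm cyc}, v}$ for $v\vert p$, and the analogous local-global compatibility at $p$ holds for $\chi_{y}$. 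The central-character constraint therefore absorbs the $p$-parts of $\pi_{x}^{\vee, \ord}\ot\chi_{\H, y}^{-1}$ into the Galois factor $V_{x}\ot\chi_{y}$, leaving only $(\pi_{x}^{p, \vee})^{U^{p}}\ot_{L}\chi_{\H, y}^{-1, p}$ on the automorphic side.

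Finally I will restrict the Galois action from $G_{F, S}\ts G_{E, S}$ to the diagonal copy of $G_{E, S}$, as dictated by the action on $\baar{Z}_{K}$ recalled in \S\ref{ssec real}; this converts $V_{x}\ot\chi_{y}$ into $V_{x|G_{E, S}}\ot_{L[G_{E, S}]}\chi_{y}$, and the away-from-$p$ Hecke action factors through $\cH^{K_{S}}_{S, L}$ as asserted. The main obstacle is the bookkeeping in the second step: carefully tracing how the trivial-central-character constraint at $p$ matches the ordinary-filtration data on $V_{x}$ with the $p$-part of $\chi_{y}$, so as to absorb the $p$-adic automorphic data into the Galois factor and produce precisely the stated formula. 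This amounts to unpacking what it means for the classical point $z$ to lift through the immersion \eqref{immers}.
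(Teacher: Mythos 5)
Your overall frame (K\"unneth $\Rightarrow$ fibre at $(x,y)$, then descend via \eqref{mgh}, then restrict the Galois action) matches the paper's, but your second step misidentifies the mechanism and contains a genuine error. The paper's proof has two moving parts: (i) a careful argument that $\omega_z=\one$ \emph{globally}, which is not immediate from \eqref{mgh} --- the quotient in \eqref{mgh} only shows the central torus acts trivially away from $Sp$, and one must pass through $\lambda_F$ and then use weak approximation to promote this to an identity of characters on all of $F_{\A^\infty}^\ts$; and (ii) once $\omega_z=\one$ is known, the quotient of $(\cM_{\G\ts\H,U^p\ts V^p})_{(x,y)}$ by the action of $F_{\A^\infty}^\ts$ (which is what produces $\cM_{K^p,z}$ per \eqref{mgh}) is an isomorphism, since that group acts on the RHS precisely by $\omega_z$. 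Your proposal glosses over (i), but more seriously, it replaces (ii) by a spurious ``absorption'' of the $p$-components of $\pi_x^{\vee,\ord}\ot\chi_{\H,y}^{-1}$ into the Galois factor $V_x\ot\chi_y$, citing Theorem~\ref{galois for hilbert}.\ref{phig}. Nothing of the kind happens: the $p$-parts $\pi_{x,p}^{\vee,\ord}$ and $\chi_{\H,y,p}^{-1}$ are independent one-dimensional tensor factors, orthogonal to $V_x\ot\chi_y$; they are simply \emph{dropped} from the stated isomorphism because the latter is only asserted as one of $\cH^{K_S}_{S,L}\ot L[G_{E,S}]$-modules, a structure to which these one-dimensional factors at $p$ and $\infty$ are invisible. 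The ordinarity filtration on $V_{x,v}$ described by Theorem~\ref{galois for hilbert}.\ref{phig} plays no role in this corollary; invoking it as the reason the automorphic $p$-data ``becomes redundant with the Galois data'' is a conceptual confusion. You should remove that appeal entirely, and replace the absorption claim with: (a) the weak-approximation argument giving $\omega_z=\one$; (b) the observation that the quotient by $F_{\A^\infty}^\ts$ is therefore trivial; and (c) the remark that the $1$-dimensional factors at $p\infty$ contribute nothing to the stated module structure.
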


\begin{proof}
Let  $\lambda_{x, F}^{p}$ and $\lambda_{y, F}^{p}$ be the restrictions of the characters $\lambda_{x}$, $\lambda_{y}$ to $\Z[F^{\ts}_{\A^{Sp\infty, \times}}/K_{F}^{Sp}]$, and let $\lambda_{F}$ be the restriction of $\lambda_{F,x}\lambda_{F,y}$ to $\Delta'=F^{\ts}_{\A^{Sp\infty, \times}}/K_{F}^{Sp}$. As this groups acts trivially on $M_{K^{p}}$ by \eqref{mgh}, we have $\lambda_{F}=\one$. On the other hand $\lambda_{F}$ equals the restriction of $\omega_{z}$ to $\Delta'$. We deduce that $\omega_{z}$ factors through $C=F^{\ts}_{\A^{\infty}}/F^{\times}F_{\A^{Sp\infty}}K_{F, S}K_{F,p}$ for some open compact $K_{F,p}\subset F_{p}^{\times}$. By weak approximation, $C=\{1\}$, therefore $\omega_{z}=\one$.

By Proposition \ref{fibreM2}, \eqref{chiHuniv}, and \eqref{mgh}, the asserted result holds provided we quotient the right-hand side by the action of $F_{\A^{\infty}}^{\ts}$,  however  this group acts  by $\omega_{z}$, hence trivially.
\end{proof}

\begin{prop}\label{etale} The natural map $\kappa\colon \cE^{\ord}_{K^{p}} \to \mathfrak{W}_{K^{p}}$ is \'etale over a neighbourhood of  the classical  points in $\mathfrak{W}^{\cl}_{K^{p}}$. In particular, the space $\cE^{\ord}_{K^{p}}$ is regular at all $z\in \cE^{\ord, \cl}_{K^{p}}$.
\end{prop}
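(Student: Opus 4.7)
The map $\kappa$ is finite by Proposition \ref{semi-local}, and $\mathfrak{W}_{K^{p}}$ is regular (being $\Spec$ of a finite product of formal power series rings over finite extensions of $\Q_{p}$). The plan is therefore to establish (a) that $\kappa$ is unramified at every classical point, and (b) that $\kappa$ is flat over a Zariski open neighbourhood of $\mathfrak{W}^{\cl}_{K^{p}}$; étaleness, and the regularity of $\cE^{\ord}_{K^{p}}$ at classical points, then follows by combining (a), (b), and regularity of the target.

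For (a), consider a classical point $z$ with image $w=\kappa(z)\in \mathfrak{W}^{\cl, W}_{K^{p},r}$. The control theorem (Proposition \ref{free indep}.3) gives
$$M_{K^{p}} \otimes_{\Lambda_{K^{p}}} \Lambda_{K^{p}, W, r} \cong \H_{d}(\baar{Z}_{K^{p}, r}, \cW)^{\ord}.$$
Carayol's decomposition \eqref{carayol iso 2} together with strong multiplicity one for $\GL_{2}$ over $F$ (applied via Jacquet--Langlands when $\B$ is ramified) shows that, after a scalar extension $L'/L$, this module splits as a direct sum of Hecke-eigenspaces indexed by Galois orbits of ordinary automorphic representations of the prescribed weight and level, each with a distinct system of spherical Hecke eigenvalues. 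Consequently the image of ${\bf T}^{\rm sph, \ord}_{K^{p}}$ in the endomorphism algebra of this fiber is a product of separable field extensions of $\kappa(w)\otimes L'$; selecting the component corresponding to the relevant nebentypus character of $\baar{T}_{K^{p},0}/\baar{T}_{K^{p},r}$ yields that the fiber $\Spec\, {\bf T}^{\rm sph, \ord}_{K^{p}} \otimes_{\Lambda_{K^{p}}} \kappa(w)$ is étale at $z$.

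For (b), I would use the projectivity of $M_{K^{p}}$ over $\Lambda_{K^{p}}$ (Proposition \ref{free indep}.1) together with a trace-pairing argument. The symmetric bilinear form on ${\bf T}^{\rm sph, \ord}_{K^{p}}$ given by
$(t_{1}, t_{2}) \mapsto \operatorname{tr}_{\Lambda_{K^{p}}}(t_{1} t_{2} \mid M_{K^{p}})$
is well-defined since ${\bf T}^{\rm sph, \ord}_{K^{p}}$ embeds in the projective $\Lambda_{K^{p}}$-module $\End_{\Lambda_{K^{p}}}(M_{K^{p}})$, and its discriminant is a global section $\Delta \in \OO_{\mathfrak{W}_{K^{p}}}$. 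By (a), the trace pairing is perfect at every classical point (a finite \'etale algebra over a field has non-degenerate trace form), so $\Delta$ does not vanish on $\mathfrak{W}^{\cl}_{K^{p}}$. Its vanishing locus is therefore a closed subscheme disjoint from the classical locus, and on the open complement the standard criterion identifies ${\bf T}^{\rm sph, \ord}_{K^{p}}$ as a finite \'etale $\Lambda_{K^{p}}$-algebra.

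The main subtlety will be to verify carefully that the trace pairing really is compatible with base change in our setting --- in particular, that the discriminant computed fiberwise at $w$ agrees with the specialisation of the global $\Delta$ --- which is what permits the propagation of perfectness from classical points to a neighbourhood. This rests on $M_{K^{p}}$ being projective over $\Lambda_{K^{p}}$, so that $\operatorname{tr}_{\Lambda_{K^{p}}}(-\mid M_{K^{p}})$ commutes with arbitrary base change. An alternative path, should the trace-pairing approach encounter technical obstructions (e.g.\ on the $(\G\times\H)'$ component where projectivity is guaranteed only after inverting $p$), is to apply Hida's classical flatness theorems for the big ordinary Hecke algebra componentwise to $\G$ and $\H$ and deduce the result for $(\G\times\H)'$ via the closed immersion \eqref{immers}.
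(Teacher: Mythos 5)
Your step (a) is correct and essentially the paper's own argument: combine the control theorem (Proposition \ref{free indep}.3), Carayol's decomposition \eqref{carayol iso 2}, and strong multiplicity one to conclude that the fibre of $\kappa$ over any $x\in \mathfrak{W}^{\cl}_{K^{p}}(\baar{\Q}_{p})$ is a product of copies of $\baar{\Q}_{p}$, hence \'etale.

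The trouble is with step (b). You cite Proposition \ref{semi-local} only for finiteness of $\kappa$, but that proposition already asserts that $\kappa$ is finite \emph{flat}. Once flatness is in hand, there is nothing left to prove: for a finite flat morphism the non-\'etale locus is closed in the source, its image in $\mathfrak{W}_{K^{p}}$ is closed (finite morphisms are closed), and by (a) that image misses $\mathfrak{W}^{\cl}_{K^{p}}$. So step (b) is redundant, and the paper's proof is exactly steps (a) plus this observation.

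Beyond being unnecessary, the trace-pairing argument as written has a circularity. To interpret ``its discriminant is a global section $\Delta\in\OO_{\mathfrak{W}_{K^{p}}}$'' you need ${\bf T}^{\rm sph,\ord}_{K^{p}}$ to be locally free over $\Lambda_{K^{p}}$, so that one can choose local bases and form the discriminant matrix; but that local freeness is precisely the flatness you are trying to establish. Projectivity of $M_{K^{p}}$ over $\Lambda_{K^{p}}$ guarantees that $\operatorname{tr}_{\Lambda_{K^{p}}}(-\mid M_{K^{p}})$ is well-defined and commutes with base change, but it does not make the Hecke algebra itself locally free. One should also beware that a finite algebra whose fibre over a closed point is \'etale need not be flat at that point: the ring map $\Z_{p}\to\Z_{p}\times\mathbf{F}_{p}$, $a\mapsto(a,\bar a)$, has fibre $\mathbf{F}_{p}\times\mathbf{F}_{p}$ over the closed point yet is not flat, so (a) alone cannot substitute for the flatness input. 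Finally, your ``alternative path'' --- applying Hida's flatness componentwise to $\G$ and $\H$ and descending to $(\G\times\H)'$ --- is precisely the content of Proposition \ref{semi-local}; one may as well cite it.
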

\begin{proof} As $\kappa $ is finite  flat by Proposition \ref{semi-local}, it suffices to check that the fibre of $\kappa$ over any $x\in \mathfrak{W}^{\cl}_{K^{p}}(\baar{\Q}_{p})$ is isomorphic to  $\baar{\Q}_{p}^{m}$ for some $m$. By \ref{free indep}, \ref{MGr} and \eqref{carayol iso 2}, this fibre  is the spectrum of the image $A_{x}$ of $\mathscr{H}_{K^{p}}^{{\rm sph}, \ord}$
in   $\bigoplus_{z\in \kappa^{-1}(x)} (\Pi_{z}^{K^{p}, \ord})^{\oplus 2}$, where the $\Pi_{z}$ form   a list  of distinct  irreducible representations of $(\G\times \H)'(\A^{p\infty})$ over $\baar{\Q}_{p}$. By strong multiplicity-one, we have $A_{x}\cong \oplus_{z}\baar{\Q}_{p}$. This proves \'etaleness. As $\mathfrak{W}_{K^{p}}$ is regular, we deuce that so is $\cE_{K^{p}}^{\ord}$ in a neighbourhood of classical points.
\end{proof}

\subsection{Galois representations in families} 

We recall the existence of a  universal family of Galois representations over $\X$.
\subsubsection{Representations associated with irreducible pseudocharacters}\lb{3.2new} Recall that an $n$-dimensional   pseudocharacterof $G$ over  a scheme $\X$ is a function $T\colon G\to \OO(\X)$  that `looks like' the trace of an $n$-dimensional representation of $G$ over $\OO(\X)$, see  \cite{rou} for the precise definition. A pseudocharacter $T$ is said to be (absolutely) irreducible  at a point $x\in \X$ if, for any (equivalently, all) geometric point $\baar{x}$ of $\X$ with image $x$, the pullback $\baar{x}^{*}T$  is not the sum of two  pseudocharacters of dimensions $k$, $n-k$ with $0< k<n$. The \emph{irreducibility locus} of $T$ is the set of points of $\X$ at which $T$ is irreducible; it is open  (\cite[\S~7.2.3]{chenevier}).
 
We start by proving that, if $T$ is irreducible, a  representation with trace $T$ is essentially unique when it exists.

\begin{lemm} Let $\X$ be an integral scheme and let  $\cV_{1}$, $\cV_{2}$ be vector bundles of rank $n>0$ over $\X$. Suppose that there is an isomorphism $F\colon \End_{\OO_{\X}}(\cV_{1})\to \End_{\OO_{\X}}(\cV_{2})$. Then there is an invertible $\OO_{\X}$-module $\calL$ and an isomorphism  
$$g \colon \cV_{1}\cong \cV_{2}\otimes \calL$$ inducing $F$ in the sense that 
$F(T)\otimes{\rm id}_{\calL} =g Tg^{-1}$
for all sections $T$ of $\End_{\OO_{\X}}(\cV_{1})$.
\end{lemm}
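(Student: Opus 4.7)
The plan is to realise $\calL$ as a line bundle of $F$-intertwiners between $\cV_1$ and $\cV_2$, and then to verify the required properties by a local calculation based on the Skolem--Noether theorem for matrix algebras.

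First I would define the subsheaf
\[\mathscr{I} := \mathcal{H}om_{\End_{\OO_\X}(\cV_1)}(\cV_1, \cV_2)\subset \mathcal{H}om_{\OO_\X}(\cV_1, \cV_2),\]
where $\cV_2$ is regarded as a left $\End_{\OO_\X}(\cV_1)$-module through $F$. Concretely, local sections of $\mathscr{I}$ are $\OO_\X$-linear maps $h\colon \cV_1\to \cV_2$ satisfying $h\circ T = F(T)\circ h$ for every local section $T$ of $\End_{\OO_\X}(\cV_1)$. The natural evaluation
\[\mathrm{ev}\colon \mathscr{I}\otimes_{\OO_\X}\cV_1\longrightarrow \cV_2,\quad h\otimes v\mapsto h(v),\]
is $\OO_\X$-linear by construction.

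The main step is to show that $\mathscr{I}$ is an invertible $\OO_\X$-module and that $\mathrm{ev}$ is an isomorphism. Both assertions are Zariski-local, so I may trivialise $\cV_1$ and $\cV_2$ over a sufficiently small affine open $U = \Spec R$, identifying both with $R^n$. Then $F$ becomes an $R$-algebra automorphism of $M_n(R)$, which, after further Zariski-localisation around each point, is inner by the Skolem--Noether theorem for the matrix algebra over a commutative local ring: $F(T) = g_0 T g_0^{-1}$ for some $g_0\in \GL_n(R)$. The defining condition of $\mathscr{I}$ becomes $(g_0^{-1}h)T = T(g_0^{-1}h)$ for all $T\in M_n(R)$, which forces $g_0^{-1}h$ into the centre $R\cdot I$ of $M_n(R)$. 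Hence $\mathscr{I}_{|U}=R\cdot g_0$ is free of rank one with generator $g_0$, and under this trivialisation $\mathrm{ev}$ identifies with the isomorphism $g_0\colon R^n\to R^n$, so that $\mathrm{ev}$ is an isomorphism of $\OO_\X$-modules globally.

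Setting $\calL := \mathscr{I}^{-1}$, one obtains an isomorphism $g\colon \cV_1\to \cV_2\otimes\calL$ as the composition of the canonical isomorphism $\cV_1\cong \mathscr{I}\otimes\cV_1\otimes\mathscr{I}^{-1}$ with $\mathrm{ev}\otimes \mathrm{id}_{\calL}$. The required equivariance $F(T)\otimes \mathrm{id}_{\calL} = g T g^{-1}$ reduces, after local trivialisation of $\calL$ by the section dual to $g_0$, to the identity $F(T) = g_0 T g_0^{-1}$, which is exactly how $g_0\in \mathscr{I}$ was chosen. The only substantive input in the plan is the local inner-ness of automorphisms of matrix algebras over commutative local rings; every other step is a formal manipulation of Hom-sheaves.
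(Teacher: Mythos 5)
Your proof is correct, and it takes a genuinely different route from the paper. The paper works with an open cover $\{U_i\}$, produces local conjugating isomorphisms $g_i\colon \cV_{1|U_i}\to \cV_{2|U_i}$ from the local inner-ness of automorphisms of Azumaya algebras (Knus--Ojanguren), and then assembles the transition scalars $c_{ij}=g_i^{-1}g_j$ into an explicit \v{C}ech cocycle in $H^1(\X,\OO_\X^\times)$ whose associated line bundle is $\calL$. Your approach instead builds $\calL$ directly (as $\mathscr{I}^{\vee}$ for the Hom-sheaf of $F$-intertwiners $\mathscr{I}=\mathcal{H}om_{\End_{\OO_\X}(\cV_1)}(\cV_1,\cV_2)$), so the cocycle manipulations are replaced by the verification that $\mathscr{I}$ is a line bundle and $\mathrm{ev}$ is an isomorphism — which again reduces to the same local inner-ness (Skolem--Noether for $M_n$ over a local ring). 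Conceptually the two are dual: the paper glues the $g_i$ into a twisted isomorphism and reads off $\calL$ from the gluing data, while you globalize first by taking the sheaf of all possible $g_i$'s and recover the isomorphism from the canonical evaluation. What your version buys is the avoidance of an explicit cocycle check and a cleaner, choice-free definition of $\calL$; the paper's version is arguably quicker to write down once the cover is fixed. Both rely on exactly the same substantive local input, so there is no gap — though if this were to be spliced in it would be worth spelling out which reference gives Skolem--Noether over a commutative local ring (Knus--Ojanguren as in the paper, or a Morita-theoretic argument).
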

\begin{proof}
By \cite[Ch. IV]{knus}, any automorphism of an Azumaya  algebra (such as $ \End_{\OO_{\X}}(\cV_{i})$) is Zariski-locally inner.  Therefore there exists an open cover $\{U_{i}\}$ of $\X$ and isomorphisms $g_{i}\colon \Gamma(U_{i}, \cV_{1})\to \Gamma(U_{i}, \cV_{2})$ such that 
\begin{align}
\label{pippo}
F(T)=g_{i}Tg_{i}^{-1}\end{align} for all $T \in \End_{\OO_{\X}(U_{i})}(V_{1})$. Let $U_{ij}:=U_{i}\cap U_{j}$ and
\begin{align}\label{cij}
c_{ij}:=g_{i}^{-1}g_{j},
\end{align}
an automorphism of $\cV_{1}$ over $U_{ij}$.  By \eqref{pippo},  $c_{ij} $ commutes with every $T \in \End_{\OO_{\X}(U_{ij})}(\cV_{1})$, hence it is a scalar in $\OO_{\X}(U_{ij})^{\times}$. One verifies easily that the $c_{ij}$ form a cocycle in $H^{1}(\X,\OO_{\X}^{\times})$. Let $\calL$ denote the associated invertible sheaf, which is trivialised by  the cover $\{U_{i}\}$. Then we may view  $g_{i}\colon \Gamma(U_{i}, \cV_{1})\to \Gamma(U_{i}, \cV_{2}\otimes \calL)$. By \eqref{cij}, the $g_{i}$ glue to the desired isomorphism $g\colon \cV_{1}\cong \cV_{2}\otimes \calL$.
\end{proof}

\begin{lemm}\label{uniq}
Let $\X$ be an integral scheme and $\mathscr{T}\colon G_{F, S}\to \OO(\X)$ an irreducible pseudocharacter of dimension $n$. Let $\cV_{1}$, $\cV_{2}$ be representations of $G_{F,S} $ with trace $\mathscr{T}$. Then
  there exist    a  line bundle $\mathscr{L}$ with trivial Galois action and a $G_{F, S}$-equivariant isomorphism
 $$\cV_{1}\cong \cV_{2} \otimes \mathscr{L}.$$
\end{lemm}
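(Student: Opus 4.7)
The plan is to use the irreducibility of $\mathscr{T}$ to canonically identify the two endomorphism algebras, and then reduce to the previous lemma.

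First I would observe that each representation $\cV_i$ gives rise to an $\OO_\X$-algebra map $\widetilde{\rho}_i \colon \OO(\X)[G_{F,S}] \to \End_{\OO_\X}(\cV_i)$ with $\mathrm{tr}\circ\widetilde{\rho}_i = \mathscr{T}$. By standard results on pseudocharacters (Rouquier, Procesi; see e.g. \cite[\S~7.2]{chenevier}), absolute irreducibility of $\mathscr{T}$ on the integral scheme $\X$ implies that each image $\widetilde{\rho}_i(\OO(\X)[G_{F,S}])$ is an Azumaya $\OO_\X$-algebra of rank $n^2$, hence equals $\End_{\OO_\X}(\cV_i)$; so $\widetilde{\rho}_i$ is surjective. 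Moreover the reduced trace on each such Azumaya algebra is perfect, so
$$\ker(\widetilde{\rho}_i) = \{\, x \in \OO(\X)[G_{F,S}] : \mathscr{T}(xy) = 0 \ \text{ for all }\ y\,\},$$
which depends only on $\mathscr{T}$. In particular $\ker(\widetilde{\rho}_1) = \ker(\widetilde{\rho}_2)$, yielding a canonical isomorphism of $\OO_\X$-algebras
$$F \colon \End_{\OO_\X}(\cV_1) \xrightarrow{\ \sim\ } \End_{\OO_\X}(\cV_2), \qquad F\circ\widetilde{\rho}_1 = \widetilde{\rho}_2.$$

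Next, I would apply the preceding lemma to $F$ to produce an invertible $\OO_\X$-module $\mathscr{L}$ and an $\OO_\X$-linear isomorphism $g \colon \cV_1 \to \cV_2\otimes\mathscr{L}$ satisfying $gTg^{-1} = F(T)\otimes\id_{\mathscr{L}}$ for every section $T$ of $\End_{\OO_\X}(\cV_1)$. Specialising to $T = \widetilde{\rho}_1(\sigma)$ for $\sigma \in G_{F,S}$ and using $F\circ\widetilde{\rho}_1 = \widetilde{\rho}_2$, this gives
$$g\circ\rho_1(\sigma) = (\rho_2(\sigma)\otimes\id_{\mathscr{L}})\circ g.$$
Equipping $\mathscr{L}$ with the trivial $G_{F,S}$-action, the right-hand side is precisely the action of $\sigma$ on $\cV_2\otimes\mathscr{L}$, so $g$ is $G_{F,S}$-equivariant, as required.

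The only technical point is the globalisation over $\X$ of surjectivity of $\widetilde{\rho}_i$ and the description of its kernel: once $\X$ is integral and $\mathscr{T}$ is irreducible in the sense of the introduction (i.e.\ irreducible at every geometric point), Chenevier's theory provides the required Azumaya structure in families and both assertions hold on the nose. If one prefers to argue pointwise, the statement can alternatively be established fibrewise and then spread out using flatness, but the global Azumaya argument is more direct and avoids any shrinking of $\X$.
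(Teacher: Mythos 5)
Your proof follows essentially the same route as the paper's: both pass through Rouquier's theorem to give $\calA := \OO_{\X}[G_{F,S}]/\Ker(\mathscr{T})$ the structure of an Azumaya algebra of rank $n^{2}$, identify $\calA$ with each $\End_{\OO_\X}(\cV_i)$, and conclude by the preceding lemma. The one place where you state more than you prove is the ``hence equals $\End_{\OO_\X}(\cV_i)$'' step: the paper supplies this by citing \cite[Corollary 2.9]{saltman} (an Azumaya subalgebra of $\End_{\OO_\X}(\cV_i)$ of the same rank $n^2$ must be all of it, e.g.\ by the commutant theorem for Azumaya algebras), and you should include a reference or a one-line justification there. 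Your explicit description of $\Ker(\widetilde\rho_i)$ via perfectness of the reduced trace is a nice way of making it transparent that the resulting isomorphism $F$ intertwines $\widetilde\rho_1$ and $\widetilde\rho_2$, which the paper leaves implicit in the word ``natural''; otherwise the two arguments coincide.
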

\begin{proof}
Write $G=G_{F,S}$ and let $\calA:=\OO_{\X}[G]/\Ker(\mathscr{T})$. By \cite[Theorem 5.1]{rou}, $\mathscr{A}$ is an Azumaya algebra of rank $4$. By \cite[Corollary 2.9]{saltman}, 
the two natural injective maps $\alpha_{i}\colon \calA\to {\End}_{\OO_{\X}}(\cV_{i})$ are isomorphisms. Then we conclude by the previous lemma.
\end{proof}

\subsubsection{Galois representations in ordinary families} We prove the analogue in Hida families of Theorem \ref{galois for hilbert}.
\begin{lemm}\label{exist rhobar}
Let $\overline{\lambda}\colon {\bf T}_{\G, U^{p}}^{\rm sph,ord} \to \baar{\bf  F}_{p}$ be a character. Then there is a unique semisimple representation $\rhobar\colon G_{F,S}\to \GL_{2}(\baar{\bf F}_{p})$
such that $\Tr(\rhobar({\rm Fr}_{v}))=q_{v}^{-1}\baar{\lambda}(T_{v})$  for all $v\notin S$.
\end{lemm}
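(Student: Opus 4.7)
The plan is to deduce existence by lifting $\baar\lambda$ to a classical characteristic-zero point of $\cE^{\ord}_{\G, U^{p}}$, invoking Theorem~\ref{galois for hilbert}, and then reducing the resulting Galois representation modulo $p$ and semisimplifying. Uniqueness will follow by {\v C}ebotarev density from the theory of pseudocharacters \cite{rou}.

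For existence, I would first consider the maximal ideal $\frakm\subset {\bf T}^{\rm sph,\ord}_{\G, U^{p}}$ associated with $\baar\lambda$ via Proposition~\ref{semi-local}. The local factor $A_{\frakm}:={\bf T}^{\rm sph,\ord}_{\G, U^{p},\frakm}$ is a complete local noetherian ring, finite flat over a local factor of $\Lm^{\circ}_{\G, U^{p}}$, hence $\Z_{p}$-flat of positive Krull dimension; in particular the corresponding clopen subset $\Spec(A_{\frakm}[1/p])\subset \cE^{\ord}_{\G, U^{p}}$ is nonempty. Lemma~\ref{dense} then provides a classical point $x\in \cE^{\ord,\cl}_{\G,U^{p}}$ lying in this clopen, and so the Hecke character $\lambda_{x}$ at $x$ reduces modulo the maximal ideal of $\OO_{L_{x}}$ (where $L_{x}:=\Q_{p}(x)$) to $\baar\lambda$. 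By Proposition~\ref{fibreM2}, $\lambda_{x}$ corresponds to an ordinary automorphic representation $\pi_{x}$ of $\G(\A)$ over $L_{x}$, and Theorem~\ref{galois for hilbert} attaches to $\pi_{x}$ a continuous $\rho_{x}\colon G_{F,S}\to \GL_{2}(L_{x})$ with $\Tr(\rho_{x}(\mathrm{Fr}_{v}))=q_{v}^{-1}\lambda_{x}(T_{v})$ for all $v\notin S$. Choosing a $G_{F,S}$-stable $\OO_{L_{x}}$-lattice in $\rho_{x}$, reducing modulo the maximal ideal, semisimplifying, and base-changing to $\baar{\bf F}_{p}$ then produces a representation $\baar\rho$ with the prescribed Frobenius traces.

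For uniqueness, given two candidates $\baar\rho_{1}$, $\baar\rho_{2}$, equality of their traces on $\{\mathrm{Fr}_{v}\}_{v\notin S}$ extends by continuity and {\v C}ebotarev density to equality of the associated pseudocharacters on all of $G_{F,S}$; the semisimple representation attached to a pseudocharacter being unique up to isomorphism over an algebraically closed field \cite{rou}, this yields $\baar\rho_{1}\cong \baar\rho_{2}$. The only non-formal step in this plan is the classical lifting of $\baar\lambda$, and the main point to verify is that every maximal ideal of the ordinary Hecke algebra is specialised by some classical point; this rests on the conjunction of the finite flatness of Proposition~\ref{semi-local} with the Zariski-density of Lemma~\ref{dense}, both of which are already in hand.
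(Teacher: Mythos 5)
Your proof is correct and follows essentially the same approach as the paper's: existence by lifting $\baar\lambda$ to a classical point via Lemma~\ref{dense}, taking a stable lattice in $\rho_{x}$, reducing mod $p$ and semisimplifying; uniqueness from density of Frobenii. The only cosmetic difference is that for uniqueness the paper appeals to Brauer--Nesbitt directly rather than phrasing it through the language of pseudocharacters and \cite{rou}, but in dimension~$2$ these amount to the same statement (both require, implicitly, that the trace determines the characteristic polynomial, e.g.\ that $p\neq 2$ or that the determinant is otherwise pinned down).
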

\begin{proof}
The existence follows by lifting $\baar{\lambda}$ to the character $\lambda_{x}$ associated with a classical point $x$ (that is possible thanks to Lemma \ref{dense}), then taking the semisimplification of the reduction modulo $p$ of a lattice in the representation  $\rho_{x}:=\rho_{\pi_{x}}$ of Theorem  \ref{galois for hilbert};   the uniqueness is a consequence of   the Brauer--Nesbitt theorem.
\end{proof}

By Proposition \ref{semi-local} we may decompose ${\bf T}_{\G, U^{p}}^{{\rm sph}, \ord}\cong
 \prod_{\mathfrak{m}}
 {\bf T}_{\G, U^{p}, \frak m}^{{\rm sph} \ord}$
 and consequently  the generic fibre of the associated schemes also decomposes as 
\begin{align}\label{decomp}
\cE_{\G, U^{p}}^{\ord} \cong \coprod
\cE_{\G, U^{p}, \frak m}^{\ord}.
\end{align}
We will  say that a connected subset  $\X \subset \cE_{\G, U^{p}}^{\ord}$  has residual representation $\rhobar\colon G_{F}\to \GL_{2}(\baar{\bf F}_{p})$ if $\X$ is contained in some $\cE_{\G, U^{p}, \frak m}^{\ord}$ such that  the character $\lambda_{\frak m}\otimes_{{\bf F}_{p}(\frak m)}\baar{\bf F}_{p} $ associated with $\frak m$ is the character of $\rhobar$.

\begin{prop}\label{galois2}
  Let $\X_{\G}$ be  an irreducible component of $\cE_{\G}$  (that is, a {Hida family}).
 Then there exist:
 \begin{itemize}
 \item
an open subset $\X_{\G}'\subset \X_{\G}$ containing $\X_{\G}^{\rm cl}:=\X_{\G}\cap\cE_{\G, U^{p}}^{\cl}$;
\item a locally free $\OO_{\X_{\G}'}$-module $\cV_{\G}$ of rank two over $\X'_{\G}$, such that $$\cV_{\G, x}\cong V_{\pi_{x}}$$ for all $x\in \X_{\G}^{\cl}$;
\item  a  filtration
\begin{gather}\label{decatv} 0\to \cV_{\G, v}^{+}\to \cV_{\G, v}\to\cV_{\G,v}^{-}\to0,\end{gather}
where the $\cV_{\G,v}^{\pm}$ are locally free $\OO_{\X'_{\G}}$-modules of rank $1$, and $G_{F_{v}}$ acts on $\cV_{\G, v}^{+}$ by the character associated, via local class field theory, with the character 
\beq\lb{alpha gal}
\alpha_{|F_{v}^{\ts}}^{\circ} \langle \ \rangle_{F_{v}}\eeq
deduced from \eqref{alphauniv}.
 \end{itemize}

The representation $\cV_{\G}$ is uniquely determined up to automorphisms and twisting by line bundles with trivial Galois action.
\end{prop}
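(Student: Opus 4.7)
The plan is to interpolate over $\X_\G$ the Galois representations $V_{\pi_x}$ ($x \in \X_\G^{\cl}$) produced by Theorem \ref{galois for hilbert}, in three stages: first as a pseudocharacter $\mathscr{T}$, then as a genuine rank-$2$ locally free sheaf on the open locus of irreducibility, and finally by equipping it with the ordinary filtration.

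First I would construct a pseudocharacter $\mathscr{T} \colon G_{F,S} \to \OO(\X_\G)$. For unramified $v$, the element $q_v^{-1}\lambda(T_v) \in \OO(\cE^{\ord}_\G)$ specialises at any $x \in \X_\G^{\cl}$ to $\Tr\rho_{\pi_x}({\rm Fr}_v)$. To extend the assignment from Frobenii to all $g \in G_{F,S}$, I would use the natural continuous $G_{F,S}$-action on the completed homology $M_{\G, U^p}$ (inherited from \'etale cohomology of the Shimura tower): appropriate trace-type functionals define a continuous $\mathscr{T}(g) \in \OO(\X_\G)$ with the correct values at classical points. The pseudocharacter identities hold by continuity from the Zariski-density of $\X_\G^{\cl}$ (Lemma \ref{dense}).

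Next I would pass to the irreducibility locus. By Theorem \ref{galois for hilbert} each $\rho_{\pi_x}$ is absolutely irreducible, so $\mathscr{T}$ is irreducible at every classical point, and the irreducibility locus $\X'_\G \subset \X_\G$ is a Zariski open containing $\X_\G^{\cl}$. On $\X'_\G$ the algebra $\calA := \OO_{\X'_\G}[G_{F,S}]/\Ker(\mathscr{T})$ is Azumaya of rank $4$ (Rouquier, as in the proof of Lemma \ref{uniq}), and specialises at each classical point to the split matrix algebra $\End V_{\pi_x}$; after further shrinking $\X'_\G$ to a neighbourhood of $\X_\G^{\cl}$, the Brauer class of $\calA$ can be trivialised Zariski-locally, yielding an isomorphism $\calA \cong \End_{\OO_{\X'_\G}}(\cV_\G)$ for a rank-$2$ locally free $G_{F,S}$-module $\cV_\G$. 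Its fibre at each $x \in \X_\G^{\cl}$ has trace $\Tr\rho_{\pi_x}$ and is absolutely irreducible, so it is isomorphic to $V_{\pi_x}$.

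For the ordinary filtration at each $v \vert p$, the character $\alpha^\circ_{|F_v^\times}\langle \cdot \rangle_{F_v}$ corresponds via local class field theory to a character $\wtil{\alpha}_v\colon G_{F_v}\to \OO(\X'_\G)^\times$. At every $x \in \X_\G^{\cl}$, Theorem \ref{galois for hilbert}.(\ref{phig}) provides a unique $G_{F_v}$-stable line $V^{+}_{\pi_x, v} \subset V_{\pi_x, v}$ on which $G_{F_v}$ acts by $\wtil{\alpha}_{v,x}$; the cohomological weight condition forces the two Hodge--Tate weights to be distinct, whence $\dim\Hom_{G_{F_v}}(\wtil{\alpha}_{v,x}, V_{\pi_x,v}) = 1$. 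I would therefore define $\cV^{+}_{\G, v}$ as the image of the natural evaluation map $\Hom_{\OO_{\X'_\G}[G_{F_v}]}(\wtil{\alpha}_v, \cV_{\G,v}) \otimes \wtil{\alpha}_v \to \cV_{\G,v}$; by the multiplicity-one statement at classical points and Nakayama's lemma, after one last shrinking of $\X'_\G$ around $\X_\G^{\cl}$ this is a line subbundle with the prescribed Galois action and correct specialisations, and the quotient $\cV^{-}_{\G,v}$ is automatically a line bundle. Uniqueness up to twist by a line bundle with trivial Galois action is Lemma \ref{uniq} applied on the integral scheme $\X'_\G$.

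The main technical obstacle lies in interpolating the ordinary filtration: one needs $\cV^{+}_{\G, v}$ to be locally free of rank $1$ over a Zariski-open containing \emph{every} classical point (not merely generically), and its fibres must recover the canonical subspaces from Theorem \ref{galois for hilbert}. Both rely on the regularity of the Hodge--Tate weights at classical points, which propagates the multiplicity-one property into families. A secondary subtlety is the trivialisation of the Brauer class of $\calA$ in the second step; here the regularity of $\cE^{\ord}_\G$ at classical points (analogous to Proposition \ref{etale}) allows one to do so Zariski-locally near $\X_\G^{\cl}$.
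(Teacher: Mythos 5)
Your broad strategy — pseudocharacter, pass to the irreducibility locus, use Rouquier to get an Azumaya algebra, then produce the ordinary line — does match the paper's. However, there are two genuine gaps.

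\textbf{From Azumaya algebra to rank-$2$ module.} You assert that, after shrinking $\X'_\G$, the Brauer class of $\calA=\OO_{\X'_\G}[G_{F,S}]/\Ker(\mathscr{T})$ ``can be trivialised Zariski-locally,'' and you deduce from this a \emph{global} isomorphism $\calA\cong\End_{\OO_{\X'_\G}}(\cV_\G)$. This step does not follow: local trivialisations $\calA|_{U_i}\cong\End(\cV_i)$ need not glue, because over overlaps the $\cV_i$ differ by line-bundle twists, and $\X'_\G$ is not affine in general. Regularity of $\cE_\G^{\ord}$ near classical points does not help — regularity has nothing to do with killing Brauer obstructions. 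The paper sidesteps this entirely by using oddness: fixing a complex conjugation $c\in G_{F,S}$, the decomposition $\calA = \calA(\rho(c)-1)\oplus\calA(\rho(c)+1) = \cV_{+1}\oplus\cV_{-1}$ produces two canonical, globally defined locally free $\calA$-modules. Since $\rho_x$ is odd for each classical $x$ (Theorem~\ref{galois for hilbert}.3), each summand has rank $2$ on the dense set $\X_\G^{\cl}$, hence everywhere; one then applies Saltman to identify $\calA\cong\End(\cV_{\pm1})$. This oddness trick is the key missing ingredient in your argument.

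\textbf{The ordinary filtration.} Your definition of $\cV^{+}_{\G,v}$ via the evaluation map from $\Hom_{\OO_{\X'_\G}[G_{F_v}]}(\wtil\alpha_v,\cV_{\G,v})\otimes\wtil\alpha_v$ is the right object — it is $H^0(F_v,\cW_v)\otimes\cV_{0,v}^+$ in the paper's notation — but ``multiplicity one at classical points plus Nakayama'' does not suffice to show it is a line subbundle with the right fibres. The issue is base change: the natural map
$$H^0(F_v,\cW_v)\otimes_{\OO_{\X'_\G}}\kappa(x)\to H^0(F_v,\cW_{v,x})$$
is injective by left-exactness, but its surjectivity requires controlling a $\mathrm{Tor}_1$ against $H^1(F_v,\cW_v)$. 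The paper handles this via Proposition~\ref{torss}.3, which needs \emph{two} cohomological inputs: vanishing of $H^2(F_v,\cW_{v,x})$ at classical $x$, and constancy of $\dim H^1(F_v,\cW_{v,x})$ as $x$ varies over a dense open (both deduced from the weight computation and the local Euler characteristic formula applied to $\cW_{v,x}^{\pm}$). Without these your argument only gives that the fibre of $H^0(F_v,\cW_v)$ at a classical point has dimension $\leq 1$, which is not enough to conclude it is a line bundle near $\X_\G^{\cl}$, nor that its fibres recover the canonical subspaces $V^+_{\pi_x,v}$.

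Your first step (constructing $\mathscr{T}$) is essentially correct and matches the paper, which simply takes the pseudocharacter defined by $\mathscr{T}({\rm Fr}_v)=q_v^{-1}\lambda(T_v)$ as given; the uniqueness part (Lemma~\ref{uniq}) is also invoked the same way.
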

The result is due to Hida and Wiles (\cite[\S~ 3.2.3]{fouquet} and references therein), except for the existence of \eqref{decatv} when  the residual Galois representation of $\X_{\G}$ is reducible.
\begin{proof}
Let $\mathscr{T}\colon G_{F, S}\to \OO(\X_{\G})$ be the pseudocharacter defined by $\mathscr{T}({\rm Fr}_{v})=q_{v}^{-1}\lambda(T_{v})$, where $\lambda\colon {\bf T}_{\G, U^{p}}^{\rm sph}\to \OO(\X_{\G})$ is the tautological character. Let $\X_{\G}  ^{\rm irr}\subset \X_{\G}$ be the (open) irreducibility locus. By Theorem \ref{galois for hilbert}, $\X_{\G}^{\cl}\subset \X_{\G}^{\rm irr}$. By Lemma \ref{uniq}, a representation $\cV_{\G}$ is unique up to Galois-trivial twists if it exists. We show existence.

By \cite[Theorem 5.1]{rou}, $\mathscr{A}:=\OO_{\X_{\G}^{\rm irr}}[G_{F, S}]/ \Ker(\mathscr{T})$ is an Azumaya algebra of rank $4$ over $\cE_{\G'}$ and the natural map 
$$\rho\colon G_{F,S}\to \mathscr{A}^{\times}$$ satisfies ${\rm Tr}\circ \rho= \mathscr{T}$ (where $\Tr$ is the reduced trace of $\mathscr A$). Let $c\in G_{F, S}$ be a complex conjugation; we have an isomorphism $\mathscr A={\mathscr A} (\rho(c)-1)\oplus {\mathscr A}(\rho(c)+1)=:\cV_{+1}\oplus \cV_{-1}$. Each of the $c$-eigen-summands $\cV_{\pm1}$ is a locally free $\OO_{\X_{\G}^{\rm irr}}$-module (since so is $\mathscr A$), whose rank is  $2$: indeed at any classical geometric point  $x\in \X_{\G}^{\cl}(\bC_{p})$, the specialisation $\rho_{x}$ is odd, hence we can pick an isomorphism ${\mathscr A}_{x}\cong M_{2}(\bC_{p})$ sending $\rho_{x}(c)$ to $\smalltwomat 1{}{}{-1}$ from which it is immediate that $\cV_{\pm 1, x}$ has rank $2$; since classical points are dense, we conclude that $\cV_{\pm 1}$ also has rank $2$. 

 Let $\cV_{\G}$ be either of $\cV_{\rm 1, \pm}$. By \cite[Corollary 2.9 (a)]{saltman}, the natural map 
 $${\mathscr A}\to \End_{\OO_{\X^{\rm irr}_{\G}}}(\cV_{\G})$$ 
 is an isomorphism; we view it as an identification to obtain a representation $\rho'$ with trace $\mathscr{T}$. As an irreducible  $2$-dimensional Galois representation over a field is uniquely determined by its trace, the representation $\cV_{\G, x}$ is isomorphic to $V_{\pi_{x}}$.

We now show the existence of the filtration up  to further restricting the base. Fix a place $v\vert p$ of $F$, and let  $\det_{v}\colon G_{F_{v}}\to \OO(\X_{\G}^{\rm irr})^{\ts}$ be the character giving the action on $\det \cV_{\G,v}$. Let $\cV_{0,v}^{+}$ be the trivial  sheaf $\OO_{\X^{\rm irr}_{\G}}$ with $G_{F_{v}}$-action by the character \eqref{alpha gal}, $\cV_{0,v}:=\cV_{\G,v}$, $\cV_{0,v}^{-}:=(\cV_{0,v}^{+})^{-1}(\det_{v})$.  Finally, for $?=+, -. \emptyset$, let 
$$\cW_{v}^{?}:= \Hom_{\OO_{\X'_{\G}}}(\cV_{0,v}^{+}, \cV_{0,v}^{?}).$$ Then for all $x\in \X_{\G}^{\rm cl}$, by Theorem \ref{galois for hilbert} we have exact sequences 
\beq \lb{cW ex seq}
0\to \cW_{v,x}^{+}= \Q_{p}(x) \to \cW_{x}\to \cW_{v,x}^{-},
\eeq
which we  wish to extend to a neighbourhood of $\X_{\G}^{\cl}$. 
From a  consideration of weights based on 
Theorem \ref{galois for hilbert}, we see that for all $x\in \X_{\G}^{\cl}$,  $H^{0}({F_{v}}, \cW_{v,x}^{-})= H^{2}(F_{v}, \cW_{v,x}^{-})=0$.
 Then from \eqref{cW ex seq} we deduce 
\beq\lb{h2va}
H^{2}(F_{v}, \cW_{v,x})=0\eeq
for all
$x\in \X_{\G}^{\cl }$,
 and from  the Euler--Poincar\'e formula and \eqref{cW ex seq} we deduce that  \beq\lb{consth1}\dim_{\Q_{p}(x)} H^{1}(F_{v}, \cW_{v,x}^{-})= 1+[F_{v}:\Q_{p}], \qquad H^{1}(F_{v}, \cW_{v,x})= 1+2[F_{v}:\Q_{p}]    \eeq 
for all $x\in \X_{\G}^{\cl }.$

By Proposition \ref{torss}.3 below, \eqref{h2va} and \eqref{consth1} imply that  the natural map 
$$H^{0}(F_{v}, \cW)\ot_{\OO_{\X^{\rm irr}_{\G}}}\Q_{p}(x)\to H^{0}(F_{v}, \cW_{v,x})\cong \Q_{p}(x)$$
is an isomorphism for all $x\in \X_{\G}^{\rm cl}$. Hence the sheaf $\calL:= H^{0}(F_{v}, \cW_{v})$ is locally free of rank one  in a neighborhood $\X_{\G}'\subset \X_{\G}^{\rm irr}$ of $\X_{\G}^{\cl}$. Defining 
$$\cV_{\G, v}^{+}:=\calL\ot_{\OO_{\X'_{\G}}} \cV^{+}_{0,v}, $$
the natural map $\cV_{\G, v}^{+}\to \cV_{\G,v|\X_{\G}'}$ is injective, and its cokernel $\cV_{\G,v}^{-}$ has rank one at each $x\in \X_{\G}^{\cl}$. Up to further restricting $\X_{\G}'$, $\cV_{\G,v}^{-}$ is also locally free of rank one. It follows immediately from the construction that the exact sequence 
$$0\to \cV_{\G, v}^{+}\to \cV_{\G, v}\to\cV_{\G,v}^{-}\to 0$$ 
has the asserted properties.
\end{proof}

\begin{prop}\label{schur} In the situation of Proposition \ref{galois2},  
the natural injective map
$$i\colon \OO_{\X'_{\G}}\to \End_{\OO_{\X'_{\G}}[G_{F, S}]}(\cV_{\G})$$
is an isomorphism over an open subset $\X_{\G}''\subset \X_{\G}$ containing $\X^{\cl}$.
\end{prop}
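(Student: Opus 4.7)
The plan is to deduce the statement directly from the Azumaya-algebra structure already exhibited in the proof of Proposition \ref{galois2}, together with the elementary fact that an Azumaya algebra is its own bicommutant and has base-ring center. I expect that no real shrinking is needed beyond $\X'_{\G}$ itself, and that we may in fact take $\X''_{\G} = \X'_{\G}$.

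Concretely, let $\X^{\mathrm{irr}}_{\G} \subset \X_{\G}$ be the (open) irreducibility locus of the pseudocharacter $\mathscr{T}$ used in the proof of Proposition \ref{galois2}; by construction $\X'_{\G} \subseteq \X^{\mathrm{irr}}_{\G}$, and both contain $\X^{\cl}_{\G}$. On $\X^{\mathrm{irr}}_{\G}$ the quotient
$$\mathscr{A} := \OO_{\X^{\mathrm{irr}}_{\G}}[G_{F,S}]/\Ker(\mathscr{T})$$
is an Azumaya algebra of rank $4$, and the natural map $\mathscr{A} \to \End_{\OO_{\X^{\mathrm{irr}}_{\G}}}(\cV_{\G})$ is an isomorphism. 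Under this identification, the sheaf $\End_{\OO_{\X'_{\G}}[G_{F,S}]}(\cV_{\G})$ becomes the centralizer in $\mathscr{A}$ of the image of $G_{F,S}$. Now, tautologically from the description of $\mathscr{A}$ as a quotient of the group algebra, the image of $G_{F,S}$ spans $\mathscr{A}$ as an $\OO_{\X^{\mathrm{irr}}_{\G}}$-module; hence it generates $\mathscr{A}$ as an $\OO$-subalgebra of itself, and the centralizer in question coincides with the centralizer of $\mathscr{A}$ in $\mathscr{A}$, i.e., with the center $Z(\mathscr{A})$. Since $\mathscr{A}$ is Azumaya, $Z(\mathscr{A}) = \OO_{\X^{\mathrm{irr}}_{\G}}$, and the composite map $\OO_{\X^{\mathrm{irr}}_{\G}} \xrightarrow{i} \End_{\OO[G_{F,S}]}(\cV_{\G}) \hookrightarrow \mathscr{A}$ is the identity on the center. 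Restricting to $\X'_{\G}$ yields the desired isomorphism over $\X''_{\G} := \X'_{\G}$.

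The main (essentially only) nontrivial input is Saltman's theorem identifying $\mathscr{A}$ with $\End_{\OO}(\cV_{\G})$, which was already invoked in Proposition \ref{galois2}; beyond that, the argument relies only on the standard fact that an Azumaya algebra equals its own commutant and has base-ring center. Consequently I do not expect any serious obstacle: the proof should be a short paragraph unwinding the definitions and citing the Azumaya centre property. A minor verification to keep in mind is that the identification of $\End_{\OO[G_{F,S}]}(\cV_{\G})$ with the centralizer of $\rho(G_{F,S})$ in $\End_{\OO}(\cV_{\G})$ uses only the $G_{F,S}$-equivariance condition, and hence transfers without change along the isomorphism $\mathscr{A} \cong \End_{\OO}(\cV_{\G})$ of Proposition \ref{galois2}.
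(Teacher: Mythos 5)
Your proof is correct, and it takes a genuinely different route from the paper's. The paper deduces the statement from absolute irreducibility of $\rho_x$ at classical points $x\in\X^{\cl}_\G$ (via Schur's lemma and Theorem \ref{galois for hilbert}), concluding that $i_x$ is an isomorphism there, and then takes $\X''_\G$ to be the open complement of the support of $\mathrm{Coker}(i)$. Your argument instead works uniformly over $\X^{\rm irr}_\G$ (hence over $\X'_\G$) by exploiting the Azumaya structure of $\mathscr{A}=\OO[G_{F,S}]/\Ker(\mathscr{T})$ already established in Proposition \ref{galois2}: since $\rho(G_{F,S})$ spans $\mathscr{A}$ as an $\OO$-module, the $G_{F,S}$-equivariant endomorphisms of $\cV_\G$ are exactly the center $Z(\mathscr{A})=\OO$. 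The payoff of your approach is that it requires no pointwise fibre computation and no further shrinking — you can take $\X''_\G=\X'_\G$ outright — and it sidesteps the small subtlety, left implicit in the paper, that one must relate the fibre $\End_{\OO[G_{F,S}]}(\cV_\G)\ot\kappa(x)$ to $\End_{\kappa(x)[G_{F,S}]}(\cV_{\G,x})$ before Schur's lemma can be applied. The trade-off is that your argument is tied to the particular Azumaya-algebra construction of $\cV_\G$ (hence is slightly less portable to variants of the construction), whereas the paper's argument would apply to any $\cV_\G$ with the stated specialization property.
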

\begin{proof}
By Theorem \ref{galois for hilbert}, $\rho_{x}$ is absolutely irreducible for all $x\in \X_{\G}^{\cl}$. 
We deduce that for each $x\in \X_{\G}^{\cl}$, the map 
$i_{x}$ is an isomorphism. Then we may take for $\X''_{\G}$ the open complement of the support of  ${\rm Coker}(i)$. 
\end{proof}

\subsection{Universal ordinary representation and local-global compatibility}\label{3.2}
The idealised description of what is achieved in this subsection would be to  define a universal ordinary automorphic representation of $\G(\A^{\infty})$  over an irreducible component $\X$ of $\cE_{\G}^{\ord}$; then show that it decomposes as the product of the representations of the local groups $\B_{v}^{\times}$,
for $v\nmid p$,\footnote{The action of $\B_{p}^{\times}$ has already been traded for an action of the torus, subsumed into the $\cE_{\G}$-module structure}     associated to $\cV|_{G_{F,v}}$ by a local Langlands correspondence in  families. 
The definition should be an elaboration of
\begin{align}\label{fakejpi}
``\Pi_{\G}:=\lim_{U^{p}{}'}\underline{\Hom}_{\OO_{\X}[G_{F, E}]}(\cM_{U^{p}{}'}, \cV_{\G})\text{''}.
\end{align}
For technical reasons, a few modifications are necessary: 
\begin{itemize}
\item the local  Langlands correspondence in families  is not defined for the unit groups of division algebras;\footnote{There is an essential reason for this, namely the possible presence of Schur indices in representations of those groups.}
therefore we  ``remove'' the components at the ramification primes $\Sigma$ of $\B$, in the following way: we consider a component of $\cE$ rather than $\cE_{\G}$, and we take ${H_{\Sg}'}$-coinvariants in an analogue $\Pi$ of \eqref{fakejpi}. For sufficiently large levels, this isolates a local factor of $\Pi$ that is generically free of rank one along  locally distinguished Hida families;
\item in the limit in \eqref{fakejpi}, we fix an arbitrarily large finite set of primes $\Sg'$, disjoint from $\Sigma$ and from $S_{p}$, and we let only the $\Sg'$-component of  $U^{p}{}'$  shrink, so as to get a representation of $\B_{\Sg'}^{\times}$;  
\item 
 we replace the abstractly constructed $\cV=\cV_{\G}\otimes \cV_{\H}$ (where $\cV_{\H}=\chi_{\rm univ}$) by a more geometric incarnation using the sheaf $\cM$ in `new' level (with respect to the chosen irreducible component).
\end{itemize}

We use the correspondence studied in \cite{LLC}, with the caveat that  strictly speaking  the normalisation chosen there differs by the one fixed here in Theorem \ref{galois for hilbert}.1 by a Tate twist. This is only a matter of book-keeping, and in order to avoid excessive notational burden, we do not signal such Tate twists when referring to the results of \cite{LLC} in the rest of this paper.

\subsubsection{Irreducible components}
Let $\X_{\G}\subset \cE^{\ord}_{\G, U^{p}}$ be an irreducible component.  Fix a place $v$ of $F$ not in $\Sigma\cup S_{p}$.
 Recall that the $v$-level of a representation $\pi_{v}$ of $\GL_{2}(F_{v})$ is the smallest $m$ such that $\pi_{v}^{U_{1}(\vpi_{v}^{m})}\neq 0$, where $U_{1}(\vpi_{v}^{m})= \{\smalltwomat abcd \in \GL_{2}(\OO_{F,v})\, :\, c\equiv d-1\equiv 0 \pmod{\vpi_{v}^{m}\OO_{F,v}}\}$.  Let $m_{x,v}$ be the $v$-level of $x\in\X^{\cl}$.
\begin{lemm} The function $x\mapsto m_{x,v}$ is constant on $\X_{\G}^{\cl}$.
\end{lemm}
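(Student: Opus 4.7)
The plan is to identify $m_{x,v}$ with the Artin conductor of the family of local Galois representations $\cV_{\G,v} := \cV_{\G|G_{F_v}}$, and then argue that this Artin conductor is locally constant on $\X_\G$. Since $\X_\G$ is irreducible (hence connected), any integer-valued locally constant function on it is constant, which yields the claim on $\X_\G^{\cl}$.

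First I would translate the statement to the Galois side. By local-global compatibility (Theorem~\ref{galois for hilbert}.1) together with the fact that the local Langlands correspondence matches conductors, one has
$$m_{x,v} = a_v(\cV_{\G,x|G_{F_v}})$$
for every $x \in \X_\G^{\cl}$, where $a_v$ denotes the Artin conductor of the Weil--Deligne representation attached to the specialization. Thus the problem becomes showing that $a_v$ is constant on a Zariski-dense subset of $\X_\G$.

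Next I would invoke the local Langlands correspondence in families at $v$ from \cite{LLC}, which applies precisely because $v \notin S_p \cup \Sigma$ (so $v$ has residue characteristic $\neq p$ and $\B_v$ is split). This produces, after possibly restricting $\X_\G$ to a neighborhood of $\X_\G^{\cl}$, a coherent $\OO_{\X_\G}$-module $\widetilde\pi_v$ carrying a smooth $\GL_2(F_v)$-action, whose fibre at each classical $x$ recovers $\pi_{x,v}$ (classical points being absolutely irreducible and generic by Theorem~\ref{galois for hilbert}, no pathological subquotients arise). Taking $U_1(\vpi_v^m)$-invariants yields a coherent subsheaf $\widetilde\pi_v^{U_1(\vpi_v^m)}$ that specializes to $\pi_{x,v}^{U_1(\vpi_v^m)}$ at each $x \in \X_\G^{\cl}$; by Casselman's new-vector theorem its fibre dimension is $\max(0, m - m_{x,v}+1)$, which recovers $m_{x,v}$. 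Since coherent sheaves on an irreducible scheme have generically constant rank, there is a fixed integer $m_0$ with $m_{x,v} = m_0$ on a Zariski-dense open subset; the rigidity of LLC in families (constancy of the conductor of the co-Whittaker sheaf, i.e.\ the Artin conductor of $\cV_{\G,v}$) extends this equality to all of $\X_\G^{\cl}$.

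The main technical obstacle lies in this last step: one must rule out the possibility that $m_{x,v}$ drops at some special classical $x$, since upper semi-continuity of stalk dimensions of a coherent sheaf only yields an inequality $m_{x,v} \leq m_0$ a priori. The reverse inequality requires the precise compatibility between the local Langlands correspondence in families and conductors, which is built into the Emerton--Helm--Moss framework cited in \cite{LLC}; concretely, it reflects the fact that at a non-$p$-adic place, the inertia action in a $p$-adic family factors through a finite quotient (Grothendieck's monodromy theorem) whose ramification data are rigid along connected families.
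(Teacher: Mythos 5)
Your first step coincides with the paper's: both use Carayol (\cite{carayol-h}) to identify $m_{x,v}$ with the Artin conductor of the $2$-dimensional Weil--Deligne representation attached to $\cV_{\G,x}|_{G_{F_v}}$. After that the arguments diverge. The paper cites a single black-box result, \cite[Theorem 3.4]{saha-cond}, which says that the Artin conductor of a family of Weil--Deligne representations that are \emph{pure} at every point is locally constant; purity at classical points is supplied by Theorem~\ref{galois for hilbert}.2. You instead reach for the local Langlands correspondence in families of \cite{LLC} and the coherent sheaf of $U_1(\vpi_v^m)$-invariants.

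The gap in your argument is exactly where you flag it, and your attempted fix does not close it. Upper semi-continuity of the rank of $\widetilde\pi_v^{U_1(\vpi_v^m)}$ gives you $m_{x,v}\le m_0$ for all $x$, with equality generically; you must still exclude a genuine drop at special classical points. You appeal to Grothendieck's monodromy theorem and ``rigidity of ramification data,'' but Grothendieck's theorem only says that inertia acts quasi-unipotently (equivalently, through a finite quotient \emph{on the associated graded of the monodromy filtration}); it says nothing about the monodromy operator $N$ itself. The Artin conductor of a Weil--Deligne representation $(r,N)$ is
$$a(r,N)=a(r)+\dim_L r^{I_{F_v}}-\dim_L (\ker N)^{I_{F_v}},$$
so it depends on $N$, and $N$ can certainly jump in a connected family (the prototypical example is an unramified two-dimensional family whose Frobenius eigenvalue ratio hits $q_v$ at a special point, where the specialization could a priori acquire nonzero $N$ and drop in conductor). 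Nothing in the Emerton--Helm--Moss framework by itself forbids this; what forbids it is \emph{purity} of every classical specialization (Theorem~\ref{galois for hilbert}.2), which forces the shape of $N$ and is precisely the hypothesis of Saha's theorem. Since you never invoke purity, your argument as written does not rule out the conductor dropping. Replacing the last paragraph with the observation that the $\cV_{\G,x}|_{G_{F_v}}$ are pure and citing Saha's result would repair the proof and bring it in line with the paper.
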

\begin{proof} 
By \cite{carayol-h}, $m_{x,v}$ equals the conductor of the $G_{F_{v}}$-representation $\cV_{x}$; as all those Galois representations are pure, we may conclude by  \cite[Theorem 3.4]{saha-cond}. 
\end{proof}
 
 We may then define the $v$-level $m_{v}$ of $\X_{\G}$ to be the common value of the $m_{x,v}$ for $x\in \X^{\cl}$.  By the following lemma, it is not restrictive to make the following assumption: \emph{for all $v\notin \Sigma \cup S_{p}$, we have $U_{v}=U_{1}{(\vpi}_{v}^{m_{v}(\X_{\G})})$}. (We say that $\X_{\G}$ is a $v$-new component of $\cE_{\G, U^{p}}^{\ord}$.)
 
\begin{lemm}\label{change level} Let ${}'\X_{\G}\subset \cE^{\ord}_{\G, U^{p}{}'}$ be an irreducible component, and suppose that $U^{p}{}'=\prod_{v\nmid p} U_{v}'$. Let $m_{v}$ be the level of $\X_{\G}$ and let $U^{p}=\prod_{v\nmid p} U_{v}$, with $U_{v}=U_{1}(\vpi_{v}^{m_{v}})\supset U_{v}'$ for all $v\notin \Sigma\cup S_{p}$, and $U_{v}=U_{v}'$.  There exists a unique irreducible component $\X_{\G}\subset \cE^{\ord}_{\G, U^{p}}$ whose image under the natural embedding $\cE^{\ord}_{\G, U^{p}}\subset \cE^{\ord}_{\G, U^{p}{}'}$ is ${}'\X_{\G}$.
\end{lemm}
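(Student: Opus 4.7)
The plan is to identify $\X_{\G}$ with ${}'\X_{\G}$ itself, viewed as a closed subscheme of $\cE^{\ord}_{\G, U^{p}}$, where the latter is in turn embedded as a closed subscheme of $\cE^{\ord}_{\G, U^{p}{}'}$. The two key observations are that the inclusion of eigenvarieties induced by the inclusion of levels is a closed immersion, and that every classical point of ${}'\X_{\G}$ already lies in $\cE^{\ord}_{\G, U^{p}}$ by the theory of newforms.

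First, I would verify that the natural map $\cE^{\ord}_{\G, U^{p}} \to \cE^{\ord}_{\G, U^{p}{}'}$ is indeed a closed immersion. Since $U^{p}{}' \subset U^{p}$, the finite \'etale covers $\baar{X}_{\G, U^{p}{}' U_{p,r}} \to \baar{X}_{\G, U^{p} U_{p,r}}$ induce Hecke-equivariant surjective pushforward maps on ordinary completed homology $M^{\circ}_{\G, U^{p}{}'} \twoheadrightarrow M^{\circ}_{\G, U^{p}}$, and hence a surjection of spherical ordinary Hecke algebras ${\bf T}^{\mathrm{sph}, \ord}_{\G, U^{p}{}'} \twoheadrightarrow {\bf T}^{\mathrm{sph}, \ord}_{\G, U^{p}}$, giving the required closed immersion upon applying $\Spec$.

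Next, for each $x \in {}'\X_{\G}^{\cl}$ the associated representation $\pi_{x}$ of Proposition~\ref{fibreM2} satisfies: (a) $\pi_{x,v}^{U_{v}} = \pi_{x,v}^{U_{v}'} \neq 0$ for $v \in \Sg \cup S_{p}$, by the defining property of points of $\cE^{\ord}_{\G, U^{p}{}'}$; (b) for $v \notin \Sg \cup S_{p}$, the $v$-level of $\pi_{x}$ equals $m_{v}$ by the definition of the level of ${}'\X_{\G}$, so the newform theory for $\GL_{2}$ ensures that $\dim \pi_{x,v}^{U_{1}(\vpi_{v}^{m_{v}})} = 1$. Taking the tensor product, $\pi_{x}^{U^{p}} \neq 0$, so the Hecke eigencharacter $\lambda_{x}$ factors through ${\bf T}^{\mathrm{sph}, \ord}_{\G, U^{p}}$, i.e., $x \in \cE^{\ord}_{\G, U^{p}}$.

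Combining this with the Zariski-density of ${}'\X_{\G}^{\cl}$ in ${}'\X_{\G}$ from Lemma~\ref{dense} and the closedness of $\cE^{\ord}_{\G, U^{p}}$ inside $\cE^{\ord}_{\G, U^{p}{}'}$, I would conclude that ${}'\X_{\G} \subset \cE^{\ord}_{\G, U^{p}}$ as closed subsets. Let $\X_{\G}$ be any irreducible component of $\cE^{\ord}_{\G, U^{p}}$ containing the irreducible closed subset ${}'\X_{\G}$; then $\X_{\G}$ is also an irreducible closed subset of $\cE^{\ord}_{\G, U^{p}{}'}$ containing ${}'\X_{\G}$, hence equals ${}'\X_{\G}$ by maximality of the latter as an irreducible component of $\cE^{\ord}_{\G, U^{p}{}'}$. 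This proves existence, and uniqueness is automatic from the injectivity of the closed immersion on points. The only non-formal input is the verification that the embedding is a closed immersion, which reduces to the surjectivity of pushforward on ordinary completed homology; once that is in hand the remaining steps are direct.
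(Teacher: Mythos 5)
Your proof is correct, and it takes a genuinely different route from the paper's. The paper fixes a \emph{single} classical point $x'$ of ${}'\X_{\G}$, uses the Carayol/newform argument to see that $x'$ comes from $\cE^{\ord}_{\G, U^{p}}$, invokes Proposition~\ref{etale} (\'etaleness of the weight map at classical points) to conclude that a unique component $\X_{\G}$ of the smaller eigenvariety passes through that point, and then argues by equidimensionality of the two eigenvarieties that the image of $\X_{\G}$ is an irreducible component of $\cE^{\ord}_{\G, U^{p}{}'}$ -- necessarily ${}'\X_{\G}$ because (again by \'etaleness at $x'$) a classical point lies on a unique component. Your argument replaces the pointwise/\'etale step by a density-plus-closedness argument: you show that \emph{every} classical point of ${}'\X_{\G}$ lies in the closed subspace $\cE^{\ord}_{\G, U^{p}}$, conclude ${}'\X_{\G}\subset\cE^{\ord}_{\G, U^{p}}$ from density (Lemma~\ref{dense}), and then use maximality of irreducible components. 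This avoids \'etaleness and equidimensionality entirely, at the modest cost of explicitly justifying the closed-immersion claim (via surjectivity of pushforward on ordinary completed homology, which holds after inverting $p$), something the paper treats as given background. Both proofs are sound; yours is more topological and slightly more self-contained in its logical dependencies, while the paper's is terser because it leans on Proposition~\ref{etale}. One small imprecision: you group $S_{p}$ with $\Sg$ in step (a), but since $U^{p}$ is an away-from-$p$ level there is no component $U_{v}$ for $v\vert p$ to compare -- only the places in $\Sg$ require the equality $U_v = U_v'$.
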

\begin{proof} Let $x'\in {}'\X_{\G}^{\cl}$ be any classical point. By \cite{carayol},  its level (that is, the level of $\pi_{x,v}$) is $m_{v}$ if and only if  $\pi_{x,v}$ already occurs in the cohomology of $X_{\baar{F}}$ at $v$-level $m_{v}$, equivalently if and only if (the system of Hecke- and $\Up_{v}$-eigenvalues associated with) $\pi_{x,v}$ occurs in a quotient of $\cM_{U^{p}}$; that is, if $x'$ comes from a point $x$ of $\cE_{\G, U^{p}}$. Let $\X_{\G}\subset \cE^{\ord}_{\G, U^{p}}$ be the irreducible component containing $x$, which is unique by Proposition \ref{etale}. As $\cE^{\ord}_{\G, U^{p}}\subset \cE^{\ord}_{\G, U^{p}{}'}$ are equidimensional of the same dimension, the image of $\X_{\G}$  in $\cE_{\G, U^{p}{}'}$ is an irreducible component, necessarily ${}'\X_{\G}$. 
\end{proof}

We now deal with the level at $\Sigma$.
\begin{lemm}\label{levSig}
 Let $v\in \Sigma$. There exists a compact open $U_{v}'\subset U_{v}$ such that for every classical point $x'\in \X_{\G}$, we have $\pi_{x',v}^{U'_{v}}=\pi_{x',v}$,
where $\pi_{x,v}$ is the local component at $v$ of $\pi_{x}\otimes \baar{\Q}_{p}$.
\end{lemm}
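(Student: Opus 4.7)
The plan is to mirror the proof of the preceding lemma, routing through the Jacquet--Langlands correspondence to handle the fact that $\B_v$ is ramified at $v$. First, I would let $\pi^{\mathrm{JL}}_{x',v}$ denote the Jacquet--Langlands transfer of $\pi_{x',v}$ to a discrete series representation of $\GL_2(F_v)$. By the Jacquet--Langlands correspondence together with local--global compatibility (as in part~1 of Theorem \ref{galois for hilbert}, citing \cite{carayol} and \cite{saito}), the Artin conductor of $\cV_{x'}|_{G_{F_v}}$ equals the conductor of $\pi^{\mathrm{JL}}_{x',v}$.

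Next, exactly as in the proof of the preceding lemma, the purity of $\cV_{x'}|_{G_{F_v}}$ from Theorem \ref{galois for hilbert} combined with \cite[Theorem~3.4]{saha-cond} forces this common conductor $N_v$ to be constant on $\X_{\G}^{\cl}$.

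The final step is to convert the uniform conductor bound on the $\GL_2$-side into a uniform level bound on the $\B_v^{\times}$-side. Each $\pi_{x',v}$ is finite-dimensional since $\B_v^{\times}$ is compact modulo centre, and the standard matching of invariants under Jacquet--Langlands (compare e.g.\ Bushnell--Henniart) produces an integer $n=n(N_v)$ such that every $\pi_{x',v}$ is trivial on the $n$-th unit group $U_{D,n} := 1 + \varpi_D^n \OO_{\B_v}$ of the maximal order $\OO_{\B_v} \subset \B_v$, where $\varpi_D$ is a uniformiser. Setting $U_v' := U_v \cap U_{D,n}$ then has the desired property. The main obstacle lies precisely in this depth-matching step; one could alternatively derive it from a compactness principle, observing that discrete series of $\GL_2(F_v)$ of fixed conductor with bounded central-character conductor form a finite set up to unramified twist, and that unramified twists preserve the kernel of the restriction of $\pi_{x',v}$ to $\OO_{\B_v}^{\times}$.
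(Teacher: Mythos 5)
Your proposal is correct, but it takes a genuinely different route from the paper's proof. You adapt the argument of the preceding lemma: first establish, via purity and \cite[Theorem 3.4]{saha-cond}, that the conductor of $\cV_{x'}|_{G_{F_v}}$ (equivalently of the Jacquet--Langlands transfer to $\GL_2(F_v)$) is constant on $\X_\G^{\cl}$, and then invoke a finiteness statement -- discrete series of $\GL_2(F_v)$ of fixed conductor form a finite set up to unramified twist -- to deduce a uniform bound on the level on the $\B_v^\times$-side, since unramified twisting (being trivial on $\OO_{F_v}^\times$ and hence on $\OO_{\B_v}^\times$ via the reduced norm) does not affect the smooth level there. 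The paper instead goes directly through the local Langlands correspondence in families \cite{LLC}: the map $f\colon\X_\G\to\mathfrak{X}_{v}$ into the Bernstein variety sends each connected component of $\X_{\G,\baar{\Q}_p}$ into a single inertial component, and a short case analysis (supercuspidal class, Steinberg class, principal-series class excluded) shows that every classical $\pi_{x',v}$ is an unramified twist of a \emph{fixed} $\pi_{x,v}$, whence the conclusion is immediate. The paper's argument thus avoids the conductor-constancy step entirely and yields a stronger statement (exact identification up to unramified twist); it is also more uniform with the rest of the paper, which systematically exploits \cite{LLC}. Your argument is slightly more elementary in spirit, but as you yourself flag, the ``depth-matching'' formulation of the final step is not quite self-contained: the correct way to run it is your second alternative, the compactness/finiteness argument, and you should note that the conductor bound already controls the conductor of the central character (since the latter is at most the former), so the extra hypothesis ``bounded central-character conductor'' is automatic.
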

\begin{proof} 
Fix a classical point $x\in \X_{\G} \subset \cE^{\ord}_{\G, U^{p}}$, and let $U_{v}'\subset U_{v}$ be such that
 $\pi_{x,v}^{U'_{v}}=\pi_{x,v}$. (This will hold for sufficiently small $U_{v}'$ as $\pi_{x,v}$ is finite-dimensional.) We show that $U_{v}'$ satisfies the desired property at all classical $x'\in \X_{\G}$.
Let $\mathfrak{X}_{v/\Q_{p}}$ be the   Bernstein variety of $\GL_{2}(F_{v})$, a scheme over $\Q_{p}$  (see \cite{LLC}, to which we refer for more background). 
By \cite[Theorem 3.2.1]{LLC}, the representation $\cV_{\G}$ of $G_{F_{v}}$ gives a map $f\colon \X_{\G}\to \mathfrak{X}_{v/\Q_{p}}$,
 compatibly with the local Langlands correspondence in the sense that for all $x\in  \X_{\G}$,   $f(x)$ is the point corresponding to the supercuspidal support of the representation $\pi_{x,v}'$ of $\GL_{2}(F_{v})$ over $\Q_{p}(x)$ attached to the representation $\cV_{\G, x}$. Note that for classical points $x$, $\pi_{x,v}={\rm JL}_{v}(\pi'_{x,v} \otimes_{\Q_{p}(x)}\baar{\Q_{p}})$, where  ${\rm JL}_{v}$ is the Jacquet--Langlands correspondence.

 After base-change to $\baar{\Q}_{p}$, we may consider the  finitely many maps $f_{i}\colon \X_{i}\to \mathfrak{X}_{v/\baar{\Q}_{p}}$, where the $\X_{i}$ are the connected components of $\X_{\G,\baar{\Q}_{p}}$. The image of  $f_{i}$ is contained in a connected component $\mathfrak{X}_{i}$ of  $\mathfrak{X}_{v/\baar{\Q}_{p}}$. These components are in bijection with inertial classes of supercuspidal supports for $\GL_{2}(F_{v})$, and  for the class $\sigma=\sigma_i $ of $\frak X_i$ there are three possibilities:
\begin{itemize}
\item $\sigma$ corresponds to  the class of a  supercuspidal representation  $\sigma_0$ of $\GL_{2}(F_{v})$ over $\baar{\Q}_{p}$. In this case, there is an unramified character $\omega\colon F_{v}^{\times}\to \OO(\frak{X}_i)^{\times}$ such that every $y'\in \mathfrak{X}_{i}$ corresponds to $\sigma_0\otimes\omega_{y'}$. Hence for  every classical  $x'\in  \X_{i}$, we have $$\pi_{x',v}\cong {\rm JL}_{v}( \sigma_0\otimes\omega_{f_{i}(x')})={\rm JL}_{v}( \sigma_0)\otimes\omega_{f_{i}(x')}\cong \pi_{x,v}\otimes \omega_{f_{i}(x')}.$$
 As $\omega_{f(x')}$ is unramified, it follows that  $\pi_{x',v}^{U_{v}}=\pi_{x',v}$.
\item $\sigma$ corresponds to the class of the supercuspidal support of ${\rm St}\otimes\omega_{0}$, where ${\rm St}$ is the Steinberg representation and $\omega_{0}\colon F_{v}^{\times}\to \baar{\Q}_{p}^{\times}$ is a character. Then there exist a closed subset $\mathfrak{X}_{i}'\subset \mathfrak{X}_{i}$ and an unramified character $\omega\colon F_{v}^{\times}\to \OO(\mathfrak{X}_{i})^{\times}$ such that  every $y'\in \mathfrak{X}_{i}'$ corresponds to the supercuspidal support of ${\rm St} \otimes\omega_{0}\omega_{y'}$, and such that every $y'\in \mathfrak{X}_{i}-\mathfrak{X}_{i}'$ corresponds to the support of an irreducible principal series representation. It follows that for  every classical $x'\in \X_{i}$,  the image $f_{i}(x')\in \mathfrak{X}_{i}'$ (since $\pi_{x,v}'\otimes \baar{\Q}_{p}$ is in the domain of ${\rm JL}_{v}$),  and that $\pi_{x',v} ={\rm JL}_{v}({\rm St}\otimes\omega_{0}\omega_{f_{i}(x')})=\omega_{0}\omega_{f_{i}(x')}\circ {\rm Nm}$. We conclude as above.

\item no element of the inertial class $\sigma$ is the supercuspidal support of a special or supercuspidal representation. This case is excluded as only those representations are in the image of the Jacquet--Langlands correspondence.
\end{itemize}
\end{proof}

\subsubsection{Galois representation from geometry}
Let $U^{p}\subset \G(\A^{p\infty})$, $V^{p}\subset \H(\A^{p\infty})$ be compact open subgroups. We will consider various  compact open subgroups $U^{p}\subset U^{p}_{*} \subset \G(\A^{p\infty})$, and will correspondingly denote by   $K^{p}_{*}$ be the image of $U^{p}_{*}\times V^{p}$ in $(\G\times \H)'(\A^{p\infty})$. Let $\X$ be an irreducible component of $\cE_{K_{}^{p}}^{\ord}\subset \cE_{\G, U^{p}}^{\ord}\times \cE_{\H, V^{p}}$, and let $\X_{\G}\subset\cE_{\G, U^{p}}^{\ord}$ be the irreducible component such that $\X\subset \X_{\G\ts \H}:=\X_{\G}\ts  \cE_{\H, V^{p}}$.

Suppose from now on that $\X$ is locally distinguished by $\H'$ (Definition \ref{hida loc dist}).
Let  $\cV_{\G}$ be the $\OO_{\X_{\G}''}[G_{F, S}]$-module   constructed in Proposition \ref{galois2} and Proposition \ref{schur}, and let $\cV_{\H}$ be the universal character $\chi_{\rm univ}$ of $G_{E, S}$   from \eqref{chiuniv}. 
Let 
$$\X^{(0)}:= \X\cap (\X_{\G}'' \ts  \cE_{\H, V^{p}})$$ an open subset,  and consider the $G_{F, E, S}$-representation 
$$\cV':=(\cV_{\G}\boxtimes \cV_{\H})_{|\X^{(0)}}$$

We  define another  sheaf $\cV$  with $G_{F, E, S}$-action,  that will provide a more  convenient and concrete substitute for $\cV'$ on (an open subset of) $\X^{(0)}$.

 Let 
$$U_{0}^{p}{}'=U^{\Sigma p}\prod_{v\in \Sigma } U_{v}',$$ with $U_{v}'$ as in Lemma \ref{levSig}.  Let $K_{0}^{p}{}'=(U_{0}^{p}{}'\times V^{p})F_{\A^{p\infty}}^{\times}/F_{\A^{p\infty}}^{\times}$, and let
$$\cV:=\cM_{K_{0}^{p}{}'}^{{H_{\Sg}'}},$$ viewed a sheaf over 
$\X$.

\begin{lemm}\label{dirsum}
 The sheaf $\cV$ is a direct summand of $\cM_{K_{0}^{p}{}'}$.
 \end{lemm}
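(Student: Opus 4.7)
The plan is to exhibit the projection onto $\cV$ as an $\OO_\X$-linear, Galois- and prime-to-$\Sg p$-Hecke-equivariant idempotent $e_{H_\Sg'}$ on $\cM_{K_0^{p}{}'}$. Since idempotents split in any additive category, this immediately produces the desired decomposition $\cM_{K_0^{p}{}'} = \cV \oplus \ker(e_{H_\Sg'})$ of sheaves of $\OO_\X[G_{F,E,S}]$-modules on $\X$.

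The crucial input is that for each $v \in \Sg$, our standing assumption that $E_v/F_v$ is non-split forces $H_v' = E_v^\times/F_v^\times$ to be compact, hence so is $H_\Sg' = \prod_{v \in \Sg} H_v'$. I endow the latter with its Haar probability measure.

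To construct $e_{H_\Sg'}$, I would first pass to an auxiliary smaller level at $\Sg$. Let $K_\Sg' \subset (\G \times \H)'(F_\Sg)$ denote the $\Sg$-component of $K_0^{p}{}'$. Since $H_\Sg'$ is compact and $K_\Sg'$ is open, the normalizer of $K_\Sg'$ in $H_\Sg'$ is open, hence of finite index, so the set of $H_\Sg'$-conjugates of $K_\Sg'$ is finite; their intersection $K_\Sg''$ is then a compact open subgroup of $K_\Sg'$ normalized by $H_\Sg'$. Let $K^{p}{}''$ be the level that agrees with $K_0^{p}{}'$ away from $\Sg$ and equals $K_\Sg''$ at $\Sg$. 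The finite group $H_\Sg' K_\Sg''/K_\Sg''$ then acts on $\cM_{K^{p}{}''}$ by $\OO_\X$-linear Hecke correspondences, commuting with the Galois action and with the Hecke operators at primes away from $\Sg p$ (by construction of $\cM$ as a sheaf on the eigenvariety). Its normalized average is an idempotent endomorphism of $\cM_{K^{p}{}''}$, whose restriction to the natural direct summand $\cM_{K_{0}^{p}{}'} = \cM_{K^{p}{}''}^{K_\Sg'/K_\Sg''}$ is the sought operator $e_{H_\Sg'}$, with image equal to $\cM_{K_0^{p}{}'}^{H_\Sg'} = \cV$ by definition.

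The main point to verify is that the averaging procedure is compatible with the $\OO_\X$-module structure globally over $\X$; but this is automatic because Hecke correspondences at primes away from $p$ act $\OO_\X$-linearly on the sheaves $\cM$ by their very construction on the eigenvariety. The only mild subtlety is therefore in the bookkeeping of levels and the verification that the finite-group average, restricted to $K_\Sg'/K_\Sg''$-invariants, indeed realizes the invariants functor $(-)^{H_\Sg'}$; this is a routine check using that $H_\Sg' \cap K_\Sg' \supseteq K_\Sg''$ acts trivially on $\cM_{K^{p}{}''}$. The resulting splitting $\cM_{K_0^{p}{}'} \cong \cV \oplus \ker(e_{H_\Sg'})$ as $\OO_\X[G_{F,E,S}]$-modules then concludes the proof.
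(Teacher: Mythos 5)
Your proof rests on the same two observations as the paper's: $H_\Sg'$ is compact (because each $E_v/F_v$ with $v\in\Sg$ is non-split), and its action on $\cM_{K_0^p{}'}$ factors through a finite quotient, so one can average; the averaging idempotent exists because $\cM$ is already a sheaf of $\Q_p$-vector spaces. The paper records exactly this in two sentences. You unfold the ``finite quotient'' step by passing to an auxiliary deeper level at $\Sg$, which is a reasonable thing to want to make explicit, but two of your supporting claims do not hold as stated.

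First, the subgroup $K_\Sg''=\bigcap_{h\in H_\Sg'}hK_\Sg'h^{-1}$ is indeed open in $K_\Sg'$ and normalized by $H_\Sg'$, but there is no reason for it to be normal in $K_\Sg'$, so the expression $\cM_{K^p{}''}^{K_\Sg'/K_\Sg''}$ is not literally defined. To salvage this one would need to shrink further, e.g.\ intersect over the conjugates by the group generated by $H_\Sg'$ and $K_\Sg'$; since that group need not be compact you would also have to argue separately that the set of relevant conjugates is still finite. This is the kind of bookkeeping the paper sidesteps by asserting directly (on the strength of Lemma~\ref{levSig}, which guarantees that the level $K_\Sg'$ already captures the entire local component at $\Sg$) that $H_\Sg'$ acts on $\cM_{K_0^p{}'}$ through an open, hence finite-index, subgroup.

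Second, the inclusion ``$H_\Sg'\cap K_\Sg'\supseteq K_\Sg''$'' invoked at the end is false: $K_\Sg''$ is open in $(\G\ts\H)'(F_\Sg)$, whereas $H_\Sg'$ is a proper closed subgroup of strictly smaller dimension, so $K_\Sg''$ cannot be contained in $H_\Sg'$. What you actually want (and what is true) is that $H_\Sg'\cap K_\Sg''$ acts trivially on $\cM_{K^p{}''}$, since it lies in the level group $K_\Sg''$; that is what makes the averaged operator descend to $H_\Sg'/(H_\Sg'\cap K_\Sg'')$. With these repairs your argument coincides with the paper's.
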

 \begin{proof} The  group ${H_{\Sg}'}=\prod_{v\in \Sigma} E_{v}^{\times}/F_{v}^{\ts}$ acts on the locally free sheaf  $\cM_{K^{p}_{0}{}'}$
through a quotient by an open subgroup. Since $H_{\Sg}'$ is compact, such a quotient is  finite; 
 therefore the inclusion $\cV\subset \cM_{K_{0}^{p}{}'}$ splits. 
 \end{proof}

\begin{prop}\label{Xad*} 
  There is an open subset $\X^{(1)}\subset \X$ containing all classical points  such that $\cV$ is locally free of rank~$2$ 
  along $\X^{(1)}$.  For every  $z=(x,y)\in \X^{ \cl}$  we have
$$\cV_{z}\cong V_{{x}}\otimes \chi_{{y}}$$
as a $G_{F, E, S}$-representation.  
\end{prop}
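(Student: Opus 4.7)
The plan is to compare $\cV = \cM_{K_0^p{}'}^{H'_\Sigma}$ with the abstractly constructed Galois representation $\cV' = (\cV_\G \boxtimes \cV_\H)_{|\X^{(0)}}$, which is locally free of rank~$2$ by Proposition~\ref{galois2}, with fibers $V_x \otimes \chi_y$ at every classical point $z = (x,y) \in \X^\cl$.

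First I would compute the fiber $\cV_z$ at an arbitrary classical point. Since $H'_\Sigma$ is compact and acts through a finite quotient on $\cM_{K_0^p{}'}$, Lemma~\ref{dirsum} ensures its invariants split off as a direct summand, so the formation of $\cV$ commutes with taking fibers: $\cV_z = (\cM_{K_0^p{}',z})^{H'_\Sigma}$. By Corollary~\ref{fibreMad},
$$
\cM_{K_0^p{}',z} \cong A_z \otimes_L (V_x \otimes \chi_y),
$$
with $L = \Q_p(z)$, $A_z = (\pi_x^{p,\vee})^{U_0^p{}'} \otimes_L \chi_{\H,y}^{-1,p}$, Galois acting only on the second factor, and $H'_\Sigma$ acting only on $A_z$ through its local component at $\Sigma$. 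Using Lemma~\ref{levSig} to identify $(\pi_{x,v}^\vee)^{U'_v} = \pi_{x,v}^\vee$ for $v \in \Sigma$, passing to $H'_\Sigma$-invariants gives
$$
\cV_z \cong \Bigl(\bigotimes_{v \in \Sigma}(\pi_{x,v}^\vee \otimes \chi_{\H,y,v}^{-1})^{H'_v}\Bigr) \otimes_L (V_x \otimes \chi_y).
$$
The local distinction hypothesis on $\X$ combined with the Tunnell--Saito--Waldspurger theorem, applied to the contragredient of $\pi_x \otimes \chi_y$ (which satisfies the same local $\eps$-factor conditions), forces each parenthesized factor to be one-dimensional. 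Hence $\cV_z \cong V_x \otimes \chi_y$ as $G_{F,E,S}$-representations.

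For local freeness, I would introduce the coherent sheaf $\cB := \underline{\Hom}_{\OO[G_{F,E,S}]}(\cV', \cM_{K_0^p{}'})$ on the open subset of $\X^{(0)}$ where $\cV'$ has absolutely irreducible fibers (which contains $\X^\cl$ by Proposition~\ref{schur}, after shrinking), together with the natural evaluation map $\cB \otimes_\OO \cV' \to \cM_{K_0^p{}'}$, which carries the Hecke and $H'_\Sigma$ actions. By the previous fiber computation and Schur's lemma for the absolutely irreducible $\cV'_z$, the fiber of $\cB$ at $z \in \X^\cl$ is canonically $A_z$, and the evaluation map is a fiberwise isomorphism at every classical point. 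By Nakayama's lemma and coherence, this extends to an isomorphism on an open neighborhood of $\X^\cl$. Taking $H'_\Sigma$-invariants yields $\cV \cong \cB^{H'_\Sigma} \otimes \cV'$; the sheaf $\cB^{H'_\Sigma}$ is coherent with one-dimensional fibers at each classical point, so upper semi-continuity of fiber dimension together with non-vanishing at classical points show that it is an invertible sheaf on an open neighborhood $\X^{(1)} \subset \X$ of $\X^\cl$, possibly after shrinking using the regularity provided by Proposition~\ref{etale}. Hence $\cV$ is locally free of rank~$2$ on $\X^{(1)}$.

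The main technical obstacle is verifying that the formation of $\cB$ commutes with base change to classical fibers. This is a Tor-vanishing question for the $\OO[G_{F,E,S}]$-module structures on $\cV'$ and $\cM_{K_0^p{}'}$. It can be addressed via the Azumaya-algebra structure of $\mathscr{A} := \OO[G_{F,E,S}]/\Ker(\Tr_{\cV'})$ (which acts on $\cM_{K_0^p{}'}$ because the trace of the latter, by density of $\X^\cl$, is a coherent multiple of $\Tr_{\cV'}$), providing a Morita equivalence that reduces the Hom computation to an essentially trivial one. The freeness properties of $\cM_{K_0^p{}'}$ from Proposition~\ref{free indep} combined with the regularity of $\cE^\ord$ at classical points then complete the argument.
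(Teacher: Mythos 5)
Your computation of the fibre $\cV_z$ follows the paper's lines but has a silent gap: the module $A_z^{H'_\Sigma}$ still carries the tensor factor $(\pi_x^{\Sigma p, \vee})^{U_0^{\Sigma p}{}'} \otimes \chi_{\H,y}^{-1, \Sigma p}$ away from $\Sigma p$, which you dropped without comment in passing from $\cM_{K_0^p{}',z}$ to your displayed formula for $\cV_z$. That factor must also be shown to be one-dimensional, and the reason is the theory of local newforms (recall $\X_\G$ was normalised to be $v$-new for all $v \notin \Sigma \cup S_p$, so by Casselman's theory the invariants at level $U_1(\vpi_v^{m_v})$ form a line). The paper states this explicitly; you need it too.

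For local freeness, your route is a genuine detour. The paper argues directly on $\cV$: it is coherent, by the fibre computation its fibre rank equals $2$ at every point of the dense subset $\X^\cl$, and since $\X$ is reduced (indeed regular) in a neighbourhood of $\X^\cl$ by Proposition~\ref{etale}, upper semicontinuity of fibre rank immediately produces an open $\X^{(1)} \supset \X^\cl$ on which the rank is constantly $2$, hence $\cV$ is locally free there. You instead introduce $\cB = \underline{\Hom}_{\OO[G_{F,E,S}]}(\cV', \cM_{K_0^p{}'})$, attempt to prove $\cM \cong \cB \otimes \cV'$, and then extract local freeness of $\cB^{H'_\Sigma}$. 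This would establish the stronger decomposition $\cV \cong \calL \otimes \cV'$ (which is the content of the \emph{Corollary} following Proposition~\ref{Xad*} in the paper, proved there via uniqueness of representations with prescribed irreducible pseudocharacter, Lemma~\ref{uniq}). But as you yourself flag, base-change commutativity for $\cB$ is not free: $\cM_{K_0^p{}'}$ is not known to be flat over $\OO_\X$ at this stage, and the Azumaya/Morita sketch requires knowing $\mathscr{A} \cong \End_\OO(\cV')$ and making the progenerator argument precise. This can very likely be pushed through, but it inverts the logical order of the paper (which proves local freeness first, then deduces the decomposition), and in a proof of this proposition it is extra work that the semicontinuity argument renders unnecessary.
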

\begin{proof} 
 By 
Corollary \ref{fibreMad},
for $z=(x,y)\in \X^{\cl}$ we have
$$\cM_{K_{0}^{p},(x,y)}\cong(\pi_{x}^{\vee, p, U_{0}^{ p}}\otimes \chi_{y}^{-1, p})\otimes (V_{x}\otimes \chi_{y})$$
(where the first pair of factors is a representations of $\G\times \H(\A^{p\infty})$ and the second one is a a representation of $G_{E}$).
By Lemma \ref{dirsum}, taking ${H_{\Sg}'}$-invariants commutes with specialisation, and we find that 
\begin{align}\label{old}
\cV_{z}\cong  (\pi_{x}^{\vee, \Sigma p, U_{0}^{ \Sigma p}}\otimes\chi_{y}^{-1,  \Sigma p})\otimes (\pi_{x, \Sigma}^{\vee}\otimes \chi_{y,\Sigma}^{-1})^{{H_{\Sg}'}} \otimes( V_{x}\otimes \chi_{y}).\end{align}
The first factor is $1$-dimensional by the theory of local newforms, and the second factor is $1$-dimensional by assumption $(\eps_{v})'$.

Since the fibre-rank of $\cV$ is $2$ in the dense set $\X^{\cl}$, there is an open neighbourhood  of this set over which $\cV$ is locally free of rank $2$. 
\end{proof}

\begin{coro} There exist:
 an open subset  $\X^{(2)}\subset \X^{(0)}\cap \X^{(1)}$ containing $\X^{\cl}$ such that 
\beqq \End_{\OO_{\X^{(2)}}[G_{F, E,S}]}(\cV)=\OO_{\X^{(2)}},
\eeqq
 an invertible sheaf $\calL$ over $\X^{(2)}$ with trivial Galois action, and a $G_{F, E, S}$-equivariant isomorphism of sheaves on $\X^{(2)}$
$$\cV\cong \calL\otimes \cV'.$$
\end{coro}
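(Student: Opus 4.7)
The strategy is to apply Lemma~\ref{uniq} to $\cV$ and $\cV'$ after verifying that both carry the same trace as a pseudocharacter, and that this pseudocharacter is absolutely irreducible on an open subset containing $\X^{\cl}$.

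First, on $\X^{(0)}\cap \X^{(1)}$, both $\cV$ and $\cV'$ are locally free of rank~$2$ with a continuous $G_{F, E, S}$-action, and hence determine $2$-dimensional pseudocharacters $\mathscr{T}_{\cV},\mathscr{T}_{\cV'}\colon G_{F,E,S}\to \OO(\X^{(0)}\cap \X^{(1)})$. For each $z=(x,y)\in \X^{\cl}$, Proposition~\ref{Xad*} gives $\cV_{z}\cong V_{x}\ot \chi_{y}$, while the construction $\cV'=\cV_{\G}\boxtimes \cV_{\H}$ together with Proposition~\ref{galois2} gives $\cV'_{z}\cong V_{x}\ot \chi_{y}$ as well. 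Thus $\mathscr{T}_{\cV}$ and $\mathscr{T}_{\cV'}$ agree on the Zariski-dense subset $\X^{\cl}$ (Lemma~\ref{dense}); since $\X$ is integral, we conclude $\mathscr{T}_{\cV}=\mathscr{T}_{\cV'}=:\mathscr{T}$.

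Next, at each $z\in \X^{\cl}$, the representation $V_{x}\ot \chi_{y}$ is absolutely irreducible as a representation of $G_{F,E,S}=G_{F,S}\ts G_{E,S}$: $V_{x}$ is absolutely irreducible as a $G_{F,S}$-representation by Theorem~\ref{galois for hilbert}, $\chi_{y}$ is a character, and the two Galois groups act on distinct tensor factors. Hence the irreducibility locus of $\mathscr{T}$ contains $\X^{\cl}$; by the openness of this locus (\cite[\S~7.2.3]{chenevier}), we may take $\X^{(2)}\subset \X^{(0)}\cap \X^{(1)}$ to be the intersection with the irreducibility locus. Applying Lemma~\ref{uniq} over the (open subscheme of the) integral scheme $\X^{(2)}$ produces the desired invertible sheaf $\calL$ with trivial Galois action and a $G_{F,E,S}$-equivariant isomorphism $\cV\cong \calL\ot \cV'$.

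Finally, the Schur property \eqref{schur eq} follows from the irreducibility of $\mathscr{T}$ on $\X^{(2)}$: by \cite[Theorem~5.1]{rou}, $\mathscr{A}:=\OO_{\X^{(2)}}[G_{F,E,S}]/\Ker(\mathscr{T})$ is an Azumaya algebra of rank~$4$, and by \cite[Corollary~2.9]{saltman} the natural map $\mathscr{A}\to \End_{\OO_{\X^{(2)}}}(\cV)$ is an isomorphism; the commutant of $\mathscr{A}$ in $\End_{\OO_{\X^{(2)}}}(\cV)$---which coincides with $\End_{\OO_{\X^{(2)}}[G_{F,E,S}]}(\cV)$---is then the centre $\OO_{\X^{(2)}}$ of $\mathscr{A}$. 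The only delicate point of the argument is the trace comparison on classical fibres, which is already encoded in Proposition~\ref{Xad*} together with the integrality of $\X$; all other steps are direct applications of results established earlier in the section.
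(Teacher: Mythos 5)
Your proof is correct and follows essentially the same strategy as the paper's (terse) proof: identify a common irreducible pseudocharacter via Proposition~\ref{Xad*} and density of classical points, then invoke Lemma~\ref{uniq} for the isomorphism and the Rouquier--Saltman Azumaya picture for the Schur property \eqref{schur eq}. The one minor stylistic difference is in how the Schur property is obtained: the paper invokes ``the argument of Proposition~\ref{schur}'' (pointwise irreducibility at classical fibres, then pass to the complement of the support of the cokernel of $\OO_{\X}\to\End_{\OO_\X[G_{F,E,S}]}(\cV)$), whereas you argue directly that $\mathscr{A}=\End_{\OO_{\X^{(2)}}}(\cV)$ over the irreducibility locus and that the relevant endomorphism ring is the commutant, hence the centre; both give the same open set.
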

\begin{proof} By Proposition \ref{Xad*} and the construction of $\cV'$, the representations $\cV$, $\cV'$ have a common  trace $\mathscr{T}\colon G_{F, E, S}\to \OO(\X^{(1)}) $. Since this is an irreducible pseudocharacter, the assertions follow
 from Lemma \ref{uniq} and (the argument of) Proposition \ref{schur}.
\end{proof}

\subsubsection{The universal ordinary representation} In what follows, all sheaves $\cM_{K^{p}_{*}}$ will be considered as sheaves over $\X$ (or open subsets of $\X$). Note that, as the action of $H'_{\Sg}$ on $\cM_{\G}$, $ \cM_{\H}$ commutes with the Galois action, the sheaves $\cM_{K^{p}_{*}}$ retain an action of $G_{F, E, S}$.  

We will use the following well-known fact. 
\begin{lemm}\lb{detT} Let $R$ be a ring and let $T\colon M\to N$ be a map of free $R$-modules of the same rank. The set of those $x\in \Spec R$ such that $T\otimes R/\frak{p}_{x}$ is an isomorphism is open in $\Spec R$.
\end{lemm}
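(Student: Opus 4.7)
The plan is to reduce to the openness of a distinguished open subset $D(d) \subset \Spec R$ associated to a single element $d \in R$, namely the determinant of a matrix representing $T$. First I would fix bases of $M$ and $N$; since both are free of the same finite rank $n$, the map $T$ is encoded by a square matrix $A \in M_{n}(R)$, and I set $d := \det A \in R$. The element $d$ is well-defined up to a unit of $R$, so the closed subscheme $V(d) \subset \Spec R$ depends only on $T$.

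For $x \in \Spec R$ with residue field $\kappa(x)$, the specialisation $T \otimes_{R} \kappa(x)$ is a linear map between two $n$-dimensional $\kappa(x)$-vector spaces, represented by $A$ reduced modulo $\mathfrak{p}_{x}$; hence it is an isomorphism if and only if its determinant $d \bmod \mathfrak{p}_{x}$ is nonzero in $\kappa(x)$, equivalently if and only if $d \notin \mathfrak{p}_{x}$. Consequently the set in question coincides with $D(d) = \Spec R \setminus V(d)$, which is open by the very definition of the Zariski topology. There is no substantial obstacle; the only point requiring a mild reading-between-the-lines is that the notation ``$T \otimes R/\mathfrak{p}_{x}$'' in the statement must be understood as the residue-field fibre $T \otimes \kappa(x)$ (the literal tensor product over $R/\mathfrak{p}_{x}$ would not in general give an open locus, as shown by $R=k[t]$, $T=$ multiplication by $t$). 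This is the natural convention when testing whether a morphism of locally free coherent sheaves on $\Spec R$ is an isomorphism at a point, which is how the lemma is applied immediately afterwards.
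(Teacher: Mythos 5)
Your argument is correct and is the same as the paper's one-line proof: the isomorphism locus is $D(\det T)$, the complement of $V(\det T)$. Your remark that $T \otimes R/\mathfrak{p}_{x}$ must be read as the residue-field fibre $T \otimes \kappa(x)$ is a genuine clarification worth making explicit -- as your $k[t]$ example shows, the literal reading would make the claimed locus fail to be open (it would miss the generic point), and the fibre reading is the one used in the application, where an isomorphism at a dense set of points is promoted to one on an open neighbourhood.
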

\begin{proof} The locus is the complement of $V(\det T)$. 
\end{proof}

In what follows, similarly to \S~\ref{ssec cong}, if `$?$' is any decoration,  $U_{?}^{p}$ is a subgroup of $\G(\A^{p\infty})$, and $V^{p}\subset \H(\A^{p\infty})$ is a fixed subgroup,  we denote by $K_{?}^{p}\subset (\G\ts\H)'(\A^{p\infty})$ the image of $U_{?}^{p}\ts V^{p}$. 
\begin{prop}\label{cofinal} Fix a finite set of primes $\Sg'$ disjoint from $\Sigma \cup S_{p}$, such that $U_{v}$ is maximal for all $v\notin {\Sg'}\cup \Sigma\cup S_{p}$, and consider the set $\mathscr{U}$ of subgroups   $U^{p}{}'=\prod_{v\nmid p}U_{v}'\subset U^{p}$  with $U'_v$  as in Lemma \ref{levSig} for all $v\in \Sigma$, and $U_{v}'=U_{v}$ for all $v\notin {\Sg'}\cup \Sigma \cup S_{p}$. (In particular $U_{0}'\in \mathscr{U}$.)
 \begin{enumerate}
\item
There exists a cofinal   sequence $(U^{p}_{i}{}')_{i\geq 0}\subset \mathscr{U} $, and open subsets $\X_{i}\subset \X^{(2)}\subset \X$ containing $\X^{\cl}$ such that $\X_{j}\subset \X_{i}$ for $i\leq j$, satisfying the following: there are  
 integers $r_{i}$ and $G_{F ,E}$-equivariant  maps
$$T_{i}\colon \cV^{\oplus r_{i}}= (\cM_{ K^{p}_{0}{}'}^{{H_{\Sg}'}} )^{\oplus r_{i}}   \to 
\cM_{K^{p}_{i}{}'}^{{H_{\Sg}'}}$$
that 
 are isomorphisms over $\X_{i}$.
\item
For each 
 $U^{p}{}'\in \mathscr{U}$, there is an open subset $\X_{U^{p}{}'}\subset \X^{(2)}$ containing $\X^{\cl}$ such that 
   the restriction to $\X_{U^{p}{}'}$ of 
\beq\label{piku}
\Pi^{K^{p}{}', \ord}_{{H_{\Sg}'}}:=\underline{\Hom}_{\OO_{\X}[G_{F, E, S}]}(\cM_{K^{p}{}'}^{{H_{\Sg}'}},\cV) \eeq
is a locally free
$\OO_{\X_{U^{p}{}'}}$-module, and we  have  an isomorphism of locally free sheaves with Hecke- and Galois- actions
 $$\cM_{K^{p}{}'}^{{H_{\Sg}'}} \cong 
( \Pi^{K^{p}{}', \ord}_{{H_{\Sg}'}} )^{\vee} \otimes \cV.$$
Moreover $\Pi^{K^{p}{}', \ord}_{{H_{\Sg}'}}\subset \Pi^{K^{p}{}'', \ord}_{{H_{\Sg}'}} $  for $U^{p}{}''\subset U^{p}{}'$
 via the natural projections  $\cM_{ K^{p}{}''}^{{H_{\Sg}'}}\to\cM_{ K^{p}{}'}^{{H_{\Sg}'}} $.
\item  The $ \cH_{\G\times \H, {\Sg'}}^{K^{p}{}'}$-module  $\Pi^{K^{p}{}', \ord}_{{H_{\Sg}'}}$ is generated by  $\Pi^{K_{0}^{p}{}', \ord}_{{H_{\Sg}'}} $
over $\X_{U^{p}{}'}$.
\item\label{Piord}  For each $z=(x,y)\in\X^{\cl}$, we have  
$$(\Pi^{K^{p}{}', \ord}_{{H_{\Sg}'}})_{z}\cong (\pi_{x}^{U^{p}{}', \ord})_{{H_{\Sg}'}}\otimes \chi_{y}, $$
with the notation of \eqref{piord}.
\end{enumerate}
\end{prop}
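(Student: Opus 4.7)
The strategy is to bootstrap from the classical-fibre identifications given by Proposition~\ref{Xad*}, Corollary~\ref{fibreMad}, and Lemma~\ref{levSig}, using three tools: (i) local old-form theory at the auxiliary primes $\Sigma'$, producing Hecke-theoretic comparison maps between the sheaves $\cM^{H'_\Sigma}_{K^{p}{}'}$ at different levels; (ii) Nakayama's lemma together with Lemma~\ref{detT}, to propagate fibrewise statements to open neighbourhoods; and (iii) the Galois rigidity \eqref{schur eq}, to cleanly isolate the automorphic factor of $\cM^{H'_\Sigma}_{K^{p}{}'}$.

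\emph{Part (1).} Put $r_i:=\dim_{\kappa(z)}(\pi_x^{U^p_i{}',\ord})_{H'_\Sigma}$ at any $z=(x,y)\in \X^{\cl}$. This is constant on $\X^{\cl}$: the factors at $v\in \Sigma$ are $1$-dimensional by local distinction, the ordinary factor at $p$ is tautologically $1$-dimensional, the factors at $v\notin \Sigma\cup\Sigma'\cup S_p$ are $1$-dimensional by newness of $U^p$, and the factors at $v\in \Sigma'$ depend only on the inertial type, which is constant on $\X_\G^{\cl}$ by the argument of Lemma~\ref{levSig}. Choose a cofinal sequence $(U^p_i{}')_{i\geq 0}\subset \mathscr U$; for each $i$ pick Hecke operators $T_{i,1},\dots,T_{i,r_i}\in \cH_{\G,\Sigma'}$ realising, at every classical $z$, a basis of the local oldform embedding $((\pi_x^{U_0^p{}'})_{H'_\Sigma})^{\oplus r_i}\hookrightarrow (\pi_x^{U^p_i{}'})_{H'_\Sigma}$. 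They define a Galois-equivariant morphism $T_i\colon \cV^{\oplus r_i}\to \cM^{H'_\Sigma}_{K^p_i{}'}$ that is surjective on every classical fibre, hence (by Nakayama) on an open $\X_i\subset \X^{(2)}$ containing $\X^{\cl}$. On $\X_i$, the closed locus $\{z:\dim \cM^{H'_\Sigma}_{K^p_i{}',z}\geq 2r_i\}$ contains the dense subset $\X^{\cl}$, so equals all of $\X_i$; combined with surjectivity this forces $\cM^{H'_\Sigma}_{K^p_i{}'}$ to be locally free of rank $2r_i$, whence $T_i$ is an isomorphism. Shrinking further to nested $\X_j\subset \X_i$ for $j\geq i$ completes the proof.

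\emph{Part (2).} Given $U^{p}{}'\in \mathscr U$, pick $U^p_i{}'\subset U^{p}{}'$ in the cofinal sequence. The pushforward $\cM^{H'_\Sigma}_{K^p_i{}'}\to \cM^{H'_\Sigma}_{K^{p}{}'}$ is $G_{F,E,S}$-equivariant and admits a Galois-equivariant splitting, namely the pullback along the \'etale cover $X_{K^p_i{}'}\to X_{K^{p}{}'}$ divided by the unit $[K^{p}{}':K^p_i{}']\in \OO_{\X_i}^\times$. Thus $\cM^{H'_\Sigma}_{K^{p}{}'}$ is a $G$-equivariant direct summand of $\cV^{\oplus r_i}$ over $\X_i$, cut out by an idempotent $e\in \End_{\OO_{\X_i}[G]}(\cV^{\oplus r_i})=M_{r_i}(\OO_{\X_i})$ (using \eqref{schur eq}). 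Writing $e\cdot \OO_{\X_i}^{r_i}$ as a projective $\OO_{\X_i}$-module of some rank $r$, and trivialising on a small open $\X_{U^{p}{}'}\subset \X_i$ containing $\X^{\cl}$, one obtains $\cM^{H'_\Sigma}_{K^{p}{}'}\cong \cV^{\oplus r}$, where $r=\dim(\pi_x^{U^{p}{}',\ord})_{H'_\Sigma}$ at classical $z$ by fibre-dimension counting. Substituting into \eqref{piku} and reapplying \eqref{schur eq} then gives $\Pi^{K^{p}{}',\ord}_{H'_\Sigma}=\Hom_G(\cV^{\oplus r},\cV)=\OO_{\X_{U^{p}{}'}}^{\oplus r}$, locally free of rank $r$, and the displayed isomorphism $\cM^{H'_\Sigma}_{K^{p}{}'}\cong (\Pi^{K^{p}{}',\ord}_{H'_\Sigma})^\vee\otimes \cV$ is tautological. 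The inclusion $\Pi^{K^{p}{}',\ord}_{H'_\Sigma}\subset \Pi^{K^{p}{}'',\ord}_{H'_\Sigma}$ for $U^{p}{}''\subset U^{p}{}'$ follows by precomposition with the surjective pushforward $\cM^{H'_\Sigma}_{K^{p}{}''}\to \cM^{H'_\Sigma}_{K^{p}{}'}$.

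\emph{Parts (3) and (4).} Part (4) is immediate by specialising $\cM^{H'_\Sigma}_{K^{p}{}'}\cong(\Pi^{K^{p}{}',\ord}_{H'_\Sigma})^\vee\otimes \cV$ and applying Proposition~\ref{Xad*}. For part (3), the cokernel of the natural map $\cH_{\G\ts\H,\Sigma'}^{K^{p}{}'}\otimes_{\OO_{\X_{U^{p}{}'}}}\Pi^{K_0^{p}{}',\ord}_{H'_\Sigma}\to \Pi^{K^{p}{}',\ord}_{H'_\Sigma}$ is coherent with fibres at classical $z$ vanishing by classical oldform theory at $\Sigma'$, hence it vanishes on a further open neighbourhood of $\X^{\cl}$. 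The principal technical obstacle is the shape argument in (2): one must conclude that a Galois-equivariant direct summand of $\cV^{\oplus r_i}$ takes the form $\cV^{\oplus r}$, which relies crucially on the scalar-endomorphism property $\End_{\OO[G]}(\cV)=\OO$ of \eqref{schur eq} to reduce to an idempotent in $M_{r_i}(\OO)$; beyond this key reduction, everything comes down to elementary linear algebra and coherent-sheaf Nakayama arguments.
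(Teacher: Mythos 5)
Your proof takes essentially the same route as the paper's: oldform degeneracy operators at $\Sigma'$ furnish the comparison maps $T_i$, a density/Nakayama argument propagates the fibrewise isomorphism from $\X^{\cl}$ to an open neighbourhood, and the scalar-endomorphism property \eqref{schur eq} isolates the automorphic factor in \eqref{piku}. The differences are cosmetic: in part 1 the paper arranges the $T_i$ to be fibrewise \emph{isomorphisms} at classical points and invokes Lemma~\ref{detT} directly (openness of the non-vanishing locus of the determinant), whereas you pass through surjectivity-plus-Nakayama and then conclude local freeness from constancy of the fibre rank; this last step uses that a coherent sheaf with constant fibre rank on a \emph{reduced} Noetherian scheme is locally free, so it tacitly requires $\X$ to be reduced — which is implicit in the paper since $\X$ is taken with its irreducible-component structure near the regular classical locus, but the paper's route via $\det T_i$ avoids having to say so. In part 2 you argue via an explicit idempotent in $M_{r_i}(\OO_{\X_i})$ where the paper takes $U^p{}'$-invariants/coinvariants; these are equivalent since the relevant index is a unit. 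Your phrasing ``trivialising on a small open $\X_{U^p{}'}$'' is slightly misleading (the conclusion only needs \emph{local} freeness of $\Pi^{K^p{}',\ord}_{H'_\Sigma}$, and it is cleaner to observe directly that $\cM^{H'_\Sigma}_{K^p{}'}\cong P\ot\cV$ with $P=e\OO^{r_i}$ projective, whence $\Pi^{K^p{}',\ord}_{H'_\Sigma}=P^\vee$); but this is not a gap. Parts 3 and 4 match the paper's argument.
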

\begin{proof} 
It suffices to prove part 1 for a sequence of subgroups $U_{i}^{p}{}'=\prod_{v\nmid p}U_{i,v}'$ that are $\B_{S}^{\times}$-\emph{conjugate} to a cofinal sequence (if $U_{i}^{p}{}''=g_{i}U_{i}^{p}{}'g_{i}^{-1}$ is cofinal and $(U_{i}^{p}{}', T_{i})$ satisfies the desired condition, then $(U_{i}^{p}{}'', g_{i}^{-1}\circ T_{i} )$ also  satisfies the desired condition).
We thus take any sequence with   $U_{i,v}'=U_{1}(\vpi^{m_{i,v}})$ for $v\notin  {\Sg'}\cup\Sigma \cup S_{p}$, with $m_{i,v} \geq m_{v}$ and such that $\min_{v\in {\Sg'}}m_{i,v}\to \infty$.

Let $r_{i}=\prod_{v}(1+m_{i,v}-m_{v})$. By the local theory of oldforms of   \cite{casselman} and the isomorphisms \eqref{old} and
\begin{align}\label{old2}
 (\cM_{ K^{p}_{i}{}'})^{ {H_{\Sg}'}}_{z}\cong  (\pi_{x}^{\vee, \Sigma p, U_{i}^{ \Sigma p}{}'}\otimes\chi_{y}^{-1,  \Sigma p})\otimes (\pi_{x, \Sigma}^{\vee}\otimes \chi_{y,\Sigma}^{-1})^{{H_{\Sg}'}} \otimes( V_{x}\otimes \chi_{y}),
\end{align}
 there are Hecke operators $$T_{v, j_{v}} \colon \cM_{ K^{p}_{0}{}'}^{{H_{\Sg}'}}     \to 
\cM_{K^{p}_{i}{}'}^{{H_{\Sg}'}}$$ such that the map $T_{i}:=\prod_{v\in S} \oplus_{j_{v}} T_{i, v, j_{v}} $ is an isomorphism after specialisation at any $z$ in the dense set $ \X^{\rm  cl}$. Hence $T_{i}$ is an isomorphism in an open neighbourhood $\X_{i}$ of $ \X^{\cl}$ (which we possibly shrink to make sure it is contained in $\X^{(2)}$).   Together with Lemma \ref{detT}, this concludes the proof of part 1. Part 2 is a consequence of part 1 and the absolute irreducibility of $\cV$, in the special case  $U^{p}{}'=U^{p}_{i}{}'$, with $\X_{U^{p}{}'}=\X_{i}$. The general case is deduced from the special case: if $U^{p}{}'\subset U_{i}^{p}{}'$, let $\X_{U^{p}{}'}:=\X_{i}$ and take  on both sides the locally free summands  consisting of $U^{p}{}'$-invariants (for the first assertion) or coinvariants (for the second assertion). For part 3,  we may again reduce to  the special case $U^{p}{}'=U^{p}_{i}{}'$; then the space $\Pi^{U_{i}^{p}{}'}$  is generated by the images of the transposes of various ``oldforms'' degeneracy maps $T_{i}$ from part 1, that are elements of the Hecke algebra $\cH_{\G, S}^{U^{p}{}'}$. Finally, part \ref{Piord} follows from \eqref{old2}. 
\end{proof}

\begin{defi}[Universal ordinary representation] 
Let $\mathscr{U}$ be as in Proposition \ref{cofinal}, and fix an arbitrary 
$ U' \in \mathscr{U}$. Let 
$$\X^{(3)}:=\X_{U^{p}{}'}$$
be as in Proposition \ref{cofinal}, and 
let 
$$\Pi^{K^{p}{}', \ord}_{{H_{\Sg}'}}:=\underline{\Hom}_{\OO_{\X^{(3)}}[G_{F, E, S}]}(\cM_{K^{p}{}'}^{{H_{\Sg}'}},\cV) $$ as in \eqref{piku}.
The \emph{universal ordinary representation}
 $$\Pi^{K^{Sp}, \ord}_{{{H_{\Sg}'}}}\subset
 {}' \Pi^{K^{Sp}, \ord}_{{H_{\Sg}'}}:=\varinjlim_{U^{p}{}''\in \mathscr{U}} \Pi^{K^{p}{}'', \ord},$$
 is the $\OO_{\X^{(3)}}[(\B_{{\Sg'}}^{\times}\times E_{{\Sg'}}^{\times})/F_{{\Sg'}}^{\ts}]$-submodule
  generated by $\Pi_{{H_{\Sg}'}}^{K^{p}{}', \ord}$.  
  \end{defi}

\subsubsection{Local-global compatibility}
The next theorem describes $\Pi^{K^{{S}p}{}', \ord}_{{{H_{\Sg}'}}}$, as a sheaf with an action by $\B_{{\Sg'}}^{\times}\times E^{\Sigma\infty,\times}$,  in terms of the local Langlands correspondence in  families of \cite{LLC}, denoted  by 
$$\cV_{\G}\mapsto \pi_{\G, {\Sg'}}(\cV_{\G}) .$$
This correspondence attaches, to any  family $\cV_{\G}$ of representations of $\prod_{v\in {\Sg'} } G_{F_{v}}$ on  a rank-$2$ locally free sheaf over a Noetherian  scheme   $\Y/\Q$, a   family of representations of $\GL_{2}(F_{{\Sg'}})$ on a torsion-free sheaf over $\Y$.  The latter  representation is \emph{co-Whittaker} in the sense of \cite[Definition 4.2.2]{LLC}; in particular it admits a unique Whittaker model.

\begin{theo}[Local-global compatibility]\label{LGC} 
Let 
$$\pi_{\G,{\Sg'}}(\cV_{\G})$$
be the representation of $\GL_{2}(F_{\Sg'})$ over $\X^{(3)}$ associated with $\cV_{\G}$ by the local Langlands correspondence in  families for $\GL_{2}(F_{{\Sg'}})$ of \cite{LLC};  let $\chi_{\H, {\rm univ}, {\Sg'}}$ be
 the pullback to $\X^{(3)}$ of the sheaf  $\chi_{\H, \rm univ}$  of \eqref{chiHuniv}, with the $\H(\A^{\infty})$-action restricted to $E_{{\Sg'}}^{\times}$.

Then there  exist an open subset $\X^{(4)}\subset \X^{ (3)}\subset \X$ containing $\X^{\cl}$,  a line bundle $ \Pi^{K^{Sp}{}',S,  \ord}_{{H_{\Sg}'}}$ over $\X^{ (4)}$, and an isomorphism of $\OO_{\X^{(4)}}[\GL_{2}(F_{{\Sg'}})\times E_{{\Sg'}}^{\times}]$-modules
$$
\Pi^{K^{{S}p}{}',  \ord}_{{{H_{\Sg}'}}}\cong 
(
\pi_{\G,{\Sg'}}(\cV_{\G})\otimes_{\OO_{\X^{(3)}}} \chi_{\H, {\rm univ}, {\Sg'}}
)
  \otimes 
  \Pi^{K^{{S}p}{}',{S},  \ord}_{{H_{\Sg}'}}. $$
\end{theo}
\begin{proof}
For $*=\emptyset, '$,  consider
$${}^{*}\pi_{\G, {\Sg'}}':=\underline{\Hom}_{\OO_{\X^{(3)}}[E_{S}^{ \times}]}  (\chi_{\H, {\rm univ},S}, {}^{*}\Pi^{K^{Sp}, \ord}_{{{H_{\Sg}'}}}),$$
a torsion-free sheaf over $\X^{(3)}$ with action by $\B_{{\Sg'}}^{\times}=\GL_{2}(F_{{\Sg'}})$. There is an obvious isomorphism
\begin{align}\label{subst}
{}^{*}\Pi^{K^{{S}p}, \ord}_{{H_{\Sg}'}} \cong {}^{*}\pi_{\G,{\Sg'}}'\otimes
 \chi_{\H, {\rm univ}, {\Sg'}}.
\end{align}

 By 
 Proposition \ref{cofinal}.\ref{Piord}
 and the local freeness of  each
  ${}'\Pi^{K^{p}{}''}_{{H_{\Sg}'}}$ near $\X^{\cl}$, the fibre of  
  $({}'\pi_{\G, S}')^{U_{S}'}$ at any $z=(x,y)\in \X^{\cl}$ equals   $\pi_{\G,S}(\cV_{\G,x})^{U_{S}''}$;    by Proposition \ref{cofinal}.3 the same is true if one replaces $({}^{}\pi'_{\G, {\Sg'}})^{U_{{\Sg'}}''}$ by the submodule $(\pi'_{\G, {\Sg'}})^{U_{S}''}$.
In conclusion, taking the limit over $U^{p}{}''\in \mathscr{U} $ we find that the smooth, finitely generated, admissible  $\OO_{\X^{(3)}}[\GL_{2}(F_{{\Sg'}})]$-module $\pi'_{\G,{\Sg'}}$ satisfies 
$$\pi'_{\G,{\Sg'}, (x,y)}\cong \pi_{\G, {\Sg'}}(\cV_{\G,x}).
$$
for all $(x, y)\in \X^{\cl}$. 
Then by \cite[Theorem  4.4.3]{LLC},  there exist an open subset $\X^{(4)}\subset \X^{(3)}$ containing $\X^{\cl} $ and  a line bundle that we denote by $ \Pi^{K^{Sp},S, \ord }_{{H_{\Sg}'}}$, such that 
$$\pi'_{\G, {\Sg'}}\cong  \pi_{\G, {\Sg'}}(\cV_{\G})\otimes \Pi^{K^{Sp},S, \ord}_{{H_{\Sg}'}}.$$
Substituting in \eqref{subst} gives the desired result.
\end{proof}

\section{Pairings}
\subsection{Global dualities}  \label{sec: duality} We construct Hecke- and/or Galois-equivariant duality pairings on the sheaves constructed in the previous section. The results of this somewhat technical subsection are summarised in Propositions \ref{duality}, \ref{abovco}. 

\subsubsection{Pairings, symmetries and involutions}\lb{skew def}
If $\epsilon \in \{\pm 1\}$,  $R$ is a ring or sheaf of rings, and $M$, $J$ are $R$-modules, an $R$-bilinear pairing $(\ ,  \ )\colon M\ot M\to J$ is said to be \emph{$\epsilon$-symmetric}  if it satisfies $(m, m')=\epsilon\cd (m', m)$. If $R$ is equipped with an involution $\iota$,  denote by  $(\cd)^{\iota}$ both the functor $\ot_{R, \iota}R$ and the maps $m\mapsto m\ot1$;  an $(R, \iota)$-sesquilinear pairing $(\ ,  \ )\colon M\ot M^{\iota}\to J$ is said to be \emph{$\epsilon$-hermitian} if it satisfies $  (m, n)= \epsilon \cd \iota ((n^{\iota}, m^{\iota})^{\iota})$.   We will also use the prefix `skew-' (respectively no prefix) if $\epsilon=-1$ (respectively $+1$).

\subsubsection{Involutions}
We denote by  the same name $\iota$  the involutions on $\cH_{\G_{*}}^{\rm sph}$, $\cH_{\G_{*}}^{\rm sph, \ord}$ $\Lambda_{\G_{*}, U^{p}_{*}} $,  $\cE_{\G_{*}}^{\ord}$ deduced from those of  \S~\ref{ssec invo}. 
If $M$ is a module over any of the above rings (or sheaf of modules over any of the above spaces), we let 
$M^{\iota}= \iota^{*} M$.  

\begin{lemm}\label{iota and dual} Let $W$ be an irreducible   algebraic representation of $\G_{*}$ over $L$.
\begin{enumerate}
\item We have $\sigma_{W^{\vee}}(t)=\sigma_{W}(t^{\iota})$ for all $t\in T_{\G_{*}}$.
\item
If $\pi^{\ord}$ is the ordinary part of  an automorphic representation of $\G_{*}(\A^{\infty})$ over $L$ of weight $W$, unramified of level $U_{*}^{S}$ outside of a finite set of primes $S$, then there is an isomorphism of $\cH_{\G_{*}, U^{S}}^{{\rm sph}, \ord}$-modules $\pi^{\vee,U_{*}^{S}, \ord}\cong (\pi^{U_{*}^{S},\ord})^{\iota}$.
  \item There is an identification 
$$(\cE_{G_{*}}^{\cl, W})^{\iota}=\cE_{\G_{*}}^{\cl, W^{\vee}}$$
such that $\pi^{\ord}_{\iota(x)}=(\pi_{x})^{\vee, \ord}$. 
\end{enumerate}
\end{lemm}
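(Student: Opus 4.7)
\emph{Plan.} For \emph{Part 1}, the identity $\sigma_{W^\vee}(t) = \sigma_W(t^\iota)$ is a direct computation using the explicit descriptions of the highest-weight characters. In the quaternionic case, for $W = W_{\G,\uw}$ with $\uw = (w;(w_\sigma))$, the dual representation has weight $\uw^\vee = (-w;(w_\sigma))$, and for $t = \smalltwomat{t_1}{}{}{t_2}$ one computes $t^\iota = t\nu(t)^{-1} = \smalltwomat{t_2^{-1}}{}{}{t_1^{-1}}$; substituting into \eqref{alg char} shows that $\sigma_W(t^\iota)$ reduces to the formula for $\sigma_{W^\vee}(t)$ after trivial algebraic manipulation. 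The toric case \eqref{Wl} is entirely analogous, with $t^\iota = t\cd N_{E/F}(t)^{-1}$ for $t \in E_p^\times$. The cases of products (and the quotient by $\rm Z$ for the group $(\G\ts\H)'$) follow formally.

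For \emph{Part 2}, the plan is to show that the Hecke character through which $\cH_{\G_*, U^S}^{\rm sph, \ord}$ acts on $\pi^{\vee, U_*^S, \ord}$ equals the pullback by $\iota$ of the character through which it acts on $\pi^{U_*^S, \ord}$; granting this, any $L$-linear isomorphism of (isotypic components for a common Hecke character on either side) yields the asserted module isomorphism — such an iso exists because the canonical pairing $\pi \ot \pi^\vee \to L$ restricts, by compactness of $U^S$, to a perfect pairing on $U^S$-invariants and further to a perfect pairing on ordinary parts (the ordinary projectors being mutually adjoint up to the involution of the torus). For the spherical part of the Hecke algebra at primes $v \notin S \cup \{v\vert p\}$, this is standard Satake theory: the Satake parameters of $\pi_v^\vee$ are inverse to those of $\pi_v$, and the involution $g\mapsto g^{T,-1}$ descends to Satake-parameter inversion on $\cH_v^{\rm sph}$. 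For the ordinary part at $v\vert p$, the local fact to prove is that if $\pi_v$ is ordinary with unit character $\alpha_v^\circ$, then so is $\pi_v^\vee$, with unit character $\alpha_v^{\vee,\circ}(t) = \alpha_v^\circ(t^\iota)$: writing $\pi_v$ as the irreducible subrepresentation of ${\rm Ind}(\alpha_v \cd (|\ |,|\ |^{-1}))$, the Weyl-swapped realization of $\pi_v^\vee$ as the subrepresentation of ${\rm Ind}(\alpha_v^\vee \cd (|\ |, |\ |^{-1}))$ with $\alpha_v^\vee(t):=\alpha_v(t^\iota)$ identifies the smooth part of the unit character, and Part~1 handles the algebraic factor $\sigma_{W,v}$.

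\emph{Part 3} is then a formal consequence. By construction (see \eqref{Ir} and Definition \ref{clwt}), the classical subscheme $\cE_{\G_*}^{\cl, W}\subset \cE_{\G_*}^{\ord}$ is cut out by the condition that, at sufficiently deep level, the torus $\baar T_{\G_*,r}$ acts on $\cM$ through $\sigma_W^{-1}$; the involution $\iota$ on $\cE_{\G_*}^{\ord}$ therefore pulls this locus back to the one cut out by $\sigma_W^{-1}\circ \iota = \sigma_{W^\vee}^{-1}$, which by Part~1 is $\cE_{\G_*}^{\cl, W^\vee}$. The identification $\pi_{\iota(x)}^{\ord} = (\pi_x)^{\vee,\ord}$ then follows from Part~2 and the strong multiplicity one characterisation of automorphic representations by their spherical Hecke characters (Proposition~\ref{fibreM2}.2). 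The only potential obstacle is notational bookkeeping: one must carefully match the two a priori distinct involutions $t\mapsto t^{T,-1}$ (underlying the Hecke-algebra involution) and $t\mapsto t\nu(t)^{-1}$ (of \eqref{involution}), which differ by Weyl-conjugation and therefore agree on double-coset operators supported in $\cH^{\rm sph}$ as well as on the subsemigroup $T^+$ modulo the centre; once this is done all the compatibilities fall into place.
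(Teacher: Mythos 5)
Your proposal is correct and follows essentially the same route as the paper: Part 1 by explicit computation with \eqref{alg char} and the identification $W_{\G,(w;(w_\sigma))}^\vee \cong W_{\G,(-w;(w_\sigma))}$; Part 2 by matching Hecke characters separately at the spherical places (your Satake-parameter-inversion argument is equivalent to the paper's use of $\pi^\vee = \pi\otimes\omega^{-1}$ and the generators $T(z)$, $T(\smalltwomat{x}{}{}{1})$) and at $v\vert p$ via the Weyl-twisted induced model of $\pi_v^\vee$ with unit character $\alpha_v^{\circ,\iota}$; Part 3 as a formal consequence. The only additions relative to the paper are the pairing argument used to realize the abstract module isomorphism (unnecessary, since the modules in question are $L$-vector spaces of equal dimension carrying a common character of an abelian Hecke algebra) and the explicit reconciliation of the two involutions $g\mapsto g^{\rm T,-1}$ and $t\mapsto t\nu(t)^{-1}$, which the paper leaves tacit.
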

\begin{proof} All results can be reduced to the case $\G_{*}=\H$, which is trivial, or $\G_{*}=\G$, that we address. Part 1 follows from the explicit description of $\sigma_{W}$ in \eqref{alg char} and $W_{\G, (w; (w_{\sigma}))}^{\vee}\cong W_{\G,(-w; (w_{\sigma}))}$ (see \eqref{inv pair W} below for an explicit duality). 

For part 2, we use $\pi^{\vee}=\pi\otimes \omega^{-1}$ where $\omega$ is the central character of $\pi$, and verify the statement separately for the spherical Hecke algebra and for the operators $\Up_{t}$. For the former, it is well known that the spherical Hecke algebra is generated by operators $T(z)$ and $T(\smalltwomat x{}{}1)=T(\smalltwomat 1{}{}x)$ for $z,x\in F_{S}^{\times}$; denoting by $\lambda_{\pi^{?}}(\cdot) $ the eigenvalue of $T(\cdot) $ on $(\pi^{?})^{U^{S}}$, we then have 
$\lambda_{\pi}(z^{\iota})=\lambda_{\pi}(z^{-1})=\omega(z)^{-1}=\lambda_{\pi^{\vee}}(z)$, and $$\lambda_{\pi}((\smalltwomat x{}{}1^{\iota})= \lambda_{\pi}(x^{-1}(\smalltwomat 1{}{}x)=\omega(x)^{-1}\lambda_{\pi}(\smalltwomat x{}{}1)=\lambda_{\pi^{\vee }}(\smalltwomat x{}{}1)$$ as desired. 

For the operators $\Up_{t}$, we verify that if $\pi$ is ordinary at $v$ with unit character $\alpha_{v}^{\circ}=\alpha_{v}\sigma_{W}^{-1}$ (as a character of $T_{\G, v}^{+}$),  then $\pi^{\vee}$ is ordinary at $v$ with unit character 
$$\alpha_{v}^{\circ,\iota}\colon t\mapsto \alpha_{v}^{\circ} (t^{\iota}).$$
This follows from observing 
\beqq
 \pi_{v}^{\vee} & \cong {\rm Ind} ( \alpha_{v}^{ \iota}\cdot (|\  |_{v},|\  |_{v}^{-1})),\\
\alpha_{v}^{\circ}(t^{\iota}) & =\alpha_{v}^{\circ}(t)  \alpha_{v}^{\circ}(\nu(t))^{-1}\in \OO_{F}^{\ts}.
\eeqq

Finally, part 3 follows from parts 1 and 2. 
\end{proof}

\subsubsection{Homological and cohomological  dualities} 
We shall define various pairings $\la\ , \  \ra_{?}$ in the (ordinary, completed) homology of Shimura varieties, starting from the Poincar\'e duality pairings. Then we will use them to construct corresponding pairings $(\,  \ )_{?}$ on spaces of representations, as follows.

\begin{enonce}{Construction}\label{constr pairing} 
Let $A$ be a ring, $G$ a group,  and let $M_{1}, M_{2}, V_{1}, V_{2}, A(d)$ be $A[G]$-modules, projective and  of finite type over $A$; denote 
$$V^{D}:= \Hom(V, A(d)).$$
Let $f_{i}\in \pi_{i}:=\Hom( M_{i}, V_{i})$ be  $A[G]$-maps,  suppose we have fixed an identification $V_{2}^{D}\cong V_{1}$;  let $\la\ , \ \ra$ be a perfect pairing $M_{1}\times M_{2}  \to A(d)$, inducing $u_{\la ,\ra} \colon M_{2}^{D}\to M_{1}$. Let $f_{2}^{D}\colon V_{2}^{D}\cong V_{1}\to M_{2}^{D}$ be the dual map, then 
 we define a pairing on $\pi_{1}\times \pi_{2}$ by
\beq\label{pairinggg}
(f_{1}, f_{2})_{\la, \ra}= f_{1}\circ u_{\la , \ra}(f_{2}^{D}) \in \End_{A[G]}(V_{1}).\eeq
\end{enonce}

\subsubsection{Homological dualities / 1} 
Fix lattices $W^{\circ}$ and $W^{\vee, \circ}$ on any  right algebraic representation of $\G_{*}$ over $L$, and denoted by $\la\ , \ \ra^{W}\colon W\ot W^{\vee}\to L$ the natural invariant pairing. This may not preserve the lattices but it does so up to a bounded denominator which we denote by $p^{-|W|}$.\footnote{With  respect to the model in \eqref{inv pair W}, we have $|W|={\rm ord}_{p}\left({k-2\choose (k-2+l)/2}\right)$ for the representation \eqref{Wwkl} of $\G$.}

We may then consider the  Poincar\'e duality pairings
\beq\label{poincare} \la , \ra_{U_{*},W}\colon H_{d}(\baar{X}_{*, U_{*}}, \cW)\times H_{d}(\baar{X}_{*, U_{*}}, \cW^{\vee})\to H_{0}(\baar{X}_{*, U_{*}}, \cW\otimes \cW^{\vee})\to L(d),
\eeq
where the second map is induced by $\la, \ra^{W}$ and summation over the connected components of $\baar{X}_{*, U_{*}}$.
  These pairings are integral up to  a bounded denominator $p^{-|W|}$ and  satisfy 
$$\la xT, y\ra_{U_{*}, W}= \la x, yT^{\iota}\ra_{U_{*}, W} $$
for any $T$ in $\cH_{\G_{*}, U_{*}}^{p}$, as well as the projection formula
\beq\label{proj formula}
\la {\rm p}_{U'_{*}/U_{*}, *}(x), y\ra_{W, U_{*}} = \la x,  {\rm p}_{U'_{*}/U_{*}}^{*}(y)\ra_{W, U_{*}'} 
\eeq
for all pairs of levels $U_{*}'\subset U_{*}$; here ${\rm p}_{U'_{*}/U_{*}}\colon X_{U'_{*}}\to X_{U_{*}}$ is the projection.

\subsubsection{Homological dualities / 2} We start to promote and modify the Poincar\'e duality pairings. The following lemma is clear.

\begin{lemm} \label{et trace}
 Let $R$ be a ring, $S$  a finite $R$-algebra,   $M$ a finite $S$-module. 
\begin{enumerate}
\item Suppose that $S$ is \'etale over $R$. Then there is a natural isomorphism
 $$\alpha\colon \Hom_{R}(M, R)\to \Hom_{S}(M, \Hom_{R}(S, R)) \to \Hom_{S}(M, S)$$
 where the first map is $\lambda\mapsto (m\mapsto (s\mapsto \lambda(s m)).$
and the  second one comes from the isomorphism $S\cong \Hom_{R}(S, R)$ induced by the relative trace map.
 \item Suppose that $S=R[T]$ for a finite abelian group $T$, then there is an isomorphism
$\beta\colon \Hom_{R}(M, R)\to \Hom_{R[T]}(M, R[T])$
given by $\lambda\mapsto (m\mapsto \sum_{t}\lambda(tm) [t^{-1}])$.
\end{enumerate}
If $S=R[T]$ is \'etale over $R$ then we have $\alpha(\lambda)=|T|^{-1}\beta(\lambda)$.
\end{lemm}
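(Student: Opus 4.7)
The plan is to treat the two parts independently and then compare them by unwinding definitions on an explicit basis. Since the statement is essentially a bookkeeping exercise with duality, the only real choice to make is to be careful about normalisations.

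For part~(1), I would start from the tensor-Hom adjunction $\Hom_R(M,R) \cong \Hom_S(M, \Hom_R(S,R))$, which is natural in $M$ and sends $\lambda$ to $m \mapsto (s \mapsto \lambda(sm))$, where $\Hom_R(S,R)$ carries its standard $S$-module structure $(s\cdot\phi)(s') = \phi(ss')$. The étaleness hypothesis enters only in the second step: the relative trace $\Tr_{S/R} \colon S \to R$ induces an $S$-linear map $S \to \Hom_R(S,R)$, $s \mapsto \Tr_{S/R}(s \cdot -)$, and this is an isomorphism precisely because the trace form of an étale extension is non-degenerate. Composing yields $\alpha$.

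For part~(2), I would verify directly that $\beta(\lambda)$ is $R[T]$-linear: for $s \in T$, substituting $u = ts$ in the defining sum shows $\beta(\lambda)(sm) = \sum_u \lambda(um)[su^{-1}] = s \cdot \beta(\lambda)(m)$. The inverse of $\beta$ is simply $\mu \mapsto (m \mapsto \mathrm{coef}_{[1]}\mu(m))$, so $\beta$ is an isomorphism of $R[T]$-modules.

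For the final comparison, I would compute the trace on $R[T]$ explicitly: multiplication by $[t]$ permutes the basis $\{[s]\}_{s\in T}$ freely unless $t=1$, so $\Tr_{R[T]/R}([t]) = |T|\cdot\delta_{t,1}$. Consequently the trace isomorphism $S \to \Hom_R(S,R)$ sends $[t]$ to $|T|\cdot[t^{-1}]^*$, where $\{[u]^*\}$ is the $R$-dual basis. Expanding $\phi_m := (s \mapsto \lambda(sm))$ in the dual basis gives $\phi_m = \sum_t \lambda(tm)[t]^*$, and applying the inverse trace isomorphism yields $\alpha(\lambda)(m) = |T|^{-1}\sum_t \lambda(tm)[t^{-1}] = |T|^{-1}\beta(\lambda)(m)$, as claimed.

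There is no substantive obstacle here; the only point to watch is the placement of inverses (the distinction between $[t]$ and $[t^{-1}]$ in the definition of $\beta$ corresponds to whether one uses the $S$-module structure on $\Hom_R(S,R)$ via left or right multiplication), which is fixed by insisting that $\beta$ be $R[T]$-linear rather than $R[T]$-antilinear.
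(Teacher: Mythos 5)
Your proof is correct, and since the paper dismisses this lemma with a single sentence ("The following lemma is clear"), you are supplying the intended verification rather than taking a different route. Both parts of your argument are the canonical ones: tensor–Hom adjunction plus the perfect trace pairing for the \'etale case, and a direct check with an explicit inverse for the group-algebra case. Your comparison at the end is exactly right: $\Tr_{R[T]/R}([t]) = |T|\,\delta_{t,1}$ gives $\tau([t]) = |T|\,[t^{-1}]^*$ in the dual basis, expanding $\phi_m$ as $\sum_t \lambda(tm)[t]^*$ and applying $\tau^{-1}$ yields $|T|^{-1}\sum_t\lambda(tm)[t^{-1}] = |T|^{-1}\beta(\lambda)(m)$. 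The one remark worth adding for the record: in part (1) the trace map $\tau\colon S \to \Hom_R(S,R)$ is an isomorphism because $S$, being finite \'etale over $R$, is finitely generated projective as an $R$-module with perfect trace pairing; this is precisely where the \'etaleness hypothesis (and not mere finiteness) is used, as you correctly observe.
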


If $S=R[T]$ for a finite abelian group $T$, one verifies that the   isomorphism of the lemma is given by
$$\la\ ,  \  \ra\mapsto \la\la\ ,\ \ra \ra, \quad \la\la x, y\ra\ra:= \sum_{t\in T} \la x, ty\ra [t^{-1}].$$

We may apply case 2 of the  lemma to  
$$M=M_{\H,V^{p}, r,W}\otimes M_{\H, V^{p},r,W^{\vee}}, 
\quad R=L,\quad  S= \Lambda_{\H, V^{p},r}:=  \Lambda_{\H, V^{p},r}^{\circ}\ot_{\OO_{L}}L \cong L[{\baar{T}_{\H, 0}/ \baar{T}_{\H, r}}]$$ (with the isomorphism of \eqref{lambda r}). 
We obtain, from the pairings $\la\ , \ra_{V_{r}, W}$, pairings
$$\la \la \ , \ \ra\ra_{V^{p},W, r}\colon M_{\H,V^{p},  W, r}\otimes_{\Lambda_{\H, W, r}} M_{\H, V^{p},  W^{\vee}, r}^{\iota} \to \Lambda_{\H, V^{p}, W, r}\otimes L,$$
and thanks to an easily verified compatibility, a well-defined pairing 
\beq\label{pair EH}\la \la \ , \ \ra\ra_{V^{p},W} \colon M_{\H,V^{p},W}\otimes_{\Lambda_{H}} M_{\H,V^{p}, W^{\vee}}^{\iota} &\to \Lambda_{\H}\otimes L=\OO_{\cE_{\H}}\otimes L\\
x\otimes y&\mapsto \lim_{r }  \la \la x_{r}, y_{r} \ra\ra_{V^{p},W, r}.
   \eeq

\subsubsection{Automorphic inner products}

Let 
$${\rm v}(U_{*}):= \vol (X_{*, U_{*}}(\bC))$$ where `$\vol$' denotes the volume with respect to the metric deduced from  the hyperbolic metric $dxdy/2\pi y^{2}$ (using the complex uniformisation \eqref{z quot}),
when $\G_{*}=\G$, and  the counting metric, when $\G_{*}=\H$. By \cite[Lemma 3.1]{yzz}, ${\rm v}(U_{*})\in \Q^{\times}$ and, when $d=\dim X_{*}=1$, it equals the degree of the Hodge bundle $L_{U_{*}}$ defined  as in \emph{loc.cit}. We have
\beq \label{deg vol}
\deg {\rm p}_{U_{*}' ,U_{*} } ={\rm v}(U'_{*})/{\rm v}(U_{*}) = {\rm Z}_{\G_{*}}(\Q)\cap U_{*}\bks U_{*}/U_{*}',
\eeq
where the last equality can be easily seen e.g. from the complex uniformisation \eqref{z quot}.
We set for any $r\geq 1$
\beq\lb{vUpp}{\rm v}(U_{*}^{p}):={{\rm v}(U_{*}^{p}U_{*,0}(p^{r})_{p})\over p^{dr[F:\Q]}}\eeq
 where $U_{*,0}(p^{r})_{p}\subset \G_{*}(\Q_{p})$ is a maximal compact subgroup if $\G_{*}=\H, \H'$, it is the group of those matrices that are upper triangular modulo $p^{r}$ if $\G_{*}=\G$, and it is deduced from those by product and quotient if $\G_{*}=\G\ts\H, (\G\ts\H)'$.  The right hand side of \eqref{vUpp} is independent of $r\geq 1$.

Let $\pi$ be an automorphic representation of $\G_{*}(\A^{\infty})$ of weight $W^{*}$ over $L$, $V_{\pi}$ the corresponding $G_{E_{*}}$-representation. Then we have an isomorphism $V_{\pi^{\vee}}\cong V_{\pi}^{*}(1)$, hence we may use   Construction \ref{constr pairing}
 with $A=L$, $G=G_{E_{*}}$,  $M_{1}=  \H_{d}(\baar{X}_{*,U_{*}}, \cW)$, $M_{2}=  \H_{d}(\baar{X}_{*,U_{*}}, \cW^{\vee})$, $V_{1}=V_{\pi}$, $V_{2}=V_{\pi^{\vee}}$ and the pairings \eqref{poincare}.
  Using \eqref{carayol iso 2}, we obtain 
 $$( \ , \  )_{\pi, U_{*}}:=(\ ,  \ )_{\la, \ra_{U_{*},W}}\colon \pi^{U_{*}}\times \pi^{\vee,U_{*}}\to L.$$ 
One verifies thanks to \eqref{proj formula} and  \eqref{deg vol} that  the pairing 
\beq\label{pair pi}
(  \ , \ )_{\pi}:= \lim_{U_{*} }\ (\dim W^{} \cdot {\rm v}(U_{*}))^{-1}\cdot  ( \ , \  )_{\pi,{U_{*}}}\colon \pi\times \pi^{\vee}\to L.
\eeq
is well-defined.

When  $\G_{*}=\H$, denoting $\pi=\chi_{\H}$, we may alternatively apply Construction \ref{constr pairing} to $A, M_{1}, M_{2}, V_{1}, V_{2}$ as above and the image of the pairings $\la \la \ ,  \ra\ra_{V^{p},r W} $ under the map $\Lambda_{\H, r, W}\to L$ given by $[t]\mapsto \chi_{\H}(t)$, and denote the resulting pairings on $\chi_{\H}\times \chi_{\H}^{-1}$ by $(\ , \ )_{\la\la, \ra\ra_{\chi_{\H},V^{p},r}}$. As $|\baar{T}_{\H, 0}/\baar{T}_{\H, r}|\cdot {\rm v}(V^{p})={\rm v}(V^{p}V_{p,r})$ by \eqref{deg vol}, we have 
$$( \ ,  \ )_{\chi_{\H}}  ={\rm v}(V^{p})^{-1} (\ , \ )_{\la\la, \ra\ra_{\chi_{\H},V^{p},r}},$$
and in particular the right-hand side is independent of $V^{p}$.

Assume for the rest of this subsection that $\G_{*}=\G,\G\times \H, (\G\times\H)'$. Then we need  a twist  in order to isolate the toric action and to obtain the $\iota$-equivariance  of the  pairings under the action of the $\Up_{p\infty}$-operators.

Let $\pi=\pi^{\infty}\ot W$ be an ordinary representation of $\G_{*}(\A)$. Using the transformation
$w_{\rm a}^{\ord}$
defined in Proposition \ref{w ord},  we define a pairing
\beq\label{()'} (f_{1} , f_{2} )_{\pi}^{\ord}:= \dim W^{}  \cdot (w_{\rm a}^{\ord}f_{1}, f_{2})_{\pi}
 \colon \pi^{\ord}\times \pi^{\vee, \ord} \to L.
\eeq
See Lemma  \ref{cor w ord} for its nondegeneracy.

\subsubsection{Homological dualities / 3} Analogously to the previous paragraph, we define a twisted Poincar\'e pairing
\beq\label{lara'}
 \H_{1}(\baar{X}_{*,U_{*}^{p},r}, \cW)^{\ord}\otimes  \H_{1}(\baar{X}_{*,U_{*}^{p}, r}, \cW^{\vee})^{\ord} &\to L(1)\\
\la x, y\ra_{U_{*}^{p},W, r}^{\ord} &:= \la x, y  w_{\rm a}^{\ord}\ra_{U_{*}^{p}U_{*,p,r},W},
\eeq
of which we will especially consider the restriction to the ordinary parts of homology.

\begin{lemm}
Let $\pi$ be an ordinary representation of $\G_{*}(\A)$, and identify $$\pi^{\ord} = \Hom_{L[G_{E_{*}}]}(\H_{1}(\baar{X}_{*,U_{*}^{p},r},\cW)^{\ord}, V_{\pi})$$ for sufficiently large $r$ similarly to   Proposition \ref{fibreM2}. Then Construction \ref{constr pairing} provides a pairing $(,)_{\la, \ra_{W, r}^{\ord}}$ on $\pi^{\ord}\times \pi^{\vee,\ord}$; it is related to 
 \eqref{()'} by
\beq\label{compare'}
(,)^{\ord}_{\pi} =   
{ {\rm v}(U_{*})}^{-1}\cdot
  (, )_{\la, \ra^{\ord}_{U*, W, r}}.\eeq
\end{lemm}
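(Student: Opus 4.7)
The identity is pure bookkeeping: it compares two instances of Construction~\ref{constr pairing} — one applied to the Poincar\'e pairing $\la\ ,\ \ra_{U_*,W}$ at a fixed finite level, the other to its $w_{\rm a}^{\ord}$-twist \eqref{lara'} — and then absorbs the normalising factors of $\dim W$ and ${\rm v}(U_*)$ that separate $(\ ,\ )_\pi$ from $(\ ,\ )_{\pi,U_*}$ through \eqref{pair pi}.

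I would first fix $r$ large enough that both $f_1\in \pi^{\ord}$ and $f_2\in \pi^{\vee,\ord}$ are realised in the ordinary parts of $\H_1(\baar X_{*,U_*},\cW^{(\vee)})$ for $U_*=U_*^p U_{*,p,r}$ via \eqref{carayol iso 2}. Since the limit defining $(\ ,\ )_\pi$ in \eqref{pair pi} stabilises under the transition maps \eqref{proj formula}, we may read it off at this single level. Combined with \eqref{()'}, this rewrites the left-hand side of \eqref{compare'} as
\[
(f_1,f_2)^{\ord}_\pi \;=\; \dim W \cdot (w_{\rm a}^{\ord} f_1,\,f_2)_\pi \;=\; {\rm v}(U_*)^{-1}\cdot (w_{\rm a}^{\ord} f_1,\,f_2)_{\pi, U_*}.
\]

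Next, I would compare the two Construction pairings by unwinding \eqref{pairinggg}. Writing $u = u_{\la,\ra_{U_*,W}}$ and $u^{\ord} = u_{\la,\ra^{\ord}_{U_*^p,W,r}}$, the defining property of the inverse-duality map together with the identity $\la x,y w_{\rm a}^{\ord}\ra = \la x (w_{\rm a}^{\ord})^\iota, y\ra$ (adjunction for Hecke operators) and the invertibility of $w_{\rm a}^{\ord}$ on ordinary parts (Proposition~\ref{w ord}) shows that $u^{\ord}(f_2^D) = u(f_2^D)\cdot((w_{\rm a}^{\ord})^\iota)^{-1}$. Composing with the Galois-equivariant map $f_1$ and using that $f_1$ intertwines Hecke actions on the two sides of \eqref{carayol iso 2} — so that right-multiplication of the homology argument by an operator $T$ matches left-action of $T$ on $f_1 \in \pi^{\ord}$ — the right operator $((w_{\rm a}^{\ord})^\iota)^{-1}$ translates into the left operator $w_{\rm a}^{\ord}$ on $f_1$. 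This gives
\[
(f_1,f_2)_{\la,\ra^{\ord}_{U_*,W,r}} \;=\; (w_{\rm a}^{\ord} f_1,\,f_2)_{\pi,U_*},
\]
and combining with the first step proves \eqref{compare'}.

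The only real obstacle is the careful bookkeeping of sides and adjoints: the action of $w_{\rm a}^{\ord}$ on $\pi$ (left) versus on homology (right), and the interaction of the involution $\iota$ with the inversion needed because $w_{\rm a}^{\ord}$ on the ordinary part is an isomorphism but not an involution. The key structural inputs — the adjunction formula $\la xT,y\ra = \la x, yT^\iota\ra$, the invertibility of $w_{\rm a}^{\ord}$ on the ordinary part, and the Hecke-equivariance built into the identification $\pi^{\ord}=\Hom_{L[G_{E_*}]}(\H_1(\baar X_{*,U_*,r},\cW)^{\ord},V_\pi)$ — all make the identification routine once the normalisation is fixed. No extra scalar factors appear because we work with a single spectral component (no finite group averaging enters, in contrast to the construction of $\la\la\ ,\ \ra\ra$ in the $\H$-case).
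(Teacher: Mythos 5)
The first step — unwinding \eqref{pair pi} and \eqref{()'} to get $(f_1, f_2)^{\ord}_{\pi} = {\rm v}(U_*)^{-1}\,(w_{\rm a}^{\ord} f_1, f_2)_{\pi, U_*}$ at a single large level — is correct. The second step, however, has a gap at its final move.

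You correctly derive, from the adjunction $\la x, y w_{\rm a}^{\ord}\ra = \la x (w_{\rm a}^{\ord})^{\iota}, y\ra$ and invertibility of $w_{\rm a}^{\ord}$ on the ordinary line, that $u^{\ord}(f_2^D) = u(f_2^D)\cdot ((w_{\rm a}^{\ord})^{\iota})^{-1}$. But then you claim that "the right operator $((w_{\rm a}^{\ord})^{\iota})^{-1}$ translates into the left operator $w_{\rm a}^{\ord}$ on $f_1$." Under the intertwining you yourself invoke, namely $(T f)(m) = f(mT)$, a right operator $T$ on homology translates into the \emph{same} left operator $T$ on $f_1$, not into $w_{\rm a}^{\ord}$. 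So what you actually get is $(f_1,f_2)_{\la, \ra^{\ord}} = \bigl(((w_{\rm a}^{\ord})^{\iota})^{-1} f_1, f_2\bigr)_{\la, \ra}$, and the missing step is the identity $((w_{\rm a}^{\ord})^{\iota})^{-1} = w_{\rm a}^{\ord}$ as operators on the one-dimensional ordinary eigenline. That identity is genuinely non-trivial: $w_{\rm a}^{\ord}$ is a limit $\lim_r p^{r[F:\Q]} w_{r,p} w_{0,\infty}^{\iota}\Up_p^{-r}$, its $\iota$-conjugate has the factors both $\iota$'d and reversed (for instance $w_r^{\iota}=w_{-r}$, $\Up_p^{\iota}\neq \Up_p$), and the admissible and coadmissible incarnations of $w_{\rm a}^{\ord}$ in Proposition~\ref{w ord} are not literal transposes of each other (the finite-level factors $w_{r,p}$ and $\Up_p^{-r}$ appear in opposite orders). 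Verifying that everything conspires so that the two pairings agree, with no extra scalar, is precisely the content that needs to be checked here; it hinges on the explicit local computations underpinning Lemma~\ref{lemm: loc pair} and Lemma~\ref{wao alg}, which your argument does not invoke. (For what it's worth, the paper's own proof of this lemma is just "this follows by chasing the definitions," so there is no explicit argument to measure against; but the chase is not as automatic as your last sentence makes it sound, and the $\iota$-unitarity of $w_{\rm a}^{\ord}$ on the ordinary line is exactly the nontrivial bookkeeping that a complete definition-chase would have to supply.)
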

\begin{proof} This follows by chasing the definitions.\end{proof}

By applying case 2 of Lemma \ref{et trace} as in \eqref{pair EH}, corrected by a factor $p^{r[F:\Q]}$,\footnote{This factor accounts for the `$K_{0}(p^{r})$'-part of the level.}
 we obtain from \eqref{lara'}
 pairings
\beq\label{larara}
 \la \la , \ra\ra_{U^{p}_{*}, W, r}\colon \H_{1}(\baar{X}_{*,U_{*}^{p},r}, \cW)\otimes_{\Z_{p}}  \H_{1}(\baar{X}_{*,U_{*}^{p}, r}, \cW^{\vee}) &\to \Lambda_{\G_{*},U_{*}^{p}, r}(1) \\
x\ot y& \mapsto p^{r[F:\Q]} \sum_{t\in \baar{T}_{\G_{*,0}}/\baar{T}_{\G_{*, r}}} \la x, y\ra_{U^{p}_{*}, W, r}^{\ord}
\eeq

\begin{lemm} \lb{Tequiv}
The parings \eqref{larara} satisfy $\la \la x_{r}T, y_{r}\ra\ra_{W, r}=\la \la x_{r}, y_{r}T^{\iota}\ra\ra_{U^{p}_{*},W, r}$ for all $T\in \cH_{\G, r}^{\ord}$ and all $x_{r}\in  H_{d}(\baar{X}_{*,U_{*}^{p},r}, \cW)$, $ y_{r}\in H_{d}(\baar{X}_{*,U_{*}^{p},r}, \cW^{\vee})$. 

For $z\in M_{\G_{*}, \cW}$, denote by   $z_{r}$ its image in $M_{\G_{*}, \cW, r}:=H_{d}(\baar{X}_{*,U_{*}^{p},r}, \cW)^{\ord}$. The pairing
\beq\label{pair lam}
\la \la \ , \ \ra\ra_{\Lambda, U^{p}_{*},W} \colon  M_{\G_{*},U_{*}^{p},\cW}\otimes_{\cH_{\G}^{\ord}}  M_{\G_{*},U_{*}^{p},\cW^{\vee}}^{\iota} & \to \Lambda_{\G_{*}, U_{*}^{p}}(d)\otimes L\\
\la\la x , y^{\iota} \ra\ra_{\Lambda,U^{p}_{*}, W}&:=\lim_{r}\la\la x_{r}, y^{\iota}_{r}\ra\ra_{U^{p}_{*},W, r}
\eeq
is well-defined.
\end{lemm}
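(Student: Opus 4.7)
The proof has two parts: verifying the claimed Hecke-equivariance at each fixed level $r$, and then checking that the pairings $\la\la\,,\,\ra\ra_{U_*^p, W, r}$ are compatible under the natural projections as $r$ varies, so that the limit \eqref{pair lam} is well-defined.

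For the equivariance, I would split $T \in \cH_{\G, r}^{\ord}$ into three types of operators: elements of the prime-to-$p$ Hecke algebra, group-algebra elements $[s]$ for $s \in \baar{T}_{\G_*, 0}$, and the Hecke operators $\Up_t$ for $t \in T_{\G_*}^{+}$.  The first case follows directly from the Hecke-adjointness of Poincar\'e duality recalled after \eqref{poincare}, together with the fact that $w_{\rm a}^{\ord}$ commutes with the prime-to-$p$ Hecke action, so the same adjointness passes to $\la\,,\,\ra^{\ord}$ and hence to its finite toric average. The second case reduces to a reindexing of the sum $\sum_{t \in \baar T_{\G_*, 0}/\baar T_{\G_*, r}}$ by $t \mapsto s^{-1}t$, using the interaction of the $\iota$-involution on $\Lambda_{\G_*, U_*^p}$ with the torus sum. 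The third case is where the twist by $w_{\rm a}^{\ord}$ is essential: by its defining property from Proposition \ref{w ord}, conjugation by $w_{\rm a}^{\ord}$ intertwines $\Up_t$ with $\Up_{t^\iota}$ on ordinary parts, which yields exactly the desired identity after unwinding the definition of $\la\,,\,\ra^{\ord}$.

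For the compatibility across levels, let ${\rm p}_{r+1, r}\colon X_{*, U_*^p U_{*,p, r+1}} \to X_{*, U_*^p U_{*, p, r}}$ be the natural projection. The projection formula \eqref{proj formula} relates $\la\,,\,\ra_{W, U_*^p U_{*, p, r}}$ and $\la\,,\,\ra_{W, U_*^p U_{*, p, r+1}}$ via pushforward and pullback, and by the level-compatibility of $w_{\rm a}^{\ord}$ established in the appendix this passes to the $\ord$-twisted versions. Pushing the sum over $\baar T_{\G_*, 0}/\baar T_{\G_*, r+1}$ down to $\baar T_{\G_*, 0}/\baar T_{\G_*, r}$ via the fibration with fibre $\baar T_{\G_*, r}/\baar T_{\G_*, r+1}$, one sees that the normalisation $p^{r[F:\Q]}$ in \eqref{larara} is exactly what is needed to cancel the discrepancy introduced by the toric pushforward, using \eqref{vUpp} and \eqref{deg vol}. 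This proves the pairings $\la\la\,,\,\ra\ra_{U_*^p, W, r}$ form an inverse system compatible with the projections $\Lambda_{\G_*, U_*^p, r+1} \to \Lambda_{\G_*, U_*^p, r}$, and the limit defines $\la\la\,,\,\ra\ra_{\Lambda, U_*^p, W}$.

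The main technical obstacle is the combined bookkeeping involving $w_{\rm a}^{\ord}$: one must use both that its conjugation action exchanges $\Up_t$ with $\Up_{t^\iota}$ on ordinary parts, and that it interacts compatibly with level-change maps up to an explicit power of $p$. Both assertions are purely local at $p\infty$ and are encapsulated in Proposition \ref{w ord}; granting them, Lemma \ref{Tequiv} becomes a formal consequence.
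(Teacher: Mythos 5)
Your proof is sound in outline and supplies the detail that the paper itself omits: the paper's ``proof'' of this lemma is a single sentence referring the reader to Fouquet's construction in \cite[\S~2.2.4]{fouquet}, so there is nothing internal to compare against directly. Your decomposition of $T$ into prime-to-$p$ operators, torus group-algebra elements, and $\Up_t$'s is the right one, and the two ingredients you isolate --- Hecke-adjointness of Poincar\'e duality away from $p$, and the fact that the $w_{\rm a}^{\ord}$-twist converts the Poincar\'e-adjoint $\Up_{t^{-1}}$ (not an ordinary operator) into $\Up_{t^\iota}$ (which is, since $\iota$ preserves $T^+_{\G_*}$) --- are exactly what make the statement true. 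The matrix identity behind the second ingredient is $w_r^{-1}t^{-1}w_r = t\nu(t)^{-1} = t^\iota$, which is implicit in the formula of Proposition \ref{w ord} though not spelled out there; you should say you are \emph{deriving} this from the formula rather than reading it off as a ``defining property,'' since Proposition \ref{w ord} only gives the limit expression for $w_{\rm a}^{\ord}$ and does not record any adjointness. Similarly, the level-compatibility of $w_{\rm a}^{\ord}$ with an explicit power of $p$ is a short computation from its formula, not something stated in the appendix. With those two small points flagged as to-be-checked rather than quoted, your argument is a legitimate self-contained replacement for the deferral to Fouquet, and the normalisation bookkeeping you describe (projection formula, fibration of $\baar T_{\G_*, 0}/\baar T_{\G_*, r}$-quotients, cancellation of the $p^{r[F:\Q]}$ factor via \eqref{deg vol} and \eqref{vUpp}) is precisely the content of the compatibility the paper calls ``easily verified'' in the $\H$-case and leaves implicit here.
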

The above construction is a  minor variation on the one of  \cite[\S~ 2.2.4]{fouquet}, to which we refer for the proof of the lemma.
As usual, when $W=\Q_{p}$ we shall omit it from the notation.

\begin{lemm}\label{pair indep} The diagram
\begin{equation*}
\xymatrix{
M_{\G_{*},U_{*}^{p}, \cW}\otimes_{\cH_{\G_{*}}^{\circ}} M_{\G_{*},U_{*}^{p}, \cW^{\vee}}^{\iota}\ar[r]^{{\la\la\, ,\,  \ra\ra}_{\Lambda,U^{p}_{*}, W}} &\Lambda_{\G_{*}, U^{p}_{*}}(d)\otimes L\\
M_{\G_{*}, U_{*}^{p}}\otimes_{\cH_{\G_{*}}^{\circ}} M_{\G_{*}, U_{*}^{p}}^{\iota}\ar[r]^{\la\la\, , \, \ra\ra_{\Lambda, U^{p}_{*}}} \ar[u]^{j_{W}\otimes j_{W^{\vee}}} &\Lambda_{\G_{*}, U^{p}_{*}}(d)\otimes L\ar[u]^{\cong},
}
\end{equation*}
where the left vertical map comes from Proposition \ref{free indep}.2 and the right vertical map is  $[t]\mapsto \sigma_{W}^{-1}(t)[t]$, is commutative.
\end{lemm}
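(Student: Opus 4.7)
The plan is to reduce to finite level $r$, where both pairings are defined as finite sums of Poincar\'e-type pairings, and to verify commutativity of the resulting diagram before taking the limit. Concretely, both $\la\la\,,\,\ra\ra_{\Lambda,U^{p}_{*}}$ and $\la\la\,,\,\ra\ra_{\Lambda,U^{p}_{*},W}$ are defined as inverse limits over $r$ of the finite-level pairings built from \eqref{larara}, and the quotient map $\Lambda_{\G_{*},U^{p}_{*}}\twoheadrightarrow\Lambda_{\G_{*},U^{p}_{*},W,r}$ is by construction compatible with the identification $\Lambda_{\G_{*},U^{p}_{*},r}\cong\OO_{L}[\baar{T}_{0}/\baar{T}_{r}]\cong\Lambda_{\G_{*},U^{p}_{*},W,r}$ twisted by $\sigma_{W}^{-1}$, so it will suffice to check commutativity at each level $r$.

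At finite level, I would unpack the definition of $j_{W,r}$ recalled in the proof of Proposition \ref{free indep}.2: it is given by cap product against a fixed highest-weight covector $w^{\vee}\in W^{\circ,N_{0}}/p^{r}$ and, dually, against a highest-weight vector $w\in W^{\circ,N_{0}}/p^{r}$ for the $j_{W^{\vee},r}$ side. Standard properties of the cup/cap product then give, for $x\in H_{d}(\baar X_{r},\OO_{L}/p^{r})^{\ord}$ and $y^{\iota}\in H_{d}(\baar X_{r},\OO_{L}/p^{r})^{\iota,\ord}$,
\[
\la j_{W,r}(x),\, j_{W^{\vee},r}(y^{\iota})\ra^{\ord}_{U^{p}_{*},W,r}
=\la w, w^{\vee}\ra^{W}\cdot\la x,y^{\iota}\ra^{\ord}_{U^{p}_{*},r},
\]
where the scalar $\la w,w^{\vee}\ra^{W}$ is a $p$-adic unit (trivial after our normalisation). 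This reduces the comparison of the two pairings entirely to the comparison of the torus actions used in forming the $\la\la\,,\,\ra\ra$.

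The key step is then the following observation on the interaction between $j_{W,r}$ and the action of $T_{\G_{*}}^{+}$. Because cap product is equivariant up to the $T_{\G_{*}}^{+}$-character on the highest-weight line, the operator $\Up_{t}$ on $M_{\G_{*},U^{p}_{*},W,r}$ corresponds, via $j_{W,r}$, to $\sigma_{W}(t)\cdot\Up_{t}$ on $M_{\G_{*},U^{p}_{*},r}\otimes\OO_{L}/p^{r}$. Therefore, applying the summation $\sum_{t\in\baar T_{0}/\baar T_{r}}[\,\cdot\,]\la\,,\,t\cdot\ra^{\ord}$ that defines $\la\la\,,\,\ra\ra_{U^{p}_{*},W,r}$ (via Lemma \ref{et trace}.2 and the recipe explained after it), the two pairings are related by
\[
\la\la j_{W}(x),\, j_{W^{\vee}}(y^{\iota})\ra\ra_{\Lambda,U^{p}_{*},W}
=\sum_{t}\sigma_{W}(t)\,\la x,t y^{\iota}\ra^{\ord}\,[t^{-1}]
\]
inside $\Lambda_{\G_{*},U^{p}_{*},W,r}$. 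Comparing with the definition of $\la\la\,,\,\ra\ra_{\Lambda,U^{p}_{*}}$ as $\sum_{t}[t^{-1}]\la x,ty^{\iota}\ra^{\ord}$ and with the explicit isomorphism $[t]\mapsto\sigma_{W}^{-1}(t)[t]$ appearing as the right vertical arrow (equivalently, the identification \eqref{lambda r}), one obtains the claimed identity.

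The main obstacle is purely bookkeeping: one must simultaneously track the weight factors appearing in (i) the twist $w_{\rm a}^{\ord}$ used to define $\la\,,\,\ra^{\ord}_{W,r}$, (ii) the cap-product description of $j_{W,r}$, and (iii) the change of variable $[t]\leftrightarrow\sigma_{W}^{-1}(t)[\bar t]$ built into \eqref{lambda r}. The virtue of the argument is that these three sources of twists line up so that the composite is exactly the identity on the group ring, yielding commutativity of the diagram as stated.
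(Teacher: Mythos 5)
Your plan starts in the same place as the paper — reduce to finite level, unwind $j_{W,r}$, and compare the torus actions — but the step you identify as the "key step" is where the argument goes off the rails. You claim that, via $j_{W,r}$, the operator $\Up_t$ on $M_{\G_*,U^p_*,W,r}$ becomes $\sigma_W(t)\cdot\Up_t$ on $M_{\G_*,U^p_*,r}\otimes\OO_L/p^r$. This directly contradicts Proposition \ref{free indep}.2, which says that $j_W$ is $\cH^\ord_{\G_*,U^p_*}$-equivariant — and $\Up_t$ is an element of that Hecke algebra. If $j_W$ twisted $\Up_t$ by $\sigma_W(t)$, it would not be $\cH^\ord$-equivariant, and that proposition (which the lemma explicitly invokes for the left vertical arrow) would be false. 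The factor $\sigma_W$ is already built in at a different place: it is the twist in the identification $\Lambda_{\G_*,U^p_*,W,r}\cong\OO_L[\baar T_{\G_*,0}/\baar T_{\G_*,r}]$ of \eqref{lambda r}, which is what one uses when packaging $\la\,,\,\ra^\ord$ into $\la\la\,,\,\ra\ra$ via Lemma \ref{et trace}.2. Because you attribute the twist to the $\Up_t$-action \emph{and} then also invoke \eqref{lambda r} as the right vertical arrow, you are in danger of double-counting the same $\sigma_W$; you happen to get the right final shape only because you apply one twist on each side of the diagram, but the mechanism you give for the left-hand twist does not exist.

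There is a second gap. You write that "standard properties of the cup/cap product" give
$\la j_{W,r}(x),\, j_{W^\vee,r}(y^\iota)\ra^\ord_{U^p_*,W,r} = \la w, w^\vee\ra^W\cdot\la x,y^\iota\ra^\ord_{U^p_*,r}$
with the scalar a $p$-adic unit. This is precisely the point where the lemma has real content, and it is not standard: the pairing $\la\,,\,\ra^\ord$ on the $W$-side involves the anti-ordinary twist $w_{\rm a}^\ord$, which on the algebraic factors acts by the Weyl element $w_0$ (and its $\iota$-dual). The paper has to unwind $j_{W,r}(x)=x\otimes\zeta_r\otimes\zeta_r^*$ and reduce to the specific identity $\la\zeta_r w_0,\zeta_r^\vee\ra\cdot\la\zeta_r^* w_0,\zeta_r^{\vee,*}\ra=1$, verified in the explicit polynomial model \eqref{inv pair W}. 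Waving this away as "trivial after our normalisation" hides the step that actually needs proof. Once you have $\cH^\ord$-equivariance of $j_W$ and the $\la\,,\,\ra^\ord$-level compatibility, the passage to $\la\la\,,\,\ra\ra$ with the $\sigma_W^{-1}$ twist on the target is genuinely formal (as the paper's "by construction" indicates), and no separate argument about $T^+$-twisting of cap products is needed.
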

\begin{proof} For simplicity we write down the proof for the group $\G_{*}=\G$ and we drop the subscripts $U^{p}$.
Poincar\'e duality and  the  pairings $ \la\ , \ \ra^{W} $  preserve integral structures up to $p^{-|W|}$.  Then  by construction it suffices to show the identity
$$\la j_{W, r}(x), j_{W^{\vee},r}(y)\ra_{W}^{\ord} \equiv  \la x, y \ra^{\ord} \pmod{p^{r-|W|}\OO_{L}}$$
for all $r\geq 1$ and  $x$, $y$ in $ H_{d}(\baar{X}_{ r},\Z_{p})$. 

By definition in \eqref{jWr}, we have
$$j_{W, r}(x)=x\ot \zeta\ot \zeta^{*}$$ where $\zeta_{r}\in W^{\circ, N_{0}}/p^{r}$ and $\zeta_{r}^{*}\in W^{\circ}_{N_{0}}/p^{r}$ are elements pairing to $1$; we denote by $\zeta_{r}^{\vee}$, $\zeta_{r}^{\vee, *}$ the analogous elements for $j_{W^{\vee}, r}$. 
 Then we need to show that 
$$\la( x\ot \zeta_{r}\ot \zeta_{r}^{*}) w_{\rm a}^{\ord}, y\ot \zeta_{r}^{\vee}\ot \zeta_{r}^{\vee*} \ra= \la x, y\ra.$$
By the definition of $w_{\rm a}^{\ord}$ in Proposition \ref{w ord}, this reduces to the identity
$$\la \zeta_{r} w_{0}, \zeta_{r}^{\vee}\ra \cdot \la \zeta_{r}^{*} w_{0}, \zeta_{r}^{\vee, *}\ra = \la \zeta_{r} , \zeta_{r}^{*}\ra\cdot \la \zeta_{r}^{\vee} , \zeta_{r}^{\vee,*}\ra=1, $$
which can be  immediately  verified using an explicit model for the pairing such as given in \eqref{inv pair W}. 
\end{proof}

\subsubsection{Dualities over Hida families} 

Let $\X$ be an irreducible component of $\cE_{K^{p}}^{\ord}$. By Proposition \ref{etale}, the map $\cE^{\ord}_{K^{p}}\to \Spec \Lambda_{\Q_{p}}$ is \'etale  in a neighbourhood $\X'$ of $\X^{\cl}$, hence we may  apply case 1 of  Lemma \ref{et trace} to deduce  from \eqref{pair lam} a pairing 
\beq
\label{pair E}
\la \la \ , \ \ra\ra_{K^{p}{}^{\ord}} \colon \cM_{K^{p}{}^{\ord}}\otimes_{\OO_{\X'}} \cM_{K^{p}{}'}^{\iota} \to \OO_{\X'}(1).\eeq

We summarise the situation.

\begin{prop}[Duality]\label{duality}
Let $\X^{(5)}\supset \X^{\cl}$ be the intersection of the subset $\X^{(4)} $ of Theorem \ref{LGC} with the locus where the map $\X\to \Spec\Lambda_{\Q_{p}} $ is \'etale. There exist
\begin{itemize}
\item  a perfect, $G_{E}$-equivariant, skew-hermtian pairing 
\begin{align}
\cM_{K^{p}{}'}^{{H_{\Sg}'}}\otimes_{\OO_{\X^{(5)}}} \cM_{K^{p}{}'}^{{H_{\Sg}'},\iota}\to \OO_{\X^{(5)}}(1).\label{iso 1}
\end{align}
induced from \eqref{pair E};
\item  a perfect, $G_{E}$-equivariant, skew-hermtian pairing 
\beq 
\cV \ot_{\X^{(5)}} \cV^{\iota} \to \OO_{\X^{(5)}}(1).\label{iso 2}.
\eeq
\item a perfect pairing 
\begin{align}\label{iso5}
((\ , \  )):= {\rm v}(K^{p}{}')^{-1}\cdot ( \ , \  )_{\la \la, \ra\ra_{K^{p}{}'}} \colon \Pi_{{H_{\Sg}'}}^{K^{p}{}'}\otimes_{\OO_{\X^{(5)}}} (\Pi_{{H_{\Sg}'}}^{K^{p}{}'})^{\iota}  \to \OO_{\X^{(5)}},\end{align}
where $(\ , )_{\la \la, \ra\ra_{K^{p}{}'}}$ is deduced from \eqref{iso 1}, \eqref{iso 2} and the isomorphism of  Proposition \ref{cofinal}.2 via Construction \ref{constr pairing}. 
\end{itemize}
\end{prop}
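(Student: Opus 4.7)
The plan is to deduce the three dualities in sequence from the Poincaré pairing \eqref{pair E} on $\cM_{K^p{}'}\otimes\cM_{K^p{}'}^{\iota}$, pushing it through the $H'_{\Sigma}$-invariants, then transporting it to $\cV$ via a Schur-style argument, and finally assembling it into a pairing on $\Pi_{H'_{\Sigma}}^{K^p{}'}$ via Construction \ref{constr pairing}. Throughout, the open set $\X^{(5)}\supset \X^{\cl}$ will be progressively shrunk, always containing $\X^{\cl}$, and perfectness will be checked at classical points and then propagated by Lemma \ref{detT} (or its coherent-sheaf analogue).

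For \eqref{iso 1}, I would start with the pairing \eqref{pair E} on $\cM_{K^p{}'}\otimes_{\OO_{\X^{(5)}}}\cM_{K^p{}'}^{\iota}\to\OO_{\X^{(5)}}(1)$, which is $G_E$-equivariant and, by the analogous assertion for Poincaré duality on the Shimura curve $\baar{Z}_{K}$ together with the sign appearing in degree $d=1$, is skew-hermitian with respect to the $\iota$-twist of the Hecke action (this is precisely the $\Up_{p\infty}$-$\iota$-equivariance of Lemma \ref{Tequiv} plus the skew-symmetry of Poincaré duality on a curve). Since $H'_{\Sigma}$ is compact and acts on $\cM_{K^p{}'}$ through a finite quotient (Lemma \ref{dirsum}), the module splits into a direct sum of $H'_{\Sigma}$-isotypic summands. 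The involution $\iota$ on the toric side sends $h\mapsto h^{-1}$, hence identifies the trivial-isotypic summand of $\cM_{K^p{}'}^{\iota}$ with that of $\cM_{K^p{}'}$ (twisted), and the Hecke-equivariance of the pairing implies that distinct isotypic summands are orthogonal. Restricting to the trivial isotypic summand therefore yields a $G_E$-equivariant skew-hermitian pairing on $\cM_{K^p{}'}^{H'_{\Sigma}}\otimes\cM_{K^p{}'}^{H'_{\Sigma},\iota}$; perfectness at every $z\in \X^{\cl}$ follows from the perfectness of Poincaré duality on the classical homology, hence from Lemma \ref{detT} it is perfect on a neighborhood of $\X^{\cl}$, which we absorb into $\X^{(5)}$.

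For \eqref{iso 2}, I would work locally on $\X^{(5)}$ using \eqref{schur eq}, which identifies $\End_{\OO_{\X}[G_{E,S}]}(\cV)=\OO_{\X^{(5)}}$, combined with Proposition \ref{cofinal}.2, giving a $G_E$-equivariant, Hecke-equivariant decomposition
\[
\cM_{K^p{}'}^{H'_{\Sigma}}\cong (\Pi^{K^p{}',\ord}_{H'_{\Sigma}})^{\vee}\otimes_{\OO_{\X^{(5)}}}\cV.
\]
Applying $(-)^{\iota}$ and combining with \eqref{iso 1}, the skew-hermitian pairing on $\cM_{K^p{}'}^{H'_{\Sigma}}$ factors through a canonical map $\cV\otimes\cV^{\iota}\to \OO_{\X^{(5)}}(1)\otimes\Hom(\Pi^{K^p{}',\ord}_{H'_{\Sigma}},\Pi^{K^p{}',\ord,\iota}_{H'_{\Sigma}})$; Schur's lemma in the form \eqref{schur eq}, together with the irreducibility of $V_{\pi_x}$ at classical points and the absence of non-scalar intertwiners between $\cV$ and $\cV^{\iota}$ at such points (verified using $V_{\pi^{\vee}}\cong V_{\pi}^{*}(1)$ from Theorem \ref{galois for hilbert}), shows that this Hom-sheaf is invertible near $\X^{\cl}$. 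Dividing out produces the desired pairing on $\cV\otimes\cV^{\iota}$; skew-hermitianness is inherited from part 1, and perfectness is checked fibrewise at classical points using \eqref{pair V}.

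Finally, \eqref{iso5} is obtained by applying Construction \ref{constr pairing} with $M_1=\cM_{K^p{}'}^{H'_{\Sigma}}$, $M_2=\cM_{K^p{}'}^{H'_{\Sigma},\iota}$, $V_1=\cV$, $V_2=\cV^{\iota}$, the pairing \eqref{iso 1} on the $M_i$, and the pairing \eqref{iso 2} on the $V_i$; the normalising factor ${\rm v}(K^p{}')^{-1}$ matches \eqref{compare'}. Perfectness at $z\in \X^{\cl}$ reduces, via \eqref{iso 1} and \eqref{iso 2}, to the non-degeneracy of the classical pairing \eqref{()'}, which is Lemma \ref{cor w ord}; once more, Lemma \ref{detT} extends perfectness to a neighbourhood. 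The main technical obstacle throughout is verifying that the interaction of the Hecke involution $\iota$ with the $H'_{\Sigma}$-action and with the $\Up_{p\infty}$-twist used in \eqref{lara'} preserves the direct-summand decomposition of Lemma \ref{dirsum} and the Schur-lemma identification of part 2 — this is where the restriction to the étaleness locus of $\kappa$ and to the open $\X^{(4)}$ of Theorem \ref{LGC} is essential, and it is built into the definition of $\X^{(5)}$.
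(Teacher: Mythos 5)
Your overall structure is right—start from \eqref{pair E}, restrict to $H'_{\Sigma}$-invariants, then produce the pairings on $\cV$ and $\Pi$—and part 3 matches the paper exactly, but you take a considerably more roundabout route through parts 1 and 2.

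For part 1, the paper does not argue via isotypic decompositions and Lemma \ref{detT}; it simply observes that the natural map $(\cM_{K^p{}'}^{H'_{\Sigma}})^{*}\to (\cM_{K^p{}'}^{*})^{H'_{\Sigma}}$ is an isomorphism (because $H'_{\Sigma}$ is compact and acts through a finite quotient, so taking invariants commutes with taking $\OO_{\X^{(5)}}$-duals), and then that the Hecke-equivariance of \eqref{pair E} forces the restriction of the resulting isomorphism $\cM_{K^p{}'}\cong(\cM_{K^p{}'}^{\iota})^{*}$ to $H'_{\Sigma}$-invariants to again be an isomorphism. This gives perfectness directly on all of $\X^{(5)}$, without checking at classical points or propagating by Lemma \ref{detT} and without any further shrinking. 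Your isotypic-decomposition argument is equivalent in substance but buys you nothing, and the extra shrinking you invoke is not needed and, strictly speaking, departs from the proposition's statement, which pins down $\X^{(5)}$ exactly.

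The more significant difference is in part 2, where your Schur-lemma argument, while plausibly workable, misses a decisive simplification: by construction $\cV = \cM_{K_0^p{}'}^{H'_{\Sigma}}$, i.e.\ $\cV$ is \emph{itself} an instance of the sheaves treated in part 1, at the level $K_0^p{}'$. So \eqref{iso 2} is literally the pairing \eqref{iso 1} specialised to that level—no factoring of the pairing through $\Pi^{\vee}\otimes\cV$, no invertibility of a Hom-sheaf, and no appeal to the irreducibility of $V_{\pi_x}$ or to $V_{\pi^{\vee}}\cong V_{\pi}^{*}(1)$ is needed. Your route requires you to justify that the pairing on $(\Pi^{\vee}\otimes\cV)\otimes((\Pi^{\iota})^{\vee}\otimes\cV^{\iota})$ decomposes as a tensor product of pairings (a statement that does need a Schur-type argument and a uniqueness-up-to-scalar input for the $\cV\otimes\cV^{\iota}\to\OO(1)$ direction), whereas the paper sidesteps the whole issue. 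I would strongly recommend replacing your part 2 with the one-line observation; your version is not wrong but is doing work the definitions have already done for you.
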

\begin{proof} 
Observe that the natural map
$$ (\cM_{K^{p}{}'}^{{H_{\Sg}'}})^{*}\to ((\cM_{K^{p}{}'})_{{H_{\Sg}'}})^{*}  =(\cM_{K^{p}{}'}^{*})^{{H_{\Sg}'}}$$
(where ${}^{*}$ denotes $\OO_{\X^{(5)}}$-dual) is an isomorphism; as \eqref{pair E} is equivariant for the action of the full Hecke algebra, this implies that its restriction \eqref{iso 1} is perfect.  It is skew-hermitian by Lemma \ref{Tequiv} and the fact that the Poincar\'e pairing  \eqref{poincare}, when $W$ is trivial, is skew-symmetric.

 We find the pairing \eqref{iso 2} by specialising \eqref{iso 1} to   $K^{p}{}'=K^{p}$, and the pairing \eqref{iso5} as described.
\end{proof}

\subsubsection{Specialisations} We describe the specialisation of the pairing $((, ))$ just constructed. 

For each algebraic representation $W $ of $(\G\times \H)$, denote by
$$\X_{r}^{\cl,W}:=\X\cap \cE_{K^{p},r}^{\cl , W},$$
the set of classical points of weight $W$ (omitted from the notation when $W=\Q_{p}$) and level $r$. Denote by a subscript `$W,r$' the pullbacks of sheaves or global sections from $\X^{(5)}$ to $\X_{r}^{\cl, W}$ (which is a finite \'etale scheme  over $\Q_{p}$). We let 
\begin{gather*}
V_{W,r}:=\Gamma(\X_{r}^{\cl, W}, \cV_{W, r}), \qquad
\Pi_{{H_{\Sg}'}, W, r}^{K^{p}{}', \ord}:=\Gamma(X_{r}^{\cl, W}, \Pi_{{H_{\Sg}'}}^{K^{p}{}', \ord}), \\
M_{K^{p}{}', W, r}^{{H_{\Sg}'}}:= \Gamma(\X_{r}^{\cl, W},  \cM_{K^{p}{}'}^{{H_{\Sg}'}})=  H_{1}(\baar{Z}_{K^{p}{}',r} , \cW)^{\ord,{H_{\Sg}'}},
\end{gather*}
where the last equality is by Proposition \ref{free indep}.3. 
We  denote
$$((\ , \ ))_{W, r}:={\rm v}(K^{p}{}')^{-1} \cdot (\ , \ )_{\la \la, \ra\ra_{K^{p}{}', W, r}}\colon \Pi_{{H_{\Sg}'}, W, r}^{K^{p}{}', \ord}\times \Pi_{{H_{\Sg}'}, W, r}^{K^{p}{}', \ord, \iota}\to \OO(\X_{r}^{\cl, W}).$$

\begin{prop} \label{abovco} 
Let $f_{1}$, respectively $f_{2}$ be global\footnote{The same statements hold with some extra notational burden if $f_{1}, f_{2}$ are only defined over an open subset of $\X^{(5)}$.} sections of $\Pi^{K^{p}{}', \ord}_{{H_{\Sg}'}}=\underline{\Hom}_{\OO_{\X^{(5)}}[G_{F, E}]}(\cM_{K^{p}{}'}^{{H_{\Sg}'}},\cV) $, respectively $(\Pi^{K^{p}{}', \ord}_{{H_{\Sg}'}})^{\iota}$. 
Let $$f_{1, W, r}\colon  M_{K^{p}{}', W, r}^{{H_{\Sg}'}} 
 \to V_{W,r}, \qquad f_{2, W, r}\colon 
( M_{K^{p}{}', W, r}^{{H_{\Sg}'}})^{\iota}\to V_{W,r}^{\iota}
$$ be $\OO_{\X^{\cl, W}_{r}}[G_{F, E}]$-linear maps. 

Let $x\in \X^{\cl, W}_{r}$, $\pi:= \pi(x) $ and let 
$$ f_{1, x}\colon  \H_{1}(\baar{Z}_{K^{p}{}',r} , \cW)\to V_{\pi},\qquad f_{2, x}\colon \H_{1}(\baar{Z}_{K^{p}{}',r} , \cW^{\vee})\to V_{\pi^{\vee}}$$  
be $\Q_{p}(x)[G_{F, E}]$-linear maps.

The following hold.
\begin{enumerate}
\item Suppose that for $i=1,2$,   the map $f_{i, W, r}$ arises as the specialisation of $f_{i}$. Then 
 $${((f_{1}, f_{2}))}|_{\X_{r}^{\cl, W}}= ((f_{1, W, r}, f_{2, W, r}))_{{W,r}}\qquad \text{in } \OO(\X_{r}^{\cl, W}).$$
 \item Suppose that  for $i=1, 2$, the map $f_{i, x}$ factors through the projection
 $${\rm p}^{?}\colon  \H_{1}(\baar{Z}_{K^{p}{}',r} , \cW^{?})\to  \H_{1}(\baar{Z}_{K^{p}{}',r} , \cW^{?})^{\ord}_{{H_{\Sg}'}} \cong H_{1}(\baar{Z}_{K^{p}{}',r} , \cW)^{\ord,{H_{\Sg}'}} = M_{K^{p}{}', W, r}^{{H_{\Sg}'}},$$
 where $?=\emptyset$ if $i=1$, $?=\vee$ if $i=2$; 
 and that $f_{i,x}$ coincides with the specialisation of $f_{i, W, r}$ at $x$. Then 
 \beq\label{eq specialise}
((f_{1}, f_{2}))_{W, r }(x)
= (f_{1,x}, f_{2,x})^{\ord}_{\pi} =\dim W\cdot (w_{\rm a}^{\ord} f_{1}, f_{2})_{\pi}
 \qquad \text{in } \Q_{p}(x).
\eeq
\end{enumerate}
\end{prop}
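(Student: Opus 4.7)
The plan is to deduce both parts from the construction of $((\,,\,))$ in Proposition~\ref{duality} as a chain of natural operations compatible with specialization. Part~1 is essentially formal: the pairing $((\,,\,))$ was built by (i) descending the $\Lambda$-valued pairing \eqref{pair lam} along the finite étale map $\X^{(5)}\to\Spec\Lambda_{\Q_{p}}$ (Lemma~\ref{et trace}(1), valid on $\X^{(5)}$ by Proposition~\ref{etale}) to obtain \eqref{iso 1}, (ii) restricting to $H_\Sg'$-invariants, (iii) applying Construction~\ref{constr pairing} with the Galois pairing \eqref{iso 2} on $\cV$, and (iv) multiplying by ${\rm v}(K^{p}{}')^{-1}$. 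Each of these operations commutes with pullback to the finite étale subscheme $\X^{\cl,W}_{r}\subset\X^{(5)}$, so the specialization of $((f_{1},f_{2}))$ to $\X^{\cl,W}_{r}$ coincides with $((f_{1,W,r},f_{2,W,r}))_{W,r}$, which is Part~1.

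For Part~2, by Part~1 applied to any extensions of the $f_{i,x}$, it suffices to verify
\[
  ((g_{1},g_{2}))_{W,r}(x) \;=\; (g_{1,x},g_{2,x})^{\ord}_{\pi}
\]
when $g_{i}=f_{i,W,r}$ factors through the ordinary--toric projection. I would first apply Lemma~\ref{pair indep}: the isomorphism $j_{W}$ intertwines $\la\la\,,\,\ra\ra_{\Lambda,U^{p}_{*},W}$ with the trivial-weight pairing $\la\la\,,\,\ra\ra_{\Lambda,U^{p}_{*}}$ via the character twist $[t]\mapsto \sigma_{W}^{-1}(t)[t]$. This reduces the computation to the trivial-weight case, with the $\sigma_{W}^{-1}$-twist absorbed into the unit eigencharacter $\alpha^{\circ}_{x}=\alpha_{x}\sigma_{W}^{-1}$ of \eqref{alphacirc}.

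Next, substituting the explicit formula \eqref{larara} into Construction~\ref{constr pairing} and specializing at $x$, the $\Up_{t}$-eigenproperty of $y_{r}$ collapses the sum over $t\in\baar{T}_{\G_{*},0}/\baar{T}_{\G_{*},r}$, yielding the scalar
\[
  p^{r[F:\Q]}\cdot|\baar{T}_{\G_{*},0}/\baar{T}_{\G_{*},r}|\cdot(g_{1,x},g_{2,x})_{\la,\ra^{\ord}_{K^{p}{}',W,r}}.
\]
By \eqref{compare'}, this last pairing equals ${\rm v}(K^{p}{}'K_{*,r})\cdot(g_{1,x},g_{2,x})^{\ord}_{\pi}$. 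The claim then reduces to the numerical identity
\[
  {\rm v}(K^{p}{}')^{-1}\cdot p^{r[F:\Q]}\cdot|\baar{T}_{\G_{*},0}/\baar{T}_{\G_{*},r}|\cdot{\rm v}(K^{p}{}'K_{*,r}) \;=\; 1.
\]

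The main obstacle is this last identity. Using \eqref{vUpp} (with $d=1$) to rewrite ${\rm v}(K^{p}{}')^{-1}=p^{r[F:\Q]}/{\rm v}(K^{p}{}'K_{*,0}(p^{r})_{p})$, and \eqref{deg vol} to convert ${\rm v}(K^{p}{}'K_{*,r})/{\rm v}(K^{p}{}'K_{*,0}(p^{r})_{p})$ into the index $[K_{*,0}(p^{r})_{p}:K_{*,r}]$, the claim becomes a purely group-theoretic comparison between this index, $|\baar{T}_{\G_{*},0}/\baar{T}_{\G_{*},r}|$, and powers of $p$; this can be read off from the explicit congruence-subgroup description in \S~\ref{ssec cong}. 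A secondary subtlety is the compatibility of $w_{\rm a}^{\ord}$ with specialization across levels, which is immediate from its construction in Proposition~\ref{w ord} as a $T^{+}$-equivariant limit of Hecke operators at $p\infty$.
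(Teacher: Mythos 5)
Your outline reproduces the paper's overall strategy — Part~1 via Lemma~\ref{pair indep} and compatibility of the construction with pullback, Part~2 by specializing the $\Lambda$-valued pairing \eqref{larara} and matching normalizations via \eqref{compare'} and \eqref{deg vol} — so the architecture is sound. But there is a sign error in Part~2 that makes the concluding numerical identity false.

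When you specialize $\la\la\,,\,\ra\ra_{\Lambda,W}$ at a classical point $x$, the $\Up_{t}$-eigenproperty does indeed produce the scalar $p^{r[F:\Q]}|\baar T_{\G_{*},0}/\baar T_{\G_{*},r}|$ \emph{for the underlying pairing on} $M_{W,r}$. However, the pairing on Hom-spaces coming from Construction~\ref{constr pairing} scales as the \emph{reciprocal}: the map $u_{\langle\,,\,\rangle}\colon M_{2}^{D}\to M_{1}$ is by definition the inverse of the isomorphism $M_{1}\to M_{2}^{D}$ induced by the pairing, so scaling $\langle\,,\,\rangle$ by $c$ scales $u_{\langle\,,\,\rangle}$ — and hence $(f_{1},f_{2})_{\langle\,,\,\rangle}=f_{1}\circ u_{\langle\,,\,\rangle}(f_{2}^{D})$ — by $c^{-1}$. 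Thus the specialisation of $(\,,\,)_{\la\la\,,\,\ra\ra_{W,r}}$ at $x$ is $p^{-r[F:\Q]}\,|\baar T_{\G_{*},0}/\baar T_{\G_{*},r}|^{-1}\cdot(\,,\,)_{\la\,,\,\ra^{\ord}_{W,r}}$, with the factor inverted relative to your claim. Consequently the identity to check is
$${\rm v}(K^{p}{}')^{-1}\cdot p^{-r[F:\Q]}\cdot|\baar T_{\G_{*},0}/\baar T_{\G_{*},r}|^{-1}\cdot{\rm v}(K^{p}{}'K_{1}^{1}(p^{r})_{p})\;=\;1,$$
which follows immediately from ${\rm v}(K^{p}{}')\,p^{r[F:\Q]}={\rm v}(K^{p}{}'K_{0}(p^{r})_{p})$ (by \eqref{vUpp}) and $|\baar T_{\G_{*},0}/\baar T_{\G_{*},r}|={\rm v}(K^{p}{}'K_{1}^{1}(p^{r})_{p})/{\rm v}(K^{p}{}'K_{0}(p^{r})_{p})$ (by \eqref{deg vol}). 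The identity you wrote instead, after the same substitutions, simplifies to $p^{2r[F:\Q]}\cdot|\baar T_{\G_{*},0}/\baar T_{\G_{*},r}|^{2}$, which is $\gg1$ for $r\geq1$; so the ``purely group-theoretic comparison'' you deferred cannot close the argument as stated. Fixing the direction of the scaling repairs the proof.
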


\begin{proof}
We simplify the notation by omitting the superscripts $ {{H_{\Sg}'}}$ and subscripts $K^{p}{}'$; moreover we ignore the normalisations ${\rm v}(K^{p}{}')^{-1}$ that are present in all of the pairings to be compared.

Part 1 follows from the definition \eqref{pair lam} if $W=\Q_{p}$, and similarly we can also identify $((\ ,  \ ))_{W,r}$ with  the restriction to $\X_{r}^{\cl, W}$ of the pairing on functions on $M_{W}$ deduced from $\la\la\ , \ \ra\ra_{W}$ via 
Construction \ref{constr pairing}. By Lemma \ref{pair indep} this implies that the desired statement holds for all $W$.

For Part 2, let $H_{W,r}:=\OO(\X_{r}^{\cl , W})$.  
First  notice that, by the construction of case 1 of Lemma \ref{et trace}, the diagram  of $H_{W, r}$-modules
\beqq
\xymatrix{
 \Hom_{H_{W,r}}(M_{W,r}^{\iota}, H_{W,r}(1))_{\baar{\Q}_{p}}\ar[d]\ar[r]^{\ \ \ \  \ \  \ u_{\la\la, \ra\ra_{W, r}}}  & M_{W,r}\ar[d]\\
  \Hom(M_{W, r, \Lambda_{W, r}}^{\iota}, \Lambda_{W, r}(1))\ar[r]^{\ \   \ \  \ \ \ u_{\la \la, \ra\ra_{\Lambda, W, r}}}  & M_{W,r}  
} 
\eeqq
is commutative. 

On the other hand, let $x\in \X_{\cl}^{W, r}$ and let $\alpha_{x}$ be the associated character of $T^{+}$. By  definition in \eqref{larara},
the pairing $\la  \la, \ra\ra_{\Lambda, W}$ specialises, on $M_{W,r|x}\otimes M_{W, r|x}^{\iota}$, to 
$$(x, y)\mapsto \sum_{t\in \baar{T}_{0}/\baar{T}_{r}} \la x, ty\ra_{U_{*}^{p},W, r}^{\ord} [t^{-1}](x)= \sum_{t\in \baar{T}_{0}/\baar{T}_{r}} \alpha_{x}(t) \la x, y\ra_{U_{*}^{p},W, r}^{\ord} \alpha^{-1}_{x}(t) =p^{r[F:\Q]} |\baar{T}_{0}/\baar{T}_{r}|\cdot \la x, y\ra_{U_{*}^{p},W, r}^{\ord}.$$
It follows that $u_{\la\la, \ra\ra_{W, r}}$ specialises at $x$ to
 $p^{-r[F:\Q]}|\baar{T}_{0}/\baar{T}_{r}|^{-1}u_{\la, \ra'_{W,r}}$, hence that the specialisation of $((, ))(x)={\rm v}({K^{p}{}'})^{-1}(,)_{\la\la,\ra\ra_{W,r}}(x)$ is
$${  (,)_{\la,\ra_{W,r}^{\ord}} 
\over p^{r[F:\Q]} |\baar{T}_{0}/\baar{T}_{r}|  {\rm v}({K^{p}{}'})}
 =
{p^{r[F:\Q]}
\cdot
{ {\rm v}(K^{p}{}' K_{1}^{1}(p^{r})_{p})}  (, )_{\pi}^{\ord}
\over {p^{r[F:\Q]} [{\rm v}(K^{p}{}' K_{1}^{1}(p^{r})_{p })/ {\rm v}(K^{p}{}' K_{0}(p)^{r}_{p }) ]   {\rm v}(K^{p}{}' K_{0}(p^{r})_{p})}}
= (, )_{\pi}^{\ord},$$
where we have used $|\baar{T}_{0}/\baar{T}_{r}|= {\rm v}(K^{p}{}' K_{0}(p^{r})_{p }) / {\rm v}(K^{p}{}' K_{1}^{1}(p)^{r}_{p }  )$ (by \eqref{deg vol}) and \eqref{compare'}. 

This establishes the first equality of \eqref{eq specialise}; the second one is just a reminder of \eqref{()'}.
\end{proof}

\subsection{Local toric pairings} \lb{sec 42}
Let $F$ be a non-archimedean local field, $E$ a quadratic \'etale algebra over $F$ with associated character $\eta\colon F^{\times}\to\{\pm 1\}$, $B$ a quaternion algebra over $F$,  $G=B^{\times}$, $H=E^{\times}$, $H'=H/F^{\times}$, and suppose given an embedding $ H\into G$ 

\subsubsection{Definition of the pairing}
Let $\pi$ be a smooth irreducible representation of $G$ over a finite extension  $ L$  of $\Q_{p}$, with a central character $\omega\colon F^{\times}\to L^{\times}$. Let $\chi\colon E^{\times}\to L^{\times}$ a character such that $\chi|_{F^{\times}}\cdot \omega=\one$. We identify $\chi$ with a representation  $L\chi$ of $E^{\times} $ on $L$, and when more precision is needed we denote by $e_{\chi}$ the basis element corresponding to the character $\chi$ in $L\chi$.  Let  $\Pi:=\pi\otimes\chi$, a representation of $(G\ts H)'=(G\times H)/F^{\times}$ over $L$. We assume that $\pi$ is essentially unitarisable, that is  that for any embedding $\iota\colon L\into \bC$,  a twist of $\iota\pi$ is isomorphic to the space of smooth vectors of a unitary representations. (This holds automatically if $\pi $ arises as the local component of a cuspidal automorphic representation over $L$.)
Let $\pi^{\vee}$ be the smooth dual, $\Pi^{\vee}:=\pi^{\vee}\otimes \chi^{-1}$

Assume from now on  that the modified local sign  $\eps(\Pi)=\eqref{eps v}$ equals $+1$.
 Then, by the result of Tunnell and Saito mentioned in the introduction, 
the space
$$\Pi^{*,H'}:={\Hom}_{H'} (\Pi, L).$$
has dimension   $1$ over $L$. Moreover the choices of an invariant pairing $(\ , \ )$ on $\Pi \otimes \Pi^{\vee}$  and a Haar measure $dt$ on $\H'$ 
give a   generator  
$$Q=Q_{(\ , \ ), dt}\in \Pi^{*,H'}\otimes_{L} (\Pi^{\vee})^{*,H'}$$
defined by the absolutely convergent integral
\begin{gather}\label{Qpair}
 Q_{(, )}(f_{1}, f_{2}):=
 \calL(V_{v}, 0)^{-1}
 \cdot
\iota^{-1} \int_{E^{\times}/F^{\times}} (\iota\Pi(t) f_{1}, \iota f_{2})\, dt ;
 \end{gather}
 for any  $\iota\colon L\into \bC$; here $\calL(V_{v}, 0)=\eqref{calLv}$. 
 
 Recall also from the introduction  \eqref{Qv intro} that 
\beq\label{Qv sec4}
 Q_{dt}\left({f_{1}\ot f_{2}\over f_{3}\ot f_{4} }\right) := {Q_{( , ), dt}(f_{1}, f_{2})\over ( f_{3}, f_{4})},
\eeq
is independent of $(\ ,\ )$ whenever it is defined.

We study the pairing, or some of its variations, in a few different contexts.

\subsubsection{Interpretation in the case $E=F \oplus F$} In this case   $G=\GL_{2}(F)$, and  the  integral \eqref{Qpair} has an interpretation as product of zeta integrals.  Let $\cK(\pi)$ and $\cK(\pi^{\vee})$ be Kirillov models over $L$ as in \cite[\S~ 2.3]{dd-pyzz}.
By \cite[Lemma 2.3.2]{dd-pyzz}, the $L$-line of invariant pairings on $\cK(\pi)\times\cK(\pi^{\vee})$  is generated 
by an element  $(\ , \ )$ such that, for each $\iota\colon L\into \bC$, 
we have 
\beq \label{kir pair}
\iota ( f, f^{\vee})={\zeta(2)\over L(1, \pi\times \pi^{\vee})}\cdot \int_{F^{\times}} \iota f(y) \iota f^{\vee}(y) d^{\times }y,\eeq
where
 the integral is absolutely convergent (as $\iota\pi$ is essentially unitarisable) and $d^{\ts}y$ is any $L$-valued Haar measure. Identify $E^{\times}$ with the diagonal torus in $\GL_{2}(F)$ and write $\chi=(\chi_{1}, \chi_{2})$ according to the decomposition $E=F\oplus F$; noting that $\chi_{2}=\omega^{-1}\chi_{1}$ and $\pi=\pi^{\vee}\otimes \omega^{-1}$, we indentify   $Q_{(, ), dt}$ with
\begin{gather}\label{Qv reg}
\iota Q_{(, ), dt}(f\otimes e_{\chi}, f^{\vee}\otimes e_{\chi^{-1}})\doteq L(1/2, \iota \pi_{E}\otimes \iota\chi)^{-1} \int_{E^{\times}/F^{\times}} \iota \chi(t) (\iota\pi(t) f,  \iota f^{\vee} )|t|^{s}\, dt|_{s=0}\\
\nonumber
=
 L(1/2, \iota \pi\otimes \iota\chi_{1})^{-1} \int_{F^{\times}} \iota f^{\vee}(t)\iota\chi_{1}(t) |t|^{s}d^{\times }t|_{s=0}
\cdot
L(1/2, \iota \pi\otimes \iota\chi_{1})^{-1} \int_{F^{\times}} \iota f^{\vee}{}(y)\iota\chi_{1}(y) |y|^{s}d^{\times }y|_{s=0}\\
\nonumber
= (L(1/2,\iota \pi\otimes \iota \chi_{1})^{-1} \cdot I(\iota f, \iota\chi_{1}, 1/2))\cdot  (L(1/2, \pi\otimes \iota\chi_{1})^{-1} \cdot I(\iota f^{\vee} , \iota\chi_{1} , 1/2)),
\end{gather}
where $I(\cdot , \cdot , 1/2)$ is the zeta integral of \cite[\S~5.2]{LLC} 
 for $\GL_{2}(F)\times \GL_{1}(F)$, and $\doteq$ denotes an equality up to constants in $L^{\ts}$ depending on the choices of measures.

\subsubsection{Special line in the unramified case} We study the first one in a short list of special cases in which there are `canonical' lines in $\Pi$, $\Pi^{\vee}$, on which  the value of the pairings $Q $ can be explicitly computed. 
\begin{lemm}[{\cite[Lemme 14]{wald}}]\label{unrQ} Suppose that $B$ is split, $E/F$ is unramified, and both $\pi$ and $\chi$ are unramified. Let $K\subset (G\times E^{\times})/F^{\times}$ be   a maximal compact subgroup. Then 
$$Q_{( \ ,\  ), dt}(v, w)=\vol(\OO_{E}^{\ts}/\OO_{F}^{\ts}, dt)\cdot  (v, w)$$
 for all $v$, respectively $w$, in the lines $\Pi^{K}$, respectively $(\Pi^{\vee})^{K}$.
\end{lemm}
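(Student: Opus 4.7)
The proof is classical and due to Waldspurger, whose computation I would reproduce in outline. Writing $v = v_0 \otimes e_{\chi}$, $w = w_0 \otimes e_{\chi^{-1}}$ with $v_0 \in \pi^{K_G}$, $w_0 \in \pi^{\vee, K_G}$ for $K_G \subset G = \GL_2(F)$ a maximal compact subgroup, the matrix coefficient factors as $(\Pi(t)v, w) = \chi(t)(\pi(t) v_0, w_0)$. Since $v_0, w_0$ are spherical we may write $(\pi(t)v_0, w_0) = (v_0, w_0) \cdot \Phi_\pi(t)$, where $\Phi_\pi$ is the normalised zonal spherical function of $\pi$ given explicitly by the Macdonald formula in terms of the Satake parameters of $\pi$. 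Thus it suffices to evaluate
\[
I := \int_{E^\times/F^\times} \chi(t)\, \Phi_\pi(t)\, dt
\]
and verify that $I = \calL(V, 0) \cdot \vol(\OO_E^\times/\OO_F^\times, dt)$.

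I would split into the inert and split cases. If $E/F$ is (unramified) inert, then $\varpi_F$ is also a uniformiser of $E$, so $E^\times = F^\times \cdot \OO_E^\times$ and the quotient $E^\times/F^\times = \OO_E^\times/\OO_F^\times$ is compact; by $\OO_E^\times$-invariance of $\chi$ and spherical invariance of $\Phi_\pi$ the integrand is the constant $1$, giving $I = \vol(\OO_E^\times/\OO_F^\times, dt)$, and a short manipulation of the unramified Euler factors shows that $\calL(V_v, 0) = 1$ in this case (the ratio $\zeta_{F,v}(2) L(V_v, 0)/(L(1,\eta_v) L(\ad(V_\pi),1))$ telescopes, because $L(V_v, 0)=L(1/2,\pi_E \otimes \chi)$ with $\pi_E$ the unramified base change, and the Satake parameters on both sides match up). In the split case $E = F \oplus F$, the quotient $H' = E^\times/F^\times$ is isomorphic to $F^\times$; using \eqref{Qv reg}, the integral $I$ unfolds as a product of two Tate zeta integrals for $\GL_2(F) \times \GL_1(F)$ applied to the spherical vectors, which are well-known to equal $L(1/2, \pi \otimes \chi_1)$ and $L(1/2, \pi^\vee \otimes \chi_1) = L(1/2, \pi \otimes \chi_2^{-1})$ respectively, up to the appropriate volume factor. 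Multiplying out these $L$-factors and dividing by the normalisation  $\calL(V, 0)$ gives exactly $\vol(\OO_E^\times/\OO_F^\times, dt)$.

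The only real work is the explicit matching of local Euler factors in each case, and the principal pitfall is bookkeeping: one must be careful with the normalisations of $L$- and $\eps$-factors (here chosen \`a la Hecke, as in Theorem \ref{galois for hilbert}.1), with the relation between $\ad(V_\pi)$ and $\pi \otimes \pi^\vee$, and with the measure conventions so that no stray powers of $q$, or of $(1 \pm q^{-1})$, appear. Once one adopts Waldspurger's formulation this reduces to a direct verification, and indeed the cited \cite[Lemme 14]{wald} provides this identity in exactly the present normalisation.
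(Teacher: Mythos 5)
The paper does not give a proof of this lemma; it is simply cited to Waldspurger's Lemme 14, so there is no in-paper argument to compare against. Your outline is a correct and fairly standard reconstruction of that computation, and the key observations are all in place: reduction to the spherical integral $I$, the case split according to whether $E/F$ is inert or split, the compactness $E^\times/F^\times \cong \OO_E^\times/\OO_F^\times$ in the inert case making the integrand constant, and the unfolding into Tate zeta integrals via \eqref{Qv reg} in the split case.

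One point worth making explicit in the inert case, because it is the crux of the bookkeeping: the identity $\calL(V_{(\pi,\chi),v},0)=1$ is not automatic from ``Satake parameters matching up'' in general — it relies essentially on the self-duality constraint $\omega_\pi \chi|_{F^\times}=\one$. Writing $\beta_1,\beta_2$ for the $\mathrm{Fr}_F$-eigenvalues of $V_\pi$ (Hecke normalisation, so $\det V_\pi(\mathrm{Fr}_F)=\beta_1\beta_2=\omega_\pi(\varpi)q^{-1}$) and $\gamma$ for the $\mathrm{Fr}_E$-eigenvalue of $V_\chi$, the constraint forces $\gamma=(\beta_1\beta_2 q)^{-1}$; substituting into $L(V,0)=\prod_i(1-\beta_i^2\gamma)^{-1}$ gives $L(V,0)/L(\ad(V_\pi),1)=1-q^{-1}$, while $\zeta_{F}(2)/L(1,\eta_v)=(1-q^{-1})^{-1}$, so $\calL=1$. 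Without invoking self-duality the telescoping does not happen. In the split case your appeal to the Tate zeta integral evaluation of spherical Kirillov vectors is correct, but be careful that both $\calL(V_{(\pi,\chi),v},0)$ and the pairing normalisation \eqref{kir pair} contribute copies of $\zeta_F(2)$ and adjoint $L$-factors that must cancel against one another using $L(s,\pi\times\pi^\vee)=\zeta_F(s)L(s,\pi,\ad)$; this is exactly the bookkeeping you flag, and it is where a sign or stray Euler factor is most likely to creep in. Subject to carrying these details through, the argument is sound.
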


\subsubsection{Special line when   $B$ is nonsplit} Suppose now that $B$ is nonsplit and that $\Pi$ is an irreducible representation of $(G\times E^{\times})/F^{\times}$ as above. Note that $\Pi$ is finite-dimensional and $H'$ is compact, so that $\Pi^{\vee}=\Pi^{*}$ and  the natural maps $\Pi^{H'}\to \Pi_{H'}$ ($=H'$-coinvariants) and $\Pi^{*,H'} \to (\Pi^{H'})^{*}$ are isomorphisms. Moreover the non-degenerate pairing $(\ , \ )$ restricts to a non-degenerate pairing on $\Pi_{H'}\otimes \Pi^{\vee}_{H'}$.
Then we may compare the restrictions of the pairings $Q_{(\ ,  \ )}$   of $(\ , \ )$  to the line $\Pi^{H'}\otimes \Pi^{\vee,H'}$.
\begin{lemm}\label{ramifQ} In the situation of the previous paragraph, we have 
\beqq
Q_{(\ , \ ), dt}= \calL(V_{(\pi , \chi), v}, 0)^{-1}
 \cdot \vol(E^{\times}/F^{\times}, dt)\cdot (\ , \ )
\eeqq
as elements  of $(\Pi^{H'})^{*}\otimes (\Pi^{\vee,H'})^{*}$. 
\end{lemm}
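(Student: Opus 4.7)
The proof is essentially immediate from the definitional formula \eqref{Qpair} once one exploits compactness. Here is the plan.

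Since $B$ is non-split at $v$, the group $G/F^{\times}$ is compact, hence so is its closed subgroup $H'=E^{\times}/F^{\times}$, and the irreducible representation $\Pi$ is finite-dimensional. Therefore the $H'$-invariants $\Pi^{H'}$ and $\Pi^{\vee,H'}$ (both one-dimensional by the Tunnell--Saito assumption that $\eps(\Pi)=+1$) are literal $H'$-fixed lines, and the restriction of $(\ ,\ )$ to $\Pi^{H'}\otimes \Pi^{\vee,H'}$ is non-degenerate as already recalled in the paragraph preceding the statement.

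Now fix $f_{1}\in \Pi^{H'}$ and $f_{2}\in \Pi^{\vee,H'}$, and fix any embedding $\iota\colon L\into \C$. For every $t\in H'$ we have $\iota\Pi(t)f_{1}=\iota f_{1}$, so the integrand appearing in \eqref{Qpair} is constant in $t$:
\[
(\iota\Pi(t)f_{1},\iota f_{2}) \;=\; (\iota f_{1},\iota f_{2}).
\]
Since $H'$ is compact, this constant function is tautologically integrable, and
\[
\int_{E^{\times}/F^{\times}} (\iota \Pi(t) f_{1},\iota f_{2})\, dt \;=\; \vol(E^{\times}/F^{\times},dt)\cdot (\iota f_{1},\iota f_{2}).
\]
Plugging this into the defining formula \eqref{Qpair} and noting that both sides are algebraic so we may remove $\iota$ (or equivalently observing that both factors $\vol(E^{\times}/F^{\times},dt)$ and $(f_{1},f_{2})$ already lie in $L$), we obtain
\[
Q_{(\ ,\ ),dt}(f_{1},f_{2}) \;=\; \calL(V_{(\pi,\chi),v},0)^{-1}\cdot \vol(E^{\times}/F^{\times},dt)\cdot (f_{1},f_{2}),
\]
which is the claimed identity of elements of $(\Pi^{H'})^{*}\otimes(\Pi^{\vee,H'})^{*}$.

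There is essentially no obstacle: the only point worth noting is the book-keeping that $\calL(V_{v},0)$ in \eqref{Qpair} is precisely the factor denoted $\calL(V_{(\pi,\chi),v},0)$ in the statement, so the normalising constant lands in the correct place. Unlike Lemma \ref{unrQ} (where one must carry out an unramified computation of a matrix coefficient integral) or the split case \eqref{Qv reg} (which unfolds into a product of Tate zeta integrals), the non-split case is trivial because the ``integral'' of a function on a compact set which is invariant under the group action is just its value times the volume.
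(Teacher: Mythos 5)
Your proof is correct and follows essentially the same approach as the paper, which simply remarks that the integral converges since $E^{\times}/F^{\times}$ is compact; you make explicit the (trivial) evaluation of that integral once one observes that the integrand $(\iota\Pi(t)f_{1},\iota f_{2})$ is constant on $H'$-invariant vectors $f_{1}$.
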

\begin{proof} 
  This follows from the definition in \eqref{Qpair}, since in this case the integration over the compact set $E^{\times}/F^{\times}$ converges.
\end{proof}

\subsection{Ordinary toric pairings} \lb{sec 43}
We define a variant  for ordinary forms of the pairing $Q$.

\subsubsection{Definition of the ordinary paring} Let $\Pi=\pi\otimes \chi$ be an ordinary automorphic representation of $(\G\times \H)'(\A^{})$ over $L$.
When referring to  local objects considered in the previous paragraphs or products thereof, we append subscripts as appropriate.

For each $v\vert p$, let $$\mu_{v}^{+}\colon E_{v}^{\ts}\to L^{\ts}$$ be the character by which $E_{v}^{\ts}$ (or equivalently $\prod_{w\vert v} G_{E_{w}}^{\rm ab}$) acts on $V_{\pi,v}^{+} \ot\chi_{v}$, and let ${\rm j}_{v}\in E_{v}$ be the purely imaginary element fixed in \eqref{jv}. Define 
$$\mu^{+}({\rm j}):=\prod_{v\vert p} \mu_{v}^{+}({\rm j}_{v}).$$
For measures $dt_{v}=dt_{v, p}$ on $H'_{v}$, $dt_{v,\infty}$ on $H'_{v, \infty}$ (the latter a merely   formal notion as in the introduction), define
\beq\lb{vol circ} {\vol^{\circ}(H'_{v}, dt_{v})} &:=
 {\vol(\OO_{E, v}^{\ts}/\OO_{F, v}^{\ts}, dt_{v})\over e_{v}
   L(1, \eta_{v})^{-1}},
  \qquad { \vol^{\circ}(H'_{v, \infty}, dt_{v, \infty})} := {\vol (H'_{v, \infty}, dt_{v, \infty}) \over 2^{[F_{v}:\Q_{p}]}}, \\
   {\vol^{\circ}(H'_{p\infty}, dt_{p\infty})}&:= \prod_{v\vert p } {\vol^{\circ}(H'_{v}, dt_{v})} \cdot { \vol^{\circ}(H'_{v, \infty}, dt_{v, \infty})}.
\eeq
The denominators in the right-hand sides are the volumes of $\vol(\OO_{E, v}^{\ts}/\OO_{F, v}^{\ts})$, respectively $\bC^{\ts}/\R^{\ts}$, for the ratio of  (rational normalisations of) selfdual measures, cf. \cite[\S~ 1.6.2]{yzz} and the proof of Proposition \ref{toric period}.

\begin{defi}\lb{Q ord def}
Let $dt=dt^{p\infty}dt_{p\infty}$ be a decomposition of the ad\`elic measure $dt$ specified in \eqref{dt norm}. 
Then we define:
\begin{itemize}
\item
 for each $f_{1, p \infty}, f_{3,p \infty}\in \Pi_{p \infty}^{\ord}$, $f_{2, p \infty}, f_{4,p \infty}\in \Pi_{p \infty}^{\vee, \ord}$ with $f_{3,p \infty}\ot f_{4,p \infty}^{\ord}\neq 0$, 
\beq 
\label{Q orddd loc} 
Q^{\ord}_{dt_{p\infty}}\left( {f_{1,p\infty}\ot f_{2,p\infty}\over f_{3,p\infty}\ot f_{4,p\infty}}\right) :=\mu^{+}({\rm j})^{} 
 {\vol^{\circ}(H'_{p\infty}, dt_{p\infty})} \cdot {f_{1, p\infty}\ot f_{2, p\infty}\over f_{3, p\infty}\ot f_{4, p\infty}}.
\eeq
\item
 for each $f_{1}, f_{3}\in \Pi^{\ord}$, $f_{2}, f_{4}\in \Pi^{\vee, \ord}$ with $(f_{3}, f_{4})^{\ord}\neq 0$, 
\beq 
\lb{Q orddd} 
Q^{\ord} \left( {f_{1}\ot f_{2}\over f_{3}\ot f_{4}}\right)  
:=Q^{p\infty}_{dt^{p\infty}}\left( {f_{1}^{p\infty}\ot f_{2}^{p\infty}\over f_{3}^{p\infty}\ot f_{4}^{p\infty}}\right)
\cdot 
Q^{\ord}_{dt_{p\infty}}\left( {f_{1,p\infty}\ot f_{2,p\infty}\over f_{3,p\infty}\ot f_{4,p\infty}}\right).
\eeq
\end{itemize}
\end{defi}
The normalisation at $p\infty$ is justified by the clean formula of Proposition \ref{compare Qs} below.
\begin{rema} \lb{Qord nonzero}Suppose  that $\Pi$ is locally distinguished, so that as explained in the introduction the functional $Q_{dt}$ is nonzero. Then the functional $Q^{\ord}_{dt}$ is also nonzero.
\end{rema}

\subsubsection{Decomposition}
Fix a decomposition $dt= \prod_{v\nmid p \infty } dt_{v} dt_{p\infty}$ such that for all but finitely many $v$, $\vol(\OO_{E,v}^{\ts}/\OO_{F, v}^{\ts})=1$. 
Let $\Sg'$ be a finite set of finite  places of $F$  disjoint from $\Sg $ and $S_{p}$ and containing the other places of ramification  of $\Pi$, and those such that $\vol(\OO_{E,v}^{\ts}/\OO_{F, v}^{\ts})\neq 1$. Let $K^{p}\subset (\G\times \H)'(\A^{p\infty})$ be an open compact subgroup that is maximal away from $S:=\Sg\cup \Sg'$ and such that $\Pi_{v}^{K_{v}}=\Pi_{v}$ for $v\in \Sigma$. 
\begin{lemm} For all $f_{1}, f_{3} \in \Pi^{K^{p}, \ord}_{H_{\Sg}'}$, $f_{2}, f_{4} \in \Pi^{\vee, K^{p}, \ord}_{H_{\Sg}'}$ with $(f_{3}, f_{4})^{\ord}\neq 0$, we have
\beq\label{dec Qord} Q^{\ord}\left(   {f_{1} \ot f_{2}\over  f_{3}\ot f_{4} } \right)
&=
\prod_{v\in \Sg'}Q_{v, dt_{v}}\left({  f_{1, v} \ot  f_{2, v}\over f_{3,v}\ot f_{4, v}}\right)
 \cdot
 \prod_{v\in \Sg} \vol(E_{v}^{\times}/F_{v}^{\times}, dt)   \calL(V_{(\pi , \chi), v}, 0)^{-1}   {  f_{1, v} \ot  f_{2, v}\over f_{3,v}\ot f_{4, v}}\\
&\cdot 
  {f^{Sp\infty}_{1}\otimes f_{2}^{Sp\infty}\over f^{Sp\infty}_{3}\otimes f^{Sp\infty}_{4}} \cdot Q^{\ord}_{p\infty,dt_{p\infty}}\left( {f_{1,p\infty}\ot f_{2,p\infty}\over f_{3,p\infty}\ot f_{4,p\infty}}\right).
 \eeq
\end{lemm}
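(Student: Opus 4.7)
The plan is to expand the definition \eqref{Q orddd} of $Q^{\ord}$ and identify each local factor by appealing to the explicit formulas for $Q_{v,dt_{v}}$ proved in \S~\ref{sec 42}. By definition, $Q^{\ord}$ is the product of a semi-local ordinary factor $Q^{\ord}_{p\infty, dt_{p\infty}}$ (which already appears on the right-hand side of the claimed identity) with the `away from $p\infty$' term $Q^{p\infty}_{dt^{p\infty}}$. The latter is manifestly an Euler product over the finite places $v \nmid p$, with local factors $Q_{v, dt_v}$ defined by \eqref{Qv sec4}. Thus the whole task is to identify the product $\prod_{v\nmid p\infty} Q_{v, dt_v}$ with the first three factors on the right-hand side, depending on whether $v\in \Sigma'$, $v\in\Sigma$, or $v$ is  outside $S = \Sigma\cup\Sigma'$.

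For $v\in \Sigma'$, no simplification is sought, and the local contribution  is simply $Q_{v, dt_v}(f_{1,v}\otimes f_{2,v}/f_{3,v}\otimes f_{4,v})$: this gives the first product. For $v\in \Sigma$, the quaternion algebra $B_v$ is non-split and the components $f_{i,v}$ lie in $\Pi_v^{H'_v}$ (equivalently in $\Pi_{v, H'_v}$, since $B_v^\times$ is compact modulo centre) by construction of $\Pi^{K^p,\ord}_{H_\Sigma'}$ as an $H_\Sigma'$-coinvariant module. Then  Lemma \ref{ramifQ} applies and yields
$$Q_{v, dt_v}\left(\frac{f_{1,v}\otimes f_{2,v}}{f_{3,v}\otimes f_{4,v}}\right) = \calL(V_{(\pi,\chi),v},0)^{-1}\cdot \vol(E_v^\times/F_v^\times, dt_v)\cdot \frac{f_{1,v}\otimes f_{2,v}}{f_{3,v}\otimes f_{4,v}},$$
which is exactly the $v$-factor of the second product.

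Finally, for each $v\notin S\cup\{v\mid p\}$, all data  ($B_v$, $E_v/F_v$, $\pi_v$, $\chi_v$, $K_v^p$) are unramified, so Lemma \ref{unrQ} gives
$$Q_{v,dt_v}\left(\frac{f_{1,v}\otimes f_{2,v}}{f_{3,v}\otimes f_{4,v}}\right) = \vol(\OO_{E,v}^\times/\OO_{F,v}^\times, dt_v)\cdot \frac{f_{1,v}\otimes f_{2,v}}{f_{3,v}\otimes f_{4,v}}.$$
By the choice of $\Sigma'$, which was required to contain all $v$ where $\vol(\OO_{E,v}^\times/\OO_{F,v}^\times, dt_v)\neq 1$, each such volume equals $1$; hence the product over $v\notin S\cup\{v\mid p\}$ collapses to the unramified ratio $f_1^{Sp\infty}\otimes f_2^{Sp\infty}/f_3^{Sp\infty}\otimes f_4^{Sp\infty}$ appearing in the third factor of the right-hand side. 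Assembling the four pieces concludes the proof; the main thing to verify along the way is that only finitely many local factors differ from $1$, which is ensured by the definition of $\Sigma'$ and the fact that $f_i, f_j^\vee$ are fixed by $K^p$ away from $S$.

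The only real subtlety is therefore bookkeeping — matching the normalisations of measures, of local pairings $(\ , \ )_v$, and of the twisted forms on the $H'_\Sigma$-coinvariants — but no new analytical input is required beyond Lemmas \ref{unrQ} and \ref{ramifQ} together with the global normalisation \eqref{dt norm}. I do not anticipate any serious obstacle.
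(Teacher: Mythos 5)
Your proof is correct and is exactly the argument the paper intends: the paper's own proof is the single sentence ``This follows from the definitions and the results of \S~\ref{sec 42}'', and what you have done is spell that out — unpack Definition~\ref{Q ord def} into $Q^{p\infty}\cdot Q^{\ord}_{p\infty,dt_{p\infty}}$, break $Q^{p\infty}$ into an Euler product of the $Q_{v,dt_v}$ of \eqref{Qv sec4}, and then dispatch the three kinds of finite places using Lemma~\ref{ramifQ} for $v\in\Sigma$, the definition directly for $v\in\Sigma'$, and Lemma~\ref{unrQ} for $v\notin S$. The one pedantic point worth flagging is that Lemma~\ref{unrQ} also requires $E_v/F_v$ unramified; the paper's choice of $\Sigma'$ only explicitly absorbs the places where $\pi$, $\chi$ ramify or $\vol(\OO_{E,v}^\times/\OO_{F,v}^\times,dt_v)\neq 1$, so strictly speaking one should also note that any place where $E/F$ ramifies either lands in $\Sigma\cup\Sigma'$ or is handled separately — but this is a gap in the paper's phrasing as much as in yours, and is harmless since only finitely many places are affected.
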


\begin{proof} This follows from the definitions and the results of \S~\ref{sec 42}. 
\end{proof}
\subsubsection{Relation to between the toric pairing and its ordinary variant} We  gather the conclusion of the computations from the appendix.

\begin{prop} \lb{compare Qs} Let $\Pi=\pi\ot \chi= \Pi^{\infty} \ot W$ be an ordinary representation of $(\G\ts \H)'(\A)$.  
Let  $w_{\rm a}^{\ord}$ and $\tord$ be the operators defined in Propositions \ref{w ord} and \ref{gHo}. Let $e_{p}(V_{(\pi, \chi)})=  \eqref{epinf L}$ be the interpolation factor of the $p$-adic $L$-function.  For all  $f_{1}$, $f_{3}\in \Pi^{\ord}$, $f_{2}$, $f_{4}\in \Pi^{\ord}$ with $(f_{3}, f_{4})^{\ord}\neq0$, we have
\beqq Q^{}\left(   {\tord(f_{1}) \ot \tord (f_{2})\over  w_{\rm a}^{\ord}(f_{3})\ot f_{4} } \right) =   e_{p}(V_{(\pi, \chi)})\cdot \dim W \cdot  Q^{\ord}\left(   {f_{1} \ot f_{2}\over  f_{3}\ot f_{4} } \right). \eeqq
\end{prop}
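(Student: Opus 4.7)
The plan is to reduce the claimed identity to a purely local computation at the places above $p$ and at $\infty$, and then to evaluate the resulting local toric integrals explicitly in terms of gamma factors matching the interpolation factor $e_p(V_{(\pi,\chi)})$.

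First, I will use the factorisation \eqref{dec Qord} for the left-hand side and Definition \ref{Q ord def} for the right-hand side. Since by Propositions \ref{w ord} and \ref{gHo} both $\tord$ and $w_{\rm a}^{\ord}$ are semilocal operators supported at the places $p$ and $\infty$, they act as the identity on the components $f_{i, v}$ for $v \nmid p\infty$. Hence the finite away-from-$p$ factors on the two sides agree identically, and the asserted equality is equivalent to the semilocal identity
\[
Q_{p\infty, dt_{p\infty}}\!\left(\frac{\tord(f_{1,p\infty}) \otimes \tord(f_{2,p\infty})}{w_{\rm a}^{\ord}(f_{3,p\infty}) \otimes f_{4,p\infty}}\right) = e_{p\infty}(V_{(\pi,\chi)}) \cdot \dim W \cdot Q^{\ord}_{p\infty, dt_{p\infty}}\!\left(\frac{f_{1,p\infty}\otimes f_{2,p\infty}}{f_{3,p\infty}\otimes f_{4,p\infty}}\right),
\]
which by multiplicativity of $e_{p\infty}$ in \eqref{epinf} splits into a product of independent identities, one at each $v\vert p$ and one at $\infty$.

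At $v = \infty$, the pairing $Q_\infty$ involves the formal symbol $\vol(H'_\infty, dt_\infty)$ times the projection $p_{H'_\infty}\colon W \to W_{H'_\infty}$. By the construction of $w_{\rm a}^{\ord}$ in Proposition \ref{w ord}, applying $w_{\rm a}^{\ord}$ to a highest-weight ordinary vector exactly produces, after the Poincar\'e pairing, the $H'_\infty$-coinvariant component. Accounting for the normalisations (in particular the denominator $2^{[F_v:\Q_p]}$ in $\vol^\circ(H'_{v,\infty}, dt_{v,\infty})$, which matches the ratio of the natural rational measure on $\C^\times/\R^\times$ to the self-dual one), and using the description of $\calL(V_{(\pi,\chi),\infty}, 0)$ from \eqref{calLv}, this yields the archimedean factor and accounts for the $\dim W$ on the right-hand side together with the sign factor $i^{(w+l)[F:\Q]} = e_\infty(V_{(\pi,\chi)})$.

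At each $v \vert p$, the heart of the matter is to evaluate
\[
\int_{H'_v} (\Pi_v(t)\,\tord(f_{1,v}), \tord(f_{2,v}))_v\, dt_v
\]
for ordinary vectors $f_{i,v}$. Using the explicit form of $\tord$ as a limit of $\Up_{p\infty}$-operators (Proposition \ref{gHo}) and the Kirillov-model expression \eqref{kir pair} of the invariant pairing, this integral unfolds into a product of two zeta integrals for $\GL_2(F_v)\times \GL_1(F_v)$ of the type appearing in \eqref{Qv reg}, one for $(\pi_v, \chi_{w_1})$ and one for $(\pi_v^\vee, \chi_{w_2})$ where $w_1, w_2$ are the places of $E$ above $v$. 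Since $\tord(f_{i,v})$ is the toric translate of an ordinary vector, the relevant Kirillov-model function is essentially a character on $\OO_{F_v}^\times$ supported in sufficiently small neighbourhoods of $0$, and its zeta integral against $\chi_{w|F_v^\times}$ evaluates, via the local functional equation and cancellation with the prefactor $\calL(V_{(\pi,\chi),v},0)^{-1}$, to the inverse gamma factor $\gamma(\iota\text{WD}(V^+_{\pi,v|G_{E,w}}\otimes V_{\chi,w}), \psi_{E,w})^{-1}$. The denominator gamma factor $\gamma(\iota\text{WD}(\ad(V_{\pi,v})(1))^{++},\psi_v)^{-1}$ similarly arises from the ratio involving the normalising constant $\mu^+(j)^{-1}$ built into $Q^\ord_{p\infty}$ via \eqref{vol circ}; the factor $|d_v|^{-1/2}$ accounts for the change of additive characters between $\psi_v$ and $\psi_{E,w}$. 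Collecting these, one recovers exactly \eqref{eVv}.

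The main technical obstacle is precisely this last step: the identification of the local toric integral at $v\vert p$, with an input of the form $\tord(f_v)$, with the ratio of gamma factors \eqref{eVv}. This is a delicate calculation in Kirillov models (including carefully tracking the purely imaginary element $\mathbf{j}_v$ that enters through $\mu^+(\mathbf{j})$ in \eqref{Q orddd loc}) and is the content of the local computations that the appendix promises to carry out.
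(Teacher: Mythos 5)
Your plan matches the paper's own proof: the paper reduces the identity to a product of local identities at places $v\mid p$ and at $\infty$, handled respectively by Propositions \ref{toric period} (Kirillov-model computation yielding the gamma-factor ratio $e_v$) and \ref{compare toric inf} (algebraic-representation computation yielding $\dim W$). Two small points worth flagging. First, the paper stresses one compatibility you pass over silently: the character $\mu^+$ appearing globally in the normalisation of $Q^{\ord}$ decomposes as $\mu^+ = \mu^{+,\rm sm}\mu^{+,\rm alg}$, with the smooth part $\prod_{v\mid p}\mu_v^+$ matching the Weil--Deligne computation at $p$ and the algebraic part $\prod_{v\mid p}\mu_{v,\infty}^+$ matching the computation at $\infty$; without this observation the local factors do not visibly glue to the global definition. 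Second, your ``unfolding into two zeta integrals'' only describes the split-$v$ case; for $v$ nonsplit in $E$ the paper performs a direct integral computation over a partition of $H'_v$ rather than an unfolding, and your archimedean contribution $e_\infty = i^{(w+l)[F:\Q]}$ is actually trivial here since $w+l=0$ for representations of $(\G\ts\H)'$, so the archimedean proposition only needs to produce $\dim W$.
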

\begin{proof}
There is a   decomposition $Q_{p\infty , dt_{p\infty}}^{\ord} = \prod_{v\vert p}Q_{v,dt_{v}}^{\ord}\cdot  \prod_{v\vert p}Q_{v, \infty, dt_{v, \infty}}^{\ord}$, whose terms are defined in \S\S~\ref{A3}-\ref{A4}. The only point worth stressing is  that if $\mu_{v}^{+}$, respectively $\mu_{v, \infty}^{+}$ is the character defined in \S~ \ref{A3mu},\footnote{Note that despite the similar notation, the character $\mu_{v}$ is defined using the Weil--Deligne  representations rather than the continuous Galois representations.} respectively \S~\ref{A4mu}, then the decomposition $\mu^{+}=\mu^{+, {\rm sm}}\mu^{+, {\rm alg}}$  of $\mu^{+}$ into a product of a smooth and an algebraic character is given by 
$\mu^{+, {\rm sm}}=\prod_{v\vert p }\mu_{v}^{+}$, $\mu^{+, {\rm alg}}= \prod_{v\vert p}\mu_{v, \infty}^{+}$. 

Then the result follows from Propositions \ref{toric period} and \ref{compare toric inf}.
\end{proof}

\subsection{Interpolation of the  toric pairings}  \lb{sec 44} We interpolate the pairings $Q^{\ord}_{dt}$ along Hida families
\subsubsection{Interpolation of  the local pairings} We use the same notation $F, E$ of \S~\ref{sec 42}.

\begin{lemm}\label{interp Lv} Let $\X$ be a scheme over $\Q$ and let   $r'=(r, N)$ be a Weil--Deligne representation  of $W_{F}$
on a rank-$2$ locally free sheaf over $\X$.  Suppose that $\X$ contains a  dense subset $\X^{\cl}$ such that $r'_{x}$ is pure for all $x\in \X^{\cl}$. Let $\ad(r')$ be the rank-$3$ adjoint representation. Then there exist an open subset $\X''\subset \X$ containing $\X^{\cl }$ and functions 
$$L(0, r')^{-1}, \qquad L(1, r', \ad)\in \OO(\X'')$$
such that for every $x\in \X'$ we have $L(0, r')^{-1}(x)=L(0, r'_{x})^{-1}$ and $L(1, r', \ad)(x)=L(1, \ad(r'_{x}))$.
\end{lemm}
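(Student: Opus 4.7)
The questions being local on $\X$, I work on a fixed affine open and in a neighborhood of a chosen $x_{0}\in\X^{\cl}$. Since inertia $I_{F}$ acts on $r$ through a finite quotient, any Frobenius lift $\phi\in W_{F}$ acts $\OO(\X)$-linearly on $r$; its characteristic polynomial
$$P(T):=\det(1-T\phi\mid r)\in\OO(\X)[T]$$
is globally defined and of degree $2$. For a pure rank-$2$ Weil--Deligne representation $r'_{x}$ one has
$$L(0,r'_{x})^{-1}=\det\bigl(1-\phi\mid (r_{x}^{I})^{N_{x}=0}\bigr),$$
which, depending on the pair $(\dim r_{x}^{I},\,\dim(r_{x}^{I})^{N_{x}=0})\in\{(0,0),(1,0),(1,1),(2,0),(2,1),(2,2)\}$, takes one of three shapes: it equals $1$, a linear polynomial $1-\alpha_{x}$ in a single Frobenius eigenvalue $\alpha_{x}$, or the full value $P(1)_{x}$. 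The plan is to exhibit, in a neighborhood of $x_{0}$, the correct factor of $P(T)$ evaluated at $T=1$, then glue the local constructions.

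The case analysis proceeds as follows. (i) If $(r_{x_{0}}^{I})^{N=0}=0$, the finiteness of the $I_{F}$-action on $r$ makes the vanishing an open condition; on the resulting open neighborhood I set $L(0,r')^{-1}:=1$. (ii) If $r_{x_{0}}$ is unramified with $N_{x_{0}}=0$, both are closed conditions; restricted to the locally closed stratum they cut out, $(r^{I})^{N=0}=r$ and I set $L(0,r')^{-1}:=P(1)$. (iii) In the intermediate case $\dim(r_{x_{0}}^{I})^{N=0}=1$, purity forces the two eigenvalues of $\phi$ on $r_{x_{0}}$ to have distinct complex absolute values, so the eigenvalue $\alpha_{x_{0}}$ on the canonical line $(r_{x_{0}}^{I})^{N=0}$ is a simple root of $P(T)_{x_{0}}$. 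Hensel's lemma factors $P(T)=(1-\alpha T)(1-\beta T)$ over an open $U\ni x_{0}$ with $\alpha(x_{0})=\alpha_{x_{0}}$, and I set $L(0,r')^{-1}:=1-\alpha$. The adjoint factor $L(1,r',\ad)$ is handled by the same recipe applied to the rank-$3$ Weil--Deligne representation $\ad(r')$, pure of weight $0$ at each classical point: purity prescribes distinct absolute values of eigenvalues of $q^{-1}\phi$ on $\ad(r)_{x_{0}}$, and Hensel's lemma furnishes the interpolating function.

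The main technical obstacle is case (iii), together with the global gluing: one must ensure that $\alpha$ selected by Hensel specialises, at every classical $x\in\X^{\cl}\cap U$, to the distinguished eigenvalue on $(r_{x}^{I})^{N_{x}=0}$ --- even if the inertial-monodromic type of $x$ differs from that of $x_{0}$ --- and that on overlaps of neighborhoods produced by different cases the formulas agree. Both points rest again on purity, which prescribes the complex absolute values of the Frobenius eigenvalues and thereby singles out a unique root of $P(T)$ on a neighborhood of each pure specialisation. A conceptually cleaner realisation, which I expect to be the most efficient in practice, is to transport the construction to the (relative) Bernstein variety of $W_{F}$ over $\X$ and extract both $L(0,r')^{-1}$ and $L(1,r',\ad)$ as tautological regular functions produced by the local Langlands correspondence in families of \cite{LLC}; the open subset $\X''$ is then cut out by the non-vanishing of the relevant denominators in that correspondence.
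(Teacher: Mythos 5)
Your primary, explicit construction has a gap at precisely the place you flag. You stratify $\X$ by the inertial--monodromic type of $r'_{x}$, build a candidate regular function on each stratum, and observe that one must ensure the Hensel root $\alpha$ specialises to the distinguished eigenvalue on $(r_{x}^{I})^{N_{x}=0}$ at every classical $x$, ``even if the inertial-monodromic type of $x$ differs from that of $x_{0}$''; you then assert that purity settles this. It does not, at least not by itself. Over $\Q$ the inertia image is a finite group of invertible order, so $\dim r_{x}^{I}$ is locally constant, but $\dim(r_{x}^{I})^{N_{x}=0}=\dim\ker(N_{x}|_{r_{x}^{I}})$ is only upper semi-continuous and can jump up on a proper closed subset. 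Purity constrains the archimedean absolute values of Frobenius eigenvalues at each individual classical point, but the Zariski topology imposes no archimedean continuity between nearby classical points, so a priori nothing prevents a family that is a Steinberg twist at the generic point of $\X$ and unramified with $N=0$ at some classical specialisation, both pure of the same weight. In that situation your case (iii) formula $1-\alpha$ and your case (ii) formula $P(1)$ disagree at the specialisation, and no regular gluing exists. What would rescue the explicit approach is the constancy of the conductor along a family pure at a dense set of points---a theorem of Saha that this paper cites elsewhere, in Lemma~\ref{change level}, via \cite{saha-cond}---which forces all classical points into a single stratum; but you do not invoke it, and ``purity prescribes the complex absolute values'' is not a substitute. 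You also omit the inversion needed to pass from $L(1,r',\ad)^{-1}$, which is what your recipe naturally produces, to the asserted $L(1,r',\ad)$.

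Your final paragraph is in fact the paper's entire proof, not a ``cleaner realisation'' of yours. The paper directly cites \cite[\S~5.1]{LLC} for the existence of regular functions $L(0,r')^{-1}$ and $L(1,r',\ad)^{-1}$ on an open $\X'''\supset\X^{\cl}$ interpolating the pointwise values at every classical $x$, and then uses purity of weight $0$ of $\ad(r'_{x})$ (so the eigenvalues of $q^{-1}\phi$ have absolute value $q^{-1}<1$ and $L(1,\ad(r'_{x}))^{-1}\neq 0$) to conclude that $L(1,r',\ad)^{-1}$ is invertible on an open $\X''\supset\X^{\cl}$. You should develop this route and discard the stratification argument.
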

\begin{proof}  
By  \cite[\S~5.1]{LLC},
 there exist an open set $\X'''\subset \X$ containing $\X^{\cl}$ and functions
 $L(0, r')^{-1}$, respectively $L(1, r', \ad)^{-1}$, in $\OO(\X''')$ interpolating $L(0, r'_{x})^{-1}$, respectively $L(1, \ad(r'_{x}))^{-1}$, for all $x\in \X'$.  By purity, $L(1,r, \ad)^{-1}$ does not vanish on $\X^{\cl}$, hence it is invertible in an open neighbourhood $\X''$ of $\X^{\cl}$ in $\X'''$.
\end{proof}

 Let $\X$ be an integral scheme,  $\cF^{\ts}$ be a $\cK_{\X}^{\times}$-module, then we define $\cF^{\ts,-1}$ to be the $\cK_{\X}$-module such that for each open $\cU\subset \X$, 
\beqq
\cF^{\ts,-1}(\cU):= \{ f^{-1}\, |\ \ f\in \cF^{\ts}(\cU)\}\eeqq
  with $\cK_{\X}^{\times}$-action given by $a\cdot f^{-1}=(a^{-1}f)^{-1}$.

\begin{prop} \label{interp QcircS}
Consider the situation of Lemma \ref{interp Lv}. Let 
$$\pi=\pi(r')$$
 be the $\OO_{\X}[\GL_{2}(F)]$-module attached to $r'$  by   the local Langlands   correspondence in  families of \cite{LLC}, let $\omega\colon F^{\times}\to \OO(\X)^{\times}$ be its central character, and  let $\chi\colon E^{\times}\to \OO(\X)^{\times}$ be a  character  such that $\omega\cdot \chi|_{F^{\times}}=\one$. 
Let $\pi^{\vee}:=\pi(\rho^{*}(1))$ and let $\Pi=\pi \otimes \chi$, $\Pi^{\vee}=\pi^{\vee}\otimes \chi^{-1}$. 
Let $(\Pi\ot_{\OO_{\X}^{\ts}}\Pi^{\vee})^{\ts}$ be the $\OO_{\X}^{\ts}$-submodule of those $f_{3}\ot f_{4}$ such that $(f_{3}, f_{4})\neq 0$. 

 Then there exist: an open subset $\X'\subset \X$ containing $\X^{\cl}$; letting  $\OO:=\OO_{\X'}$, $\cK:=\cK_{\X'} $, an $\OO^{\ts}$-submodule
  $(\Pi\ot_{\OO_{\X}^{\ts}}\Pi^{\vee})^{\ts}$ specialising at all $z\in \X^{cl}$ to the space of $f_{3,z}\ot f_{4,z}$ such that $(f_{3, z}, f_{4, z})_{z}\neq 0$;  and  a map of $\OO$-modules
$$\cQ_{dt}^{} \colon 
(\Pi\otimes_{\OO}\Pi^{\vee})\otimes_{\OO^{\times}}  ({\Pi} \otimes_{\OO^{\times}} {\Pi}^{\vee})^{\ts, -1}
 \to \mathscr{K},$$
satisfying the following properties.
\begin{enumerate}\item
For all $t_{1}, t_{2}\in E^{\times}/F^{\ts}$, $g\in (\GL_{2}(F)\times E^{\times})/F^{}$,
$$\cQ_{dt}\left({ \Pi(t_{1})f_{1}\otimes \Pi^{\vee}(t_{2})f_{2})\over \Pi( g)f_{3}\otimes f_{4}}\right)=
\cQ_{dt}\left( 
{f_{1}\otimes f_{2}\over  f_{3}\otimes \Pi^{\vee}(g^{-1})f_{4}}\right);$$
\item For all $x\in \X^{\cl}$,  
$$\cQ^{}_{dt | x} =Q_{dt}, $$ where $Q_{dt}$ is  the paring on $\Pi_{x}\ot \Pi_{x}^{\vee}$ of   \eqref{Qv sec4}.
\end{enumerate}
\end{prop}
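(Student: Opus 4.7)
The plan is to construct $\cQ_{dt}$ by interpolating in families each of the three ingredients in the defining formula \eqref{Qpair} for the local pairing $Q_{dt}$: the L-factor $\calL(V_{(\pi,\chi),v},0)^{-1}$, the invariant pairing $(\ ,\ )$ on $\Pi\otimes\Pi^{\vee}$, and the toric integral. First, Lemma \ref{interp Lv} directly provides, after restricting to an open subset $\X_1\subset \X$ containing $\X^{\cl}$, a section of $\OO(\X_1)$ interpolating $\calL(V_{(\pi,\chi),v},0)^{-1} = L(1,\eta_v)L(1,\ad(V_{\pi,v}))/(\zeta_{F,v}(2)L(0,V_v))$: apply the lemma to the Weil--Deligne representation of $V = V_{\pi|G_E}\otimes V_\chi$ (which is pure at classical points by purity of $r'_x$) for the numerator, and to $\ad(r')$ for the denominator; the factors $\zeta_{F,v}(2)$ and $L(1,\eta_v)$ are constants.

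Second, by \cite{LLC} the sheaves $\pi=\pi(r')$ and $\pi^{\vee}=\pi(r'^{*}(1))$ are co-Whittaker $\OO$-modules on an open neighbourhood of $\X^{\cl}$. An invariant pairing $(\ ,\ )\colon \pi\otimes\pi^{\vee}\to\OO$ is obtained after possibly further restriction via Kirillov models, interpolating formula \eqref{kir pair}; tensoring with the canonical pairing $\chi\otimes\chi^{-1}\to\OO$ (legitimate by the hypothesis $\omega\cdot\chi|_{F^{\times}}=\one$) yields a pairing on $\Pi\otimes\Pi^{\vee}$. The third, and most delicate, step is the integral $\int_{H'}(\Pi(t)f_{1},f_{2})\,dt$, which I would treat according to the splitting behaviour of $E/F$ at $v$:
\begin{itemize}
\item If $E_v$ is a field (in particular when $B$ is ramified at $v$), then $H'=E_v^{\times}/F_v^{\times}$ is compact, the integrand is $\OO$-valued and locally constant in $t$ with open stabiliser, and the integral reduces to a finite sum that is patently a section of $\OO$.
\item If $E_v=F_v\oplus F_v$, use the factorisation \eqref{Qv reg} of $Q_{(\ ,\ ),dt}$ as a product of normalised $\GL_2\times\GL_1$ zeta integrals $L(1/2,\pi\otimes\chi_i)^{-1}\cdot I(f,\chi_i,1/2)$. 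By the theory reviewed in \cite[\S 5]{LLC}, these normalised zeta integrals define $\OO$-valued functions on the family.
\end{itemize}

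Combining these ingredients, I define
\beqq
\cQ_{dt}\!\left(\frac{f_{1}\otimes f_{2}}{f_{3}\otimes f_{4}}\right):=\frac{\calL(V_{(\pi,\chi),v},0)^{-1}\displaystyle\int_{H'}(\Pi(t)f_{1},f_{2})\,dt}{(f_{3},f_{4})}\,,
\eeqq
as a section of $\cK_{\X'}$ over the intersection $\X'$ of the open subsets produced in the previous steps; the ratio is independent of the auxiliary choice of pairing $(\ ,\ )$, and the denominator is interpreted via the $\cK^{\times}$-module structure on the specified $\OO^{\times}$-module of those $f_3\otimes f_4$ with $(f_3,f_4)$ a regular section. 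Property~(1) then follows from the $\GL_2(F)\times E^{\times}$-invariance of $(\ ,\ )$ on both numerator and denominator and from the left $H'$-invariance of the Haar integral (with the cancellation of denominators producing the stated invariance of $\cQ_{dt}$). Property~(2) is immediate at each $x\in\X^{\cl}$ from the interpolation properties of each factor.

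The main obstacle is the integral in the split case: $H'$ is non-compact and the naive integral diverges, so one cannot simply imitate the non-split argument. The resolution is precisely the zeta-integral interpretation \eqref{Qv reg}, replacing the integral by a product of normalised Hecke zeta integrals whose interpolation in families is a fundamental input of \cite{LLC}. A secondary (but routine) point is shrinking $\X$ enough so that all constructions—the co-Whittaker model, the Kirillov embedding, the non-vanishing of $L(1,\ad)$, and the definition of $(\Pi\otimes\Pi^{\vee})^{\times}$—are simultaneously valid; each of these removes only a nowhere-dense closed subset from $\X\smallsetminus\X^{\cl}$.
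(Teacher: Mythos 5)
Your proposal follows the paper's argument essentially step for step: interpolate the L-factors via Lemma \ref{interp Lv}, construct the invariant pairing over an open neighbourhood of $\X^{\cl}$ using the local Langlands theory in families of \cite{LLC} (the paper cites \cite[Lemma 5.2.5]{LLC}, after observing that purity gives temperedness hence essential unitarisability at classical points), treat the non-split case by observing that the toric integral is a finite sum, and treat the split case via the normalised zeta-integral factorisation \eqref{Qv reg} interpolated by \cite[Proposition 5.2.4]{LLC}. This is the same decomposition and the same key inputs, so the approaches coincide.
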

\begin{proof} 
For each $x\in \X^{\rm cl}$, $\pi_{x}$ corresponds to a pure Weil--Deligne representation under local Langlands, hence it is essentially unitarisable (and in fact tempered, see \cite[Lemma 1.4 (3)]{TY}).
Then by \cite[Lemma 5.2.5]{LLC} 
there is an open neighbourhood $\X'$ 
of $\X^{\cl}$ in $\X$ and an invariant  pairing over $\X'$ 
\beq
\label{inv pair family}
(\ , \, )\colon \pi\otimes \pi^{\vee}\to \OO_{\X'}
\eeq
specialising to   the pairing $(\ , \ )_{x}$ defined by  \eqref{kir pair} at all $x\in \X^{\cl}$.  It induces an invariant pairing $ \Pi\otimes \Pi^{\vee}\to \OO_{\X'}$ still denoted by $(\ , \, )$.

By Lemma \ref{interp Lv}, up to possibly shrinking $\X'$, we have regular functions on $\X'$ interpolating   $z\mapsto L(1/2, \pi_{z,E}\otimes\chi_{z})^{-1}=L(0, r_{z}|_{W_{E}'} \otimes \chi_{z})^{-1}$ and $x\mapsto L(1, \pi_{x}, \ad)=L(1, r_{x}', \ad)$.

If $E/F$ is split,   \cite[Proposition 5.2.4]{LLC} 
 applied to \eqref{Qv reg} gives an element $\cQ_{(, ),dt}\colon \Pi_{E^{\times}}\otimes \Pi_{E^{\times}}^{\vee}\to \OO_{\X' }$ interpolating $Q_{(, )_{x}}$ for $x\in \X^{\cl}$, and we define
\beq\label{qcircdef}
\cQ_{dt}\left({f_{1}\ot f_{2}\over f_{3}\ot f_{4} }\right) := {\cQ_{( , ), dt}(f_{1}, f_{2})\over ( f_{3}, f_{4})},
\eeq

If $E/F$ is nonsplit, by the previous discussion we can interpolate all terms occurring in the definition  \eqref{Qpair} (note that the integral there is just a finite sum), to obtain a pairing  $\cQ_{(,)}$ over $\X'$ interpolating $\cQ_{(,)_{x}}$ for $x\in \X^{\cl}$. Then we again define $\cQ^{}$ by \eqref{qcircdef}.
\end{proof}

\subsubsection{Product of local pairings} We  consider the global situation, 
 resuming with the setup of \S\S~\ref{3.2}-\ref{sec: duality}.

Let $\Pi:=\Pi^{K^{p}{}',  \ord}_{{{H_{\Sg}'}}}$ over $\X^{(5)}$.
Recall that we have a decomposition 
\beq\label{deco}
\Pi
\cong (\pi_{\G,\Sg'}(\cV_{\G})\otimes \chi_{\H, {\rm univ},{\Sg'}})
\otimes_{\OO_{\X^{(5)}}}
  \Pi^{K^{p}{}',S,  \ord}_{{H_{\Sg}'}} 
  \eeq
from Theorem \ref{LGC}.

 Let 
 $
 (\Pi\otimes_{\cK^{\times}_{\X^{(5)}}}\Pi^{\iota})^{\ts}
 \subset  \Pi\otimes_{\cK^{\times}_{\X^{(5)}}}\Pi^{\iota}$ be the 
  $\OO_{\X^{(5)}}^{\times}$-submodule of sections $f_{3}\otimes_{\OO_{\X^{(5)}}^{\times}} f_{4}$ such that $f_{3}\otimes f_{4}\neq 0$ and $(f_{3,v},f_{4,v})_{v}\neq 0$ for each  the pairings $(, )_{v}=$ \eqref{inv pair family}, $v\in \Sg'$.

 \begin{theo} \label{X6} Let $\Pi:=\Pi_{{H_{\Sg}'}}^{K^{p}{}', \ord}$ and $\X^{(5)}$ be as in Proposition \ref{duality}, and let 
 $
 (\Pi\otimes_{\cK_{\X^{(5)}}}\Pi^{\iota})^{\ts}
 \subset  \Pi\otimes_{\cK_{\X^{(5)}}}\Pi^{\iota}$ be the submodule defined above. 
 Then there exist an open subset $\X^{(6)}\subset \X^{(5)}$ containing $\X^{\cl}$ and, letting  $\OO=\OO_{\X^{(6)}}$, $\cK:=\cK_{\X^{(6)}}$, a map of $\OO^{\ts}$-modules
$$\cQ^{} \colon 
(\Pi\otimes_{\OO}\Pi^{\iota})\otimes_{\OO^{\times}}  ({\Pi} \otimes_{\OO^{\times}} {\Pi}^{\iota})^{\ts, -1}
 \to \mathscr{K}_{\X}$$
satisfying:
\begin{enumerate}\item
For any $t_{1}, t_{2}\in E_{\Sg'}^{\times}/F_{\Sg'}^{\ts}\subset (\GL_{2}(F_{\Sg'})\times E_{\Sg'}^{\times})/F_{\Sg'}^{\ts}$, any $h\in \cH_{S, K_{\Sg'}}$  
and any section 
$$(f_{1}\otimes f_{2}) \otimes  (f_{3}\otimes f_{4})^{-1} \quad 
\text{ of }
\quad
( \Pi\otimes_{\cK} \Pi^{\iota})\otimes_{\cK^{\times}}  ({\Pi} \otimes_{\cK^{\times}} {\Pi}^{\iota})^{\ts, -1}, $$
 we have
$$\cQ^{}\left({\Pi_{\Sg'}(t_{1})f_{1} \otimes \Pi_{\Sg'}^{\iota}(t_{2})f_{2} \over \Pi( h)f_{3} \otimes f_{4}}\right)=
  \cQ^{}\left({ f_{1}\otimes f_{2} \over f_{3} \otimes \Pi^{\iota}(h)f_{4}}\right);$$
 in the left-hand side, $\Pi_{\Sg'}$, respectively $\Pi_{\Sg'}^{\iota}$ denote the actions of the Hecke algebras at $S$ on  $\Pi$, respectively $ \Pi^{\iota}$.
\item For all $x\in \X^{\cl}$,  $$\cQ^{}_{dt |x}=Q^{\ord}, $$  where $Q_{}^{\ord}$  is the restriction of the pairing on $\Pi_{x}^{\vee, \ord}\ot \Pi_{x}^{\vee,\ord}$ of Definition \ref{Q ord def}.
\end{enumerate}
\end{theo}

\begin{proof} By \eqref{Q orddd loc}, \eqref{Q orddd}, \eqref{dec Qord}, and \eqref{deco}, we need to interpolate:
\begin{itemize}
\item the terms $\calL(V_{(\pi, \chi), v}$ for $v\in \Sg$: this is Lemma \ref{interp Lv};
\item the characters $\mu_{v}^{+}$ for $v\vert p$:  this follows form the existence of the filtration  \eqref{decatv} over  an open subset of $\X$.
\item the term $Q_{dt, \Sg'}:=\prod_{v\in \Sg'}Q_{dt, v}$,
 According to the proof of \cite[Theorem 4.4.1]{LLC}, 
the representation $\pi_{\G, {\Sg'}}(\cV_{\G})$ is the maximal torsion-free quotient of $\otimes_{v\in {\Sg'}}\pi_{\G,v}(\cV_{\G})$.  For  sections $f_{i,{\Sg'}}$ that are  images of $\otimes_{v\in {\Sg'}}f_{i,v}$, with $f_{i,v}$ sections of $ \pi_{\G,v}(\cV_{\G})\otimes\chi_{\H, {\rm univ},v}$ if $i=1,3$, or of  $ \pi_{\G,v}(\cV_{\G}^{\iota})\otimes\chi^{-1}_{\H, {\rm univ},v}$ if $i=2,4$, let
$$\cQ^{}_{{\Sg'}}\left({ f_{1, {\Sg'}}\otimes f_{2, {\Sg'}}\over f_{3, {\Sg'}}\otimes f_{4, {\Sg'}}}\right):=
\prod_{v\in {\Sg'}} \cQ_{v}\left( {f_{1,v}\otimes f_{2,v}\over f_{3,v}\otimes  f_{4,v}}\right),$$
where the factors in the right-hand side are provided by Proposition \ref{interp QcircS}. This is
 well-defined  independently of the choices of $f_{i,v}$ as $\mathscr{K}$ is torsion-free.   
\end{itemize}

This completes the interpolation of \eqref{Q orddd} into a function $\cQ$, that satisfies properties 1 and 2 by  construction and the corresponding properties from Proposition \ref{interp QcircS}.
\end{proof}

\section{Selmer sheaves and $p$-adic heights}\label{big sec: sel}
In this section we present the   theory of Selmer complexes and $p$-adic heights  needed in the rest of the paper. The foundational material  is  taken from the   book of \nek\  \cite{nek-selmer}.
\subsection{Continuous cohomology} 
Let
  $(R^{\circ}, \mathfrak{m})$ be a complete Noetherian local ring,  let $G$ be a topological group. 
  \subsubsection{Continuous cochains for (ind-) admissible $R[G]$-modules} \lb{ssec adm}
Let   $M$  be an $R^{\circ}[G]$-module.
  We say that $M$ is  \emph{admissible of finite type }   if it is of finite type as an $R^{\circ}$-module  and  
   the action $G\times M\to M$ is continuous  (when $M$ is given the $\mathfrak{m}$-adic topology). We say that $M$ is \emph{ind-admissible} if $M=\bigcup_{\alpha}M_{\alpha}$ where $\{M_{\alpha}\}$ is the set of   finite-type admissible $R^{\circ}[G]$-submodules of $M$. 

The complex of continuous cochains of $M$ is denoted by $C^{\bullet}_{\rm cont}(G, M)$; it is defined in the usual  way \cite[(3.4.1)]{nek-selmer} when $M$ is admissible of finite type,  and by  $C^{i}_{\rm cont}(G, M):= \varinjlim_{\alpha} C^{i}_{\rm cont}(G, M_{\alpha})$ when we have a presentation $M=\bigcup_{\alpha}M_{\alpha}$ as above.   The image of  $C^{\bullet}_{\rm cont}(G, M)$ in the derived category of   $D({}_{R}{\rm Mod})$ of $R^{\circ}$-modules is denoted by 
$$ {\rm R}\Gamma(G, M)$$
and its cohomology groups by $$H^{i}(G, M)$$
(we omit the subscript `cont' as we will only be working with continuous cohomology). 
 
\subsubsection{Localisation} Let  
$$R=R^{\circ}[\mathcal{S}^{-1}]$$
 for some multiplicative subset $\mathcal{S}\subset R^{\circ}$, and let $M$ be an $R[G]$-module. We say that $M$ is ind-admissible if it is ind-admissible as an $R^{\circ}[G]$-module, and that it is of finite type if it is of finite type as an $R$-module. Suppose that   $M:=M^{\circ}\otimes_{R^{\circ}} R$ for an ind-admissible $R^{\circ}[G]$-module $M^{\circ}$. Then $M$ is ind-admissible  as an $R^{\circ}[G]$-module and   there is a canonical isomorphism
\beq\label{localisation}
C^{\bullet}_{\rm cont}(G, M)\cong C^{\bullet}_{\rm cont}(G, M^{\circ})\otimes_{R^{\circ}}R
\eeq
(\cite[(3.7.4)]{nek-selmer}).

\begin{rema} \lb{calCR}Let 
$$\mathscr{C}= \mathscr{C}_{R^{\circ}}$$ 
be the category of schemes isomorphic to open subschemes of $\Spec R^{\circ}$, with maps being open immersions. It follows  from  the previous paragraph that,  for any object $X$ of $\mathscr{C}$,  the condition of ind-admissibility is defined for all quasicoherent $\OO_{X}[G]$-modules, and  the functors ${\rm R}\Gamma(G,- )$  are well-defined on ind-admissible  $\OO_{X}[G]$-modules. Moreover, both the ind-admissibility condition and the functors ${\rm R}\Gamma(G, - )$  are compatible with restriction along open immersions in $\mathscr{C}$. 
\end{rema}

In  the  following, we will not further comment on the generalisation indicated in the previous remark when referring to sources only considering $R^{\circ}[G]$-modules.

\subsubsection{Completed product} \lb{compl product} For $i=1,2$, let $R_{i}^{\circ}$ be complete noetherian local rings, and let $R^{\circ}:= R_{1}^{\circ}\hat{\ot} R_{2}^{\circ}$. We have  a functor 
\beq\lb{hatprod}\hat{\ts}\colon \mathscr{C}_{R_{1}^{\circ}}\ts \mathscr{C}_{R_{2}^{\circ}} \to  \mathscr{C}_{R^{\circ} }\eeq
defined on objects 
by $\Spec R_{1}^{\circ}[1/f_{1}] \hat{\ts} \Spec R_{1}^{\circ}[1/f_{2}]:= \Spec R_{1}^{\circ}\hat{\ot}R_{2}^{\circ}[1/f_{1}\ot1, 1/1\ot f_{2}]$ and glueing.

\subsubsection{Notation} 
Throughout the rest of this section, 
$X$ will denote an object of $\mathscr{C}_{R^{\circ}}$.  If $\calA=\OO_{X}, \OO_{X}[G]$, we denote  by $D({}_{\calA}{\rm Mod})$ the derived category of $\calA$-moduels. We use sub- or superscripts $$\text{ft,  {ind-adm}, $+$, $-$, {b}, $[a, b]$,  {perf},} $$ to denote the full subcategory of objects quasi-isomorphic to complexes of $\calA$-modules that are respectively termwise of finite type, termwise ind-admissible, bounded below, bounded above, bounded, concentrated in degrees $[a,b]$, bounded, perfect (= bounded and termwise projective and of finite type).

\begin{prop}[{ \cite[(3.5.6)]{nek-selmer}}]
The functor ${\rm R}\Gamma(G, -)$ can be extended  to a functor on the category of bounded-below complexes of ind-admissible $\OO_{X}[G]$-modules, with values in bounded-below complexes of $\OO_{X}$-modules \cite[(3.4.1.3), (3.5.1.1)]{nek-selmer}. It descends to an exact   functor  
$${\rm R}\Gamma(G, -)\colon D^{+}({}^{\textup{ind-adm}}_{\OO_{X}[G]}{\rm Mod})\to D^{+}({}_{\OO_{X}}{\rm Mod}).$$
\end{prop}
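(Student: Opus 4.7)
\emph{Proof plan.} The plan is to follow the standard construction of a derived functor via totalisation of a double complex, with the key input being that $C^{\bullet}_{\rm cont}(G,-)$ transforms short exact sequences of ind-admissible $\OO_X[G]$-modules into short exact sequences of cochain complexes.

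First, for a bounded-below complex $K^{\bullet}$ of ind-admissible $\OO_X[G]$-modules, I would define $C^{\bullet}_{\rm cont}(G, K^{\bullet})$ as the total complex of the double complex $\bigl(C^{p}_{\rm cont}(G, K^{q})\bigr)_{p,q}$. Since $K^{\bullet}$ is bounded below and $C^{\bullet}_{\rm cont}(G,-)$ lives in non-negative cohomological degree, each total degree involves only finitely many non-zero entries, so the total complex is well-defined, bounded below, and functorial in $K^{\bullet}$. Compatibility with open immersions in $\mathscr{C}_{R^{\circ}}$ follows from \eqref{localisation} and the corresponding compatibility on individual modules already observed in Remark~\ref{calCR}.

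Second, the crucial point is that the resulting functor sends quasi-isomorphisms to quasi-isomorphisms. Via the two spectral sequences associated with the double complex, together with the canonical truncation filtration on $K^{\bullet}$, this reduces to showing that a short exact sequence $0\to M'\to M\to M''\to 0$ of ind-admissible $\OO_X[G]$-modules induces a short exact sequence of cochain complexes. Writing $M=\bigcup_{\alpha}M_{\alpha}$ as in \S~\ref{ssec adm} and choosing, for each finite-type admissible submodule $M''_{\beta}\subset M''$, a finite-type admissible lift inside $M$, one reduces by exactness of filtered colimits (which by construction commute with $C^{n}_{\rm cont}$ on ind-admissible modules) to the admissible finite-type case. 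There the essential assertion is that every continuous cocycle $G^{n}\to M''$ lifts to a continuous cochain $G^{n}\to M$; this is the heart of the matter and relies on the topological structure of $M$ as a complete Hausdorff $\mathfrak{m}$-adic $R^{\circ}$-module, which provides the necessary continuous set-theoretic sections of the surjection $M\to M''$.

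Third, once this $\delta$-functorial exactness is in hand, the functor automatically descends to $D^{+}({}^{\textup{ind-adm}}_{\OO_X[G]}{\rm Mod})\to D^{+}({}_{\OO_X}{\rm Mod})$: an acyclic bounded-below complex is built up from its canonical truncations by short exact sequences of ind-admissible modules (the cycle modules being ind-admissible since that subcategory is closed under kernels and cokernels), and the cochain functor preserves the resulting exact triangles, hence acyclicity. Triangulated exactness is built into the totalisation construction, since $C^{\bullet}_{\rm cont}(G,-)$ commutes with shifts and finite direct sums and therefore sends mapping cones to mapping cones up to canonical isomorphism.

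The main obstacle is the cochain-level lifting assertion in step two. It is where the ind-admissibility hypothesis and the continuity of the $G$-action with respect to the $\mathfrak{m}$-adic topology are used in an essential way; without them the functor would only be left exact and its passage to the derived category would require a separate choice of resolutions rather than the naive totalisation above.
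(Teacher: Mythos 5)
The statement you are addressing is not proved in the paper at all: it is quoted verbatim as Nekov\'a\v{r}'s result, with the bracketed citation \cite[(3.5.6)]{nek-selmer} in the proposition heading and the supporting references \cite[(3.4.1.3), (3.5.1.1)]{nek-selmer} inline. So there is no ``paper's own proof'' to compare against; the comparison is really with Nekov\'a\v{r}'s argument, which your proposal reconstructs in broad strokes. Your outline is sound and identifies the right pressure point.

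Two remarks on the substance. First, the lifting step you flag as ``the heart of the matter'' is indeed what the ind-admissibility hypothesis and the finiteness of the residue field of $R^{\circ}$ are for: an admissible finite-type $R^{\circ}$-module $M$ with its $\mathfrak{m}$-adic topology is profinite (each $M/\mathfrak{m}^{n}M$ is finite), so a continuous surjection $M\twoheadrightarrow M''$ of such modules admits a continuous set-theoretic section, and therefore $C^{n}_{\rm cont}(G,M)\to C^{n}_{\rm cont}(G,M'')$ is onto; the ind-admissible case then follows by the filtered colimit presentation from \S~\ref{ssec adm} since $C^{n}_{\rm cont}(G,-)$ commutes with those colimits, and the localised case reduces to the integral one by \eqref{localisation}. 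You should make the profiniteness point explicit rather than the vaguer appeal to ``complete Hausdorff $\mathfrak{m}$-adic module'': without finiteness of $R^{\circ}/\mathfrak{m}$ the existence of continuous sections is not automatic. Second, once exactness of $C^{\bullet}_{\rm cont}(G,-)$ on ind-admissible modules is in hand, the descent to $D^{+}$ is standard and your truncation argument is one valid route; the only auxiliary fact needed, which you correctly invoke, is that the ind-admissible modules form an abelian subcategory closed under subobjects and quotients (this is in Nekov\'a\v{r} as well), so the cycle and boundary modules of an acyclic complex stay in the category.

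In short: your proposal is a reasonable reconstruction of the cited proof and is not in conflict with the paper, which supplies no argument of its own here.
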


\subsubsection{Base-change} Suppose that $R\twoheadrightarrow R'=R/I$ is a surjective map of rings. Let $j\colon R^{\circ} \to R= R^{\circ}[\mathscr{S}^{-1}]$ be the natural map and let  $I^{\circ}:= j^{-1}(I)$. Then $R^{\circ}{}':= R^{\circ}/I^{\circ}$ is also complete local Noetherian, and  we may write $R'= R^{\circ}{}'[\mathscr{S}']^{-1}$ where  $\mathscr{S}'$ is the image of $\mathscr{S}$ in $R^{\circ}{}'$. Let $M'$ be an ind-admissible $R'$-module, then $C^{\bullet}_{\rm cont}(G, M')$ is the same whether we consider $M'$ as an $R'$-module or as an $R$-module: in the special case $R=R^{\circ}$ this follows from the fact that the maximal ideal of $R^{\circ}{}'$ is the image of the maximal ideal of $R^{\circ}$, so that the $\mathfrak{m}$-adic and $\mathfrak{m}'$-adic topologies on finitely generated $R^{\circ}{}'$-modules coincide; the general case   follows from  the special case by localisation \eqref{localisation}. 

More generally, if $Y\subset X $ is a closed subset, the functor ${\rm R}\Gamma(G, -)$ on $\OO_{Y}[G]$-modules coincides with the restriction of the functor on $\OO_{X}[G]$-modules of the same name.

 \begin{prop}\label{base change} Let $M$ be an ind-admissible $\OO_{X}[G]$-module and let $N$ be an $\OO_{X}$-module of finite projective dimension. Then  there is a natural isomorphism  in $D^{\rm b}({}_{\OO_{X}}{\rm Mod})$
 $$  {\rm R}\Gamma(G, M) \stackrel{\rm L}\otimes_{\OO_{X}} N\cong  {\rm R}\Gamma(G, M \stackrel{\rm L}\otimes_{\OO_{X}} N). $$
  \end{prop}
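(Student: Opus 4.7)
The plan is to reduce the statement to a pointwise assertion about continuous cochains by replacing $N$ by a finite projective resolution, then verify the resulting identity at the level of complexes by commuting $C^{\bullet}_{\rm cont}(G,-)$ with tensoring by projective modules.

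First, since $N$ has finite projective dimension, I would choose a bounded resolution $P^{\bullet}\to N$ by projective $\OO_X$-modules. Then by definition
\[
  {\rm R}\Gamma(G,M)\stackrel{\rm L}\otimes_{\OO_X} N \;\cong\; {\rm Tot}\bigl(C^{\bullet}_{\rm cont}(G,M)\otimes_{\OO_X} P^{\bullet}\bigr),
\]
and, noting that each $M\otimes_{\OO_X}P^i$ is ind-admissible (a direct summand of a direct sum of copies of $M$, and ind-admissibility is preserved under such operations since it is defined as a filtered union of admissible submodules of finite type),
\[
  M\stackrel{\rm L}\otimes_{\OO_X}N \;\cong\; {\rm Tot}(M\otimes_{\OO_X}P^{\bullet})
\]
as a bounded complex of ind-admissible $\OO_X[G]$-modules, so that ${\rm R}\Gamma(G, M\stackrel{\rm L}\otimes N)$ is represented by the totalisation of the double complex $C^{\bullet}_{\rm cont}(G,\,M\otimes_{\OO_X}P^{\bullet})$.

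The key point then becomes a natural isomorphism of complexes
\[
  C^{\bullet}_{\rm cont}(G,M)\otimes_{\OO_X} P \;\stackrel{\sim}{\longrightarrow}\; C^{\bullet}_{\rm cont}(G,\,M\otimes_{\OO_X} P)
\]
for each projective $\OO_X$-module $P$. Writing $P$ as a direct summand of a free module $\bigoplus_{\alpha}\OO_X$, this reduces to checking that both functors commute with arbitrary direct sums of ind-admissible modules; for the continuous cochain functor this follows from its definition via the filtered colimit over admissible submodules of finite type (\S\ref{ssec adm}), since a direct sum of filtered unions is again a filtered union and cochains on admissible finite-type modules manifestly commute with finite direct sums. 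Having this termwise isomorphism, the two double complexes above are canonically identified, and totalising yields the desired quasi-isomorphism.

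The main obstacle is the bookkeeping for the last step: one must check that the naturality of the isomorphism in $P$ is compatible with differentials in $P^{\bullet}$, so as to obtain an isomorphism of double complexes and not merely a termwise quasi-isomorphism. This is a routine but slightly tedious verification using that $C^{\bullet}_{\rm cont}(G,-)$ is an exact functor on short exact sequences of ind-admissible modules with a projective quotient (so that one can replace $P^{\bullet}$ by its Postnikov tower if desired). Once this is in place, the isomorphism is functorial in $M$, independent of the chosen resolution $P^{\bullet}$ (two such are homotopy equivalent), and in particular descends to the derived category as claimed.
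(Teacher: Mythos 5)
Your argument is correct, but it takes a somewhat different route from the paper. The paper's proof is essentially a one-liner: it chooses the finite projective resolution $P^{\bullet}\to N$ and then cites \cite[(3.4.4)]{nek-selmer} for the fact that the natural map $C^{\bullet}_{\rm cont}(G,M)\otimes_{\OO_X}P^{\bullet}\to C^{\bullet}_{\rm cont}(G,M\otimes_{\OO_X}P^{\bullet})$ is an isomorphism of complexes (with a footnote observing that the cited result, stated for flat $R^{\circ}$-modules, applies because flat $\OO_X$-modules remain flat over $R^{\circ}$ when $X$ is open in $\Spec R^{\circ}$). You instead re-prove that lemma from scratch: reduce to $P$ free by additivity (direct summands), then observe that $C^{\bullet}_{\rm cont}(G,-)$ on ind-admissible modules commutes with arbitrary direct sums because it commutes with filtered colimits (by its very definition) and with finite direct sums. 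That argument is sound — any direct sum is a filtered colimit of finite sub-sums, so commutation follows — and it has the virtue of being self-contained, whereas the paper's approach is shorter but relies on the external reference and a small flatness observation to make it apply.

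Two small remarks. First, your step that $M\otimes_{\OO_X}P^i$ is ind-admissible (direct summand of a direct sum of copies of $M$, and ind-admissibility passes to submodules since $R^{\circ}$ is Noetherian) is correct and worth stating, though the paper does not dwell on it. Second, your closing paragraph about checking compatibility with the differentials in $P^{\bullet}$ is unnecessary: the isomorphism $C^{\bullet}_{\rm cont}(G,M)\otimes P\to C^{\bullet}_{\rm cont}(G,M\otimes P)$ is natural in $P$ by construction, so it automatically assembles into an isomorphism of double complexes; invoking a Postnikov tower or exactness of $C^{\bullet}_{\rm cont}$ on suitable short exact sequences is over-engineering a point that is already handled by functoriality.
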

 \begin{proof} Let $P^{\bullet}$ be a finite projective resolution of $N$.   The natural map  of complexes of $\OO_{X}$-modules
$$C_{\rm cont}^{\bullet}(G, M)\otimes_{\OO_{X} }P^{\bullet}\to  C_{\rm cont}^{\bullet}(G, M\otimes_{\OO_{X}}P^{\bullet})$$
 is an isomorphism by   \cite[(3.4.4)]{nek-selmer}.\footnote{In \emph{loc. cit.}, the ring denoted by $R$ is our $R^{\circ}$, but as our $X$ is open in $\Spec R^{\circ}$, the $\OO_{X}$-modules $P^{n}$ are also flat as $R^{\circ}$-modules and the  cited result applies.} The desired result follows from the definition of derived tensor product. 
 \end{proof}
The proposition applies when $N=\OO_{Y}$ with $Y\subset X$  a local complete intersection, 
or when  $X$ is regular and $N$ is any coherent $\OO_{X}$-module.
 We highlight the following case. 
 \begin{coro}   \label{base change cor} 
 Let $M$ be an ind-admissible $\OO_{X}[G]$-module that is flat as an $\OO_{X}$-module, and let $x\in X$ be a nonsingular point. Then there is an isomorphism in $D^{\rm b}({}_{\kappa(x)}{\rm Mod})$
 $$ {\rm R}\Gamma(G, M) \stackrel{\rm L}\otimes_{\OO_{X}} \kappa(x)\cong 
 {\rm R}\Gamma(G, M \otimes_{\OO_{X}} \kappa(x)) $$
 hence a second-quadrant spectral sequence 
 $${\rm Tor}_{-p}(H^{q}(G, M), \kappa(x)) \Rightarrow  H^{q-p}(G , M\otimes_{\OO_{X}}\kappa(x)).$$
 \end{coro}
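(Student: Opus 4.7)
The plan is to derive the corollary as a direct consequence of Proposition \ref{base change}, modulo a reduction to an open neighbourhood where $\kappa(x)$ has finite projective dimension.

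First, I would reduce to the case where $\kappa(x)$ has a finite projective resolution over $\OO_X$. Since $x$ is nonsingular, the local ring $\OO_{X,x}$ is regular, so $\kappa(x)$ admits a finite projective (in fact, free, via a Koszul complex on a regular system of parameters) resolution over $\OO_{X,x}$. Such a resolution lifts to a finite projective resolution over $\OO_U$ for some open neighbourhood $U \subset X$ of $x$ (clearing denominators). By Remark \ref{calCR}, formation of ${\rm R}\Gamma(G, -)$ is compatible with restriction to open subschemes in $\mathscr{C}_{R^\circ}$, and $M_{|U}$ is still ind-admissible and flat over $\OO_U$. So I may replace $X$ by $U$ and assume that $\kappa(x)$ has finite projective dimension as an $\OO_X$-module.

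Next, I would apply Proposition \ref{base change} with $N=\kappa(x)$ to obtain
$${\rm R}\Gamma(G, M) \stackrel{\rm L}\otimes_{\OO_{X}} \kappa(x) \cong {\rm R}\Gamma(G, M \stackrel{\rm L}\otimes_{\OO_{X}} \kappa(x)).$$
Since $M$ is flat over $\OO_X$ by hypothesis, the derived tensor product on the right collapses: $M \stackrel{\rm L}\otimes_{\OO_X} \kappa(x) = M \otimes_{\OO_X} \kappa(x)$, which is the desired isomorphism.

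Finally, the spectral sequence is the standard hyper-Tor (or hypercohomology) spectral sequence associated to the derived tensor product of the complex ${\rm R}\Gamma(G, M)$ with $\kappa(x)$: taking a finite projective resolution $P^\bullet \to \kappa(x)$ and forming the double complex $C_{\rm cont}^\bullet(G, M) \otimes_{\OO_X} P^\bullet$, the spectral sequence for the filtration by columns reads
$$E_2^{-p,q} = {\rm Tor}_p^{\OO_X}(H^q(G, M), \kappa(x)) \Rightarrow H^{q-p}\bigl({\rm R}\Gamma(G, M) \stackrel{\rm L}\otimes_{\OO_X} \kappa(x)\bigr),$$
and substituting the isomorphism above identifies the abutment with $H^{q-p}(G, M \otimes_{\OO_X} \kappa(x))$.

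There is essentially no obstacle: the only delicate point is the reduction to an open neighbourhood where $\kappa(x)$ has bounded projective dimension, which is immediate from regularity at $x$ together with Remark \ref{calCR} on the compatibility of ${\rm R}\Gamma(G, -)$ with open restriction.
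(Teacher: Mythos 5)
Your argument is correct and is essentially the paper's own proof: reduce via regularity at $x$ (plus Remark \ref{calCR}) to the situation where $\kappa(x)$ has finite projective dimension, apply Proposition \ref{base change} with $N=\kappa(x)$, collapse the derived tensor product using flatness of $M$, and read off the hyper-Tor spectral sequence. The only (immaterial) difference is that you spread the finite projective resolution out over an open neighbourhood rather than passing directly to the regular local ring $\OO_{X,x}$, which is slightly tidier from the point of view of the category $\mathscr{C}_{R^\circ}$.
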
 
\begin{proof} After possibly localising at $x$, we may assume that $X=\Spec R$ is the spectrum of a  local ring, which by assumption will be regular. Then  
 $\kappa(x)$   has finite projective dimension over $R$, and the result follows from the previous proposition. 
\end{proof}
 
\subsubsection{Continuous cohomology as derived functor} For $i=0, 1$, the functors $M\mapsto H^{i}(G,M)$ on the category of ind-admissible $R$-modules  coincide with the $i^{\rm th}$ derived functors of $M\mapsto M^{G}$ (\cite[(3.6.2)(v)]{nek-selmer}).

\subsection{Specialisations}\label{(F)} 
From here on we further  assume that $R^{\circ}$ has finite residue field of characteristic~$p$. 

\subsubsection{Finiteness conditions}
Let $G$ be a profinite group. We consider the condition 
$$\text{(F)}\qquad H^{i}(G, M) \text{ is finite for all finite discrete ${\bf F}_{p}[G]$-modules and all $i\geq 0$}$$
and define the $p$-cohomological dimension of $G$ to be
$${\rm cd}_{p}(G):= \sup\, \{i \, :\, \exists \text{ a finite discrete ${\bf F}_{p}[G]$-module $M$ with } H^{i}(G, M)\neq 0\}.$$
 \begin{lemm}\label{finite coh}
 If $G$ satisfies (F) then the cohomology groups of ind-admissible $\OO_{X}[G]$-modules of finite type are $\OO_{X}$-modules of finite type (\cite[(4.2.5), (4.2.10)]{nek-selmer}).
  The cohomology of any ind-admissible $\OO_{X}[G]$-module vanishes in degrees $>{\rm cd}_{p}(G)$ (\cite[(4.26)]{nek-selmer}). 
 \end{lemm}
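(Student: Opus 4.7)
The plan is to reduce both statements to the defining properties of the condition (F) and of $\mathrm{cd}_{p}(G)$, which concern only finite discrete $\mathbf{F}_{p}[G]$-modules. Via the localisation isomorphism \eqref{localisation} and Remark \ref{calCR}, I may assume throughout that $X=\Spec R^{\circ}$ and work with ind-admissible $R^{\circ}[G]$-modules.

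First I would address the vanishing statement. Since $C^{\bullet}_{\mathrm{cont}}(G,-)$ on ind-admissible modules is defined as a filtered colimit over the finite-type admissible submodules $M_{\alpha}\subset M$, and cohomology of complexes commutes with filtered colimits, it suffices to prove vanishing for $M$ finite-type admissible. For such $M$, the $\mathfrak{m}$-adic filtration $\{\mathfrak{m}^{n}M\}$ is $G$-stable and $M=\varprojlim_{n} M/\mathfrak{m}^{n}M$; each quotient $M/\mathfrak{m}^{n}M$ admits a finite filtration with subquotients $\mathfrak{m}^{k}M/\mathfrak{m}^{k+1}M$, which are finite-dimensional $\mathbf{F}_{p}$-vector spaces because $R^{\circ}/\mathfrak{m}$ is a finite field of characteristic $p$ and $M$ is finitely generated. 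Thus $M/\mathfrak{m}^{n}M$ is a finite discrete $\mathbf{F}_{p}[G]$-module, and induction on $n$ via the long exact sequence in cohomology gives $H^{i}(G,M/\mathfrak{m}^{n}M)=0$ for $i>\mathrm{cd}_{p}(G)$. The desired vanishing $H^{i}(G,M)=0$ then follows from the Milnor-type short exact sequence
\[
0 \to {\varprojlim_{n}}^{1} H^{i-1}(G,M/\mathfrak{m}^{n}M)\to H^{i}(G,M)\to \varprojlim_{n} H^{i}(G,M/\mathfrak{m}^{n}M)\to 0,
\]
valid for continuous cochains of the inverse system $\{M/\mathfrak{m}^{n}M\}$ with its surjective transition maps.

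Next I would treat finite generation. Again reducing to $M$ finite-type admissible, hypothesis (F) ensures each $H^{i}(G,M/\mathfrak{m}^{n}M)$ is finite; in particular the inverse system $\{H^{i}(G,M/\mathfrak{m}^{n}M)\}_{n}$ automatically satisfies Mittag--Leffler, so the $\varprojlim^{1}$ term above vanishes and $H^{i}(G,M)\cong \varprojlim_{n} H^{i}(G,M/\mathfrak{m}^{n}M)$ is $\mathfrak{m}$-adically separated and complete. To conclude finite generation over $R^{\circ}$, I would apply a topological Nakayama argument: the cardinality of $H^{i}(G,M/\mathfrak{m}^{n}M)$ is uniformly bounded in $n$ (this is where one uses that the $\mathfrak{m}$-adic graded pieces of $M$ are of bounded dimension over $\mathbf{F}_{p}$, together with (F) applied to each graded piece, yielding a bound on the length of $H^{i}(G,M/\mathfrak{m}^{n}M)$ growing like a polynomial in $n$ times the length of the relevant finite cohomology groups on the graded pieces). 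A lift of generators modulo $\mathfrak{m}$ to $H^{i}(G,M)$, combined with completeness and separatedness, then produces a surjection from a finite free $R^{\circ}$-module onto $H^{i}(G,M)$.

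The main obstacle is the passage from finite cohomology of finite quotients to finite generation of the full continuous cohomology: one needs both the vanishing of $\varprojlim^{1}$ (granted here by Mittag--Leffler, since each term is finite) and a uniform bound on the growth of $H^{i}(G,M/\mathfrak{m}^{n}M)$ to invoke topological Nakayama. Both points are carried out carefully in \cite[(4.2.5), (4.2.10), (4.2.6)]{nek-selmer}, so the remaining work is mostly bookkeeping to identify our ind-admissible $\OO_{X}[G]$-modules with those considered there and to propagate the conclusion under localisation in $\mathscr{C}_{R^{\circ}}$ as in Remark \ref{calCR}.
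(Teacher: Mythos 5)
The paper supplies no proof of this lemma; it simply cites Nekov\'a\v{r}'s book, and your sketch correctly reconstructs the argument behind those citations via the $\mathfrak{m}$-adic filtration, reduction to finite discrete $\mathbf{F}_{p}[G]$-modules, the Milnor $\varprojlim^{1}$ sequence, and topological Nakayama --- so it is essentially the same approach as what the paper delegates to Nekov\'a\v{r}.

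One small sharpening is worth flagging. In your vanishing paragraph, taking $i=\mathrm{cd}_{p}(G)+1$ leaves a potentially nonzero term $\varprojlim^{1}_{n} H^{\mathrm{cd}_{p}(G)}(G, M/\mathfrak{m}^{n}M)$ in the Milnor sequence; the inner terms $H^{\mathrm{cd}_{p}(G)}(G, M/\mathfrak{m}^{n}M)$ need not vanish, and their $\varprojlim^{1}$ only dies because each is \emph{finite}, hence the system is automatically Mittag--Leffler. You invoke exactly this fact --- finiteness from (F) implying Mittag--Leffler --- but only in your second paragraph on finite generation; it is needed already in the vanishing step. This is consistent both with Nekov\'a\v{r}'s hypotheses and with the paper's usage, since condition (F) holds for all the groups $G_{E,S}$ and $G_{E_{w}}$ that the paper actually considers, but as written your vanishing argument silently leaves the top-degree case open.
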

 
 When $E$ is a number field, $S$ is a finite set of places of  $E$ and $G=G_{E, S}$,
  condition (F) is satisfied and ${\rm cd}_{p}(G)= 3$. When $E_{w}$ is a local field and $G=G_{E_{w}}$,
   condition (F) is satisfied and ${\rm cd}_{p}(G)= 2 $. In the latter case we use the notation $H^{i}(E_{w}, M) $ for $H^{i}(G, M)$. 

\subsubsection{Projective limits, specialisations} We give two  results on the compatibility of  $G$-cohomology  with other functors.

\begin{lemm}\label{projlim ok} Let $ G$ be a profinite group satisfying  (F) and let $M=\varprojlim_{n} M_{n}$ be the limit of  a countable projective system of admissible $R^{\circ}$-modules of finite type. Then for all $i$ the natural map
$$ H^{i}(G, M) \to  \varprojlim_{n} H^{i}(G, M_{n})$$
is an isomorphism.
\end{lemm}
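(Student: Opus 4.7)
The plan is to derive the result from the general principle that a countable inverse system of compact Hausdorff topological abelian groups has vanishing $\lim^1$, applied twice: once at the level of cochains (to obtain a Milnor-type short exact sequence) and once at the level of cohomology (to kill the $\lim^1$ term in that sequence).

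First I would observe that, since $M = \varprojlim_n M_n$ carries the limit topology (which on each $M_n$ agrees with the $\mathfrak{m}$-adic topology by admissibility), a continuous cochain $G^i \to M$ is the same datum as a compatible family of continuous cochains $G^i \to M_n$. This yields a natural isomorphism of complexes of $R^\circ$-modules
$$C^\bullet_{\rm cont}(G, M) \;\xrightarrow{\sim}\; \varprojlim_n C^\bullet_{\rm cont}(G, M_n).$$
Next, I would invoke the Milnor-type exact sequence for cohomology of a countable inverse limit of complexes (of the kind used by Jannsen in continuous \'etale cohomology): provided the inverse system $\{C^i_{\rm cont}(G, M_n)\}_n$ is Mittag--Leffler for every $i$, one obtains
$$0 \to {\lim_n}^{\!\!1} H^{i-1}(G, M_n) \to H^i(G, M) \to \varprojlim_n H^i(G, M_n) \to 0.$$

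The Mittag--Leffler hypothesis at the cochain level holds automatically in our setting. Since $R^\circ$ is a complete Noetherian local ring with \emph{finite} residue field, $R^\circ$ is itself compact in the $\mathfrak{m}$-adic topology, and hence so is every finite-type admissible $R^\circ$-module $M_n$. As $G$ is profinite, $G^i$ is compact, so $C^i_{\rm cont}(G, M_n) = C(G^i, M_n)$ is compact Hausdorff in the compact-open topology, with continuous $R^\circ$-linear transition maps; a countable inverse system of compact Hausdorff topological abelian groups always has vanishing $\lim^1$, so in particular satisfies Mittag--Leffler.

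The same compactness principle kills the remaining $\lim^1$. By Lemma \ref{finite coh} --- which applies because $G$ satisfies (F) and each $M_n$ is admissible of finite type --- each group $H^{i-1}(G, M_n)$ is a finite-type $R^\circ$-module, hence again compact Hausdorff in its $\mathfrak{m}$-adic topology, with continuous transition maps. Therefore $\lim^1_n H^{i-1}(G, M_n) = 0$, and the Milnor sequence collapses to the desired isomorphism. The main point requiring care is the verification that $\lim^1$ vanishes for countable inverse systems of compact Hausdorff abelian groups: this is classical, but its application here uses in an essential way the assumption that $R^\circ$ has \emph{finite} residue characteristic-$p$ residue field, so that finite-type $R^\circ$-modules are genuinely compact rather than merely profinite-like in a weaker sense.
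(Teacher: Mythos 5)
The argument has a genuine gap at the step where you claim that $C^i_{\rm cont}(G, M_n) = C(G^i, M_n)$ is compact Hausdorff in the compact-open topology. This is false in general: when $G^i$ is an infinite profinite group and $M_n$ is an infinite profinite $R^\circ$-module, the space of continuous maps with the compact-open topology (which, as $G^i$ is compact, is the topology of uniform convergence) is typically not compact. The basic example is already $G=\mathbf{Z}_p$, $M_n=\mathbf{Z}_p$: by Mahler's theorem $C(\mathbf{Z}_p,\mathbf{Z}_p)$ with the sup norm is isometric to the space of null sequences in $\mathbf{Z}_p$, and the Mahler basis $\binom{x}{n}$, $n\geq 0$, is a $1$-separated sequence in the unit ball, so the space is not (sequentially) compact. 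Hence the vanishing of $\lim^1$ at the cochain level, which you need in order to set up the Milnor sequence, does not follow from the compactness principle you invoke. (Your second application of that principle is fine: $H^{i-1}(G, M_n)$ is of finite type over $R^\circ$ by Lemma~\ref{finite coh}, hence compact Hausdorff in its $\mathfrak{m}$-adic topology with continuous transition maps, so $\lim^1_n H^{i-1}(G, M_n)=0$; this captures the same content as the paper's use of (F).)

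The paper gets the Milnor sequence by first \emph{reducing to finite $M_n$}, and this is exactly what makes the cochain-level $\lim^1$ tractable. It invokes Nekov\'a\v r's Corollary~4.1.3 in the special case $M_n = M/\mathfrak{m}^n M$ (so all $M_n$ finite discrete with surjective transition maps), and then for general $M_n$ writes $M = \varprojlim_n\varprojlim_r M_n/\mathfrak{m}^r M_n$, a countable cofiltered limit of finite modules, and passes to a diagonal cofinal chain. When the $M_n$ are finite, the decreasing sequence of images $\mathrm{Im}(M_{n'}\to M_n)\subset M_n$ stabilizes, and since $\mathrm{Im}\bigl(C^i(G,M_{n'})\to C^i(G,M_n)\bigr)=C^i\bigl(G,\mathrm{Im}(M_{n'}\to M_n)\bigr)$ (surjections of profinite, in particular finite, sets admit continuous sections), the cochain tower satisfies Mittag--Leffler. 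That is what supplies the cochain-level $\lim^1$-vanishing; the rest of your argument, applied after this reduction, then matches the paper's.
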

\begin{proof}
In the special case $M_{n}=M/\mathfrak{m}^{n}M$, it is shown in \cite[Corollary 4.1.3]{nek-selmer} that the map under consideration is surjective with kernel $\lim^{(1)}_{n}  H^{i-1}(G, M_{n})$; this vanishes since by (F) those cohomology groups are finite, hence the projective system they form satisfies the Mittag-Leffler condition.  The general case follows from applying the special case to $M$ and the $M_{n}=\varprojlim_{r}M_{n}/\mathfrak{m}^{r}M_{n}$. 
\end{proof}

\begin{prop}\label{torss} Let $G$ be a profinite group satisfying (F) and ${\rm cd}_{p}(G)=e<\infty$. Let $M$  be an ind-admissible $\OO_{X}[G]$-module of finite type. Let  $x\in X$ be a nonsingular point, let $i_{0}\geq 0$  and suppose that   
$$H^{i}(G, M\otimes_{R}\kappa(x))=0$$
for all $i\geq i_{0}+1$. 
\begin{enumerate}
\item For all $i\geq i_{0}+1$, the support of the finitely generated $R$-module $H^{i}(G, M)$ is a proper closed subset not containing $x$.
\item The natural map
$$H^{i_{0}}(G, M)\otimes_{\OO_{X}}\kappa(x)\to H^{i_{0}}(G, M\otimes_{R} \kappa(x))$$
is an isomorphism.
\item\lb{torss3} Suppose further  that $i_{0}=1$,  and that for $y$ in some dense open subset of $X$,  $\dim _{\kappa(y)} H^{1}(G, M\ot\kappa(y))= \dim _{\kappa(x)} H^{1}(G, M\ot\kappa(x))$. Then the natural map
$$H^{{0}}(G, M)\otimes_{R}\kappa(x)\to H^{{0}}(G, M\otimes_{\OO_{X}} \kappa(x))$$
is an isomorphism.
\end{enumerate}
\end{prop}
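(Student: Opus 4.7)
The common engine is the base-change spectral sequence of Corollary~\ref{base change cor} (applied to $M$, which in the situations of interest is flat, or to an appropriate flat replacement): after localizing at $x$ so that $\kappa(x)$ admits a finite projective resolution over $R:=\OO_{X,x}$, one has a second-quadrant spectral sequence
\[
E_2^{p,q} \;=\; \mathrm{Tor}^R_{-p}\bigl(H^q(G,M),\kappa(x)\bigr) \;\Longrightarrow\; H^{p+q}(G, M\otimes_R\kappa(x)), \qquad p\leq 0,
\]
with all terms finitely generated over $R$ (Lemma~\ref{finite coh}) and supported in the strip $0\leq q\leq e$. The strategy is to feed the vanishing hypothesis into the spectral sequence one total degree at a time, using part~(1) to kill successively higher Tor obstructions.

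For part~(1), I argue by descending induction on $i\geq i_0+1$, the case $i>e$ being trivial by Lemma~\ref{finite coh}. Given the inductive hypothesis $H^j(G,M)_x=0$ for every $j>i$, each Tor contribution $E_2^{-\sigma,i+\sigma}_x$ with $\sigma\geq 1$ vanishes. Consequently the only filtration piece of the abutment in total degree~$i$ is $E_\infty^{0,i}_x=E_2^{0,i}_x = H^i(G,M)_x\otimes\kappa(x)$, forced to equal $H^i(G,M\otimes\kappa(x))=0$ by hypothesis. Nakayama on the finitely generated $R$-module $H^i(G,M)_x$ yields $H^i(G,M)_x=0$, closing the induction. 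Part~(2) now follows at once: in degree~$i_0$, part~(1) kills every Tor obstruction to the edge map $H^{i_0}(G,M)\otimes\kappa(x)\to H^{i_0}(G,M\otimes\kappa(x))$ being an isomorphism.

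For part~(3), the analogous analysis in degree~$0$ shows that the kernel and cokernel of the natural map are controlled by $\mathrm{im}\bigl(d_2\colon \mathrm{Tor}^R_2(H^1(G,M),\kappa(x))\to H^0(G,M)\otimes\kappa(x)\bigr)$ and by $E_\infty^{-1,1}=\mathrm{Tor}^R_1(H^1(G,M),\kappa(x))$ respectively, all other potential obstructions being killed by part~(1). Both vanish once $H^1(G,M)$ is locally free at~$x$, which I would establish as follows. The complex $\mathrm{R}\Gamma(G,M)$ can be represented, locally near $x$, by a perfect $\OO_X$-complex---a standard consequence of the finiteness machinery of \cite{nek-selmer} under hypotheses~(F) and $\mathrm{cd}_p(G)<\infty$---so that $y\mapsto \dim_{\kappa(y)} H^i(G,M\otimes\kappa(y))$ is upper semi-continuous on $X$. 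Hence the vanishing $H^{\geq 2}(G,M\otimes\kappa(x))=0$ extends to a Zariski open neighborhood~$V$ of~$x$, on which part~(2) applies pointwise and gives $H^1(G,M)\otimes\kappa(y)\cong H^1(G,M\otimes\kappa(y))$. By the hypothesis of~(3), this space has dimension $d:=\dim_{\kappa(x)} H^1(G,M\otimes\kappa(x))$ at every $y\in V\cap U$ (with $U$ the dense open of the hypothesis); combined with the same value~$d$ at~$x$ from part~(2), upper semi-continuity of the fibre dimension of the coherent sheaf $H^1(G,M)$ forces it to be constantly~$d$ on some open neighborhood of~$x$, whence local freeness of rank~$d$ there (using that $\OO_{X,x}$ is reduced). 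All relevant Tor's then vanish and the spectral sequence at degree~$0$ collapses to the claimed isomorphism.

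The main obstacle is the semi-continuity input, resting on the representability of $\mathrm{R}\Gamma(G,M)$ near~$x$ by a perfect $\OO_X$-complex; this is the nontrivial technical ingredient, drawing on the full strength of the finiteness results of \cite{nek-selmer} under hypotheses~(F) and $\mathrm{cd}_p(G)<\infty$.
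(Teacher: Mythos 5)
Your proposal is correct, and for parts~(1)--(2) it follows the paper's own route exactly: the base-change spectral sequence from Corollary~\ref{base change cor}, descending induction on $i$ from $i>e$ downwards, killing the Tor terms by the inductive hypothesis, and Nakayama on the edge term $E_2^{0,i}=H^i(G,M)\otimes\kappa(x)$. Part~(2) is the case $i=i_0$ of the same induction, as you note.

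For part~(3), the paper simply \emph{asserts} that the hypothesis forces $H^1(G,M)$ to be locally free of constant rank near $x$ and then reads off the vanishing of the $\mathrm{Tor}_1$ (and, implicitly, $\mathrm{Tor}_2$) obstruction; you go further and actually supply the missing argument. Your route --- represent $\mathrm{R}\Gamma(G,M)$ near the regular point $x$ by a perfect $\OO_X$-complex (bounded, coherent cohomology, finite Tor-dimension since $\OO_{X,x}$ is regular), use semi-continuity of fibre dimensions to propagate $H^{\geq 2}(G,M\otimes\kappa(y))=0$ to an open neighbourhood, feed this into part~(2) to identify $H^1(G,M)\otimes\kappa(y)$ with $H^1(G,M\otimes\kappa(y))$ there, and conclude constant fibre rank hence local freeness --- is sound, and it is exactly the kind of detail the paper leaves to the reader. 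You are also more careful than the paper's displayed ``exact sequence,'' whose injectivity on the left in fact requires the $\mathrm{Tor}_2$ term to die (as you observe by tracking the $d_2$ differential).

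One phrasing could be sharpened: ``upper semi-continuity forces it to be constantly $d$'' is slightly elliptical. What really does the work is the conjunction of two consequences of semi-continuity: the locus $\{\dim\geq d+1\}$ is closed and misses $x$, giving an open set where $\dim\leq d$; and the locus $\{\dim\geq d\}$ is closed and contains a dense subset, hence (after shrinking to an irreducible neighbourhood of $x$, which exists since $\OO_{X,x}$ is a domain) is everything. Equivalently one can say: $\mathrm{Fitt}_{d-1}(H^1(G,M))$ vanishes at the generic point of the component through $x$, hence at $x$ since $\OO_{X,x}$ is a domain, while $\mathrm{Fitt}_d$ is the unit ideal at $x$ by part~(2); this gives local freeness of rank $d$ directly. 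Either way, your conclusion and the overall proof are correct.
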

\begin{proof}  By Nakayama's lemma  and the vanishing assumption, the first statement is equivalent to  
\beq \label{dhi} 
H^{i}(G, M)\otimes_{\OO_{X}}\kappa(x)\cong H^{i}(G, M\otimes_{\OO_{X}} \kappa(x)).
\eeq
Therefore, for the proof of the first and second statements  it is enough to prove  \eqref{dhi}  for all $i\geq i_{0}$, which we do by decreasing induction on $i$.

 For $i\geq e+1$ the result is automatic. In general, Corollary  \ref{base change cor} gives a second-quadrant spectral sequence 
\beq \lb{E2pq}
E_{2}^{p,q}={\rm Tor}_{-p}^{\OO_{X}}(H^{q}(G, M),  \kappa(x))\Rightarrow H^{q-p}(G, M\otimes_{\OO_{X}} \kappa(x)).\eeq
By induction hypothesis, all terms on the diagonal  $q-p=i$ vanish except possibly  the one with $p=0$, and the differentials with source and target such term are $0$. It follows that $H^{i}(G, M\otimes_{\OO_{X} }\kappa(x))=E_{\infty }^{0, i}=E_{2}^{0, i}=H^{i}(G, M)\otimes_{\OO_{X}}\kappa(x) $.

Finally, under the assumptions of part 3, the finitely generated $R$-module $H^{1}(G, M)$ is locally free of constant rank in a neighbourhood of $x$. Hence in the exact sequence 
$$0\to H^{{0}}(G, M)\otimes_{R}\kappa(x)\to H^{0}(G, M\otimes_{\OO_{X}} \kappa(x)) \to {\rm Tor}_{1}^{\OO_{X}}(H^{1}(G, M), \kappa(x))$$
deduced from \eqref{E2pq}, the last term vanishes.
\end{proof}

\subsection{Selmer complexes and height pairings}\label{sec: sel} As in the preceding subsection, let $R^{\circ}$ be a Noetherian local ring  with finite residue field  of characteristic $p$,  $X$ an object of $\mathscr{C}_{R}$. 

When $E_{w}$ is a local field, we write ${\rm R}\Gamma(E_{w}, -):= {\rm R}\Gamma(G_{E_{w}}, -)$ and similarly for its cohomology groups. For number fields, we will only use the analogous shortened notation for Selmer groups.
\subsubsection{Greenberg data} Let $E$ be a number field, $Sp$  a finite set of finite places of $E$ containing those above $p$. 
 Fix for every $w\vert p$ an embedding $\baar{E}\into\baar{ E}_{{w}}$ inducing an embedding $G_{w}:=G_{E_{w}}\into G_{E, Sp}$.
 If $M$ is an $\OO_{X}[G_{E, Sp}]$-module, we   denote by $M_{w}$ the module $M$ considered as an $\OO_{X}[G_{w}]$-module.
 
 \begin{defi}\label{grbg datum} A  \emph{Greenberg datum}   $ ( M, (M_{w}^{+})_{w\in  Sp})$ (often abusively abbreviated by $M$ in what follows)  over $X$ consists of
 \begin{itemize}
 \item an ind-admissible  $\OO_{X}[G_{E, Sp}]$-module $M$,   finite and locally free  as an $\OO_{X}$-module;
 \item  for every $w\in Sp$ a \emph{Greenberg local condition}, that is  a short exact sequence $$ 0 \to M_{w}^{+}\stackrel{i_{w}^{+}}{\to} M_{w}\to M_{w}^{-}\to 0$$
of ind-admissible $\OO_{X}[G_{w}]$-modules,  finite and locally free as $\OO_{X}$-modules. 
 \end{itemize}
 \end{defi}
 In this paper, at places $w \nmid p$ we will only consider the \emph{strict} Greenberg conditions $M_{w}^{+}=0$.

\subsubsection{Selmer complexes} Given a Greenberg  datum  $M= (M, (M_{w}^{+})_{w\in Sp})$,
  the \emph{Selmer complex}  
$$\wtil{{\rm R}{\Gamma}}_{f}(E, M)$$ is the image of the complex
$$ {\rm Cone}\left(     C^{\bullet}_{\rm cont}(G_{E, Sp},M)\oplus\bigoplus_{w\in Sp} C^{\bullet}_{\rm cont}(E_{w}, M_{w}^{+})\stackrel{ \oplus_{w}{\rm res}_{w}-i_{w, *}^{+} }{\longrightarrow}    \bigoplus_{w\in Sp} C^{\bullet}_{\rm cont}(E_{w}, M_{w}) \right)[-1]  $$
  in $D({}_{\OO_{X}}^{\rm ft}{\rm Mod})$. Its cohomology groups are denoted by $\wtil{H}^{i}_{f}(E, M)$. 
We   have an exact triangle 
\beq\label{ex tri}
\wtil{{\rm R}{\Gamma}}_{f}(E, M)\to {\rm R}{\Gamma}_{}(E, M)\to \oplus_{w\in Sp} {\rm R}\Gamma(E_{w}, M^{-}_{w}). 
 \eeq

\begin{prop} The Selmer complex $\wtil{{\rm R}{\Gamma}}_{f}(E, M)$  and all terms of  \eqref{ex tri} belong to $D_{\rm perf}^{[0,3]}({}_{\OO_{X}}{\rm Mod})$. 
\end{prop}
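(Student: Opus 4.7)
The plan is to handle the three terms of \eqref{ex tri} separately. For the middle term ${\rm R}\Gamma(E, M)$: the group $G_{E, Sp}$ satisfies condition (F) with $p$-cohomological dimension $3$ (as recorded after Lemma \ref{finite coh}). By Lemma \ref{finite coh}, each $H^i(E, M)$ is a finitely generated $\OO_X$-module, and it vanishes for $i > 3$. Perfect representability in $D_{\rm perf}^{[0, 3]}({}_{\OO_X}{\rm Mod})$ then follows from the construction in \cite[\S 4.2]{nek-selmer}: using the ind-admissibility of $M$ and its local freeness over $\OO_X$, one produces an explicit quasi-isomorphic complex of finitely generated projective $\OO_X$-modules concentrated in degrees $[0, 3]$.

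The third term $\bigoplus_{w \in Sp} {\rm R}\Gamma(E_w, M_w^-)$ is handled identically: each $G_{E_w}$ satisfies (F) with $p$-cohomological dimension $\leq 2$ (for both $w \vert p$ and $w \nmid p$ finite), and $M_w^-$ is locally free of finite rank over $\OO_X$ as the quotient in Definition \ref{grbg datum}. Each summand therefore lies in $D_{\rm perf}^{[0, 2]}$, and a finite direct sum of perfect complexes is perfect.

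For the Selmer complex: by construction, $\wtil{{\rm R}\Gamma}_f(E, M)$ is the shift by $[-1]$ of the cone of a morphism between two complexes that are perfect by the above arguments applied to the Greenberg data $M$, $M_w^+$, $M_w$. Since $D_{\rm perf}$ is a triangulated subcategory of $D({}_{\OO_X}{\rm Mod})$, this shifted cone is perfect. The degree bound follows from the long exact sequence attached to \eqref{ex tri}:
\begin{equation*}
\cdots \to \wtil{H}^i_f(E, M) \to H^i(E, M) \to \bigoplus_{w\in Sp} H^i(E_w, M_w^-) \to \wtil{H}^{i+1}_f(E, M) \to \cdots
\end{equation*}
For $i \geq 4$ both $H^i(E, M)$ and $H^{i-1}(E_w, M_w^-)$ vanish (the latter since $i - 1 \geq 3 > {\rm cd}_p(G_{E_w})$), so $\wtil{H}^i_f(E, M) = 0$. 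Combined with $\wtil{{\rm R}\Gamma}_f(E, M) \in D^{\geq 0}$, this yields $\wtil{{\rm R}\Gamma}_f(E, M) \in D_{\rm perf}^{[0, 3]}$.

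The main technical point---not a formal consequence of finiteness and a degree bound---is the existence of a perfect (rather than merely pseudocoherent) representative of the continuous cochain complex ${\rm R}\Gamma(G, M)$ over a general Noetherian base $\OO_X$; this is furnished by Nekov\'{a}\v{r}'s explicit construction, which exploits the finite $p$-cohomological dimension of $G$ and the local freeness of $M$ to truncate the continuous cochain complex.
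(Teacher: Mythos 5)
The paper's own proof is a one-line reference to \cite[Proposition 9.7.2 (ii)]{nek-selmer}, so your expansion tracks the same underlying argument. Your overall strategy — establish a perfect representative of each continuous cochain complex via Nekov\'a\v{r}'s construction, then form the cone — is the right one, and most of the proof is sound.

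The step that is not correctly justified is the passage from ``$\wtil{{\rm R}\Gamma}_{f}(E, M)$ is perfect and has cohomology concentrated in $[0,3]$'' to ``$\wtil{{\rm R}\Gamma}_{f}(E, M) \in D_{\rm perf}^{[0,3]}$.'' In general, a complex that is both perfect and cohomologically concentrated in $[a,b]$ need not admit a single representative by finitely generated projectives in degrees $[a,b]$: over $\OO_{X}=\Z$, the complex $\Z \xrightarrow{\ 2\ } \Z$ placed in degrees $[-1,0]$ is perfect with cohomology $\Z/2$ concentrated in degree $0$, yet cannot be represented by a complex of finite projective $\Z$-modules concentrated in degree $0$. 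Trimming a perfect complex from above is always possible (the surjection onto the top term splits because the target is projective), but trimming from below is not, since the cokernel of an injection of projectives need not be projective. So the long-exact-sequence argument gives cohomological concentration but not the termwise concentration that $D_{\rm perf}^{[0,3]}$ (as defined in \S~\ref{sec: sel}) actually asks for. It would also be worth justifying the claim ``$\wtil{{\rm R}\Gamma}_{f}(E, M) \in D^{\geq 0}$'' rather than asserting it.

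The fix is immediate and you already have the needed inputs. Nekov\'a\v{r}'s construction produces, for a profinite group $G$ satisfying (F) with ${\rm cd}_{p}(G)=e$ and $M$ ind-admissible of finite type and locally free over $\OO_{X}$, a perfect representative of ${\rm R}\Gamma(G, M)$ whose \emph{terms} sit in degrees $[0,e]$ — this is precisely what you cite for the global and local pieces. With perfect representatives of $A:= {\rm R}\Gamma(G_{E,Sp},M)\oplus\bigoplus_{w} {\rm R}\Gamma(E_{w}, M_{w}^{+})$ and $B:=\bigoplus_{w} {\rm R}\Gamma(E_{w}, M_{w})$ concentrated termwise in degrees $[0,3]$ and $[0,2]$ respectively, the complex ${\rm Cone}(A\to B)[-1]$ has $n$-th term $A^{n}\oplus B^{n-1}$, which is nonzero only for $n\in[0,3]$. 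This gives the degree bound directly at the level of complexes, with no appeal to the long exact sequence and no truncation step. The rest of your argument (finiteness conditions (F), ${\rm cd}_{p}=3$ globally and $2$ locally, perfectness passing through cones and direct sums) is fine.
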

\begin{proof} As in \cite[Proposition 9.7.2 (ii)]{nek-selmer}. 
\end{proof}

From the triangle  \eqref{ex tri} we extract an exact sequence
\beq\label{tilde ses}
0 \to H^{0}(G_{E, Sp}, M)\to \bigoplus_{w\in Sp} H^{0}(E_{w}, M^{-}_{w}) \to \wtil{H}^{1}_{f}(E, M)\to H^{1}_{f} (E, M)\to 0
\eeq
where the last term is the (Greenberg) Selmer group 
\beq\label{sel gr} H^{1}_{f} (E, M):= \Ker \left( H^{1}_{}(G_{E, Sp},M)\to \bigoplus_{w\in Sp}  H^{1} (E_{w}, M_{w}^{-})\right). 
\eeq

\subsubsection{Height pairings} \label{sec: ht}
For $?=\emptyset,\iota$, let  $M^{?}=(M^{?}, (M_{w}^{?,+}))$ be a strict Greenberg datum for $G_{E, Sp} $ over $X$. Suppose given a perfect  pairing of $\OO_{X}[G_{E, Sp}]$-modules
\beq \lb{skewp}
M\ot_{\OO_{X}}M^{\iota}\to \OO_{X}(1) \eeq
such that $M^{+}_{w}$ and $M^{+, \iota}_{w}$ are exact orthogonal of each other. Let $\Gamma_{F}$ be a profinite abelian group.

For every pair of  Greenberg data $M$, $M^{\iota}$ as above, there  is a   height pairing 
\beq \lb{hhh}
h_{M}\colon \wtil{H}^{1}_{f}(E, M)\ot_{\OO_{X}}  \wtil{H}^{1}_{f}(E, M^{\iota})
\to \OO_{X}\hat{\ot}\Gamma_{F}\eeq
constructed  in  \cite[\S11.1]{nek-selmer}. The following is a special case of \cite[Appendix C, Lemma 0.16]{venerucci-thesis}. 
\begin{prop} \lb{prop ht} 
For each regular point $x\in X$ and $P_{1}\ot P_{2}\in \wtil{H}^{1}_{f}(E,  M)\ot_{\OO_{X}}  \wtil{H}^{1}_{f}(E, M^{\iota})$, we have 
$$h_{M\ot \kappa(x)} (P_{1,x}, P_{2, x})=(h_{M}(P_{1},  P_{2}))(x).$$
\end{prop}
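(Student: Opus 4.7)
The plan is to deduce the compatibility from the functoriality of Nekov\'a\v{r}'s construction of the height pairing together with the base-change results for continuous cohomology established in \S\ref{(F)}. Recall that the height pairing $h_M$ is built from a Bockstein-type connecting map associated with a canonical ``cyclotomic'' deformation $\widetilde{M}$ of $M$ over $\OO_X \hat{\ot} \Z_p\llb \Gamma_F \rrb$, whose specialisation along the augmentation ideal $I$ with $I/I^2 \cong \Z_p \hat{\ot} \Gamma_F$ recovers $M$. Concretely, the pairing is the composition of: (i) the Bockstein $\beta \colon \widetilde{H}^1_f(E, M) \to \widetilde{H}^2_f(E, M) \hat{\ot} \Gamma_F$ arising from the short exact sequence $0 \to M \hat{\ot} \Gamma_F \to \widetilde{M}/I^2 \widetilde{M} \to M \to 0$ of Greenberg data (using the local splittings at $w \in Sp$ coming from the analogous local deformations), and (ii) the cup product with the second class, followed by the global duality trace map $\widetilde{H}^3_f(E, \OO_X(1)) \to \OO_X$ provided by the pairing \eqref{skewp}.

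First I would check that each of these ingredients commutes with specialisation at a regular point $x$. Since $x$ is regular, $\kappa(x)$ has finite projective dimension over $\OO_{X,x}$, so Proposition \ref{base change} (applied locally at $x$ and termwise to each component of the Selmer complex, including the local pieces at $w \in Sp$ and the deformation quotient $\widetilde{M}/I^2\widetilde{M}$, all of which are flat or of finite Tor-dimension over $\OO_X$) yields natural quasi-isomorphisms
$$
\widetilde{\rm R\Gamma}_{f}(E, M) \stackrel{\rm L}{\ot}_{\OO_X} \kappa(x) \simeq \widetilde{\rm R\Gamma}_{f}(E, M \ot \kappa(x)),
$$
and similarly for the deformation $\widetilde{M}/I^2 \widetilde{M}$, compatibly with the short exact sequence defining $\beta$. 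It follows that the Bockstein map commutes with specialisation: $\beta_{M \ot \kappa(x)}(P_{1,x}) = (\beta_M(P_1))(x)$ in $\widetilde{H}^2_f(E, M \ot \kappa(x)) \hat{\ot} \Gamma_F$.

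Next I would verify that the cup product and global duality trace are also compatible with specialisation: the cup product on continuous cochain complexes is defined by a universal formula and so commutes with any base change, while the global duality pairing at $x$ is the specialisation of the one over $X$ since the duality pairing \eqref{skewp} is $\OO_X$-linear and the local orthogonality of $M^\pm_w$ and $M^{\iota, \pm}_w$ is preserved under $- \ot \kappa(x)$. Combining these three compatibilities gives the claimed identity $h_{M \ot \kappa(x)}(P_{1,x}, P_{2,x}) = (h_M(P_1, P_2))(x)$.

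The main obstacle is a careful bookkeeping check that Nekov\'a\v{r}'s construction of $\beta$ (which a priori uses choices of splittings, cochain-level lifts, and local conditions at the places in $Sp$) is genuinely natural in $M$ and compatible with the derived base change at a regular point, rather than only with reduction modulo a regular element. This is exactly the content of the cited Lemma~0.16 in Appendix~C of \cite{venerucci-thesis}, which verifies the compatibility in the required generality; I would appeal to that result to conclude, rather than redo the Bockstein bookkeeping from scratch.
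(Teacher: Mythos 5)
Your proposal is correct and ends up at exactly the same place as the paper: the paper's entire justification for this proposition is the citation to Venerucci's Appendix~C, Lemma~0.16, which is also what you invoke in your final paragraph. The intermediate sketch (Bockstein from the cyclotomic deformation, cup product, duality trace, each commuting with specialisation at a regular point via Proposition~\ref{base change}) is a faithful reconstruction of what that lemma verifies, but the paper itself does not reproduce those details.
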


 Venerucci has defined height pairings in an even more general context. Let $M_{X}$ be a strict Greenberg datum over $X$ as above, let $Y\subset X$ be a  local complete intersection, and let $M_{Y}^{?}$ be the restriction of $M_{X}^{?}$.  Let $\mathscr{N}_{Y/X}^{*}$ be the conormal sheaf of $Y\to X$. 
Then there is a height pairing
\beq \lb{hnv} h_{M_{Y}/M_{X}}\colon  \wtil{H}^{1}_{f}(E, M_{Y})\ot_{\OO_{Y}}  \wtil{H}^{1}_{f}(E, M_{Y}^{\iota})\to \mathscr{N}_{Y/X}^{*}, \eeq
constructed in \cite[Appendix C, \S~0.21]{venerucci-thesis}. 

We  note its relation to \eqref{hhh} in a special case, and its symmetry properties in a conjugate-self-dual case.

\begin{prop} \lb{prop ht2} The pairing \eqref{hnv}
satisfies the following properties. 
\begin{enumerate}
\item
Let $\Gamma_{F}$ be a profinite abelian quotient of $G_{E, Sp}$, let $X=Y\hat{\ts}_{\Spec \Q_{p}}\Spec \Z_{p}\llb \Gamma_{F}\rrb_{\Q_{p}}$ (where $\hat{\ts}= \eqref{hatprod}$), and assume that $M_{X}^{?}=M_{Y}^{?}\ot_{\Z_{p}} \Z_{p}\llb \Gamma_{F} \rrb$ for $?=\emptyset, \iota$, where if $?=\emptyset $  (respectively $?=\iota$) then $G_{E, Sp}$ acts on $\Gamma_{F}$ through the tautological character (respectively its inverse). 
Then $$h_{M_{Y}/M_{X}} = h_{M_{Y}}=\eqref{hhh}.$$
\item Suppose that there is an involution $\iota\colon X\to X$ stabilising $Y$ and such that $M_{X}^{\iota}= M_{X}\ot_{\OO_{X}, \iota}\OO_{X}$, $M_{X,w}^{+,\iota}= M_{X, w}^{+}\ot_{\OO_{X}, \iota}\OO_{X}$. 

 Let $\epsilon, \epsilon'\in\{\pm 1\}$. Assume that the pairing \eqref{skewp} is $\epsilon$-hermitian (\S~\ref{skew def}),
that  ${\rm d}_{Y/X}\iota= \epsilon'\id$  on $\mathscr{N}_{Y/X}^{*}$, and that there is an  $\OO_{Y}$-linear isomorphism 
$${\rm c}\colon \wtil{H}^{1}_{f}(E, M_{Y}^{\iota}) =  \wtil{H}^{1}_{f}(E, M_{Y})^{\iota} \to  \wtil{H}^{1}_{f}(E, M_{Y}) .$$
Then the pairing 
\beq\lb{skewsym}
h_{M_{Y}/M_{X}}^{\Box} \colon  \wtil{H}^{1}_{f}(E, M_{Y})\ot_{\OO_{Y}}  \wtil{H}^{1}_{f}(E, M_{Y}) &\to \mathscr{N}_{Y/X}^{*}\\
(z, z') &\mapsto h_{M_{Y}/M_{X}}(z, {\rm c} z')
 \eeq
 is $\epsilon\epsilon'$-symmetric. 
\end{enumerate}
\end{prop}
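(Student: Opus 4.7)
The proposition splits into two rather different assertions: Part 1 is essentially a compatibility check unwinding definitions, while Part 2 is a sign calculation tracking how the $\epsilon$-hermitian structure, the involution sign $\epsilon'$, and the cup-product antisymmetry interact. For Part 1, I will first identify $\mathscr{N}^{*}_{Y/X}$ canonically with $\OO_{Y}\hat{\ot}\Gamma_{F}$ via the product decomposition $X=Y\hat{\ts}\Spec\Z_{p}\llb\Gamma_{F}\rrb_{\Q_{p}}$ together with the canonical identification  $\mathscr{N}^{*}_{\{0\}/\Spec\Z_{p}\llb\Gamma_{F}\rrb_{\Q_{p}}}\cong \Gamma_{F}\hat{\ot}\Q_{p}$.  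Under this identification, Venerucci's Bockstein-type construction of $h_{M_{Y}/M_{X}}$ in \cite[App.~C, \S0.21]{venerucci-thesis} coincides on the nose with \nek's definition of the cyclotomic height pairing $h_{M_{Y}}$ in \cite[\S11.1]{nek-selmer}: both are built from the first-order deformation of $M_{Y}$ along the universal cyclotomic character, which by the hypothesis $M_{X}^{?}=M_{Y}^{?}\ot_{\Z_{p}}\Z_{p}\llb\Gamma_{F}\rrb$ is supplied tautologically.

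For Part 2, I will combine three inputs.  First, the functoriality of Venerucci's construction under isomorphisms of Greenberg data, together with the $\iota$-equivariance $M_{X}^{\iota}=\iota^{*}M_{X}$ and the hypothesis $\mathrm{d}_{Y/X}\iota=\epsilon'\id$ on $\mathscr{N}^{*}_{Y/X}$, yields an identity
$$h_{M_{Y}/M_{X}}(\iota^{*}w_{1},\iota^{*}w_{2})\ =\ \epsilon'\cdot h_{M_{Y}^{\iota}/M_{X}^{\iota}}(w_{1},w_{2})$$
as sections of $\mathscr{N}^{*}_{Y/X}$.  Second, the $\epsilon$-hermitian structure on \eqref{skewp} passes through the cup-product/Bockstein definition of the height to yield a ``hermitian swap'' identity relating $h_{M_{Y}/M_{X}}$ to $h_{M_{Y}^{\iota}/M_{X}^{\iota}}$ up to a factor of $\epsilon$; this is the analogue for Venerucci's pairing of the hermitian symmetry proved by \nek\ in \cite[\S11.3]{nek-selmer} for cyclotomic heights, and its proof follows the same cochain-level template. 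Third, one checks that the isomorphism $\mathrm{c}$ is compatible with the involutive identification $\iota^{*,2}=\id$ on $M_{Y}$, so that substituting $\mathrm{c}z_{2}$ for the second argument converts the hermitian swap into a genuine symmetry relation between $h^{\Box}(z_{1},z_{2})$ and $h^{\Box}(z_{2},z_{1})$. Assembling the three inputs then gives $h^{\Box}(z_{1},z_{2})=\epsilon\epsilon'\cdot h^{\Box}(z_{2},z_{1})$.

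The main obstacle will be the sign bookkeeping in Part 2.  The Bockstein/cup-product construction carries an intrinsic anticommutativity of cup product, and one must verify that this interacts correctly with the hermitian sign $\epsilon$ and the normal-bundle sign $\epsilon'$ to produce the asserted symmetry sign $\epsilon\epsilon'$ rather than $-\epsilon\epsilon'$; some internal cancellation of signs is to be expected.  This calculation should reduce, at a regular classical point $y\in Y$ with $\iota$ the cyclotomic involution, to \nek's original symmetry argument for self-dual Galois representations in \cite[\S11.3]{nek-selmer}, which will serve both as a template for the cochain-level computation and as a consistency check on the final signs.
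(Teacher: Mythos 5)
Your Part~1 is the same as the paper's (the paper simply says it follows from the construction and omits the details, since the case is not needed).

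For Part~2, you take a genuinely different route. You propose to push the $\epsilon$-hermitian structure directly through the Bockstein/cup-product construction of $h_{M_Y/M_X}$ by a cochain-level sign computation modelled on \nek's \cite[\S11.3]{nek-selmer}, and to combine this ``hermitian swap'' with $\iota$-equivariance and the normal-bundle sign $\epsilon'$. This could in principle work, but the central step -- the hermitian swap for Venerucci's pairing -- is asserted as ``following the same cochain-level template'' rather than carried out, and it is precisely where the sign bookkeeping you flag as the main obstacle lives. The paper sidesteps that computation entirely: it first reduces to codimension one by picking a regular sequence $(x_1,\ldots,x_r)$ generating the ideal of $Y$ and invoking \cite[Appendix C, Proposition 0.5]{venerucci-thesis} to identify the projection $\partial_i\circ h$ with $h_{M_Y/M_{X_i}}$ for a divisorial intermediate space $X_i$; then for $r=1$ it expresses $h_{M_Y/M_X}$ as the composition of Venerucci's map $i_x\colon \wtil{H}^1_f(E,M_Y)\to \wtil{H}^2_f(E,M_X)[x]$ with the Cassels--Tate pairing on $\wtil{H}^2_f(E,M_X)_{\OO_X\text{-tors}}$, whose $\epsilon$-hermitian property is already established in \nek\ and Venerucci. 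All maps in this composition are $\iota$-equivariant, so the sign falls out without any new cochain-level argument. The paper's approach thus buys you a cleaner, shorter proof that leans on an existing hermitian structure rather than re-deriving one; your approach, if completed, would be more self-contained but requires verifying a new compatibility at the level of cochains. You also omit the reduction to codimension one, which in the paper's argument is what makes the Cassels--Tate route available; your direct approach would have to handle the multi-dimensional conormal sheaf in one go, adding to the bookkeeping burden.
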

In our main application  in Theorem \ref{ugz thm}, we have  $Y=\X$ (or an open subset), the Hida family for $(\G\ts\H)'$; $X=\X^{\sharp}$, the Hida family for $\G\ts \H$ containing $X$; and  $M_{Y}=\cV$, $M_{X}=\cV^{\sharp}$, the corresponding universal $G_{E}$-representations. In that case, the height pairing $h_{\cV/\cV^{\sharp}}$ is simply $1/2 $ of the pairing $h_{\cV}$ of Proposition \ref{prop ht}.

\begin{proof} Part 1 follows from the construction. (We omit further details since, by the remark preceding the present proof, we do not actually need it in this paper). 
We prove the symmetry properties. Let $I$ be the ideal sheaf of $Y$; up to restricting to some open subset of $X$ we may assume that $I$ is generated by a regular sequence $x=(x_{1}, \ldots, x_{r})$. Then $\calN_{Y/X}^* = I/I^2$ is finite locally free generated by $([x_1], \ldots, [x_r])$. Let $\partial_i \in N = (I/I^2)^\vee$ be the map $\partial_i([x_j])=\delta_{ij}$. It suffices to show that $h_i:=\partial_i \circ h$ is   $\epsilon \epsilon'$-hermitian 
 for all $i$. By \cite[Appendix C, Proposition 0.5]{venerucci-thesis}, $h_i$ is identified with $ h_{M_{Y}/M_{X_{i}}}$,  where $X_i=V_{X}((x_j)_{j\neq I})$ so that $Y=V_{X_{i}}(x_{i})$. Hence it suffices to prove the claim for $r=1$.

We argue similarly to \cite[Proof of Corollary 10.10]{venerucci-thesis}.  Assume thus $r=1$, write $x $ in place of $x_{1}$, and let $K$ be the fraction field of $X$. By  \cite{nek-selmer} and \cite[Appendix A, \S~ 0.7]{venerucci-thesis} we have an $\epsilon$-hermitian Cassels--Tate pairing 
$$\cup \colon \wtil{H}^{2}_{f}(E, M_{X})_{\text{$\OO_{X}$-tors}}
 \ot_{\OO_{X}} \wtil{H}^{2}_{f}(E, M_{X}^{\iota})_{\text{$\OO_{X}$-tors}} \to K/\OO_{X},$$
and by \cite[Appendix C, Proposition 0.17]{venerucci-thesis}, we have a map
 $$ i_{x}\colon \wtil{H}^{1}_{f}(E, M_{Y})\to  \wtil{H}^{2}_{f}(E, M_{X})[x]$$
 such that $h_{M_{Y}/M_{X}}$ coincides with 
$$ \wtil{H}^{1}_{f}(E, M_{Y}) \ot  \wtil{H}^{1}_{f}(E, M^{\iota}_{Y}) 
\stackrel{i_{x}\ot i_{x}^{\iota}}{\longrightarrow} 
 \wtil{H}^{2}_{f}(E, M_{X})[x]\ot  \wtil{H}^{2}_{f}(E, M^{\iota}_{X})[x]  
 \stackrel{\cup}{\longrightarrow}  x^{-1}\OO_{X}/\OO_{X}  \stackrel{[\cd x^{2}]}{\longrightarrow}  I/I^{2}= \calN^{*}_{Y/X}.$$
Since all the above maps are $\iota$-equivariant, we find that $h_{M_{Y}/M_{X}}$ is $\epsilon$-hermitian as well. The desired assertion follows from this and the fact that $\iota $ acts by $\eps'$ on $\calN^{*}_{Y/X}$.
 \end{proof}

\section{Universal Heegner class}

\subsection{Tate cycles and Abel--Jacobi maps} Let  $X/E$ be an algebraic variety over a number field, and let  $R$ 
be a finite extension of $\Q_{p}$, or its ring of integers, or a finite quotient of its ring of integers.
\subsubsection{Tate cycles}
If $\cW $ is an \'etale local system of  $R[G_{E}]$-modules on $X$,   the $R$-module of  \emph{Tate ($0$)-cycles} is the space 
$$\mathscr{Z}_{0}(X , \cW):=\bigoplus_{x\in X} H^{0}(x, \cW)$$
where the sum runs over the closed points of $X$ and, if $x\in X$ and $\baar{x}:= x\times_{\Spec E}\Spec \baar{E}$, we define $H^{0}(x, \cW):= H^{0}(\baar{x}, \cW)^{G_{E}}$.  Elements of the latter space are written $\sum_{\baar{x}'} [\baar{x}' ]\otimes \xi_{\baar{x}'}$, where $\baar{x}'$ runs through the  points of $\baar{x}$.  
When $\cW=R$, the module $\mathscr{Z}_{0}(X, R)$ is simply the usual $R$-module of $0$-cycles with coefficients in $R$. Its quotient by the relation of rational equivalence is denoted ${\rm CH}_{0}(X, R)$.

When $X$ has dimension $0$, its \emph{fundamental class} is the Tate cycle with trivial coefficients 
 $$[X]:= \sum_{\baar{x}'\in Z(\baar{E}} [\baar{x}' ]  \otimes 1 \in \mathscr{Z}_{0}(X, \Z_{p}).$$ 
 If $a=\sum_{\baar{x}'}[\baar{x}'] \otimes \xi_{\baar{x}'}\in \mathscr{Z}_{0}(X, \cW)$ its support $|a|\subset \baar{X}$ is the support of the divisor $\sum [\baar{x}']$, where the sum extends to those $\baar{x}' $ such that $\xi_{\baar{x}'}\neq 0$. 

\subsubsection{Abel--Jacobi map}  A Tate cycle $a\in \mathscr{Z}_{0}(X, \cW)$ yields a map $R\to H^{0}(|a|, \cW)^{G_{E}}$ and, if $X$ has dimension $1$, the latter cohomology group maps to $H^{2}_{|a|}(\baar{X}, \cW(1))
$. The image of $1\in R$ under the compostion
$$R\to H^{0}(|a|, \cW)\to H^{2}_{|a|}(\baar{X}, \cW(1))\to H^{2}(\baar{X}, \cW(1)),$$ is denoted by $\baar{\rm cl}(a)$. 
Consider the exact sequence 
\beq\label{ex pair}
0\to H^{1}(\baar{X}, \cW(1))\to H^{1}(\baar{X}-|a|, \cW(1))\to H^{2}_{|a|}(\baar{X}, \cW(1))\to H^{2}(\baar{X}, \cW(1)).
\eeq
 Let $e$ be a Galois-equivariant idempotent acting on the right on $H^{*}(\baar{X}, \cW(1) )$, such that $\baar{\rm cl}(a)e=0$. Then we may apply the idempotent $e$ to \eqref{ex pair} and pull back the resulting exact sequence via the map $R\to H^{2}_{|a|}(\baar{X}, \cW(1))$ given by $a$, obtaining an extension 
\beq \label{AJ extn}
0\to H^{1}(\baar{X}, \cW(1))e\to E_{a} \to R\to 0
\eeq
in the category of $G_{E}$-representations over $R$.  The map sending $a$ to the class ${\rm AJ}(a)e$ of this extension is called the $e$-Abel--Jacobi map, 
$${\rm AJ}e\colon \mathscr{Z}_{0}(X, \cW)\to H^{1}(G_{E}, H^{1}(\baar{X}, \cW(1))e)=  H^{1}(G_{E}, H_{1}(\baar{X}, \cW)e),$$
where the last equality is just a reminder of our notational conventions.  When $e={\rm id}$, it is omitted from the notation.
When  $\cW=R$ and $e$ acts via correspondences, the map ${\rm AJ}e$ factors through ${\rm CH}_{0}(X, R)e$.

\subsection{Heegner cycles} 
We use the notation from \S~\ref{sec: not} for compact subgroups $U_{*,p, \ur}\subset  U_{*,p}(p^{\ur})\subset \G_{*}(\Q_{p})$ and let $X_{*, U^{p}_{*}{}',\ur}\rightarrow X_{*, U^{p}{}'}(p^{\ur})$ be the associated Shimura varieties; the level $U_{*}^{p}{}'$ will be fixed 
 and often omitted from the notation. If $p\OO_{F,p}=\prod_{v\vert p }\vpi_{v}^{e_{v}}\OO_{F, v}$ we use  $r$ as a shorthand for  $\ur=(e_{v}r)_{v\vert p}$.

\subsubsection{Embeddings of Shimura varieties}

For any pair of subgroups $V'\subset \H'(\A^{\infty})$, $K\subset (\G\times\H)'(\A^{\infty})$ such that $K \cap \H'(\A^{\infty}) \supset V$, we define the diagonal embedding
\beqq {\rm e'}={\rm e}_{V', K}' \colon Y'_{V'} &\to Z_{K}\\
y& \mapsto [({\rm e}(\tilde{y}), \tilde{y})]
\eeqq
if $\tilde{y}$ is any lift of $y$ to $Y_{V}$ for some $V\subset \H(\A^{\infty})$ such that $VF_{\A^{\infty}}^{\times}\subset V'$.  
   
Let $W=W_{\G}\otimes W_{\H}$ be an irreducible right  algebraic representation of $(\G\times \H)'$ over $L\supset \Q_{p}$.  If $W$ satisfies (wt), the space $ W^{H'}$ is $1$-dimensional over $L$.
Let $\cW $ be the \'etale sheaf on the Shimura tower $Z$  associated with $W$;  any $\xi\in W^{H'}$ induces a map $ \Q_{p}\to {\rm e}'^{*}\cW$ of \'etale sheaves on the tower  $Y'$; by adjunction we obtain a canonical map $ \Q_{p}\to {\rm e}'^{*}\cW \ot W^{\vee}_{H'}$ where the second factor is simply an $L$-line. 

We let
\beqq
{\rm e'}_{W, K, V',*} &\colon 
 \mathscr{Z}_{0}(Y_{V'}', \Z_{p}) \to \mathscr{Z}_{0}(Y_{V'}'  ,  {\rm e}'^{*}\cW^{\circ})  \ot W^{\vee}_{H'}
   \to \mathscr{Z}_{0}(Z_{K}, \cW^{\circ}) \ot W^{\vee}_{H'}
\\
{\rm e}'_{W,\ur, *}&\colon
 \mathscr{Z}_{0}(Y'_{\ur}, \Z_{p})\to \mathscr{Z}_{0}(Y_{\ur}',  {\rm e}'^{*}\cW^{\circ}) \ot W^{\vee}_{H'}
   \to \mathscr{Z}_{0}(Z(p^{\ur}), \cW^{\circ})  \ot W^{\vee}_{H'} \to \mathscr{Z}_{0}(Z_{\ur}, \cW^{\circ})  \ot W^{\vee}_{H'}
\eeqq
be the compositions of the maps described above and, respectively, ${\rm e}'_{W, *}$  or ${\rm e}'_{\ur, *}$.

\subsubsection{CM cycles}
Let  $[Y_{V'}']\in \mathscr{Z}_{0}(Y'_{V'}, \Z_{p})$ be the  fundamental class. 
For any pair of levels $K, V$  such that ${\rm e}_{W,(K, V')} $ is defined, 
let
 \begin{align}
\nonumber
\Delta_{W, (K, V')}&:=  {\rm e}_{W,K, V, *}'[Y'_{V'}]  \in  \mathscr{Z}_{0}(Z_{K}, \cW^{\circ}), 
\end{align}

When $ W\neq\Q_{p}$, we consider the elements
\beqq \Delta_{W,  (K, V')}^{\circ}:=
 {1\over |Y'_{V'}(\baar{E})|} \cdot \Delta_{ W,(K, V')}  \in  \mathscr{Z}_{0}(Z_{K}, \cW^{\circ})
 \eeqq
 When  $W=\Q_{p}$, we consider the modification
\beq\label{modif hodge}
\Delta_{ (K, V')}^{\circ}:= {1\over |Y'_{V'}(\baar{E})|} \cdot( \Delta_{(K, V')} -\deg(\Delta_{(K, V')})\cdot\xi_{\text{Hodge}} ) \in {\rm CH}_{0}(Z_{K})_{\Q_{p}}, 
\eeq
where $\xi_{\text{Hodge}} $ is the {Hodge class} of  \cite[\S3.1.3]{yzz}, whose introduction is motivated by the following lemma.
\begin{lemm} The image under pushforward of $\Delta^{\circ}_{W,(K'', V'')} $ in $ \mathscr{Z}_{0}(Z_{K}, \cW)$ (if $W\neq \Q_{p}$) or ${\rm CH}_{0}(Z_{K})_{\Q_{p}}$ (if $W=\Q_{p})$  is independent of $V'', K''$ such that 
$V''\subset K''\cap \H'(\A^{\infty})  $ and $K'\subset K$. We have 
$$\baar{\rm cl}(\Delta^{\circ}_{W,(K, V')})=0 \quad \text{in } H^{2}(\baar{Z}_{K}, \cW(1)).$$
\end{lemm}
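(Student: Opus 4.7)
The plan is to verify the two assertions separately, both being essentially formal once one unpacks the definitions.

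\emph{Independence of auxiliary levels.} First I would treat the dependence on $V''$. For any further shrinking $V''' \subset V''$ (with $V'''$ acting freely on the relevant cover), pushforward along $Y'_{V'''} \to Y'_{V''}$ sends the unnormalised fundamental class $[Y'_{V'''}]$ to $\deg(Y'_{V'''}/Y'_{V''}) \cdot [Y'_{V''}]$, and this degree equals the ratio $|Y'_{V'''}(\baar E)|/|Y'_{V''}(\baar E)|$. Consequently the normalised class $|Y'_{V''}(\baar E)|^{-1}[Y'_{V''}]$ is stable under pushforward in the tower $\{Y'_{V''}\}$. Combining this with the naturality $\mathrm{p}_{K''/K,*} \circ {\rm e}'_{W,K'',V'',*} = {\rm e}'_{W,K,V'',*}$ of the diagonal embedding under shrinking $K''\subset K$, and with the factorisation ${\rm e}'_{W,K'',V''',*} = {\rm e}'_{W,K'',V'',*}\circ \mathrm{p}_{V'''/V'',*}$ inherent in the definition, yields independence of $(K'',V'')$ in $\mathscr{Z}_0(Z_K,\cW)$. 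For $W=\Q_p$ one additionally notes that $\xi_{\rm Hodge}$ is defined uniformly on the tower (cf.\ \cite[\S 3.1.3]{yzz}), so that $\mathrm{p}_{K''/K,*}(\xi_{\rm Hodge,K''}) = \deg(\mathrm{p}_{K''/K}) \cdot \xi_{\rm Hodge,K}$ and $\deg(\mathrm{p}_{K''/K,*}\Delta^\circ_{(K'',V'')}) = \deg(\Delta^\circ_{(K,V')})$, so the Hodge modification in \eqref{modif hodge} is preserved.

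\emph{Vanishing of the cycle class.} I would split into two cases.  If $W\neq\Q_p$, I claim that already $H^2(\baar Z_K,\cW(1))=0$, so the vanishing is automatic.  By Poincar\'e duality on the smooth projective curve $Z_K$, this group is dual to $H^0(\baar Z_K,\cW^\vee)$, which decomposes as a sum over geometric connected components of the $\pi_1$-invariants of the stalk of $\cW^\vee$.  Since $W$ is an irreducible nontrivial algebraic representation of $(\G\ts\H)'$ of cohomological weight and the monodromy on each component is Zariski-dense in a form of the derived group (by strong approximation for the simply connected cover of $\G^{\rm der}$), those invariants vanish.  If $W=\Q_p$, then $\cW=\Q_p$ and $H^2(\baar Z_K,\Q_p(1))\cong \bigoplus_{c\in \pi_0(\baar Z_K)}\Q_p$, with $\baar{\rm cl}$ recording the degree of a zero-cycle on each component.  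By construction of $\xi_{\rm Hodge}$ in \cite[\S 3.1.3]{yzz}, it has degree $1$ on every geometric component, so the modification $\Delta_{(K,V')}-\deg(\Delta_{(K,V')})\cdot\xi_{\rm Hodge}$ is componentwise of degree zero, proving the vanishing.

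The main (though mild) obstacle is the monodromy vanishing in the $W\neq\Q_p$ case: one must verify that on each geometric connected component of $\baar Z_K$ the image of the fundamental group in $(\G\ts\H)'(\Q_p)$ really is Zariski-dense in enough of the group that $W^\vee$ has no invariants.  This is a standard consequence of strong approximation together with the irreducibility and non-triviality of $W$ restricted to the derived group, but care is needed because the connected components of $\baar Z_K$ are permuted by a CM torus, so one must argue on a single component.
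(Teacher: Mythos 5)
Your proposal is correct and follows essentially the same route as the paper, which (after noting the independence is clear for $W\neq\Q_p$) simply cites Saito for the vanishing $H^2(\baar Z_K,\cW(1))=0$ and \cite[\S3.1.3]{yzz} for the Hodge-class facts; the strong-approximation/Poincar\'e-duality argument you sketch for the monodromy vanishing is precisely the content of the cited passage in Saito, and your caveat about the permutation of components by the CM torus is the correct point of care that that reference handles.
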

\begin{proof}
If $W\neq \Q_{p}$, the first assertion is clear; the second one is automatic as  $H^{2}(\baar{Z}_{K}, \cW(1))=0$ (see the argument in \cite[bottom of p. 1089]{saito}). If $W=\Q_{p}$, the  assertions amount, respectively, to the compatibility of the Hodge classes under pushforward and the fact that, by construction,  the $0$-cycle $\Delta_{K, V}^{\circ}$ has degree zero; both facts are explained in \cite[\S3.1.3]{yzz}.
\end{proof}
\subsubsection{Cycles, Selmer classes,  and functionals} Let
\beq \label{PWKV}
P_{W,( K, V')}:= {\rm AJ}(\Delta_{W,  (K, V')}^{\circ})\in H^{1}(G_{E, Sp}, H_{1}(\baar{Z}_{K}, \cW)).
\eeq
The classes $P_{W,( K, V')}$ are also compatible under pushforward and yields  elements
\beqq
P_{W} &:= \lim_{K \cap \H'(\A^{\infty}) \supset V'} P_{W, (K, V')} \in
\varprojlim_{K}   H^{1}(G_{E, Sp}, H_{1}(\baar{Z}_{K}, \cW)).
\eeqq
The space in the right-hand side has a right  action by $(\G\times\H)'(\A^{\infty})$, and $P_{W}$ is invariant under $\H'(\A^{\infty})$. Via \eqref{carayol iso 2} and the biduality $W^{\vee\vee}=W$, $P_{W}$ yields, for each ordinary  representation $\Pi$ of weight $W$, a map
$$P_{\Pi}\colon \Pi\to H^{1}(G_{E, Sp}, V_{\Pi}).$$

Using  the map $\gamma_{H'}^{\ord}\colon \Pi^{\ord}\to \Pi_{H'} $ from Proposition \ref{gHo}, we also obtain a  map
\beq\lb{Pord text}
P_{\Pi}^{\ord}:=   P_{\Pi} \gamma_{H'}^{\ord}\colon \Pi^{\ord}\to H^{1}(G_{E, Sp}, V_{\Pi}).\eeq

\begin{rema}\lb{rem-P in H1f}  We conjecture that (i) there exist an algebraic variety $N_{W,(K, V')}/E$ of odd dimension $2d_{W}+1$, a homologically trivial cycle $\mathfrak{Z}_{W, (K, V')}\in {\rm CH}_{d_{W}}(N_{W, (K, V')})_{0}$, and a map $$\lm\colon H^{2d_{W}+1}(\baar{N}_{W, (K, V')}, \Q_{p}(d+1)) \to H_{1}(\baar{Z}_{K}, \cW)$$ such that $P_{W, (K, V')} =\lm({\rm AJ}(\mathfrak{Z}_{W, (, V')}))$; (ii) the elements $P_{W, (K, V')}$  belong to  $ H^{1}_{f}(E, H_{1}(\baar{Z}_{K}, \cW))$, so that the maps $P_{\Pi}$ take values in $H^{1}_{f}(E, V_{\Pi})$. 

When $\G=\GL_{2/\Q}$, one can prove (i) with $N_{W, (K, V')}$ a Kuga--Sato variety for $Z_{K}$,  generalising \cite[Proposition II.2.4]{nek-heeg}. 
The (probably not insurmountable) difficulty  in the general case is that, if $F\neq \Q$, the Shimura variety $Z $ is not of PEL-type.  Part (ii) should essentially be a consequence of either  (a)  part (i),  via \cite{nek-syn, nek-niz}, or (b) granted a generalisation of the theory of \emph{locc. citt.}  to nontrivial coefficients system,  of the  weaker assertion that, for a finite place $w$ of $E$, the image of $\Delta_{W, (K, V')}$ in $H_{0}({Z}_{K, E_{w}},\cW)$  comes from a corresponding  class in  the syntomic cohomology of $Z_{K, E_{w}}$ with coefficients in $\cW$. 
\end{rema} 

\subsection{Universal Heegner class} \lb{sec: 6.3}
We use the local construction described in  \S~\ref{ssec tord} to turn the $\H'(\A)$-invariant class $P_{W}$ into  an $\H'(\A^{p\infty})$-invariant  class $\cP_{W}$ with values in the ordinary completed homology.
Then we show that $\cP_{W}$ is independent of $W$ and it interpolates $P^{\circ}_{\Pi}$ at all representations $\Pi$ satisfying (ord), (n-exc). 
\subsubsection{Construction}
Let $d_{\ur}:=|Y'_{\ur}(\baar{E})|$ and let 
$ d^{\circ} =  d_{\ur}\prod_{v\vert p}q_{v}^{-r_{v}} \in \Z_{\geq 1}$, which is the limit of an eventually constant sequence. 
Recall that for  the tame level $K^{p}{}'\subset( \G\times \H)'(\A^{p\infty}), $
 we denote  $M^{\circ}_{K^{p}{}', W}:=  \varprojlim_{r}\H_{1} (\baar{Z}_{W,r}, \cW^{\circ})^{\ord}$. 

\begin{defi} The \emph{universal Heegner point} of weight $W$ is the element 
\beq\label{cPW-def}
\cP_{W}:= P_{W}\gamma_{H'}^{\ord}  \in d^{\circ, -1} H^{1}(G_{E, Sp}, M^{\circ}_{ K^{p}, W})
\eeq
where we still   denote by $\gamma_{H'}^{\ord}$ the map induced by the map 
$$\gamma_{H'}^{\ord}\colon \varprojlim_{K_{p}} \H_{1}(\baar{Z}_{K^{p}{}'K_{p}}, \cW)^{\H'(\A)} \to M_{ K^{p}{}', W}$$
of Proposition \ref{gHo}.
As usual, we simply write $\cP:= \cP_{\Q_{p}}$. When we want to emphasise the choice of $K^{p}{}'$ we write $\cP_{K^{p}{}', W}$ instead of $\cP_W$.
\end{defi}

\subsubsection{Independence of weight}  The class $\cP_{W}$ does not depend on $W$. 
\begin{prop}\label{indep of wt}
Under the identification 
\beq 
\lb{iddd} H^{1}(E, M^{\circ}_{K^{p}{}'}\otimes_{\Z_{p}}\OO_{L})\stackrel{j_{W,*}}\cong H^{1}(E,  M^{\circ}_{K^{p}{}',W})  \eeq
induced from the isomorphism $j_{W}$ of  Proposition \ref{free indep}.2, we have
$$j_{W, *}(\cP)= \cP_{W}.$$
\end{prop}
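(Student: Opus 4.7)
The plan is to reduce the statement to a compatibility at finite level $r$, then to unwind the explicit descriptions of $j_{W,r}$ (cap product with a highest weight vector) and of the two Heegner cycles (coefficient insertion via $\zeta \in W^{N_0}$ versus $\xi \in W^{H'_\infty}$), and finally to invoke the intertwining property of $\gamma_{H'}^{\ord}$ from the appendix to reconcile the two.

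First I would reduce to finite level. By Proposition \ref{free indep}.2, the isomorphism $j_W$ is obtained as the inverse limit of the maps $j_{W,r}$ from \eqref{jWr}, and by Lemma \ref{projlim ok}, passing to cohomology commutes with this inverse limit. Both classes $\cP$ and $\cP_W$ are, by definition, inverse limits (over $r$) of classes $\cP_r = P_r \gamma_{H'}^{\ord,r}$ and $\cP_{W,r} = P_{W,r}\gamma_{H'}^{\ord,r}$ obtained by truncating the operator $\gamma_{H'}^{\ord}$ of Proposition \ref{gHo} at finite level. Hence it suffices to show, for each sufficiently large $r$, that $j_{W,r,*}(\cP_r) = \cP_{W,r}$ in $H^1(E, H_1(\baar{Z}_r,\cW^\circ)^\ord)$.

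Next I would analyse both sides separately. On the left, by the construction in the proof of Proposition \ref{free indep}.2, the map $j_{W,r}$ is cap product with the highest weight vector $\zeta_r \in W^{\circ,N_0}/p^r$; equivalently, it is induced by the Galois-equivariant map of \'etale sheaves $\Z/p^r \to \cW^\circ/p^r$ sending $1$ to $\zeta_r$. Since the Abel--Jacobi map is functorial in the coefficient sheaf,
$$j_{W,r,*}\mathrm{AJ}(\Delta^\circ_{(K,V'),r}) = \mathrm{AJ}\bigl((\zeta_r)_*\Delta^\circ_{(K,V'),r}\bigr),$$
where $(\zeta_r)_*$ denotes coefficient insertion by $\zeta_r$. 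On the right, by definition $\Delta^\circ_{W,(K,V'),r}$ is obtained from the pushforward $\mathrm{e}'_*[Y'_{V'}]$ by coefficient insertion with the canonical highest weight vector $\xi \in W^{H'_\infty}$ (the line $W^{H'_\infty}$ being one-dimensional under (wt)). So the identity to prove becomes
$$\mathrm{AJ}\bigl((\zeta_r)_*\Delta^\circ_{(K,V'),r}\bigr)\cdot \gamma_{H'}^{\ord,r} = \mathrm{AJ}\bigl((\xi_r)_*\Delta^\circ_{(K,V'),r}\bigr)\cdot \gamma_{H'}^{\ord,r}.$$

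The final step is to verify this after the action of $\gamma_{H'}^{\ord}$. This is a purely algebraic statement about the fibres of the local system at a CM point: it asserts that the infinite-place component of $\gamma_{H'}^{\ord}$ intertwines the two coefficient insertions, sending the line $L\cdot\xi \subset W^{H'_\infty}$ isomorphically onto the line $L\cdot\zeta \subset W^{N_0,\ord}$. This intertwining is precisely what is proved in Proposition \ref{gHo} (building on the explicit computations of \S~\ref{A4}), where the operator $w_{\rm a}^{\ord}$ and its variants are designed to carry the $H'_\infty$-invariant highest weight line to the ordinary $N_0$-invariant highest weight line. Coefficient-wise application to $\Delta^\circ_{(K,V'),r}$ delivers the desired equality.

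The main obstacle is the last step: tracking the normalizations and verifying that the $p$-adic and infinite components of $\gamma_{H'}^{\ord}$ combine correctly with the factor $|Y'_{V'}(\baar{E})|^{-1}$ in the definition of $\Delta^\circ$. The denominator $d^{\circ,-1}$ appearing in \eqref{cPW-def} is precisely what records the discrepancy between the degree of $Y'_r$ and the local Iwahori factors at $p$, and its matching across weights amounts to the explicit volume computation underlying Proposition \ref{gHo}.
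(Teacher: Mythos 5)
Your sketch captures the correct overall strategy — reduce to finite level, identify both classes as coefficient insertions into the trivial-coefficient cycle, then show the operator $\gamma_{H'}^{\ord}$ reconciles the two insertions — but the crucial final step is misattributed and under-specified in ways that matter.

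First, you invoke Proposition~\ref{gHo} for the "intertwining" of the lines $L\xi$ and $L\zeta$, but Proposition~\ref{gHo} only establishes that $\gamma_{H'}^{\ord}$ is well-defined (the limit converges); it says nothing about normalisation. The content you need is Lemma~\ref{unitary}: the operator ${}^{\rm alg}\gamma_{H'}^{\ord} = \lim_r \gamma_{r,p}\Up_p^{-r}\ot c(W)^{-1}\gamma_{0,\infty}^{\iota}$ is \emph{unitary} in the sense that it carries the element $\xi\ot\xi^{\vee}\in W^{H'}\ot W^{\vee,H'}$ pairing to $1$ into an element of $W^{N_0}\ot W^{\vee}_{N_0}$ that also pairs to $1$, hence onto $\zeta_r\ot\zeta_r^{\vee}$. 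It is not enough to note that the lines are one-dimensional and mapped onto each other; the specific scalar is exactly what $c(W)$ is engineered to correct, and without citing that computation your argument cannot close.

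Second, you elide the fact that the two instances of $\gamma_{H'}^{\ord}$ you write on either side of your displayed equation are \emph{not} the same operator: for trivial $W$ the operator is purely $p$-adic ($c(\Q_p)=1$ and $\gamma_{0,\infty}^{\iota}$ is trivial), whereas for nontrivial $W$ it carries the extra factor $c(W)^{-1}\gamma_{0,\infty}^{\iota}$ acting on the coefficient. The comparison is therefore not "the same operator applied after two different coefficient insertions"; reconciling the two requires precisely the mod-$p^r$ device the paper uses: after reducing modulo $p^r$ and passing to level $K_p(p^r)$, the local system $\cW^{\circ}/p^r$ trivialises, which lets one write $[\Delta_{W,\ur}]_r=[\Delta_{\Q_p,\ur}\ot\xi\ot\xi^{\vee}]_r$ and reduce the whole identity to a $p$-divisibility statement checked against Lemma~\ref{unitary}. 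Your sketch does not supply an alternative to this reduction, so as written there is a gap at the decisive step. (A smaller point: the discrepancy you attribute to $d^{\circ,-1}$ in the last paragraph is not where the weight-dependence enters; the paper simply clears that denominator at the outset, and the entire weight-matching is carried by $c(W)$ via Lemma~\ref{unitary}.)
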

\begin{proof}
We show that the difference $j_{W, *}(\cP)-  \cP_{W}$ is $p$-divisible. Since  $H^{1}(G_{E, Sp},  M^{\circ}_{K^{p}{}',W})  $ is a finitely generated module over the ring $\Lambda^{\circ}_{K^{p}{}'}$ by  Lemma \ref{finite coh}, any $p$-divisible element is zero. We will use some of the notation and results of the appendix, in particular the matrices $\gamma$ defined in \S~\ref{A1}, the involution $\iota=(-)^{\rm T, -1}$ on $\GL_{2}$,  and the operator $\tord$ of Proposition \ref{gHo}. 

We  tacitly multiply both sides by  $d^{\circ}$, so that they belong to the lattices \eqref{iddd}. By the definitions of $\cP_{W}$ and $j_{W}$, we need to show the following. Denote by $[ -]_{r}$ the reduction modulo $p^{r}$, and by $c(W)$ the constant \eqref{cW}; then we should have
$$[p^{r[F:\Q]}\Delta_{\Q_{p}, \ur} \gamma_{r, p}\Up_{p}^{-r} \gamma^{\iota}_{0,\infty} ]_{r}\mapsto [c(W)^{-1}p^{r[F:\Q]}\Delta_{W, \ur}  \gamma_{r, p}\Up_{p}^{-r} \gamma^{\iota}_{0,\infty} ]_{r},$$
under the map 
\beq \lb{opla} j_{W}'\colon H^{1}(G_{E, Sp}, H_{1}(\baar{Z}_{K^{p}K_{p}(p^{r})}, \Z/p^{r})) &\to     H^{1}(G_{E, Sp}, H_{1}(\baar{Z}_{K^{p},K_{p}(p^{r})}, \Z/p^{r}) \ot_{\Z/p^{r}} (W^{\circ}/p^{r})^{N_{0,r}} \ot_{\OO_{L}/p^{r}} (W^{\vee, \circ}/p^{r})_{N_{0,r}}  \\
c& \mapsto c\ot \zeta_{r}\ot \zeta_{r}^{\vee},
\eeq
where $\zeta_{r}\ot\zeta_{r}^{\vee}$ is the unique element pairing to $1$.

As  the local system $\cW^{\circ}/p^{r}\cW^{\circ}$ is trivial on $\baar{Z}_{K^{p}K_{p}(p^{r})}$, we have 
$$[p^{r[F:\Q]}\Delta_{W, \ur}]_{r} = [p^{r[F:\Q]}\Delta_{\Q_{p}, \ur} \ot \xi \ot \xi^{\vee}]_{r} $$
in 
$$
  H^{1}(G_{E, Sp}, H_{1}(\baar{Z}_{K^{p},K_{p}(p^{r})}, \Z/p^{r})) \ot_{\Z/p^{r}} (W^{\circ}/p^{r}W^{\circ})^{H'} \ot_{\OO_{L}/p^{r}} (W^{\vee, \circ}/p^{r}W^{\vee,\circ})_{H'}  ,$$
  where $\xi\ot\xi^{\vee}$ is the unique element pairing to $1$.  Note first  that the  image of $  [p^{r[F:\Q]}\Delta_{W, \ur}]_{r}$ under  $\gamma_{r, p}\Up_{p}^{-r}\gamma_{0, \infty}^{\iota}$ belongs to the right-hand side of \eqref{opla}: indeed it  suffices to show that for any $\xi \in W^{\circ}$,  the class $[\xi \gamma_{r}]_{r}$ is fixed by $N_{0, r}$, which follows from the congruence 
  $$ \gamma_{r} n - \gamma_{r}\equiv 0 \pmod{p^{r} M_{2}(\Z_{p})}$$
valid    for any  $n\in N_{0, r}$. 
  
It remains to see that if $\xi\ot \xi^{\vee}$ pairs to $1$, then so does $c(W)^{-1}\cdot \xi\gamma_{r, p}\ot \xi^{\vee}\gamma_{0, \infty}^{\iota}$ in the limit  $r\to \infty$.  This is proved in Lemma \ref{unitary}.
\end{proof}

\subsection{Local properties of the  universal Heegner class}
\lb{sec: 6.4}
Recall that $\X$ is an irreducible component of $\cE_{K^{p}}$ hence of the form $\Spec R$ with $R=R^{\circ}[1/p ]$ and  $R^{\circ}= {\bf T}_{(\G\times \H)', K^{p}, \mathfrak{m}}^{{\rm sph}, \ord}/\mathfrak{a}$ for some maximal and minimal ideals $\mathfrak{m}\subset \mathfrak{a}\subset {\bf T}_{(\G\times \H)', K^{p}}$. The ring $R^{\circ}$ satisfies the assumptions of \S~\ref{sec: sel}, hence Greenberg data over open subsets of $\X$ give rise to sheaves of Selmer complexes. 

Let $\X^{(i)}\subset \X\subset \cE_{K^{p}{}'}$ be the open sets defined in \S~\ref{section 3}. 
Proposition \ref{galois2} provides  a strict Greenberg datum  $(\cV, (\cV_{w}^{+})_{w\vert p}, (0)_{w\in S})$ over $\X$. Via Proposition \ref{cofinal}.2 we obtain  a strict Greenberg datum $(\cM_{K^{p}{}'}^{{H_{\Sg}'}}, (\cM_{K^{p}{}', w}^{{H_{\Sg}'}, +})_{w\vert p}, (0)_{w\in S}) $ over $\X^{(3)}$ with
$$ \cM_{K^{p}{}', w}^{{H_{\Sg}'},\pm}= \cV_{w}^{\pm} \otimes ( \Pi^{K^{p}{}', \ord}_{{H_{\Sg}'}} )^{\vee} .$$

 We begin the study of the Selmer complexes attached to the above Greenberg data, with the goal to promote $\cP$ to a section  of $\wtil{H}^{1}_{f}({E}, \cM_{K^{p}{}'}^{{H_{\Sg}'}})$ over a suitable open subset of $\X$. 

\subsubsection{Comparison of  Bloch--Kato and Greenberg Selmer groups}
 Let $z\in \X^{\cl}$ and let $V=\cV_{|z}$. We compare  two notions of Selmer groups for $V$.

\begin{lemm}\label{wt-mon} Let $w\nmid p$ be a place of $E$. Then, for all $i$,
$$H^{i}(E_{w},V)=0.$$
\end{lemm}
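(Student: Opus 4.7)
The plan is to reduce the vanishing to the purity of the Galois representation $\cV_{|z}$ at the classical point $z$, together with standard facts about local Galois cohomology away from $p$.

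First, I would identify $V := \cV_{|z}$ explicitly. Writing $z = (x,y)$, Proposition \ref{Xad*} gives $V \cong V_{\pi_x | G_E} \otimes \chi_y$. Because $\X$ is a Hida family for the quotient group $(\G\times\H)'$, the central character of $\Pi_z = \pi_x\otimes\chi_y$ is trivial, i.e.\ $\omega_{\pi_x}\omega_{\chi_y} = \one$ (this is recorded in Corollary \ref{fibreMad}). Hence $V$ is conjugate-self-dual in the sense explained after \eqref{def Lp}: there is an isomorphism $V^* \cong V^{\iota}(1)$, and $V$ is pure of weight $-1$.

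Next, fix a place $w\nmid p$ of $E$, lying over a place $v\nmid p$ of $F$. The restriction of $V_{\pi_x}$ to $G_{E_w}$ comes from a pure Weil--Deligne representation of weight $w_{x}-1$ by Theorem \ref{galois for hilbert}(2), and tensoring with the character $\chi_{y,w}$ preserves purity; the self-duality fixes the total weight to be $-1$. In particular, the Frobenius eigenvalues of $\text{Fr}_w$ acting on the inertia-invariants $V^{I_w}$ all have complex absolute value $q_w^{-1/2}$, hence none of them equals $1$. This gives
\[
H^0(E_w, V) \;=\; (V^{I_w})^{\text{Fr}_w}\;=\; 0.
\]
The same argument applies to $V^*(1)$, which is again pure of weight $-1$ (dualising flips the sign of the weight, and the Tate twist $(1)$ subtracts $2$). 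By Tate local duality,
\[
H^2(E_w, V) \;\cong\; H^0(E_w, V^*(1))^{\vee} \;=\; 0.
\]

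Finally, for a $p$-adic representation of the local Galois group at a place of residue characteristic different from $p$, the local Euler--Poincar\'e characteristic vanishes:
\[
\chi(E_w, V) \;=\; \sum_{i\geq 0}(-1)^{i}\dim_{L} H^i(E_w, V) \;=\; 0.
\]
Since $H^i(E_w, V) = 0$ for $i\geq 3$ by the bound $\operatorname{cd}_p G_{E_w}\leq 2$ recalled in \S\ref{(F)}, combining with $H^0 = H^2 = 0$ forces $H^1(E_w, V) = 0$ as well, completing the proof.

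There is no real obstacle here; the only point requiring some care is ensuring that purity of the Weil--Deligne representation at $w$ transfers to the statement about Frobenius eigenvalues on inertia-invariants of the $p$-adic realisation, but this is a standard consequence of the weight--monodromy property in Theorem \ref{galois for hilbert}(2) together with the Grothendieck monodromy theorem.
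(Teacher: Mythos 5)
Your proof is correct and takes essentially the same approach as the paper, which simply cites Nekov\'a\v{r}'s Proposition 2.5 in \cite{nek-AJ} for the implication ``pure of weight $-1$ implies vanishing of all local $H^i$'' and reduces purity to Theorem \ref{galois for hilbert}.2, exactly as you do; you have just unpacked that cited implication via $H^0$, Tate duality for $H^2$, and the local Euler--Poincar\'e formula for $H^1$. One small imprecision: the Frobenius eigenvalues on $V^{I_w}$ need not all have absolute value exactly $q_w^{-1/2}$ --- when the monodromy operator is nonzero, $V^{I_w}$ sits in the $\leq 0$ part of the monodromy filtration and the eigenvalues have absolute value $\leq q_w^{-1/2}$ (e.g.\ $q_w^{-1}$ in a Steinberg case) --- but since all of them have absolute value $<1$, none equals $1$ and the conclusion $H^0(E_w,V)=0$ stands, so the argument goes through unchanged.
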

\begin{proof} As observed in \cite[Proposition 2.5]{nek-AJ}, this is implied by the prediction from the weight-monodromy conjecture that the monodromy filtration on $\cV_{z}$ is pure of weight $-1$. Writing $z=(x, y)\in \cE^{\ord, \cl}\subset \cE_{\G}^{\ord ,\cl}\times \cE_{\H}^{\ord, \cl}$, the weight-mondromy conjecture for $\cV_{z}$ follows from the corresponding statement for $\cV_{\G,x}$, that is Theorem \ref{galois for hilbert}.2.
\end{proof}
Let $H^{1}_{f, {\rm Gr}}({E}, V)$ be the Greenberg Selmer group.
  Bloch and Kato \cite{bk} have defined subspaces $H^{1}_{f}(E_{w}, V)\subset H^{1}(E_{w}, V)$ and a Selmer group
  $$H^{1}_{f, {\rm BK}}(E, V):= \{ s\in H^{1}_{}(E, V)\, :\, \forall w\in Sp, \ {\rm loc}_{w}(s)\in H^{1}_{f}(E_{w}, V)\}.
  $$
  \begin{lemm} Suppose that $\Pi_{z}$ satisfies $(\textup{wt})$. We have 
  $$H^{1}_{f, {\rm BK}}(E, V) = H^{1}_{f, {\rm Gr}}(E, V),$$
  where the right-hand side  is the Greenberg Selmer group as in \eqref{sel gr}.
  \end{lemm}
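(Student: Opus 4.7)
Both Selmer groups are cut out by local conditions at the places in $Sp$, so it suffices to verify that at every finite place $w$ of $E$ the Bloch--Kato local condition $H^{1}_{f}(E_{w}, V)\subset H^{1}(E_{w}, V)$ agrees with the Greenberg local condition
$$H^{1}_{f, \mathrm{Gr}}(E_{w}, V) := \Ker\bigl(H^{1}(E_{w}, V) \to H^{1}(E_{w}, V_{w}^{-})\bigr),$$
where $V_{w}^{-}=V$ if $w\nmid p$ (the strict Greenberg datum) and $V_{w}^{-}$ is the quotient of the ordinary filtration \eqref{decatv} if $w\mid p$.

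For $w\nmid p$, the whole of $H^{1}(E_{w}, V)$ vanishes by Lemma \ref{wt-mon}, so both local conditions are trivially zero; in particular the unramified Bloch--Kato condition coincides with the Greenberg one.

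For $w\mid v\mid p$, I would argue as follows. Using Theorem \ref{galois for hilbert}.4 and the analogous description of the Hodge--Tate weights of $V_{\chi}$ (directly from the weight $\underline{l}$ of $\chi$), together with the explicit shape of $V_{w}^{+}$ given by \eqref{alpha gal}, one reads off that at every embedding $\sigma\colon E_{w}\hookrightarrow \baar{L}$ the $\sigma$-Hodge--Tate weight of $V_{w}^{+}$ is strictly negative while that of $V_{w}^{-}$ is non-positive; the strict negativity in the first case uses the hypothesis $(\mathrm{wt})$ $|l_{\sigma}|<w_{\sigma}$ to beat the twist coming from $V_{\chi}$. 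With this Hodge--Tate configuration, the standard comparison for ordinary (in fact semistable) Galois representations---originally due to Bloch--Kato and revisited in, e.g., \cite[\S12.5]{nek-selmer} and in Flach's work---gives $H^{1}_{f}(E_{w}, V) = H^{1}_{f, \mathrm{Gr}}(E_{w}, V)$.

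The one point that needs care is the Hodge--Tate weight verification; this is the only place where $(\mathrm{wt})$ is used. Note that the lemma does \emph{not} assume $(\mathrm{n\textup{-}exc})$: an exceptional place $w\mid p$ (where $V_{w}^{-}$ is itself the trivial $G_{E_{w}}$-representation) contributes an extra $H^{0}(E_{w}, V_{w}^{-})=L$ piece to Nekov\'a\v{r}'s $\wtil{H}^{1}_{f}(E, V)$ via the sequence \eqref{tilde ses}, but it affects neither the Greenberg nor the Bloch--Kato Selmer group, since the corresponding local condition at $w$ still coincides on both sides (the comparison above does not require $H^{0}(E_{w}, V_{w}^{-})$ to vanish, only the Hodge--Tate weight inequalities).
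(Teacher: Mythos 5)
Your proof follows essentially the same route as the paper's: for $w\nmid p$ both local conditions vanish by Lemma \ref{wt-mon}, and for $w\mid p$ the comparison reduces to Nekov\'a\v{r} \cite[\S~12.5]{nek-selmer}, with the weight condition $(\textup{wt})$ guaranteeing the Hodge--Tate hypothesis. The paper's proof is one sentence for each case and cites (12.5.8) with the remark that (12.5.7)(1)(i) holds under $(\textup{wt})$; you have merely unpacked what that hypothesis says about Hodge--Tate weights.

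One small sign slip worth correcting: with the paper's convention that $\chi_{\rm cyc}$ has Hodge--Tate weight $-1$, the Panchishkin/Greenberg condition requires the weights of $V_{w}^{+}$ to be $\leq -1$ and those of $V_{w}^{-}$ to be $\geq 0$, not $\leq 0$. Concretely, writing $w+l=0$ (which holds on $\X^{\cl}$ by Corollary \ref{fibreMad}), the Hodge--Tate weight of $V_{w}^{-}$ at an embedding lying over $\sigma$ is $(w_{\sigma}\mp l_{\sigma})/2-1$, and $(\textup{wt})$ together with the parity condition forces $w_{\sigma}\mp l_{\sigma}\geq 2$, so these weights are non-negative rather than non-positive. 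This does not affect the structure of the argument, and your final remark about $(\textup{n-exc})$ not being needed is correct and a useful clarification.
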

\begin{proof} We need to show that for all $w\in Sp$, $ H^{1}_{f}(E_{w}, V)= \Ker \left(  H^{1}(E_{w}, V)\to  H^{1}_{f}(E_{w}, V_{w}^{-})\right)$. This is automatic for $w\nmid p$ by Lemma \ref{wt-mon}. For $w\vert p$ this is \cite[(12.5.8)]{nek-selmer}: the context of \emph{loc. cit} is more restricted but the proof still applies, the key point being that (12.5.7)(1)(i) \emph{ibid.} still holds for all $w$ under the weight condition (wt).
\end{proof}

\begin{lemm}\label{sel away from p} Let $w\nmid p$ be a finite place of $E$.  Then  $H^{1}(E_{w},\cV)$ and  $H^{1}(E_{w}, \cM_{K^{p}{}'}^{{H_{\Sg}'}}) $ are supported in a closed subset of $\X$ (respectively $\X^{(3)}$) disjoint from $\X^{ \cl}$. 
\end{lemm}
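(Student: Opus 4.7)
The strategy is to combine the vanishing statement of Lemma \ref{wt-mon} with the upper-semicontinuity result of Proposition \ref{torss}, applied to the local Galois group $G=G_{E_{w}}$. Recall that $G$ satisfies condition (F) of \S\ref{(F)} with $\mathrm{cd}_{p}(G)=2$, so Proposition \ref{torss} is applicable to ind-admissible $\OO_{\X}[G]$-modules that are finite locally free as $\OO_{\X}$-modules, at any nonsingular point of $\X$.

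First I treat $\cV$. By Proposition \ref{etale}, every $z\in \X^{\cl}$ is a nonsingular point of $\X$, and $\cV$ is locally free over an open subset containing $\X^{\cl}$ by Proposition \ref{galois2}. Lemma \ref{wt-mon} gives $H^{i}(E_{w},\cV_{|z})=0$ for all $i\geq 0$ and all $z\in \X^{\cl}$; hence I may apply Proposition \ref{torss}.1 with $i_{0}=0$. It follows that the support of the finitely generated $\OO_{\X}$-module $H^{i}(E_{w},\cV)$ is a closed subset of $\X$ disjoint from $\X^{\cl}$, for every $i\geq 1$. Specialising to $i=1$ yields the first assertion.

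Next I treat $\cM^{H_{\Sg}'}_{K^{p}{}'}$. By Proposition \ref{cofinal}.2, over $\X^{(3)}$ there is an isomorphism
\[
\cM^{H_{\Sg}'}_{K^{p}{}'}\;\cong\;(\Pi^{K^{p}{}',\ord}_{H_{\Sg}'})^{\vee}\otimes_{\OO_{\X^{(3)}}}\cV,
\]
compatible with the Galois action, where $(\Pi^{K^{p}{}',\ord}_{H_{\Sg}'})^{\vee}$ is a locally free $\OO_{\X^{(3)}}$-module with trivial $G_{E,Sp}$-action. Since continuous cochains commute with tensoring by such a flat sheaf with trivial action, we obtain
\[
H^{i}(E_{w},\cM^{H_{\Sg}'}_{K^{p}{}'})\;\cong\;H^{i}(E_{w},\cV)\otimes_{\OO_{\X^{(3)}}}(\Pi^{K^{p}{}',\ord}_{H_{\Sg}'})^{\vee}.
\]
The support of the right-hand side is contained in the support of $H^{i}(E_{w},\cV)$ restricted to $\X^{(3)}$, which by the first part is closed and disjoint from $\X^{\cl}$.

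There is no serious obstacle here; the only point that requires some attention is verifying the hypotheses of Proposition \ref{torss} (nonsingularity at classical points, and ind-admissibility/finite locally freeness of the coefficient modules), both of which have already been established in earlier parts of the paper.
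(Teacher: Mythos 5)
Your proof is correct and takes essentially the same route as the paper: apply Proposition \ref{torss} with the vanishing input from Lemma \ref{wt-mon} to handle $\cV$, and use the tensor-factorisation $\cM^{H_{\Sg}'}_{K^{p}{}'}\cong (\Pi^{K^{p}{}',\ord}_{H_{\Sg}'})^{\vee}\otimes\cV$ to deduce the statement for the second sheaf. The only (harmless) slip is attributing the local freeness of $\cV$ to Proposition \ref{galois2}, which concerns the abstractly constructed $\cV_{\G}$; the local freeness of the $\cV$ appearing in the lemma (namely $\cM^{H'_{\Sg}}_{K^{p}_{0}{}'}$) is established in Proposition \ref{Xad*}.
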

\begin{proof}
This follows from Proposition  \ref{torss} and Lemma \ref{wt-mon}.
\end{proof}

\subsubsection{Local Selmer properties  of $\cP$}  \lb{6777}
Let $w\nmid p$ be a place of $E$ as above.
As $$  \cM_{K^{p}{}'}^{{H_{\Sg}'}}= \cV\otimes ( \Pi^{K^{p}{}', \ord}_{{H_{\Sg}'}} )^{\vee} $$ over $\X^{(3)}$, the support of $H^{1}(E_{w}, \cM_{K^{p}{}'}^{{H_{\Sg}'}}) $ is in fact the intersection of $\X^{(3)}$ and of the support of  $H^{1}(E_{w},\cV)$. 
We denote by 
\beq\label{open sel away from p}
\X^{(3, w)}\supset \X^{\cl}
\eeq the open complement in $\X^{(3)}$ of the support of $H^{1}(E_{w},\cV)$ .

\begin{lemm}\label{sel at p} Let $w\vert p$ be a place of $E$, with underlying place $v$ of $F$.  The
     image ${\rm loc}_{w}^{-}(\cP)$ of $\cP$ in
$$H^{1}(E_{w}, \cM_{K^{p}{}', w}^{{H_{\Sg}'}, -})$$
vanishes over $\X^{(3)}$. 
\end{lemm}
\begin{proof} 
We lighten the notation by dropping form the notation the superscript `$(3)$'  and all decorations from $\cM$, $\cM_{w}^{-}$. Let  $\tilde{\X} := \Spec_{\X} \OO_{\X} [([\sqrt{z}])_{z\in A}]$, where $A$ is a (finite) set of topological generators for $F^{\ts}\bks \A_{F}^{\infty\ts}/(K^{p}{}'\cap {\rm Z}(\A_{F}^{p\infty\ts}))$. As $\tilde{\X}\to \X$ is faithfully flat, we may prove the statement after a base-change to $\tilde{\X}$; we denote base-changed sheaves and sections thereof with a tilde $\tilde{\Box}$.

Let $\chi\colon E^{\ts} \bks \A_{E}^{\infty\ts}\to G_{E}^{\rm ab}\to \OO(\X)^{\ts}$ be the universal character, and let  $\omega =\chi_{|F^{\ts}\bks \A_{F}^{\infty\ts}}$, so that $\det \cV_{\G|\X} (-1)=\omega$. Let $\omega^{1/2} \colon F^{\ts}\bks \A_{F}^{\infty\ts} \to \OO(\tilde{\X})^{\ts}$ be a square root of $\omega$.
  We may write
   $$\tilde{\cV} =(\tilde{\cV}_{\G}\ot \omega^{-1/2})_{|G_{E}} \ot \tilde{\chi}' , 
  \qquad  \tilde{\chi}' :=\tilde{\chi} \ot \omega^{-1/2}_{|G_{E}},$$
where now   $\tilde{\chi}'\colon G_{E} \to \Gal(E_{\infty}/E)$  is the projection for the abelian extension $E_{\infty}/E$ such that  $\Gal(E_{\infty}/E) $ is the maximal pro-$p$ quotient of $E^{\ts}\A_{F}^{\infty\ts} \bks \A_{E}^{\infty\ts} /V^{p}{}'$.

Write $E_{\infty}=\bigcup_{n\geq0} E_{n}$ as an increasing union of finite extensions, where $E_{0}=E$ and  eventually $E_{n+1}/E_{n}$ is totally ramified at each prime above $p$, and let   
$\tilde{\chi}'_{n}\colon G_{E}\to \Gal(E_{n}/E)$ be the natural projection. 
  Let $\alpha^{\circ}_{v}$ be the character giving the $G_{F_{v}}$-action on $\cV_{\G}^{+}(-1)$, 
  so that $$\tilde{\cV}^{-}_{w}= \omega_{w}\alpha_{w}^{\circ, -1}\ot\tilde{\chi}_{w}= \omega^{1/2}_{w}\alpha^{\circ, -1}_{w} \tilde{\chi}'_{w},$$ 
where for a character $\omega'_{v}$ of $G_{F_{v}}$, we denote $\omega'_{w}:=\omega'_{v|\G_{E_{w}}}$. Let
$$\tilde{\cV}^{-}_{n,w} :=\omega_{w}^{1/2}\alpha_{w}^{\circ, -1} \tilde{\chi}'_{n,w}, \qquad 
\tilde{\cM}^{-}_{n,w}:= \tilde{\cV}^{-}_{n,w}\ot (\Pi^{K^{p}{}', \ord}_{{H_{\Sg}'}} )^{\vee}\subset \tilde\cM_{w}.$$

 Then the same argument as in \cite[proof of Proposition 2.4.5,  primes $v\vert p$]{howbig} shows that
the image  ${\rm loc}_{w}^{-}(\tilde{\cP})$ vanishes in $H^{1}(E_{w},\cM^{-}_{n,w})= \prod_{w'\vert w} H^{1}(E_{n},\tilde{\cM}^{-}_{0,w})$ for each $n$; here $w'$ runs through the (eventually constant) set of primes of $E_{n}$ above $w$. Since $H^{1}(E_{w},\tilde{\cM}^{-}_{w}) =\varprojlim_{n} H^{1}(E_{w}, \tilde\cM^{-}_{n, w})$ by Proposition \ref{projlim ok},  the lemma is proved. 
  \end{proof}

\begin{coro}\label{X3f} Let $\X^{(3, f)}:=\bigcap_{w\in S} \X^{(3,w)}\supset \X^{\cl}$, where the sets $\X^{(3, w)}$ are as defined  in \eqref{open sel away from p}. Then $\cP$ defines a section
$$
\cP\in \wtil{H}^{1}_{f}(E,  \cM_{K^{p}{}'}^{{H_{\Sg}'}}) (\X^{(3, f)})=
H^{1}_{f}({E},  \cM_{K^{p}{}'}^{{H_{\Sg}'}})  (\X^{(3, f)}).$$
\end{coro}

\begin{proof} This follows from Lemmas \ref{sel away from p} and  \ref{sel at p}. The displayed equality is a consequence of  \eqref{tilde ses}.
\end{proof}

\subsubsection{Proof of Theorem \ref{theo heeg univ}} \lb{645}
Via Proposition \ref{cofinal}.2,  we may view the class $\cP=\cP_{K^{p}}=\cP_{K^{p}, \Q_{p}}$  (Definition \ref{cPW-def}) as an $\cH_{K}^{p\Sg}$-equivariant  functional
\beq \label{cP as functional} \cP_{K^{p}{}'}\colon \Pi^{K^{p}{}',\ord}_{\H_{\Sg'}}\to {H}^{1}_{}(E, \cV)(\X^{(3,f)}).
\eeq

By the  results of \S~\ref{6777},   $\cP$ takes values in  the Selmer group $\wtil{H}^{1}_{f}(E, \cV)(\X^{(3,f)})$.
  It satisfies the asserted interpolation properties by the definitions of the classes $\cP_{W}$ in \S~\ref{sec: 6.3} and Proposition \ref{indep of wt}.

\subsubsection{Exceptional locus of $\X$} \lb{ssec exc}
  Let $w|v$ be places of $E$ and $F$ above $p$, and let $\mu_{w}^{\pm}\colon E_{w}^{\times}\to \OO(\X)^{\times}$ be the characters giving the Galois action on $\cV^{\pm}$. Let $\X^{{\rm exc},v }\subset \X$ be the closed subset defined by $\mu_{w}^{-}=1$ for some (hence automatically all) places $w\vert v$ of $E$. We let 
\beq\label{exc sets}
\X^{\exc}:=\bigcup_{v\vert p} \X^{\exc, v}, \quad \X^{\cl, \exc, (v)}:=\X^{\cl}\cap \X^{\exc, (v)}, \quad  \X^{\cl,{\text{n-exc}} , (v)}:=\X^{\cl} - \X^{\cl, \exc, (v)}.
\eeq

We say that an ordinary  automorphic representation of $\Pi=\Pi_{|z}$ over a $p$-adic field  is \emph{exceptional at the place $v\vert p$} if $z\in \X^{\exc, v}$.

We may characterise the exceptional representations, and seize the opportunity to collect some useful results; see also Lemma \ref{exc vanishing}.

\begin{lemm}\lb{6444} Let $\Pi=\pi\otimes\chi$ be an ordinary automorphic representation of $(\G\ts\H)'(\A)$ over a $p$-adic field $L$,  of numerical weights $\uw$, $\ul$.
\begin{enumerate}
\item  Let $v\vert p$ be a place of $F$. The  following are equivalent:
\begin{enumerate}
\item
the representation $\Pi$ is exceptional  at $v$;
\item $e_{v}(V_{(\pi, \chi)})= \eqref{eVv}=0$;
\item the following conditions hold:
\begin{itemize}
 \item  the smooth representation $\pi_{v}$ of $G_{v}$ is special of the form ${\rm St}\ot\alpha_{v}$;
 \item for some (equivalently all)  places $w\vert v $ of $E$, we have $\chi_{w} \cdot \alpha_{v}\circ N_{E_{w}/F_{v}}=\one$.
 \item $w_{\sg}=2$ and $l_{\sg}=0$ for all $\sg\colon F\into \overline{L}$ inducing the place $v$.
\end{itemize}\end{enumerate}
\item Let $S_{p}^{\rm exc}= S_{p}^{\rm exc,\, s} \cup S_{p}^{\rm exc, \, ns}$ be the set of places $v\vert p$ (respectively those that  moreover are split, nonsplit in $E$) where $\Pi$ is exceptional.
\begin{enumerate}
\item The kernel of the natural surjective map $$\wtil{H}^{1}_{f}(E, V_{\Pi}) \to{H}^{1}_{f}(E, V_{\Pi})$$
has dimension 
$2 |S_{p}^{\rm exc, \, s}| + |S_{p}^{\rm exc, \, ns}|$. 
\item  Assume that $\G$ is associated with a quaternion algebra  $\B$ that is split at all places $v\vert p$.  Then 
$$\eps_{v}^{\G}(V_{\Pi}) = -1\qquad  \Longleftrightarrow  \qquad v\in S_{p}^{\rm exc, \, ns}.$$
\end{enumerate}
\end{enumerate}
\end{lemm}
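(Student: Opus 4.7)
The plan is to handle Part 1 as a local computation at each $v\vert p$, derive Part 2(a) from the Selmer exact sequence \eqref{tilde ses}, and establish Part 2(b) by a direct root-number calculation.

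For Part 1, the first step is to make $\mu_w^-$ explicit. By Theorem \ref{galois for hilbert}(4), the character of $G_{F_v}$ on $V_{\pi,v}^-$ equals $\omega_{\pi,v}\chi_{{\rm cyc},v}/(\alpha_{\pi,v}^\circ\chi_{{\rm cyc},v}) = \omega_{\pi,v}/\alpha_{\pi,v}^\circ$, so via local class field theory
\[
\mu_w^- = (\omega_{\pi,v}/\alpha_{\pi,v}^\circ)\circ N_{E_w/F_v}\cdot \chi_w.
\]
Its triviality decouples into the vanishing of the algebraic part, which given the parity constraints of (wt) is equivalent to $w_\sigma = 2$ and $l_\sigma = 0$ for every $\sigma$ inducing $v$, and the vanishing of the smooth part, which reduces to $\chi_w\cdot\alpha_v\circ N_{E_w/F_v} = \one$. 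Under these constraints the ordinarity of $\pi_v$ together with the minimal weight condition forces $\pi_v = {\rm St}\otimes\alpha_v$, since an ordinary unramified principal series in these weights would yield a bottom character incompatible with the matching condition on $\chi_w$. This establishes (a)$\Leftrightarrow$(c). For (a)$\Leftrightarrow$(b), one expands $\calL(V_{(\pi^\iota,\chi^\iota),v})^{-1}$ via \eqref{calLv}: the factor $L(V_w^-,0)^{-1} = 1 - \mu_w^-(\vpi_w)$ (arising from the decomposition of $L(V_w,0)^{-1}$ along the ordinary filtration) vanishes exactly when $\mu_w^-$ is trivial and unramified, which by the above is precisely the exceptional condition; a direct check rules out compensating poles from the other factors in \eqref{eVv}.

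For Part 2(a), the sequence \eqref{tilde ses}, combined with Lemma \ref{wt-mon} to discard the contribution of $w\nmid p$, identifies
\[
\ker\bigl(\wtil{H}^1_f(E,V_\Pi)\to H^1_f(E,V_\Pi)\bigr) = \mathrm{coker}\Bigl(H^0(G_{E,Sp},V_\Pi) \to \bigoplus_{w\vert p}H^0(E_w, V_w^-)\Bigr).
\]
The global $H^0$ vanishes since $V_{\Pi|G_E}$ is absolutely irreducible, while by Part 1, $\dim H^0(E_w, V_w^-)$ equals $1$ if $v=w|_F \in S_p^{\rm exc}$ and $0$ otherwise. Summing over split $v$ (two places $w\vert v$) and nonsplit $v$ (one such place) yields the asserted count $2|S_p^{\rm exc,\,s}| + |S_p^{\rm exc,\,ns}|$.

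For Part 2(b), the splitness of $\B$ at $v\vert p$ gives $\eps(\B_v) = +1$, reducing the problem to evaluating $\eps(V_v)\chi_v(-1)\eta_v(-1)$. When $\Pi$ is non-exceptional at $v$, standard local root-number formulas applied through the local Langlands correspondence show that the combination equals $+1$ regardless of how $v$ behaves in $E$. When $\Pi$ is exceptional, Part 1(c) provides $\pi_v = {\rm St}\otimes\alpha_v$ together with $\chi_w = (\alpha_v\circ N_{E_w/F_v})^{-1}$, so the epsilon factor of $V_w$ admits an explicit expression in $\alpha_v$, $\chi_w$ and the additive character. The main obstacle, though essentially routine, is the bookkeeping of the resulting signs: in the split case the contributions from the two places $w\vert v$ cancel and $\eta_v(-1) = 1$, giving $+1$; in the nonsplit case a single uncancelled sign combines with $\eta_v(-1)$ to produce $-1$.
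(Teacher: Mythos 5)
The overall outline is reasonable, and your treatments of Part 2(a) (via \eqref{tilde ses} and Lemma \ref{wt-mon}, with $H^0(E_w,V_w^-)$ of dimension $1$ or $0$) and of the splitting of $\mu_w^-$ into smooth and algebraic parts are on the right track. However, there is a genuine gap in the central step of Part 1, (a)$\Leftrightarrow$(c): the deduction that $\pi_v$ must be special. Your argument --- that ``an ordinary unramified principal series in these weights would yield a bottom character incompatible with the matching condition on $\chi_w$'' --- does not work. For \emph{any} ordinary $\pi_v$ in the stated weights, the smooth part of $\mu_w^-$ equals $(\omega_{\pi,v}/\alpha_v)\circ N_{E_w/F_v}\cdot\chi_w$ and one may always pick $\chi_w$ making this trivial; there is no character-level obstruction distinguishing Steinberg from principal series. (Note also that your matching condition $(\omega_{\pi,v}/\alpha_v)=\alpha_v^{-1}$ silently presupposes $\omega_{\pi,v}=\alpha_v^2$, which is exactly the Steinberg case, so the reasoning is circular.) The actual input is structural and irreducible: the paper invokes the weight-monodromy theorem (Theorem \ref{galois for hilbert}.2), which forces $V_{\pi,v}^-$ to be pure of motivic weight $-1$ unless $\pi_v$ is a special representation, in which case $V_{\pi,v}^+$ drops to weight $-2$ and $V_{\pi,v}^-$ rises to weight $0$. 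Since $\chi_w$ is pure of weight $0$ by (wt) and (sd), $V_w^-$ has weight $-1\neq 0$ and cannot be trivial unless $\pi_v$ is special; the weight-$2$, $l=0$ constraint and the match on $\chi_w$ then follow. This purity input is indispensable and is what your argument is missing.

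A secondary concern is Part 2(b): the paper does not attempt a raw epsilon-factor sign calculation. It instead uses the Tunnell--Saito--Waldspurger dichotomy (\cite[Lemme 10]{wald}), which makes $\eps_v^\G(V_\Pi)=-1$ equivalent to the vanishing of $\Pi_v^{*,H_v'}$, and then reduces to the explicit $Q$-computation of Proposition \ref{toric period}, which shows the functional on the ordinary line is nonzero exactly when $e_v(V_{(\pi,\chi)})\neq 0$. Your claim that ``standard local root-number formulas\ldots show the combination equals $+1$'' in the non-exceptional case is precisely the nontrivial content; for a non-exceptional special $\pi_v$ at a nonsplit $v$, the sign depends on $\chi_w$ and $\alpha_v$, and the bookkeeping you defer is the whole point. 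For the (a)$\Leftrightarrow$(b) equivalence, your route through $L(V_w^-,0)^{-1}$ inside $\calL^{-1}$ is workable, but the paper gets a cleaner single-factor argument by observing that the numerator gamma factor $\gamma(V_w^+,\psi_{E,w})$ has a pole exactly when $V_w^+$ is cyclotomic, equivalently $V_w^-$ trivial, while the adjoint factor in the denominator is always regular and nonvanishing; you would still need to verify no cancellation, which your write-up only gestures at.
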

\begin{proof} Consider part 1. We first prove the equivalence of the first two conditions. The adjoint gamma factor in the denominator of each $e_{v}(V_{(\pi, \chi)})$ is always defined and nonzero, whereas the gamma factor in the numerator is never zero and it has a pole if and only if, for some $w\vert v$, $V_{w}^{+}$  is the cyclotomic character of $E_{w}^{\ts}$. This happens precisely when, for some $w\vert v$,  $V_{w}^{-}$ is the trivial character -- that is, when $\Pi$ is exceptional at $v$. 

Now let us prove the equivalence to (c). Let $V=V_{\Pi}=V_{\pi|G_{E}}\ot V_{\chi}$. By the weight-monodromy conjecture (Theorem \ref{galois for hilbert}.2), the $1$-dimensional representations  $V^{\pm}_{\pi,v}$ are both  of motivic weight $-1$, thus have no $G_{E_{w}}$-invariants for any $w\vert v$,  unless $\pi_{ v}$ is a special representation.  In the latter case $V^{+}_{w|z}$ (respectively $V_{w}^{-})$ is of weight $-2$ (respectively $0$).  
This is compatible with the ordinariness requirement only when the weight $\uw$ is~$2$ at $v$ as in the statement of the lemma.  The second condition in (c) is immediate from the definition of (a). 

Consider now part 2. The first statement follows directly from  \eqref{tilde ses}. Let us prove the second one. By the results recalled in \S~\ref{sec m1} and \cite[Lemme 10]{wald}, the  condition $ \eps_{v}^{\G}(V_{\Pi}) = -1$ is equivalent to the vanishing of  the functional $Q=Q_{\Pi_{v}}=\eqref{Qpair}$ and of the space $\Pi_{v}^{*, H'_{v}}$. These conditions are never met if $v$ splits in $E$ or $\pi_{v}$ is a principal series, and otherwise they are equivalent  to the nonvanishing of $(\Pi_{v}')^{*, H_{v}'}$, where $\Pi'_{v}=\pi_{v}'\otimes \chi_{v}$ and $\pi_{v}'$ denotes the Jacquet--Langlands transfer of $\pi_{v}$ to the nonsplit quaternion algebra $B_{v}'^{\ts}$ over $F_{v}$. 

Assume that $v$ is nonsplit in $E$. If $\pi_{v}$ is exceptional, then by part 1 we have $\pi_{v}' =  \chi_{v}\circ {\rm Nm}$, where ${\rm Nm} $ is the reduced norm of $B_{v}'$, so that obviously $(\Pi_{v}')^{*, H_{v}'}\neq 0$. If $\pi_{v}$ is not exceptional, then by the explicit computation of Proposition \ref{toric period} we have $Q_{\Pi_{v}'}\neq 0$ (see also \cite[Corollary A.2.3]{nonsplit} for a variant of the last argument).
\end{proof}

\subsubsection{Heegner classes belong to the Bloch--Kato Selmer group}
 We can now prove the first assertion of Theorem \ref{GZ thm}.

\begin{prop}\lb{in H1f}  If $\Pi$ is not exceptional or has trivial weight,  the map $P_{\Pi}$ of \eqref{cP Pi 0} takes values in $H^{1}_{f}(E, V_{\Pi})\subset H^{1}(G_{E, Sp},V_{\Pi})$.
\end{prop}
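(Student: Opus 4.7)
The plan is to treat separately the two disjunctive hypotheses.

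\emph{Non-exceptional case.} Suppose $\Pi = \Pi_z$ is not exceptional, so $z \in \X^{\cl,\text{n-exc}}$ for the locally distinguished Hida family $\X \subset \cE^{\ord}_{K^{p}}$ through $z$ (any sufficiently small tame level $K^{p}{}'$ with $\Pi^{K^{p}{}'} \neq 0$ will do). Theorem \ref{theo heeg univ}.2 (established in \S~\ref{645} via Corollary \ref{X3f}) gives that $\cP_{K^{p}{}'}$ is a section of the Selmer sheaf $\wtil{H}^{1}_{f}(E,\cV)$ over the open subset $\X^{(3,f)} \supset \X^{\cl,\text{n-exc}}$. Part 3 of the same theorem identifies the specialization of $\cP_{K^{p}{}'}$ at $z$ with $P^{\ord}_{\Pi,K^{p}{}'}$, so for every $f \in \Pi^{K^{p}{}',\ord}_{H'_\Sg}$ we obtain $P^{\ord}_{\Pi}(f) \in \wtil{H}^{1}_{f}(E, V_{\Pi})$. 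By Lemma \ref{6444}.2(a), non-exceptionality is precisely the condition under which $\wtil{H}^{1}_{f}(E,V_\Pi) = H^{1}_{f}(E,V_\Pi)$, so the specialization lands in the Bloch--Kato Selmer group. Finally, by Proposition \ref{gHo} the operator $\gamma^{\ord}_{H'}$ surjects $\Pi^{\ord}$ onto $\Pi_{H'_\infty}$, and by construction \eqref{Pord text} we have $P_{\Pi} \circ \gamma^{\ord}_{H'} = P^{\ord}_{\Pi}$; hence $P_{\Pi}$ takes values in $H^{1}_{f}(E,V_\Pi)$ on all of $\Pi_{H'_\infty}$, after letting $K^{p}{}'$ shrink.

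\emph{Trivial-weight case.} Suppose instead $W = \Q_p$ (but $\Pi$ may be exceptional). In this case the interpolation argument above does not apply at exceptional $z$; we argue instead from the geometry of \eqref{modif hodge}. The representation $V_\pi$ is a $G_F$-equivariant direct summand (cut out by the Hecke action via \eqref{carayol iso 2}) of the rational $p$-adic Tate module $V_p J_K$ of the Albanese variety $J_K$ of the Shimura curve $Z_K$. The modified cycle $\Delta^\circ_{(K,V')}$ has degree zero on every geometric connected component, hence its Albanese image is a well-defined element of $J_K(E) \otimes_\Z \Q_p$. The global Kummer map
\[
J_K(E) \otimes_\Z \Q_p \hookrightarrow H^{1}(G_{E,Sp}, V_p J_K)
\]
lands in $H^{1}_{f}(E, V_p J_K)$ by the very definition of the Bloch--Kato local conditions on the Tate module of an abelian variety at all finite places (cf.\ \cite{bk}). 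Twisting by the finite-order character $\chi$ and projecting to the $\Pi$-isotypic component are morphisms of geometric $p$-adic Galois representations, and $H^{1}_{f}$ is functorial under such morphisms; hence $P_{\Pi}(f) \in H^{1}_{f}(E, V_\Pi)$ for every $f \in \Pi_{H'_\infty}$.

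\emph{Main obstacle.} The two arguments cover complementary regimes and together exhaust the hypothesis of the proposition. The technical heart of the non-exceptional case is already carried out in Lemmas \ref{wt-mon}--\ref{sel at p}, where purity at primes $w \nmid p$ and the vanishing of $H^{2}(E_w, \cM^-_w \otimes \kappa(z))$ at non-exceptional $w \mid p$ are used to propagate the vanishing of the local residue from the dense subset $\X^{\cl,W=\Q_p}$ to a neighborhood of $z$. The only subtle point in the trivial-weight case is that, unlike the Hida-theoretic argument, it works without any non-exceptionality hypothesis because the Bloch--Kato Selmer condition on the Tate module of an abelian variety does not distinguish between reduction types. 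Neither route handles the remaining regime (exceptional $\Pi$ with $W \neq \Q_p$), and the proposition makes no claim there.
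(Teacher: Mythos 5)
Your trivial-weight argument via the Albanese image and the Kummer map is the intended one (the paper dismisses this case as "clear"), and your interpolation argument up to the conclusion that $P^{\ord}_{\Pi}$ takes values in $H^{1}_{f}(E,V_{\Pi})$ coincides with the paper's. The gap is in your final step.

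You assert that "by Proposition \ref{gHo} the operator $\gamma^{\ord}_{H'}$ surjects $\Pi^{\ord}$ onto $\Pi_{H'_\infty}$," but Proposition \ref{gHo} only constructs $\gamma^{\ord}_{H'}\colon \Pi^{\ord}\to \Pi_{H'}$; it makes no surjectivity claim, and the claim is false. The space $\Pi^{\ord}$ is, at each place $v\mid p$ and at $\infty$, a single line ($\pi_v^{\ord}$ and $W^{N}$ respectively), whereas $\Pi_{H'_\infty}=\Pi^{\infty}\otimes W_{H'_\infty}$ still carries the full (generically infinite-dimensional) smooth representation at $p$. So knowing that $P_{\Pi}\circ\gamma^{\ord}_{H'}$ lands in $H^{1}_{f}$ only controls $P_{\Pi}$ on a tiny subspace of $\Pi_{H'_\infty}$, and your conclusion does not follow. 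The missing ingredient is a multiplicity-one argument: fix any $\partial\colon H^{1}(E,V)/H^{1}_{f}(E,V)\to L$; then $\partial P_{\Pi}\in\Pi^{*,\H'(\A)}$, which is at most one-dimensional by Tunnell--Saito. Your step gives $\partial P_{\Pi}\gamma^{\ord}_{H'}=0$, and the paper then invokes Proposition \ref{exc vanishing}: the non-exceptionality of $\Pi$ is precisely the condition guaranteeing that a \emph{nonzero} element of $\Pi^{*,\H'(\A)}$ cannot annihilate the image of $\tord$. Hence $\partial P_{\Pi}=0$ for every $\partial$, and $P_{\Pi}$ lands in $H^{1}_{f}(E,V_{\Pi})$. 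Without Proposition \ref{exc vanishing} (or an equivalent statement linking non-exceptionality to the nonvanishing of the local ordinary-to-toric operator against invariant functionals), there is no way to propagate the vanishing from the ordinary line to the whole space, and indeed this propagation genuinely fails when $\Pi$ is exceptional.
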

\begin{proof} If $\Pi$ has trivial weight this is clear. Assume  that $\Pi$ is not exceptional.
 Let $\partial\colon H^{1}(E, V)/H^{1}_{f}(E , V)\to L$ be any linear map. Then we need to show that the $\H'(\A)$-invariant map $\partial P_{\Pi}\colon \Pi \to L$ is zero.
By Corollary \ref{X3f} and  Theorem \ref{theo heeg univ}, whose proof we have just completed,  the map $P_{\Pi}^{\ord}$ takes values in $H^{1}_{f}(E, V)$; equivalently, $\partial  P_{\Pi}\tord =0$. Since $\Pi$ is not exceptional,  by Lemma \ref{exc vanishing} this means that $\partial  P_{\Pi}=0$. 
\end{proof}

\subsubsection{Enhanced ordinary Heegner classes for exceptional representations}\lb{ss 648}
For any $z=(x, y)\in \X^{\cl}$ corresponding to a representation $\Pi$, define the \emph{enhanced Heegner class}
\beq \lb{tilPo} \wtil{P}_{\Pi}^{\ord}:= \cP_{|z}\in \wtil{H}^{1}_{f}(E, V_{\Pi}). \eeq
By the results established so far,  $ \wtil{P}_{\Pi}^{\ord}$ has image $P_{\Pi}^{\ord}$ under the natural map $ \wtil{H}^{1}_{f}(E, V_{\Pi})\to {H}^{1}_{f}(E, V_{\Pi})$; as  noted in Lemma \ref{6444}, this map fails to be an isomorphism precisely when $\Pi $ is exceptional.

\section{The main theorems, and a conjecture}
In this section, we prove our main theorems (\S~\ref{sec: final}, or \S~\ref{sec G} for  Theorem \ref{nv exc}), as well as a universal Waldspurger formula for  families of `sign $+1$' (\S~\ref{sec: wald}). Then, we discuss a conjecture on the leading terms of universal Heegner points (and toric periods) at classical points (\S~\ref{sec bd conj}).

\subsection{Proofs of the main theorems}\lb{sec: final}
Both of our central  theorems (Theorems \ref{GZ thm} and \ref{ugz thm}) ultimately follow from \cite{dd-pyzz, nonsplit}, where Theorem \ref{GZ thm} is established when $W$ is trivial, by an argument combining interpolation and multiplicity-one principles.

\subsubsection{$p$-adic Gross--Zagier formula for ordinary forms} \lb{B ord}
We start by stating a  variant of  Theorem \ref{GZ thm}, valid under  the same assumptions. 

  \begin{customthm}{$\text{B}^{\, \ord}$}\label{GZ thm'} Let $\Pi=\pi\ot \chi$ be an 
ordinary, locally distinguished automorphic  
 representation of $(\G\ts\H)'(\A)$ over $L$.
Let $V=V_{\Pi}$, and let $\wtil{P}_{\Pi}^{\ord} \in \wtil{H}^{1}_{f}(E, V_{\Pi})$  be the enhanced Heegner class defined in \eqref{tilPo}.

 Then 
for all $f_{1}^{}\in \Pi_{H'_{\infty}}^{\ord}$, $f_{2}^{}\in\Pi^{\vee, \ord}_{H'_{{\infty}}}$, $f_{3}^{}\in \Pi^{\ord}$, $f_{4}^{}\in\Pi^{\vee, \ord}$ with $(f_{3}, f_{4})^{\ord}\neq 0$, we have
\beq\lb{formula B'}   {  h_{V}(\wtil P_{\Pi}^{\ord}(f_{1}^{}),\wtil P_{ \Pi^{\vee}}^{\ord}(f_{2}^{}))    \over   (f_{3}^{}, f_{4}^{})^{\ord}_{\Pi} }  
= 
   \calL_{p}'(V_{(\pi, \chi)}, 0)
\cdot
Q^{\ord}\left( {  f_{1} \ot  f_{2}\over f_{3}\ot f_{4}}\right).
\eeq
\end{customthm}
\begin{rema}  In contrast to Theorem \ref{GZ thm}:
\begin{itemize}
\item
Theorem \ref{GZ thm'} also holds for exceptional $\Pi$;
\item we have only included the Gross--Zagier formula and omitted an analogue to the first statement of Theorem \ref{GZ thm}, that is that $\wtil P_{\Pi}^{\ord}$ takes values in $\wtil H^{1}_{f}(E, V)$, as that has already been established. 
\end{itemize} 
\end{rema}

\begin{lemm} \lb{B Bord} Suppose that $\Pi$ is not exceptional. Theorem \ref{GZ thm'} is equivalent to Theorem \ref{GZ thm}.
\end{lemm}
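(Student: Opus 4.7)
The equivalence will follow from a direct substitution of test vectors, combined with the comparison of the toric pairing and its ordinary avatar established in Proposition~\ref{compare Qs}, and the coincidence $\wtil{P}^{\ord}_\Pi = P^{\ord}_\Pi$ in the non-exceptional case.

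First, I would note that since $\Pi$ is not exceptional, Lemma~\ref{6444}.2(a) gives $\wtil{H}^1_f(E, V_\Pi) = H^1_f(E, V_\Pi)$, so $\wtil{P}^{\ord}_\Pi(f) = P^{\ord}_\Pi(f) = P_\Pi(\gamma^{\ord}_{H'} f)$ by construction \eqref{Pord text}. To deduce Theorem~\ref{GZ thm'} from Theorem~\ref{GZ thm}, I would take arbitrary ordinary test vectors $f_1, f_3 \in \Pi^{\ord}$ and $f_2, f_4 \in \Pi^{\vee, \ord}$ with $(f_3, f_4)^{\ord}_\Pi \neq 0$, and set
$$ f_1' := \gamma^{\ord}_{H'}(f_1), \qquad f_2' := \gamma^{\ord}_{H'}(f_2), \qquad f_3' := w^{\ord}_{\rm a}(f_3), \qquad f_4' := f_4. $$
By \eqref{()'} we have $(f_3', f_4')_\Pi = (f_3, f_4)^{\ord}_\Pi / \dim W$, which in particular is non-zero, so the tuple $(f_i')$ is admissible in Theorem~\ref{GZ thm}. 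The numerator of the left-hand side of the formula of Theorem~\ref{GZ thm} applied to these vectors equals $h_V(\wtil{P}^{\ord}_\Pi(f_1), \wtil{P}^{\ord}_{\Pi^\vee}(f_2))$, while the $Q$-factor on the right is transformed by Proposition~\ref{compare Qs} into $e_p(V_{(\pi,\chi)}) \cdot \dim W \cdot Q^{\ord}(f_1 \otimes f_2 / f_3 \otimes f_4)$. The prefactor $e_p(V_{(\pi,\chi)})^{-1}$ in Theorem~\ref{GZ thm} cancels the $e_p(V_{(\pi,\chi)})$ produced here, and the two factors of $\dim W$ cancel between the denominator and the $Q$-comparison, leaving precisely the identity \eqref{formula B'} of Theorem~\ref{GZ thm'}.

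For the converse, the substitution runs backwards. Both sides of the formula in Theorem~\ref{GZ thm} are, as functions of $(f_1, f_2)$, elements of $\Pi^{*, H'(\A)} \otimes \Pi^{\vee, *, H'(\A)}$, and by the multiplicity-one theorem of Waldspurger--Tunnell--Saito recalled in \S\ref{sec m1}, this space is one-dimensional. Likewise, both sides are Hecke-invariant in $(f_3, f_4)$ up to the normalization $1/(f_3, f_4)_\Pi$. Hence to verify Theorem~\ref{GZ thm} it suffices to check it on a single quadruple $(f_i')$ for which both sides are manifestly defined and non-zero; and every such quadruple can be written in the form $(\gamma^{\ord}_{H'} f_1, \gamma^{\ord}_{H'} f_2, w^{\ord}_{\rm a} f_3, f_4)$ up to the $\H'(\A^\infty)$- and Hecke-actions, using that $\gamma^{\ord}_{H'}$ is non-vanishing on $\Pi^{\ord}_{H'_\infty}$ (Proposition~\ref{gHo}, combined with Remark~\ref{Qord nonzero} and the non-degeneracy of $Q^{\ord}$) and that $w^{\ord}_{\rm a}$ induces a non-degenerate pairing on ordinary parts (Lemma~\ref{cor w ord}).

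I do not expect any serious obstacle: the argument is essentially bookkeeping. The one point requiring attention is the precise cancellation of the $p$-adic interpolation factor $e_p(V_{(\pi,\chi)})$, which occurs exactly because the definition of $Q^{\ord}$ at places above $p$ and $\infty$ (Definition~\ref{Q ord def}) was designed so that Proposition~\ref{compare Qs} produces this factor with the correct sign of the exponent. The multiplicity-one step in the converse direction is standard, but it is worth emphasising that it truly does require the non-exceptionality hypothesis: in the exceptional case one would instead need the refined identification $\wtil{P}^{\ord}_\Pi$ of \S\ref{ss 648}, which has no direct counterpart in Theorem~\ref{GZ thm}.
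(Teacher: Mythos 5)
Your proof is correct and follows essentially the same route as the paper's: the same special-quadruple substitution $(f_1', f_2', f_3', f_4') = (\gamma^{\ord}_{H'}f_1, \gamma^{\ord}_{H'}f_2, w^{\ord}_{\rm a}f_3, f_4)$, the same bookkeeping with Proposition~\ref{compare Qs} and \eqref{()'} to verify the cancellation of $e_p(V_{(\pi,\chi)})$ and $\dim W$, and the same multiplicity-one reduction for the converse. One small point of imprecision in the converse: what is needed is not that ``both sides are manifestly defined and non-zero'' on the chosen special quadruple, but that the canonical generator $Q$ is non-zero there (so both sides, being proportional to it, are determined by comparison at that point even if they happen to vanish); and the non-vanishing of $Q$ on special quadruples uses Proposition~\ref{compare Qs} together with the non-vanishing of $e_p(V_{(\pi,\chi)})$, which is precisely the equivalence of Lemma~\ref{6444}.1.(a)--(b) under the non-exceptionality hypothesis — a citation you gesture at in your closing remark but do not name explicitly.
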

\begin{proof}  Using freely the notation and results of Appendix  \ref{app A}, we first show that Theorem \ref{GZ thm'} for $f_{1}$, $f_{2}$, $f_{3}$, $f_{4}$ is equivalent to  Theorem B for 
$$f_{1}'= \gamma_{H'}^{\ord} f_{1} \in \Pi_{H'_{p\infty}}, \quad f_{2}'= \gamma_{H'}^{\ord} f_{2}^{} \in \Pi^{\vee}_{H'_{p\infty}},\qquad f_{3}'=w_{\rm a}^{\ord} f_{3}^{}\in \Pi^{\rm a}, \quad f_{4}'=f_{4}^{}\in \Pi^{\vee, \ord};$$
let us call such $(f_{1}', f_{2}', f_{3}', f_{4}' )$  a `special quadruple'.

Indeed, by the definitions \eqref{Pord text}, \eqref{()'}, the left hand side of \eqref{formula B'} equals 
$$  {  h_{V}(P_{\Pi}^{}(f_{1}'), P_{\Pi^{\vee}}^{\ord}(f_{2}')) \over \dim W \cdot (f_{3}', f_{4}')};$$
whereas by  Proposition \ref{compare Qs}, 
\beqq
Q^{\ord}\left( {  f_{1} \ot  f_{2}\over f_{3}\ot f_{4}}\right).
= {e_{p}(V_{(\pi, \chi)})^{-1} \over\dim W }  \cdot Q^{}\left(   {f_{1}'\ot  f_{2}'\over f_{3}'\ot f_{4}' } \right).
\eeqq

By the multiplicity-one principle,  Theorem \ref{GZ thm} for special quadruples implies Theorem  \ref{GZ thm} in general, since under our assumptions the functional $Q$ is non-vanishing on special quadruples: this again follows from Proposition \ref{compare Qs} and Lemma \ref{6444}.1.(a)-(b). 
\end{proof}

\subsubsection{Comparison of $p$-adic $L$-functions}
We describe how, upon restricting  $ \calL_{p}(\cV^{\sharp})$ to the cyclotomic line through a point of trivial weight, we recover the $p$-adic $L$-function of \cite{dd-pyzz, nonsplit}.

\begin{lemm} \lb{compare Lp} 
Let $z=(x, y)\in \cE_{\G_{0}}^{\ord, \rm cl}\ts \cE_{\H}^{\ord, \cl}$ be a point corresponding to a representation $\pi_{0 ,x}\otimes \chi_{y}$ of weights $(0; (2, \ldots, 2))$, $(0;0, \ldots, 0)$. 
 Let $A$ denote the modular abelian variety attached to $\pi_{0,x}$, and let  $$\calL_{p}( V_{(A , \chi)})\in \mathscr{K}(\cE_{\rm Z})$$
  be
  the $p$-adic $L$-function of \cite[Theorem A]{nonsplit}.
   Consider the map 
 \beqq
 j_{x, y}\colon \cE_{\rm Z}&\to \{x\} \ts \cE_{\H}^{\ord} \subset \cE_{\G_{0}}^{\ord, \rm cl}{\ts} \cE_{\H}^{\ord}\\
 \chi_{F} &\mapsto \pi_{0, x}\otimes \chi_{y}\cdot \chi_{F\circ N_{E_{\A}^{\ts}/F_{\A}^{\ts}}}.\eeqq
 Then 
\beq
\lb{LL'} \calL_{p}( V_{(\pi_{0} , \chi)}):=  j_{(x, y)}^{*}  \calL_{p}(\cV^{\sharp}) &=   \calL_{p}( V_{(A, \chi)}).\eeq  
\end{lemm}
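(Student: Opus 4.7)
The plan is first to establish the equality of interpolation factors at all relevant classical points by a local computation at places above $p$, and then to deduce \eqref{LL'} from the uniqueness of $p$-adic interpolation.

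For the interpolation factors, I would work place by place above $p$ and compare $e_{v}(V_{(\pi_x,\chi_y)})$ of \eqref{eVv} with the corresponding local factor appearing in ${}'e_{p\infty}$ in \cite{non-split}. The adjoint gamma factor in the denominator, the $\calL$-correction, and the discriminant term $|d_v|^{-1/2}$ coincide in the two formulations; the only discrepancy is in the numerator, which here is a product over $w\vert v$ of gamma factors of rank-one representations of $W_{E_w}$, while in \cite{non-split} it is a single gamma factor of the representation of $W_{F_v}$ obtained by induction from $\bigoplus_{w|v} W_{E_w}$. The inductivity of local $\epsilon$-constants (Deligne--Langlands) yields, for each $w\vert v$ and each Weil--Deligne representation $\rho$ of $W_{E_w}$,
\[
\gamma\bigl(\mathrm{Ind}_{W_{E_w}}^{W_{F_v}}(\rho), \psi_v\bigr) \Big/ \gamma(\rho, \psi_{E,w}) \;=\; \lambda(E_w/F_v, \psi_v)^{\dim \rho},
\]
and an identical identity after direct summing over $w\vert v$. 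The $\rho$-dependent contributions cancel in the ratio of the two numerators, leaving a product of $\lambda$-factors whose value by dimension only depends on $E/F$, $p$, and $\psi$; a direct comparison with the definition \eqref{gauss} (which is exactly this ratio evaluated on the trivial characters) identifies it with $c(\eta_p)$. This proves the first assertion of the lemma.

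For \eqref{LL'}, I would argue by uniqueness of interpolation. The same local computation applies verbatim if $\chi_y$ is replaced by $\chi_y \cdot (\chi_{F,s}\circ N_{E_{\A}^{\ts}/F_{\A}^{\ts}})$ for any classical character $\chi_{F,s}\in \cE_{\rm Z}^{\rm cl}$, because such a cyclotomic twist affects only the representations $\rho$ inside the gamma factors and not the induction/restriction structure that is responsible for $c(\eta_p)$. Hence $j_{(x,y)}^*\calL_p(\cV^\sharp)$ and $c(\eta_p)\cdot {}'\calL_p(V_{(\pi_0,\chi)})$ are two meromorphic functions on $\cE_{\rm Z}$ which, at every $\chi_{F,s}$ in the dense intersection of the interpolation ranges of Theorem \ref{conj Lp} and of the main theorem of \cite{non-split}, take the common value $e_{p\infty}(V_{(\pi_0, \chi\chi_{F,s})})\cdot \calL(V_{(\pi_0, \chi\chi_{F,s})},0)$. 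Since the $p$-adic $L$-function is uniquely characterised by this interpolation, the two functions coincide.

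The main point requiring care, rather than a genuine obstacle, is the bookkeeping of normalisations: one must verify that the additive characters $\psi_v$ and $\psi_{E,w}=\psi_v\circ \Tr_{E_w/F_v}$ agree with those implicit in \cite{non-split}, and that the conventions on Haar measures and on the gamma factors themselves match on both sides, so that no spurious unit enters the ratio. Once these normalisations are verified, both parts of the lemma follow by the formal argument above.
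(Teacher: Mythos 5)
Your proposal is correct and takes essentially the same approach as the paper, which simply notes that the identity is ``immediate from the respective interpolation properties'' (the constant $c(\eta_p)$ having been \emph{defined} in \eqref{gauss} precisely as the ratio of interpolation factors, labelled ``the Gau\ss\ sum of \cite[Lemma A.1.3]{non-split}''). Your contribution is to spell out the local computation — inductivity of $\gamma$-factors, cancellation of the $\rho$-dependence, identification of the surviving Langlands $\lambda$-constants with $c(\eta_p)$ — and then invoke density of classical points plus uniqueness of interpolation, which is exactly the content the paper leaves to the reader.
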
 
\begin{proof}  This is immediate from the respective  interpolation properties. (Note that the first equality in
  \eqref{LL'} is just a reminder of \eqref{def Lp}.)
\end{proof}

\subsubsection{Interpolation argument and proof of the main theorems}
Let $\X \subset \cE^{\ord}_{K^{p}}$ be a locally distinguished Hida family for $(\G\ts \H)'$, as in Definition \ref{hida loc dist}. Fix a level $K^{p}{}'\subset K^{p}$.  Let 
$$\X'=\X'_{K^{p}{}'}:=\X^{(6)}\cap \X^{(3,f)}\supset \X^{\cl}$$  be  the intersection of the open subsets of $\X$ of Theorem \ref{X6} and Corollary \ref{X3f}.

Recall that we denote by $\X^{\cl, W}$ the set of classical points of weight $W$, and by $\X^{\cl, \textup{ n-exc}}$ the set of non-exceptional classical points. When $W=\Q_{p}$ is the trivial weight, we  also define
$$\X^{\cl,\ \textup{p-crys}, \  \Q_{p}, \textup{ram}} \subset \X^{\cl, \textup{p-crys}, \, \Q_{p}} \subset  \X^{\cl, \text{n-exc}} $$
by the following conditions on the classical point $z=(x, y)$ (equivalently, on the representation $\Pi_{z}$):
\begin{enumerate}
\item[(p-crys)] for all $v\vert p$, the representation $V_{x|G_{F_{v}}}$ is potentially crystalline (equivalently, $\pi_{x,v}$ is a principal series; the second inclusion above follows from Lemma \ref{6444});
\item[(ram)] $\chi_{y,p}$ is \emph{sufficiently ramified} in the following sense: let $r_{v}\geq1$ be minimal such that 
 $1+\vpi_{v}^{r_{v}} \OO_{F_{v}}$ is contained in the kernel of $\omega_{x,v}$, and let $U_{F,p}^{\circ}=\prod_{v\vert p}   (1+\vpi_{v}^{r_{v}} \OO_{F_{v}})$; then $\chi_{y, p}$ is is nontrivial on $N_{E_{p}/F_{p}}^{-1}(U_{F,p}^{\circ})\cap \OO_{E_{p}}^{\ts}$.
\end{enumerate}

 \begin{lemm} \lb{dense'}
 The subset $\X^{\cl,\ \textup{p-crys}, \  \Q_{p}, \textup{ram}}  \subset \X'$ is dense. 
 \end{lemm}
 \begin{proof} 
Denote by ${\rm p}_{\G} \colon \X' \to \cE_{\G}$ the natural projection.
 If  `$?$' is any relevant decoration, let  $\X_{\G}^{?}:= {\rm p}_{\G}(\X^{?})$; for  $x\in \X_{\G}^{\cl}$,  let $\X_{x, \H}^{?}:= {\rm p}_{\G}^{-1}(x)\cap  \X^{?}$.

For each $x\in \X_{\G}^{\cl, \textup{ p-crys, } \Q_{p}}$, the set 
$ \X_{x,\H}^{\cl,  \ \Q_{p},   \textup{ ram }}$ contains contains all but finitely many points in 
 $ \X_{x, \H}^{\cl, \, \Q_{p}}$, which is dense in  $\X_{x, \H}$. Thus the closure of $\X^{\cl,\ \text{p-crys}, \  \Q_{p}, \text{ ram}}$ contains all of $\X^{\cl,\ \text{p-crys},\  \Q_{p}}$.  
 
 Now we observe that $\X_{\G}^{\cl, \ \text{p-crys},\  \Q_{p}} = \mathscr{Y}_{\G}\cap \X_{\G}^{\cl,\  \Q_{p}}$ for the open subset  $$\mathscr{Y}_{\G}:=\X_{\G}- \{ x\ | \cV_{\G, v|x }^{+}(-1)  \cong \cV_{\G, v|x}^{-} \text{for some $v\vert p$}\} ,$$ which is non-empty as it contains $\X_{\G}^{\cl, W_{\G}}$ for any representation $W_{\G}$ whose partial weights are all $\geq 3$ (cf. the proof of Lemma \ref{6444}.1 (c)). 
Therefore  $\X^{\cl,\ \text{p-crys},\  \Q_{p}}$ is the intersection of the  non-empty open ${\rm p}_{\G}^{-1}(\Y_{\G})$ with  $\X^{\cl, \, \Q_{p}}$, which  is dense in $\X'$ by Lemma \ref{dense}.  We conclude that   $\X^{\cl,\ \text{p-crys},\  \Q_{p}}$ and  $\X^{\cl,\ \text{p-crys}, \  \Q_{p}, \text{ ram}}$ are dense in $\X'$. 
 \end{proof}
 

\begin{prop} \lb{final lemma}
The following are equivalent.
\begin{enumerate}
\item[1.] \lb{it1} Theorem \ref{ugz thm} holds over $\X'_{K^{p}{}'}$ for all $K^{p}{}'\subset K^{p}$.

\medskip

\item[2a.] \lb{it3a}
Theorem \ref{GZ thm'} holds for all representations $\Pi$ corresponding to points of $\X^{\cl}$ satisfying $\textup{(wt)}$.
\item[2b.] \lb{it3c}
Theorem \ref{GZ thm'} holds for all representations $\Pi$ corresponding to points of $\X^{\cl, \textup{ p-crys}, \ \Q_{p}, \textup{ ram}}$.

\medskip

\item[3a.] \lb{it4a} Theorem \ref{GZ thm}   holds for all representations $\Pi$ corresponding to points of $\X^{\cl, \textup{ n-exc}}$ satisfying $\textup{(wt)}$.
\item[3b.] \lb{it4c} Theorem \ref{GZ thm}  holds for all  representations $\Pi$ corresponding to points of 
$\X^{\cl, \textup{ p-crys}, \ \Q_{p}, \textup{ ram}}$.
\end{enumerate}
\end{prop}
\begin{proof}
 For any point   $z\in \X^{\cl, \textup{n-exc}}$ satisfying $\textup{(wt)}$,  denote  by $\Pi_{z}$ the associated automorphic representation, by $V_{z}$  the associated Galois representation . We have   proved the following specialisation-at-$z$ properties of objects defined  over (open subsets of)  $\X$:
\begin{itemize}
\item the $\OO_{\X'}$-module $\Pi_{H'_{\Sg}}^{K^{p}{}', \ord}$ (respectively $(\Pi_{H'_{\Sg}}^{K^{p}{}', \ord})^{\iota}$  specialises to $\Pi_{z,H'_{\Sg}}^{K^{p}{}', \ord}$ (respectively  $\Pi_{z,H'_{\Sg}}^{\vee, K^{p}{}', \ord}$), by Proposition \ref{cofinal}.\ref{Piord} (and the definition of the involution $\iota$);
\item the Galois representation $\cV$  (respectively $\cV^{\iota}$) and its ordinary filtrations  specialise to $V=V_{\Pi}$ (respectively $V_{\Pi^{\vee}}$)  with its ordinary filtrations, by construction (Proposition \ref{galois2});
\item there is a natural map $\wtil{H}^{1}_{f}(E, \cV)_{|z}\to \wtil{H}^{1}_{f}(E, V)$;
\item  the class $\cP_{K^{p}{}'}$ specialises to the restriction of $P^{\ord}_{\Pi_{z}}$ to $\Pi_{z,H'_{\Sg}}^{K^{p}{}', \ord}$ under the above map, by  Theorem   \ref{theo heeg univ} whose proof is completed in \S~\ref{645};
\item the product of local terms $\cQ$ specialises to the restriction of $Q^{\ord} $ to the spaces of $H'_{\Sg}$-coinvariants, $K^{p}{}'$-invariants in $\Pi_{z}^{\ord}$, $\Pi_{z}^{\vee,\ord}$, 
 by Theorem \ref{X6}.
\end{itemize}
Let us  complete the proof  that either side of Theorem \ref{ugz thm} specialises to $1/2$ times the corresponding side of Theorem \ref{GZ thm'}.  Consider the diagram $\X_{0}\to \X_{0}^{\sharp}\to \cE_{\rm Z}$. It is \emph{not} a product, even Zariski-locally; however  the
conormal sheaf is trivial. (This is dual to the fact that $\G\ts\H\to (\G\ts\H)'$ is a ${\rm Z}$-torsor for the \'etale topology but  {not} for the Zariski topology.)  The    immersion $\X\hat{\ts} \cE_{\rm Z}\to \X^{\sharp}$ given by `$(\Pi, \chi_{F})\mapsto \Pi\ot \chi_{F}$' induces the map on conormal sheaves 
$$\calN_{\X/\X^{\sharp}}^{*}=\OO_{\X}\hat{\ot} \Gamma_{F} \to
\calN_{\X/\X\ts \cE_{\rm Z}}^{*} = \OO_{\X}\hat{\ot} \Gamma_{F} $$
  that is multiplication by $1/2$
 under the canonical identifications. Hence:
\begin{itemize}
\item the $p$-adic height pairing ${h}_{\cV}=h_{\cV^{\sharp}|\X}$ specialises to 
${1\over 2} h_{V}={1\over 2} h_{V\ot \chi_{F, {\rm univ}|G_{E} }| z}$, by \S~\ref{sec: ht};
\item the derivative ${\rm d}^{\sharp} \calL_{p}(\cV)$   specialises to ${1\over 2} \calL_{p}'(V, 0)$ in $\Q_{p}(z)\hat{\ot} \Gamma_{F}$,  by the definition in \eqref{def Lp}.
\end{itemize}

We may now  complete the proof. By the specialisation properties summarised above, we have $1.\Rightarrow 2.a$ ($\Rightarrow 2.b$). By Lemma \ref{B Bord}, we have $2.a \Rightarrow 3.a$, $2.b \Leftrightarrow 3.b$.  
 By Lemma \ref{dense'} and the specialisation properties, $2.b\Rightarrow 1.$
\end{proof}

\begin{proof}[{Proof of Theorems \ref{GZ thm},  \ref{ugz thm}, and \ref{GZ thm'}}] The first assertion of Theorem \ref{GZ thm} was proved in Proposition \ref{in H1f}.
For a representation $\Pi$ of trivial weight satisfying the conditions (p-crys), (ram), the  formula of Theorem \ref{GZ thm} is \cite[Theorem B]{nonsplit} (cf. Lemma \ref{compare Lp}).
  By Proposition \ref{final lemma}, this implies  Theorem \ref{ugz thm} and the general case  of Theorems \ref{GZ thm},   \ref{GZ thm'}.  
\end{proof}

\subsubsection{Applications to non-vanishing / 1: self-dual CM families} \lb{sec: nvCM}
We prove the generic non-vanishing result of  Theorem \ref{nv CM}. Recall that $\Y$ is a component of the subvariety $\cE_{\H}^{\ord , \rm sd}\subset\cE_{\H}^{\ord}$ cut out by the condition $\chi_{F_{\A}^{\ts}}=\eta \chi_{{\rm cyc}, F}$, and such that $\eps(\chi_{y}, 1)=-1$ generically along $\Y$.

\begin{proof}[Proof of Theorem \ref{nv CM}] Recall that a $p$-adic CM type of $E$ over  $\baar{\Q}_{p}$ is a choice   $\Sg$ of a place $w\vert v$ of $E$ for each place $v\vert p$ of $F$ (we identify primes above $p$ with embeddings into   $\baar{\Q}_{p}$). For each of the $p$-adic CM types $\Sg$ of $E$ and each connected component $\Y^{\sharp}$ of $\cE^{\ord}_{\H}$,  there is a Katz $p$-adic $L$-function
$$L_{\Sg} \in \OO(\Y^{\sharp}).$$
It is characterised (see \cite{katz, HT}) by its  values at the subset $\Y^{\sharp, \cl.\Sg}\subset \Y^{\sharp, \cl}$ of those $y$ such that  the algebraic part of $\chi_{y}$ is   $t\mapsto\prod_{\sg\in \Sg} \sg(t)^{w} \sg(t/t^{c})^{k_{\sg}}$ for integers $w,  k_{\sg}$ such that either $w\geq 1$, or  $w<1$ and $w+k_{\sg}>0$ for all $\sg\in \Sg$. The interpolation property relates $L_{\Sg}(y)$ to $L(1, \chi_{y})$. It is easy to see that for  a given $y\in \cE^{\ord, \rm sd}_{\H}$, there is a unique CM type $\Sg$ such that $y$ belongs to the interpolation subset of $L_{\Sg}$. For such $y$ and $\Sg$,  we denote by $L_{p}(\chi_{y}, s)\in \OO(\cE_{{\rm Z}/\Q_{p}(y)})$ the function $s \mapsto L_{\Sg}(y(s))$ where $\chi_{y(s)}=\chi\cdot \chi_{F,s}\circ N_{E/F})$.

Consider now the setup of the theorem, and  let $\Y^{\sharp}\subset \cE_{\H}^{\ord}$ be the component containing $\Y$. 
By \cite{ashay}, under our assumptions the normal derivative $d^{\sharp} L_{\Sg}\in \mathscr{N}^{*}_{\Y/\Y^{\sharp}}$ is non-vanishing. Let $\Y'_{\Sg}$ be the complement of its zero locus, and let $$\Y':=\bigcap_{\Sg} \Y_{ \Sg}'.$$
 Let $y\in \Y^{\cl}\cap \Y'$, and let $\chi:=\chi_{y}$. It is easy to see that there is a unique $\Sg$ such that $y\in \Y^{\sharp, \cl , \Sg}$. 
 
 We \emph{claim} that there exists a a finite-order character $\chi_{0}\in \cE_{\H}^{\ord, \cl}$, such that  the character 
 $$\chi'= \chi^{c} \chi_{0}^{c}\chi_{0}^{-1} $$
 (that has  the same algebraic part as $\chi^{c}:=\chi\circ c$, and    defines a point $y'\in \cE_{\H}^{\ord, {\rm sd}, \cl}$) satisfies the following properties:
\begin{itemize}
 \item $L(1, \chi')\doteq L_{p}(\chi', 0)\neq 0$, where $\doteq $ denotes equality up to a nonzero constant;
 \item $H^{1}_{f}(E, \chi')=0$.
 \end{itemize}

 Granted the claim, we have a decomposition of $G_{E}$-representations
$$ (\chi\chi_{0}\oplus \chi^{c}\chi_{0}^{c})\ot \chi_{0}^{-1} =\chi \oplus \chi' $$ 
and a corresponding factorisation
$$ \calL_{p}(V_{(\pi_{0}, \chi_{0}^{-1})},s)\doteq L_{p}(\chi,s) L_{p}(\chi',s ), $$
 where $\pi_{0}=\theta(\chi\chi_{0})$ (the theta lift), and $\pi_{0}\otimes\chi_{0}^{-1}$ descends to a representation of $(\G_{0}\ts\H)'(\A)$.  It follows that  $\calL_{p}'(V_{(\pi_{0}, \chi_{0}^{-1})},0)\neq 0$. By Theorem \ref{BK thm}, we have a class 
 $$Z \in H^{1}_{f}({E}, V_{(\pi_{0}, \chi_{0}^{-1})}) \ot H^{1}_{f}({E}, V_{(\pi_{0}, \chi_{0}^{-1})}) =    (H^{1}_{f}({E},\chi) \otimes H^{1}_{f}({E},\chi^{c}))
 \oplus (H^{1}_{f}({E},\chi')\otimes H^{1}_{f}({E},\chi'^{c})) $$
  whose $p$-adic height is non-vanishing. Since $H^{1}_{f}({E},\chi')=H^{1}_{f}({E},\chi'^{c})) =0$, the class $Z$ is as desired.

It remains to prove the claim. Let $\Y_{1}\subset \cE_{\H}^{\ord, {\rm sd}}$ be a component over which the anticyclotomic Main Conjecture is known -- that is, one containing a finite-order character satisfying the properties of \cite{hida-quad}). By applying  \cite[Lemma 2.5]{bur-d} to any character corresponding to a point of $\Y_{1}^{\cl}$, we find another component $\Y_{2}\subset \cE_{\H}^{\ord, {\rm sd}}$,  whose classical points correspond to characters $\chi_{2}$ with $$\eps(1, \chi_{2})=1;$$
 moreover from the proof in \emph{loc. cit.}  one sees that $\Y_{2}$ may be taken to still satisfy the assumptions of \cite{hida-quad}. Then the function $L_{\Sg c|\Y_{2}}$ is non-vanishing by \cite{hsieh-mu}; hence, by the density of classical points with a given weight, we may find $y'\in \Y_{2}^{\cl}$ corresponding to a character $\chi'$ satisfying the first among the required  conditions. By the anticylcotomic Main Conjecture for $\Y_{2}$ proved in \cite{hida-mc, hida-quad}, that is equivalent to the second condition. Finally, the ratio $\chi'/\chi^{c} $ is an anticyclotomic character (that is, trivial on $F_{\A}^{\ts}$), hence (\cite[Lemma 5.3.1]{28}) of the form $\chi_{0}^{c}\chi_{0}^{-1}$ for some finite-order character $\chi_{0}$.  
\end{proof}

\subsection{A universal  Waldspurger formula}\lb{sec: wald}
We describe  the complementary picture over locally distinguished families  attached to \emph{coherent} quaternion algebras over $F$.  Unexplained notions and  notation will be entirely parallel to what defined in the introduction. 

 Let $B$ be a totally definite quaternion algebra \emph{over $F$}, let  $\Sg$ be the set of finite places where $B$ is ramified, and let $\G_{/\Q}$ be the algebraic group with $\G(R)=(B\ot R)^{\ts}$ for any $\Q$-algebra $R$. Let $E$ be a CM quadratic extension of $F$, admitting an $F$-algebra embedding ${\rm e}\colon E\into B$  which we fix. We use the same symbols as in the introduction for the towers of Shimura varieties associated to the groups as in \eqref{list}.  Here, all those Shimura varieties  are $0$-dimensional.

 Let $L$ be a $p$-adic field and let $\Pi$ be an automorphic representation of $(\G\ts\H)'(\A):=(\G\ts\H)'(\A^{\infty})\ts (\G\ts\H')_{/\Q_{p}}$ over $L$ of weight $W$, by which we mean one occurring in $\H^{0}(\baar{Z}, \cW^{\vee})\ot W$. The normalised  fundamental class of $Y$ gives rise to an element $P\in \H_{0}(Z, \cW)^{\H'(\A)}$ and to an  $\H'(\A)$-invariant functional 
 $$P_{\Pi}\colon \Pi\to L, $$
 which may be nonzero only if $\Pi$ is locally distinguished by $\H'$. If $\Pi$ is ordinary, we again define $P^{\ord}:= P\tord$.

 Let $\X$ be a locally distinguished  Hida family for $(\G\ts \H)'$, which via a Jacquet--Langlands  map is isomorphic to  a Hida family $\X_{0}$ for $(\G_{0}\ts\H)'$. For each compact open subgroup $K^{p}\subset (\G\ts\H)'(\A^{p})$, there is a `universal ordinary representation' $\Pi^{K^{p},\ord}_{H'_{\Sg}}$ of $(\G\ts\H)'(\A)$ over $\X$. As in Theorem \ref{theo heeg univ}, there exists an $\H'(\A^{p\infty})$-invariant, $\OO_{\X}$-linear functional 
\beq\lb{cP as functional2} \cP\colon  \Pi^{K^{p},\ord}_{H'_{\Sg}}\to \OO_{\X} \eeq
 interpolating (the restrictions of) $P_{\Pi_{z}}^{\ord}$ at all $z\in \X^{\cl}$ satisfying (wt). 
 
Starting from the natural pairings $\H_{0} (\baar{Z}_{K} , \cW)\ot \H_{0} (\baar{Z}_{K} ,\cW^{\vee})\to L$, we may define pairings $(\ , \ )_{\Pi}$ on each  representation $\Pi$ over a field  by the formula  \eqref{pair pi} (using the counting measure for ${\rm v}(K)$);   then we obtain modified pairings $(\ , \ )^{\ord}_{\Pi}$ on each $\Pi^{\ord}\ot \Pi^{\vee, \ord}$, and a  pairing $((\ ,  \ ))$ on the universal representations over $\X$, interpolating modified pairings  $(\ , \ )^{\ord}_{\Pi_{z}}$.

Finally, the functional $\cQ$, over an open $\X'\subset \X$ containing $\X^{\cl}$,  is also constructed as  in  \S~\ref{sec 44}; in the argument using the local Langlands correspondence, we use the rank-$2$ family of Galois representations pulled back from $\X_{0}$. 

 \begin{theoA}\label{wald univ} Let $\X$ be a locally distinguished Hida family for $(\G\ts\H)'$. 
 Abbreviate $\Pi^{(\iota)}:=  \Pi^{K^{p}{}', \ord, (\iota)}_{H'_{\Sg}}$, $\OO:=\OO_{\X}$, $\cK:=\cK_{\X}$. 

There is an
open subset $\X'\subset \X$ containing $\X^{\cl}$, such that 
$${\cP(f_{1})\cdot \cP^{\iota}(f_{2})    \over   ((f_{3}, f_{4}))    }
=
 \calL_{p}(\cV^{\sharp})_{|\X}
\cdot
\cQ\left( {  f_{1} \ot  f_{2}\over f_{3}\ot f_{4}}\right),
$$
 an equality of $\cK$-valued $\OO$-linear  functionals on $(\Pi\ot_{\OO} \Pi^{\iota})\ot_{\OO^{\times}} (\Pi\ot_{\OO} \Pi^{\iota})^{\ts, -1}$.
\end{theoA}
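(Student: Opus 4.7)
The plan is to follow the template of Theorem \ref{ugz thm} but at the level of values rather than derivatives: in the coherent, totally definite setting the relevant $L$-function does not vanish identically on $\X$, so no normal-derivative apparatus or height pairing is needed. The first step is to set up, by mimicking the constructions of \S\ref{sec: 6.3}--\S\ref{sec 44} with the $0$-dimensional Shimura variety $Z$ in place of the curve used for Heegner cycles, all the universal objects appearing in the statement: the invariant functional $\cP$ of \eqref{cP as functional2}, its involution-twin $\cP^{\iota}$, the interpolated pairing $((\ ,\ ))$ obtained from Poincar\'e (really counting) duality on $\H_0(\baar Z,\cW)$ after the twist by $w_{\rm a}^{\ord}$, and the interpolated local factor $\cQ$ built as in Theorem \ref{X6}. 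These are regular on a suitable open subset $\X'\subset \X$ containing $\X^{\cl}$, and specialise at every $z\in\X^{\cl}$ satisfying $(\textup{wt})$ to $P^{\ord}_{\Pi_z}$, $(\ ,\ )^{\ord}_{\Pi_z}$, and $Q^{\ord}$ respectively; this is the formal analogue of Theorem \ref{theo heeg univ}, Proposition \ref{abovco}, and Theorem \ref{X6}.

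The second step is to verify the asserted identity pointwise at a Zariski-dense subset of $\X^{\cl}$. Fix $z\in\X^{\cl}$ with $\Pi_z$ of weight $W$ satisfying $(\textup{wt})$, and set $V=V_{\Pi_z}$. The classical Waldspurger formula for the totally definite quaternion algebra $B$, in the explicit representation-theoretic form proved in \cite{yzz}, reads
$$\frac{P_{\Pi_z}(f_1)\,P_{\Pi_z^{\vee}}(f_2)}{(f_3,f_4)_{\Pi_z}}=\calL(V,0)\cdot Q\!\left(\frac{f_1\otimes f_2}{f_3\otimes f_4}\right).$$
Applying $\tord$ to $f_1,f_2$ and $w_{\rm a}^{\ord}$ to $f_3$, and using $P^{\ord}_{\Pi}=P_{\Pi}\tord$, the definition $(\ ,\ )^{\ord}_{\Pi}=\dim W\cdot(w_{\rm a}^{\ord}\cdot,\cdot)_{\Pi}$, and the local comparison of Proposition \ref{compare Qs}, one deduces the ordinary form
$$\frac{P^{\ord}_{\Pi_z}(f_1)\,P^{\ord}_{\Pi_z^{\vee}}(f_2)}{(f_3,f_4)^{\ord}_{\Pi_z}}=e_{p\infty}(V)\,\calL(V,0)\cdot Q^{\ord}\!\left(\frac{f_1\otimes f_2}{f_3\otimes f_4}\right).$$

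Thirdly, I combine this pointwise identity with interpolation. By Theorem \ref{conj Lp} the restriction $\calL_p(\cV^{\sharp})_{|\X}$ specialises at $z$ to $e_{p\infty}(V)\,\calL(V,0)$; by the specialisation properties of $\cP$, $((\ ,\ ))$, and $\cQ$ recalled above, both sides of the claimed identity are regular $\cK_\X$-valued functionals on $(\Pi\otimes_\OO\Pi^\iota)\otimes_{\OO^\times}(\Pi\otimes_\OO\Pi^\iota)^{\times,-1}$ over $\X'$ whose specialisations at every $z\in\X^{\cl, \text{n-exc}}$ (say) coincide. Since $\X^{\cl, \text{n-exc}}$ is Zariski-dense in $\X$ by Lemma \ref{dense} together with the closedness of the exceptional locus, the identity holds on all of $\X'$. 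A final continuity argument along the fibres $\X_{\G,y}$ of Proposition \ref{h1f exc}, exactly as used at the end of the proof of Theorems \ref{GZ thm}, \ref{GZ thm'}, extends the statement over the full classical locus.

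The hard part is almost entirely bookkeeping rather than conceptual: one must check that the constructions of the sheaves $\cM$, $\cV$, $\Pi^{K^p{}',\ord}_{H'_\Sigma}$ and of the pairings $((\ ,\ ))$ and $\cQ$ of \S\ref{section 3}--\S\ref{sec 44} transport to this setting with $\dim Z=0$. In fact this is substantially easier than in the Heegner case: the absence of an Abel--Jacobi map and of any Greenberg local conditions at $p$ means that $\cP$ lands directly in $\OO_\X$, and the entire Selmer-complex machinery of \S\ref{big sec: sel} is bypassed. The only genuinely nontrivial input beyond the formalism already developed is the classical Waldspurger formula at a dense set of classical points, which is supplied by \cite{yzz}.
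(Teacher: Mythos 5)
Your proposal is correct and follows essentially the same path as the paper: the paper's own argument for Theorem~\ref{wald univ} consists of exactly the remark that it ``follows from its specialisations at all classical points satisfying (wt)'' (via the interpolation machinery of \S~\ref{sec: final}), with the classical input being the Waldspurger formula suitably normalised and converted into ordinary form as in Theorem~\ref{GZ thm'}. Your derivation of the ordinary Waldspurger formula from the classical one via $\gamma_{H'}^{\ord}$, $w_{\rm a}^{\ord}$ and Proposition~\ref{compare Qs}, and the observation that the Selmer-complex formalism of \S~\ref{big sec: sel} is bypassed because $\cP$ lands directly in $\OO_{\X}$, are exactly what is needed.

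Two small remarks. First, the paper sources the classical formula from Waldspurger's original paper (\cite{wald}), not from \cite{yzz}; either reference can be made to work, but the normalisations in \S~\ref{ssec loc tp} and \eqref{Qpair} are set up to match Waldspurger's. Second, your final paragraph invokes the $\X_{\G,y}$-fibre argument of Proposition~\ref{h1f exc} to extend to exceptional classical points. That step is genuinely needed in the $\epsilon=-1$ case because there $\wtil{H}^1_f(E,\cV)$ is only controlled near non-exceptional points; in the present $\epsilon=+1$ setting $\cP$ is a section of $\OO_{\X'}$ over the full open $\X'$, and the two sides of the identity are $\cK_{\X'}$-valued sections agreeing on the Zariski-dense $\X^{\cl,\textup{n-exc}}$, hence agree identically on $\X'$. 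So the extension to $\X^{\cl}$ is immediate and the appeal to Proposition~\ref{h1f exc} is superfluous.
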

Similarly to \S~\ref{sec: final}, this universal formula follows from its specialisations at all classical points satisfying (wt); those are known by modifying the main result of  \cite{wald} as in Theorem \ref{GZ thm'}. 

 The formula essentially reduces the study of $\calL_{p}(\cV^{\sharp})_{|\X}$ to the study of the universal Waldspurger periods $\cP$. This is particularly interesting in the case of exceptional zeros, as we discuss next.

\subsection{Bertolini--Darmon conjectures and exceptional zeros} \lb{sec bd conj}
We first  formulate a conjecture on the behaviour of $\cP$ at a point $z\in \X^{\cl}$ and gather some old and new evidence in its favour.  In view of \S~\ref{ss 648}, the conjecture is often particularly interesting when $z$ is exceptional.  Then we deduce from our constructions and a known exceptional case of the conjecture a proof of Theorem \ref{nv exc}.

The conjecture requires some algebraic preliminaries.
 
 \subsubsection{Pfaffian regulators}  
 Let $L$ be a field  of characteristic $0$, and let $M$, $T$ be a finite-dimensional $L$-vector spaces.  Let $h\colon M\ot M\to T$ be a skew-symmetric pairing, and let $r=\dim_{L}M$.
 
 If $r$ is even, we define the Pfaffian regulator  $${\rm Pf}^{+}(M, h)\in ({\rm Sym}^{r/2}T )/ L^{\ts}$$
 to be the Pfaffian of the skew-symmetric matrix $h(x_{i}, x_{j})_{ij}$ for any $L$-basis $x_{i}$ of $M$. It is well-defined modulo $L^{\ts}$. 
 
 If $r$ is odd, we define an enhanced Pfaffian regulator 
  $${\rm Pf}^{-}(M, h)\in (M\ot {\rm Sym}^{(r-1)/2}T )/ L^{\ts}$$
as follows. It suffices to define $\partial^{e}{\rm Pf}^{-}(h) $ for any basis $\partial_{1}, \ldots, \partial_{d}$ of $T^{\vee}$ and all tuples $e=(e_{i})_{i=1}^{d}$ with $\sum_{i=1}^{d}e_{i}= (r-1)/2$; here $\partial^{e}:= \prod_{i=1}^{d} \partial_{i}^{e_{i}} $. Let $I\subset \{1, \ldots, d\}$ be the support of the tuple $e$. Let $M_{I}   \subset M$ be the sum, over $i \in I$,  of the radicals of the pairings $\partial_{i} h$. If $\dim M_{I}\geq 2$, we define $
\partial^{e} {\rm Pf}^{-}(h) :=0$. If $\dim M_{I}=1$ and $x\in M_{i}$ is a generator,  denote by $\baar{h}$ the induced pairing on $\baar{M}:= M/M_{I}$; we define 
$$\partial^{e} {\rm Pf}^{-}(M, h)  = x\ot \partial^{e} {\rm Pf}^{+}(\baar{M}, \baar{h}). $$

\begin{rema}\lb{rel reg} In the even case, we have of course ${\rm Pf}^{+}(M, h) ^{2}  = R(M, h)\in ( {\rm Sym}^{r}T )/ L^{\ts,2}$, where $R(M, h)$ is the discriminant of the pairing $h$. In the odd case, assume further given a symmetric bilinear pairing $h^{\sharp}\colon M\ot M\to T^{\sharp}$. Let $h'=h\oplus h^{\sharp}\colon M\ot M\to T':-T\oplus T^{\sharp}$, and let $R(M, h')\in {\rm Sym}^{r}( T')/L^{\ts, 2}$ be its discriminant. Then it is easy to verify that 
$$h^{\sharp}( {\rm Pf}^{-}(M, h), {\rm Pf}^{-}(M, h))\in (T^{\sharp}\ot {\rm Sym}^{r-1}T/) L^{\ts, 2}$$
 is the image of $R(M, h')$ under the natural projection.
\end{rema}

\begin{rema}\lb{ambi int} If $L$ is the fraction field of a domain $\OO$ and $M$ is endowed with an $\OO$-lattice, it is possible to lift the ambiguity in the definitions to an element of $\OO^{\ts}$. 
\end{rema}

\subsubsection{A conjecture \`a la Bertolini--Darmon} \lb{sec 733}
Let  $\G$ be either as in the introduction or as in \S~\ref{sec: wald}. We define a sign $\epsilon := -1$ in the former case and $\epsilon := +1$ in the latter case. Let $\X$ be a locally distinguished Hida family for $(\G\ts\H)'$, and let $$z\in \X^{\cl}$$ be a classical point.  We denote by $\cP$ the universal Heegner class (if $\epsilon=-1$) or toric period (if $\epsilon =+1$), viewed as a family of functionals as in \eqref{cP as functional}, \eqref{cP as functional2}, parametrised by a subset $\X'\subset \X$.  Assume that $\X'$ can be taken to be a neighbourhood  of $z\in \X$.

Let  $T_{z}^{*}\X ={\frakm}_{z}/\frakm_{z}^{2}$ be the cotangent space to $\X$ at $z$ (where $\frakm_{z}\subset \OO_{\X, z}$ is the  maximal ideal), and for any $r\in \N$ let  
$$({\rm d}_{z}^{\parallel})^{r} \colon {\frakm}_{z}^{r}\subset \OO_{\X, z}\to \frakm_{z}^{r}/\frakm_{z}^{r+1} = {\rm Sym}^{r} T_{z}^{*}\X  $$
be the   natural projection. It is easy to see that the involution $\iota$ of \S~\ref{ssec invo} satisfies
${\rm d}_{z}^{\parallel}\iota =\id$ (whereas ${\rm d}_{z}^{\sharp} \iota =-\id$).  

Let $V=\cV_{|z}$, and let
 ${\rm c} \colon  \wtil{H}^{1}_{f}(E, V^{\iota}) \to  \wtil{H}^{1}_{f}(E, V)$ be the isomorphism induced by the adjoint action, on $G_{E, S}$, of a lift of the complex conjugation in $\Gal(E/F)$. Let 
\beq\lb{hpar}h^{\parallel}:= h_{z/\X}^{\Box} \colon \wtil{H}^{1}_{f}(E, V) \ot  \wtil{H}^{1}_{f}(E, V)  \to  T_{z}^{*}\X\eeq
 be the \nek--Venerucci height pairing as in \eqref{skewsym}.  Since the pairing of Proposition \ref{duality} is skew-hermitian, by Proposition \ref{prop ht2} the pairing $h^{\parallel}$ is skew-symmetric.
Define  $${\rm Pf}^{\, \parallel, \pm}(V):= {\rm Pf}^{\pm} (\wtil{H}^{1}_{f}(E, V), h^{\parallel}).$$

  \begin{customconj}{Pf} \lb{pf conj}  Let $\wtil{r}:= \dim_{L}\wtil{H}^{1}_{f}(E, V)$.
We have $(-1)^{\wtil{r}}=\epsilon$, the universal element  $\cP$ vanishes to order at least $\lfloor{\wtil{r}/2}\rfloor$ at $z$, and for any generator  $\wp\in (\Pi^{\ord})^{*, \H'(\A^{p\infty)}}$ and all $f\in \Pi^{\ord}$, we have
\beq\lb{pf conj eq}
({{\rm d}}^{\parallel}_{z})^{ \lfloor{\wtil{r}/2}\rfloor} \, \cP(f) = {\rm Pf}^{\, \parallel, \epsilon}(V) \cd \wp(f)\eeq
in  $[\wtil{H}^{1}_{f}(E, V)^{(1+\epsilon)/2}\ot_{\Q_{p}(z)} {\rm Sym}_{\Q_{p}(z)}^{{\lfloor{\wtil{r}/2}\rfloor}}T_{z}^{*}\X ]\, / \Q_{p}(z)^{\ts}$.
\end{customconj}

\subsubsection{Relation to the original conjectures of Bertolini--Darmon}\lb{ssec bd} Define $\X^{\rm a}:= \X\cap ( \{x\}\ts \cE_{\H'})$ (the anticyclotomic family), $\X^{\rm wt} := \X\cap (\cE_{\G}\ts\{y\})$ (the weight family). In the classical case when $E$ is imaginary quadratic, $\pi$ is associated with an elliptic curve over $\Q$, $\chi=\one$,  and we restrict to the anticyclotomic variable $\X^{\rm a}$, Conjecture \ref{pf conj} is a variant of conjectures of Bertolini--Darmon, surveyed in \cite{bdsurvey} and (in a form slightly closer to the one of the present work) in \cite[\S~4]{dd-exc}. The Bertolini--Darmon conjectures were partly generalised to higher-weight modular forms  in \cite{LV}.

\begin{rema} By using natural  $G_{E, S}$-stable lattices in $V$, or better lattices in $\wtil{H}^{1}_{f}(E, V)$ spanned by motivic elements, it is possible to define the Pfaffian regulators up to an ambiguity that is a unit in the ring of integers of a local field or of a number field (recall Remark \ref{ambi int}). It should then possible to refine the conjecture up to such ambiguity (by including the appropriate constants), as in the original works of Bertolini--Darmon (see \cite[Conjecture 4.2.1]{dd-exc}).

In view of Remark \ref{rel reg}, inserting Conjecture \ref{pf conj} into Theorem \ref{ugz thm} would yield a multivariable formula relating higher partial derivatives of $p$-adic $L$-functions  with suitable height regulators, in the spirit of the  Birch and Swinnerton-Dyer conjecture; the argument is the same as that of  \cite[Proposition 5.1.1]{dd-exc}. We plan to return to formulate such conjectural formulas in the appropriate generality in future work. 
\end{rema}

\subsubsection{Evidence for Conjecture \ref{pf conj} in low rank} The preliminary parity conjecture 
\beq \lb{par1} 
(-1)^{\wtil{r}}=\epsilon\eeq 
is known in many cases as a consequence of the work of \nek\ (see \cite[Theorem 12.2.3]{nek-selmer}. Indeed, the statement proved  in \emph{loc. cit.}, in view of the functional equation of $\calL(V_{(\pi, \chi)},s)$, is that 
\beq \lb{par2}
(-1)^{r}= \eps
\eeq
 where $r=\dim H^{1}_{f}(E, V)$ and $\eps =\eps (V)$. Now if $r^{\rm exc}= r^{\rm exc, \, s}+ {r}^{\rm exc,\, ns}$ denotes the number of exceptional  primes of $F$ above $p$ (respectively, the number of those that moreover are split or nonsplit in $E$),  by Lemma \ref{6444}.2 we have $\epsilon=\eps\cd (-1)^{r^{\rm exc,\, ns}}$, and $\wtil{r}= r+2r^{\rm exc,\, s}+ r^{\rm exc, \, ns}$. Thus \eqref{par2} is equivalent to \eqref{par1}.

Let us now review the conjectural vanishing and leading-term formula. Most of the available evidence is concentrated in the case where $V$ arises from the classical context of \S~\ref{ssec bd}, to which we restrict unless otherwise noted for the rest of this discussion. (All the results mentioned below hold under various additional assumptions, which we will not recall.)

 If $\X$ is replaced with $\X^{\rm a}$ (the original Bertolini--Darmon case), the conjecture is  known if $\calL(V_{(\pi, \chi)},s)$ vanishes  to order $0$ or $1$ at $s=0$, see \cite[Theorem 4.2.5]{dd-exc} and references therein, as well as \cite{LP}. Some of those results have been generalised to higher weight (\cite{IoS, Mas}) or to totally real fields (\cite{Hu, BGe, MB}).

  When $\X$ is replaced by $\X^{\rm wt}$, $p$ is inert in $E$, $V$ is exceptional, and $\epsilon =+1$, Bertolini--Darmon \cite{bd-hida} proved a formula  for  ${\rm d}_{z}^{\parallel} \cP_{|\X^{\rm wt}}$, which implies the projection of \eqref{pf conj eq} to $T_{z}^{*}\X^{\rm wt}$ when ${\rm ord}_{s=0}\calL(V_{(\pi, \chi)}, s)=1$. The interpretation of the formula of Bertolini--Darmon in terms of height pairings was observed by Venerucci (see \cite[Theorem 2.1 and Theorem 4.2.2]{ven}), whose work was a second important influence in the formulation of Conjecture \ref{pf conj}.  The Bertolini--Darmon formula was generalised to higher-weight modular forms by Seveso \cite{Sev}  and to elliptic curves over totally real fields by Mok \cite{Mok}.

  \subsubsection{Evidence  for Conjecture \ref{pf conj} in higher rank}
Lower bounds for the order of vanishing of $\cP_{|\X^{\rm a}}$ have been obtained in two recent works for $\epsilon =1$. In the context of elliptic curves over totally real fields, \cite[Theorem 5.5]{BGe} gives a  bound (that is coarser than predicted by Conjecture \ref{pf conj}) in terms of the number of exceptional primes. In a classical context (and if if $p$ splits in $E$),  Agboola--Castella \cite[Corollary 6.5]{Agb-Ca} prove a bound that is finer than that of Conjecture \ref{pf conj}. 
 (That refined  bound is predicted by Bertolini--Darmon;  it is related to some trivial  degeneracies of the anticyclotomic height pairing, as touched upon also in the paragraphs preceding \eqref{exppl} below.)

 Regarding  the formula of Conjecture  \ref{pf conj}, an interesting anticyclotomic case can be deduced from  the recent work \cite{FG}, as we now explain. 

 \paragraph{The work of Fornea--Gehrmann}
Suppose that $A$ is a modular elliptic curve over the totally real field $F$,  such that the set $S_{p}^{\rm exc}$
of places  $v|p$ where $A$ has multiplicative reduction consists of exactly  $r$ primes $v_{1}, \ldots, v_{r}$ inert in $E$.  
Let $\phi\colon\bigotimes_{i=}^{r} E_{v_{i}}^{\ts}\to \bigotimes_{i=1}^{r} {A}(E_{v_{i}})$  be the product of Tate unifomisations, and let  $\hat{\phi}$ be the induced map on $p$-adic completions.

 One of the main constructions of \cite{FG} produces an explicit  element $Q_{A}\in \bigotimes_{i=1}^{r} E_{v_{i}}^{\ts}\hat{\ot} \Q_{p}$, which carries a precise conjectural relation to the arithmetic of $A$. Partition $S_{p}^{\exc}= S_{p}^{\exc, +}\sqcup S_{p}^{\exc, -}$ according to whether the multiplicative reduction is,  respectively, split or nonsplit, and let $r^{+}:= |S_{p}^{\exc, +}|$.  Denote   $ \hat{A}(E_{v})=A(E_{v})\hat{\ot}\Q_{p}$ and let $\hat{A}(E_{v})^{\pm}$ be its $\pm$-eigenspaces for the conjugation $c_{v}\in \Gal(E_{v}/F_{v})$.

 Assume that $A_{E}$ has rank  $r_{A}\geq r$ and let $r_{A}^{+}$ be the rank of $A$. 
According to \cite[Conjectures 1.3, 1.5]{FGM}, we have\footnote{In \emph{locc. citt.}, some restrictive assumptions are made (in particular that $E$ is not CM), but the conjectures make sense even without those and indeed closely related conjectures appear in \cite{FG} without those assumptions. Moreover, our statement  slightly differs from the ones of \cite{FG}, which instead of postulating that $(x_{1}, \ldots, x_{r})$ is a basis, postulates that $\hat{\phi}(Q_{A})\neq 0 $ under the extra assumption that $\Res_{F/\Q}A$ is simple (equivalently $L(A, s)$ is primitive). Our slight reformulation appears more uniform and still addresses \cite[Remark 1.1]{FG} (cf. the comment following \cite[Conjecture 1.5]{FG}). }
 \beq \lb{conj FG}
  r_{A}>r \text{ or }  r_{A}^{+}+r^{+}\neq r &\Longrightarrow \hat\phi(Q_{A}) = 0, \\
  r_{A}= r \text{ and }  r_{A}^{+}+r^{+}=r &\Longrightarrow
\hat\phi(Q_{A}) \doteq \det( (\hat{x}^{\rm a}_{i,v_{j}})_{1\leq i, j\leq r}) \eeq
where $(x_{1}, \ldots, x_{r})$ is a basis of $A(E)_{\Q}$, and for $v\in S_{p}^{\exc , \pm} $ 
we denote by $\hat{x}_{v}^{\rm a}$ the eigen-projection  of $x\in A(E)$ to $ \hat{A}(E_{v})^{\mp}$. The symbol $\doteq $ denotes equality up to a constant in $\Q^{\ts}$. 

For  $V=V_{p}A_{E}$, assuming the finiteness of $\Sha(A_{E})[p^{\infty}]$ we have $\wtil{r}:=\dim  \wtil{H}^{1}_{f}(E, V)  =r+r_{A}$ (see Lemma \ref{6444}.1(a) or \eqref{exppl} below), and   the parity conjecture is known. Hence if $r_{A}\equiv r\pmod 2$,  which we henceforth assume,
  $V$ corresponds to a point $z$ of a locally  distinguished Hida family $\X$ of sign $\epsilon =+1$ (for a unique choice of the coherent quaternionic group $\G$).
 Let 
 $$\Gamma^{\rm a}:= (E_{\A^{\infty}}^{\ts}/F_{\A^{\infty}}^{\ts}\widehat{\OO}_{E}^{p, \ts}) \hat{\ot}\Q_{p}\cong T_{z}^{*}\X^{\rm a},  
 \qquad  \ell^{\rm a}=\prod_{w} \ell^{\rm a}_{w}\colon E_{\A^{\infty}}^{\ts}/E^{\ts} \to \Gamma^{\rm a}$$
  be the natural projection, and let 
  $\ell_{\rm exc, \ot}^{\rm a}:=\bigotimes_{i=1}^{r}\ell^{\rm a}_{v_{i}}\colon \bigotimes_{i=1}^{r} E_{v_{i}}^{\ts}\hat{\ot}\Q_{p} \to {\rm Sym}^{r}\Gamma^{\rm a}$.  Denoting by ${\rm d}_{z}^{\rm a}$ the component of ${\rm d}_{z}^{\parallel}$ along $\X^{\rm a}$,  
Theorem A of \cite{FG}  (which relies on the aforementioned lower bound from \cite{BGe}) proves 
\beq \lb{eq FG} ({\rm d}_{z}^{\rm a})^{ r} \cP(f^{\circ}) \doteq \ell_{\rm exc, \ot}^{\rm a}(Q_{A}) \eeq
for a suitable test vector $f^{\circ}\in \Pi^{\ord}$. 

 \paragraph{Comparison with Conjecture \ref{pf conj}}
We show that granted \eqref{conj FG} and the finiteness of $\Sha(A_{E})[p^{\infty}]$, the formula \eqref{eq FG} is equivalent to the conjectured \eqref{pf conj eq}.

Let $h^{\rm a}\colon \wtil{H}^{1}_{f}(E, V) \ot \wtil{H}^{1}_{f}(E, V) \to T_{z}^{*}\X^{\rm a}=\Gamma^{\rm a}$
be the projection of $h^{\parallel}$, and let 
$${\rm Pf}^{\, \rm a, +}(V) ={\rm Pf}^{+} ( \wtil{H}^{1}_{f}(E, V) , h^{\rm a}) \in {\rm Sym}^{r} \Gamma^{\rm a}.$$
Let $\wtil{H}^{1}_{f}(E, V)^{\pm}$ be the eigenspaces for the complex conjugation $c\in \Gal(E/F)$.   Since $h_{z/\X^{\rm a}}$ enjoys the $c$-equivariance property $h_{z/\X^{\rm a}}(cx, cx')=c. h_{z/\X^{\rm a}}(x, x')$ and $c$ acts by $-1$ on $\Gamma^{\rm a}$, by construction the pairing $h^{\rm a}$ satisfies $h(cx, cx')= -h(x, x')$,  so that each of  $\wtil{H}^{1}_{f}(E, V)^{\pm}$ is $h^{\rm a}$-isotropic. In particular, Conjecture \ref{pf conj} agrees with \eqref{eq FG} and \eqref{conj FG} that, in the first case of the latter, we have $({\rm d}_{z}^{\rm a})^{r}\cP=0$.

Assume now we are in the second case of \eqref{conj FG}, so that each of  $\wtil{H}^{1}_{f}(E, V)^{\pm}$  
has dimension $r$ and $h^{\rm a}$ need not be degenerate.
For $1\leq i\leq r$,  let $q_{i}=q_{v_{i}}\in E_{v_{i}}^{\ts}$ be a Tate parameter for $A/E_{v}$. By
  \cite[\S~7.14]{nekheights}
 we explicitly have
\beq \lb{exppl}
\wtil{H}^{1}_{f}(E, V) &=  H^{1}_{f}(E, V) \oplus \bigoplus_{i=1}^{r} \Q_{p}\cd[q_{v_{i}}] \\
 h^{\rm a}(q_{v}, q_{v'})=0,\qquad  h^{\rm a}(x, q_{v}) &= \log^{\rm a}_{A, v} (\hat{x}_{v}) , \qquad x\in A(E),\quad  v\neq v',
\eeq
where $\log_{A, v}^{\rm a}\colon \hat{A}(E_{v})\ot \Q_{p} 
\cong E_{v}^{\ts}/q_{v}^{\Z}\hat{\ot}\Q_{p} 
\cong  \OO_{E_{v}}^{\ts}\hat{\ot}\Q_{p} \stackrel{\ell_{v}^{\rm a}}{\longrightarrow} \Gamma^{\rm a}$. Note that $\log_{A, v}^{\rm a}$ factors through $x\mapsto \hat{x}_{v}^{\rm a}$. 

Up to changing the basis  $x_{i}$ of $A(E)_{\Q}$ and reordering the $v_{i}$,  we may assume that the basis 
 $$x_{r^{+}+1}, \ldots, x_{r}, q_{1}, \ldots, q_{r^{+}}, \qquad  x_{1}, \ldots, x_{r^{+}}, q_{r^{+}+1}, \ldots, q_{r} $$
of $\wtil{H}^{1}_{f}(E, V)$ is the concatenation of a basis of  $\wtil{H}^{1}_{f}(E, V)^{+}$ and a basis of $\wtil{H}^{1}_{f}(E, V)^{-}$, respectively. Using this basis, \eqref{exppl},  and the identity ${\rm pf} \smalltwomat {}{M}{-M^{\rm t}}{} =\pm \det M$, we have\footnote{All the equalities to  follow ignore signs and in fact, by our coarse definitions, only make sense at best  up to $\Q^{\ts}$.}
$${\rm Pf}^{\, \rm  a, +}(V)=\det M= \det M_{1}\det M_{2},$$
 where the $r\ts r$ matrix $M$ is block-left-upper-triangular with anti-diagonal blocks $M_{k} = (\log_{A,v_{j}}^{\rm a}(x_{i}))_{i, j \in I_{k}}$ for $I_{1}=\{ r^{+}+1,\ldots, r\}$, $I_{2}=\{ 1, \ldots, r^{+}\}$.

On the other hand, we note that under \eqref{conj FG}, we have $\ell_{\rm exc, \ot}^{\rm a}(Q_{A}) = \det  N$ where   the $r\ts r$ matrix $N_{ij}= \log^{\rm a}_{A,v_{j}}(\hat{x}_{i, v}) = h^{\rm a}(x_{i}, q_{j})$ is block-diagonal with  blocks $N_{1}=M_{2}$, $N_{2}=M_{1}$. Thus $\ell_{\rm exc, \ot}^{\rm a}(Q_{A})  = {\rm Pf}^{\, \rm  a, +}(V)$, and \eqref{eq FG} is equivalent to \eqref{pf conj eq}.

\subsubsection{Applications to non-vanishing / 2: exceptional families} \lb{sec G} We prove Theorem \ref{nv exc}.\footnote{A less interesting variant of it was sketched in \cite{nonsplit}.} Recall that $\X_{0}$ is a Hida family for ${\rm PGL}_{2/\Q}$, that contains a classical point $x_{0}\in \X(\Q_{p})$ corresponding to an elliptic curve $A$ with split multiplicative reduction at $p$ satisfying $L(A, 1)=L(V_{p}A, 0)\neq 0$. 

\begin{proof}[Proof of Theorem \ref{nv exc}]
 Let $E$ be an imaginary quadratic field, with associated quadratic character $\eta$, satisfying the following:  $p$ is inert in $A$, all other primes dividing the conductor of $A$ split in $E$, and the twisted $L$-value $L(A, \eta, 1)\neq 0$. Then $A$ has split multiplicative reduction over $E$ with Tate parameter
  $$q=q_{A}\in E_{p}^{\ts}.$$ 
Let $\Omega_{A_{E}}\in \bC^{\ts}$ be the N\'eron period, and let 
  let $\H:={\rm Res}_{E/\Q}{\bf G}_{m}$. 

By construction,  $\eps_{v}(V_{p}A_{E})=1$ for all finite $v\nmid p$, hence the Hida family $\X\subset \cE_{(\GL_{2}\ts\H)'}^{\ord}$ containing the image of $\X_{0}\ts \{\one\}$ is locally distinguished. Let $\X^{\sharp}\subset  \cE_{(\GL_{2}\ts\H)}^{\ord}$ be the Hida family containing $\X$.
Let $\Pi$ be the universal ordinary representation over $\X$ and let $f\in \Pi$ be such that $f_{|x_{0}}$ is a test vector (that is, a vector not annihilated by any $\H'(\A^{p})$-invariant  functional $\lm\colon \Pi_{|x_{0}}\to \Q_{p}$). Let $\cP_{0, E}$ be the pullback of $\cP(f)$ to $\X_{0}$, and let $\cP_{0}:={1\over 2}\Tr_{E/\Q}\cP_{0, E}$.  By Corollary \ref{X3f}, 
$$\cP_{0,E}\in \wtil{H}_{f}^{1}(G_{E}, \cV_{0}),\qquad \cP_{0}\in  \wtil{H}_{f}^{1}(G_{\Q}, \cV_{0}).$$

By the main result of \cite{BDrigid} (as reformulated in \cite[Theorem 5.4, \S~5.2]{bdsurvey}),\footnote{In the works of Bertolini--Darmon, an explicit test vector $f$ is chosen;  cf. \cite{dd-exc} for more details on bridging the setups.} there is a constant $c\in \Q_{p}^{\ts}$ such that 
$$\qquad \cP_{0, E}(x_{0})\ot \cP_{0, E}^{\iota}(x_{0})=c\cdot {L(A_{E}, 1)\over \Omega_{A_{E}} }\cdot [q] \ot [q]\qquad \text{in }  \wtil{H}^{1}_{f}({\Q}, V_{p}A_{E})\ot  \wtil{H}^{1}_{f}({\Q},V_{p} A_{E})  $$
using the  description
$ \wtil{H}^{1}_{f}({\Q}, V_{p}A_{E})= \Q_{p}\cdot [q] \oplus H^{1}_{f}({\Q}, V_{p}A_{E})$ as in \eqref{exppl}.

In particular, $\cP_{0, E}(x_{0})= \cP_{0}(x_{0})$ is a nonzero multiple of $[q]$, which   is $\Gal(E/\Q)$-invariant. Hence $\cP_{0, E}$ and $\cP_{0}$ are  non-vanishing.  Then by  \cite{nek-koly},  $\wtil{H}^{1}_{f}({\Q}, \cV_{0})$ has generic rank~1.

Moreover, 
\beq\lb{GSa}
h_{\cV_{0}/\cV_{0}^{\sharp}}(\cP_{0} , \cP_{0}^{\iota})(x_{0}) =c\cdot {L(A_{E}, 1)\over \Omega_{A_{E}} }\cdot h([q], [q]) = c\cdot \ell (q) \cdot {L(A_{E}, 1) \over \Omega_{A_{E}}}\in \Gamma_{\Q}\ot_{\Z_{p}}\Q_{p},\eeq
where $\ell\colon \Q_{p}^{\ts}\to \Gamma_{\Q}$ is the universal logarithm (see again  \cite[\S~7.14]{nekheights} for the  second equality). By \cite{st et}, the right-hand side is nonzero, hence $h_{\cV_{0}/\cV_{0}^{\sharp}}(\cP_{0}, \cP_{0}^{\iota})\neq 0$. 
\end{proof}
\begin{rema} \lb{rmk GS}
As noted in \cite{dd-exc, nonsplit}, the combination of Theorem \ref{ugz thm} (or rather Theorem \ref{GZ thm'}) and a precise form of \eqref{GSa} gives a new proof ot the following  theorem of Greenberg--Stevens \cite{GS}: for $A_{/\Q_{p}}$ an elliptic curve of split multiplicative reduction at $p$ and $L_{p}(V_{p}A)\in \Z_{p}\llb \Gamma_{\Q}\rrb_{\Q_{p}}$ its $p$-adic $L$-function,    $$L_{p}'(V_{p}A, 0)= {\ell(q)\over{\rm ord}_{p}(q)} \cdot {L(A, 1)\over \Omega_{A}}.$$
\end{rema}

\appendix

\section{$p$-adic semilocal  constructions} \lb{app A}

\subsection{Preliminaries}\lb{A1}
Throughout this appendix, unless otherwise noted  $L$ denotes a field of characteristic zero (admitting embeddings into $\bC$).

\subsubsection{Admissible and coadmissible representations} Let $\G$ be a reductive group over $\Q_{p}$. We denote 
\beq \lb{G ccc}
G_{p}:=\G(\Q_{p}), \qquad G_{\infty}:= \G(\Q_{p}), \qquad G=G_{p\infty}:=G_{p}\ts G_{\infty}, \qquad G_{\Delta}:= \Delta(\G(\Q_{p}))\subset G, \eeq
where $G_{p}$ and $G_{\Delta}$ have the $p$-adic topology, $G_{\infty}$ has the Zariski topology, and  $\Delta $ is the (continuous) diagonal embedding. The difference between $G_{p}$, $G_{\infty}$, $G_{\Delta}$ will be in the category of modules we choose to consider.  Namely, we consider the categories of smooth admissible representations of $G_{p}$ over $L$, of algebraic representations of $G_{\infty}$ over $L$, and the products of such for $G$; we call the  latter locally algebraic representations of $G$ over $L$.  

\begin{defi} \lb{def admiss} Suppose  that $L$ is a finite extension of $\Q_{p}$. 
A \emph{$p$-adic} locally algebraic admissible  representation $\Pi$ of $G$ over $L$ is one such that for each compact open subgroup $K\subset G_{\Delta}$, 
there exists a family of  $\OO_{L}$-lattices $\Pi^{K, \circ}\subset \Pi^{K}$, for $K\subset G_{\Delta}$,  with the property that  $\Pi^{K', \circ}\cap \Pi^{K}= \Pi^{K, \circ}$ for all $K'\subset K$. 
\end{defi}
The typical example of  a $p$-adic locally algebraic admissible  representation is 
$\varinjlim_{K_{p}\subset G_{p}} H^{i}(Y_{K^{p}K_{p}}, \cW)\ot W^{\vee}$, where $Y_{K}$ is the system of  locally symmetric spaces attached to a model $\G_{\Q}$ of $\G$ over $\Q$, and $\cW$ is the automorphic local system attached to the algebraic representation  $W$ of $G_{\infty}$.

\medskip

There is a dual notion, introduced in  \cite[p. 152]{Sch-T}, see also \cite{Sch-T2}. Assume that $L$ is endowed with a discrete valuation (possibly trivial), giving it a  norm $|\cd|$. Let  $G'$ be one of the groups \eqref{G ccc} or an open subgroup. For $K\subset G'$ a compact open subgroup,  let  $\cD_{G',K}=\cH_{G', K}:=C^{\infty}_{c}(K\bks G'/K,L)$ and $\cD_{G'}=\varprojlim \cD_{G',K}$ be the Hecke algebras of distributions; they are endowed with a natural topology as $L$-vector space, respectively as the inverse limit. A \emph{coadmissible} $G'$-representation $M$ over $(L, |\cd|)$ is  a topological right $\cD_{G'}$-module such that, for any compact subgroup $G^{\circ}\subset G'$,  the $\cD_{G^{\circ}}$-module $M$   admits a presentation  of the following form: there exists a  system of  topological $\cD_{G^{\circ}, K}$-modules $M_{K}$ and isomorphisms  $M_{K}\cong M_{K'}\ot_{\cD_{G^{\circ}, K'}} \cD_{G^{\circ}, K}$ for  $K'\subset K\subset G^{\circ}$, such that   $M\cong \varprojlim_{K} M_{K}$.

Considering first a field  $L$ as endowed with a trivial valuation, we shall consider coadmissible representations $M$ of $G_{p}$ over $L$ that are \emph{smooth} in the sense  the Lie algebra $\frak g$ of $G_{p}$ acts trivially; coadmissible representations $W$  of $G_{\infty}$ that are algebraic (those are just algebraic representations); and the products of such as representations of $G$, which we call locally algebraic coadmissible representations of $G$.

\begin{defi}  \lb{def coadmiss} Suppose  that $L$ is a finite extension of $\Q_{p}$; denote by $|\cd|$ the $p$-adic norm and by $|\cd|_{\rm triv}$ the trivial norm on $L$. 
A \emph{$p$-adic} locally algebraic coadmissible  representation $M$ of $G$ over $L$  is one as above for $(L, |\cd|_{\rm triv})$, whose restriction to $G_{\Delta}$ is coadmissible for $(L, |\cd|)$. 
\end{defi}

The typical example of  a $p$-adic locally algebraic coadmissible  representation is 
$\varprojlim_{K_{p}} H_{i}(Y_{K^{p}K_{p}}, \cW)\ot W^{\vee}$, where the notation is as after Definition \ref{def admiss}.

\subsubsection{Notation} 
Consider the groups \eqref{list}. For a place $v\vert p$ of $F$, we let
 $$G_{v}:=\B_{v}^{\ts},\quad H_{v}:=E_{v}^{\ts},\quad H_{v}':=E_{v}^{\ts}/F_{v}^{\ts},\quad (G\ts H)'_{v}:= (G_{v}\ts H_{v})/F_{v}^{\ts}$$
 as topological groups. We use the parallel notation $G_{*, v, \infty}$ for $G_{*,v}$ viewed as the group of points of an algebraic group over $F_{v}$.

We assume from now on that $\B_{p}$ is split and fix an isomorphism $\G_{\Q_{p}}\cong \Res_{F_{p}/\Q_{p}}\GL_{2}$, giving a model of $\G_{*}$ over $\Z_{p}$. We define involutions 
$$g^{\iota}:= g^{{\rm T}, -1} \quad \text{on $\G(\Q_{p})$}, \qquad h^{\iota}:=h^{c, -1} \quad \text{ on $\H(\Q_{p})$},$$ that induce involutions $\iota$ on all our groups. The embedding $\H'\into (\G\ts \H)'$ is compatible with the involutions.

For $t\in T_{\G_{*}, p}$,  let $\Up_{t}:= K_{p, r}
t K_{p,r}\in \cH^{K_{p,r}}_{G_{*, p}}$ for any $r\geq 1$, and 
$$\Up_{t,p\infty}:=\Up_{t}\ot t_{\infty}^{}. $$
When $x\in F_{p}^{\ts}$, we abuse notation by writing $\Up_{x}= \Up_{\smalltwomat {x}{}{}1}$; we also write 
$$\Up_{p\infty}:= \Up_{\smalltwomat p{}{}1, p\infty}$$
 for short.

\subsubsection{Ordinary parts of admissible or coadmissible $G_{*}$-modules}  \lb{ssec ord}
Let $L$ be a finite extension of $\Q_{p}$. Let $\Pi=\Pi_{p}\ot W$ be a $p$-adic locally algebraic admissible representation of $G_{*}$
 Let us write 
$$\Pi^{N_{0, (r)}}:= \Pi^{N_{0, (r)}}\ot W^{N} ,$$ where $N_{0, r}:=K_{p, r}$. Choose  $\OO_{L}$-lattices $W^{\circ}\subset W$, $\Pi_{p}^{\circ, K}\subset \Pi_{p}^{K}$, stable under the Hecke action, and compatibly with    the transition maps associated with  $K'\subset K$.
 Then   $\Pi^{\circ, N_{0}}:=\Pi_{p}^{\circ, N_{0}} \ot W^{\circ, N}= \varinjlim_{r} \Pi_{p}^{\circ, K_{p,r}} \ot W^{\circ, N}$ 
is  stable under the action of $\Up_{p\infty}$.
As shown by Hida, the idempotent 
 $$e^{\ord}: = \lim_{n}\Up_{p\infty}^{n!} \colon \Pi^{\circ, N_{0}}\to \Pi^{\circ, N_{0}}$$ is then well-defined and its image is denoted by $\Pi^{\circ, \ord}$.  The space $\Pi^{\circ, \ord}$ is   the maximal split $\OO_{L}$-submodule of $\Pi^{\circ, N_{0}}$ over which $\Up_{p\infty}$ acts invertibly. We also write $e^{\ord}$ for $e^{\ord}\ot 1\colon \Pi^{N_{0}}=\Pi^{\circ, N_{0}}\ot L\to \Pi^{N_{0}}$, and we let $\Pi^{\ord}=e^{\ord}\Pi^{N_{0}}$ be its image. If $\Pi_{p}$ and $W$ are irreducible, then $\Pi^{\ord}$ has dimension either~$0$ or~$1$; in the latter case we say that $\Pi$ is \emph{ordinary}.  (This notion is independent of the choice of lattices.)
 
 Let ${\rm M}={\rm M}_{p}\ot W^{\vee}$ be a $p$-adic locally algebraic coadmissible  right module for $G_{*}$ over $L$. By definition of coadmissibility, the system $({\rm M}_{p,K})_{K\subset G_{*,p}}$ is endowed with a compatible system $\cH_{\G_{*}(\Z_{p}), K}$-stable  lattices ${\rm M}_{p,K}^{\circ}$, so that  for some $\G_{*}(\Z_{p})$-stable lattice $W^{\vee, \circ}$, 
 ${\rm M}^{\circ}_{N_{0}}:= \varprojlim {\rm M}^{\circ}_{p, K_{p, r}}\ot W^{\vee, \circ}_{N_{0}}$
is  stable under $\Up_{p\infty}$. Then we can again define $e^{\ord}\colon {\rm M}^{(\circ)}_{N_{0}} \to {\rm M}^{(\circ)}_{N_{0}}$. Its image 
$${\rm M}^{(\circ), \ord}:= {\rm M}^{(\circ)}_{N_{0}} e^{\ord}$$ is called the \emph{ordinary part} of ${\rm M}^{(\circ)}_{N_{0}} $.
 
The ordinary parts $\Pi^{\ord}$, ${\rm M}^{\ord}$ retain an action of the operators $\Up_{t, p\infty}$.

\subsubsection{Special group elements, and further notation} The following notation will be in use throughout this appendix. Let  $v\vert p$ be a place of $F$. We denote by    $e_{v}$ be the ramification degree of $E_{v}/F_{v}$, and fix a uniformiser   $\vpi_{v}\in F_{v}$  chosen so that $\prod_{v\vert p}\vpi_{v}^{e_{v}}=p$.   Let ${\rm Tr}_{v}=\Tr_{E_{v}/F_{v}}$ and ${\rm Nm}_{v}:={\rm Nm}_{E_{v}/F_{v}}$ be the trace and norm.
Fix an isomorphism $\OO_{E, v}=\OO_{F, v}\times \OO_{F, v}$ if $v$ is split. If $v$ is nonsplit,  let $c$ be the Galois conjugation of $E_{v}/F_{v}$, and  fix an element $\theta_{v}\in \OO_{E,v}$ such that $\OO_{E, v}=\OO_{F,v}[\theta_{v}]$ (thus $\theta_{v}$ is a unit if $v$ is inert and a uniformiser if $v$ is ramified). We define a purely imaginary ${\rm j}_{v}\in E_{v}^{\ts}$ to be 
\beq\lb{jv}
{\rm j}_{v}:= \begin{cases} (-1_{w},1_{w^{c}}) & \text{if $E_{v}=E_{w}^{\ts}\ts E_{w^{c}}^{\ts}$,} \\
\tht_{v}^{c}-\tht_{v} & \text{if $E_{v}$ is a field}.
\end{cases}
\eeq

We assume that $E_{v}$ embeds in $\B_{v}$ and fix the embedding $E_{v}\to \B_{v}$ to be 
\beqq
t=(t_{w},t_{w^{c}}) \mapsto  \twomat {t_{w}}{}{}{t_{w^{c}}} & &\text{if $E_{v}=E_{w}^{\ts} \ts E_{w^{c}}^{\ts}$}, \\
 t=a+\theta b\mapsto \twomat {a+b\Tr_{v}\theta_{v}} {b{\rm Nm}_{v}\theta_{v}} {-b} a& & \text{if $E_{v}$ is a field}.
\eeqq

For $r\geq 0$, let 
\beqq 
w_{r, v}&:=\twomat {}1{-p^{r}}{} \in \GL_{2}(F_{v}), \qquad
\gamma_{r,v} :=
\begin{cases}
	 {\twomat {p^{r}} 1{}{1}}   &\text{if $v$ splits}\\
	{\twomat {p^{r}{\rm Nm}_{v}(\theta_{v})} {}{}1 }  &\text{if $v$ is nonsplit}
\end{cases}
\in (G\ts H)_{v}'
\eeqq
and 
\beqq 
w_{r}&:=\prod_{v\vert p}w_{r, v}\in \G(\Q_{p}), \qquad \gamma_{r}:= \prod_{v\vert p}\gamma_{r,v}\in (\G\ts\H)'(\Q_{p}).
\eeqq
\subsection{Toric,  ordinary, and anti-ordinary parts} \lb{sec: A1} Let $L$ be a finite extension of $\Q_{p}$. We perform some twists. 
\subsubsection{Ordinary and anti-ordinary parts}
 Let $w:=\smalltwomat {}1{-1}{}\in G_{*,\Delta}$ and let  $\pi^{w}$ be the representation on the same space as $\pi$ but with $G$-action given by $\pi^{w}(g)v:=\pi(w^{-1}gw)v$.  Let $N^{-}:= w^{-1}Nw$, and $U_{p\infty}^{-}:= U_{w^{-1}\smalltwomat p {}{}1 w, p\infty}$. 
 
 Let $\pi=\pi_{p}\ot W$ be a $p$-adic admissible locally algebraic   representation of $G$ over $L$. The \emph{anti-ordinary part} of $\pi$ is  the space  $$\pi^{\rm a}:=\pi_{p}^{\rm a}\ot W^{N^{-}}\subset \pi$$  of `ordinary' elements  with respect to $N^{-}$ and $U_{p\infty}^{-}$. Because $\pi^{w}$ is isomorphic to $\pi$, the spaces $\pi^{\rm a}$ and  $\pi^{\ord}$ have the same dimension. 
 
 Let $M=M_{p}\ot W$ be a  $p$-adic coadmissible locally algebraic   representation of $G$ over $L$. The \emph{anti-ordinary part} of $M$ is the quotient  $$M^{\rm a}:= M_{p}^{\rm a}\ot W_{N^{-}}$$ of $M$ that is its `ordinary' part with respect to $N_{0}^{-}$ and and $U_{p\infty}^{-}$.

\begin{prop}\lb{w ord} Let $W$ be an algebraic representation of $G_{\infty}$. 
\begin{enumerate}
\item
Let $\pi$ be a $p$-adic locally algebraic admissible representation of $G$. There is an isomorphism 
\beqq
w^{\ord}_{\rm a}\colon \pi^{\ord}&\to \pi^{\rm a}\\
 f&\mapsto  \lim_{r\to \infty}  p^{r[F:\Q]} w_{r,p}w_{0, \infty}^{\iota}\Up_{p}^{-r}f,
\eeqq
where the sequence stabilises as soon as $r\geq 1 $ is such that $f_{p}\in \pi_{p}^{U_{1}^{1}(p^{r})}$.
\item 
Let $M$ be a  $p$-adic locally algebraic coadmissible representation of $G$. There is a map
\beqq
w^{\ord}_{\rm a}\colon M^{\ord}&\to M^{\rm a}\\
 m=m_{p}\ot m_{\infty}&\mapsto  \lim_{r\to \infty}  p^{r[F:\Q]} [m  (U_{p}^{})^{-r} w_{r,p}]_{N_{0}^{-}} \ot [m_{\infty}w_{0, \infty}^{\iota}]_{N^{-}},
 \eeqq
 where before applying $w_{*}$, we take arbitrary lifts from $N_{0}^{}$-coinvariants to the module $M$.
\end{enumerate}
\end{prop}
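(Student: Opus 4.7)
My plan is to reduce to a single place $v \mid p$, since all operators and groups factor over places; I fix $v$ and drop it from the notation, writing $G = \GL_2(F_v)$. The infinite-place factor $w_{0,\infty}^\iota$ is handled first, separately: since $w_0 N w_0^{-1} = N^-$, it induces an isomorphism $W^N \xrightarrow{\sim} W^{N^-}$, with the involution $\iota$ contributing only a harmless scalar. The substance is the analysis of the smooth operator $\Phi_r := p^{r[F_v:\Q_p]} w_{r,p} \Up_p^{-r}$ on vectors of $\pi_p$ fixed by $U_1^1(p^r)_v$.

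The computations will be driven by three matrix identities, all verified directly:
\begin{equation*}
w_{r+1} = w_r \smalltwomat{p}{}{}{1}, \qquad w_r \smalltwomat{p}{u}{}{1} = w_{r+1} \smalltwomat{1}{u/p}{}{1}, \qquad w_r \smalltwomat{1}{x}{}{1} w_r^{-1} = \smalltwomat{1}{}{-p^r x}{1}.
\end{equation*}
From the third identity, $w_r$ sends $N_0$-fixed vectors to vectors fixed by $\smalltwomat{1}{}{p^r \OO_{F,v}}{1} \subset N_0^-$; the $N_0^-$-fixed-point condition is thus exhausted in the limit, so that $w^{\ord}_{\rm a}(f)$ lies in $\pi^{N_0^-}$. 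Combining the second identity with the Iwahori coset decomposition $\Up_p = \sum_{u \in \OO_{F,v}/p\OO_{F,v}} \smalltwomat{p}{u}{}{1}$ yields $w_r \Up_p = w_{r+1} \sum_u \smalltwomat{1}{u/p}{}{1}$, and a symmetric identity on the other side produces the $\Up_p^-$-eigenequation with the correct $p$-adic unit eigenvalue. Iterating these and comparing $\Phi_{r+1}f$ to $\Phi_r f$ on $f \in \pi^{U_1^1(p^r)_v} \cap \pi^{\ord}$ gives stabilization; the factor $p^{[F_v:\Q_p]} = q_v^{e_v}$ is chosen to precisely cancel the coset degeneracy appearing at each shift from $r$ to $r+1$.

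Bijectivity in the admissible case will follow by exhibiting an inverse built from $w_r^{-1}$ (using $w_r^2 = -p^r I$) with the roles of $N_0, N_0^-$ and $\Up_p, \Up_p^-$ exchanged. The coadmissible case is dual: one lifts $m \in M^{\ord}$, a quotient of $M_{N_0}$, to $\tilde m \in M$, applies $w_{r,p}$, and projects to $M_{N_0^-}$; independence of the lift follows from the same matrix identities, which carry the ambiguity $M(N_0 - 1)$ into $M(N_0^- - 1)$ up to errors damped by $\Up_p^{-r}$, hence to zero in $M^{\rm a}$. The main obstacle I expect is the precise $p$-adic bookkeeping in the coset decomposition of $\Up_p$ over a general $F_v/\Q_p$: the count of $q_v^{e_v}$ cosets matches the normalization $p^{[F_v:\Q_p]}$, but tracking the contribution of each coset through the iteration requires care of the sort familiar from Hida's construction of the ordinary projector.
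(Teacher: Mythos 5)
Your strategy---reduce to one place, isolate the archimedean/algebraic factor via $w_{0,\infty}^{\iota}$, control the smooth part with matrix identities, and build an explicit inverse---is a reasonable one, but it contains a genuine error and is incomplete in ways that are not merely cosmetic. It also differs substantially from the paper's own (very terse) argument: the paper declares well-definedness standard and proves the isomorphism by computing a pairing in the Kirillov model (Lemma~\ref{lemm: loc pair}), using that $\pi^{\ord}$ and $\pi^{\rm a}$ are lines, so nonvanishing suffices.

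The serious error is the claim that ``the $N_0^-$-fixed-point condition is thus exhausted in the limit.'' Your own third identity gives $w_r N_0 w_r^{-1}=\smalltwomat{1}{}{p^r\OO}{1}$, a \emph{shrinking} chain of subgroups of $N_0^-$ as $r\to\infty$; it collapses rather than exhausts. So this does not show $w^{\ord}_{\rm a}(f)\in\pi^{N_0^-}$, and indeed no single $w_r f$ is $N_0^-$-invariant. What one actually has is that $w_r$ normalises $U_1^1(p^r)$ and intertwines $\Up_t$ with the opposite operator $\Up_t^-$; membership in $\pi^{\rm a}$ has to be established through the ordinary projector for $(N^-,\Up_{p\infty}^-)$, not through literal $N_0^-$-fixing of a single vector.

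Relatedly, your stabilisation step is not closed. From the second identity (and the third at level $r+1$) one gets, on an $N_0$-fixed $f$, $w_r\Up_p f=\sum_{u}\smalltwomat{1}{}{-p^r u}{1}w_{r+1}f$, an average of $w_{r+1}f$ over $\smalltwomat{1}{}{p^r\OO/p^{r+1}\OO}{1}$. This is not the action of $\Up_p^-$ on $w_{r+1}f$, and the assertion that the $p^{[F:\Q]}$ coset contributions coincide (so that the normalisation cancels exactly) is precisely what the sketch does not prove: $\smalltwomat{p}{}{}{1}f$ is not $N_0$-fixed, so the cosets are a priori distinct vectors. The bookkeeping you flag as ``familiar from Hida's construction'' is exactly the missing content.

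Finally, your route to bijectivity (an inverse built from $w_r^{-1}=-p^{-r}w_r$ with $N_0,N_0^-$ exchanged) is a plausible alternative to the paper's, but it is left unverified: one must actually check the composite is the identity, tracking the extra factor of $-p^{-r}$. The paper instead evaluates $(w^{\ord}_{\rm a}f,f^\vee)_\pi$ on an explicit test vector and finds a nonzero inverse gamma factor; since both sides are one-dimensional, this suffices. The coadmissible case you identify correctly as dual, but give no argument.
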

\begin{proof}
That the maps are well-defined is a standard result left to the reader. At least for admissible representations, the map  is an isomorphism (equivalently, nonzero) because of Lemma \ref{lemm: loc pair} below.
\end{proof}

Let $\pi_{v}^{\ord}$ (respectively $\pi_{v}^{\rm a}$) denote  the preimage of $\pi^{\ord}$   (respectively $\pi^{\rm a}$) in $\pi_{v}$, and let $W_{v}$ be the $G_{v, \infty}$-component of $W$. The following local components of the above map are similarly well-defined:
\beq\lb{waov}
w_{{\rm a}, v}^{\ord}\colon \pi_{v}^{\ord}&\to \pi_{v }^{\rm a},
&\qquad  w_{{\rm a}, v, \infty}^{\ord}\colon W_{v}^{N_{v}}&\to W_{v}^{N_{v}^{-}}\\
 f_{v}&\mapsto \lim_{r\to \infty} p^{r[F_{v}:\Q_{p}]} w_{r , v} \Up_{p,v}^{-r}f_{v}, 
 & 
 f_{v, \infty}& \mapsto w_{0,v, \infty}^{\iota}f_{v, \infty}.
 \eeq

\begin{lemm} \lb{cor w ord} Let $\pi$ be an ordinary representation of $G$. 
If $(\ , \ )\colon \pi \ot \pi^{\vee}\to L$ is a nondegenerate  $G$-invariant  pairing, then  the pairing 
\beqq
(\ , \ )^{\ord} \colon \pi^{\ord}\ot \pi^{\vee, \ord}&\to L \\
(f_{1}, f_{2})^{\ord}&:= (w_{\rm a}^{\ord} f_{1}, f_{2})\eeqq  is a nondegenerate pairing.
\end{lemm}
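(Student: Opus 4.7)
My approach will be to reduce the statement to a purely local nondegeneracy claim, which is essentially the content of the already-cited Lemma \ref{lemm: loc pair} used in the proof of Proposition \ref{w ord}. First, since $\pi$ is irreducible admissible and ordinary, $\pi^{\ord}$ and $\pi^{\rm a}$ are finite-dimensional $L$-vector spaces of the same (finite) dimension, and Proposition \ref{w ord} tells us $w_{\rm a}^{\ord}\colon \pi^{\ord}\to \pi^{\rm a}$ is an isomorphism. Therefore the map $(f_1,f_2)\mapsto (w_{\rm a}^{\ord}f_1, f_2)$ is nondegenerate on $\pi^{\ord}\otimes\pi^{\vee,\ord}$ if and only if the restriction of the invariant pairing $(\ ,\ )$ to $\pi^{\rm a}\otimes\pi^{\vee,\ord}$ is nondegenerate.

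The next step will be to factor everything place by place over $p\infty$. Writing $\pi = \pi^{p\infty}\otimes \bigotimes_{v\vert p}\pi_v\otimes W$, with $W=\bigotimes_{v\vert p} W_v$, and correspondingly $\pi^{\rm a}=\pi^{p\infty}\otimes \bigotimes_v(\pi_v^{\rm a}\otimes W_v^{N_v^-})$, $\pi^{\vee,\ord}=\pi^{\vee,p\infty}\otimes \bigotimes_v(\pi_v^{\vee,\ord}\otimes W_v^{\vee,N_v})$, any nondegenerate $G$-invariant pairing on an irreducible admissible $\pi$ is, by the uniqueness of such pairings, a nonzero scalar times a product of local invariant pairings $(\ ,\ )_v$. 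Hence the global nondegeneracy reduces to two purely local statements at each $v\vert p$: the smooth statement that the restriction of $(\ ,\ )_v$ to $\pi_v^{\rm a}\otimes\pi_v^{\vee,\ord}$ is nondegenerate, and the algebraic statement that the pairing $W_v^{N_v^-}\otimes W_v^{\vee,N_v}\to L$ is nondegenerate. The algebraic statement is classical: on the finite-dimensional irreducible algebraic representation $W_v$ of $\GL_2(F_v\otimes_{\Q_p} L)$, the lines of lowest-weight vectors and of highest-weight covectors are one-dimensional and nondegenerately paired by the invariant form.

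For the smooth local statement, I will use $G_v$-invariance together with the explicit formula for $w_{\rm a,v}^{\ord}$ from \eqref{waov}: for generators $f_v\in\pi_v^{\ord}$ and $h_v\in\pi_v^{\vee,\ord}$,
\[
(w_{\rm a,v}^{\ord}f_v,h_v)_v=\lim_{r\to\infty}p^{r[F_v:\Q_p]}\,\alpha_v(\vpi_v)^{-r}(f_v,w_{r,v}^{-1}h_v)_v,
\]
where $\alpha_v$ is the smooth unit character of $\pi_v$ on the Iwahori-fixed line. This is exactly the limit whose non-vanishing is asserted by Lemma \ref{lemm: loc pair}, already invoked in the proof of Proposition \ref{w ord} to establish that $w_{\rm a,v}^{\ord}$ is itself an isomorphism; invoking it here completes the proof.

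The main obstacle — modulo the fact that the key calculation is relegated to the cited local lemma — is the explicit verification that the above limit stabilises and is nonzero. In a concrete model of the ordinary principal series $\pi_v\hookrightarrow \mathrm{Ind}(\alpha_v|\cdot|_v,\beta_v|\cdot|_v^{-1})$, one exhibits $f_v$ and $h_v$ as explicit Iwahori-fixed functions, computes $w_{r,v}^{-1}h_v$ via the big-cell decomposition, and checks by a direct integral that for $r$ large enough the pairing stabilises to a nonzero constant proportional to the ``diagonal'' contribution.
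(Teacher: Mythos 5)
Your proof is correct and takes essentially the same route as the paper: both reduce, via uniqueness of invariant pairings, to the explicit computation in Lemma \ref{lemm: loc pair} (Kirillov model, giving a nonzero gamma factor) together with an algebraic-weight check which the paper isolates as Lemma \ref{wao alg}. The only stylistic difference is that you insert the intermediate step through the isomorphism $w_{\rm a}^{\ord}\colon\pi^{\ord}\to\pi^{\rm a}$ and the restriction of $(\ ,\ )$ to $\pi^{\rm a}\ot\pi^{\vee,\ord}$, which is harmless but does not shortcut anything, since that isomorphism is itself established in Proposition \ref{w ord} by the same Lemma \ref{lemm: loc pair}.
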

\begin{proof} It suffices to see this for a specific pairing $(\ , \ )$: we may take the product of the pairings \eqref{kir pair v} below, that are known to be nondegenerate,  and any nondegenerate pairing on $W\ot W^{\vee}$. Then the result follows from Lemmas \ref{lemm: loc pair} and  \ref{wao alg} below.
\end{proof}

\subsubsection{Ordinary and toric parts}\lb{ssec tord}
We  construct a  map from the ordinary part of a representation of $(G\ts H)'$ to its toric coinvariants, as  well as a dual map in the opposite direction for coadmissibe modules. These map are the key to the interpolation of toric periods. 

Suppose that   $W_{(v)}$ (respectively $W=\bigotimes_{v\vert p}W_{v}$ is  an algebraic representation of $(G\ts H)_{(v) , \infty}$ (respectively $(G\ts H)'_{\infty}$) over $L$  such that, for a field extension $L'/L$ splitting $E$,  $W_{(v), \infty}\ot_{L} L'=\bigotimes_{\sg\colon F_{(v)}\into L} W_{\sg}$ with 
\beq\lb{Wwkl}
W_{\sg}=W_{\sg, w, k, l}:= {\rm Sym}^{k_{\sg}-2}{\rm Std} \cdot {\det}^{w-k_{\sg}+2\over 2} \ot \sg^{{l_{\sg}-w}\over 2}{(\sg^{c})}^{-l_{\sg}-w\over 2}
\eeq
for some integers $k_{\sg }\geq 2, |l_{\sg}|<k_{\sg}, w$ of the same parity. (Here we have chosen, for each $\sg\colon F\into L$, an extension $\sg\colon E\into L$.) Then we define a constant
\beq\lb{cW}
c(W_{\sg}) &:=  {\rm j}_{v}^{-w-k_{\sg}+2 }\cdot {k_{\sg}-2 \choose (k_{\sg}-2-l_{\sg})/2} \cdot 
\begin{cases} 1& \text{if $v $ splits in $E$}\\ 
 \tht^{c,(k-2-l)/2} \tht^{(k-2+l)/2}  & \text{if $v$ does not split in $E$},
\end{cases}\\
   c(W_{(v)})&:=\prod_{\sg\colon F_{(v)}\into L} c(W_{\sg}).
\eeq
(Note that   ${\rm j}_{v}^{-w-k_{\sg}+2 }=1$ if $v$ splits in $E$, as $w+k_{\sg}-2$ is even.)

\begin{lemm}\lb{ex mat id} 
Recall the congruence subgroups $V_{v,r}'$, $K_{v,r}$ defined in \S~\ref{ssec cong}. For all $r\geq 1$, we  have the identity of Hecke operators in the Hecke algebra for $(G\ts H)'_{v}$:
$$ V_{v, r+1}'\left( \sum_{t\in V_{v,r}'/V_{v,r+1}'} t \right) \cdot
\gamma_{r+1,v}   K_{v,r}=   V_{v, r+1}' \gamma_{r,v}\cdot \Up_{\vpi_{v}}  K_{v, r}.$$
\end{lemm}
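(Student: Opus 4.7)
The identity is an equality in the free abelian group on double cosets in $V_{v,r+1}'\bks(\G\ts\H)'(F_v)/K_{v,r}$. My strategy is to decompose both sides into single cosets and match the resulting terms via an explicit bijection.

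I expand the right-hand side using the standard decomposition
$$\Up_{\vpi_v}\cdot K_{v,r}=\bigsqcup_{a\in\OO_{F,v}/\vpi_v\OO_{F,v}}\smalltwomat{\vpi_v}{a}{}{1}K_{v,r},$$
turning it into the formal sum $\sum_{a}[V_{v,r+1}'\,\gamma_{r,v}\smalltwomat{\vpi_v}{a}{}{1}K_{v,r}]$ of $q_v$ single cosets. The left-hand side is likewise a sum of $|V_{v,r}'/V_{v,r+1}'|=q_v$ single cosets. It therefore suffices to produce, for each $a\in\OO_{F,v}/\vpi_v\OO_{F,v}$, a coset representative $t_a\in V_{v,r}'/V_{v,r+1}'$ such that
$$V_{v,r+1}'\,t_a\,\gamma_{r+1,v}\,K_{v,r}=V_{v,r+1}'\,\gamma_{r,v}\smalltwomat{\vpi_v}{a}{}{1}\,K_{v,r};$$
since the cosets on the right are pairwise disjoint, the assignment $a\mapsto t_a$ is automatically injective, hence bijective by counting.

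Unwinding via the diagonal embedding $V_{v,r}'\into (\G\ts\H)'_v$, the identity above reduces to the single membership
$$\gamma_{r+1,v}^{-1}\,{\rm e}(t_a)^{-1}\,\gamma_{r,v}\smalltwomat{\vpi_v}{a}{}{1}\ \in\ U_{v,r}\cdot F_v^\times\ \subset \ \GL_2(F_v),$$
where ${\rm e}\colon E_v\into G_v$ is the fixed embedding and the $\H$-components match automatically once $t_a$ is a representative of a class in $V_{v,r}'/V_{v,r+1}'$. This is a direct $2\ts 2$ matrix computation, carried out separately according to whether $v$ splits in $E$ or not. In the split case, with ${\rm e}(s_1,s_2)=\smalltwomat{s_1}{}{}{s_2}$ and $\gamma_{r,v}=\smalltwomat{p^r}{1}{}{1}$, a representative $t_a=(1+\epsilon_a,1)$ with $\epsilon_a$ an element of $p^r\OO_{F,v}$ depending affinely on $a$ works; in the non-split case, with ${\rm e}(a'+b'\theta_v)=\smalltwomat{a'+b'\Tr_v\theta_v}{b'\Nm_v\theta_v}{-b'}{a'}$ and $\gamma_{r,v}=\smalltwomat{p^r\Nm_v\theta_v}{}{}{1}$, an analogous candidate involving $\theta_v$ plays the same role.

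The principal task, and therefore the main obstacle, is the bookkeeping of the matrix computation, especially in the non-split case, where the embedding depends on $\theta_v$ and the matrix product involves the terms $\Tr_v\theta_v$ and $\Nm_v\theta_v$; one must verify that each of the resulting entries lies in $\OO_{F,v}$ and satisfies the required congruence modulo $\vpi_v^r$ needed to land in $U_1^1(\vpi_v^r)$. The dichotomy in the shape of $\gamma_{r,v}$ (upper-triangular with off-diagonal entry $1$ in the split case, diagonal in the non-split case) is precisely what ensures that the matrix product lands in the correct congruence subgroup in each instance. Once this case-by-case verification is completed, the bijection $a\leftrightarrow t_a$ assembles to give the required identity of Hecke operators.
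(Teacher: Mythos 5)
Your plan is essentially the paper's proof: both reduce the Hecke-operator identity to a single matrix identity in $\GL_2(F_v)$ of the form $t_{j,r,v}\,\gamma_{r+1,v}=\gamma_{r,v}\,\smalltwomat{\vpi_v}{j}{}{1}\,k_{j,r,v}$ with $t_{j,r,v}$ a torus element and $k_{j,r,v}\in U_1^1(\vpi_v^{r_v})$, using the same left-coset decomposition of $\Up_{\vpi_v}$, and your split-case candidate $t_a=(1+a\vpi_v^r,1)$ agrees with the paper's $t_{j,r,v}=\smalltwomat{1+j\vpi_v^r}{}{}{1}$. The one step you leave implicit — recording the candidates in the non-split case — is exactly what the paper adds, namely $t_{j,r,v}=1+\theta_v\vpi_v^r$ together with an explicit $2\ts 2$ matrix $k_{j,r,v}$ built from $\Tr_v(\theta_v)$ and $\Nm_v(\theta_v)$; the residual matrix multiplication is then a direct check that neither you nor the paper writes out in full.
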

\begin{proof}
This is a consequence of   the following matrix identity.

Let $v\vert p$ be a prime of $F$.
For   $r\in \Z_{\geq 1}$, $j\in \OO_{F,v}$ 
 let 
  $b_{j,v}:= \twomat {\vpi} j {}1$. In the split case, let 
    $$ t_{j,r, v}= k_{j,r,v}:= \twomat {1+j\vpi^{r}}{}{}1\in E_{v}^{\ts}$$
 In the nonsplit case, let
 $$t_{j,r, v}=1+\theta_{v}\vpi_{v}^{r},
  \qquad 
  k_{j, r, v}=\twomat {1+j{\Tr}_{v}(\theta_{v}) \vpi_{v}^{r}} {{\rm Tr}_{v}(\theta_{v})-j\vpi_{v}^{r}}
  {-j{\rm N}_{v}(\theta_{v}) \vpi_{v}^{2r}} { 1+j^{2}{\rm N}_{v}(\theta_{v} )\vpi_{v}^{r}}.
  $$
   
    Then 
$$t_{j,r, v}\gamma_{r+1,v}=\gamma_{r,v} b_{j,v} k_{j,r, v}$$
in $\GL_{2}(F_{v})$.
\end{proof}

\begin{prop}\lb{gHo} Let $W$ be an algebraic representation of $(G\ts H)'_{\infty}$. 
\begin{enumerate}
\item Let $\Pi=\Pi_{p}\ot W$  be a $p$-adic locally algebraic  admissible representation  of $(G\ts H)'$. 
There is a map 
\begin{align}
\nonumber
\tord \colon \Pi^{\ord} &{\to}  \Pi_{H'}\\
\lb{H'N}
  f  &\mapsto \lim_{r} \ 
    {H'_{}}[ p^{r[F:\Q]} \cdot c(W)^{-1}\cdot \gamma_{r, p\infty}  \Up_{p\infty}^{-r} f]
 \end{align}
where
  ${H'_{}}[-]\colon \Pi\to \Pi_{H'}$ is the natural projection. 

 The sequence in the right hand side of \eqref{H'N} stabilises as soon as $f_{p}\in \Pi^{K_{p, r}}$, where $K_{p, r}\subset (G\ts H)_{p}'$ is  defined at the end of \S~\ref{sec: not}.
\item
Let   ${\rm M}:= {\rm M}_{p}\ot W^{\vee}$ be a $p$-adic locally algebraic  coadmissible representation of $(G\ts H)'$.
There  is a map 
\beqq
\tord \colon {\rm  M}^{H'}&\to {\rm M}^{\ord}\\
m  &\mapsto \lim_{r} 
\
[ p^{r[F:\Q]} \cdot  c(W)^{-1}\cdot  m
 \gamma_{r,p\infty}]
{N_{0,r}} e^{\ord} \Up_{p\infty}^{-r},
 \eeqq
 where $[ -]{N_{0,r}}\colon  {\rm M} \to {\rm M}_{N_{0}}$ is the natural projection.
\end{enumerate}
\end{prop}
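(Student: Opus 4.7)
The proof of both parts rests on the matrix identity of Lemma~\ref{ex mat id}, together with a careful tracking of the algebraic action on $W$ encoded in the constant $c(W)$. I describe the plan for part~(1); part~(2) is obtained by a dual argument.

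First, I would reduce the stabilisation claim to a local statement at each prime $v\vert p$. Since $\gamma_{r, p\infty}=\prod_{v\vert p}\gamma_{r,v, p\infty}$ and $\Up_{p\infty}=\prod_{v\vert p}\Up_{\vpi_v, p\infty}$ decompose as commuting products, and $c(W)=\prod_v c(W_v)$, it suffices to show that for each $v$, once $f_p$ is $K_{v, r}$-invariant, one has
\begin{equation*}
[p^{[F_v:\Q_p]}\, c(W_v)^{-1}\gamma_{r+1,v,p\infty}\Up_{\vpi_v, p\infty}^{-1}\, f]_{H'_v} \;=\; [c(W_v)^{-1}\gamma_{r,v,p\infty}\, f]_{H'_v}
\end{equation*}
in $\Pi_{H'_v}$. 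Applied to the smooth $p$-component of $f$, Lemma~\ref{ex mat id} gives exactly this identity modulo an action of $V'_{v,r+1}(\sum_{t\in V'_{v,r}/V'_{v,r+1}} t)$; since $V'_{v,r+1}\subset H'_v$ acts trivially on $H'_v$-coinvariants, the sum acts as multiplication by $|V'_{v,r}/V'_{v,r+1}|$, and a direct count (together with the power of $\vpi_v$ introduced in passing from $\gamma_{r,v}$ to $\gamma_{r+1,v}$) matches the normalising factor $p^{[F_v:\Q_p]}$.

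The role of $c(W_v)$ is to compensate for the algebraic action of $\gamma_{r,v,\infty}$ on $W_v$. Using the explicit polynomial model~\eqref{ghp} and the decomposition~\eqref{Wwkl}, one computes that on the line $W_{v, H'_v}\cong W_v^{H'_v}$ (which is $1$-dimensional by the weight condition), the element $\gamma_{r,v,\infty}$ acts by $c(W_v)$ times a unit stable in $r$; thus the normalised limit in \eqref{H'N} exists and is independent of $r$ as soon as $f_p$ is $K_{v, r}$-invariant. This is also what dictates the specific form of $c(W_v)$ in~\eqref{cW}, including the binomial coefficient and the powers of $\theta_v$. Finally, part~(2) is obtained by the parallel dual argument: one replaces left $H'$-coinvariants by right $N_0$-coinvariants followed by $e^{\ord}\Up_{p\infty}^{-r}$, and applies the right-module version of Lemma~\ref{ex mat id}.

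The main obstacle is the explicit algebraic computation underlying $c(W_v)$: one must track how $\gamma_{r,v,\infty}$ acts on the polynomial model of $W_v$ via~\eqref{ghp} and how its image in the toric coinvariant line depends on the highest-weight vector. All other steps are formal Hecke-algebra manipulations built on Lemma~\ref{ex mat id}.
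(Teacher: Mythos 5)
Your core idea is the paper's: the stabilisation of the sequence is extracted from the double-coset identity of Lemma~\ref{ex mat id}, which on $H'$-coinvariants collapses the averaging operator $\sum_{t\in V'_{p,r}/V'_{p,r+1}}\Pi(t)$ to the scalar $|V'_{p,r}/V'_{p,r+1}|$, and this matches the normalising power of~$p$. So far so good.

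Where your account goes astray is the role of $c(W)$. You present it as what ``makes the normalised limit exist and be independent of~$r$,'' and you identify the explicit computation behind $c(W_v)$ as the main obstacle. Neither is right. Since $c(W)$ is a nonzero constant independent of~$r$, it cannot affect whether the sequence stabilises. Moreover, the algebraic-at-$\infty$ component of $\gamma_{r,p\infty}\Up_{p\infty}^{-r}$ is already $r$-independent on the nose: from the definitions one has $\gamma_{r,v}\cdot\smalltwomat p{}{}1^{-r}=\gamma_{0,v}$ in both the split and non-split cases, so the factor acting on $W$ equals $\gamma_{0,\infty}$ for every $r$ and contributes no $r$-dependence to the limit. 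Hence the entire convergence question lives in the smooth part $\Pi_p$ (respectively ${\rm M}_p$), and Lemma~\ref{ex mat id} settles it there, exactly as the paper does. The constant $c(W)$ is purely a normalisation, chosen so that the resulting map interacts correctly with the dualities; the computation you label the ``main obstacle'' is precisely the content of Lemma~\ref{unitary}, which is a separate statement used in the proof of Proposition~\ref{indep of wt}, not in Proposition~\ref{gHo} itself. Also note that $\gamma_{r,v,\infty}$ does not preserve $W_v^{H'_v}$ (it is not in $H'_v$), so ``$\gamma_{r,v,\infty}$ acts on the line $W_{v,H'_v}\cong W_v^{H'_v}$ by $c(W_v)$ times a unit'' is not a well-posed formulation; the correct statement is about the composite $W_v^{N}\xrightarrow{\gamma_{0,v,\infty}}W_v\to W_{v,H'_v}$.

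In short: the Hecke-algebra part of your argument is the paper's proof, and it suffices; you should strip out the discussion of $c(W_v)$ from the proof of existence, and cite or prove separately (as Lemma~\ref{unitary}) the statement that $c(W)$ is the normalisation making ${}^{\rm alg}\tord$ unitary.
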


The constant $c(W)$ is justified by Lemma \ref{unitary} below.

\begin{proof}
  For part 1, let  $f\in \Pi_{p}^{K_{p, r}}$. 
   Then it follows from Lemma \ref{ex mat id} that,  denoting by  $[f_{r}]_{H'}$
  the sequence in the right hand side of \eqref{H'N},   we have 
  $${1\over p^{[F:\Q]}} \sum_{t\in V_{p,r}'/V_{p,r+1}'} \Pi(t) f_{r+1}=f_{r},$$
 hence $[f_{r+1}-f_{r}]_{H'}=0 $ and the sequence stabilises. 
 
 For part 2,  Lemma \ref{ex mat id} similarly implies (the boundedness and)
  the convergence of the sequence in $\varprojlim_{r} {\rm M}_{N_{0, r}}^{\ord}$. 
\end{proof}

Let $\Pi_{v}^{\ord}$ denote  the preimage of $\Pi^{\ord}$ in $\Pi_{v}$, and let $W_{v}$ be the $(G\ts H)'_{v, \infty}$-component of $W$. 
The following  local components of the above maps are similarly well-defined:
\beq\lb{tord v}
\gamma_{H',v}^{\ord}\colon \Pi_{v}^{\ord}&\to \Pi_{v, H'_{v}}
&\qquad
\gamma_{H',v, \infty}^{\ord}\colon W_{v}^{N}&\to W_{v, H'_{v}}\\
 f_{v}&\mapsto \lim [p^{r[F_{v}:\Q_{p}]} \gamma_{r , v} \Up_{\vpi_{v}}^{-r}f_{v}]_{H'_{v}}, 
 & 
 f_{v, \infty}& \mapsto
   c(W_{v})^{-1}
   \cdot \gamma_{0,v, \infty}^{\iota}f_{v, \infty}. 
 \eeq

\subsubsection{Exceptional representations and vanishing of $P^{\ord}$}   We show that $\tord$ acts by zero precisely on those representations that are exceptional.
\begin{lemm} \lb{exc vanishing}
Let $\Pi=\pi\ot\chi$ be an ordinary, distinguished, irreducible representation of $(G\ts H)'$.  The following are equivalent:
\begin{enumerate}
\item $\Pi$ is exceptional; 
\item $e_{v}(V_{(\pi, \chi)}) = 0$;
\item there exists $P\in \Pi^{*, H'} -\{ 0\}$ such that $P^{\ord}:=P \tord=0$;
\item for all $P\in \Pi^{*, H'}$,  we have $P^{\ord}=0$;
\end{enumerate}
\end{lemm}
\begin{proof} The equivalence of 1. and 2. is  a reminder from  Lemma \ref{6444}. The equivalence of 3. and 4. is a consequence of multiplicity-one. Consider 3. Let $P\in \Pi^{*, H'}$. Identify  $\Pi^{\vee}=\Pi^{\iota}$ (the representation on the same space as $\Pi$, with group action twisted by the involution $\iota$). Then  the identity map on spaces yields  isomorphisms  $\Pi^{*, H'}\cong \Pi^{\vee, *, H'}$ and $\Pi^{\ord, *}\cong \Pi^{\vee, \ord, *}$, and it follows from the explicit description of $\tord$ that if $P^{\vee}$ denotes the image of $P$, then the image of $P^{\vee}$ is  $P^{\vee,\ord}$.  Hence,  $P^{\ord}$ is zero if and only if so is $P^{\vee, \ord}$, if and only if so is $P\ot P^{\vee}\circ \tord \ot \tord$. Now by the theory recalled in \S~\ref{ssec loc tp} , $P \ot P^{\vee}$ is necessarily a multiple of the explicit functional $Q_{dt, (, )}$ defined there. Therefore  it suffices to show that \emph{$Q_{dt, (, )}$ vanishes on the line $\tord \Pi^{\ord}\ot \tord \Pi^{\vee, \ord}$ if and only if $e_{v}(V_{(\pi, \chi)}) = 0$}. This follows from the explicit computations of Propositions \ref{toric period} and \ref{compare toric inf} below, cf. also Proposition \ref{compare Qs}.
\end{proof}

\subsection{Pairings at $p$} 
\lb{A3}
The goal of this subsection is to relate the $p$-components of the toric terms  $Q$ and their ordinary variants $Q^{\ord}$, as defined in \S\S~\ref{sec 42}-\ref{sec 43}.

 Let $v \vert p$ be a place of $F$. 
\subsubsection{Integrals and gamma factors}
If $\pi$  (respectively $\chi$) is an irreducible representation of $G_{v}$ over $L$, we denote by $V_{\pi}$ (respectively $V_{\chi})$ the associated  $2$- (respectively $1$-) dimensional Frobenius-semisimple representation of ${\rm WD}_{F_{v}}$ (respectively of  ${\rm WD}_{E_{v}}:=\prod_{w\vert v}{\rm WD}_{E_{w}}$; we choose the  ``Hecke'' normalisation, so that $\det V_{\pi}$ is the cyclotomic character if $\pi$ is self-dual. 
If $\Pi=\pi\ot \chi$ is an irreducible representation of $(G\ts H)'_{v}$, we denote by $V_{\Pi}= V_{\pi|{\rm WD}_{E_{v}}}\ot V_{\chi}$ the  associated  $2$-dimensional representation of ${\rm WD}_{E_{v}}$. 
If $E_{*} $ is $F$ or $E$, $w\vert p$ is a prime of $E_{*}$   and $V$ is any representation of ${\rm WD}_{E_{*,v}}$ as above, we let $V_{w}:= V_{|{\rm WD}_{E_{*, w}}}$.

 If $\psi\colon F_{v}\to \bC^{\ts}$ is a nontrivial character, we denote by $d_{\psi} y$ the selfdual Haar measure on $F_{v}$ and $d^{\ts}_{\psi}y:= d_{\psi}y/|y|$. The \emph{level} of $\psi$ is the largest $n$ such that $\psi_{|\vpi^{-n}\OO_{F, v}}=1$. We recall that if $\psi$ has level~0, then $\vol(\OO_{F, v},d_{\psi}y)=1$. 
 
 Recall the Deligne--Langlands $\gamma$-factor  of \eqref{gamma intro}.

\begin{lemm}[{\cite[Lemma A.1.1]{nonsplit}}] \lb{int gamma}
Let $\mu\colon F_{v}^{\ts}\to \bC^{\ts}$ and $\psi\colon F_{v}\to \bC^{\ts}$ be  characters, with $\psi_{v}\neq 1$. Let $d^{\ts}y$ be a Haar measure on $F_{v}^{\ts}$.
Then 
$$\int_{F_{v}^{\ts}} \mu(y) \psi(y) d^{\ts}y={d^{\ts}y \over d^{\ts}_{\psi}y}  \cdot \mu(-1)\cdot  \gamma(\mu, \psi)^{-1} .$$
\end{lemm}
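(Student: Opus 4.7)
The plan is to prove Lemma \ref{int gamma} as a direct consequence of Tate's local functional equation, after some elementary reductions.

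First, I would observe that both sides are linear in the Haar measure $d^{\ts}y$, so without loss of generality we may assume $d^{\ts}y = d^{\ts}_{\psi}y$. The claim then reduces to
$$ \int_{F_{v}^{\ts}} \mu(y)\psi(y) \, d^{\ts}_{\psi}y = \mu(-1)\cdot \gamma(\mu,\psi)^{-1}. $$
Next, I would introduce an auxiliary complex variable $s$ and consider the regularised zeta integral
$$ Z(\mu,\psi, s) := \int_{F_{v}^{\ts}} \mu(y) |y|^{s}\psi(y) \, d^{\ts}_{\psi}y. $$
For $\mu$ unitary and $\mathrm{Re}(s)$ in a suitable range the integral converges absolutely and defines a meromorphic function which analytically continues to all $s\in\C$; the integral we want is its value at $s=0$.

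The key step is to apply Tate's local functional equation to $Z(\mu,\psi,s)$.  Writing the integral as the Mellin transform at the character $\mu|\cd|^{s-1}$ of the Schwartz-Bruhat function $\mathbf{1}_{F_{v}}(y)\psi(y)$ (suitably regularised), the local functional equation relates $Z(\mu,\psi,s)$ to the Mellin transform of its Fourier transform $\widehat{\psi \mathbf{1}}(y) = \delta_{-1}(y)$ (the distributional Dirac mass at $-1$, reflecting that $\psi$ is the additive character used to define the Fourier transform).  Concretely, in the ramified case ($\mu$ of conductor $\geq 1$) one verifies directly by orthogonality of additive characters that the integral localises to the annulus $|y|=q_{v}^{d-n}$ where $d$ is the level of $\psi$ and $n$ the conductor of $\mu$, yielding precisely a local Gau\ss{} sum; one then compares the Gau\ss{} sum to the standard expression for $\eps(\mu,\psi)$ (recalling $L(\mu)=L(\mu^{-1}|\cd|)=1$ so that $\gamma(\mu,\psi)^{-1}=\eps(\mu,\psi)^{-1}$), the factor $\mu(-1)$ arising from the change of variables $y\mapsto -y$ that identifies the Gau\ss{} sum with $\eps(\mu,\psi)$ in its standard normalisation.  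In the unramified case, one uses analytic continuation from $\mathrm{Re}(s)$ large, where the integral is a convergent geometric series that evaluates explicitly; matching with $L(\mu)/\eps(\mu,\psi) L(\mu^{-1}|\cd|)$ and the level of $\psi$ yields the result.

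The main obstacle is the careful bookkeeping of normalisations: one must track the conductor of $\psi$ (which enters through the self-dual measure), the sign $\mu(-1)$ (which arises from Fourier-inversion conventions), and the Hecke-type normalisation of $\gamma$ fixed in \eqref{gamma intro}, to verify that all constants line up without spurious factors.  Once those bookkeeping points are settled, the proof amounts to a one-line application of Tate's functional equation, and indeed the identity is essentially Tate's thesis read backwards, as already exploited in \cite[Lemma A.1.1]{non-split} from which the present lemma is directly cited.
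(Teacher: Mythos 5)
The paper does not prove this lemma; it is quoted verbatim from \cite[Lemma A.1.1]{non-split}, so there is no internal argument to compare against. That said, your proposal via Tate's local functional equation is correct and is the canonical proof: applying the functional equation to the Dirac mass $\delta_{-1}$ at $s=1$ (so that $Z(s,\delta_{-1},\mu^{-1})=\mu(-1)$ and $\hat\delta_{-1}(y)=\psi(-y)$), and changing variables $y\mapsto -y$, gives
$\int_{F_v^\times}\mu(y)\psi(y)\,d^\times_\psi y=\gamma(1,\mu^{-1},\psi)$,
which equals $\mu(-1)\gamma(\mu,\psi)^{-1}$ in the normalisation of \eqref{gamma intro} once one uses the standard relation $\eps(1,\mu^{-1},\psi)\,\eps(0,\mu,\psi)=\mu(-1)$.

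One imprecision worth flagging: $\mathbf{1}_{F_v}(y)\psi(y)$ is not a Schwartz--Bruhat function (it has unbounded support), and indeed the integral $\int_{F_v^\times}\mu(y)\psi(y)\,d^\times y$ does not converge absolutely when $\mu$ is unramified, so the phrase ``Mellin transform of the Schwartz--Bruhat function $\mathbf{1}_{F_v}\psi$'' is not literally meaningful and Tate's equation cannot be invoked off the shelf. Your fall-back to the direct computation --- the Gauss-sum localisation to the annulus $|y|=q_v^{d+n}$ in the ramified case, and the geometric-series continuation from a half-plane of absolute convergence in the unramified case --- is precisely what makes the argument rigorous, so this is a phrasing defect rather than a gap.
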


\subsubsection{Local pairing}  The following isolates those representations that can be components of an ordinary representation.
\begin{defi}\lb{def-fs}
A \emph{refined} representation $(\pi, \alpha)$ of $G_{v}$ over a field $L$ consists of  a smooth irreducible infinite-dimensional representation $\pi$ and   a character $\alpha\colon F_{v}\to L^{\ts}$, such that  $\pi$ embeds into  the un-normalised induction ${\rm Ind}( |\ |\alpha , |\ |^{-1}\omega\alpha^{-1}))$ for some  other  character $\omega\colon F_{v}^{\ts}\to L^{\ts}$.\footnote{Note that $\pi$ admits a refinement  if and only if it is neither supercuspidal nor $1$-dimensional.} Sometimes we abusively simply write $\pi$ instead of $(\pi, \alpha)$. A refined   representation $\Pi=\pi\ot \chi$ $(G\ts H)'_{v}$ is the product of a refined representation $\pi=(\pi, \alpha)$ of $G$ and a character $\chi$ of $H$, such that $\omega\chi_{|F_{v}^{\ts}}=\one$. 
\end{defi}
If $(\pi, \alpha)$ is a refined representation of $G_{v}$, we let $\pi^{\ord}\subset \pi^{N_{0}}$ be the unique line on which the operator $\Up_{t}$ acts by $\alpha(t)$. If $\Pi=\pi\ot\chi$  is a refined representation of $(G\ts H)'_{v}$, we let $\Pi^{\ord}:=\pi^{\ord}\ot\chi$. The  associated Weil--Deligne representation $V_{\pi}$ is reducible, and we have a unique filtration 
$$0\to V_{\pi}^{+}\to V_{\pi} \to V_{\pi}^{-}\to 0$$
such that ${\rm WD}_{F_{v}}$ acts on $V_{\pi}^{+}$ through the character $\alpha|\cdot|$.

Let $\pi$ be a refined  representation of $G_{v}$ over $L$, and let $(\ , \  )_{\pi}\colon \pi\ot \pi^{\vee}\to L$  be a  $G$-invariant   pairing. Then we define 
\beqq
(\ , \  )_{\pi}^{\ord}\colon \pi^{\ord}\ot (\pi^{\vee})^{\ord} & \to L \\
f\ot f^{\vee} &\mapsto  (w^{\ord}_{\rm a}f, f^{\vee}),
\eeqq
where $w^{\ord}_{\rm a} $ is the operator denoted  $w_{{\rm a}, v}^{\ord}$ in \eqref{waov}. If $\Pi$ is a refined representation of $(G\ts H)_{v}'$ over $L$ and $(\ , \ ) =(\ , \ )_{\pi}(\ ,\ )_{\chi}\colon \Pi\ot \Pi^{\vee} \to L$ is a pairing, we define  $(\ , \ )^{\ord}:= (\ , \  )_{\pi}^{\ord} (\ ,\ )_{\chi}$, a pairing on $\Pi^{\ord}\ot \Pi^{\vee, \ord}$.

\begin{lemm} \lb{lemm: loc pair} Let $(\pi, \alpha)$ be a refined representation of $G_{p}$ over $\bC$, with central character  $\omega$ as in Definition \ref{def-fs}. Let $\alpha^{\vee}=\alpha\omega^{-1}$.
 Let
$$\ad(V_{\pi})^{++}(1)= \Hom(V_{\pi}^{-}, V_{\pi}^{+})(1).$$
Fix  a   character $\psi\colon F_{v}\to \bC^{\ts}$ of level~$0$, and Kirillov models of $\iota\pi_{v}$, $\pi_{v}^{\vee}$ with respect to $\psi_{v}$, $-\psi_{v}$.
Let
\beq\lb{f kir}
f_{v}^{(\vee)}(y):= \one_{\OO_{F, v}}(y) \alpha_{v}^{(\vee)} |\ |_{v}(y) \qquad \in \pi^{\ord}_{v}. 
\eeq
Suppose that $( \ , \ )_{\pi,{v}}$ is, in the Kirillov models, the pairing 
\beq
\lb{kir pair v}
(f, f^{\vee})_{\pi}:= \int_{F^{\ts}}  f(y)f^{\vee}(y) d^{\ts}_{\psi}y .\eeq
Then 
$$(f, f^{\vee})^{\ord}_{\pi,v}= \omega_{v}(-1)\cdot \gamma(\ad(V_{\pi})^{++}(1), \psi)^{-1} .$$
\end{lemm}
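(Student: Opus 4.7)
The proof is a direct computation in the Kirillov model, extracting the gamma factor via Lemma \ref{int gamma}. By definition of $w_{\rm a}^{\ord}$ in \eqref{waov}, the quantity to compute unfolds as
\beqq
(f, f^\vee)^{\ord}_{\pi,v} = \lim_{r\to\infty}\; p^{r[F_v:\Q_p]} \cdot \bigl(w_{r,v}\Up_{p,v}^{-r} f,\; f^\vee\bigr)_{\pi}.
\eeqq

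My first step is to see that $f$ is a $\Up_{p,v}$-eigenvector. Using the explicit description $\Up_{p,v} = \sum_{u \in \OO_{F,v}/\vpi_v\OO_{F,v}} \pi_v(\smalltwomat{\vpi_v}{u}{}{1})$ on $U_1^1(\vpi_v^r)$-invariants together with the formula $(\pi_v(\smalltwomat{\vpi_v}{u}{}{1})\phi)(y) = \psi(uy)\phi(\vpi_v y)$ in the Kirillov model, a direct verification using that $\psi$ has level $0$ yields a $\Up_{p,v}$-eigenvalue $\beta_v$ for $f$ determined by $\alpha_v$. Hence $\Up_{p,v}^{-r}f = \beta_v^{-r} f$, and the problem reduces to computing the single pairing $(w_{r,v} f, f^\vee)_\pi$, multiplied by the scalar $p^{r[F_v:\Q_p]}\beta_v^{-r}$.

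The second step is to evaluate $w_{r,v} f$ using the decomposition $w_{r,v} = w_{0,v}\smalltwomat{p^r}{}{}{1}$ (a trivial matrix check). The action of $\smalltwomat{p^r}{}{}{1}$ in the Kirillov model is simply $\phi(y) \mapsto \phi(p^r y)$, while the long Weyl element $w_{0,v}$ acts by an explicit Fourier-type integral transform (the ``gamma-twisted'' Jacquet functional equation of the Kirillov model). Substituting into the pairing \eqref{kir pair v} and simplifying, the inner product $(w_{r,v}f, f^\vee)_\pi$  collapses to an integral of the form
\beqq
(w_{r,v} f, f^\vee)_\pi \;=\; C_r\cdot \int_{F_v^{\ts}} \one_{\OO_{F,v}}(y) \,\mu(y)\,\psi(c_r y)\, d^{\ts}_\psi y,
\eeqq
where $\mu$ is the character of $F_v^\ts$ associated via local class field theory with the one-dimensional representation $\ad(V_\pi)^{++}(1) = \Hom(V_\pi^-, V_\pi^+)(1)$ (explicitly built from $\alpha_v$, $\omega_v$, and $|\cdot|_v$ using that $V_\pi^+$, $V_\pi^-$ correspond respectively to the characters $\alpha_v|\cdot|_v$ and $\omega_v\alpha_v^{-1}|\cdot|_v^{-1}$), the element $c_r \in F_v^\ts$ scales as a controlled power of $\vpi_v^r$, and $C_r$ collects the resulting normalisation constants from the Weyl element action and from $\smalltwomat{p^r}{}{}{1}$.

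The final step is to apply Lemma \ref{int gamma} to evaluate the integral as $\mu(-1)\gamma(\mu, \psi)^{-1}$ times an explicit power of $|c_r|_v$ (the measure normalisation being trivial since $\psi$ has level $0$). Multiplying by $p^{r[F_v:\Q_p]}\beta_v^{-r} C_r$ and letting $r\to\infty$, the unbounded $r$-dependence $|c_r|_v^{-1}\mu(c_r)$ combines with $p^{r[F_v:\Q_p]}\beta_v^{-r}C_r$ to give a finite constant. The sign $\omega_v(-1) = \mu(-1)$ arises from $w_{0,v}^2 = -I$ (acting by $\omega_v(-1)$ in $\pi_v^{\vee}$), and the remaining gamma factor matches $\gamma(\ad(V_\pi)^{++}(1), \psi)^{-1}$ by the identification of $\mu$. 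The main obstacle is the careful bookkeeping of these normalisations: powers of $q_v$ from Haar measure conventions, the precise value of the eigenvalue $\beta_v$, and constants arising from the Kirillov-model functional equation must all conspire with the factor $p^{r[F_v:\Q_p]}$ so as to produce a finite limit equal to the clean answer stated in the lemma.
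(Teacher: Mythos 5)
Your general strategy — express the ordinary pairing as a scalar times $(w_{r,v}f, f^\vee)_\pi$, use the $\Up_{p,v}$-eigenvalue of $f$, and extract a gamma factor from the Weyl element via a functional equation — is in the same family as the paper's argument, but the middle step is structurally off.

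The paper's route is cleaner: it does not try to compute $w_r f$ as a Kirillov function and then integrate against $f^\vee$ directly. Rather, it recognises that $(f,f^\vee)^{\ord}_\pi$ is the value at $s=0$ of $\alpha|\cdot|(\vpi)^{-r}\,Z(s+1/2, w_r f, \alpha^\vee|\cdot|)$, where $Z$ is the $\GL_2$ zeta integral, and then applies the $\GL_2$ local functional equation to flip the Weyl element out of the zeta integral. After the functional equation one obtains $\omega(-1)\gamma(s+1/2, \pi\ot\alpha^\vee|\cdot|,\psi)^{-1}$ times a dual zeta integral. The key algebraic simplification is that, using $\alpha^\vee = \alpha\omega^{-1}$, the multiplicative character appearing in that dual integral becomes essentially trivial, so it reduces to a ratio of local zeta values; and the $\GL_2$ gamma factor decomposes by additivity into $\gamma(s,\alpha\alpha^\vee|\cdot|^2,\psi)^{-1}\gamma(s,|\cdot|,\psi)^{-1}$, the second of which cancels against the zeta ratio at $s=0$, leaving exactly $\gamma(\ad(V_\pi)^{++}(1),\psi)^{-1}$.

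Your step~2/3 misidentifies this structure in two ways. First, you claim the pairing collapses to an integral $\int\one_{\OO_F}(y)\mu(y)\psi(c_r y)\,d^\ts_\psi y$ with $\mu$ the character of $\ad(V_\pi)^{++}(1)$; but the character in the post-functional-equation dual integral is \emph{trivial} (precisely because $\alpha^\vee = \alpha\omega^{-1}$), and the relevant gamma factor is produced as a prefactor by the functional equation, not by evaluating such an integral. Second, you invoke Lemma~\ref{int gamma} ($\GL_1$ Tate integral), but that lemma plays no role in this proof — it is used later, in Proposition~\ref{toric period}. Here the gamma factor comes out of the $\GL_2$ functional equation and the additivity $\gamma(\pi\ot\chi) = \gamma(\chi_1\chi)\gamma(\chi_2\chi)$ for $\pi$ in the principal series; if you try to avoid the $\GL_2$ functional equation and instead directly compute the Weyl-element kernel and reduce to $\GL_1$ by hand, you will end up re-proving the functional equation and the bookkeeping you flag at the end will not close without it. So the ``main obstacle'' you name at the end is not merely a bookkeeping issue: without the zeta-integral framing, there is no clean mechanism to cancel the unbounded $r$-dependence against $p^{r[F_v:\Q_p]}\beta_v^{-r}$.
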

\begin{proof}
We omit all remaining subscripts $v$ and argue    similarly to \cite[Lemma 2.8]{hsieh3}.  The inner product
$(f, f^{\vee})^{\ord}_{\pi}$ is the value at $s=0$ of 
$$\qquad \alpha|\ |(\vpi)^{-r}Z(s+1/2,w_{r}f, \alpha^{\vee}|\  |), 
\qquad Z(s+1/2, w_{r}f, \alpha^{\vee}|\ | ):=   \int_{F^{\ts}} w_{r}f(y)\alpha^{\vee}|\ |(y) |y|^{s}d^{\ts}_{\psi}y.$$
  By  the functional equation for $\GL_{2}$, this equals 
\beqq
&\omega(-1)\cdot \gamma(s+1/2,\pi\ot \alpha^{\vee}|\ |, \psi)^{-1}  \cdot 
\int_{p^{-r}\OO_{F}-\{0\}} \alpha\alpha^{\vee, -1}\omega^{-1}|\ |^{-s}(y) d^{\ts}_{\psi}y\\
= \ & \omega(-1)\cdot  \gamma(s,\alpha \alpha^{\vee}|\ |^{2}, \psi)^{-1} \cdot \gamma(s, |\ |, \psi)^{-1}   
\cdot \zeta_{F}(1)^{-1}
\zeta_{F}(-s),
\eeqq
using the fact that the domain of integration can be replaced with $F^{\ts}$, the additivity of gamma factors, and the relation $\alpha^{\vee}=\alpha\omega^{-1}$. 
Evaluating at $s=0$ we find 
$\gamma(\ad(V_{\pi})^{++}(1), \psi)^{-1}$ as desired.
\end{proof}

\subsubsection{Local toric period} 
\lb{A3mu}
We compute the value of the local toric periods on the lines of interest to us. Let $\Pi=\pi\ot\chi$ be a refined representation of $(G\ts H)'_{v}$.
 Let $dt$ be a measure on $H'_{v}$, and set as in \eqref{vol circ}
$$ {\vol^{\circ}(H'_{v}, dt_{})} :=
 {\vol(\OO_{E, v}^{\ts}/\OO_{F, v}^{\ts}, dt_{})\over e_{v} 
  L(1, \eta_{v})^{-1}}.$$

Then for all $f_{1}, f_{3}\in \Pi^{\ord}$, $f_{2},f_{4}\in \Pi^{\vee, \ord}$ with $f_{3}, f_{4}\neq 0$, we define
\beq \lb{Q orddd v}
Q_{dt}^{\ord}\left( {f_{1}\ot f_{2} \over f_{3}\ot f_{4}}\right)   : =  \mu^{+}({\rm j}_{v})^{}\cdot  {\vol^{\circ}(H'_{v}, dt_{})} \cdot {f_{1}\ot f_{2} \over f_{3}\ot f_{4}},
\eeq 
 where  ${\rm j}_{v}=\eqref{jv}$ and $\mu^{+}=\chi_{v}\cdot \alpha|\cdot|\circ N_{E_{v}/F_{v}}$ is the character giving the action of  $E_{v}^{\ts}$ on $V^{+}:=V_{\pi}^{+}\ot \chi$.

\begin{prop} \lb{toric period} Let $\Pi=\pi\ot\chi$ be a refined   representation of $(G\ts H)'_{v}$ over $L$, with associated Weil--Deligne representation $V=V_{\pi|{\rm WD}_{E_{v}}}\ot \chi$.  Let ${\tord}={\tord}_{,v}$ be as defined in \eqref{tord v}. Then  
$$ 
 Q_{dt}^{}\left({ \tord f_{1}\ot \tord f_{2} \over w_{\rm a}^{\ord} f_{3}\ot f_{4}}\right) =  e_{v}(V_{(\pi, \chi)}) \cdot  
 Q_{dt}^{\ord}\left( {f_{1}\ot f_{2} \over f_{3}\ot f_{4}}\right) . $$
Here $$  e_{v}(V_{(\pi, \chi)}) = \calL(V_{(\pi, \chi)}, 0)^{-1}\cdot
 \iota^{-1}\left( |d|_{v}^{-1/2} \gamma({\rm ad}(\iota V_{\pi}^{++})(1),  \psi_{v})\cdot \prod_{w\vert v} \gamma(\iota V^{+}_{|{\rm WD}_{E_{w}}}, \psi_{E_{w}})^{-1}\right)$$
is defined independently of  any choice of  an embedding $\iota\colon L\into \bC$ and nontrivial character $\psi\colon F_{v}\to \bC^{\ts}$.
\end{prop}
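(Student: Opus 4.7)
\smallskip

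The plan is as follows. By the multiplicity-one theorem of Tunnell--Saito, the space $\Pi^{*,H'_v}\otimes \Pi^{\vee,*,H'_v}$ is at most one-dimensional; hence it suffices to verify the identity on a single quadruple $(f_1,f_2,f_3,f_4)$ for which both sides are manifestly nonzero. I will take $(f_1,f_2)=(f_3,f_4)$ to be the natural ordinary vectors $(f,f^\vee)$ of \eqref{f kir} in fixed Kirillov models of $\pi$ and $\pi^\vee$ with respect to a level-zero additive character $\psi_v$; the pairing $(\,,\,)_\pi$ is taken to be \eqref{kir pair v} and extended to $\Pi$ using a standard pairing on $\chi\otimes\chi^{-1}$. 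Note that $\tord f_3=\tord f_4=0$ is permitted since $\tord$ appears only in the numerator; by Remark \ref{Qord nonzero} (and its non-vanishing form in Lemma \ref{lemm: loc pair}) both sides are indeed non-vanishing for this choice.

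Granted this reduction, the right-hand side is immediate: $(f_3,f_4)^{\ord}=\omega_v(-1)\gamma(\ad(V_\pi)^{++}(1),\psi_v)^{-1}$ by Lemma \ref{lemm: loc pair}, and $Q^{\ord}_{dt}(\tfrac{f\otimes f^\vee}{f\otimes f^\vee})=\mu^+(\mathrm{j}_v)\cdot\vol^\circ(H'_v,dt)$ by definition \eqref{Q orddd v}. For the left-hand side I will expand
\[
\tord f \;=\; \lim_{r}[\,p^{r[F_v:\Q_p]}c(W_v)^{-1}\,\gamma_{r,v}\Up_{\varpi_v}^{-r} f\,]_{H'_v},
\]
use the $H'_v$-invariance of $Q_{dt}$ to move the projection into the integrand, and apply a single substitution $t\mapsto \gamma_{r,v}^{-1}t\gamma_{r,v}$ on $E_v^\times/F_v^\times$. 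The point is that for $t\in E_v^\times$, the conjugate $\gamma_{r,v}^{-1}t\gamma_{r,v}\in\GL_2(F_v)$ can be written (using the explicit form of $\gamma_{r,v}$ and the chosen embedding $E_v\hookrightarrow B_v$) as an upper-triangular matrix times a matrix close to the identity for large $r$; this reduces the integrand to an explicit product over $w\mid v$ of $\mu^+_w(t_w)|t_w|_{E_w}$ against $\chi_w(t)$ and a $\psi_{E,w}$-type character, after one further $\Up_{\varpi_v}^{-r}$-contraction. At the same time, $w_{\rm a}^\ord f$ in the denominator contributes exactly the factor $\gamma(\ad(V_\pi)^{++}(1),\psi_v)^{-1}$ of the previous lemma.

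The integral over $E_v^\times/F_v^\times$ then splits into a product of one-dimensional integrals over $F_v^\times$ (if $v$ splits in $E$) or over $E_v^\times/F_v^\times$ (otherwise); applying Lemma \ref{int gamma} to each factor converts each into an inverse gamma factor $\gamma(V^+_{|\mathrm{WD}_{E_w}},\psi_{E,w})^{-1}$. Collecting all the pieces and identifying $L$-factors, the ratio of the two sides is
\[
\calL(V_{(\pi,\chi)},0)^{-1}\cdot|d_v|^{-1/2}\cdot\gamma(\ad(V_\pi)^{++}(1),\psi_v)\cdot\prod_{w\mid v}\gamma(V^+_{|\mathrm{WD}_{E_w}},\psi_{E,w})^{-1},
\]
which is precisely $e_v(V_{(\pi,\chi)})$. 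The absence of dependence on $\psi_v$ and on the embedding $L\hookrightarrow\C$ will follow from the invariance of both sides under these choices.

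The main obstacle will be the careful bookkeeping in the middle step: correctly handling the factors $c(W_v)$, $\mu^+(\mathrm{j}_v)$, $|d_v|^{-1/2}$, and $e_v$ (the ramification index appearing in $\vol^\circ$), and showing in a unified way across the three cases (split, inert, ramified) that the conjugation-substitution on $E_v^\times/F_v^\times$ produces exactly the character $\mu^+$ of interest. Concretely, the most delicate point is to verify that, after the change of variables, the remaining contribution of the component of $t\in\OO_{E,v}^\times$ mapping to $1\in H'_v$ accounts exactly for the discrepancy between $\vol$ and $\vol^\circ$ on $\OO_{E,v}^\times/\OO_{F,v}^\times$; this is the source of the factor $L(1,\eta_v)^{-1}$ and, in the ramified case, of the extra factor $e_v^{-1}$.
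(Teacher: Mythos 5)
Your strategy is essentially the paper's: pick the test quadruple $f_1=f_3=f$, $f_2=f_4=f^{\vee}$ from \eqref{f kir} (valid because $\Pi^{\ord}$ and $\Pi^{\vee,\ord}$ are one-dimensional), compute $(f_3,f_4)^{\ord}$ via Lemma \ref{lemm: loc pair}, and evaluate the remaining toric integral via Lemma \ref{int gamma}. The split case then goes through much as you sketch.

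The gap is in the non-split case. You describe the key move as conjugating by $\gamma_{r,v}$ so that $\gamma_{r,v}^{-1}t\gamma_{r,v}$ becomes upper-triangular times something close to the identity for large $r$, reducing the toric integral directly to Tate-type integrals. That picture is accurate only on part of $H'_v$. The paper's proof splits $H'_v=H'_1\sqcup H'_2$: on $H'_1$ (classes of $1+b\theta_v$, $b\in\OO_{F,v}$) the conjugate is indeed congruent, modulo $U_1^1(\vpi_v^{r'})$, to an upper-triangular matrix; but on $H'_2$ (classes of $a\,\Nm(\theta_v)+\theta_v$) it is congruent to a Weyl element times an upper-triangular matrix. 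The $H'_2$-contribution is therefore not a Tate integral, and its evaluation requires a further application of the $\GL_2\times\GL_1$ zeta-integral functional equation (the same mechanism that proves Lemma \ref{lemm: loc pair}) before Lemma \ref{int gamma} can be brought in. Separately, on the $H'_1$-piece the Kirillov $y$-integral initially runs over $\vpi_v^{-r}\OO_{F,v}\setminus\{0\}$ and must be extended to $F_v^{\ts}$; showing that the tail vanishes is a case split on the valuation of $b$ against the conductor of $\mu^{+}|_{F_v^{\ts}}$, using the cancellation of a ramified character, and is a genuine computation rather than bookkeeping. Finally, your appeal to the two sides ``being non-vanishing'' breaks down exactly when $\Pi$ is exceptional at $v$ (i.e.\ $\tord f=0$, $e_v=0$), where the identity reads $0=0$: the paper's explicit gamma-factor expressions cover that case uniformly, whereas your reduction would need a separate argument there.
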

\begin{proof}
Identify $\chi^{\pm 1}$ with $L$ and assume that $f_{i}=f_{i, \pi}f_{i, \chi}$ with $f_{i, \chi}$ identified with $1$. Fix $\iota\colon L\into \bC$ (omitted from the notation) and $\one\neq \psi\colon F_{v}\to \bC^{\ts}$. Identify $\pi$, $\pi^{\vee}$ with Kirillov models with respect to $\psi$, $-\psi$. 
 Let $(\ , \ )=(\ ,  \ )_{\pi}\cdot(\, \ )_{\chi}$ be the invariant pairing on $\Pi \ot \Pi^{\vee}$
such that     $( \ , \ )_{\pi} = \eqref{kir pair} $ and $(1 \ ,1  \ )_{\chi}=1$.  Assume, after a harmless extension of scalars, that $dt=|D_{v}|^{-1/2}
d^{\ts}_{\psi_{E}}z/d^{\ts}_{\psi}y$, which gives  $\vol^{\circ}(H', dt)=1$.  Let $f_{1}=f_{3}=f_{\pi}$, $f_{2}=f_{4}=f_{\pi}^{\vee}$ with $f_{\pi}^{(\vee)}$ as in \eqref{f kir}. 

In view of the definitions  \eqref{Qv sec4}, \eqref{Q orddd v} and of Lemma \ref{lemm: loc pair}, it suffices to show that 
$$Q^{\sharp}(\tord f, \tord f^{\vee}) := \int_{H'_{v}}(\pi(t)\tord f,  \tord f^{\vee}) \chi(t) \, dt
 =  \omega_{}(-1)\cd \mu^{+}({\rm j}_{v}) \cdot  
 \prod_{w\vert v} \gamma(V^{+}_{|{\rm WD}_{E_{w}}}, \psi_{E_{w}})^{-1}.$$

We denote by $\alpha$ the refinement of $\pi$, and we  fix  $r\geq 1$ to be larger than the valuations of the  conductors of $\pi$ and  of the norm of the conductor of $\chi$. 
\paragraph{Split case} Suppose first that $E_{v}/F_{v}$ is split and identify $E_{v}^{\ts}=F_{v}^{\ts}\ts F_{v}^{\ts}$ as usual. 
 Then as in  \cite[Lemma 10.12]{dd-pyzz} we find
\beqq 
Q^{\sharp}(\tord f, \tord f^{\vee}) &= 
\prod_{w\vert v}
\int_{E_{w}^{\ts}}\alpha\chi_{w}|\ |_{w}(y_{w}) \psi_{w}(y_{w}) d^{\ts}y_{w}
\int_{E_{w^{c}}^{\ts}}\alpha_{}\chi_{w^{c}}|\ |_{w^{c}}(y_{w^{c}}) \psi_{w^{c}}(-y_{w^{c}}) d^{\ts}y_{w^{c}}\\
&=
\omega_{v}(-1)\cdot 
\mu^{+}_{}({\rm j}_{v})
 \cdot \gamma(V^{+}_{v}, \psi_{v})^{-1},
 \eeqq
 where we have used Lemma \ref{int gamma}.

\paragraph{Nonsplit case} Now suppose that $E_{v}=E_{w}$ is a field and drop all subscripts $v$, $w$.
We   abbreviate ${\rm T}:=\Tr(\theta)$, $\Nm:={\rm Nm}(\theta)$.

 We have
\beq\lb{nsplitQv} Q^{\sharp} (\tord f,\tord f^{\vee})
 = 
  \int_{H'}
    \alpha\alpha^{\vee}|\ |^{2}(\vpi)^{-r}
  \cdot
  (\pi(\gamma_{r}^{-1}t \gamma_{r})f_{\pi} , f_{\pi}^{\vee})\chi(t) \, dt.
\eeq
 There is a decomposition 
\beqq
H'=H'_{1}\sqcup H'_{2}, \qquad H'_{1}=\{  1+b\theta  \ |\  b\in \OO_{F}\} ,
\quad H'_{2}=
\{a{\rm N} +\theta\  |\  a\in {\rm N}^{-1}\vpi\OO_{F}\}, 
\eeqq
  that is an isometry when  $H'_{1}$, $H_{2}'$ are endowed with the measures $d_{\psi}b$, $d_{\psi}a$.
  
  Let $r':= r+e-1$ and let us redefine, for the purposes of this proof, $w_{r'}:= \smalltwomat {}1{-\Nm^{-1}\vpi^{-r}}{}$.
 Let  $\sim_{r'} $ denote the relation in $\GL(2, F)$ of equality up to right multiplication by an element of $U_{1}^{1}(\vpi^{r'})$, 
 and let $t^{(r)}:= \gamma_{r}^{-1}t\gamma_{r}$. 

\paragraph{Contribution from  $H_{1}'$.} For $t=1+b\theta\in H_{1}'$,  we have
\beqq
t^{(r)}=
 \twomat {1+b{\rm T}} {b\vpi^{-r}}
{-b\Nm\vpi^{r}}   {1} \sim_{r'}
 \twomat {1+b{\rm T}+b^{2}\Nm}  {b\vpi^{-r}} {}1.
\eeqq
Hence the integral over $H_{1}'$ equals 
\beqq
\omega^{-1}\alpha^{2}|\ |^{2}(\vpi)^{-r}
& \int_{\OO_{F}} \int_{\OO_{F}-\{0\}}
\psi(by\vpi^{-r}) \alpha|\ |({\rm Nm}(1+b\theta)y) \alpha\omega^{-1}|\ |(y) \chi(1+b\theta)
\,d^{\ts}_{\psi}y
\, d_{\psi}b
\\
= &
\int_{\OO_{F}}
\int_{ \vpi^{-r}\OO_{F}-\{0\}} 
\chi\cdot \alpha |\ |\circ {\rm Nm} ((1+b\tht)y)\cdot \psi(by)
\,
d_{\psi}^{\ts}y
\, d_{\psi}b.
\eeqq
 
 We show that the domain of integration in $y$ can be harmlessly extended to $F^{\ts}$, i.e. that
$$\int_{\OO_{F}} \int_{v(y)\leq  -r-1} \mu^{+}((1+b\tht) y)\psi(by) \, d^{\ts}_{\psi}y \, db$$
vanishes. Consider first the contribution from $v(b)\geq r$.  On this domain, $\mu^{+}(1+b\tht)=1$ and integration in $db$ yields $\int_{\vpi^{r}\OO_{F}}\psi(by)\, db= \one_{\vpi^{-r}\OO_{F}}(y)$, that vanishes on $v(y)\leq -r-1$. Consider now the contribution from $v(b)\leq r-1$
\beq \lb{rerb}
\int_{0\leq v(b) \leq r-1}  \mu^{+}(1+b\tht) \int_{v(y)\leq -r-1} \mu^{+}(y)\psi(by) \, d^{\ts}_{\psi}y db.\eeq
Let $n$ be the conductor of $\mu^{+}_{|F^{\ts}}$.  Then \eqref{rerb} vanishes if $n=0$; otherwise only the annulus $v(y)=-n-1$ contributes, and after a change of variable $y'=by$ we obtain 
$$\eps(\mu^{+}_{|F^{\ts}}, \psi)^{-1}\cdot \int_{r- n \leq v(b) \leq r-1}   \mu^{+}(1+b\theta)\mu^{+}(b)^{-1} db.$$
On  our domain $\mu^{+}(1+b\theta)=1$, and $\int \mu^{+}(b)^{-1}=0$ as $\mu^{+}_{|F^{\ts}} $ is ramified.

We conclude that the contribution from $H_{1}'$ is 
$$
\int_{\OO_{F}} \int_{F^{\ts}} \mu^{+}((1+b\tht) y)\psi(by) \, d^{\ts}_{\psi}y \, db
\int_{H_{1}'}
\int_{ F^{\ts}}
\mu^{+}(ty)\cdot \psi_{E}(ty/(\tht-\tht^{c}))
\, d_{\psi}^{\ts}y
\, dt.
$$

\paragraph{Contribution from  $H_{2}'$.}  For $t=a{\rm N}+\tht \in H_{2}'$, we have
\beqq
t^{(r)} 
&=
 \twomat {a\Nm+{\rm T}} {\vpi^{-r}}
{-\Nm\vpi^{r}}   {a\Nm} 
= 
w_{r}'
\twomat 
1 {-a\vpi^{-r}}
{a\Nm +{\rm T}} {\vpi^{-r}}
\sim_{r}
w_{r'}
\twomat 
{1+a{\rm T}+a^{2}\Nm } {-a\vpi^{-r}}
{} {\vpi^{-r}} .
\eeqq

Then the integral over $H_{2}'$ is
\beqq
\ &\omega^{-1}\alpha^{2}|\ |^{2}(\vpi)^{-r}
\int_{\Nm^{-1}\vpi\OO_{F}} 
 \left( \pi(\left( \smalltwomat {{\rm Nm}(1+a\tht)} {-a\vpi^{-r}}
{} {\vpi^{-r}} \right) f, 
\pi^{\vee}(w_{r'}^{-1}f^{\vee} \right) \cdot\chi(a\Nm +\tht)
\,d_{\psi }a\\
=&
\omega^{-1}\alpha^{2}|\ |^{2}(\vpi)^{-r}
\int_{\Nm^{-1}\vpi\OO_{F}} 
\int_{\OO_{F}-\{0\}}
\omega(\vpi)^{-r} 
\psi(-ay)
\alpha|\ |(y\vpi^{r}{\rm Nm}(1+a\tht))
\cdot\pi^{\vee}(w_{r'}^{-1})f^{\vee}(y) 
\cdot
 \chi(a\Nm +\tht)
 \,d_{\psi}^{\ts}y\,
\,d_{\psi }a\\
=&
\alpha|\  |(\vpi)^{-r}
\int_{\Nm^{-1}\vpi\OO_{F}} 
\int_{F^{\ts}}
\alpha|\ |(y{\rm Nm}(1+a\tht))
\cdot\pi^{\vee}(w_{r'}^{-1})f^{\vee}(y)
\cdot \chi(a\Nm +\tht)
 \,d_{\psi}^{\ts}y
\,d_{\psi }a\\
=&
\alpha|\  |(\vpi^{-r}{\rm N}^{-1})  \cdot 
Z(1/2,\pi^{\vee}( w_{r'}^{-1})f^{\vee}, \al|\ |)
\cdot
\int_{\Nm^{-1}\vpi\OO_{F}} 
\alpha|\  |({\rm Nm}(a{\rm N}+ \tht))
\cdot \chi(a\Nm +\tht)
\, d_{\psi }a,
\eeqq 
where we have observed that $w_{r'}f^{\vee}$ vanishes outside $\OO_{F}$, and that $\psi(-ay)=1$ for $y\in \OO_{F}$. 
Applying first the same argument as in the proof of Lemma \ref{lemm: loc pair},  then   Lemma \ref{int gamma}, this equals
\beqq
& \gamma(\ad(V_{\pi})^{++}(1),- \psi)^{-1}
 \cdot   \int_{\Nm^{-1}\vpi\OO_{F}} \al|\  | \circ {\rm Nm} \cdot \chi(a\Nm +\tht) d_{\psi}a  \\
 = & \int_{F^{\ts}}\mu^{+}(y) \psi(y) \, d^{\ts}_{\psi}y 
  \cdot   \int_{\Nm^{-1}\vpi\OO_{F}} 
\mu^{+}(a\Nm +\tht) d_{\psi}a  
= \int_{H_{2}'} \int_{F^{\ts}}
\mu^{+}(ty) \psi_{E}(ty/(\tht-\tht^{c})) \, d_{\psi}^{\ts}y.
\eeqq

\paragraph{Conclusion} Summing up the two contributions to \eqref{nsplitQv} yields
$$\mu^{+}(\tht^{c}-\tht) \cdot \int_{H'} \int_{F^{\ts}}  \mu^{+}(u) \psi_{E}(u) \, d^{\ts}u 
= \omega_{}(-1)\cdot  \mu^{+}({\rm j})\cdot    \gamma(\mu^{+}, \psi_{E})^{-1},
$$
as desired.
 \end{proof}

\subsection{Pairings at infinity}\lb{A4}
 Fix a place $v\vert p$ of $F$. 
\subsubsection{Models for algebraic representations and pairings} Suppose  that $W$ is the  representation \eqref{Wwkl} of $(G\ts H)_{v, \infty}'$ over $L\stackrel{\sg}{\hookleftarrow} E$. We identify $W$ with the space of homogeneous  polynomials $p(x, y)$ of degree $k-2$ in $L[x, y]$, where $x$ and  $y$ are considered as the components of a  column (respectively  row) vector if $W$ is viewed as a right (respectively left) representation. In those two cases, the action is respectively:
\beq \lb{W act poly}
p. (g, h)(x, y)&= \det(g)^{w-k+2\over 2}\sg(h)^{l-w\over 2}\sg^{c}(h)^{-l-w \over 2}\cdot  p(g(x, y)^{\rm T}) \\
(g, h).p(x, y)& = \det(g)^{w-k+2\over 2}\sg(h)^{l-w\over 2}\sg^{c}(h)^{-l-w \over 2}\cdot  p((x, y)g) .
\eeq
In either case, we   fix the invariant pairing 
\beq\lb{inv pair W} (x^{k-2-a}y^{a}, x^{a'}y^{k-2-a'}) = (-1)^{a} {k-2 \choose a}^{-1}\delta_{a, a'}.\eeq

\begin{lemm} \lb{wao alg} Let $W=\eqref{Wwkl}$, viewed as a left representation  of $G_{v, \infty}$ only.  Let $w_{\rm a}^{\ord }\colon W^{N}\to W_{N}$ be the map denoted by $w_{{\rm a}, v, \infty}^{\ord}$ of \eqref{waov}. 
Fix the models and pairing described above. Then $W^{N}$ is spanned by $x^{k-2}$ and $W_{N}$ is spanned by the image of $y^{k-2}$, and 
$$(w_{\rm a}^{\ord } (x^{k-2}), x^{k-2})=1.$$
\end{lemm}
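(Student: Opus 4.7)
The plan is to unfold the definitions and carry out a direct computation in the explicit polynomial model; there is no essential difficulty beyond bookkeeping of signs.

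First I would make $W^N$ and $W_N$ explicit. Using \eqref{W act poly}, the left action of $n = \smalltwomat{1}{b}{0}{1} \in N$ is $n.p(x,y) = p(x, bx+y)$, so $N$-invariants are polynomials independent of $y$ and $W^N = L\cdot x^{k-2}$. The infinitesimal action of $\mathfrak{n}$ is $p \mapsto x\partial_y p$, whose image is the span of monomials divisible by $x$; therefore $W_N$ is the line spanned by the class of $y^{k-2}$.

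Next I would unwind the map $w_{\rm a}^{\ord}$. According to \eqref{waov}, on the algebraic factor it is given by the action of $w_0^{\iota}$ with $w_0 = \smalltwomat{}{1}{-1}{}$ and $\iota\colon g \mapsto (g^T)^{-1}$; a one-line matrix calculation shows $w_0^{\iota} = w_0$. I would interpret the target $W^{N^-}$ of \eqref{waov} as $W_N$ via the canonical isomorphism $W^{N^-}\hookrightarrow W \twoheadrightarrow W_N$ (which is bijective since both one-dimensional lines are represented by $y^{k-2}$). Since $\det w_0 = 1$, the determinant twist in \eqref{W act poly} is trivial, and
\[
w_{\rm a}^{\ord}(x^{k-2}) \;=\; x^{k-2}\bigl((x,y)\,w_0\bigr) \;=\; (-y)^{k-2} \;=\; (-1)^{k-2}\,y^{k-2} \quad\text{in } W.
\]

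Finally I would descend the pairing and evaluate. Because $\det n = 1$ for $n \in N$, invariance of \eqref{inv pair W} under $N$ gives $(np, q) = (p, n^{-1}q) = (p,q)$ for any $q \in W^N$, so the pairing descends to a well-defined bilinear map $W_N \otimes W^N \to L$. Reading off \eqref{inv pair W} with first argument $y^{k-2}$ (so $a = k-2$) and second argument $x^{k-2}$ (so $a' = k-2$) gives $(y^{k-2}, x^{k-2}) = (-1)^{k-2}\binom{k-2}{k-2}^{-1} = (-1)^{k-2}$. Combining,
\[
\bigl(w_{\rm a}^{\ord}(x^{k-2}),\, x^{k-2}\bigr) \;=\; (-1)^{k-2}\cdot(-1)^{k-2} \;=\; 1,
\]
as claimed. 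The only step requiring any care is matching conventions for $\iota$, for the right action in \eqref{W act poly}, and for the sign $(-1)^a$ in \eqref{inv pair W}; once these are lined up the calculation is mechanical.
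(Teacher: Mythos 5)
Your computation is correct, and since the paper states this lemma without proof (evidently regarding it as a routine verification), there is nothing in the text to compare against: your direct calculation in the polynomial model is exactly the argument one would write. The key steps all check out: $W^{N}=L\,x^{k-2}$ and $W_{N}$ is spanned by the image of $y^{k-2}$ (in characteristic zero the span of $\{np-p\}$ coincides with the image of the nilpotent $x\partial_{y}$); $w_{0}^{\iota}=(w_{0}^{\rm T})^{-1}=w_{0}$ and $\det w_{0}=1$, so the left action gives $w_{0}.x^{k-2}=x^{k-2}\mapsto ((x,y)w_{0})=(-y)^{k-2}=(-1)^{k-2}y^{k-2}$; the pairing \eqref{inv pair W} is $N$-invariant (no $\det$-twist on $N\subset\SL_{2}$) and hence descends to $W_{N}\otimes W^{\vee,N}$; and $(y^{k-2},x^{k-2})=(-1)^{k-2}\binom{k-2}{k-2}^{-1}=(-1)^{k-2}$, so the two signs cancel to give $1$. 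One very small slip: at the end you refer to "the right action in \eqref{W act poly}," but the computation correctly uses the \emph{left} action throughout, which is what the lemma prescribes; this does not affect the argument.
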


\subsubsection{The map $\tord $ is unitary on algebraic representations} We start with  a lemma completing the proof of Proposition \ref{indep of wt}.

Suppose that  $M_{p}=M_{p, 0}\ot W_{p}$ is a decomposition of a locally algebraic coadmissible right $(G\ts H)'_{p}$-representation over $L$, into the product of a smooth and an irreducible algebraic representation, respectively. Let $W^{\vee}_{\infty}$ be the dual representation to $W_{p}$, viewed as a right  representation of  $(G\ts H)'_{\infty}$. Assume that $L$ is a $p$-adic field and that  the $(G\ts H)'$-module  $M_{p}\ot W^{\vee}_{\infty}$ is $p$-adic coadmissible. Then the operator $\tord$ on it   (whose definition of Proposition \ref{gHo} extends \emph{verbatim} to the case where $M_{p}$ is only locally algebraic) decomposes as 
$$\tord= \lim_{r\to \infty} (p^{r[F:\Q]}\cdot\gamma_{r, p}\Up_{p}^{-r})\ot \gamma_{r, p}\Up_{p}^{-r}\ot c(W)^{-1} \gamma_{0, \infty}^{\iota}.$$
according to the decomposition $M_{p}\ot W^{\vee}_{\infty}=M_{p, 0}\ot W_{p}\ot W^{\vee}_{\infty}$

\begin{lemm}\lb{unitary}  In relation to the situation just described, the operator 
$${}^{\rm alg}\tord:=  \lim_{r\to \infty} \gamma_{r, p}\Up_{p}^{-r}\ot  c(W)^{-1}\gamma_{0, \infty}^{\iota}\colon W^{H'}\ot W^{\vee, H'} \to W^{N}\ot W_{N}$$
is unitary.  That is, for any invariant pairing $(\ , \ )$ on $W\ot W^{\vee}$ and $\xi\in W^{H'}$, $\xi^{\vee}\in W^{\vee, H'}$, the images of $\xi\ot \xi^{\vee}$ and  ${}^{\rm alg}\tord (\xi\ot \xi^{\vee})$  under the pairings induced by $(\ , \  )$ coincide. 
\end{lemm}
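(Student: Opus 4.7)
The pairing, the representation $W$, and the operator ${}^{\rm alg}\tord$ all factor as products over places $v\mid p$ and embeddings $\sigma\colon F_v\into L$, and the constant $c(W)=\prod_{v,\sigma} c(W_\sigma)$ is similarly multiplicative. Hence it suffices to verify unitarity for a single factor $W_\sigma=W_{\sigma,w,k,l}$ as in \eqref{Wwkl}. Using the polynomial realisation of $W_\sigma$ described before \eqref{inv pair W}, one checks directly that the embedded matrix ${\rm e}(t)$ for $t=a+b\theta\in E_v^\times$ has the linear forms $X+\sigma^c(\theta)Y$ and $X+\sigma(\theta)Y$ as eigenvectors with eigenvalues $\sigma(t)$ and $\sigma^c(t)$. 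Combining this with the weight twist $\det^{(w-k+2)/2}$ on the $G$-side and the character $\sigma^{(l-w)/2}(\sigma^c)^{(-l-w)/2}$ on the $H$-side, the cancellation condition for $H'$-invariance pins down
\[ \xi^+ := (X+\sigma^c(\theta)Y)^{(k-2-l)/2}(X+\sigma(\theta)Y)^{(k-2+l)/2} \]
as a generator of $W_\sigma^{H'}$, and analogously a generator $\xi^{\vee,+}$ of $W_\sigma^{\vee,H'}$.

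I would then unpack the action of $\gamma_{r,p}\Up_p^{-r}$ on $W_\sigma$: because $\Up_p$ acts on the algebraic factor as the group element $\smalltwomat{p}{}{}{1}$, the matrix product $\Up_p^{-r}\gamma_{r,p}$ collapses (the $p^r$ from $\gamma_{r,v}$ cancels) to a single group element, independent of $r$, equal to $\smalltwomat{{\rm Nm}(\theta)}{}{}{1}$ in the non-split case (or the analogous element in the split case using the idempotent decomposition $E_v=F_v\oplus F_v$); a parallel analysis applies to $\gamma_{0,\infty}^\iota$ acting on $W_\sigma^\vee$. Applying these to $\xi^+$ and $\xi^{\vee,+}$ via the formula \eqref{W act poly}, expanding $X\mapsto {\rm Nm}(\theta)X$ (so that e.g.\ ${\rm Nm}(\theta)X+\sigma^c(\theta)Y=\sigma^c(\theta)(\sigma(\theta)X+Y)$), shows that the images land in $W_\sigma^N$, respectively $W_{\sigma,N}^\vee$, and produces explicit scalars involving $\sigma(\theta),\sigma^c(\theta)$ and determinant factors $({\rm Nm}\theta)^{(w-k+2)/2}$.

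Finally, I would compute the pairings of both sides using \eqref{inv pair W}. For $(\xi^+,\xi^{\vee,+})$ one expands the product of linear forms in the monomial basis $X^{k-2-a}Y^a$; the coefficient extraction naturally produces the binomial $\binom{k-2}{(k-2-l)/2}$ along with a factor $\theta^{(k-2+l)/2}\theta^{c,(k-2-l)/2}$ (arising from the $Y$-coefficients of the eigenvector forms). For $({}^{\rm alg}\tord(\xi^+\otimes\xi^{\vee,+}))$ one gets a product of eigenvalue scalars for ${\rm e}(\cd)$, a power of $({\rm Nm}\theta)$ from the $\det$-twists and, crucially, a factor of $({\rm j}_v)^{w+k-2}$ (with ${\rm j}_v=\theta^c-\theta$) that comes from projecting onto $W^N\otimes W_N$ after passing through the eigenvector basis. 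The definition of $c(W_\sigma)$ in \eqref{cW} is precisely the product of these spurious factors: ${\rm j}_v^{-w-k+2}$ cancels the determinantal/Vandermonde contribution, $\theta^{c,(k-2-l)/2}\theta^{(k-2+l)/2}$ cancels the eigenvector normalisation, and the binomial $\binom{k-2}{(k-2-l)/2}$ matches the pairing \eqref{inv pair W}, so the two sides agree.

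The main obstacle is the consistent scalar bookkeeping, especially because of the different shape of $\gamma_{0,p}$, $\gamma_{0,\infty}^\iota$ and ${\rm j}_v$ in the split and non-split cases; for the split case one must redo the eigenvector analysis with the idempotents of $E_v=F_v\oplus F_v$ replacing $\theta$, and verify that the factor ${\rm j}_v^{-w-k+2}=1$ in $c(W_\sigma)$ is consistent with the parity forcing $w+k$ to be even. Matching left/right action conventions in $W$ versus $W^\vee$ and correctly identifying the quotient $W_N$ as the target of $c(W)^{-1}\gamma_{0,\infty}^\iota$ also require care.
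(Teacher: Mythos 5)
Your proposal takes essentially the same route as the paper's proof: reduce to a single factor $W_\sigma$, pass to the $H'$-eigenvector basis $z=x+\theta^c y$, $\bar z=x+\theta y$ to identify the generator of $W_\sigma^{H'}$, compute the images under $\gamma_{r,p}\Up_p^{-r}$ and $c(W)^{-1}\gamma_{0,\infty}^{\iota}$, and verify via \eqref{inv pair W} that the constant $c(W_\sigma)$ exactly absorbs the binomial, the $\theta,\theta^c$-powers, and the ${\rm j}_v$-power. The only imprecision is the phrase ``the images land in $W_\sigma^N$'': after applying the collapsed group element $\smalltwomat{{\rm Nm}\theta}{}{}{1}$ to $\xi^+$, the result does not literally lie in $W^N$ — only its projection to $W^N$ (which is all that the pairing against $W_N^{\vee}$ detects) matters, and this is what the limit/ordinary-projector structure implicitly extracts; the paper's own notation is equally terse on this point. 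The identification of each factor in $c(W_\sigma)$ with its source (determinant of the change-of-basis matrix $\smalltwomat 1{\theta^c}1{\theta}$ contributing ${\rm j}_v$; the $\theta,\theta^c$ eigen-scalars; the binomial from \eqref{inv pair W}) matches the paper's explicit calculation.
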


\begin{proof}
We may fix a place $v\vert p$, and consider the factor representations $W_{v}\ot W_{v,\infty}^{\vee}$  of $(G\ts H)'_{v} \ts(G\ts H)'_{v, \infty}$.  After extension of scalars, we may decompose $W_{v}=\bigotimes_{\sg\colon F\to \baar{Q}_{p}}W_{v}^{\sg}$ where each $W_{v}^{\sg}$ is one of the representations \eqref{Wwkl} for suitable integers $w, k, l$. Thus we are reduced to proving the unitarity of the relevant component of ${}^{\rm alg}\tord$ on the  representation $W_{v}^{\sg}\ot W_{v, \infty}^{\vee, \sg}$. We omit all subscripts.

\paragraph{Split case}
Suppose first that $v$ splits in $E$.  Then $W^{H'}= Lx^{(k-2-l)/2} y^{(k-2+l)/2}$, and if 
$$\xi:= x^{(k-2-l)/2} y^{(k-2+l)/2} $$ then $$\xi^{\vee}:=   (-1)^{(k-2+l)/2} {k-2 \choose  {(k-2-l)/2}} x^{(k-2+l)/2}y^{ (k-2-l)/2}$$ satisfies $( \xi, \xi^{\vee})=1$. 
We have 
$$ \xi {\tord}_{, p}:=\lim_{r\to \infty} \xi \gamma_{r, p}\Up_{p}^{-r}  = y^{k-2},  $$
and
\beqq
\xi^{\vee}{\tord}_{,\infty} 
= (-1)^{(k-2+l)/2}   c(W)^{-1} {k-2 \choose  {(k-2-l)/2}}  x^{(k-2+l)/2}(-x+y)^{ (k-2-l)/2}
\eeqq
projects into $W^{\vee}_{N} \stackrel{\cong}{\leftarrow} Lx^{k-2}$ to 
$$\xi^{\vee}{\tord}_{,\infty} =   (-1)^{k-2}   c(W)^{-1}   {k-2 \choose  {(k-2-l)/2}}  x^{k-2}.$$
Hence 
$$(  \xi {\tord}_{, p}, \xi^{\vee}{\tord}_{,\infty})=   c(W)^{-1}   {k-2 \choose  {(k-2-l)/2}} =1.$$

\paragraph{Nonsplit case}
Suppose now that $v$ does not split in $E$. Let $z:= x+\tht^{c} y$, $\baar{z}:=x+\tht y$. Then $W^{H'} = L z^{(k-2-l)/2}\baar{z}^{ (k-2+l)/2}$, and if 
$$ \xi :=z^{(k-2-l)/2}\baar{z}^{ (k-2+l)/2}=  x^{(k-2-l)/2}y^{ (k-2+l)/2} .\twomat 1{\tht^{c}} 1{\tht}  (-{\rm j})^{(w+k-2)/2} \in W^{H'}$$
then 
$$\xi^{\vee}:=  (-1)^{(k-2+l)/2} {k-2 \choose  {(k-2-l)/2}} x^{(k-2+l)/2}y^{ (k-2-l)/2}  .\twomat 1{\tht^{c}} 1{\tht}  (-{\rm j})^{(-w-k+2)/2} \in W^{\vee, H'}$$
satisfies $( \xi, \xi^{\vee})=1$. 
We have 
$$ \xi {\tord}_{, p}= {\rm N}^{(w-k+2)/2}   \tht^{c,(k-2-l)/2} \tht^{(k-2+l)/2} y^{k-2}  ,$$
and  
\beqq
\xi^{\vee}{\tord}_{,\infty} 
&=  (-1)^{(k-2-l)/2}  (-{\rm j})^{-(w+k-2)/2} c(W)^{-1}
 {k-2 \choose  {(k-2-l)/2}} x^{(k-2+l)/2}y^{ (k-2-l)/2}  \twomat 1{\Nm^{-1}\tht^{c}} 1{\Nm^{-1}\tht} \eeqq
projects into $W^{\vee}_{N} \stackrel{\cong}{\leftarrow} Lx^{k-2}$ to 
$$\xi^{\vee}{\tord}_{,\infty} =  (-1)^{(k-2-l)/2}   (-{\rm j})^{-w-k+2}  c(W)^{-1} {k-2 \choose  {(k-2-l)/2}} \Nm^{(w+k-2)/2}
x^{k-2}.
$$
Then 
$$(  \xi {\tord}_{, p}, \xi^{\vee}{\tord}_{,\infty})=   (-{\rm j})^{-w-k+2 }  \tht^{c,(k-2-l)/2} \tht^{(k-2+l)/2} {k-2 \choose  {(k-2-l)/2}} c(W)^{-1}=1 .$$
\end{proof}

\subsubsection{Algebraic  toric period}\lb{A4mu}
Let  $W =W_{G} \ot W_{H}$ be an  algebraic representation of $(G\ts H)'_{v,\infty}$ over $L$.
For any $\iota\colon L\into \bC$, let $\iota V_{ }$ (respectively $\iota V_{G}$) be the Hodge structure associated with $W$ (respectively $W_{G}$),
 and let\footnote{To compare with \eqref{calLv}, we have $\zeta_{\R}(2)/L(1, \eta_{\bC/\R})=1.$}
$$\calL(V_{(W_{G}, W_{H})}, 0):= \iota^{-1}\left({\pi^{-[F_{v}:\Q_{p}]}L(\iota V_{}, 0) \over L(\ad(\iota V_{\G), \infty}), 1)}\right).$$ 

Let $dt$ be a `measure' on $H'_{v, \infty}$, by which we simply mean a value $\vol(H'_{v, \infty }, dt)$ similarly to \S~\ref{ssec loc tp}, and set as in \eqref{vol circ}
$$ {\vol^{\circ}(H'_{v}, dt_{})} :=2^{-[F_{v}:\Q]} \vol(H'_{v, \infty }, dt).$$
 Let $(\ , \ )=(\ ,  \ )_{W_{G}}\cdot(\, \ )_{W_{H}}$ be a nondegenerate invariant pairing on $W. \ot W^{\vee}$. 

Then for all $f_{1}, f_{3}\in W$, $f_{2},f_{4}\in W^{\vee}$ with $(f_{3}, f_{4})\neq 0$, we define
\beq \lb{Q inf}
Q_{dt}^{}\left( {f_{1}\ot f_{2} \over f_{3}\ot f_{4}}\right)   : =  \calL(V_{(W_{G}, W_{H})}, 0)^{-1}\cdot \vol^{}(H'_{v, \infty}, dt) \cdot {( {\rm p}_{H'} (f_{1}), {\rm p}_{H'} (f_{2})) \over (f_{3}, f_{4})},
\eeq
where ${\rm p}_{H'}$ denotes the idempotent projector onto $H'_{v, \infty}$-invariants. 

Let $\sg_{W_{G}}\colon F_{v}^{\ts}\to L^{\ts}$ be the character giving the action of $\twomat {F_{v}^{\ts}}{}{}1$ on $W_{G}^{N}$, let $\chi\colon E_{v}^{\ts}\to L^{\ts} $ be the algebraic character attached to $W_{H}$, and let 
$$\mu^{+}=\chi\cdot \sg_{W_{G}}\circ N_{E_{v}/F_{v}}.$$
  Let ${\rm j}_{v}=\eqref{jv}$.
 Then for all $f_{1}, f_{3}\in W^{N}:=W_{G}^{N}\ot W_{H}$, $f_{2}, f_{4}\in W^{\vee , N}$ with $f_{3}, f_{4}\neq 0$, we define
\beq \lb{Q ord inf v}
Q_{dt}^{\ord}\left( {f_{1}\ot f_{2} \over f_{3}\ot f_{4}}\right) :=
\mu^{+}({\rm j}_{v})^{}\cdot  {\vol^{\circ}(H'_{v}, dt_{})} \cdot {f_{1}\ot f_{2} \over f_{3}\ot f_{4}}.
\eeq

\begin{prop} \lb{compare toric inf} 
Let $W$  be a    representation of $(G\ts H)'_{v, \infty}$ over $L$.
Let ${\tord}={\tord}_{,v, \infty}$ be as defined in \eqref{tord v}, and let $w_{\rm a}^{\ord}=w_{{\rm a}, v, \infty}^{\ord}$ be as defined in \eqref{waov}. Then  for all $f_{1}, f_{3}\in W^{N}$, $f_{2}, f_{4}\in W^{\vee , N}$ with $f_{3}, f_{4}\neq 0$, 
$$ 
 Q_{dt}^{}\left({ \tord f_{1}\ot \tord f_{2} \over w_{\rm a}^{\ord} f_{3}\ot f_{4}}\right) =  \dim W \cdot   
 Q_{dt}^{\ord}\left( {f_{1}\ot f_{2} \over f_{3}\ot f_{4}}\right) . $$
\end{prop}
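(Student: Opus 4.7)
The proof is a direct archimedean computation in the spirit of, but considerably simpler than, the $p$-adic calculation of Proposition \ref{toric period}. First, by extending scalars I reduce to the case where $L$ splits $E_v$, so that $W = \bigotimes_\sigma W_\sigma$ decomposes over embeddings $\sigma\colon F_v\hookrightarrow L$ with each $W_\sigma$ of the explicit polynomial form \eqref{Wwkl}. The invariant pairing, the operators $\tord$ and $w_{\rm a}^{\ord}$, the $H'$-projector, the character $\mu^+$, the dimension $\dim W$, and the archimedean $L$-factor $\calL(V_{(W_G,W_H)}, 0)$ all factor compatibly, so it suffices to verify the identity on a single tensor factor $W = W_\sigma$.

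For a single such factor the lines $W^N$, $W^{\vee,N}$, $W^{H'}$, $W^{\vee,H'}$ are all one-dimensional, and both sides of the asserted identity depend bilinearly on the ratios $f_1\otimes f_2/f_3 \otimes f_4$. Normalizing $f_1 = f_3 = \eta \in W^N$ and $f_2 = f_4 = \eta^\vee \in W^{\vee, N}$ so that $(w_{\rm a}^{\ord}\eta, \eta^\vee) = 1$ (possible by Lemma \ref{wao alg}), and using $\vol^{\circ}/\vol = 2^{-[F_v:\Q_p]}$, the identity becomes the scalar equality
\begin{equation*}
(p_{H'}(\tord \eta), p_{H'}(\tord \eta^{\vee})) = \dim W \cdot \mu^+({\rm j}_v) \cdot 2^{-[F_v:\Q_p]} \cdot \calL(V_{(W_G,W_H)}, 0).
\end{equation*}
The left-hand side is computed in the polynomial model: one expands $\gamma_{0,\infty}^\iota \eta$ and $\gamma_{0,\infty}^\iota \eta^\vee$ in the monomial basis and projects onto the $H'$-invariant lines spanned by $\xi = z^{(k-2-l)/2}\bar z^{(k-2+l)/2}$ (non-split case) or $\xi = x^{(k-2-l)/2}y^{(k-2+l)/2}$ (split case). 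This is the computation performed in the proof of Lemma \ref{unitary}, applied in the opposite direction, and it yields an explicit scalar involving $c(W)^{-2}$, a binomial coefficient, and a sign depending on the parity of $(k-2\pm l)/2$.

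For the right-hand side I would compute $\calL(V_{(W_G,W_H)}, 0)$ using the Deligne recipe: the Hodge numbers of $V = V_{W_G}|_{W_{E_v}} \otimes V_{W_H}$ determine $L(V, 0)$ and $L(\mathrm{ad}(V_G), 1)$ as products of $\Gamma_{\mathbb C}$-factors, and standard $\Gamma$-function identities (notably the duplication formula) collapse the ratio to an explicit expression in $(w, k_\sigma, l_\sigma)$. Matching this against the computed left-hand side, including the contribution of $\mu^+({\rm j}_v)$ from \eqref{jv}, which in the non-split case equals $\mu^+(\theta^c-\theta)$ and matches exactly the factor built into $c(W)$, completes the proof.

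The main obstacle is the bookkeeping in the $L$-factor computation: one must carefully track powers of $2$ and $\pi$, roots of unity arising from evaluating algebraic characters at the purely imaginary element ${\rm j}_v$, and signs in the binomial coefficients, and then verify that all these combine into the required identity. As in the proof of Proposition \ref{toric period}, the split and non-split cases must be treated separately; in each case the final matching is a purely mechanical verification once the explicit formulas on both sides are in hand.
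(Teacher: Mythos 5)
Your proposal takes essentially the same route as the paper's proof: reduce to a single tensor factor $W_\sigma$ of the form \eqref{Wwkl}, normalize the test vectors via Lemma \ref{wao alg} (and $\vol(H'_{v,\infty},dt)=1$), compute $(\,{\rm p}_{H'}(\gamma_{H'}^{\ord} f_1),\, {\rm p}_{H'}(\gamma_{H'}^{\ord} f_2)\,)$ explicitly in the polynomial model (split and non-split cases separately), and match against the explicit evaluation $\calL(V_\sigma,0)=\frac{2}{k_\sigma-1}\binom{k_\sigma-2}{(k_\sigma-2+l_\sigma)/2}^{-1}$ and $\mu^+_\sigma({\rm j})$. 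One small bookkeeping correction for when you carry it out: the scalar produced by the polynomial-model computation is $c(W_\sigma)^{-1}c(W_\sigma^\vee)^{-1}$, not $c(W_\sigma)^{-2}$; these agree in the split case (where the ${\rm j}$-power is trivial and the binomials are symmetric) but differ in the non-split case because $W_\sigma^\vee$ has parameters $(-w,k_\sigma,-l_\sigma)$, so the powers of ${\rm j}$ and the $\theta,\theta^c$-factors do not simply square.
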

\begin{proof} 
After possibly extending scalars we may assume that $L$ splits $E$ and pick an extensions of each   $\sg\colon F\into L$ to a $\sg\colon  E\into L$. 
We then have  $W=\bigotimes_{\sg\colon F\into L}W_{\sg} $ with  $W_{\sg}=\eqref{Wwkl}$ for suitable integers $w, k_{\sg},l_{\sg}$, and analogously $\mu^{+}(t)= \prod_{\sg\colon F\into L}\mu_{\sg}^{+}$ with 
\beq \lb{xi+j}
\mu_{\sg}^{+}(t)=\sg (t)^{(k_{\sg}-2+l_{\sg})/2} \sg (t^{c})^{(k_{\sg}-2-l_{\sg})/2}, \qquad  \mu_{\sg}^{+}({\rm j})=(-1)^{(k_{\sg}-2-l_{\sg})/2}\cdot  {\rm j}_{v}^{k_{\sg}-2}.
\eeq
If $v$ splits in $E$, this simplifies to $\mu_{\sg}^{+}({\rm j}) = (-1)^{(k_{\sg}-2+l_{\sg})/2}$.

Moreover, 
 $\calL(V, 0)= \prod_{\sg}\calL(V_{\sg}, 0)$ with
$$\calL(V_{\sg}, 0)= {\pi^{-1}\Gamma_{\bC}({k_{\sg}+l_{\sg}\over 2})\Gamma_{\bC}({k_{\sg}-l_{\sg}\over 2}) \over \Gamma_{\bC}(k_{\sg}) \Gamma_{\R}(2)} 
= {2\over k_{\sg}-1} \cdot {k_{\sg}-2 \choose {k_{\sg}-2+l_{\sg}\over 2 }}^{-1}.
$$

Fix a $\sg \colon F\into L$ for the rest of this proof, work with $W_{\sg} $ only,  and we drop $\sg $  from the notation.  
We may  assume  that $f:=f_{1}$, $f^{\vee}:= f_{2}$ both equal $x^{k-2}$ in the models \eqref{W act poly}, and that $\vol(H', dt)=1$.  By the definitions above and Lemma \ref{wao alg}, we then need to prove that 
$$ Q(\tord f_, \tord  f^{\vee}):= 
{k-1\over 2 }\cdot {k_{}-2 \choose {k-2+l_{}\over 2 }} \cdot ( {\rm p}_{H'}( \tord f_{1}), {\rm p}_{H'} (\tord  f_{2}) ) ={k-1 \over 2} \cdot \mu^{+}({\rm j}).$$
Recall  in what follows that $\tord$ contains  the factor $c(W) =\eqref{cW}$.
\paragraph{Split case}
Suppose  first that $v$ splits in $E$.
Then $W^{H'} =  L x^{(k-2-l)/ 2}y^{(k-2+l)/ 2 }$, and
 $c(W)^{-1}\gamma_{0}^{\iota} f= c(W)^{-1} (x-y)^{k-2}$
 projects to 
$$
{}\ \qquad \tord f =(-1)^{ (k-2+l )/ 2}  c(W)^{-1} {k-2 \choose (k-2-l)/2} x^{(k-2-l)/ 2}y^{(k-2+l )/ 2 } \qquad \in W^{H'}.$$
 It follows that 
\beqq
Q^{}(\tord f_, \tord  f^{\vee})
&=   { k_{}-1\over 2} \cdot {k_{}-2 \choose ( {k_{}-2+l)/ 2 }}^{2}\cdot 
(-1)^{ {k-2+l\over 2}} \cdot c(W)^{-1}c(W^{\vee})^{-1}\\
&=     {k-1\over 2} \cdot  \mu^{+}({\rm j}) .
\eeqq

\paragraph{Nonsplit case}
Suppose now  that $v$ is nonsplit in $E$. Let $z:= x-\tht^{c,-1} y$, $\baar{z}:=x-\tht^{ -1}y$, then  $W^{H'}=L   z^{(k-2-l)/ 2}\baar{z}^{(k-2+l)/ 2 }$ and 
$$  \gamma_{0}^{\iota} f=c(W)^{-1} \Nm^{(w+k-2)/2} x^{k-2} =c(W)^{-1} \Nm^{(w+k-2)/2} {\rm j}^{2-k}  (\tht^{c}z-\tht \baar{z})^{k-2}$$
projects to
\beqq 
\tord f&= c(W^{\vee})^{-1}{k-2 \choose {k-2-l\over 2}}^{} (-1)^{(k-2+l)/  2}\cdot
  \Nm^{(w+k-2)/2} {\rm j}^{2-k} \theta^{c, (k-l-2)/2} \theta^{(k+l-2)/2}\cdot z^{(k-2-l)/ 2}\baar{z}^{(k-2+l)/ 2 }\\
&=
c(W^{\vee})^{-1}{k-2 \choose {k-2-l\over 2}}^{} 
(-1)^{(k-2+l)/  2}\cdot
 {\rm j}^{(-w-k+2)/2} \theta^{c, (k-l-2)/2} \theta^{(k+l-2)/2}\cdot 
 \twomat 1 1 {-\tht^{c,-1}} {-\tht^{-1}} . x^{(k-2-l)/ 2} {y}^{(k-2+l)/ 2 } .
\eeqq
By the invariance of the pairing, 
$$Q(\tord f, \tord f^{\vee}) = (-1)^{(k-2+l)/2}    c(W)^{-1}c(W^{\vee})^{-1} {k-2 \choose {k-2-l\over 2}}
(-{\rm j})^{2-k} \Nm^{k-2}  = (-1)^{k-2}{k-2 \choose {k-2-l\over 2}}^{-1}
\mu^{+}({\rm j})$$
so that again
\beqq
Q^{}(\tord f_, \tord  f^{\vee})= \dim W\cdot  \mu^{+}({\rm j}) .
\eeqq
\end{proof}

 \section{Correction}

Denote by $S_{p, \rm ns}$ the set of $p$-adic places of $F$ that are nonsplit in $E$.
 For a Hecke character $\chi$ of $E$, we consider the following condition:
$$(\star) \qquad  \text{for each  $v\in S_{p, \rm ns}$,  $v$ is inert in $E$ and $\chi_{v}$ is unramified}.$$
After a correction, the  article \cite{nonsplit} only proves the formula of Theorem B for  (ordinary, locally distinguished, potentially crystalline, non-exceptional) representations $\Pi=\pi\ot\chi$ of trivial weight  \emph{satisfying $(\star)$}. As a consequence, the main results of this paper are affected as follows.
\begin{itemize}
\item Theorem A  holds for $\pi_{0} \ot \chi$ under the extra assumption $(\star)$.
\item Theorem B, as well as Theorem $\text{B}^{\text{ord}}$ of \S~7.1.1, hold for   $\Pi=\pi \ot \chi$ under the extra assumption $(\star)$.
\item Theorem C is not affected.
\item Assume that \emph{no $p$-adic place of $F$ is ramified in $E$}. Define a completed homology module $M_{K^{p}}^{\star}$  as in \S~1.3.1, but with the limit being taken only over subgroups $K_{p}$ containing $N_{\G , 0} \ts \prod_{v \in S_{p, \rm ns}} \OO_{E,v}^{\ts}$. Define a space $\cE_{K^{p}}^{\rm ord, \star}$ as in \emph{loc. cit.} with $M_{K^{p}}$ replaced by $M_{K^{p}}^{\star}$. (This is simply the closure in $\cE^{\rm ord, \star}_{K^{p}}$ of the classical points of $\cE^{\rm ord}_{K^{p}}$ that correspond to representations satisfying $(\star)$.)  Define a $\text{Hida}^{\star}$ family for $(\G\ts \H)'$ to be an irreducible component of $\cE_{K^{p}}^{\rm ord, \star}$. 

Then Theorem D and Theorem E hold for \emph{$\text{Hida}^{\star}$ families} rather than $\text{Hida}$ families.
\item Theorem F is not affected. 
\item Theorem G is not affected, and the related alternative proof of the theorem of Greenberg--Stevens mentioned in Remark 7.3.4 remains valid.
\end{itemize}

The original proofs still apply. For the  main theorems (B and D), the proofs in \S~7.1 need to replace Lemma 7.1.4 with the statement that \emph{in a $\text{Hida}^{\star}$ family, the classical points of trivial weight that satisfy condition \textup{(p-crys)}  are dense}; the last paragraph in the proof of Lemma 7.1.4 applies verbatim to prove that  statement. (The condition (ram) is no longer relevant, since the corrected  version of \cite[Theorem B]{nonsplit} does not require it.)

\begin{enonce*}[remark]{Errata} In  footnote 12, the citation to \cite{nek-heeg}  should  not point  to \S~I.2 (I am grateful to M.H. Nicole for bringing this to my attention), but to Proposition II.2.4 (2). In the third-last line of the proof of Lemma 4.1.1, the symbol `$\cong$' should be replaced by `$\into$'.
\end{enonce*}

\backmatter
\addtocontents{toc}{\medskip}

\begin{bibdiv}
\begin{biblist}


\bib{Agb-Ca}{article}{
   author={Agboola, Adebisi},
   author={Castella, Francesc},
   title={On anticyclotomic variants of the $p$-adic Birch and
   Swinnerton-Dyer conjecture},
   language={English, with English and French summaries},
   journal={J. Th\'{e}or. Nombres Bordeaux},
   volume={33},
   date={2021},
   number={3},
   pages={629--658},
   issn={1246-7405},
   review={\MR{4402376}},
}

\bib{st et}{article}{
   author={Barr{\'e}-Sirieix, Katia},
   author={Diaz, Guy},
   author={Gramain, Fran{\c{c}}ois},
   author={Philibert, Georges},
   title={Une preuve de la conjecture de Mahler-Manin},
   language={French},
   journal={Invent. Math.},
   volume={124},
   date={1996},
   number={1-3},
   pages={1--9},
   issn={0020-9910},
   review={\MR{1369409 (96j:11103)}},
   doi={10.1007/s002220050044},
}

\bib{bei}{article}{
   author={Be\u\i linson, A. A.},
   title={Higher regulators and values of $L$-functions},
   language={Russian},
   conference={
      title={Current problems in mathematics, Vol. 24},
   },
   book={
      series={Itogi Nauki i Tekhniki},
      publisher={Akad. Nauk SSSR, Vsesoyuz. Inst. Nauchn. i Tekhn. Inform.,
   Moscow},
   },
   date={1984},
   pages={181--238},
   review={\MR{760999}}, 
}

\bib{BGe}{article}{
   author={Bergunde, Felix},
   author={Gehrmann, Lennart},
   title={Leading terms of anticyclotomic Stickelberger elements and
   $p$-adic periods},
   journal={Trans. Amer. Math. Soc.},
   volume={370},
   date={2018},
   number={9},
   pages={6297--6329},
   issn={0002-9947},
   review={\MR{3814331}},
   doi={10.1090/tran/7120},
}

\bib{BDrigid}{article}{
   author={Bertolini, Massimo},
   author={Darmon, Henri},
   title={A rigid analytic Gross-Zagier formula and arithmetic applications},
   note={With an appendix by Bas Edixhoven},
   journal={Ann. of Math. (2)},
   volume={146},
   date={1997},
   number={1},
   pages={111--147},
   issn={0003-486X},
   review={\MR{1469318 (99f:11079)}},
   doi={10.2307/2951833},
}

\bib{bdsurvey}{article}{
   author={Bertolini, Massimo},
   author={Darmon, Henri},
   title={The $p$-adic $L$-functions of modular elliptic curves},
   conference={
      title={Mathematics unlimited---2001 and beyond},
   },
   book={
      publisher={Springer, Berlin},
   },
   date={2001},
   pages={109--170},
   review={\MR{1852156 (2002i:11061)}},
}

\bib{bd-hida}{article}{
   author={Bertolini, Massimo},
   author={Darmon, Henri},
   title={Hida families and rational points on elliptic curves},
   journal={Invent. Math.},
   volume={168},
   date={2007},
   number={2},
   pages={371--431},
   issn={0020-9910},
   review={\MR{2289868}},
   doi={10.1007/s00222-007-0035-4},
}

\bib{bdp}{article}{
   author={Bertolini, Massimo},
   author={Darmon, Henri},
   author={Prasanna, Kartik},
   title={Generalized Heegner cycles and $p$-adic Rankin $L$-series},
   note={With an appendix by Brian Conrad},
   journal={Duke Math. J.},
   volume={162},
   date={2013},
   number={6},
   pages={1033--1148},
   issn={0012-7094},
   review={\MR{3053566}},
   doi={10.1215/00127094-2142056},
}

\bib{bsv}{article}{
author={Bertolini, Massimo},
author={Seveso, Marco Adamo},
author={Venerucci, Rodolfo},
title={Reciprocity laws for balanced diagonal classes}, journal={Ast\'erisque}, status={to appear}, label={BSV}}

\bib{bcf}{article}{
   author={Blanco-Chac\'{o}n, Iv\'{a}n},
   author={Fornea, Michele},
   title={Twisted triple product $p$-adic $L$-functions and
   Hirzebruch-Zagier cycles},
   journal={J. Inst. Math. Jussieu},
   volume={19},
   date={2020},
   number={6},
   pages={1947--1992},
   issn={1474-7480},
   review={\MR{4166999}},
   doi={10.1017/s1474748019000021},
}

 \bib{bk}{article}{
   author={Bloch, Spencer},
   author={Kato, Kazuya},
   title={$L$-functions and Tamagawa numbers of motives},
   conference={
      title={The Grothendieck Festschrift, Vol.\ I},
   },
   book={
      series={Progr. Math.},
      volume={86},
      publisher={Birkh\"auser Boston, Boston, MA},
   },
   date={1990},
   pages={333--400},
   review={\MR{1086888 (92g:11063)}},
}

\bib{bu-fl}{article}{
   author={Burns, D.},
   author={Flach, M.},
   title={Motivic $L$-functions and Galois module structures},
   journal={Math. Ann.},
   volume={305},
   date={1996},
   number={1},
   pages={65--102},
   issn={0025-5831},
   review={\MR{1386106}},
   doi={10.1007/BF01444212},   
}

\bib{ashay}{article}{
   author={Burungale, Ashay A.},
   title={On the $\mu$-invariant of the cyclotomic derivative of a Katz
   $p$-adic $L$-function},
   journal={J. Inst. Math. Jussieu},
   volume={14},
   date={2015},
   number={1},
   pages={131--148},
   issn={1474-7480},
   review={\MR{3284481}},
   doi={10.1017/S1474748013000388},
}

\bib{bur-d}{article}
{   author={Burungale, Ashay A.},
   author={Disegni, Daniel},
   title={On the non-vanishing of $p$-adic heights on CM abelian varieties,
   and the arithmetic of Katz $p$-adic $L$-functions},
   journal={Ann. Inst. Fourier (Grenoble)},
   volume={70},
   date={2020},
   number={5},
   pages={2077--2101},
   issn={0373-0956},
   review={\MR{4245607}},
}

\bib{BuTian}{article}{
   author={Burungale, Ashay A.},
   author={Tian, Ye},
   title={$p$-converse to a theorem of Gross-Zagier, Kolyvagin and Rubin},
   journal={Invent. Math.},
   volume={220},
   date={2020},
   number={1},
   pages={211--253},
   issn={0020-9910},
   review={\MR{4071412}},
   doi={10.1007/s00222-019-00929-7},
}

\bib{BL}{article}{
   author={B\"{u}y\"{u}kboduk, K\^{a}zim},
   author={Lei, Antonio},
   title={Interpolation of generalized Heegner cycles in Coleman families},
   journal={J. Lond. Math. Soc. (2)},
   volume={104},
   date={2021},
   number={4},
   pages={1682--1716},
   issn={0024-6107},
   review={\MR{4339947}},
   doi={10.1112/jlms.12471},
}

\bib{BPS}{article}
{author={B\"uy\"ukboduk, Kazim}, author={Pollack, Robert}, author={Sasaki, Shu}
title={$p$-adic Gross-Zagier formula at critical slope and a conjecture of Perrin-Riou}, status={preprint}, label={BPS}}

\bib{cst}{article}{
   author={Cai, Li},
   author={Shu, Jie},
   author={Tian, Ye},
   title={Explicit Gross-Zagier and Waldspurger formulae},
   journal={Algebra Number Theory},
   volume={8},
   date={2014},
   number={10},
   pages={2523--2572},
   issn={1937-0652},
   review={\MR{3298547}},
   doi={10.2140/ant.2014.8.2523}, 
}

\bib{carayol}{article}{
   author={Carayol, Henri},
   title={Sur la mauvaise r\'eduction des courbes de Shimura},
   language={French},
   journal={Compositio Math.},
   volume={59},
   date={1986},
   number={2},
   pages={151--230},
   issn={0010-437X},
   review={\MR{860139 (88a:11058)}},
}

\bib{carayol-h}{article}{
   author={Carayol, Henri},
   title={Sur les repr\'esentations $l$-adiques associ\'ees aux formes
   modulaires de Hilbert},
   language={French},
   journal={Ann. Sci. \'Ecole Norm. Sup. (4)},
   volume={19},
   date={1986},
   number={3},
   pages={409--468},
   issn={0012-9593},
   review={\MR{870690 (89c:11083)}},
}

\bib{casselman}{article}{
   author={Casselman, William},
   title={On some results of Atkin and Lehner},
   journal={Math. Ann.},
   volume={201},
   date={1973},
   pages={301--314},
   issn={0025-5831},
   review={\MR{0337789}},  label={Cas73}
}

\bib{cast}{article}{
   author={Castella, Francesc},
   title={Heegner cycles and higher weight specializations of big Heegner
   points},
   journal={Math. Ann.},
   volume={356},
   date={2013},
   number={4},
   pages={1247--1282},
   issn={0025-5831},
   review={\MR{3072800}},  
}

\bib{cast2}{article}{
   author={Castella, Francesc},
   title={On the $p$-adic variation of Heegner points},
   journal={J. Inst. Math. Jussieu},
   volume={19},
   date={2020},
   number={6},
   pages={2127--2164},
   issn={1474-7480},
   review={\MR{4167004}},
   doi={10.1017/s1474748019000094},
}

\bib{chenevier}{article}{
   author={Chenevier, Ga{\"e}tan},
   title={Familles $p$-adiques de formes automorphes pour ${\rm GL}_n$},
   language={French, with English summary},
   journal={J. Reine Angew. Math.},
   volume={570},
   date={2004},
   pages={143--217},
   issn={0075-4102},
   review={\MR{2075765}},
   doi={10.1515/crll.2004.031},
}

\bib{CH}{article}{
   author={Chida, Masataka},
   author={Hsieh, Ming-Lun},
   title={Special values of anticyclotomic $L$-functions for modular forms},
   journal={J. Reine Angew. Math.},
   volume={741},
   date={2018},
   pages={87--131},
   issn={0075-4102},
   review={\MR{3836144}},
   doi={10.1515/crelle-2015-0072},
}

\bib{CV}{article}{
   author={Cornut, C.},
   author={Vatsal, V.},
   title={CM points and quaternion algebras},
   journal={Doc. Math.},
   volume={10},
   date={2005},
   pages={263--309},
   issn={1431-0635},
   review={\MR{2148077}},
}

\bib{darmon-hhp}{article}{
   author={Darmon, Henri},
   title={Integration on $\scr H_p\times\scr H$ and arithmetic applications},
   journal={Ann. of Math. (2)},
   volume={154},
   date={2001},
   number={3},
   pages={589--639},
   issn={0003-486X},
   review={\MR{1884617}},
}

\bib{dr}{article}{
   author={Darmon, Henri},
   author={Rotger, Victor},
   title={Diagonal cycles and Euler systems I: A $p$-adic Gross-Zagier
   formula},
   language={English, with English and French summaries},
   journal={Ann. Sci. \'{E}c. Norm. Sup\'{e}r. (4)},
   volume={47},
   date={2014},
   number={4},
   pages={779--832},
   issn={0012-9593},
   review={\MR{3250064}},
   doi={10.24033/asens.2227},
}

\bib{deligne}{article}{
   author={Deligne, P.},
   title={Formes modulaires et repr\'esentations de ${\rm GL}(2)$},
   language={French},
   conference={
      title={Modular functions of one variable, II},
      address={Proc. Internat. Summer School, Univ. Antwerp, Antwerp},
      date={1972},
   },
   book={
      publisher={Springer, Berlin},
   },
   date={1973},
   pages={55--105. Lecture Notes in Math., Vol. 349},
   review={\MR{0347738}},
}

\bib{dimitrov}{article}{
   author={Dimitrov, Mladen},
   title={Automorphic symbols, $p$-adic $L$-functions and ordinary
   cohomology of Hilbert modular varieties},
   journal={Amer. J. Math.},
   volume={135},
   date={2013},
   number={4},
   pages={1117--1155},
   issn={0002-9327},
   review={\MR{3086071}},
   doi={10.1353/ajm.2013.0035},
}

\bib{dd-ant}{article}{
   author={Disegni, Daniel},
   title={$p$-adic heights of Heegner points on Shimura curves},
   journal={Algebra Number Theory},
   volume={9},
   date={2015},
   number={7},
   pages={1571--1646},
   issn={1937-0652},
   review={\MR{3404649}},
   doi={10.2140/ant.2015.9.1571},
}

\bib{dd-pyzz}{article}{
author={Disegni, Daniel}, title={The $p$-adic Gross--Zagier formula on Shimura curves}, 	journal={Compos. Math.}, volume={153}, number={10}, date={2017}, pages={1987--2074}
}

\bib{dd-exc}{article}{
   author={Disegni, Daniel},
   title={On the $p$-adic Birch and Swinnerton-Dyer conjecture for elliptic
   curves over number fields},
   journal={Kyoto J. Math.},
   volume={60},
   date={2020},
   number={2},
   pages={473--510},
   issn={2156-2261},
   review={\MR{4094741}},
   doi={10.1215/21562261-2018-0012},
}

\bib{LLC}{article}{
   author={Disegni, Daniel},
   title={Local Langlands correspondence, local factors, and zeta integrals
   in analytic families},
   journal={J. Lond. Math. Soc. (2)},
   volume={101},
   date={2020},
   number={2},
   pages={735--764},
   issn={0024-6107},  
   review={\MR{4093973}},
   doi={10.1112/jlms.12285},
}

\bib{nonsplit}{article}{author={Disegni, Daniel}, title={The $p$-adic Gross--Zagier formula on Shimura curves, II: nonsplit primes},     journal={J. Inst. Math. Jussieu},
status={to appear},  doi={10.1017/S1474748021000608},
label={Dis/a}, }

\bib{dd-pLf}{article}{author={Disegni, Daniel}, title={$p$-adic $L$-functions via local-global interpolation: the case of $\GL_{2}\ts {\rm GU}(1)$}, journal={Canad. J. Math.}, status={to appear; available at \url{http://www.math.bgu.ac.il/~disegni}}, label={Dis/b}, }

\bib{emerton}{article}{
   author={Emerton, Matthew},
   title={On the interpolation of systems of eigenvalues attached to
   automorphic Hecke eigenforms},
   journal={Invent. Math.},
   volume={164},
   date={2006},
   number={1},
   pages={1--84},
   issn={0020-9910},
   review={\MR{2207783}},
   doi={10.1007/s00222-005-0448-x},
}

\bib{fontaine}{article}{
   author={Fontaine, Jean-Marc},
   title={Repr\'{e}sentations $l$-adiques potentiellement semi-stables},
   language={French},
   note={P\'{e}riodes $p$-adiques (Bures-sur-Yvette, 1988)},
   journal={Ast\'{e}risque},
   number={223},
   date={1994},
   pages={321--347},
   issn={0303-1179},
   review={\MR{1293977}},
}

\bib{FG}{article}{
	author={Fornea, Michele},
	author={Gehrmann, Lennart}, 
	title={Plectic Stark--Heegner points},
	status={arXiv:2104.12575v1}, label={FG},}

\bib{FGM}{article}{
author={Fornea, Michele}, author={Guiitart, Xavier}, author={Masdeu, Marc}
title={Plectic p-adic invariants}, status={arXiv:2104.12566v1}, label={FGM}}

\bib{fouquet}{article}{
   author={Fouquet, Olivier},
   title={Dihedral Iwasawa theory of nearly ordinary quaternionic
   automorphic forms},
   journal={Compos. Math.},
   volume={149},
   date={2013},
   number={3},
   pages={356--416},
   issn={0010-437X},
   review={\MR{3040744}},
   doi={10.1112/S0010437X12000619},
}

\bib{ggp}{article}{
   author={Gan, Wee Teck},
   author={Gross, Benedict H.},
   author={Prasad, Dipendra},
   title={Symplectic local root numbers, central critical $L$ values, and
   restriction problems in the representation theory of classical groups},
   language={English, with English and French summaries},
   note={Sur les conjectures de Gross et Prasad. I},
   journal={Ast\'erisque},
   number={346},
   date={2012},
   pages={1--109},
   issn={0303-1179},
   isbn={978-2-85629-348-5},
}

\bib{GS}{article}{
   author={Greenberg, Ralph},
   author={Stevens, Glenn},
   title={$p$-adic $L$-functions and $p$-adic periods of modular forms},
   journal={Invent. Math.},
   volume={111},
   date={1993},
   number={2},
   pages={407--447},
   issn={0020-9910},
   review={\MR{1198816 (93m:11054)}},
   doi={10.1007/BF01231294},
}

\bib{greenberg}{article}{
   author={Greenberg, Ralph},
   title={Elliptic curves and $p$-adic deformations},
   conference={
      title={Elliptic curves and related topics},
   },
   book={
      series={CRM Proc. Lecture Notes},
      volume={4},
      publisher={Amer. Math. Soc., Providence, RI},
   },
   date={1994},
   pages={101--110},
   review={\MR{1260957}},
}

\bib{GZ}{article}{
   author={Gross, Benedict H.},
   author={Zagier, Don B.},
   title={Heegner points and derivatives of $L$-series},
   journal={Invent. Math.},
   volume={84},
   date={1986},
   number={2},
   pages={225--320},
   issn={0020-9910},
   review={\MR{833192 (87j:11057)}},
   doi={10.1007/BF01388809},  
}

\bib{gross-msri}{article}{
   author={Gross, Benedict H.},
   title={Heegner points and representation theory},
   conference={
      title={Heegner points and Rankin $L$-series},
   },
   book={
      series={Math. Sci. Res. Inst. Publ.},
      volume={49},
      publisher={Cambridge Univ. Press},
      place={Cambridge},
   },
   date={2004},
   pages={37--65},
   review={\MR{2083210 (2006h:11068)}},
   doi={10.1017/CBO9780511756375.005},
}

\bib{gross-incoh}{article}{
   author={Gross, Benedict H.},
   title={Incoherent definite spaces and Shimura varieties},status={preprint}, label={Gro}}

\bib{gms}{article}{
   author={Guitart, Xavier},
   author={Masdeu, Marc},
   author={\c Seng\"un, Mehmet Haluk},
   title={Darmon points on elliptic curves over number fields of arbitrary
   signature},
   journal={Proc. Lond. Math. Soc. (3)},
   volume={111},
   date={2015},
   number={2},
   pages={484--518},
   issn={0024-6115},
   review={\MR{3384519}},
}

\bib{Hida88}{article}{
   author={Hida, Haruzo},
   title={On $p$-adic Hecke algebras for ${\rm GL}_2$ over totally real
   fields},
   journal={Ann. of Math. (2)},
   volume={128},
   date={1988},
   number={2},
   pages={295--384},
   issn={0003-486X},
   review={\MR{960949}},
   doi={10.2307/1971444},
}

\bib{hida-adv}{article}{
   author={Hida, Haruzo},
   title={On nearly ordinary Hecke algebras for ${\rm GL}(2)$ over totally
   real fields},
   conference={
      title={Algebraic number theory},
   },
   book={
      series={Adv. Stud. Pure Math.},
      volume={17},
      publisher={Academic Press, Boston, MA},
   },
   date={1989},
   pages={139--169},
   review={\MR{1097614}},
}

\bib{hida-LF}{article}{
   author={Hida, Haruzo},
   title={On $p$-adic $L$-functions of ${\rm GL}(2)\times {\rm GL}(2)$ over
   totally real fields},
   language={English, with French summary},
   journal={Ann. Inst. Fourier (Grenoble)},
   volume={41},
   date={1991},
   number={2},
   pages={311--391},
   issn={0373-0956},
 review={\MR{1137290 (93b:11052)}},
}

\bib{HT}{article}{
   author={Hida, H.},
   author={Tilouine, J.},
   title={Anti-cyclotomic Katz $p$-adic $L$-functions and congruence
   modules},
   journal={Ann. Sci. \'Ecole Norm. Sup. (4)},
   volume={26},
   date={1993},
   number={2},
   pages={189--259},
   issn={0012-9593},
   review={\MR{1209708 (93m:11044)}},
}

\bib{28}{book}{
   author={Hida, Haruzo},
   title={Hilbert modular forms and Iwasawa theory},
   series={Oxford Mathematical Monographs},
   publisher={The Clarendon Press, Oxford University Press, Oxford},
   date={2006},
   pages={xiv+402},
   isbn={978-0-19-857102-5},
   isbn={0-19-857102-X},
   review={\MR{2243770}},
   doi={10.1093/acprof:oso/9780198571025.001.0001},
}

\bib{hida-mc}{article}{
   author={Hida, Haruzo},
   title={Anticyclotomic main conjectures},
   journal={Doc. Math.},
   date={2006},
   number={Extra Vol.},
   pages={465--532},
   issn={1431-0635},
   review={\MR{2290595}},
}

\bib{hida luminy}{article}{
   author={Hida, Haruzo},
   title={Control of nearly ordinary Hecke algebras}, status={preprint}, date={2009}}

\bib{hida-quad}{article}{
   author={Hida, Haruzo},
   title={Quadratic exercises in Iwasawa theory},
   journal={Int. Math. Res. Not. IMRN},
   date={2009},
   number={5},
   pages={912--952},
   issn={1073-7928},
   review={\MR{2482130}},
   doi={10.1093/imrn/rnn151},
}
	
\bib{howard}{article}{
   author={Howard, Benjamin},
   title={The Iwasawa theoretic Gross-Zagier theorem},
   journal={Compos. Math.},
   volume={141},
   date={2005},
   number={4},
   pages={811--846},
   issn={0010-437X},
   review={\MR{2148200 (2006f:11074)}},
   doi={10.1112/S0010437X0500134X},
}

\bib{howbig}{article}{
   author={Howard, Benjamin},
   title={Variation of Heegner points in Hida families},
   journal={Invent. Math.},
   volume={167},
   date={2007},
   number={1},
   pages={91--128},
   issn={0020-9910},
   review={\MR{2264805 (2007h:11067)}},
   doi={10.1007/s00222-006-0007-0},
}

\bib{hsieh-mu}{article}{
   author={Hsieh, Ming-Lun},
   title={On the $\mu$-invariant of anticyclotomic $p$-adic $L$-functions
   for CM fields},
   journal={J. Reine Angew. Math.},
   volume={688},
   date={2014},
   pages={67--100},
   issn={0075-4102},
   review={\MR{3176616}},
   doi={10.1515/crelle-2012-0056},
}

\bib{hsieh3}{article}{
   author={Hsieh, Ming-Lun},
   title={Hida families and $p$-adic triple product $L$-functions},
   journal={Amer. J. Math.},
   volume={143},
   date={2021},
   number={2},
   pages={411--532},
   issn={0002-9327},
   review={\MR{4234973}},
}

\bib{Hu}{article}{
   author={Hung, Pin-Chi},
   title={On the derivative of anticyclotomic $p$-adic $L$-functions for
   Hilbert modular forms},
   journal={Int. J. Number Theory},
   volume={14},
   date={2018},
   number={3},
   pages={615--630},
   issn={1793-0421},
   review={\MR{3786637}},
   doi={10.1142/S1793042118500379},
}

\bib{IoS}{article}{
   author={Iovita, Adrian},
   author={Spie\ss , Michael},
   title={Derivatives of $p$-adic $L$-functions, Heegner cycles and
   monodromy modules attached to modular forms},
   journal={Invent. Math.},
   volume={154},
   date={2003},
   number={2},
   pages={333--384},
   issn={0020-9910},
   review={\MR{2013784}},
   doi={10.1007/s00222-003-0306-7},
}

\bib{JLZ}{article}{
   author={Jetchev, Dimitar},
   author={Loeffler, David},
   author={Zerbes, Sarah Livia},
   title={Heegner points in Coleman families},
   journal={Proc. Lond. Math. Soc. (3)},
   volume={122},
   date={2021},
   number={1},
   pages={124--152},
   issn={0024-6115},
   review={\MR{4210260}},
   doi={10.1112/plms.12363},
}

\bib{katz}{article}{
   author={Katz, Nicholas M.},
   title={$p$-adic $L$-functions for CM fields},
   journal={Invent. Math.},
   volume={49},
   date={1978},
   number={3},
   pages={199--297},
   issn={0020-9910},
   review={\MR{513095}},
   doi={10.1007/BF01390187},
}

\bib{knus}{book}{
   author={Knus, Max-Albert},
   author={Ojanguren, Manuel},
   title={Th\'eorie de la descente et alg\`ebres d'Azumaya},
   language={French},
   series={Lecture Notes in Mathematics, Vol. 389},
   publisher={Springer-Verlag, Berlin-New York},
   date={1974},
   pages={iv+163},
   review={\MR{0417149}},
}

\bib{LL}{article}{
   author={Li, Chao},
   author={Liu, Yifeng},
   title={Chow groups and $L$-derivatives of automorphic motives for unitary
   groups},
   journal={Ann. of Math. (2)},
   volume={194},
   date={2021},
   number={3},
   pages={817--901},
   issn={0003-486X},
   review={\MR{4334978}},
   doi={10.4007/annals.2021.194.3.6},
}

\bib{loeffler}{article}{
   author={Loeffler, David},
   title={Spherical varieties and norm relations in Iwasawa theory},
   language={English, with English and French summaries},
   journal={J. Th\'{e}or. Nombres Bordeaux},
   volume={33},
   date={2021},
   number={3},
   pages={1021--1043},
   issn={1246-7405},
   review={\MR{4402388}},
   doi={10.1007/s10884-020-09844-5},
}

\bib{LZZ}{article}{
   author={Liu, Yifeng},
   author={Zhang, Shouwu},
   author={Zhang, Wei},
   title={A $p$-adic Waldspurger formula},
   journal={Duke Math. J.},
   volume={167},
   date={2018},
   number={4},
   pages={743--833},
   issn={0012-7094},
   review={\MR{3769677}},
   doi={10.1215/00127094-2017-0045},
}

\bib{five}{article}{
   author={Liu, Yifeng},
   author={Tian, Yichao},
   author={Xiao, Liang},
   author={Zhang, Wei},
    author={Zhu, Xinwen},
   title={On the Beilinson-Bloch-Kato conjecture for Rankin--Selberg motives},
   journal={Invent. math.},
 status={to appear},
    label={LTXZZ}}

\bib{LP}{article}{
   author={Longo, Matteo},
   author={Pati, Maria Rosaria},
   title={Exceptional zero formulae for anticyclotomic $p$-adic
   $L$-functions of elliptic curves in the ramified case},
   journal={J. Number Theory},
   volume={190},
   date={2018},
   pages={187--211},
   issn={0022-314X},
   review={\MR{3805453}},
   doi={10.1016/j.jnt.2018.02.010},
}

\bib{LV}{article}{
   author={Longo, Matteo},
   author={Vigni, Stefano},
   title={A refined Beilinson-Bloch conjecture for motives of modular forms},
   journal={Trans. Amer. Math. Soc.},
   volume={369},
   date={2017},
   number={10},
   pages={7301--7342},
   issn={0002-9947},
   review={\MR{3683110}},
   doi={10.1090/tran/6947},
}

\bib{Mas}{article}{
   author={Masdeu, Marc},
   title={CM cycles on Shimura curves, and $p$-adic $L$-functions},
   journal={Compos. Math.},
   volume={148},
   date={2012},
   number={4},
   pages={1003--1032},
   issn={0010-437X},
   review={\MR{2956034}},
   doi={10.1112/S0010437X12000206},
}

\bib{mazur-icm}{article}{
   author={Mazur, B.},
   title={Modular curves and arithmetic},
   conference={
      title={Proceedings of the International Congress of Mathematicians,
      Vol.\ 1, 2},
      address={Warsaw},
      date={1983},
   },
   book={
      publisher={PWN, Warsaw},
   },
   date={1984},
   pages={185--211},
   review={\MR{804682 (87a:11054)}},
}

\bib{Mok}{article}{
   author={Mok, Chung Pang},
   title={Heegner points and $p$-adic $L$-functions for elliptic curves over
   certain totally real fields},
   journal={Comment. Math. Helv.},
   volume={86},
   date={2011},
   number={4},
   pages={867--945},
   issn={0010-2571},
   review={\MR{2851872}},
   doi={10.4171/CMH/243},
}

\bib{MB}{article}{
   author={Molina, Santiago},
   title={Anticyclotomic $p$-adic $L$-functions and the exceptional zero
   phenomenon},
   journal={Trans. Amer. Math. Soc.},
   volume={372},
   date={2019},
   number={4},
   pages={2659--2714},
   issn={0002-9947},
   review={\MR{3988589}},
   doi={10.1090/tran/7646},
}

\bib{nek-koly}{article}{
   author={Nekov\'{a}\v{r}, Jan},
   title={Kolyvagin's method for Chow groups of Kuga-Sato varieties},
   journal={Invent. Math.},
   volume={107},
   date={1992},
   number={1},
   pages={99--125},
   issn={0020-9910},
   review={\MR{1135466}},
   doi={10.1007/BF01231883},
}

\bib{nekheights}{article}{
   author={Nekov{\'a}{\v{r}}, Jan},
   title={On $p$-adic height pairings},
   conference={
      title={S\'eminaire de Th\'eorie des Nombres, Paris, 1990--91},
   },
   book={
      series={Progr. Math.},
      volume={108},
      publisher={Birkh\"auser Boston},
      place={Boston, MA},
   },
   date={1993},
   pages={127--202},
   review={\MR{1263527 (95j:11050)}},
}

\bib{nek-heeg}{article}{
   author={Nekov{\'a}{\v{r}}, Jan},
   title={On the $p$-adic height of Heegner cycles},
   journal={Math. Ann.},
   volume={302},
   date={1995},
   number={4},
   pages={609--686},
   issn={0025-5831},
   review={\MR{1343644 (96f:11073)}},
   doi={10.1007/BF01444511},
}

\bib{nek-syn}{article}{
   author={Nekov{\'a}{\v{r}}, Jan},
   title={Syntomic cohomology and $p$-adic regulators}
    date= {1998}, url={https://webusers.imj-prg.fr/~jan.nekovar/pu/syn.pdf}
}

\bib{nek-AJ}{article}{
   author={Nekov\'a\v r, Jan},
   title={$p$-adic Abel-Jacobi maps and $p$-adic heights},
   conference={
      title={The arithmetic and geometry of algebraic cycles},
      address={Banff, AB},
      date={1998},
   },
   book={
      series={CRM Proc. Lecture Notes},
      volume={24},
      publisher={Amer. Math. Soc., Providence, RI},
   },
   date={2000},
   pages={367--379},
   review={\MR{1738867}},
}

\bib{nek-selmer}{article}{
   author={Nekov{\'a}{\v{r}}, Jan},
   title={Selmer complexes},
   language={English, with English and French summaries},
   journal={Ast\'erisque},
   number={310},
   date={2006},
   pages={viii+559},
   issn={0303-1179},
   isbn={978-2-85629-226-6},
   review={\MR{2333680 (2009c:11176)}},
}

\bib{nek-niz}{article}{
   author={Nekov\'a\v r, Jan},
   author={Nizio\l , Wies\l awa},
   title={Syntomic cohomology and $p$-adic regulators for varieties over
   $p$-adic fields},
   note={With appendices by Laurent Berger and Fr\'ed\'eric D\'eglise},
   journal={Algebra Number Theory},
   volume={10},
   date={2016},
   number={8},
   pages={1695--1790},
   issn={1937-0652},
   review={\MR{3556797}},
}

\bib{ota}{article}{
   author={Ota, Kazuto},
   title={Big Heegner points and generalized Heegner cycles},
   journal={J. Number Theory},
   volume={208},
   date={2020},
   pages={305--334},
   issn={0022-314X},
   review={\MR{4032299}},
   doi={10.1016/j.jnt.2019.08.005},
}

\bib{pr}{article}{
   author={Perrin-Riou, Bernadette},
   title={Points de Heegner et d\'eriv\'ees de fonctions $L$ $p$-adiques},
   language={French},
   journal={Invent. Math.},
   volume={89},
   date={1987},
   number={3},
   pages={455--510},
   issn={0020-9910},
   review={\MR{903381 (89d:11034)}},
   doi={10.1007/BF01388982},
}

\bib{PR2}{article}{
   author={Perrin-Riou, Bernadette},
   title={Fonctions $L$ $p$-adiques, th\'eorie d'Iwasawa et points de
   Heegner},
   language={French, with English summary},
   journal={Bull. Soc. Math. France},
   volume={115},
   date={1987},
   number={4},
   pages={399--456},
   issn={0037-9484},
   review={\MR{928018 (89d:11094)}},
}

\bib{PRbook}{book}{
   author={Perrin-Riou, Bernadette},
   title={Fonctions $L$ $p$-adiques des repr\'esentations $p$-adiques},
   language={French, with English and French summaries},
   journal={Ast\'erisque},
   number={229},
   date={1995},
   pages={198},
   issn={0303-1179},
   review={\MR{1327803 (96e:11062)}},
}

\bib{Qiu}{article}{
	author={Qiu, Conqling}, title={Modularity and Heights of CM cycles on Kuga-Sato varieties}, 
	status={preprint}, label={Qiu}}

\bib{rou}{article}{
   author={Rouquier, Rapha{\"e}l},
   title={Caract\'erisation des caract\`eres et pseudo-caract\`eres},
   language={French},
   journal={J. Algebra},
   volume={180},
   date={1996},
   number={2},
   pages={571--586},
   issn={0021-8693},
   review={\MR{1378546}},
   doi={10.1006/jabr.1996.0083},
}

\bib{saha-cond}{article}{
   author={Saha, Jyoti Prakash},
   title={Conductors in p-adic families},
   journal={Ramanujan J.},
   volume={44},
   date={2017},
   number={2},
   pages={359--366},
   issn={1382-4090},
   review={\MR{3715419}},
}

\bib{saito-h}{article}{
   author={Saito, Hiroshi},
   title={On Tunnell's formula for characters of ${\rm GL}(2)$},
   journal={Compositio Math.},
   volume={85},
   date={1993},
   number={1},
   pages={99--108},
   issn={0010-437X},
   review={\MR{1199206 (93m:22021)}},
}

\bib{saito}{article}{
   author={Saito, Takeshi},
   title={Hilbert modular forms and $p$-adic Hodge theory},
   journal={Compos. Math.},
   volume={145},
   date={2009},
   number={5},
   pages={1081--1113},
   issn={0010-437X},
   review={\MR{2551990}},
   doi={10.1112/S0010437X09004175},
}

\bib{saltman}{book}{
   author={Saltman, David J.},
   title={Lectures on division algebras},
   series={CBMS Regional Conference Series in Mathematics},
   volume={94},
   publisher={Published by American Mathematical Society, Providence, RI; on
   behalf of Conference Board of the Mathematical Sciences, Washington, DC},
   date={1999},
   pages={viii+120},
   isbn={0-8218-0979-2},
   review={\MR{1692654}},
   doi={10.1090/cbms/094},
}

\bib{Sch-T}{article}{
   author={Schneider, Peter},
   author={Teitelbaum, Jeremy},
   title={Algebras of $p$-adic distributions and admissible representations},
   journal={Invent. Math.},
   volume={153},
   date={2003},
   number={1},
   pages={145--196},
   issn={0020-9910},
   review={\MR{1990669}},
   doi={10.1007/s00222-002-0284-1},
}

\bib{Sch-T2}{article}{
   author={Schneider, Peter},
   author={Teitelbaum, Jeremy},
   title={Duality for admissible locally analytic representations},
   journal={Represent. Theory},
   volume={9},
   date={2005},
   pages={297--326},
   review={\MR{2133762}},
   doi={10.1090/S1088-4165-05-00277-3},
}

\bib{Sev}{article}{
   author={Seveso, Marco Adamo},
   title={Heegner cycles and derivatives of $p$-adic $L$-functions},
   journal={J. Reine Angew. Math.},
   volume={686},
   date={2014},
   pages={111--148},
   issn={0075-4102},
   review={\MR{3176601}},
   doi={10.1515/crelle-2012-0027},
}

\bib{shnidman}{article}{
   author={Shnidman, Ariel},
   title={$p$-adic heights of generalized Heegner cycles},
   language={English, with English and French summaries},
   journal={Ann. Inst. Fourier (Grenoble)},
   volume={66},
   date={2016},
   number={3},
   pages={1117--1174},
   issn={0373-0956},
   review={\MR{3494168}},
}

\bib{SW99}{article}{
   author={Skinner, C. M.},
   author={Wiles, A. J.},
   title={Residually reducible representations and modular forms},
   journal={Inst. Hautes \'Etudes Sci. Publ. Math.},
   number={89},
   date={1999},
   pages={5--126 (2000)},
   issn={0073-8301},
   review={\MR{1793414}},
}

\bib{tate-nt}{article}{author= {Tate, John},
     title = {Number theoretic background},
 conference = {title={Automorphic forms, representations and {$L$}-functions}, place= { {C}orvallis, {O}re.,},year={1977},}
       book={
      series={Proc. Sympos. Pure Math.}
      volume={XXXIII},
      publisher={Amer. Math. Soc.},
      place={Providence, R.I.},
   },
   year={1979}
     pages = {3--26},
}

\bib{TY}{article}{
   author={Taylor, Richard},
   author={Yoshida, Teruyoshi},
   title={Compatibility of local and global Langlands correspondences},
   journal={J. Amer. Math. Soc.},
   volume={20},
   date={2007},
   number={2},
   pages={467--493},
   issn={0894-0347},
   review={\MR{2276777}},
}

\bib{tian}{article}{
   author={Tian, Ye},
   title={Congruent numbers and Heegner points},
   journal={Camb. J. Math.},
   volume={2},
   date={2014},
   number={1},
   pages={117--161},
   issn={2168-0930},
   review={\MR{3272014}},
   doi={10.4310/CJM.2014.v2.n1.a4},
}

\bib{TX}{article}{
   author={Tian, Yichao},
   author={Xiao, Liang},
   title={$p$-adic cohomology and classicality of overconvergent Hilbert
   modular forms},
   language={English, with English and French summaries},
   journal={Ast\'erisque},
   number={382},
   date={2016},
   pages={73--162},
   issn={0303-1179},
   isbn={978-2-85629-843-5},
   review={\MR{3581176}},
}

\bib{tunnell}{article}{
   author={Tunnell, Jerrold B.},
   title={Local $\epsilon $-factors and characters of ${\rm GL}(2)$},
   journal={Amer. J. Math.},
   volume={105},
   date={1983},
   number={6},
   pages={1277--1307},
   issn={0002-9327},
   review={\MR{721997 (86a:22018)}},
   doi={10.2307/2374441},
}

\bib{venerucci-thesis}{article}{
author={Venerucci, Rodolfo}, title={$p$-adic regulators and $p$-adic families of modular forms},  journal={Thesis, Universit\`a di Milano}, date={2012}, status={available at  \url{https://air.unimi.it/retrieve/handle/2434/219975/275017/phd_unimi_R08514.pdf}}}

\bib{ven}{article}{
   author={Venerucci, Rodolfo},
   title={Exceptional zero formulae and a conjecture of Perrin-Riou},
   journal={Invent. Math.},
   volume={203},
   date={2016},
   number={3},
   pages={923--972},
   issn={0020-9910},
   review={\MR{3461369}},
   doi={10.1007/s00222-015-0606-8},
}

\bib{wald}{article}{
   author={Waldspurger, J.-L.},
   title={Sur les valeurs de certaines fonctions $L$ automorphes en leur
   centre de sym\'etrie},
   language={French},
   journal={Compositio Math.},
   volume={54},
   date={1985},
   number={2},
   pages={173--242},
   issn={0010-437X},
   review={\MR{783511 (87g:11061b)}},
}

\bib{xue}{article}{
   author={Xue, Hang},
   title={Arithmetic theta lifts and the arithmetic Gan-Gross-Prasad
   conjecture for unitary groups},
   journal={Duke Math. J.},
   volume={168},
   date={2019},
   number={1},
   pages={127--185},
   issn={0012-7094},
   review={\MR{3909895}},
   doi={10.1215/00127094-2018-0039},
}

\bib{yzz}{book}{
     title = {The Gross-Zagier Formula on Shimura Curves},  
     subtitle = {},     
     edition = {},       
     author = {Yuan, Xinyi},author = {Zhang, Shou-Wu},author = {Zhang, Wei},
     editor = {},     
     volume = {184},     
     series = {Annals of Mathematics Studies},  
     pages = {272},         
     place={Princeton, NJ},
     date = {2012},      
     publisher = {Princeton University Press},         
     }

\bib{YZZ3}{article}{
     title = {Triple product L-series and Gross--Kudla--Schoen cycles},  
     author = {Yuan, Xinyi},author = {Zhang, Shou-Wu},author = {Zhang, Wei}, status={preprint}, label={YZZ}}

\bib{zhang-hcycles}{article}{
   author={Zhang, Shouwu},
   title={Heights of Heegner cycles and derivatives of $L$-series},
   journal={Invent. Math.},
   volume={130},
   date={1997},
   number={1},
   pages={99--152},
   issn={0020-9910},
   review={\MR{1471887}},
   doi={10.1007/s002220050179},
}

\end{biblist}
\end{bibdiv}

\end{document}